\let\oldskull\skull
\def\skull{\mathord{\oldskull}}
\DeclareMathAlphabet{\mathbbm}{U}{bbm}{m}{n}
\DeclareFontFamily{U}{BOONDOX-calo}{\skewchar\font=45 }
\DeclareFontShape{U}{BOONDOX-calo}{m}{n}{
  <-> s*[1.05] BOONDOX-r-calo}{}
\DeclareFontShape{U}{BOONDOX-calo}{b}{n}{
  <-> s*[1.05] BOONDOX-b-calo}{}
\DeclareMathAlphabet{\mcb}{U}{BOONDOX-calo}{m}{n}
\SetMathAlphabet{\mcb}{bold}{U}{BOONDOX-calo}{b}{n}
\setlist{noitemsep,topsep=4pt}
\def\DeclareSymbol#1#2#3{%
	\expandafter\gdef\csname MH@symb@#1\endcsname{\tikzsetnextfilename{symbol#1}%
	\tikz[baseline=#2,scale=0.15,draw=symbols,line join=round,line cap=round]{#3}}%
	\expandafter\gdef\csname MH@symb@#1s\endcsname{\scalebox{0.75}{\tikzsetnextfilename{symbol#1}%
	\tikz[baseline=#2,scale=0.15,draw=symbols,line join=round,line cap=round]{#3}}}%
	\expandafter\gdef\csname MH@symb@#1ss\endcsname{\scalebox{0.65}{\tikzsetnextfilename{symbol#1}%
	\tikz[baseline=#2,scale=0.15,draw=symbols,line join=round,line cap=round]{#3}}}%
	}
\def\<#1>{\ifthenelse{\boolean{mmode}}{\mathchoice{\csname MH@symb@#1\endcsname}{\csname MH@symb@#1\endcsname}{\csname MH@symb@#1s\endcsname}{\csname MH@symb@#1ss\endcsname}}{\csname MH@symb@#1\endcsname}}
\newcommand*{\bigcdot}{}
\DeclareRobustCommand*{\bigcdot}{%
  \mathbin{\mathpalette\bigcdot@{}}%
}
\newcommand*{\bigcdot@scalefactor}{.5}
\newcommand*{\bigcdot@widthfactor}{1.15}
\newcommand*{\bigcdot@}[2]{%
  \sbox0{$#1\vcenter{}$}
  \sbox2{$#1\cdot\m@th$}%
  \hbox to \bigcdot@widthfactor\wd2{%
    \hfil
    \raise\ht0\hbox{%
      \scalebox{\bigcdot@scalefactor}{%
        \lower\ht0\hbox{$#1\bullet\m@th$}%
      }%
    }%
    \hfil
  }%
}
\def\act{\bigcdot}
\newcommand{\cut}{\mathfrak{C}}
\def\symset{\mcb{s}}
\def\symcol{\mcb{S}}
\newcommand{\mrd}{\mathop{}\!\mathrm{d}}
\newcommand{\ssep}{\mid}
\newcommand{\subforest}{\hookrightarrow}
\newcommand{\subroot}{%
  \mathrel{\ThisStyle{\vbox{\offinterlineskip\ialign{%
    \hfil##\hfil\cr
    $\scriptscriptstyle r$\cr
    $\SavedStyle\hookrightarrow$\cr
}}}}}
\def\wnorm#1{\lfloor \hspace{-0.29em} \rceil #1 \lfloor \hspace{-0.29em} \rceil}
\newcommand{\mcO}{\mathcal{O}}
\newcommand{\mcH}{\mathcal{H}}
\newcommand{\mcA}{\mathcal{A}}
\newcommand{\mcR}{\mathcal{R}}
\newcommand{\mcC}{\mathcal{C}}
\newcommand{\mcB}{\mathcal{B}}
\newcommand{\mcU}{\mathcal{U}}
\newcommand{\mcL}{\mathcal{L}}
\newcommand{\mcF}{\mathcal{F}}
\newcommand{\mcD}{\mathcal{D}}
\newcommand{\mcP}{\mathcal{P}}
\newcommand{\mcG}{\mathcal{G}}
\newcommand{\mcX}{\mathcal{X}}
\newcommand{\mbA}{\mathbf{A}}
\newcommand{\mbF}{\mathbf{F}}
\newcommand{\mbX}{\mathbf{X}}
\newcommand{\T}{\mathbf{T}}
\def\Lab{\mathfrak{L}}
\def\Deltam{\Delta^{\!-}}
\def\Deltap{\Delta^{\!+}}
\def\Deltaex{\Delta^{\!\mathrm{ex}}}
\def\Deltacut{\Delta^{\!\mathrm{cut}}}
\def\bUpsilon{\boldsymbol{\Upsilon}}
\def\bbUpsilon{\boldsymbol{\bar{\Upsilon}}}
\def\${|\!|\!|}
\def\id{\mathrm{id}}
\def\var#1{#1\textnormal{-var}}
\def\Hol#1{#1\textnormal{-H{\"o}l}}
\def\gr#1{#1\textnormal{-gr}}
\let\vaccent\v
\def\v#1{#1\textnormal{-vee}}
\def\tri#1{#1\textnormal{-tr}}
\def\symm#1{#1\textnormal{-sym}}
\def\Area{\textnormal{Area}}
\def\diam{\textnormal{diam}}
\def\hol{\textnormal{hol}}
\def\expan{\mcb{H}}  
\def\scal#1{{\langle#1\rangle}}
\def\CS{\mathcal{S}}
\def\CR{\mathcal{R}}
\def\CF{\mcb{F}}
\def\CT{\mcb{T}}
\def\CU{\mcb{U}}
\def\bone{\mathbf{1}}
\def\ST{\boldsymbol{\mcb T}}
\def\SU{\boldsymbol{\mcb U}}
\def\SF{\boldsymbol{\mcb F}}
\def\STp{\boldsymbol{\mcb T}_{\!\!+}}
\def\STm{\boldsymbol{\mcb T}_{\!\!-}}
\def\SFm{\boldsymbol{\mcb F}_{\!\!-}}
\let\tto\rightsquigarrow
\def\sol{{\mathop{\mathrm{sol}}}}
\newcommand{\mfu}{\mathfrak{u}}
\newcommand{\mfU}{\mathfrak{U}}
\newcommand{\mfF}{\mathfrak{F}}
\newcommand{\mfT}{\mathfrak{T}}
\newcommand{\mfn}{\mathfrak{n}}
\newcommand{\mfe}{\mathfrak{e}}
\newcommand{\mfL}{\mathfrak{L}}
\newcommand{\mfa}{\mathfrak{a}}
\newcommand{\mfc}{\mathfrak{c}}
\newcommand{\mfR}{\mathfrak{R}}
\newcommand{\mft}{\mathfrak{t}}
\newcommand{\mfm}{\mathfrak{m}}
\newcommand{\mfp}{\mathfrak{p}}
\newcommand{\mfl}{\mathfrak{l}}
\newcommand{\mfh}{\mathfrak{h}}
\newcommand{\mfq}{\mathfrak{q}}
\newcommand{\mfG}{\mathfrak{G}}
\newcommand{\mfO}{\mathfrak{O}}
\newcommand{\mfg}{\mathfrak{g}}
\def\cC{\mathscr{C}}
\def\cD{\mathscr{D}}
\def\can{\textnormal{\scriptsize can}}
\def\sym{\textnormal{\small \textsc{sym}}}
\def\gsym{\textnormal{\small \textsc{gsym}}}
\def\BPHZ{\textnormal{\small \textsc{bphz}}}
\def\YM{\textnormal{\small \textsc{ym}}}
\def\init{\textnormal{init}}
\def\bXi{\boldsymbol{\Xi}}
\newcommand{\ex}{\mathrm{ex}}
\newcommand{\ad}{\mathrm{ad}}
\newcommand{\Ad}{\mathrm{Ad}}
\def\combplus[#1,#2,#3,#4]{\binom{#1\ {\scriptstyle #4} }{#2\ #3}}
\def\singlescalegenvert[#1,#2]{\hat{H}^{#2}_{#1}}
\def\multiscalegenvert[#1,#2]{H^{#2}_{#1}}
\def\moll{\chi}
\def\PP{\mathbb{P}}
\def\nr[#1]{\tilde{N}[#1]} 
\def\inn[#1]{\mathring{N}[#1]}
\def\nrinn[#1]{\hat{N}_{#1}} 
\def\nrmod[#1,#2]{\tilde{N}_{#1}(#2)}
\def\nrinnmod[#1,#2]{\hat{N}_{#1}(#2)}
\def\ident[#1]{\underline{#1}}
\def\mylink#1#2{\mathrel{\vbox{\offinterlineskip\ialign{%
    \hfil##\hfil\cr
    $\scriptscriptstyle#1$\cr
    \noalign{\kern0.1ex}
    $#2$\cr
}}}}
\def\mysublink[#1]#2#3{\mathrel{\vbox{\offinterlineskip\ialign{%
    \hfil##\hfil\cr
    $\scriptscriptstyle#2$\cr
    \noalign{\kern0.1ex}
    $#3$\cr
    \noalign{\kern-0.2ex}
    \smash{\raisebox{-\height}{\hbox{$\scriptscriptstyle #1$}}}\cr
    \noalign{\kern0.2ex}
}}}}
\def\smooth{\mcb{C}^\infty}
\def\fon[#1]{\cC_{#1}}
\def\mincompproj[#1]{\mfp_{#1}}
\def\Proj_#1{\mathop{\mathrm{Proj}_{#1}}}
\def\negrenorm[#1]{\mfR_{#1}}
\def\topnegrenorm[#1]{\overline{\mfR}_{#1}}
\def\quotedge[#1]{E^{q}_{#1}}
\def\posrenorm[#1]{\mcC_{#1}}
\def\topposrenorm[#1]{\overline{\mcC_{#1}}}
\def\cutsmod[#1]{\mathbb{C}_{+,#1}}
\def\fullcutsmod[#1]{\cut_{#1}}
\def\emptyset{{\centernot\Circle}}
\colorlet{symbols}{blue!30!black!50}
\colorlet{testcolor}{green!60!black}
\colorlet{darkblue}{blue!60!black}
\colorlet{darkgreen}{green!60!black}
\definecolor{darkergreen}{rgb}{0.0, 0.5, 0.0}
\definecolor{purple}{rgb}{0.55,0.05,0.8}
\colorlet{redkernel}{red!80}
\def\symbol#1{\textcolor{symbols}{#1}}
\def\pr{\textcolor{purple}}
\def\1{\mathbf{\symbol{1}}}
\def\hotimes{\mathbin{\hat\otimes}}
\newcommand*{\mathcolor}{}
\def\mathcolor#1#{\mathcoloraux{#1}}
\newcommand*{\mathcoloraux}[3]{%
  \protect\leavevmode
  \begingroup
    \color#1{#2}#3%
  \endgroup
}
    \pgfmathsetlength{\pgf@xb}{\pgfkeysvalueof{/pgf/outer xsep}}%
    \pgfmathsetlength{\pgf@yb}{\pgfkeysvalueof{/pgf/outer ysep}}%
\colorlet{greennode}{green!50!black}
\colorlet{rednode}{red!50!black}
\colorlet{lbluenode}{blue!25}
\colorlet{dbluenode}{blue}
\colorlet{orangenode}{orange}
\definecolor{connection}{rgb}{0.7,0.1,0.1}
\tikzset{
root/.style={circle,fill=black!50,inner sep=0pt, minimum size=3mm},
        var/.style={circle,fill=black!10,draw=black,inner sep=0pt, minimum size=3mm},
        kernel/.style={semithick,shorten >=2pt,shorten <=2pt},
        kernel1/.style={thick},
        kernels/.style={snake=zigzag,shorten >=2pt,shorten <=2pt,segment amplitude=1pt,segment length=4pt,line before snake=2pt,line after snake=5pt,},
        rho/.style={densely dashed,semithick,shorten >=2pt,shorten <=2pt},
           testfcn/.style={dotted,semithick,shorten >=2pt,shorten <=2pt},
           tau/.style={circle,inner sep=1pt,draw=black,fill=white,text=black,thin},
        renorm/.style={shape=circle,fill=white,inner sep=1pt},
        labl/.style={shape=rectangle,fill=white,inner sep=1pt},
        xi/.style={very thin,circle,fill=lbluenode,draw=symbols,inner sep=0pt,minimum size=1.2mm},
        xigreen/.style={very thin,circle,fill=greennode,draw=symbols,inner sep=0pt,minimum size=1.2mm},
        xigreen1/.style={very thin,rectangle,fill=greennode,draw=symbols,inner sep=0pt,minimum size=1.2mm},
        xired/.style={very thin,circle,fill=rednode,draw=symbols,inner sep=0pt,minimum size=1.2mm},
        xilblue/.style={very thin,circle,fill=lbluenode,draw=symbols,inner sep=0pt,minimum size=1.2mm},
        xiorange/.style={very thin,circle,fill=orangenode,draw=symbols,inner sep=0pt,minimum size=1.2mm},
        xix/.style={crosscircle,fill=lbluenode,draw=symbols,inner sep=0pt,minimum size=1.2mm},
 %
xix-green-red/.style={circle, fill=greennode!70!white,draw=rednode,inner sep=0pt,minimum size=1.6mm,append after command={node [inner sep=0pt,minimum size=0.8mm,thick, draw = rednode, cross out]{}}},
xix-green-red1/.style={rectangle, fill=greennode!70!white,draw=rednode,inner sep=0pt,minimum size=1.5mm,append after command={node [inner sep=0pt,minimum size=1mm,thick, draw = rednode, cross out]{}}},
	xib/.style={very thin,circle,fill=lbluenode,draw=symbols,inner sep=0pt,minimum size=1.6mm},
	xie/.style={very thin,circle,fill=greennode,draw=symbols,inner sep=0pt,minimum size=1.6mm},
	xid/.style={very thin,circle,fill=lbluenode,draw=symbols,inner sep=0pt,minimum size=1.6mm},
	xibx/.style={crosscircle,fill=lbluenode,draw=symbols,inner sep=0pt,minimum size=1.6mm},
	kernels2/.style={ultra thick,draw=symbols,segment length=12pt},
	not/.style={thin,regular polygon, regular polygon sides=3,draw=connection,fill=connection,inner sep=0pt,minimum size=1.2mm},
	dot/.style={thin,circle,fill=black,draw=black,inner sep=0pt,minimum size=0.5mm},
	notlblue/.style={thin,regular polygon, regular polygon sides=3,draw=lbluenode,fill=lbluenode,inner sep=0pt,minimum size=1.2mm},
	notorange/.style={thin,regular polygon, regular polygon sides=3,draw=orangenode,fill=orangenode,inner sep=0pt,minimum size=1.2mm},
	notgreen/.style={thin,regular polygon, regular polygon sides=3,draw=greennode,fill=greennode,inner sep=0pt,minimum size=1.2mm},
	>=stealth,
  }
\newtheorem{assumption}[lemma]{Assumption}
\colorlet{darkblue}{blue!90!black}
\colorlet{darkred}{red!90!black}
\colorlet{darkgreen}{green!70!black}
\def\s{\mathfrak{s}}
\newcommand{\e}{\varepsilon}
\def\K{\mathfrak{K}}
\def\bbar#1{\bar{\bar #1}}
\def\${|\!|\!|}
\def\?{{\color{red}?}}
\def\id{\mathrm{id}}
\def\restr{\mathbin{\upharpoonright}}
\def\Id{\mathrm{id}}
\def\proj{\mathbf{p}}
\def\PP{\mathbf{P}}
\def\Set{\mathop{\mathrm{Set}}\nolimits}
\def\Hom{\mathord{\mathrm{Hom}}}
\def\Homb{\mathord{\overline{\mathrm{Hom}}}}
\def\VHomb{\mathord{\overline{\mathrm{SHom}}}}
\def\Iso{\mathord{\mathrm{Iso}}}
\def\TStruc{\mathord{\mathrm{TStruc}}}
\def\SSet{\mathord{\mathrm{SSet}}}
\def\Cas{\mathord{\mathrm{Cas}}}
\def\Func{\mathbf{F}}
\def\PPi{\boldsymbol{\Pi}}
\def\ttau{\boldsymbol{\tau}}
\def\ff{\boldsymbol{f}}
\def\ob{\mathord{\mathrm{Ob}}}
\def\Vec{{\mathrm{Vec}}}
\def\id{\mathrm{id}}
\def\bbar#1{\bar{\bar #1}}
\def\bxi{\boldsymbol{\xi}}
\def\Lab{\mathfrak{L}}
\def\dash{\leavevmode\unskip\kern0.18em--\penalty\exhyphenpenalty\kern0.18em}
\def\slash{\leavevmode\unskip\kern0.15em/\penalty\exhyphenpenalty\kern0.15em}
\def\hotimes{\mathbin{\hat\otimes}}
\def\one{\mathbbm{1}}
\newcommand{\roof}[1]{\lceil #1 \rceil}
\newtheorem{example}[lemma]{Example}
\newcounter{tables}
\let\basepoint\logof
\def\logof{\mathord{{\basepoint}}} 
\title{Langevin dynamic for the 2D Yang--Mills measure}
\author{Ajay Chandra$^1$, Ilya Chevyrev$^2$, Martin Hairer$^1$, and Hao Shen$^3$}
\institute{Imperial College London \\ \email{a.chandra@imperial.ac.uk, m.hairer@imperial.ac.uk} \and University of Edinburgh, \email{ichevyrev@gmail.com} \and University of Wisconsin--Madison, \email{pkushenhao@gmail.com}}
\begin{document}
\maketitle
\begin{abstract}
We define a natural state space and Markov process associated to the stochastic Yang--Mills heat 
flow in two dimensions.

To accomplish this we first introduce a space of distributional connections for which holonomies along sufficiently regular curves (Wilson loop observables) and the action of an associated group of gauge transformations are both well-defined and satisfy good continuity properties. 
The desired state space is obtained as the corresponding space of orbits under this group action and is shown to be a Polish space when equipped with a natural Hausdorff metric.

To construct the Markov process we show that the stochastic Yang--Mills heat flow takes values in our space of connections and use the ``DeTurck trick'' of introducing a time dependent gauge transformation to show invariance, in law, of the solution under gauge transformations.

Our main tool for solving for the Yang--Mills heat flow is the theory of regularity structures and along the way we also develop a ``basis-free'' framework for applying the theory of regularity structures in the context of vector-valued noise \dash this provides a conceptual framework for interpreting several previous constructions and 
we expect this framework to be of independent interest. 
\end{abstract}
\setcounter{tocdepth}{2}

\tableofcontents

\section{Introduction}

The purpose of this paper and the companion article~\cite{CCHSPrep} is to study the Langevin dynamic associated to the Euclidean Yang--Mills (YM) measure.
Formally, the YM measure is written
\begin{equ}\label{eq:YM_measure}
\mrd\mu_\YM(A) = \mathcal{Z}^{-1}\exp\big[-
S_{\YM}(A)\big] \mrd A\;,
\end{equ}
where $\mrd A$ is a formal Lebesgue measure on the space of connections of a 
principal $G$-bundle $P\to M$, $G$ is a compact Lie group, and $\mathcal{Z}$ is a normalisation constant.
The YM action is given by 
\begin{equ}\label{eq:YM_energy}
S_{\YM}(A) \eqdef \int_M
|F_A(x)|^2 \mrd x\;,
\end{equ}
where $F_A$ is the curvature $2$-form of $A$, the norm $|F_A|$ is given by an $\Ad$-invariant inner product on the Lie algebra $\mfg$ of $G$, and $\mrd x$ is a Riemannian volume measure on the space-time manifold $M$.
The YM measure plays a fundamental role in high energy physics, constituting one of the 
components of the Standard Model, and its rigorous construction largely remains open, see~\cite{JW06, Chatterjee18} and the references therein.
The action $S_\YM$ in addition plays a significant role in geometry, see e.g.~\cite{AB83, DK90}.

For the rest of our discussion we will take $M=\T^d$, the $d$-dimensional torus equipped with the Euclidean distance and normalised Lebesgue measure, and the principal bundle $P$ to be trivial.
In particular, we will identify the space of connections on $P$ with $\mfg$-valued $1$-forms on $\T^d$ (implicitly fixing a global section).
The results of this paper almost exclusively focus on the case $d=2$, and the case $d=3$ is studied in~\cite{CCHSPrep}.

A postulate of gauge theory is that all physically relevant quantities should be invariant under the action
of the gauge group, which consists of the automorphisms of the principal bundle $P$ fixing the base space.
In our setting, the gauge group can be identified with maps $g\in\mfG^\infty = \mcC^\infty(\T^d,G)$, and the corresponding action on connections is given by
\begin{equ}\label{eq:gauge_tranform}
A \mapsto A^g \eqdef gAg^{-1} - (\mrd g)g^{-1}\;.
\end{equ}
Equivalently, the $1$-form $A^g$ represents the same connection as $A$ but in a new frame 
(i.e.\ global section) determined by $g$.
The physically relevant object is therefore not the connection $A$ itself,
but its orbit $[A]$ under the action \eqref{eq:gauge_tranform}.


In addition to the challenge of rigorously interpreting~\eqref{eq:YM_measure}
due to the infinite-dimensionality of the space of connections,
gauge invariance poses an additional difficulty that is not encountered in theories such as the $\Phi^p$ models.
Indeed, since $S_\YM$ is invariant under the action of the infinite-dimensional gauge group $\mfG^\infty$
(as it should be to represent a physically relevant theory), the interpretation of~\eqref{eq:YM_measure}
as a probability measure on the space of connections runs into the problem of the impossibility of
constructing a measure that is ``uniform'' on each gauge orbit.
Instead, one would like to quotient out the action of the gauge group and build the measure 
on the space of gauge orbits, but this 
introduces a new difficulty in that it is even less clear what the reference ``Lebesgue measure'' means in this case. 

A natural approach to study the YM measure is to consider the Langevin dynamic associated with the action $S_\YM$.
Indeed, this dynamic is expected to be naturally gauge covariant and one can aim to 
use techniques from PDE theory to understand its behaviour.
Denoting by $\mrd_A$ the covariant derivative associated with $A$ and by $\mrd_A^*$ its adjoint, the equation governing the Langevin dynamic is formally given by
\begin{equ}\label{eq:SYM_no_deturck}
\partial_t A = -\mrd_A^* F_A + \xi\;.
\end{equ}
In coordinates this reads, for $i=1,\ldots, d$ and with summation over $j$ implicit,\footnote{Implicit summation over repeated indices will be in place throughout the paper with departures from this convention explicitly specified.}
\begin{equ}[e:SYMcoord]
\partial_t A_i
= \xi_i + \Delta A_i - \partial_{ji}^2 A_j 
+ \big[A_j,2\partial_j A_i - \partial_i A_j + [A_j,A_i]\big]
+ [\partial_j A_j, A_i]\;,
\end{equ}
where $\xi_{1},\dots,\xi_{d}$ are independent $\mfg$-valued space-time white noises on $\R\times\T^d$ with
covariance induced by an $\Ad$-invariant scalar product on $\mfg$. (We fix such a scalar product for the
remainder of the discussion.)
Equation~\eqref{eq:SYM_no_deturck} was the original motivation of Parisi--Wu~\cite{ParisiWu} in their introduction of
stochastic quantisation.
This field has recently received renewed interest due to a
development of tools able to study singular SPDEs~\cite{Hairer14, GIP15}, and has 
proven fruitful in the study and an alternative construction of the scalar $\Phi^4$ quantum 
field theories~\cite{MW17Phi42,MW17Phi43,AK20,MoinatWeber18,GH21}
(see also~\cite{BG18} for a related construction).

A very basic issue with~\eqref{eq:SYM_no_deturck} is the lack of ellipticity of the term $\mrd_A^* F_A$, 
which is a reflection of the invariance of the action $S_\YM$ under the gauge group\footnote{Since $A^g$ is a solution for any solution $A$ and fixed element $g \in \CG$, there are non-smooth solutions. Note also that the highest order term 
in the right-hand side is $d^* d A$ which annihilates all exact forms and therefore cannot be elliptic.}.
A well-known solution to this problem is to realise that if we take any sufficiently regular functional
$A \mapsto H(A) \in \CC^\infty(\T^2,\mfg)$ and consider instead of \eqref{eq:SYM_no_deturck}
the equation
\begin{equ}\label{eq:SYM_deturck}
\partial_t A = -\mrd_A^* F_A + \mrd_A H(A) + \xi\;,
\end{equ}
then, at least formally, solutions to \eqref{eq:SYM_deturck} are gauge equivalent to those of \eqref{eq:SYM_no_deturck}
in the sense that there exists a time-dependent gauge transformation mapping one into the other one,
at least in law.
This is due to the fact that the tangent space of 
the gauge orbit at $A$ (ignoring issues of regularity \slash topology for the moment)
is given by terms of the form $\mrd_A \omega$, where $\omega$ is an arbitrary $\mfg$-valued $0$-form.

A convenient choice of $H$ is given by $H(A) = -\mrd^* A$ which yields the 
so-called DeTurck--Zwanziger term~\cite{zwanziger81,deturck83} 
\begin{equ}
-\mrd_A\mrd^* A =  (\partial_i + [A_i,\cdot])\partial_j A_j\,\mrd x_i\;.
\end{equ}
This allows to cancel out the term $\partial_{ji}^2 A_j$ appearing in \eqref{e:SYMcoord} 
and thus renders the equation parabolic,
while still keeping the solution to the modified equation gauge-equivalent 
to the original one.
We note that the idea to use this modified equation to study properties of the heat flow has proven a useful tool in geometric analysis~\cite{deturck83, Donaldson, CG13} and has appeared in works on stochastic quantisation in the physics literature~\cite{zwanziger81,BHST87II,DH87}.

With this discussion in mind, the equation we focus on, also referred to in the sequel as the stochastic Yang--Mills (SYM) equation, is given in coordinates by
\begin{equ}\label{eq:SYM}
\partial_t A_i = \Delta A_i + \xi_i + \big[A_j,2\partial_j A_i - \partial_i A_j + [A_j,A_i]\big]\;.
\end{equ}
Our goal is to show the existence of a natural space of gauge orbits such that (appropriately renormalised) solutions to~\eqref{eq:SYM}  define a canonical Markov process on this space.
In addition, one desires a class of gauge invariant observables to be defined on this orbit space
which is sufficiently rich to separate points; a popular class is that of Wilson loop observables (another being the lasso variables of Gross~\cite{Gross85}),
which are defined in terms of holonomies of the connection and a variant of which is known to separate the gauge orbits in the smooth setting~\cite{Sengupta92}.

One of the difficulties in carrying out this task is that any reasonable definition for the state space should be supported on gauge 
orbits of distributional connections, and it is a priori not clear
how to define holonomies (or other gauge-invariant observables) for such connections.
In fact, it is not even clear how to carry out the construction to ensure that the orbits form a reasonable (e.g.\ Polish) space, given that the quotient of a Polish space by the action of a Polish group will typically yield a highly 
pathological object from a measure-theoretical perspective. (Think of even simple cases like the quotient of
$L^2([0,1])$ by the action of $H^1_0([0,1])$ given by $(x,g) \mapsto x + \iota g$ with $\iota \colon H_0^1 \to L^2$
the canonical inclusion map or the quotient of the torus $\T^2$ by the action of $(\R,+)$ given by an irrational rotation.)

We now describe our main results on an informal level, postponing a precise
formulation to Section~\ref{sec:main_results}, and mention connections with the existing literature and several open problems.

\subsection{Outline of results}

Our first contribution is to identify a natural space of distributional connections $\Omega^1_\alpha$, which can be seen as a refined analogue of the classical H{\"o}lder--Besov spaces, along with an associated gauge group.
An important feature of this space is that holonomies along all sufficiently regular curves (and thus Wilson loops and their variants) are canonically defined for each connection in $\Omega^1_\alpha$ and are continuous functions of the connection and curve.
In addition, the associated space of gauge orbits is a Polish space and thus well behaved from the viewpoint of probability theory. A byproduct of the construction of $\Omega^1_\alpha$ is a parametrisation-independent
way of measuring the regularity of a curve which relates to $\beta$-H{\"o}lder regular curves with $\beta \in (1,2)$
(in the sense that they are differentiable with $\beta-1$-H{\"o}lder derivative) 
in a way that is strongly reminiscent of how $p$-variation relates to H{\"o}lder regularity for $\beta \le 1$.  

In turn, we show that the SPDE~\eqref{eq:SYM} can naturally be solved in the space $\Omega^1_\alpha$ through mollifier approximations.
More precisely, we show that for any mollifier $\moll^\eps$ at scale $\eps\in(0,1]$ and $C\in L_{G}(\mfg,\mfg)$ (where $L_{G}(\mfg,\mfg)$ consists of all linear operators from $\mfg$ to itself which commute with $\Ad_g$ for any $g \in G$), the solutions to the renormalised SPDE  
\begin{equ}\label{eq:renormalised_SYM}
\partial_t A_i = 
\Delta A_i +\moll^\eps*\xi_i + C A_i + \big[A_j,2\partial_j A_i - \partial_i A_j + [A_j,A_i]\big]
\end{equ}
converge as $\Omega^1_\alpha$-valued processes as $\eps\to 0$ (with a possibility of finite-time blow-up).
Observe that the addition of the mass term in~\eqref{eq:renormalised_SYM}
(as well as the choice of mollification with respect to a fixed coordinate system)
breaks gauge-covariance for any $\eps>0$.
Our final result is that gauge-covariance can be restored in the $\eps\to0$ limit.
Namely, we show that for each non-anticipative mollifier $\moll$,
there exists an essentially \textit{unique}
choice for $C$ (depending on $\moll$) such that 
in the limit $\eps\to0$, the law of the gauge orbit $[A(t)]$ is 
independent of $\moll$ and depends only on the gauge orbit $[A(0)]$ of the initial condition.
This provides the construction of the aforementioned canonical Markov process
associated to~\eqref{eq:SYM} on the space of gauge orbits.

We mention that a large part of the solution theory for~\eqref{eq:SYM} is now automatic and 
follows from the theory of regularity structures~\cite{Hairer14,BHZ19, CH16,BCCH21}.
In particular, these works guarantee that a suitable renormalisation procedure
yields convergence of the solutions inside some H{\"o}lder--Besov space.
The points which are not automatic are that the limiting solution 
indeed takes values in the space $\Omega^1_\alpha$, that it is gauge invariant, and that no 
diverging counterterms are required for the convergence of \eqref{eq:renormalised_SYM}.
One contribution of this article is to adapt the
algebraic framework of regularity structures developed in~\cite{BHZ19, BCCH21}
to address the latter point.
Precisely, we give a natural renormalisation procedure for SPDEs  of the form
\begin{equ}\label{eq:general_SPDE}
(\partial_t - \mathscr{L}_\mft) A_\mft = F_{\mft}(\mbA, \bxi)\;, \quad \mft\in\mfL_+\;,
\end{equ}
with vector-valued noises and solutions.
Here, for some finite index sets $\mfL_\pm$,  $(\mathscr{L}_\mft)_{\mft\in\mfL_+}$ are differential operators, $\mbA$ and $\bxi$ represent the jet of $(A_\mft)_{\mft\in\mfL_+}$
and $(\xi_\mfl)_{\mfl\in\mfL_-}$ which take values in vector spaces $(W_\mft)_{\mft\in \mfL_+}$
and $(W_\mft)_{\mft\in \mfL_-}$ respectively,
and the nonlinearities $(F_{\mft})_{\mft\in\mfL_+}$ are smooth and local.
We give a systematic way to build a regularity structure associated 
to~\eqref{eq:general_SPDE} and to derive the renormalised equation without ever choosing a basis of the spaces $W_\mft$.

\begin{example}
In addition to~\eqref{eq:SYM}, an equation of interest which fits into this framework comes from the Langevin dynamic of the Yang--Mills--Higgs Lagrangian 
\begin{equ}\label{eq:Higgs_Lagrangian}
\frac12\int_{\T^d} \Bigl(|F_A|^2 + |\mrd_A \Phi|^2 -  m^2|\Phi|^2 + \frac12 |\Phi|^4\Bigr)\mrd x\;,
\end{equ}
where $A$ is a $1$-form taking values in a Lie sub-algebra $\mfg$ of the anti-Hermitian operators on $\C^N$, and $\Phi$ is a $\C^N$-valued function.
The associated SPDE (again with DeTurck term) reads
\begin{equation}\label{eq:YMH_SPDE}
\begin{split}
\partial_t A &= -\mrd_A^* F_A -\mrd_A\mrd^* A - \mathbf{B}(\Phi\otimes \mrd_A\Phi) + \xi^A\;,
\\
\partial_t \Phi &= -\mrd_A^*\mrd_A \Phi + (\mrd^* A)\Phi - \Phi |\Phi|^2 - m^2\Phi + \xi^\Phi\;,
\end{split}
\end{equation}
where $\mathbf{B}\colon\C^N\otimes\C^N \to \mfg$ is the $\R$-linear map that satisfies $\scal{h,\mathbf{B}(x\otimes y)}_\mfg = \Re\scal{ h x,y}_{\C^N}$ for all $h \in \mfg$.

One of the consequences of our framework is that the renormalisation counter\-terms of~\eqref{eq:YMH_SPDE} can all be constructed from iterated applications of $\mathbf{B}$, the Lie bracket $[\cdot,\cdot]_\mfg$, and 
the product $(A,\Phi) \mapsto A\Phi$.
\end{example}

\subsection{Relation to previous work}

There have been several earlier works on the construction of an orbit space.
Mitter--Viallet~\cite{Mitter} showed that the space of gauge orbits modelled on $H^k$ for
$k > {\frac d2} +1$ is a smooth Hilbert manifold.
More recently, Gross~\cite{Gross2,GrossSingular}
has made progress on the analogue in $H^{1/2}$ in dimension $d=3$.

An alternative (but related) route to give meaning to the YM measure is to directly define a stochastic process indexed by a class of gauge invariant observables (e.g.\ Wilson loops).
This approach was undertaken in earlier works on the 2D YM measure~\cite{Driver89,Sengupta97,Levy03,Levy06} which have successfully given explicit representations of the measure for general compact manifolds and principal bundles.
It is not clear, however, how to 
extract from these works a space of gauge orbits with a well-defined probability measure,
which is somewhat closer to the physical interpretation of the measure. (This is a kind of non-linear analogue to 
Kolmogorov's standard question of finding a probability measure on a space of ``sufficiently regular''
functions that matches a given consistent collection of $n$-point distributions.)
In addition, this setting is ill-suited for the study of the Langevin dynamic since it is far from clear how to interpret a realisation of such a stochastic process as the initial condition for a PDE.

A partial answer was obtained in~\cite{Chevyrev18YM} where it was 
shown that a gauge-fixed version of the YM measure (for a simply-connected structure group $G$)
can be constructed in a Banach space of distributional connections 
which could serve as the space of initial conditions of the PDE~\eqref{eq:SYM}.
Section~\ref{sec:state_space} of this paper extends part of this 
earlier work by providing a strong generalisation of the spaces used 
therein (e.g.\ supporting holonomies along all sufficiently regular 
paths, while only axis-parallel paths are handled in~\cite{Chevyrev18YM})
and constructing an associated canonical space of gauge orbits.

Another closely related work was recently carried out in~\cite{Shen18}.
It was shown there that the lattice gauge covariant Langevin dynamic of the scalar Higgs model (the Lagrangian of which is given by~\eqref{eq:Higgs_Lagrangian} without the $|\Phi|^4$ term and with an abelian Lie algebra) in $d=2$ can be appropriately modified by a DeTurck--Zwanziger term and renormalised to yield local-in-time solutions in the continuum limit.
The  mass renormalisation term $C A_i$ as in~\eqref{eq:renormalised_SYM}
is absent in~\cite{Shen18} due to the fact that the lattice gauge theory preserves the exact gauge symmetry,
while a divergent mass renormalisation for the Higgs field $\Phi$  is still needed but preserves gauge invariance. 
In addition, convergence of a natural class of gauge-invariant observables was shown over short time intervals; but there was no description for the orbit space.

\subsection{Open problems}

It is natural to conjecture that the Markov process constructed in 
this paper possesses a unique invariant measure, for which the 
associated stochastic process indexed by Wilson loops agrees with the YM measure constructed in~\cite{Sengupta97, Levy03,Levy06}.
Such a result would be one of the few known rigorous connections between the 
YM measure and the YM energy functional~\eqref{eq:YM_energy} (another connection is made in~\cite{LevyNorris06} through a large deviations principle).
A possible approach would be to show that the gauge-covariant lattice 
dynamic for the discrete YM measure converges to the solution to the SYM equation~\eqref{eq:SYM} 
identified in this paper.
Combined with a gauge fixing procedure as in~\cite{Chevyrev18YM} and
an argument of Bourgain~\cite{Bourgain94} along the lines of~\cite{HM18}, 
this convergence would prove the result
(as well as strong regularity properties of the YM measure obtained from the description of the orbit space in this paper).
The main difficulty to overcome is the lack of general stochastic estimates for the lattice which are available in the continuum thanks to~\cite{CH16}.

Our results do not exclude finite-time blow-up of solutions to SYM~\eqref{eq:SYM},
not even in the quotient space. (Since gauge orbits are unbounded, non-explosion of solutions 
to~\eqref{eq:SYM} is a stronger property than non-explosion of the Markov process on gauge orbits constructed
in this article.)
It would be of interest to determine whether the solution to SYM survives almost surely  for all 
time for any initial condition. The weaker case of the Markov 
process would be handled by the above conjecture combined 
with the strong Feller property~\cite{HM16} and irreducibility~\cite{HS19} which both hold in this case.
The analogous result is known for the $\Phi^4_d$ SPDE in $d=2,3$~\cite{MoinatWeber18}.
Long-time existence of the deterministic YM heat flow in $d=2,3$ is also known~\cite{Rade92,CG13}, but it is not clear how to adapt these methods to the stochastic setting.

It is also unclear how to extend all the results of this paper to the 3D setting.
In~\cite{CCHSPrep} we analyse the SPDE~\eqref{eq:SYM} for $d=3$ and show that the solutions take values in a suitable state space to which gauge equivalence extends in a canonical way.
We furthermore show the same form of gauge-covariance in law as in this paper, which allows us to construct a Markov process on the corresponding orbit space.
However, an important result missing in~\cite{CCHSPrep} in comparison to this article is the existence of a gauge group which acts transitively on the gauge orbits.
We give further details therein.

\subsection{Outline of the paper}

The paper is organised as follows.
In Section~\ref{sec:main_results}, we give a precise formulation of our main results concerning the SPDE~\eqref{eq:SYM} and the associated Markov process on gauge orbits.
In Section~\ref{sec:state_space} we provide a detailed study of the space of distributional $1$-forms $\Omega^1_\alpha$ used in the construction of the state space of the Markov process.
In Section~\ref{sec:SHE} we study the stochastic heat equation as an $\Omega^1_\alpha$-valued process.

In Section~\ref{sec:vector-valued_noises} we give a canonical, basis-free framework for constructing regularity structures associated to SPDEs with vector-valued noise. 
Moreover, we generalise the main results of \cite{BCCH21} on formulae for renormalisation counterterms in the scalar setting and obtain analogous vectorial formulae. 
We expect this framework to be useful in for a variety of systems of SPDE whose natural formulation involve vector-valued noise 
\dash in the context of \eqref{eq:SYM} this framework allows us to directly obtain expressions for renormalisation counterterms in terms of Lie brackets and to use symmetry arguments coming from the $\Ad$-invariance of the noises.  

In Section~\ref{sec:solution_theory} we prove local well-posedness of the SPDE~\eqref{eq:SYM},
and in Section~\ref{sec:gauge_equivar} we show that gauge covariance holds in law
for a specific choice of renormalisation procedure 
which allows us to construct the canonical Markov process on gauge orbits.

\subsection{Notation and conventions}
\label{subsec:notation}

We collect some notation and definitions used throughout the paper.
We denote by $\R_+$ the interval $[0,\infty)$ and we identify the torus $\T^2$ with the set $\big[-\frac12,\frac12\big)^2$.
We equip $\T^2$ with the geodesic distance, which, by an abuse of notation, we denote $|x-y|$, and $\R\times \T^2 $ 
with the parabolic distance $|(t,x)-(s,y)|=\sqrt{|t-s|}+|x-y|$.

A \emph{mollifier} $\moll$ is a smooth function on space-time $\R\times\R^2$ (or just space $\R^2$) with support 
in the ball $\{z \ssep |z| < \frac14 \}$ such that $\int\moll = 1$.
We will assume that any space-time mollifier $\moll$ we use satisfies $\moll(t,x_{1},x_{2}) = \moll(t,-x_{1},x_{2}) = \moll(t,x_{1},-x_{2})$.\footnote{This is for convenience, so that some renormalisation constants vanish, but is not strictly necessary.}
A space-time mollifier is called \emph{non-anticipative}
if it has support in the set $\{(t,x)\ssep t \geq 0\}$.

Consider a separable Banach
space $(E, |\cdot|)$.
For $\alpha\in[0,1]$ and a metric space $(F,d)$, we denote by $\mcC^{\Hol\alpha}(F,E)$ the set of all functions $f\colon F\to E$ such that
\begin{equ}
|f|_{\Hol\alpha} \eqdef \sup_{x,y} \frac{|f(x)-f(y)|}{d(x,y)^\alpha} < \infty\;,
\end{equ}
where the supremum is over all distinct $x,y\in F$.
We further denote by $\mcC^\alpha(F,E)$ the space of all functions $f\colon F\to E$ such that
\begin{equ}
|f|_{\mcC^{\alpha}} \eqdef |f|_{\infty} + |f|_{\Hol\alpha} < \infty\;,
\end{equ}
where $|f|_\infty \eqdef \sup_{x\in F} |f(x)|$.
For $\alpha>1$, we define 
$\mcC^\alpha(\T^2,E)$
(resp. $\mcC^\alpha(\R\times\T^2,E)$)  to be the space of $k$-times differentiable functions (resp. functions that are $k_0$-times differentiable in $t$ and $k_1$-times differentiable in $x$  with $2k_0+k_1\le k$), where $k\eqdef\roof\alpha-1$, with $(\alpha-k)$-H{\"o}lder continuous $k$-th derivatives.

For $\alpha<0$, let $r \eqdef -\roof{\alpha-1}$ and $\mcB^r$ denote the set of all smooth functions $\psi \in \mcC^\infty(\T^2)$ with $|\psi|_{\mcC^r} \leq 1$ and support in the ball $|z|<\frac14$.
Let $(\mcC^\alpha(\T^2,E),|\cdot|_{\mcC^\alpha})$ denote the space of distributions $\xi \in \mcD'(\T^2,E)$ for which
\[
|\xi|_{\mcC^\alpha} \eqdef \sup_{\lambda\in(0,1]}\sup_{\psi \in \mcB^r} \sup_{x \in \T^2} \frac{|\scal{\xi,\psi^\lambda_x}|}{\lambda^\alpha} < \infty\;,
\]
where $\psi^\lambda_x(y) \eqdef \lambda^{-2}\psi(\lambda^{-1}(y-x))$.
For $\alpha=0$, we define $\mcC^0$ to simply be $L^\infty(\T^2,E)$,
and use $\mcC(\T^2,E)$ to denote the space of continuous functions,
both spaces being equipped with the $L^\infty$ norm.
For any $\alpha\in\R$, we denote by $\mcC^{0,\alpha}$ the closure  of smooth functions in $\mcC^\alpha$.
We drop $E$ from the notation and write simply $\mcC(\T^2)$, $\mcC^{\alpha}(\T^2)$, etc. whenever $E=\R$.

For a space $\CB$ of $E$-valued functions (or distributions) on $\T^2$, we denote by $\Omega\CB$\label{Omega_CB page ref} the space of $E$-valued $1$-forms $A = \sum_{i=1}^2 A_i \mrd x_i$ where $A_1, A_2 \in\CB$.
If $\CB$ is equipped with a (semi)norm $|\cdot|_\CB$, we define
\begin{equ}
|A|_{\Omega\CB} \eqdef \sum_{i=1}^2 |A_i|_{\CB}\;.
\end{equ}
When $\CB$ is of the form $\mcC(\T^2,E)$, $\mcC^{\alpha}(\T^2,E)$, etc., we write simply $\Omega\mcC$, $\Omega\mcC^{\alpha}$, etc. for $\Omega\CB$.

Given two real vector spaces $V$ and $W$ we write $L(V,W)$ for the set of all linear operators from $V$ to $W$.
If $V$ is equipped with a topology, we write $V^*$ for its topological dual,
and otherwise we write $V^*$ for its algebraic dual.
As mentioned in the introduction, we also write $L_{G}(\mfg,\mfg) = \{C \in L(\mfg,\mfg):\ C \Ad_{g} = \Ad_{g} C \textnormal{ for all } g \in G\}$.

\subsubsection{State space with blow-up}
\label{subsubsec:blowup}

Given a metric
space $(F,d)$, we extend it with a cemetery state by setting
$\hat F = F\sqcup\{\skull\}$ and
equipping it 
with the topology whose basis sets are the balls of $F$
and the complements of closed balls.
We use the convention $d(\skull,f) \eqdef \infty$ for all $f\in F$
and observe that $\hat F$ is metrisable with metric
\[
\hat d(f,g) = d(f,g)\wedge
\big(h[f]+h[g]\big)\;,
\]
where $h\colon\hat F\to [0,1]$ is defined by $h[f]=(1+d(f,o))^{-1}$, where $o\in F$ is 
any fixed element (see the proof of~\cite[Thm.~2]{Mandelkern89} for a similar statement).

Denote $\R_+=[0,\infty)$.
Given $f \in \CC(\R_{+},\hat F)$, we define $T[f] = \inf\{ t \ge 0: f(t) = \skull\}$.
We then define\label{page_ref:F_sol}
\begin{equ}
F^{\sol} \eqdef \Big\{ f \in \mcC(\R_+,\hat F) \,:\,
f\restr_{[T[f],\infty)} \equiv \skull \Big\}\;.
\end{equ}
We should think of $F^\sol$ as the state space of dynamical systems with values in $F$ which can 
blow up in finite time and cannot be `reborn'.
We equip $F^\sol$ with the following metric, which is an extension of that defined in the
arXiv version of~\cite[Sec.~2.7.2]{BCCH21} with the benefit that it can be defined even when $F$ is a non-linear space.

Consider the cone $C\hat F = \big( [0,1]\times \hat F \big) /\sim$, where $x\sim y$ $\Leftrightarrow$ [$x=y$] or [$x=(0,f),y=(0,g)$ for some $f,g\in \hat F$].
Treating $\hat F \simeq \{1\}\times \hat F$ as a subset of $C\hat F$, we extend the metric $\hat d$ to $C\hat F$ by
\[
\hat d((a,f),(b,g)) = |a-b| + (a\wedge b)\hat d(f,g)\;.
\]
(Using that $\hat d\leq 2$ on $\hat F$, the verification that $\hat d$ is a metric on $C\hat F$ is routine.)

For any $f \in F^\sol$, we define its running supremum by
\begin{equ}
S_f(t) \eqdef \sup_{s \le t}  d(f(s),o) \in [0,\infty] \;,
\end{equ}
where $o \in F$ is a fixed element.
We also fix a smooth non-increasing function $\psi \colon \R \to [0,1]$ with
derivative supported in $[1,2]$, $\psi(1) = 1$, and $\psi(2) = 0$. Given $L\ge 1$, 
we define $\Theta_L(f) \in \CC(\R_+,C \hat F)$ by 
\begin{equ}
\Theta_L(f)(t) = \big( \psi(S_f(t)/L), f(t) \big)\;.
\end{equ}
We then equip $F^\sol$ with a metric $D(\cdot,\cdot) \eqdef \sum_{L=1}^{\infty} 2^{-L} D_{L}(\cdot,\cdot)$, where for $f,g \in F^\sol$
\begin{equ}
D_L(f,g) \eqdef \sup_{t \in [0,L]} \hat d \big( \Theta_L(f)(t), \Theta_L(g)(t) \big)
\;.
\end{equ}
The point of these definitions is that they equip $F^\sol$ with a metric such that 
a function $f$ exploding at time $T$ is close to a function $g$ that `tracks' $f$ closely up to time $T-\eps$,
but then remains finite, or possibly explodes at some later time.
In particular, we have the following lemma, the proof of which is identical to the proof of the arXiv version of~\cite[Lem.~2.19]{BCCH21}.

\begin{lemma}\label{lem:alt_conv}
Consider $f,f_1,f_2,\ldots \in F^\sol$. The following statements are equivalent.
\begin{enumerate}[label=(\roman*)]
\item\label{pt:D_conv} $D(f_n,f)\to 0$;
\item\label{pt:D_conv_alt} for every $L>0$,
\begin{equ}
\lim_{n\to\infty} \sup_{t\in[0,T_L]} d(f_n(t),f(t)) = 0\;,
\end{equ}
where $T_L \eqdef L\wedge\inf\{t>0\,:\, d(f_n(t),o) \vee d(f(t),o) > L\}$.
\end{enumerate}
\end{lemma}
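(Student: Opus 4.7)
The plan is to prove the two implications separately, using the preliminary observation that $D(f_n,f)\to 0$ is equivalent to $D_L(f_n,f)\to 0$ for every $L\ge 1$. This follows by dominated convergence on $\sum_L 2^{-L}D_L$: each $D_L$ is uniformly bounded by $3$, since $h\le 1$ forces $\hat d\le h[f]+h[g]\le 2$ on $\hat F$, and hence $\hat d((a,f),(b,g))\le 1+2=3$ on $C\hat F$.

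For (i)$\Rightarrow$(ii), fix $L$ and choose any $L'>L$. For $t\in[0,T_L]$ one has $d(f_n(s),o)\vee d(f(s),o)\le L$ for all $s\le t$, so $S_{f_n}(t)\vee S_f(t)\le L<L'$, whence $\psi(S_{f_n}(t)/L')=\psi(S_f(t)/L')=1$. The cone metric therefore reduces to $\hat d(f_n(t),f(t))$ and we obtain $\sup_{t\in[0,T_L]}\hat d(f_n(t),f(t))\le D_{L'}(f_n,f)\to 0$. Since $h[f_n(t)]+h[f(t)]\ge 2/(1+L)$ uniformly on this interval, the identity $\hat d=d$ holds as soon as $\hat d<2/(1+L)$, which happens uniformly in $t$ for $n$ large, yielding (ii).

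For (ii)$\Rightarrow$(i), fix $L$ and apply (ii) with $L$ replaced by $2L$ to obtain $\e_n\eqdef\sup_{t\in[0,T_{2L}]}d(f_n(t),f(t))\to 0$. On $[0,T_{2L}\wedge L]$ both running suprema are bounded by $2L$, and the crude bound $|S_{f_n}(t)-S_f(t)|\le\sup_{s\le t}d(f_n(s),f(s))\le\e_n$ together with the Lipschitz continuity of $\psi$ controls the first coordinate of $\Theta_L$ uniformly, while the second-coordinate contribution is at most $\hat d(f_n(t),f(t))\le\e_n$. On the remaining range $[T_{2L},L]$, non-empty only if $T_{2L}<L<2L$, continuity at $T_{2L}$ forces $\max(d(f_n(T_{2L}),o),d(f(T_{2L}),o))=2L$; say $d(f(T_{2L}),o)=2L$, the other case being symmetric. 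Then $S_f(t)\ge 2L$ and $\psi(S_f(t)/L)=0$ for every $t\ge T_{2L}$, while monotonicity of $S_{f_n}$ and of $\psi$ gives $\psi(S_{f_n}(t)/L)\le\psi((2L-\e_n)/L)\to\psi(2)=0$ uniformly in $t\ge T_{2L}$. Both weights thus vanish in the limit and the cone identification of the fibre $\{0\}\times\hat F$ with a single point kills the second-coordinate term, giving $D_L(f_n,f)\to 0$.

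The main difficulty is the treatment of $[T_{2L},L]$: one must exploit the specific design of the cone $C\hat F$ (which collapses the zero-weight fibre to a point, rendering possibly uncontrolled behaviour of $f_n$ past the ``blow-up time'' of $f$ invisible) together with the monotonicity of the running supremum and the continuity of $\psi$ at the threshold $\psi(2)=0$, in order to turn the boundary control at the time $T_{2L}$ into uniform control over the whole remaining interval.
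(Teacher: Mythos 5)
Your proof is correct, and it is the natural (and, as far as I can tell, the same) argument as the one in the reference that the paper delegates to. A few things worth noting. The preliminary reduction from $D$ to the family $D_L$ via dominated convergence, together with the uniform bound $D_L \leq 3$, is exactly the right first step. In (i)$\Rightarrow$(ii) you should pick $L'$ to be an \emph{integer} (say $L'=\lceil L\rceil+1$) so that the preliminary observation applies, but this is cosmetic. In (ii)$\Rightarrow$(i), the step ``continuity at $T_{2L}$ forces $\max(d(f_n(T_{2L}),o),d(f(T_{2L}),o))=2L$'' is only literally an equality when $0<T_{2L}<2L$; if $T_{2L}=0$ you only get $\geq 2L$, but the rest of the argument uses only the inequality, so nothing breaks. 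Finally, your phrasing ``the cone identification of the fibre $\{0\}\times\hat F$ with a single point kills the second-coordinate term'' is morally right but the mechanism is simply that one of the two weights is exactly $0$, so the factor $(a\wedge b)=0$ annihilates the $\hat d(f_n(t),f(t))$ term regardless of whether the other point lies on the collapsed fibre; it would be cleaner to say this directly. None of these affect the validity of the argument.
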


We remark here that if $F$ is separable, then so are $(C\hat F,\hat d)$ and $(F^\sol,D)$
(to show this, one can for example adapt the argument of~\cite[Thm~4.19]{KechrisSet}).
Furthermore, if $(F,d)$ is a complete metric space,
it is not difficult to see that $(C\hat F,\hat d)$ is also complete.
On the other hand,
the following example shows that $(F^\sol,D)$ may not be complete even if $(F,d)$ is complete.

\begin{example}\label{ex:not_complete}
Consider $F=\R$ and a sequence $f_n$
such that $f_n(1-2^{-k})=2^k$ for $k=0,\ldots, n$, and then constant on $[1-2^{-n},\infty)$.
On the intervals $[1-2^{-k+1},1-2^{-k}]$ for $k=1,\ldots, n$, suppose $f_n$ goes from $2^{k-1}$ down to $0$ and then back up to $2^k$ linearly.
Then clearly $f_n$ is Cauchy for $D$ and its limit $f$ exists pointwise as a function on $\CC([0,1),\R)$,
but its naive extension as $f(t)=\skull$ for $t\geq 1$ is \textit{not} an element of $\CC(\R_+,\hat\R)$ because it fails to converge to $\skull$ as $t\nearrow1$.
\end{example}

\subsection*{Acknowledgements}

{\small
We would like to thank Andris Gerasimovi\vaccent cs for many discussions regarding the derivation of the
renormalised equation in Sections~\ref{sec:solution_theory} and~\ref{sec:gauge_equivar}.
MH gratefully acknowledges support by the Royal Society through a research professorship.
IC was partly funded by a Junior Research Fellowship of St John's College, Oxford, during the writing of this article.
HS gratefully acknowledges support by NSF via DMS-1712684 / DMS-1909525, DMS-1954091 and CAREER DMS-2044415.
AC gratefully acknowledges financial support from the Leverhulme Trust via an Early Career
Fellowship, ECF-2017-226.
}

\section{Main results}
\label{sec:main_results}

In this section, we give a precise formulation of the main results
described in the introduction.
\subsection{State space and solution theory for SYM equation}
Our first result concerns the state space of the Markov process.
We collect the main features of this space in the following theorem along with precise references, and refer the reader to Section~\ref{sec:state_space} for a detailed study.

\begin{theorem}\label{thm:state_space}
For each $\alpha\in(\frac23,1)$, there exists a Banach space $\Omega^1_\alpha$ of distributional $\mfg$-valued $1$-forms on $\T^2$ with the following properties.
\begin{enumerate}[label=(\roman*)]
\item\label{pt:hol_well-defined} For each $A\in\Omega^1_\alpha$ and $\gamma\in\mcC^{1,\beta}([0,1],\T^2)$
with $\beta\in(\frac{2}{\alpha}-2,1]$, the holonomy $\hol(A,\gamma)\in G$ is well-defined
and, on bounded balls of $\Omega^1_\alpha\times \mcC^{1,\beta}([0,1],\T^2)$, 
is a H{\"o}lder continuous function of $(A,\gamma)$
with distances between $\gamma$'s measured in the supremum metric.
In particular, Wilson loop observables are well-defined on $\Omega^1_\alpha$.
(See Theorem~\ref{thm:extension_to_curves} and Proposition~\ref{prop:p-var_cont}
combined with Young ODE theory~\cite{Lyons94,FrizHairer}.)
\item There are canonical embeddings with the classical H{\"o}lder--Besov spaces
\begin{equ}
\Omega\mcC^{0,\alpha/2}\hookrightarrow \Omega^1_\alpha\hookrightarrow \Omega\mcC^{0,\alpha-1}\;.
\end{equ}
(See Section~\ref{subsec:smooth_one_forms}.)

\item Let $\mfG^{0,\alpha}$ denote the closure of smooth functions in $\mcC^{\Hol\alpha}(\T^2,G)$.
Then there is a continuous group action of $\mfG^{0,\alpha}$ on $\Omega^1_\alpha$.
Furthermore there exists a metric $D_\alpha$ on the quotient space of gauge orbits $\mfO_\alpha\eqdef \Omega^1_\alpha/\mfG^{0,\alpha}$ which induces
the quotient topology and such that $(\mfO_\alpha,D_\alpha)$ is separable and complete.
(See Corollary~\ref{cor:group_action_smooth_closure} and Theorem~\ref{thm:Polish}.)

\item Gauge orbits in $\mfO_\alpha$ are uniquely determined by conjugacy classes of holonomies along loops. (See Proposition~\ref{prop:gauge_equiv}.)
\end{enumerate}
\end{theorem}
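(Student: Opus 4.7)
The statement has two implications. The easy direction is that if $A' = A^g$ for some $g \in \mfG^{0,\alpha}$, then the standard transformation law for holonomies under gauge change gives $\hol(A',\gamma) = g(x)\hol(A,\gamma) g(x)^{-1}$ for any loop $\gamma$ based at $x$, so conjugacy classes of loop holonomies agree. Since holonomy is continuous in both $A$ and $\gamma$ by part~\ref{pt:hol_well-defined}, this transformation law (which is classical for smooth data) extends to $\Omega^1_\alpha$ by approximating $A$ in $\Omega\mcC^{0,\alpha-1}$ using the embedding $\Omega\mcC^{0,\alpha/2}\hookrightarrow\Omega^1_\alpha$ together with simultaneous smooth approximation of $g$ in $\mfG^{0,\alpha}$ and passage to the limit.

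For the substantive direction, suppose $A,A'\in\Omega^1_\alpha$ have the same conjugacy class of holonomies along every (sufficiently regular) loop. Fix a basepoint $x_0\in\T^2$. The plan is, first, to extract a single element $g_0\in G$ simultaneously conjugating all based holonomies at $x_0$; second, to propagate $g_0$ to a map $g\colon\T^2\to G$ using parallel transport formulas; third, to verify the Hölder regularity required for $g\in\mfG^{0,\alpha}$; and finally to verify $A'=A^g$. For the first step, the based holonomies $\gamma\mapsto\hol(A,\gamma)$ and $\gamma\mapsto\hol(A',\gamma)$ are homomorphisms from the loop semigroup at $x_0$ into the compact group $G$ whose images generate the respective holonomy subgroups; compactness of $G$ together with an equaliser / barycentre argument (essentially the same one used in~\cite{Sengupta92} in the smooth setting) then promotes pointwise conjugacy to the existence of a single $g_0\in G$ with $\hol(A',\gamma)=g_0\hol(A,\gamma)g_0^{-1}$ for every loop $\gamma$ at $x_0$.

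Once $g_0$ is fixed, define
\[
g(x) \eqdef \hol(A',\gamma_x)\,g_0\,\hol(A,\gamma_x)^{-1}\;,
\]
where $\gamma_x$ is any $\mcC^{1,\beta}$ curve from $x_0$ to $x$ with $\beta>\frac{2}{\alpha}-2$. Path independence follows from the loop hypothesis: if $\gamma_x$ and $\tilde\gamma_x$ are two such curves, then $\tilde\gamma_x^{-1}\ast\gamma_x$ is a loop at $x_0$, and multiplicativity of holonomies under concatenation, combined with $g_0$ conjugating all loop holonomies, shows the two definitions agree. To see $g\in \mfG^{0,\alpha}$, given nearby $x,y\in\T^2$ one chooses $\gamma_y = \gamma_x\ast \eta$ for a short smooth segment $\eta$ of length $\lesssim|x-y|$; the multiplicativity of holonomy reduces $g(y)g(x)^{-1}$ to holonomies of $A,A'$ along $\eta$. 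The Hölder continuity of $(A,\gamma)\mapsto \hol(A,\gamma)$ on bounded balls from part~\ref{pt:hol_well-defined}, applied to short straight-line segments with length parameter controlling the $\mcC^{1,\beta}$ norm, then yields $|g(y)-g(x)|\lesssim |x-y|^\alpha$. Approximability of $g$ by smooth maps follows by mollifying $A,A'$ (using the embedding $\Omega\mcC^{0,\alpha/2}\hookrightarrow\Omega^1_\alpha$), applying the construction to the approximations to obtain smooth $g_\eps$, and invoking continuity of $\hol$ to conclude $g_\eps\to g$ in $\mcC^{\Hol\alpha}$.

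Finally, to check $A'=A^g$, observe that by construction $g(x)$ is exactly the element realising the ``change of frame'' between the $A$-parallel transport and the $A'$-parallel transport along any curve emanating from $x_0$. For smooth $A,A',g$ this identity is equivalent to $A'=A^g$ by differentiating the defining relation along arbitrary curves; in the distributional setting one deduces the relation $A'=A^g$ in $\Omega^1_\alpha$ by mollifying and passing to the limit, using continuity of the gauge action established in Section~\ref{sec:state_space}. The main obstacle is the regularity step: one needs the H\"older estimate on $g$ to hold globally on $\T^2$ with an exponent matching exactly the $\alpha$-regularity of the underlying $\Omega^1_\alpha$ connections, which requires the continuity of holonomy along curves, as formulated in part~\ref{pt:hol_well-defined}, to be used with precisely the right scaling in the length of $\eta$; this is where the hypothesis $\alpha>\frac23$ (ensuring admissible $\beta\in(\frac2\alpha-2,1]$ exist) plays an essential role.
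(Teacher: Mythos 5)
Your proposal addresses only part~(iv) of the theorem and is silent on parts~(i)--(iii); in the paper, Theorem~\ref{thm:state_space} is a summary statement whose proof is distributed over Theorem~\ref{thm:extension_to_curves}, Proposition~\ref{prop:p-var_cont}, Corollary~\ref{cor:group_action_smooth_closure}, Theorem~\ref{thm:Polish}, and Proposition~\ref{prop:gauge_equiv}, so any self-contained write-up would need to at least sketch the role of each. Regarding part~(iv), your overall strategy (fix $g_0$, propagate via parallel transport, verify regularity and $A'=A^g$) is the same as in Proposition~\ref{prop:gauge_equiv}, but there are two substantive issues.

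First, your ``equaliser / barycentre'' step is attempting to promote individual conjugacy of holonomies to simultaneous conjugacy by a single element $g_0$. This is both unnecessary and problematic. In the paper, ``same conjugacy class of holonomies along loops'' is already interpreted as the existence of a single $g_0\in G$ conjugating $\hol(A,\gamma)$ to $\hol(\bar A,\gamma)$ for all loops $\gamma$ at a fixed basepoint (this is precisely condition~\ref{point:hol_sim} of Proposition~\ref{prop:gauge_equiv}); no barycentre argument is needed. Moreover, individual conjugacy does \emph{not} in general imply simultaneous conjugacy of two homomorphisms into a compact group without extra hypotheses, so if you genuinely wanted the stronger statement you would need to justify this carefully, not merely cite~\cite{Sengupta92}.

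Second, and more seriously, your argument that $g$ lies in $\mfG^{0,\alpha}$ (the closure of smooth maps) and that $A'=A^g$ holds ``by mollifying $A,A'$ and passing to the limit'' has a genuine gap: if you mollify $A$ and $A'$ \emph{separately} to obtain $A_\eps$ and $A'_\eps$, there is no reason for $A_\eps$ and $A'_\eps$ to be gauge equivalent or to have conjugate loop holonomies, so the corresponding $g_\eps$ defined by the holonomy formula is either not path-independent, or depends on an arbitrary choice of paths and therefore need not be smooth. Consequently you cannot invoke ``applying the construction to the approximations'' or conclude that $A'_\eps = A_\eps^{g_\eps}$. The paper sidesteps this entirely: Proposition~\ref{prop:gauge_equiv} only gives $g\in\mfG^\alpha$, and the upgrade to $g\in\mfG^{0,\alpha}$ when $A,\bar A\in\Omega^1_{\gr\alpha}$ is Proposition~\ref{prop:mfG_0_alpha_action}, proved directly via the little-H\"older characterisations of $\Omega^1_{\gr\alpha}$ (Proposition~\ref{prop:Omega^1_alpha_char}) and $\mfG^{0,\alpha}$ (Lemma~\ref{lem:mfG_0_alpha_char}) applied to the explicit formula \eqref{eq:g_def} --- no mollification of the connections is involved. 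Similarly, the identity $A^g=\bar A$ is checked directly against line segments using the definition of $A^g$ as a Young integral, rather than by approximation. Finally, your explicit formula $g(x)=\hol(A',\gamma_x)\,g_0\,\hol(A,\gamma_x)^{-1}$ has the inverse on the wrong factor relative to the paper's conventions \eqref{eq:hol_g_action}--\eqref{eq:g_def}; it should read $g(x)=\hol(A',\gamma_x)^{-1}g_0\,\hol(A,\gamma_x)$.
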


\begin{remark}
Analogous spaces could be defined on any manifold, but it is not clear whether higher dimensional versions are useful for the
study of the stochastic YM equation. 
\end{remark}

\begin{remark}\label{rem:holonomy_param_indep}
Since $\hol(A,\gamma)$ is independent of the parametrisation of $\gamma$, the ``right'' way of measuring its
regularity should also be parametrisation-independent, which is not the case of $\CC^{1,\beta}$. This
is done in Definition~\ref{def:pvar} which might be of independent interest.
\end{remark}

We now turn to the results on the SPDE.
Let us fix $\alpha\in(\frac23,1)$ and $\eta\in(\frac\alpha4-\frac12,\alpha-1]$.
Note that $\alpha>\frac23$ implies that such $\eta$ always exists.
%
Let us fix for the remainder of this section a space-time mollifier $\moll$
as defined in Section~\ref{subsec:notation} and
denote $\moll^\eps(t,x) \eqdef \eps^{-4}\moll(t\eps^{-2},\eps^{-1}x)$.
We also fix i.i.d.\ $\mfg$-valued white noises $(\xi_i)_{i=1}^2$ on $\R\times\T^2$ and write
$\xi^\eps_i \eqdef \xi_i * \moll^\eps$.
Fix some $C\in L_{G}(\mfg,\mfg)$ independent of $\eps$, and for each $\eps\in(0,1]$ consider the system of PDEs on $\R_+ \times \T^2$, with $i \in \{1,2\}$,
\begin{equs}\label{eq:SPDE_for_A}
\partial_t A^\eps_i &= 
\Delta A^\eps_i +\xi^\eps_i + C A^\eps_i + [A^\eps_j,2\partial_j A^\eps_i - \partial_i A^\eps_j + [A^\eps_j,A^\eps_i]] \;,
\\
A^\eps(0) &= a \in \Omega^1_\alpha\;.
\end{equs}

\begin{theorem}[Local existence]\label{thm:local_exist}
The solution $A^\eps$ converges in $(\Omega^{1}_{\alpha})^{\sol}$ in probability as $\eps \to 0$ to an $(\Omega^{1}_{\alpha})^\sol$-valued 
random variable $A$.
\end{theorem}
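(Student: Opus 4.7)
The strategy is to combine the regularity structures machinery, in the basis-free vector-valued form developed in Section~\ref{sec:vector-valued_noises}, with the tailored analysis of the stochastic heat equation as an $\Omega^1_\alpha$-valued process in Section~\ref{sec:SHE}.

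First, one builds a regularity structure for the system~\eqref{eq:SPDE_for_A} following the procedure of Section~\ref{sec:vector-valued_noises}. The nonlinearity is a polynomial in $A^\eps$ and $\partial A^\eps$ valued in $\mfg^2$, contracted via the Lie bracket and the $\Ad$-invariant inner product, and subcriticality in $d=2$ ensures the list of trees of negative homogeneity is finite. The BPHZ theorem of~\cite{BHZ19, CH16} yields a sequence of renormalised models $Z^\eps$ converging in probability, while the vectorial version of the BCCH formula identifies the induced counterterm in the SPDE as an $\Ad$-invariant linear function of $A^\eps$, which by definition of $L_G(\mfg,\mfg)$ is necessarily of the form $C A^\eps_i$. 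A fixed-point argument in the modelled distribution spaces $\mcD^{\gamma,\eta}$ then produces a local mild solution $A^\eps$; joint continuity of the solution map in the noise and the initial condition gives convergence in probability in the H\"older--Besov topology $\Omega\mcC^{\alpha-1}$ up to a (possibly finite) explosion time.

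To upgrade the convergence to the $\Omega^1_\alpha$ topology, one writes $A^\eps = \Psi^\eps + v^\eps$ where $\Psi^\eps$ solves the linear stochastic heat equation $\partial_t \Psi^\eps = \Delta \Psi^\eps + \xi^\eps$ with vanishing initial data. Section~\ref{sec:SHE} ensures that $\Psi^\eps \to \Psi$ in $\Omega^1_\alpha$, uniformly on compact time intervals, in probability. The remainder $v^\eps$ solves an equation whose (renormalised) right-hand side, by the reconstruction theorem applied to its modelled distribution expansion, belongs to $\Omega\mcC^{\bar\alpha}$ for some $\bar\alpha > \alpha/2$; by Theorem~\ref{thm:state_space} this embeds continuously into $\Omega^1_\alpha$. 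Together with the initial condition $a\in\Omega^1_\alpha$ absorbed into a suitable modelled distribution at time zero, this gives convergence of $A^\eps$ in $\Omega^1_\alpha$ on any time interval prior to blow-up. The passage to convergence in the explosion space $(\Omega^1_\alpha)^{\sol}$ is then effected by Lemma~\ref{lem:alt_conv}: truncating at the hitting time $T_L$ of large $\Omega^1_\alpha$-balls and using uniform-in-$\eps$ stability of the fixed-point problem on $\{T_L > t\}$ yields convergence in the metric $D$.

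The main obstacle is the second paragraph above: matching the intrinsic topology of $\Omega^1_\alpha$, which encodes line-integral data not produced directly by the reconstruction theorem, against the output of the regularity structures machinery, whose natural output is H\"older--Besov. The key observation that resolves this is that the ``rough'' component of $A^\eps$ is captured entirely by the linear piece $\Psi^\eps$, for which Section~\ref{sec:SHE} provides dedicated $\Omega^1_\alpha$-valued estimates, while the remainder $v^\eps$ is genuinely a function of H\"older regularity strictly above $\alpha/2$, so that the embedding $\Omega\mcC^{0,\alpha/2}\hookrightarrow \Omega^1_\alpha$ from Theorem~\ref{thm:state_space} applies. A subsidiary difficulty is to propagate this decomposition through the BPHZ renormalisation so that the mass-type counterterm $C A^\eps_i$ (and no additional divergences) is what remains in the effective equation; this is the payoff of the symmetry arguments made available by the $\Ad$-invariance of the noise in the vector-valued framework of Section~\ref{sec:vector-valued_noises}.
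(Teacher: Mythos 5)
Your overall strategy is essentially the paper's: the lift to modelled distributions, the identification of the counterterm as a multiple of $A_i$ via $\Ad$-invariance, and crucially the decomposition $A^\eps = \Psi^\eps + B^\eps$ with $\Psi^\eps$ a linear stochastic heat flow handled directly via the $\Omega^1_\alpha$ estimates of Section~\ref{sec:SHE} and $B^\eps$ handled via the embedding $\Omega\mcC^{0,\alpha/2}\hookrightarrow\Omega^1_\alpha$. That part is right and matches the paper.

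There is, however, a genuine gap that your proposal glosses over: you never establish that the renormalisation constant produced by BPHZ, namely $\lambda C^\eps_{\sym}$ with $C^\eps_{\sym} = 4\hat C^\eps - \bar C^\eps$ as in~\eqref{e:defConstants}, has a \emph{finite} limit as $\eps\downarrow 0$. The theorem asserts convergence of solutions to~\eqref{eq:SPDE_for_A} with a \emph{fixed, $\eps$-independent} $C\in L_G(\mfg,\mfg)$. The BPHZ black box alone gives convergence with the BPHZ choice $C^\eps = \lambda C^\eps_{\sym}$; to replace this by a fixed $C$ one must know that $C^\eps - C$ is a bounded perturbation, i.e.\ $\lim_{\eps\downarrow 0}C^\eps_{\sym}$ exists and is finite. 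You invoke only ``subcriticality in $d=2$'', but subcriticality is a statement about the index set of the regularity structure, not about the size of the counterterms \dash the corresponding model in $d=3$ is equally subcritical, yet there the analogue of $C^\eps_{\sym}$ diverges (Remark~\ref{rem:2d_no_renorm}). The finiteness is proved in the paper by the explicit integration-by-parts computation of Lemma~\ref{lem:tree_calcs}, specifically the cancellation $C^\eps_\sym = 4\hat C^\eps - \bar C^\eps \to C_\sym$, which uses $(\d_t-\Delta)K = \delta_0 + Q$ in an essential way. Without some version of this calculation your argument does not prove the stated theorem.

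A secondary imprecision: you put zero initial data into $\Psi^\eps$ and then say the initial condition is ``absorbed into a suitable modelled distribution.'' The issue is continuity at $t=0$ in $\Omega^1_\alpha$: the harmonic extension $Ga$ of $a\in\Omega^1_\alpha$ is not $\Omega\mcC^{0,\alpha/2}$-valued at $t=0$, so it cannot be folded into the remainder $v^\eps$ with your claimed regularity $\bar\alpha>\alpha/2$. The paper avoids this by putting $a$ directly into $\Psi^\eps$ and using Proposition~\ref{prop:strongly_continuous} (strong continuity of $e^{t\Delta}$ on $\Omega^1_\alpha$) together with Theorem~\ref{thm:SHE_regularity}, which is stated for general $\Omega_\alpha$-valued initial data. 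If you insist on starting $\Psi^\eps$ from zero you need to cite that strong continuity result separately for the $Ga$ term; as written, the step is not justified.
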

\begin{remark}\label{rem:2d_no_renorm}
Note that the RHS of \eqref{eq:SPDE_for_A} does not contain any divergent mass counterterm \dash that is a term of the form $C^{\eps}A_{i}$ with $\lim_{\eps \downarrow 0} C^{\eps} = \infty$. 
The fact that one can obtain a non-trivial limit without such a counterterm is specific to working in two space dimensions and is not true in three space dimensions. 
\end{remark}
\begin{remark}  
Note that we could take the initial condition $a\in\Omega\mcC^\eta$, and the analogous statement would hold either with $(\Omega^{1}_{\alpha})^{\sol}$ replaced by $(\Omega\mcC^\eta)^{\sol}$ or, in the definition of $(\Omega^{1}_{\alpha})^{\sol}$ dropping continuity at $t=0$.
\end{remark}
\begin{remark}\label{rem:A=Psi+B}
As one would expect, the roughest part of the solution $A$ is already captured by the solutions $\Psi$
to the stochastic heat equation. (In fact, one has $A=\Psi+B$ where $B$ belongs to 
in $\mcC^{1-\kappa}$ for any $\kappa>0$.)
Hence, fine regularity properties of $A$ can be inferred from those of $\Psi$.
In particular, one could sharpen the above result to encode time regularity of the solution $A$ at the expense of taking smaller values of $\alpha$, cf. Theorem~\ref{thm:SHE_regularity}.
\end{remark}
\begin{remark}\label{rem:simple_vs_reductive}
From our assumption that $G$ is compact, it follows that $\mfg$ is reductive, namely it can be written as the direct sum of simple Lie algebras and an abelian Lie algebra.
Note that if $\mfh$ is one of the simple components, then every
$C\in L_G(\mfg,\mfg)$ preserves $\mfh$ and its restriction to $\mfh$
is equal to $\lambda\id_\mfh$ for some $\lambda\in\R$;
indeed, $\mfh$ is the Lie algebra of a compact simple Lie group (see the proof of~\cite[Thm~4.29]{Knapp02} for a similar statement) and thus its complexification is also simple, and the claim follows readily from Schur's lemma.
Furthermore, since these components are orthogonal under the Ad-invariant inner product on $\mfg$ introduced in \eqref{eq:YM_energy}, 
 each white noise $\xi_{i}$ also splits into independent noises, each valued in the abelian or a simple component. 
 Eq.~\eqref{eq:SPDE_for_A} then decouples into a system of equations, each for a simple or abelian component, which means that it suffices to prove Theorem~\ref{thm:local_exist} in the  case of a simple Lie algebra
 for which we can take $C \in \R$ (this is the approach we take in our analysis of this SPDE).
 In the abelian case, \eqref{eq:SPDE_for_A} is just a linear stochastic heat equation taking values in an abelian Lie algebra with $C$ a linear map (commuting with $\Ad$) from the abelian Lie algebra to itself, for which the solution theory is standard.
\end{remark}
We give the proof of Theorem~\ref{thm:local_exist} in Section~\ref{sec:solution_theory}.
In principle a large part of the proof is by now automatic and follows from the series of results~\cite{Hairer14,CH16,BHZ19,BCCH21}. 
Key facts which don't follow from general principles are that the solution takes values in the space $\Omega^1_\alpha$ (but this only requires one to show that the SHE takes values in it) and 
more importantly that no additional renormalisation is required.
%
%
However, if one were to directly apply  the framework of~\cite{Hairer14,CH16,BHZ19,BCCH21},
one would have to expand the system 
 with respect to a basis of $\mfg$ 
 into a system of equations driven by $d\times \dim(\mfg)$ independent $\R$-valued  {\it scalar} space-time white noises.
The renormalised equation computed using \cite{BCCH21} would then have to be rewritten to be taken back to the setting of vector valued noises. 
In particular, verifying that the renormalisation counterterm takes the form prescribed above would be both laborious and not very illuminating. 
We instead choose to work with \eqref{eq:SPDE_for_A} intrinsically and, in Section~\ref{sec:vector-valued_noises}, develop a framework for applying the theory of regularity structures and the formulae of \cite{BCCH21} directly to equations with vector valued noise. 

When working with scalar noises, a labelled decorated combinatorial tree $\tau$, which represents some space-time process, corresponds to a one dimensional subspace of our regularity structure. 
On the other hand, if our noises take values in some vector space $W$, then it is natural\footnote{See Section~\ref{sec:motivation} for more detail on why this is indeed natural.} for $\tau$ to index a subspace of our regularity structure 
isomorphic to a partially symmetrised tensor product of copies of $W^*$, where the particular symmetrisation is determined by the symmetries of $\tau$. 

One of our key constructions in Section~\ref{sec:vector-valued_noises} is a functor $\Func_{\cdot}(\act)$ which maps labelled decorated combinatorial trees, which we view as objects in a category of ``symmetric sets'', to these partially symmetrised tensor product spaces in the category of vector of spaces. 
In other words, operations\slash morphisms between these trees analogous to the products and the coproducts of \cite{BHZ19} are mapped, under this functor, to corresponding linear maps between the vector spaces they index.
This allows us to construct a regularity structure, with associated structure group and renormalisation group, without performing any basis expansions.

We also show that this functor behaves well under direct sum decompositions of the vector spaces $W$, which 
allows us to verify that our constructions in the vector noise setting are consistent with the regularity 
structure that would be obtained in the scalar setting if one performed a basis expansion. 
This last point allows us to transfer results from the setting of scalar noise to that of vector noise. One of 
our main results in that section is Proposition~\ref{prop:renormalisation_preserves_coherence} which reformulates 
the renormalisation formulae of \cite{BCCH21} in the vector noise setting.
\subsection{Gauge covariance and the Markov process on orbits}
\label{subsec:gauge_covar_results}
The reader may wonder why we don't simply enforce $C = 0$ in \eqref{eq:SPDE_for_A}
since this is allowed in our statement. 
One reason is that although the limit of $A^\eps$ exists for 
such a choice, it would depend in general on the choice of mollifier $\moll$.
More importantly, our next result shows that it is possible to counteract this by choosing $C$ as
a function of $\moll$ in such a way that not only the limit is independent of
the choice of $\moll$, but the canonical projection of $A$ onto $\mfO_\alpha$ is independent (in law!) 
of the choice of representative of the initial condition.
This then allows us to use this SPDE to construct a ``nice'' Markov
process on the gauge orbit space $\mfO_\alpha$, which would \textit{not} be the case for 
any other choice of $C$.

We first discuss the (lack of) gauge invariance of the mollified equation~\eqref{eq:SPDE_for_A} from a geometric perspective.
Recall that the natural state space for $A$ is the space $\mcA$ of (for now smooth) connections  on a principal $G$-bundle $P$ (which we assume is trivial for the purpose of this article).
The space of connections is an affine space modelled on the vector space $\Omega^1(\T^2, \Ad(P))$, the space of $1$-forms on $\T^2$ with values in the adjoint bundle.
In what follows, we drop the references to $\T^2$ and $\Ad(P)$.

Recall furthermore that the covariant derivative is a map $\mrd_A \colon \Omega^k\to \Omega^{k+1}$
with adjoint $\mrd^*_A \colon \Omega^{k+1}\to \Omega^{k}$.
Hence, the correct geometric form of the DeTurck--Zwanziger term $\mrd_A \mrd^* A$
is really $\mrd_A \mrd_Z^* (A-Z) = \mrd_A\mrd_A^*(A-Z)$, where $Z$ is 
the canonical flat connection associated with the global section of 
$P$ which we implicitly chose at the very start (this choice for $Z$ 
is only for convenience -- any fixed ``reference'' connection $Z$ 
will lead to a parabolic equation for $A$, e.g., the initial 
condition of $A$ is used as $Z$ in~\cite[Sec.~6.3]{DK90}).
The mollification operator $\moll^\eps \colon \Omega^k \to \Omega^k$ also depends on our 
global section (or equivalently, on $Z$).

If we endow $\Omega^k$ with the distance coming from its natural $L^2$ Hilbert space structure then, 
for any $g \in \mfG^\infty = \mcC^\infty(\T^2,G)$, the adjoint action $\Ad_g \colon \Omega^k \to \Omega^k$ 
is an isometry with the covariance properties
$\Ad_g(A-Z) = A^g - Z^g$ and $\Ad_g \mrd_{A}\omega = \mrd_{A^g} \Ad_g \omega$.
Finally, recall that $F_A$ is a $2$-form in $\Omega^2$, and satisfies $\Ad_g F_A = F_{A^g}$.

With these preliminaries in mind, for any $\xi^\eps \in \mcC^\infty([0,T],\Omega^1)$, we rewrite the PDE~\eqref{eq:SPDE_for_A} as
\begin{equ}
\partial_t A = -\mrd_A^* F_A - \mrd_A \mrd^*_A(A-Z) + \xi^\eps + C(A-Z)\;, \quad A(0) = a \in \mcA\;,
\end{equ}
where $C\in \R$ is a constant.
Note that the right and left-hand sides take values in $\Omega^1$.
For a time-dependent gauge transformation $g \in \mcC^\infty([0,T],\mfG^\infty)$, we have that $B \eqdef A^g$ satisfies
\begin{equ}
\partial_t B = \Ad_g \partial_t A - \mrd_{B}[(\partial_t g)g^{-1}]\;.
\end{equ}
In particular, if $g$ satisfies
\begin{equ}\label{eq:PDE_for_g}
(\partial_t g) g^{-1} = \mrd^*_B(Z^g-Z)\;,
\end{equ}
then $B$ solves
\begin{equ}\label{eq:PDE_for_bar_A}
\partial_t B = -\mrd_{B}^* F_{B}
-\mrd_{B} \mrd^*_{B}(B - Z) + \Ad_g\xi^\eps + C(B-Z^g)\;,
\quad B(0) = a^{g(0)} \in \mcA\;.
\end{equ}
The claimed gauge covariance of~\eqref{eq:SPDE_for_A} is then a consequence of the non-trivial fact
that one can choose the constant $C$ in such a way that, as $\eps \to 0$, $B$ converges to the same 
limit in law as the SPDE~\eqref{eq:SPDE_for_A} started from $a^{g(0)}$, i.e.
\begin{equ}[e:tildeA]
\partial_t \tilde A = -\mrd_{\tilde A}^* F_{\tilde A}
-\mrd_{\tilde A} \mrd^*_{\tilde A}(\tilde A - Z)
+ \xi^\eps + C(\tilde A-Z)\;,
\quad \tilde A(0) = a^{g(0)} \in \mcA\;.
\end{equ}
We now  make this statement precise. 
Written in coordinates, the equations for the gauge transformed system are given by
\begin{equs}[2]\label{eq:SPDE_for_B}
\partial_t B_i &= 
\Delta B_i +g\xi^\eps_i g^{-1} + C B_i + C (\partial_i g) g^{-1}
\\
&\qquad
+[B_j,2\partial_j B_i - \partial_i B_j + [B_j,B_i]]\;,&\qquad B(0) &= a^{g(0)} \in \Omega_\alpha^1\;,
\\
(\d_t g)g^{-1} &= \partial_j((\partial_j g)g^{-1})+ [B_j, (\partial_j g)g^{-1}]\;,&\qquad g(0) &\in \mfG^{0,\alpha}\;.
\end{equs}

In our gauge covariance argument it will be useful to drop extraneous information and only keep track of how $g$ and $\bar{g}$ act on our gauge fields.
To this end, for $\alpha > \frac{1}{2}$, we define the  \emph{reduced gauge group} $\mathring{\mathfrak{G}}^{0,\alpha}$\label{mrmfG^0,alpha page ref} to be the quotient of $\mathfrak{G}^{0,\alpha}$ by the kernel\footnote{Note that this kernel consists of all constant gauge transformations $g(\cdot) = \tilde{g}$ with $\tilde{g}$ in the kernel of the adjoint representation of $G$.} of its action on $\Omega \CC^{\alpha-1}$ . 
We denote the corresponding projection map by $\mathfrak{G}^{0,\alpha} \ni g \mapsto [g] \in \mathring{\mathfrak{G}}^{0,\alpha}$.

The desired gauge covariance is then stated as follows.
%
%
\begin{theorem}\label{thm:gauge_covar}
For every non-anticipative space-time mollifier $\moll$, one has the following results.
\begin{enumerate}[label=(\roman*)]
\item \label{pt:gauge_covar}  There exists a unique $\eps$-independent $\bar C\in L_{G}(\mfg,\mfg)$ with the following property.
For every $C \in L_{G}(\mfg,\mfg)$, $a\in\Omega_\alpha^1$, and $g(0)\in\mfG^{0,\alpha}$,
let $(B,g)$ be the solution to~\eqref{eq:SPDE_for_B}
and $(\bar{A},\bar{g})$ be the solution to
\begin{equs}[2]\label{eq:SPDE_for_bar_A}
\partial_t \bar{A}_i &= 
\Delta \bar{A}_i + \moll^\eps * (\bar{g}\xi_i \bar{g}^{-1}) + C \bar{A}_i
+ (C-\bar C)(\partial_i \bar{g} )\bar{g}^{-1}
\\
&\qquad +[\bar{A}_j,2\partial_j \bar{A}_i - \partial_i \bar{A}_j + [\bar{A}_j,\bar{A}_i]]\;,&
\bar{A}(0) &= a^{g(0)}\;,
\\
(\d_t \bar{g})\bar{g}^{-1} &= \partial_j((\partial_j \bar{g})\bar{g}^{-1})+ [\bar{A}_j, (\partial_j \bar{g})\bar{g}^{-1}]\;,& \bar{g}(0) &= g(0)\;,
\end{equs}
where we set $\bar{g}\equiv 1$ on $(-\infty,0)$.
Then, for every $\eps>0$,~\eqref{eq:SPDE_for_bar_A} is well-posed in the sense that, replacing $\xi_i$ by $\bone_{t<0}\xi_i + \bone_{t\geq0}\xi^\delta_i$, $(\bar A, \bar g)$ converges to a smooth maximal solution in $(\Omega^1_\alpha\times \mfG^{0,\alpha})^\sol$ as $\delta\downarrow0$.

Furthermore, $(\bar{A},[\bar{g}])$ and $(B,[g])$ converge in probability to the same limit in $(\Omega_{1}^{\alpha} \times \mathring{\mathfrak{G}}^{0,\alpha})^\sol$ as $\eps\to 0$.

\item \label{pt:indep_moll} If $\bar C$ is as stated in item~\ref{pt:gauge_covar}, then the limiting 
solution $A$ to~\eqref{eq:SPDE_for_A} with $C=\bar C$ is independent of $\moll$ (as long as it is 
non-anticipative). 
\end{enumerate} 
\end{theorem}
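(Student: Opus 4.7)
The strategy hinges on the DeTurck trick: if $A$ solves~\eqref{eq:SPDE_for_A} and $g$ is driven by the auxiliary parabolic equation appearing in~\eqref{eq:SPDE_for_B}, then $B = A^g$ together with $g$ solves~\eqref{eq:SPDE_for_B} by direct computation from the transformation law~\eqref{eq:gauge_tranform}. Thus $(B,g)$ furnishes one realisation of the gauge-transformed dynamics; the coupled system~\eqref{eq:SPDE_for_bar_A} provides a second, independent realisation $(\bar A, \bar g)$. The plan is to show these two realisations converge, as $\eps \to 0$, to the same element of $(\Omega^1_\alpha \times \mathring{\mathfrak{G}}^{0,\alpha})^\sol$, provided $\bar C$ is chosen to cancel the divergent discrepancy between them. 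The discrepancy arises from the ordering of mollification and conjugation: $B$ is driven by $g(\moll^\eps*\xi)g^{-1}$ whereas $\bar A$ is driven by $\moll^\eps*(\bar g\xi \bar g^{-1})$. These coincide formally as $\eps \downarrow 0$, but each produces its own divergent BPHZ counterterms, whose mismatch must be absorbed by the term $(C-\bar C)(\partial_i\bar g)\bar g^{-1}$ explicitly present in~\eqref{eq:SPDE_for_bar_A}.

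First I would build enlarged regularity structures for both coupled systems, using the vector-valued noise framework of Section~\ref{sec:vector-valued_noises} together with the solution theory of Section~\ref{sec:solution_theory}. Local well-posedness of~\eqref{eq:SPDE_for_B} in $(\Omega^1_\alpha \times \mathfrak{G}^{0,\alpha})^\sol$ follows from Theorem~\ref{thm:local_exist} once one checks the parabolic regularity of $g$ in terms of $B$. For~\eqref{eq:SPDE_for_bar_A}, the non-anticipative property of $\moll$ is essential: it ensures $\bar g(t)$ is adapted to $\xi\restr_{(-\infty,t]}$, so that $\moll^\eps*(\bar g\xi \bar g^{-1})$ admits a canonical BPHZ lift and the system has a local solution theory in the enlarged state space.

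The core step, and the principal obstacle, is a tree-by-tree comparison of the BPHZ counterterms for the two systems via Proposition~\ref{prop:renormalisation_preserves_coherence}. For~\eqref{eq:SPDE_for_B}, the counterterms are inherited from those of~\eqref{eq:SPDE_for_A}. For~\eqref{eq:SPDE_for_bar_A}, additional divergent contributions arise from trees containing the noise leaf contracted inside the factor $\bar g\xi \bar g^{-1}$. Upon reconstruction these fall into two families: constants times $\bar A_i$, absorbed by the explicit $C\bar A_i$ term, and constants times $(\partial_i\bar g)\bar g^{-1}$, which fix $\bar C-C$. $\mathrm{Ad}$-invariance of the noise forces $\bar C \in L_G(\mfg,\mfg)$, and Remark~\ref{rem:simple_vs_reductive} further reduces $\bar C$ to finitely many scalar parameters, each uniquely pinned down by the matching. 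With $\bar C$ so chosen, the two renormalised equations become identical in the limit, and Theorem~\ref{thm:local_exist} applied to both systems delivers convergence to a common limit, establishing part~\ref{pt:gauge_covar}.

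For part~\ref{pt:indep_moll}, uniqueness of $\bar C$ in part~\ref{pt:gauge_covar} ensures that for any two non-anticipative mollifiers $\moll$ and $\moll'$ the difference $\bar C(\moll)-\bar C(\moll')$ is precisely the difference in BPHZ-renormalisation shifts between the two mollified theories. Since the BPHZ limit of~\eqref{eq:SPDE_for_A} is by construction mollifier-independent, and $\bar C(\moll)$ differs from the BPHZ constant by a quantity fixed purely by the gauge-covariance constraint (and thus independent of $\moll$), the limit with $C=\bar C(\moll)$ is likewise independent of the choice of non-anticipative mollifier.
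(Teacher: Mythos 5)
Your high-level picture is accurate: the DeTurck trick makes $(B,g)$ gauge-equivalent to the original dynamics, the discrepancy between the two systems comes from the ordering of mollification versus conjugation, BPHZ counterterms have to be matched tree by tree, and $\Ad$-invariance reduces $\bar C$ to a handful of scalars. However, the proposal skips over three obstacles that are essential to getting the argument to close, and each requires a non-trivial idea.

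First, $g$ lives in the nonlinear Lie group $G$, so one cannot pose an abstract fixed-point problem for it directly within the regularity-structures framework. The paper handles this by replacing $g$ with the linear-space variables $h = (\mrd g)g^{-1}$ and $U = \Ad_g$, deriving the coupled parabolic system they satisfy (Lemma~\ref{lemma:linear_g_system}), and only then posing the fixed point. Your sentence ``once one checks the parabolic regularity of $g$ in terms of $B$'' hides this step; without it there is no fixed-point formulation to which the black-box theory applies.

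Second, you speak of building ``enlarged regularity structures for both coupled systems'', but constructing two separate structures does not give you any handle on the \emph{difference} $(B,g) - (\bar A,\bar g)$. The paper instead constructs a \emph{single} regularity structure containing two copies of the noise ($\Xi$ and $\bar\Xi$), two copies of the integration operator ($\mcb I$ and $\bar{\mcb I}$ realising $K$ and $K^\eps$), and $\eps$-dependent norms under which $\Xi-\bar\Xi$ and $\mcb I - \bar{\mcb I}$ gain a factor of $\eps^\theta$ (Section~\ref{subsec:eps_reg_structs}). Both fixed-point problems~\eqref{eq:abstract_fixed_point_eq} and~\eqref{eq:abstract_fixed_point_eq_2} are cast in that one structure, which is what makes the closeness estimate~\eqref{eq:abstract_soln_plus_reconstruction_estimate} meaningful. ``Matching counterterms and applying Theorem~\ref{thm:local_exist} to both systems'' does not by itself show the two maximal solutions converge to the \emph{same} limit.

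Third, the products $U\moll^\eps*\xi$ and $\moll^\eps*(\bar U\xi)$ create a singularity at the $t=0$ time slice that standard $\cD^{\gamma,\eta}$ spaces do not accommodate (the lowest degree drops below $-2$). The paper decomposes $\mcU$ into the heat flow of its initial condition, handled through operators $\mcb K^\omega$ with input distributions $\omega$ obtained from the stochastic estimate of Lemma~\ref{lem:Uxi_conv}, and a remainder handled in the spaces $\hat\cD^{\gamma,\eta}$ of Appendix~\ref{app:Singular modelled distributions}. Without this there is no well-posed fixed point for either system. Finally, for part~\ref{pt:indep_moll}, the assertion that the shift ``fixed purely by the gauge-covariance constraint'' is mollifier-independent is precisely what needs to be verified: the paper does this by computing $\bar C = \lambda C_\gsym$ with $\bar C_\gsym = 0$ for non-anticipative $\moll$ (Remark~\ref{rem:nonAnt}), and then invoking Remark~\ref{rem:sym_indep_of_mollifier} which identifies exactly which choice of $C$ cancels the $\moll$-dependence of $C_\sym$.
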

As discussed above, $B = A^g$ (i.e.\ $B$ is pathwise gauge equivalent to $A$) for any choice of $C$.
On the other hand, if $\moll$ is non-anticipative, then $\moll^\eps * (\bar{g}\xi_i \bar{g}^{-1})$ is equal in law to $\xi_i^\eps$ by It{\^o} isometry since $\bar{g}$ is adapted, so that when $C=\bar C$, 
the law of $\bar{A}$ does not depend on $\bar g$ anymore and 
$\bar{A}$ is equal in law to the 
process $\tilde A$ defined in~\eqref{e:tildeA}, obtained by starting the dynamics for $A$ from $a^{g(0)}$. 
The theorem therefore proves the desired form of gauge covariance for the choice $C = \bar C$.
\begin{remark}\label{rem:simple_vs_reductive_2}
Again, as in Remark~\ref{rem:simple_vs_reductive} it suffices to prove Theorem~\ref{thm:gauge_covar} in the case of a simple Lie algebra for which one has $\bar{C} \in \R$. 
In this case, for non-anticipative $\moll$, we will show that $\bar C =
\lambda \lim_{\eps \downarrow 0} 
\int \mrd z\ \moll^\eps(z) (K*K^\eps)(z)$,  
where $K$ is the heat kernel, $K^\eps=\moll^\eps*K$,
and $\lambda<0$ is such that $\lambda\id_\mfg$ is the quadratic Casimir
in the adjoint representation.
\end{remark}

We give the proof of Theorem~\ref{thm:gauge_covar} at the end of Section~\ref{sec:renorm_for_system}.
As with Theorem~\ref{thm:local_exist}, the strategy is to lift both systems~\eqref{eq:SPDE_for_B} and~\eqref{eq:SPDE_for_bar_A} to fixed point problems in an appropriate space of modelled distributions.
We compare the two systems using $\eps$-dependent norms on a suitable regularity structure.
The products $g\xi^\eps g^{-1}$ and $\bar g \xi\bar g^{-1}$, however, cause singularities at the $t=0$ time slice
which are not encountered, for example, in the analogous strategy for the equation~\eqref{eq:SPDE_for_A}.
To handle these singularities, we employ a similar strategy to~\cite{MateBoundary}
and decompose $g$ into the heat flow of its initial condition, which we handle with integration operators requiring auxiliary distributions $\omega$ as input (the $\omega$ are obtained through stochastic estimates),
and the remainder with improved behaviour at $t=0$, which we handle with special spaces of modelled distributions described in Appendix~\ref{app:Singular modelled distributions}.

We finally turn to the associated Markov process on gauge orbits.
To state the way in which our Markov process is canonical
we introduce a particular class of $\Omega^1_\alpha$-valued processes
which essentially captures the ``nice'' ways to run the SPDE~\eqref{eq:SPDE_for_A}
and restart it from different representatives of gauge orbits.

For a metric space $X$, denote by $D_0(\R_+,X)$ 
the space of functions $A\colon \R_+\to X$ which are c{\`a}dl{\`a}g on $(0,\infty)$ and for which $A(0+)\eqdef \lim_{t\downarrow0}A(t)$ exists.
We can naturally identify $D_0(\R_+,X)$ with $X\times D(\R_+,X)$, where $D$ is the usual Skorokhod space of c{\`a}dl{\`a}g functions, and we equip $D_0$ with the metric induced by this identification.
For the remainder of this section, by a ``white noise'' we mean a pair of i.i.d.\ $\mfg$-valued
white noises $\xi=(\xi_1,\xi_2)$ on $\R\times \T^2$.

\begin{definition}\label{def:generative}
Setting $\hat\Omega^1_\alpha \eqdef \Omega^{1}_\alpha\cup\{\skull\}$,
a probability measure $\mu$ on $D_0(\R_+,\hat\Omega^{1}_\alpha)$ is called generative if 
there exists a filtered probability space $(\mcO,\mcF,(\mcF_t)_{t \geq 0}, \P)$
supporting a white noise $\xi$
for which the filtration $(\mcF_t)_{t \geq 0}$ is admissible
(i.e.\ $\xi$ is adapted to $(\mcF_t)_{t \geq 0}$ and $\xi\restr[t,\infty)$ is independent of $\mcF_t$ for all $t \geq 0$),
and a random variable $A\colon\mcO\to D_0(\R_+,\hat\Omega^{1}_\alpha)$
with the following properties.
\begin{enumerate}
\item The law of $A$ is $\mu$ and $A(0)$ is $\mcF_0$-measurable.

\item\label{pt:fix_at_zero}
There exists an $\mcF_0$-measurable random variable $g_{0}\colon\mcO\to\mfG^{0,\alpha}$ such that $A(0+) = A(0)^{g_{0}}$.
(We use the convention $\skull^g=\skull$ for all $g\in\mfG^{0,\alpha}$.)

\item\label{pt:dynamics} For any $0\leq s \leq t$, let
$\Phi_{s,t} \colon \hat\Omega^1_\alpha \to \hat\Omega^1_\alpha$ denote the (random)
solution map in the $\eps\to0$ limit of~\eqref{eq:SPDE_for_A}
with a non-anticipative mollifier $\moll$ and constant $C=\bar C$ from part~\ref{pt:gauge_covar} of
Theorem~\ref{thm:gauge_covar}.\footnote{Note that $\Phi$ exists and is independent of the choice of $\moll$
by part~\ref{pt:indep_moll} of Theorem~\ref{thm:gauge_covar}.}
There exists a non-decreasing sequence of stopping times $(\sigma_j)_{j=0}^\infty$,
such that $\sigma_0=0$ almost surely and, for all $j \geq 0$,
\begin{enumerate}
\item\label{pt:solves_SYM} 
$A(t) = \Phi_{\sigma_j,t}(A(\sigma_j+))$ for all $t\in[\sigma_j,\sigma_{j+1})$, and

\item\label{pt:gauge_at_jumps} there exists an $\mcF_{\sigma_{j+1}}$-measurable random variable $g_{j+1}\colon\mcO\to\mfG^{0,\alpha}$  such that
$A(\sigma_{j+1}) = \Phi_{\sigma_j,\sigma_{j+1}}(A(\sigma_j+))^{g_{j+1}}$.
\end{enumerate}

\item\label{pt:honest_blow_up} Let $T^* \eqdef \inf\{t \geq 0 \ssep A(t) = \skull\}$.
Then a.s. $\lim_{j\to\infty}\sigma_j=T^*$.
Furthermore, on the event $T^*<\infty$, a.s. $A\equiv \skull$ on $[T^*,\infty)$ and
\begin{equ}
\lim_{t\nearrow T^*}\inf_{g\in \mfG^{0,\alpha}} |A(t)^g|_\alpha = \infty\;.
\end{equ}
\end{enumerate}
If there exists $a\in\hat\Omega^1_\alpha$ such that 
$A(0)=a$ almost surely, then we call $a$ the initial condition of $\mu$.
\end{definition}

\begin{remark}
In the setting of Definition~\ref{def:generative},
if $B\colon\mcO\to\hat\Omega^1_\alpha$ is $\mcF_s$-measurable, 
then $t\mapsto \Phi_{s,t}(B)$ is adapted to $(\mcF_t)_{t \geq 0}$.
In particular, the conditions on the process $A$ imply that $A$ is adapted to $(\mcF_t)_{t \geq 0}$.
\end{remark}

Denote as before
$\hat\mfO_\alpha \eqdef \mfO_\alpha\sqcup\{\skull\}$
and let $\pi\colon\hat\Omega^1_\alpha\to\hat\mfO_\alpha$ be the canonical projection.
Note that if $\mu$ is generative and $\hat\mfO_\alpha$ is equipped with the quotient topology, then the pushforward $\pi_*\mu$ is a probability measure
on $\mcC(\R_+,\hat\mfO_\alpha)$ (rather than just on $D_0(\R_+,\hat\mfO_\alpha)$)
thanks to items~\ref{pt:fix_at_zero},~\ref{pt:solves_SYM}, and~\ref{pt:gauge_at_jumps} of Definition~\ref{def:generative}.

We equip $\mfO_\alpha$ with the metric $D_\alpha$ from Theorem~\ref{thm:state_space}.
Note that the metric on $\hat\mfO_\alpha$ from Section~\ref{subsubsec:blowup}
induces a topology finer than the quotient topology, so it is not automatic that $\pi_*\mu$ is a probability measure on $\mfO_\alpha^\sol$.
The fact that this is case is part of the following theorem which shows the existence
of the Markov process on the space of gauge orbits announced in the introduction.

%

%
\begin{theorem}\label{thm:Markov_process}
\begin{enumerate}[label=(\roman*)]
\item \label{pt:generative_exists} For every $a\in\hat\Omega_{\alpha}^1$, there exists a generative 
probability measure $\mu$ with initial condition $a$.
Moreover, one can take in Definition~\ref{def:generative}
$(\mcF_t)_{t \geq 0}$ to be the filtration generated 
by any white noise 
and the process $A$ itself to be Markov.

\item \label{pt:unique_Markov}
There exists a unique family of probability measures $\{\P^x\}_{x\in\hat\mfO_\alpha}$
such that $\pi_*\mu=\P^x$
for every $x\in \hat\mfO_\alpha$, $a\in x$,
and generative probability measure $\mu$ with initial condition $a$.
Furthermore, $\{\P^x\}_{x\in\hat\mfO_\alpha}$ define the transition probabilities of a time homogeneous Markov process on $\hat\mfO_\alpha$.
\end{enumerate}
\end{theorem}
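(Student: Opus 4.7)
\emph{Existence (part~\ref{pt:generative_exists}).} Fix $a \in \hat\Omega^1_\alpha$ and work on a filtered probability space $(\mcO,\mcF,(\mcF_t)_{t \geq 0},\P)$ supporting a white noise $\xi$ that generates the filtration. Define $A(t) = \Phi_{0,t}(a)$, where $\Phi$ is the $\eps \downarrow 0$ solution map to~\eqref{eq:SPDE_for_A} with $C = \bar C$ produced by Theorems~\ref{thm:local_exist} and~\ref{thm:gauge_covar}, extended by $\skull$ after the blow-up time $T^* = \inf\{t \geq 0 : A(t) = \skull\}$. Take $g_0 = \bone$, $\sigma_j = j \wedge (T^* - 2^{-j})^+$ (so $\sigma_j \nearrow T^*$), and $g_{j+1} = \bone$. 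Items~1--3 of Definition~\ref{def:generative} then hold by construction, and the Markov property on $\hat\Omega^1_\alpha$ is inherited from the flow identity $\Phi_{s,t} \circ \Phi_{0,s} = \Phi_{0,t}$ (obtained by passing to the $\eps \downarrow 0$ limit of the corresponding deterministic identity at the $\eps$-level) together with independence of $\xi|_{[s,\infty)}$ from $\mcF_s$. For item~\ref{pt:honest_blow_up}, the divergence $\inf_{g \in \mfG^{0,\alpha}} |A(t)^g|_\alpha \to \infty$ on $\{T^* < \infty\}$ follows by contradiction: a sequence $t_n \nearrow T^*$ admitting representatives $A(t_n)^{h_n}$ of uniformly bounded $\Omega^1_\alpha$-norm would yield, via Theorem~\ref{thm:local_exist}, a uniform local existence time past $t_n$ for the solution started from $A(t_n)^{h_n}$, and Theorem~\ref{thm:gauge_covar}\ref{pt:gauge_covar} would transport this extension back to $A$, contradicting maximality at $T^*$.

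\emph{Projected law depends only on the orbit (part~\ref{pt:unique_Markov}).} Fix $a \in \hat\Omega^1_\alpha$ and let $\mu$ be an arbitrary generative measure with initial condition $a$. The plan is to show $\pi_*\mu$ equals the law of $\pi \circ A^{\mathrm{sim}}$, where $A^{\mathrm{sim}}$ is the simple Markov process constructed in the previous paragraph, and that this law depends only on $x = [a]$. The defining relations $A(0+) = A(0)^{g_0}$ and $A(\sigma_{j+1}) = \Phi_{\sigma_j,\sigma_{j+1}}(A(\sigma_j+))^{g_{j+1}}$ both reduce to the identity after projection by $\pi$, so $\pi \circ A$ is continuous and evolves as a concatenation of projected SPDE flows across the intervals $[\sigma_j,\sigma_{j+1}]$. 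The crucial input is then the in-law identity
\begin{equ}
\bigl(\pi \circ \Phi_{s,t}(B^{h})\bigr)_{t \geq s} \stackrel{d}{=} \bigl(\pi \circ \Phi_{s,t}(B)\bigr)_{t \geq s}
\end{equ}
valid for any $\mcF_s$-measurable $B \colon \mcO \to \hat\Omega^1_\alpha$ and $h \colon \mcO \to \mfG^{0,\alpha}$, which is the conditional version of Theorem~\ref{thm:gauge_covar}\ref{pt:gauge_covar} applied with $C = \bar C$ and initial gauge $h$: the two limiting pairs $(B,[g])$ and $(\bar A,[\bar g])$ agree, and their $\pi$-projections are precisely the two sides of this identity. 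Iterating across the intervals $[\sigma_j,\sigma_{j+1}]$ while using independence of noise increments shows that $\pi_*\mu$ depends only on $[a]$, allowing us to define $\P^x \eqdef \pi_*\mu$ unambiguously; uniqueness of the family $\{\P^x\}$ is then immediate.

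\emph{Support, Markov property, and main obstacle.} The projected measure is supported on $\mfO_\alpha^\sol$ rather than merely $\mcC(\R_+,\hat\mfO_\alpha)$: item~\ref{pt:honest_blow_up} of Definition~\ref{def:generative} is exactly the condition that $D_\alpha(\pi(A(t)),o) \to \infty$ as $t \nearrow T^*$ (after fixing any basepoint $o$), which characterises the entry into the cemetery state in $\mfO_\alpha^\sol$. The Markov property of $\{\P^x\}_{x \in \hat\mfO_\alpha}$ is inherited from that of $A^{\mathrm{sim}}$: conditionally on $\pi(A^{\mathrm{sim}}(t)) = x$, the post-$t$ trajectory equals $\pi \circ \Phi_{t,t+\cdot}(A^{\mathrm{sim}}(t))$ whose law, by the identity displayed above, depends on $A^{\mathrm{sim}}(t)$ only through $\pi(A^{\mathrm{sim}}(t)) = x$, hence coincides with $\P^x$. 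The main obstacle is precisely the promotion of Theorem~\ref{thm:gauge_covar}\ref{pt:gauge_covar}, stated for deterministic $a$ and $g(0)$, to $\mcF_s$-measurable random inputs $(B,h)$; this underlies both the well-definedness of $\P^x$ and the Markov property, and is handled by regular conditional probabilities on the Polish spaces $\hat\Omega^1_\alpha$ and $\mfG^{0,\alpha}$ together with joint measurability of the map $(b,h) \mapsto \text{law of the limiting pair } (\bar A,[\bar g])$ with deterministic inputs.
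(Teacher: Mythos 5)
Your construction in part~(i) is incorrect. You take the raw flow $A(t)=\Phi_{0,t}(a)$, with all gauge transformations $g_j = \bone$ and stopping times $\sigma_j$ chosen purely for bookkeeping. But item~4 of Definition~\ref{def:generative} requires that on $\{T^*<\infty\}$ one has $\lim_{t\nearrow T^*}\inf_{g\in\mfG^{0,\alpha}}|A(t)^g|_\alpha=\infty$, i.e.\ the \emph{orbit} must blow up, not just the representative. There is no reason this holds for the raw solution: the SYM flow does not preserve near-minimality within an orbit, and (as the paper itself warns in the Introduction) gauge orbits are unbounded, so $|A(t)|_\alpha$ can explode while $\inf_g|A(t)^g|_\alpha$ stays bounded. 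Your contradiction argument to salvage this does not work. You claim that a sequence of bounded representatives $A(t_n)^{h_n}$ would, via Theorem~\ref{thm:local_exist} and Theorem~\ref{thm:gauge_covar}\ref{pt:gauge_covar}, yield an extension of $A$ past $T^*$. But Theorem~\ref{thm:gauge_covar}\ref{pt:gauge_covar} is a statement about the \emph{law} of the projected process; it does not produce a pathwise extension of the fixed trajectory $A$. The SYM solution started from $A(t_n)^{h_n}$ and driven by $\xi|_{[t_n,\infty)}$ is a different process from $A(t_n+\cdot)$, and its existence for a further time $\delta$ says nothing about the fate of $A$. The paper instead builds the process by alternately running the SYM and, at the stopping time $\sigma_{j+1}$ when $|\Theta(t)|_\alpha \geq 2+\inf_g|\Theta(t)^g|_\alpha$, jumping to a near-minimal representative $S([\Theta(\sigma_{j+1})])$ via the Borel selection $S$ of Lemma~\ref{lem:selection}. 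It is precisely this restart mechanism that makes $\sigma_{j+1}-\sigma_j$ stochastically bounded below in terms of $M(\sigma_j)=\inf_g|A(\sigma_j+)^g|_\alpha$, so that $\sum\sigma_j<\infty$ forces $M(\sigma_j)\to\infty$.

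Regarding part~(ii), your plan is conceptually reasonable but diverges from the paper, and the ``main obstacle'' you identify is left unresolved rather than handled. You propose to promote Theorem~\ref{thm:gauge_covar}\ref{pt:gauge_covar} to $\mcF_s$-measurable random inputs $(B,h)$ by regular conditional probabilities; the paper avoids this altogether by a pathwise coupling. Concretely, given any generative $\bar A$ driven by $\bar\xi$, it builds a c{\`a}dl{\`a}g gauge transformation $g$ adapted to $(\bar\mcF_t)$ (with $\bar g=g^{-1}$ solving the appropriate component of~\eqref{eq:SPDE_for_bar_A} on intervals of continuity and jumping at the jump times of $\bar A$ and of $A$) such that $A\eqdef\bar A^g$ is exactly the canonical Markov process from part~(i) driven by the white noise $\Ad_g\bar\xi$ started from $a$. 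The identity $\pi\circ A=\pi\circ\bar A$ is then pathwise and gives $\pi_*\bar\mu=\pi_*\mu$ directly; the control of $|g|_{\Hol\alpha}$ comes from~\eqref{eq:g_Hol_bound} rather than any conditioning argument. Finally, you do not actually check that $\pi_*\mu$ is supported on $\mfO_\alpha^\sol$ (rather than merely $\mcC(\R_+,\hat\mfO_\alpha)$ with the weaker quotient topology). This is nontrivial because the metric on $\hat\mfO_\alpha$ induces a \emph{finer} topology, and the paper needs Lemma~\ref{lem:k_lower_bound} to conclude that $\inf_{a\in x_n}|a|_\alpha\to\infty$ implies $x_n\to\skull$ in $(\hat\mfO_\alpha,D_\alpha)$.
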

The proof of Theorem~\ref{thm:Markov_process} is given at the end of Section~\ref{subsec:Markov_process}.
\section{Construction of the state space}
\label{sec:state_space}
The aim of this section is to find a space of distributional $1$-forms and a corresponding group of gauge transformations which can be used to construct the state space for our Markov process.
We would like our space to be sufficiently large to contain $1$-forms with components that ``look like'' a free field, but sufficiently
small that there is a meaningful notion of integration along smooth enough curves.
Our space of $1$-forms is a strengthened version of that constructed in~\cite{Chevyrev18YM}, the main difference being that we 
do not restrict our notion of integration to axis-parallel paths.
\subsection{Additive functions on line segments}
Let $\mcX$\label{mcX page ref} denote the set of oriented line segments in $\T^2$ of length at most $\frac14$.
Specifically, denoting $B_r \eqdef \{v \in \R^2 \ssep |v| \leq r\}$, we define $\mcX \eqdef \T^2 \times B_{1/4}$ (first coordinate is the initial point, second coordinate is the direction).
We fix for the remainder of this section a Banach space $E$.\label{E page ref}

\begin{definition}
We say that $\ell=(x,v),\bar\ell=(\bar x,\bar v)\in \mcX$ are joinable if $\bar x = x+v$ and there exist $w\in\R^2$ and $c,\bar c\in [-\frac14,\frac14]$ such that $|w|=1$, $v=cw$, $\bar v = \bar c w$, and $|c+\bar c| \leq \frac14$.
In this case, we denote $\ell\sqcup\bar\ell \eqdef (x,(c+\bar c)w) \in \mcX$.
We say that a function $A\colon\mcX\to E$ is additive if $A(\ell\sqcup\bar\ell) = A(\ell)+A(\bar\ell)$ for all joinable $\ell,\bar\ell \in \mcX$.
Let $\Omega=\Omega(\T^2,E)$\label{Omega page ref} denote the space of all measurable $E$-valued additive functions on $\mcX$.
\end{definition}
Note that additivity implies that $A(x,0) = 0$ for all $x\in\T^2$ and $A\in\Omega$.
\begin{remark}\label{rem:line_integral}
For $A\in\Omega$, one should think of $A(\ell)$ as the line integral along $\ell$ of a homogeneous function on the tangent bundle of $\T^2$.
To wit, any measurable function $B\colon\T^2\times \R^2 \to E$ which is bounded on $\T^2\times B_1$ and homogeneous in the sense that $B(x,cv)=cB(x,v)$ for all $(x,v)\in\T^2\times \R^2$ and $c\in\R$, defines an element $A\in\Omega$ by
\begin{equ}
A(x,v) \eqdef \int_0^1 B(x+tv, v) \mrd t\;.
\end{equ}
We will primarily be interested in the case that $B$ is a $1$-form, i.e., $B(x,v)$ is linear in $v$, and we discuss this situation in Section~\ref{subsec:smooth_one_forms}.
However, many definitions and estimates turn out to be more natural in the general setting of $\Omega$.
\end{remark}

For $\ell = (x,v) \in \mcX$, let us denote by $\ell_i \eqdef x$ and $\ell_f \eqdef x+v$ the initial and final point of $\ell$ respectively.
We define a metric $d$ on $\mcX$ by
\begin{equ}
d(\ell,\bar\ell) \eqdef |\ell_i-\bar\ell_i|\vee|\ell_f-\bar\ell_f|\;.
\end{equ}
For $\ell = (x,v) \in \mcX$, let $|\ell| \eqdef |v|$ denote its length.

\begin{definition}\label{def:rho}
We say that $\ell,\bar\ell \in \mcX$ are \emph{far} if $d(\ell,\bar\ell) > \frac14 (|\ell| \wedge |\bar\ell|)$.
Define the function $\rho \colon \mcX^2 \to [0,\infty)$ by\label{rho page ref}
\begin{equ}
\rho(\ell,\bar\ell) \eqdef
\begin{cases}
|\ell| + |\bar\ell| &\text{if $\ell,\bar\ell$ are far,}
\\
|\ell_i - \bar\ell_i| + |\ell_f-\bar\ell_f|  + \Area(\ell,\bar\ell)^{1/2} &\text{otherwise,}
\end{cases}
\end{equ}
where $\Area(\ell,\bar\ell)$ is the area of the convex hull of of the points $(\ell_i,\ell_f,\bar\ell_f,\bar\ell_i)$ (which is well-defined\footnote{Since we are on the torus, the definition of this ``convex hull'' can be ambiguous if $\ell,\bar\ell \in \mcX$ are far.} whenever $\ell,\bar\ell$ are not far), see the last example of Figure~\ref{fig:arrows}.
\end{definition}

\begin{figure}[h]
\centering
\begin{tikzpicture}[scale=1.1]
\draw[white] (2.2,0.8) -- (2.2,0.9);
\fill[blue!5] (0,0) circle (.5);
\fill[blue!5] (2,0) circle (.5);
\draw[thick,->] (0,0) to (2,0);
\fill[black] (0,0) circle (.03);
\draw[thick,gray,->] (-.3,.2) to (2.2,0.8);
\fill[gray] (-.3,.2) circle (.03);
\end{tikzpicture}
\hspace{1cm}
\begin{tikzpicture}[scale=1.1]
\fill[blue!5] (0,0) circle (.5);
\fill[blue!5] (2,0) circle (.5);
\draw[thick,->] (0,0) to (2,0);
\fill[black] (0,0) circle (.03);
\draw[thick,gray,<-] (-.3,.2) to (2.1,-0.2);
\fill[gray] (2.1,-.2) circle (.03);
\end{tikzpicture}
\hspace{1cm}
\begin{tikzpicture}[scale=1.1]
\fill[blue!5] (0,0) circle (.5);
\fill[blue!5] (2,0) circle (.5);
\fill[blue!15] (0,0) -- (-.25,.3) -- (2,0) -- (2.1,-0.35) -- (0,0);
\draw[thick,->] (0,0) to (2,0);
\fill[black] (0,0) circle (.03);
\draw[thick,gray,->] (-.25,.3) to (2.1,-0.35);
\fill[gray] (-.25,.3) circle (.03);
\end{tikzpicture}
\caption{Given a line segment $\ell$ shown as a black arrow, another (gray) line segment $\bar\ell$
with $|\bar\ell| \ge |\ell|$ is far from $\ell$ if and only if its endpoints don't both lie in the light blue circles (of radius $|\ell|/4$) around the corresponding endpoints of $\ell$. For instance, $\ell$ and
$\bar\ell$ shown in the first two pictures above are far, but they aren't in the last one.
In the last picture, the shaded quadrilateral is the convex hull of the endpoints.}\label{fig:arrows}
\end{figure}
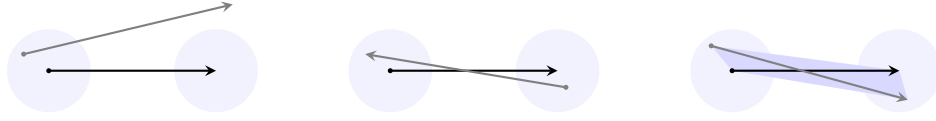

\begin{remark}\label{rem:inframetric}
If $\ell,\bar\ell\in\mcX$ are not far, then their lengths are of the same order, and $\Area(\ell,\bar\ell)$ is of the same order as $|\ell|\big[d(\bar\ell_i,\ell) + d(\bar\ell_f,\ell)\big]$, where, denoting $\ell=(x,v)$, we have set $d(y,\ell) \eqdef \inf_{t \in [-1,2]} |x+tv - y|$ (note the set $[-1,2]$ in the infimum instead of $[0,1]$).
In particular, it readily follows that although $\rho$ isn't a metric in general, it is a semimetric admitting a constant $C \geq 1$ such that for all $a,b,c\in\mcX$
\begin{equ}\label{eq:inframetric}
\rho(a,b) \leq C (\rho(a,c) + \rho(b,c))\;.
\end{equ}
\end{remark}

For $\alpha\in[0,1]$, we define the (extended) norm on $\Omega$\label{norm_alpha page ref}
\begin{equ}[e:normA]
|A|_\alpha \eqdef \sup_{\rho(\ell,\bar\ell)>0}\frac{|A(\ell)-A(\bar\ell)|}{\rho(\ell,\bar\ell)^\alpha}\;.
\end{equ}
We also write $\Omega_\alpha$\label{Omega_alpha page ref} for 
the Banach space $\{A \in \Omega \ssep |A|_\alpha < \infty\}$ equipped with the norm $|\cdot|_\alpha$.

\begin{remark}
By additivity, any element of $\Omega$ extends uniquely to an additive function on
all line segments, not just those of length less than $1/4$ and we will use this extension in the sequel 
without further mention. However, the supremum in \eqref{e:normA} is restricted to these ``short''
line segments.
\end{remark}

\begin{remark}
Since we know that $A \in \Omega$ vanishes on line segments of zero length, 
it follows that $|A(\ell)| \le |A|_\alpha |\ell|^\alpha$, so that 
despite superficial appearances \eqref{e:normA} is a norm on $\Omega_\alpha$ and not just a seminorm.
\end{remark}

We now introduce several other (semi)norms which will be used in the sequel.

\begin{definition}
Define the (extended) norm on $\Omega$
\begin{equ}
|A|_{\gr\alpha} \eqdef \sup_{|\ell| > 0}\frac{|A(\ell)|}{|\ell|^\alpha}\;.
\end{equ}
Let $\Omega_{\gr\alpha}$ denote the Banach space $\{A \in \Omega \ssep |A|_{\gr\alpha} < \infty\}$ equipped with the norm $|\cdot|_{\gr\alpha}$.
\end{definition}

\begin{definition}\label{def:alpha_vee}
We say that a pair $\ell,\bar\ell \in \mcX$ \emph{form a vee} if they are not far, have the same length $|\ell|=|\bar\ell|$, and have the same initial point $\ell_i=\bar\ell_i$.
Define the (extended) seminorm on $\Omega$
\begin{equ}
|A|_{\v\alpha} \eqdef \sup_{\ell\neq\bar\ell}\frac{|A(\ell)-A(\bar\ell)|}{\Area(\ell,\bar\ell)^{\alpha/2}}\;,
\end{equ}
where the supremum is taken over all distinct $\ell,\bar\ell \in \mcX$ forming a vee.
\end{definition}

\begin{definition}
For a line segment $\ell = (x,v) \in \mcX$, let us denote the associated subset of $\T^2$ by
\begin{equ}
\iota(\ell) \eqdef \iota(x,v) \eqdef \{x+cv\,:\, c \in [0,1)\}\;.
\end{equ}
For an integer $n \geq 3$, an \emph{$n$-gon} is a tuple $P = (\ell^1,\ldots, \ell^n) \in \mcX^n$ such that 
\begin{itemize}
\item $\ell^1_i=\ell^n_f$, and $\ell^j_i = \ell^{j-1}_f$ for all $j=2,\ldots, n$,
\item $\iota(\ell^j)\cap\iota(\ell^k) = \emptyset$ for all distinct $j,k \in \{ 1,\ldots, n\}$, and
\item $\iota(\ell^1)\cup\ldots\cup\iota(\ell^n)$ has diameter at most $\frac14$.
\end{itemize}
(See Figure~\ref{fig:shapes} for examples.) A $3$-gon is called a \emph{triangle}.
\end{definition}

Note that an $n$-gon $P$ splits $\T^2$ into two connected components, one of which is simply connected and we denote by $\mathring P$.
We further note that this split allows us to define when two $n$-gons have the same orientation.
For measurable subsets $X,Y \in \T^2$, let $X\triangle Y$ denote their symmetric difference, and $|X|$ denote the Lebesgue measure of $X$.
For $n$-gons $P_1, P_2$, let us denote $|P_1| \eqdef |\mathring P_1|$ and
\begin{equ}
|P_1; P_2| \eqdef
\begin{cases}
|\mathring P_1 \triangle \mathring P_2| & \text{ if $P_1,P_2$ have the same orientation}
\\
| P_1| + | P_2| & \text{ otherwise}\;,
\end{cases}
\end{equ}
(see Figure~\ref{fig:shapes}) which we observe defines a metric on the set of $n$-gons.

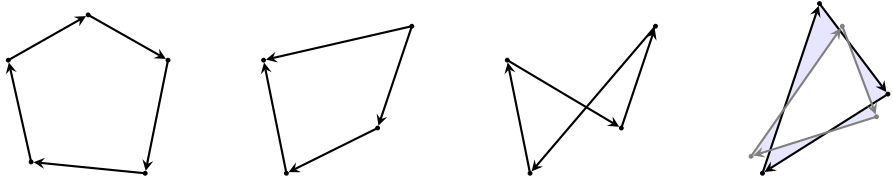
\begin{figure}[h]
\centering
\begin{tikzpicture}[scale=1.5]
\coordinate[dot] (1) at (0,0.1);
\coordinate[dot] (2) at (-0.2,1);
\coordinate[dot] (3) at (0.5,1.4);
\coordinate[dot] (4) at (1.2,1);
\coordinate[dot] (5) at (1,0);
\draw[thick,->] (1.center) -- (2);
\draw[thick,->] (2.center) -- (3);
\draw[thick,->] (3.center) -- (4);
\draw[thick,->] (4.center) -- (5);
\draw[thick,->] (5.center) -- (1);
\end{tikzpicture}
\hspace{1cm}
\begin{tikzpicture}[scale=1.5]
\coordinate[dot] (1) at (0,0);
\coordinate[dot] (2) at (-0.2,1);
\coordinate[dot] (3) at (1.1,1.3);
\coordinate[dot] (4) at (0.8,0.4);
\draw[thick,->] (1.center) -- (2);
\draw[thick,->] (3.center) -- (2);
\draw[thick,->] (3.center) -- (4);
\draw[thick,->] (4.center) -- (1);
\end{tikzpicture}
\hspace{1cm}
\begin{tikzpicture}[scale=1.5]
\coordinate[dot] (1) at (0,0);
\coordinate[dot] (2) at (-0.2,1);
\coordinate[dot] (3) at (1.1,1.3);
\coordinate[dot] (4) at (0.8,0.4);
\draw[thick,->] (1.center) -- (2);
\draw[thick,->] (2.center) -- (4);
\draw[thick,->] (4.center) -- (3);
\draw[thick,->] (3.center) -- (1);
\end{tikzpicture}
\hspace{1cm}
\begin{tikzpicture}[scale=1.5]
\coordinate (1c) at (0,0);
\coordinate (2c) at (0.5,1.5);
\coordinate (3c) at (1.1,0.7);

\coordinate (1bc) at (-0.1,0.15);
\coordinate (2bc) at (0.7,1.3);
\coordinate (3bc) at (1,0.5);

\fill[blue!10] (1c) -- (2c) -- (3c) -- (1c);
\fill[blue!10] (1bc) -- (2bc) -- (3bc) -- (1bc);

\begin{scope}
\clip (1c) -- (2c) -- (3c) -- (1c);
\fill[white] (1bc) -- (2bc) -- (3bc) -- (1bc);
\end{scope}

\coordinate[dot] (1) at (1c);
\coordinate[dot] (2) at (2c);
\coordinate[dot] (3) at (3c);

\coordinate[dot,gray] (1b) at (1bc);
\coordinate[dot,gray] (2b) at (2bc);
\coordinate[dot,gray] (3b) at (3bc);

\draw[thick,->] (1c) -- (2);
\draw[thick,->] (2c) -- (3);
\draw[thick,->] (3c) -- (1);

\draw[thick,gray,->] (1bc) -- (2b);
\draw[thick,gray,->] (2bc) -- (3b);
\draw[thick,gray,->] (3bc) -- (1b);

\end{tikzpicture}
\caption{The first collection of segments forms a $5$-gon, while the second and third do
\textit{not} form $4$-gons. On the right, the distance $|P_1;P_2|$ between the black triangle $P_1$ and grey triangle $P_2$ 
triangle is given by the area shaded in blue.}\label{fig:shapes}
\end{figure}

\begin{definition}
Let $P=(\ell^1,\ldots,\ell^n)$ be an $n$-gon.
For $A\in\Omega$, we denote
\begin{equ}
A(\partial P) \eqdef \sum_{j=1}^n A(\ell^j)\;.
\end{equ}
For $\alpha \in [0,1]$ we define the quantities
\begin{equ}[e:normP]
|A|_{\tri\alpha} \eqdef \sup_{|P|>0} \frac{|A(\partial P)|}{|P|^{\alpha/2}}\;,
\end{equ}
where the supremum is taken over all triangles $P$ with $|P|>0$, and
\begin{equ}
|A|_{\symm\alpha} \eqdef \sup_{|P;\bar P|>0} \frac{|A(\partial P) - A(\partial \bar P)|}{|P;\bar P|^{\alpha/2}}\;,
\end{equ}
where the supremum is taken over all triangles $P,\bar P$ with $|P;\bar P|>0$.
\end{definition}

The motivation behind each norm is the following.
\begin{itemize}
\item The norm $|\cdot|_{\v\alpha}$ facilitates the analysis of gauge transformations (Section~\ref{subsec:gauge_transforms}).
\item The norm $|\cdot|_{\symm\alpha}$ is helpful in extending the domain of definition of $A \in \Omega_\alpha$ to a wider class of curves (Section~\ref{subsec:regular_curves}).
\item The norm $|\cdot|_{\tri\alpha}$ is simpler but equivalent to $|\cdot|_{\symm\alpha}$.
Furthermore, the values $A(\partial P)$ can be evaluated using Stokes' theorem (e.g., as in Lemma~\ref{lem:Kolmog_tri_bound}).
\end{itemize}
We show now that each of these norms, when combined with $|\cdot|_{\gr\alpha}$, is equivalent to $|\cdot|_\alpha$.

\begin{theorem}\label{thm:norm_equiv}
There exists $C \geq 1$ such that for all $\alpha \in [0,1]$ and $A \in \Omega$
\begin{equ}
C^{-1} |A|_\alpha \leq |A|_{\gr\alpha} + |A|_{\bullet} \leq C|A|_\alpha\;,
\end{equ}
where $\bullet$ is any one of $\v\alpha$, $\tri\alpha$, or $\symm\alpha$.
\end{theorem}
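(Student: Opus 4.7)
The plan is to prove the equivalence $|A|_\alpha \sim |A|_{\gr\alpha} + |A|_{\tri\alpha}$ in detail, and then reduce the other cases $\bullet \in \{\v\alpha, \symm\alpha\}$ to it via pairwise comparisons.

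For the lower direction $|A|_\alpha \le C(|A|_{\gr\alpha} + |A|_{\tri\alpha})$, fix $\ell, \bar\ell \in \mcX$ and split into cases. If they are far, $\rho(\ell,\bar\ell) = |\ell|+|\bar\ell|$ and $|A(\ell) - A(\bar\ell)| \le |A|_{\gr\alpha}(|\ell|^\alpha+|\bar\ell|^\alpha) \le 2|A|_{\gr\alpha}\rho(\ell,\bar\ell)^\alpha$. Otherwise, form the quadrilateral $Q$ with vertices $\ell_i, \bar\ell_i, \bar\ell_f, \ell_f$ (in this cyclic order), split it along the diagonal $(\ell_i,\bar\ell_f)$ into two triangles $T_1, T_2$, and use additivity to derive
\[
A(\ell) - A(\bar\ell) \;=\; A(s_1) + A(s_2) - A(\partial T_1) - A(\partial T_2)\,,
\]
where $s_1 = (\ell_i, \bar\ell_i-\ell_i)$ and $s_2 = (\bar\ell_f, \ell_f-\bar\ell_f)$. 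Since $|T_1|+|T_2| \le \Area(\ell,\bar\ell)$, the right-hand side is bounded by $C(|A|_{\gr\alpha}+|A|_{\tri\alpha})\rho(\ell,\bar\ell)^\alpha$.

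For the upper direction, $|A|_{\gr\alpha} \le |A|_\alpha$ is immediate since the degenerate segment $(\ell_i,0)$ is far from $\ell$ with $\rho = |\ell|$. The substantive bound is $|A|_{\tri\alpha} \le C|A|_\alpha$: for a triangle $P = (v_1, v_2, v_3)$ with longest side $c = |v_1v_3|$, set $a = |v_1v_2|$ and $q = v_1+(a/c)(v_3-v_1)$, split $\ell^* = (v_1,v_3-v_1) = \ell^*_a \sqcup \ell^*_b$ with $|\ell^*_a|=a$, and use $A(\ell^3)=-A(\ell^*)$ to write
\[
A(\partial P) \;=\; \bigl[A(\ell^1) - A(\ell^*_a)\bigr] + \bigl[A(\ell^2) - A(\ell^*_b)\bigr]\,.
\]
Place $v_1$ at the origin, $v_3$ on the positive $x$-axis, and $v_2 = (p,h)$. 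The fact that $c$ is longest forces $p \in [0,c]$ and $h\le c$, from which $|v_2-q|^2 = 2a(a-p) = 2ah^2/(a+p) \le 2h^2$ and $\Area(\ell^1,\ell^*_a) = ah/2 \le |P|$. If $|v_2-q| \le a/4$, the pair $(\ell^1,\ell^*_a)$ is not far, so $\rho(\ell^1,\ell^*_a) \le |v_2-q|+\Area^{1/2} \le C|P|^{1/2}$ (using $h \le \sqrt{2}\,|P|^{1/2}$), and the first bracket is bounded by $C|A|_\alpha|P|^{\alpha/2}$; otherwise $a \le 4|v_2-q| \le 8|P|^{1/2}$, and $|A(\ell^1)|,|A(\ell^*_a)| \le |A|_{\gr\alpha}a^\alpha \le C|A|_\alpha|P|^{\alpha/2}$ directly. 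The second bracket is treated identically after swapping $v_1 \leftrightarrow v_3$, using $\Area(\ell^2,\ell^*_b) = h(c-a)/2 \le |P|$.

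Finally, $|A|_{\v\alpha} \le C|A|_\alpha$ because the not-far condition on a vee forces $|\ell_f-\bar\ell_f| \le C\,\Area^{1/2}$ and hence $\rho \le C\,\Area^{1/2}$; $|A|_{\tri\alpha} \le |A|_{\symm\alpha}$ follows by letting $\bar P$ degenerate to three collinear points, on which $A(\partial\bar P)\to 0$ by additivity; and $|A|_{\symm\alpha} \le C(|A|_{\gr\alpha}+|A|_{\tri\alpha})$ follows by decomposing $A(\partial P)-A(\partial\bar P) = \sum_{i=1}^3 (A(\ell^i)-A(\bar\ell^i))$ along appropriately paired sides and applying the first paragraph, combined with the elementary estimate $\rho(\ell^i,\bar\ell^i) \le C|P;\bar P|^{1/2}$. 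The main technical obstacle is the triangle upper bound $|A(\partial P)| \le C|A|_\alpha|P|^{\alpha/2}$ and the associated case split between the ``not far'' regime (where the vee-type geometry dominates) and the ``far'' regime (where the two segments are individually short enough that the $|\cdot|_{\gr\alpha}$ bound suffices).
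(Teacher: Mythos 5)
Your route---proving $|A|_\alpha\asymp|A|_{\gr\alpha}+|A|_{\tri\alpha}$ first via a quadrilateral decomposition and a longest-side split, then reducing the other cases---is genuinely different from the paper, which first establishes $|A|_\alpha\asymp|A|_{\gr\alpha}+|A|_{\v\alpha}$ by introducing an auxiliary segment $\bar a$ from $\ell_i$ towards $\bar\ell_f$ of length $|\ell|$ (so that $(\ell,\bar a)$ is a vee), then proves $|A|_{\v\alpha}\lesssim|A|_{\gr\alpha}+|A|_{\tri\alpha}$ and $|A|_{\tri\alpha}\lesssim|A|_\alpha$ (reducing to right triangles), and finally treats $\symm\alpha$ in a separate lemma. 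The main line of your argument is mostly sound, but the reductions to $\v\alpha$ and $\symm\alpha$ both have real gaps. For $\v\alpha$ you only prove the upper bound $|A|_{\v\alpha}\lesssim|A|_\alpha$; the required lower bound $|A|_\alpha\lesssim|A|_{\gr\alpha}+|A|_{\v\alpha}$ would need a comparison such as $|A|_{\tri\alpha}\lesssim|A|_{\gr\alpha}+|A|_{\v\alpha}$ (true, e.g.\ by splitting the longest side at \emph{two} points, at distance $a$ from $v_1$ and $b$ from $v_3$, to form two vees plus a middle segment of length $a+b-c\lesssim|P|^{1/2}$), and you never supply it.

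The more serious gap is in the $\symm\alpha$ bound: the claimed ``elementary estimate'' $\rho(\ell^i,\bar\ell^i)\le C|P;\bar P|^{1/2}$ is false. Take $P$ and $\bar P$ to be two disjoint congruent isosceles triangles with base $L$ and height $\eps\ll L$, same orientation, with $\bar P$ a translate of $P$ at distance $\sim L$; then $|P;\bar P|=L\eps$ but $\rho(\ell^i,\bar\ell^i)\asymp L$ for every pairing, and $L/\sqrt{L\eps}=\sqrt{L/\eps}$ is unbounded. In that example the desired bound still holds by the crude estimate $|A(\partial P)|+|A(\partial\bar P)|\lesssim|A|_{\tri\alpha}(|P|^{\alpha/2}+|\bar P|^{\alpha/2})$ since $|P|,|\bar P|\le|P;\bar P|$, but there is a regime where neither that fallback nor side-pairing works: keep $P$ and let $\bar P$ have the same base with the apex shifted by $\delta$ parallel to the base, $\eps\ll\delta\ll L$. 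Then $|P;\bar P|\asymp\eps\delta\ll L\eps\asymp|P|$, so the crude bound is useless, while $\rho(\ell^2,\bar\ell^2)\asymp\delta\gg(\eps\delta)^{1/2}$, so $|A|_\alpha\,\rho(\ell^2,\bar\ell^2)^\alpha$ overshoots the target $|A|_{\symm\alpha}\,|P;\bar P|^{\alpha/2}$: the cancellation between $A(\ell^2)-A(\bar\ell^2)$ and $A(\ell^3)-A(\bar\ell^3)$ is destroyed by a term-by-term estimate. The paper's Lemma 3.8 avoids this by writing $A(\partial P)-A(\partial\bar P)=\sum_{i\le 6}A(\partial Q_i)$ for $n$-gons $Q_i$ with $n\le 7$ and $\sum|\mathring Q_i|=|P;\bar P|$, and applying the triangulation bound of Lemma 3.7. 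Two smaller points to tighten: for not-far but crossing segments the cycle $\ell_i\to\bar\ell_i\to\bar\ell_f\to\ell_f$ need not be a simple polygon with interior diagonal $(\ell_i,\bar\ell_f)$, and $T_1,T_2$ can have diameter up to $\tfrac54|\ell|>\tfrac14$, so a subdivision step is needed; and in the far case of the second bracket $A(\ell^2)-A(\ell^*_b)$, the segments have lengths $b$ and $c-a$, the far condition directly gives only $c-a\lesssim|P|^{1/2}$ (since $c-a\le b$), and the further bound $b\lesssim|P|^{1/2}$ requires the extra observation $a-p=h^2/(a+p)\le h$, whence $c-p=(c-a)+(a-p)\lesssim|P|^{1/2}$ and $b^2=(c-p)^2+h^2\lesssim|P|$; this is not ``the first bracket after swapping $v_1\leftrightarrow v_3$''.
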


For the proof, we require the following lemmas.

\begin{lemma}\label{lem:n-gon_tri_bound}
For $\alpha \in [0,1]$ and $n \geq 3$, it holds that
\begin{equ}
\sup_{|P|>0} \frac{|A(\partial P)|}{|P|^{\alpha/2}} \leq C_n |A|_{\tri\alpha}\;,
\end{equ}
where the supremum is taken over all $n$-gons $P$ with $|P|>0$,
and where $C_3 \eqdef 1$ and for $n \geq 4$
\begin{equ}
C_n \eqdef \frac{C_{n-1}+(C_{n-1}^{-2/(2-\alpha)})^{\alpha/2}}{(1+C_{n-1}^{-2/(2-\alpha)})^{\alpha/2}}\;.
\end{equ}
\end{lemma}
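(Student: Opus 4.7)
The plan is to proceed by induction on $n$. The base case $n=3$ is immediate from the definition~\eqref{e:normP} of $|A|_{\tri\alpha}$ together with $C_3=1$.

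For $n \geq 4$ and an $n$-gon $P=(\ell^1,\dots,\ell^n)$, I will invoke the classical ``ear theorem'' for simple polygons, which produces three consecutive vertices $v_{j-1},v_j,v_{j+1}$ of $P$ such that the diagonal $d$ joining $v_{j-1}$ to $v_{j+1}$ lies in the closure of $\mathring P$. Cutting along $d$ splits $P$ into a triangle $T$ (with sides $\ell^{j-1},\ell^j,\bar d$, where $\bar d$ denotes $d$ with reversed orientation) and an $(n-1)$-gon $P'$, obtained from $P$ by replacing the pair $(\ell^{j-1},\ell^j)$ by $d$. Since the vertex set of $P$ has diameter at most $\tfrac14$, the diagonal $d$ has length at most $\tfrac14$ and hence lies in $\mcX$; the remaining edges of $P'$ inherit the non-intersection and diameter properties of $P$, so $P'$ is a valid $(n-1)$-gon. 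Additivity of $A$ gives $A(d)+A(\bar d)=0$, whence $A(\partial P) = A(\partial T) + A(\partial P')$, and the decomposition $\mathring P = \mathring T \sqcup \mathring{P'}$ (up to a null set) yields $|P|=|T|+|P'|$.

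Applying the inductive hypothesis to $P'$ and the defining bound~\eqref{e:normP} to $T$, and setting $t=|T|/|P|\in[0,1]$, the claim reduces to showing
\begin{equation*}
\varphi(t) \eqdef t^{\alpha/2} + C_{n-1}(1-t)^{\alpha/2} \leq C_n\;.
\end{equation*}
A routine one-variable calculus computation shows that $\varphi$ attains its maximum at $t_{\star}=(1+C_{n-1}^{2/(2-\alpha)})^{-1}$ with value $(1+C_{n-1}^{2/(2-\alpha)})^{1-\alpha/2}$, and a short algebraic manipulation (factoring $C_{n-1}^{-\alpha/(2-\alpha)}$ from the numerator and denominator of the expression defining $C_n$) identifies this maximum with the stated $C_n$. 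A degenerate ear ($|T|=0$) is handled by $\varphi(0)=C_{n-1}\leq C_n$. The only non-routine input in the whole argument is the invocation of the ear theorem; the main (minor) obstacle is the bookkeeping needed to confirm that $P'$ is genuinely a valid $(n-1)$-gon, which follows from $d \subset \overline{\mathring P}$ together with the diameter bound on $P$.
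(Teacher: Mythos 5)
Your proof is correct and takes the same route as the paper: induction on $n$ via the (two) ears theorem, splitting the $n$-gon into a triangle and an $(n-1)$-gon, and identifying $C_n$ as the sharp constant in $x^{\alpha/2}+C_{n-1}y^{\alpha/2}\le C_n(x+y)^{\alpha/2}$. Your extra bookkeeping (the diagonal lies in $\mcX$, $P'$ is a valid $(n-1)$-gon) is accurate and just makes explicit what the paper leaves implicit.
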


\begin{proof}
This readily follows by induction, using the two ears theorem \cite{Ears} (every polygon admits a triangulation)
and the fact that $C_n$ is the optimal constant 
such that 
$x^{\alpha/2}+C_{n-1}y^{\alpha/2} \leq C_n (x+y)^{\alpha/2}$ for all $x,y \geq 0$.
\end{proof}

\begin{lemma}\label{lem:symm_tri_bound}
There exists $C \geq 1$ such that for all $\alpha \in [0,1]$ and $A \in \Omega$
\begin{equ}
|A|_{\tri\alpha} \leq |A|_{\symm\alpha} \leq C |A|_{\tri\alpha}\;.
\end{equ}
\end{lemma}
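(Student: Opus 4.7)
My plan is to prove the two inequalities separately. The lower bound $|A|_{\tri\alpha} \leq |A|_{\symm\alpha}$ is essentially immediate: I take $\bar P = P^{\mathrm{rev}}$, the triangle $P$ traversed in reverse (one checks that $P^{\mathrm{rev}}$ is again a valid $3$-gon). Additivity of $A$ gives $A(\partial P^{\mathrm{rev}}) = -A(\partial P)$, since reversing an edge $\ell$ yields $A(\ell^{\mathrm{rev}}) = -A(\ell)$, and the opposite-orientation clause in the definition of $|{\cdot};{\cdot}|$ gives $|P; P^{\mathrm{rev}}| = 2|P|$. The test ratio in the definition of $|{\cdot}|_{\symm\alpha}$ thus equals $2^{1-\alpha/2}|A(\partial P)|/|P|^{\alpha/2}$, which is at least $|A(\partial P)|/|P|^{\alpha/2}$ since $\alpha \leq 1$, so taking the supremum in $P$ gives the claim.

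For the upper bound $|A|_{\symm\alpha} \leq C|A|_{\tri\alpha}$, fix two triangles $P, \bar P$ with $|P;\bar P|>0$ and split on orientation. If the orientations are opposite then $|P;\bar P| = |P| + |\bar P|$ and the triangle inequality, together with the subadditivity of $t \mapsto t^{\alpha/2}$, gives $|A(\partial P) - A(\partial \bar P)| \leq |A|_{\tri\alpha}(|P|^{\alpha/2} + |\bar P|^{\alpha/2}) \leq 2|A|_{\tri\alpha}|P;\bar P|^{\alpha/2}$. The real content is the same-orientation case. Here my plan is to build a simultaneous refinement of $P$ and $\bar P$ by small triangles: the arrangement of the (at most) six lines spanned by the edges of $P$ and $\bar P$ partitions $\overline{\mathring P \cup \mathring{\bar P}}$ into finitely many convex polygonal cells, the number of which is bounded by a universal constant, and each cell lies in exactly one of $\mathring P \cap \mathring{\bar P}$, $\mathring P \setminus \mathring{\bar P}$, or $\mathring{\bar P} \setminus \mathring P$. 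Triangulating each cell yields sub-triangles $T_1,\dots,T_N$ with $N \leq N_0$ for an absolute constant $N_0$. Each $T_j$ sits inside $P$ or $\bar P$ and is therefore a valid $3$-gon (in particular of diameter $\leq 1/4$).

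Orienting each $T_j$ consistently with whichever of $P, \bar P$ contains it, additivity of $A$ (internal edges cancel) yields $A(\partial P) = \sum_{\mathring T_j \subset \mathring P}A(\partial T_j)$ and the analogue for $\bar P$. Since $P$ and $\bar P$ share orientation, any $T_j$ with $\mathring T_j \subset \mathring P \cap \mathring{\bar P}$ receives the same orientation from both viewpoints and hence cancels in $A(\partial P) - A(\partial \bar P)$, leaving only a signed sum over $T_j$ with $\mathring T_j \subset \mathring P \triangle \mathring{\bar P}$. The areas of these surviving $T_j$ total exactly $|P;\bar P|$. The triangle inequality, the bound $|A(\partial T_j)| \leq |A|_{\tri\alpha}|T_j|^{\alpha/2}$, and the concavity estimate $\sum_j|T_j|^{\alpha/2} \leq N_0^{1-\alpha/2}(\sum_j|T_j|)^{\alpha/2}$ then produce $|A(\partial P) - A(\partial \bar P)| \leq N_0 |A|_{\tri\alpha}|P;\bar P|^{\alpha/2}$, closing the proof with $C = N_0$.

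The only real obstacle, and it is more of a bookkeeping point than a mathematical one, is the combinatorial construction of the refinement with a universal $N_0$ and the verification of orientation cancellation under degenerate configurations of $P$ and $\bar P$ (coincident vertices, tangent edges, three concurrent lines, and so on). These can be handled either by a direct case analysis or by treating degenerate configurations as limits of generic ones and noting that the estimate is stable under such limits; in either case $N_0$ depends only on the combinatorial type of the intersection of two triangles and not on $A$, $P$, or $\bar P$.
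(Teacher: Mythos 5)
Your proof is correct and takes a genuinely different route from the paper's, in both halves.

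For the lower bound $|A|_{\tri\alpha} \leq |A|_{\symm\alpha}$, the paper dismisses this as ``obvious by taking $\bar P$ a degenerate triangle,'' but a zero-area triangle fails the disjointness requirement in the definition of an $n$-gon, so your reversal argument, using $\bar P = P^{\mathrm{rev}}$ (which is a genuine $3$-gon since $P$ is), $A(\partial P^{\mathrm{rev}}) = -A(\partial P)$ from additivity, and the opposite-orientation clause $|P;P^{\mathrm{rev}}| = 2|P|$, is tighter and avoids that small sloppiness. For the upper bound, the paper produces an ad hoc decomposition of $\mathring P_1 \triangle \mathring P_2$ into at most $6$ polygons, each an $n$-gon with $n\leq 7$ (see the configurations in Figure~\ref{fig:n-gons}), and then invokes Lemma~\ref{lem:n-gon_tri_bound}; you instead take the arrangement of the six lines spanned by the edges, triangulate its cells, and exploit cancellation of the cells interior to $\mathring P\cap\mathring{\bar P}$. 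Your decomposition is more systematic and uniform (no need to enumerate intersection patterns), and effectively re-proves the special case of Lemma~\ref{lem:n-gon_tri_bound} you need in passing; the tradeoff is a larger universal constant.

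One caveat: your proposed fallback of ``treating degenerate configurations as limits of generic ones'' does not work as stated, because $A$ is only assumed measurable and additive --- it need not be continuous, so $A(\partial P)$ is not controlled by $A(\partial P_n)$ for approximating triangles $P_n\to P$. Stick with the direct case analysis: a cell degenerating to a segment or point contributes zero to both $A(\partial P)=\sum_j A(\partial T_j)$ (a back-and-forth traversal vanishes by additivity) and to $\sum_j|T_j|$, so degenerate cells may simply be discarded from the sum without appeal to any limiting argument.
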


\begin{proof}
The first inequality is obvious by taking $\bar P$ in the definition of $|A|_{\symm\alpha}$ as any degenerate triangle.
For the second, let $P_1,P_2$ be two triangles.
We need only consider the case that $P_1,P_2$ are oriented in the same direction.
Observe that there exist $k \leq 6$ and $Q_1,\ldots, Q_k$, where each $Q_i$ is an $n$-gon with $n \leq 7$,
such that $|\mathring P_1 \triangle \mathring P_2| = \sum_{i=1}^k |\mathring Q_i|$, and such that 
$A(\partial P_1) - A(\partial P_2) = \sum_{i=1}^k A(\partial Q_i)$ (see Figure~\ref{fig:n-gons}).
It then follows from Lemma~\ref{lem:n-gon_tri_bound} that
\begin{equ}
|A(\partial P_1) - A(\partial P_2)| \lesssim |A|_{\tri\alpha} \sum_{i=1}^k|\mathring Q_i|^{\alpha/2} \lesssim  |A|_{\tri\alpha}|\mathring P_1 \triangle \mathring P_2|^{\alpha/2}\;,
\end{equ}
as required.
\end{proof}

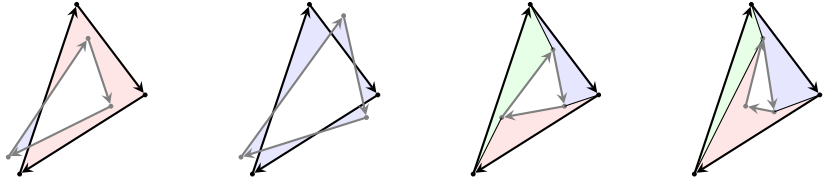
\begin{figure}[h]
\centering
\begin{tikzpicture}[scale=1.5]
\coordinate (1c) at (0,0);
\coordinate (2c) at (0.5,1.5);
\coordinate (3c) at (1.1,0.7);

\coordinate (1bc) at (-0.1,0.15);
\coordinate (2bc) at (0.6,1.2);
\coordinate (3bc) at (0.8,0.6);

\fill[red!10] (1c) -- (2c) -- (3c) -- (1c);
\fill[blue!10] (1bc) -- (2bc) -- (3bc) -- (1bc);

\begin{scope}
\clip (1c) -- (2c) -- (3c) -- (1c);
\fill[white] (1bc) -- (2bc) -- (3bc) -- (1bc);
\end{scope}

\coordinate[dot] (1) at (1c);
\coordinate[dot] (2) at (2c);
\coordinate[dot] (3) at (3c);

\coordinate[dot,gray] (1b) at (1bc);
\coordinate[dot,gray] (2b) at (2bc);
\coordinate[dot,gray] (3b) at (3bc);

\draw[thick,->] (1c) -- (2);
\draw[thick,->] (2c) -- (3);
\draw[thick,->] (3c) -- (1);

\draw[thick,gray,->] (1bc) -- (2b);
\draw[thick,gray,->] (2bc) -- (3b);
\draw[thick,gray,->] (3bc) -- (1b);

\end{tikzpicture}
\hspace{1cm}
\begin{tikzpicture}[scale=1.5]
\coordinate (1c) at (0,0);
\coordinate (2c) at (0.5,1.5);
\coordinate (3c) at (1.1,0.7);

\coordinate (1bc) at (-0.1,0.15);
\coordinate (2bc) at (0.8,1.4);
\coordinate (3bc) at (1,0.5);

\fill[blue!10] (1c) -- (2c) -- (3c) -- (1c);
\fill[blue!10] (1bc) -- (2bc) -- (3bc) -- (1bc);

\begin{scope}
\clip (1c) -- (2c) -- (3c) -- (1c);
\fill[white] (1bc) -- (2bc) -- (3bc) -- (1bc);
\end{scope}

\coordinate[dot] (1) at (1c);
\coordinate[dot] (2) at (2c);
\coordinate[dot] (3) at (3c);

\coordinate[dot,gray] (1b) at (1bc);
\coordinate[dot,gray] (2b) at (2bc);
\coordinate[dot,gray] (3b) at (3bc);

\draw[thick,->] (1c) -- (2);
\draw[thick,->] (2c) -- (3);
\draw[thick,->] (3c) -- (1);

\draw[thick,gray,->] (1bc) -- (2b);
\draw[thick,gray,->] (2bc) -- (3b);
\draw[thick,gray,->] (3bc) -- (1b);

\end{tikzpicture}
\hspace{1cm}
\begin{tikzpicture}[scale=1.5]
\coordinate (1c) at (0,0);
\coordinate (2c) at (0.5,1.5);
\coordinate (3c) at (1.1,0.7);

\coordinate (1bc) at (0.25,0.5);
\coordinate (2bc) at (0.7,1.1);
\coordinate (3bc) at (0.8,0.6);

\fill[green!10] (1c) -- (1bc) -- (2bc) -- (2c);
\fill[blue!10] (2c) -- (2bc) -- (3bc) -- (3c);
\fill[red!10] (3c) -- (3bc) -- (1bc) -- (1c);

\begin{scope}
\clip (1c) -- (2c) -- (3c) -- (1c);
\fill[white] (1bc) -- (2bc) -- (3bc) -- (1bc);
\end{scope}

\coordinate[dot] (1) at (1c);
\coordinate[dot] (2) at (2c);
\coordinate[dot] (3) at (3c);

\coordinate[dot,gray] (1b) at (1bc);
\coordinate[dot,gray] (2b) at (2bc);
\coordinate[dot,gray] (3b) at (3bc);

\draw[thick,->] (1c) -- (2);
\draw[thick,->] (2c) -- (3);
\draw[thick,->] (3c) -- (1);

\draw[] (1c) -- (1bc);
\draw[] (2c) -- (2bc);
\draw[] (3c) -- (3bc);

\draw[thick,gray,->] (1bc) -- (2b);
\draw[thick,gray,->] (2bc) -- (3b);
\draw[thick,gray,->] (3bc) -- (1b);
\end{tikzpicture}
\hspace{1cm}
\begin{tikzpicture}[scale=1.5]
\coordinate (1c) at (0,0);
\coordinate (2c) at (0.5,1.5);
\coordinate (3c) at (1.1,0.7);

\coordinate (1bc) at (0.45,0.6);
\coordinate (2bc) at (0.6,1.2);
\coordinate (3bc) at (0.7,0.55);

\fill[green!10] (1c) -- (2bc) -- (2c);
\fill[blue!10] (2c) -- (2bc) -- (3bc) -- (3c);
\fill[red!10] (3c) -- (3bc) -- (1bc) -- (2bc) -- (1c);

\begin{scope}
\clip (1c) -- (2c) -- (3c) -- (1c);
\fill[white] (1bc) -- (2bc) -- (3bc) -- (1bc);
\end{scope}

\coordinate[dot] (1) at (1c);
\coordinate[dot] (2) at (2c);
\coordinate[dot] (3) at (3c);

\coordinate[dot,gray] (1b) at (1bc);
\coordinate[dot,gray] (2b) at (2bc);
\coordinate[dot,gray] (3b) at (3bc);

\draw[thick,->] (1c) -- (2);
\draw[thick,->] (2c) -- (3);
\draw[thick,->] (3c) -- (1);

\draw[] (1c) -- (2bc);
\draw[] (2c) -- (2bc);
\draw[] (3c) -- (3bc);

\draw[thick,gray,->] (1bc) -- (2b);
\draw[thick,gray,->] (2bc) -- (3b);
\draw[thick,gray,->] (3bc) -- (1b);
\end{tikzpicture}
\caption{Examples of $Q_1,\ldots, Q_k$ appearing in the proof of Lemma~\ref{lem:symm_tri_bound}.
In the first figure, $k=2$ where $Q_1$ is bounded by blue shaded region and $Q_2$ is bounded by red shaded region; $Q_1$ is a triangle and $Q_2$ is a $7$-gon.
In the second figure, $k=6$ and all $Q_i$ are triangles bounded by blue shaded regions.
In the last two figures, $P_1$ is contained inside $P_2$, in which case we can choose
$k=3$ and $Q_1,Q_2,Q_3$ either as triangles, $4$-gons, or $5$-gons, shaded in blue, red, and green, and
given in this example by rotating clockwise each outer arrow until it hits the inner triangle.}\label{fig:n-gons}
\end{figure}

\begin{proof}[of Theorem~\ref{thm:norm_equiv}]
We show first
\begin{equ}\label{eq:init_alpha}
C^{-1} |A|_\alpha \leq |A|_{\gr\alpha} + |A|_{\v\alpha} \leq C|A|_\alpha\;.
\end{equ}
The second inequality in~\eqref{eq:init_alpha} is clear (without even assuming that $A$ is additive) since $|\ell| = \rho(\ell,\bar\ell)$ whenever $|\bar\ell|=0$, and for any $\ell,\bar\ell \in \mcX$ forming a vee, we have $\rho(\ell,\bar\ell) \lesssim \Area(\ell,\bar\ell)^{1/2}$.

It remains to show the first inequality in~\eqref{eq:init_alpha}.
If $\ell,\bar\ell$ are far, then clearly $|A(\ell)-A(\bar\ell)| \lesssim \rho(\ell,\bar\ell)^{\alpha}|A|_{\gr\alpha}$.
Supposing now that $\ell,\bar\ell$ are not far,
we want to show that
\begin{equ}\label{eq:A_gr_vee_bound}
|A(\ell)-A(\bar\ell)| \lesssim \rho(\ell,\bar\ell)^{\alpha}(|A|_{\gr\alpha}+|A|_{\v\alpha})\;.
\end{equ}
Consider the line segment $a$ with initial point $\ell_i$ and endpoint $\bar\ell_f$, and the line segment $\bar a \in \mcX$ such that $\bar a = (\ell_i,c(\bar\ell_f-\ell_i))$ for some $c>0$ and $|\bar a| = |\ell|$ (see Figure~\ref{fig:as}).
Note that it is possible that $|a|>\frac14$, and thus $a\notin\mcX$, however $A(a)$ still makes sense by additivity of $A$.
Observe that $|a_f - \bar a_f| \lesssim |\ell_f-\bar\ell_f|$ and $\Area(\ell,\bar a) \lesssim \Area(\ell,\bar\ell)$ (for the latter, note that $a$ is contained inside the convex hull of $\ell,\bar\ell$, and that $\bar a$ is at most twice the length of $a$).

\begin{figure}[h]
\centering
\begin{tikzpicture}[scale=1.6]
\coordinate (li) at (0,0);
\coordinate (lf) at (2,0);

\coordinate (bli) at (-.25,.3);
\coordinate (blf) at (2.2,-0.4);

\fill[blue!5] (li) circle (.5);
\fill[blue!5] (lf) circle (.5);

\draw[thick,->] (li) to (lf);

\draw[thick,gray,->] (bli) to (blf);

\fill[black] (li) circle (.03);
\fill[gray] (li) circle (.03);

\draw[red,thick,->] (li.-10.3) -- (-10.3:2);

\draw [black,blue,domain=-20:20] plot ({2*cos(\x)}, {2*sin(\x)});
\end{tikzpicture}
\caption{The black arrow represents $\ell$, the grey arrow represents $\bar \ell$, and the red arrow represents $\bar a$.
The blue arc represents the circle of radius $|\ell|$ centred at $\ell_i$.}\label{fig:as}
\end{figure}
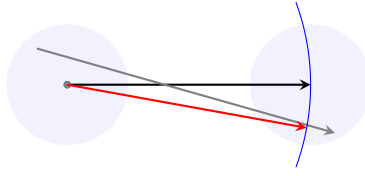

Observe that $\bar a$ and $\ell$ form a vee because $\bar\ell_f$ is in the ball $B\eqdef \{z\in\T^2\,:\,|z-\ell_f|\leq \frac14|\ell|\}$
and therefore $\bar a_f$, which is the intersection point of the ray from $\ell_i$ to $\bar \ell_f$ with the circle of radius $|\ell|$ centred at $\ell_i$, is also inside $B$.
Therefore, breaking up $A(a)$ into $A(\bar a)$ and a remainder, we see by additivity of $A$ that
\begin{equ}
|A(\ell) - A(a)| \lesssim |A|_{\v\alpha}\Area(\ell,\bar\ell)^{\alpha/2} + |A|_{\gr\alpha}|\ell_f-\bar\ell_f|^\alpha\;.
\end{equ}
By symmetry, one obtains
\begin{equ}
|A(\bar\ell)-A(a)| \lesssim |A|_{\v\alpha} \Area(\ell,\bar\ell)^{\alpha/2} + |A|_{\gr\alpha}|\ell_i-\bar\ell_i|^{\alpha}\;,
\end{equ}
which proves~\eqref{eq:A_gr_vee_bound}.

For the remaining inequalities, one can readily see that
\begin{equ}
|A|_{\v\alpha} \lesssim |A|_{\gr\alpha} + |A|_{\tri\alpha}\;,
\end{equ}
so that the claim follows if we can show that
\begin{equ}\label{eq:tri_alpha_bound}
|A|_{\tri\alpha} \lesssim |A|_{\alpha}\;.
\end{equ}
For this, consider a triangle $P = (\ell^1,\ell^2,\ell^3)$ and assume without loss of generality that $|\ell^1| \geq |\ell^2| \geq |\ell^3|$.
Suppose first that $P$ is right-angled.
If $\ell^1,\ell^2$ are far, then $\sum_{j=1}^3|\ell^j| \lesssim |P|^{1/2}$, while if $\ell^1,\ell^2$ are not far, then $\rho(\ell^1,\ell^2) \lesssim |P|^{1/2}$.
In either case, $|A(\partial P)| \leq |A|_\alpha |P|^{1/2}$.
For general $P$, we can split $P$ into two right-angled triangles $P_1,P_2$ with $|P_1|+|P_2| = |P|$ and $A(\partial P)=A(\partial P_1)+A(\partial P_2)$ and apply the previous case, which proves~\eqref{eq:tri_alpha_bound}.
The conclusion follows from Lemma~\ref{lem:symm_tri_bound}.
\end{proof}

For $A\in\Omega$ and $\ell=(x,v) \in\mcX$, define the function $\ell_A \colon [0,1] \to E$ by
\begin{equ}
\ell_A(t) \eqdef A(x,tv)\;.
\end{equ}

\begin{lemma}\label{lem:Hol_paths}
There exists a constant $C > 0$ such that for all $\alpha \in [0,1]$, $A \in \Omega$, and $\ell,\bar\ell \in \mcX$ forming a vee, one has
\begin{equ}\label{eq:ell_A_Hol}
|\ell_A|_{\Hol\alpha} \leq |\ell|^\alpha|A|_{\gr\alpha}\;,\qquad
|\ell_A - \bar\ell_A|_{\Hol{\frac \alpha 2}} \leq C\Area(\ell,\bar\ell)^{\alpha/2} |A|_{\alpha}\;.
\end{equ}
\end{lemma}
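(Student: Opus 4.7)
For the first bound, by additivity of $A$, for any $0\le s\le t\le 1$ the segments $(x,sv)$ and $(x+sv,(t-s)v)$ join to give $(x,tv)$, so
\[
\ell_A(t)-\ell_A(s)=A(x+sv,(t-s)v)\;,
\]
which by the definition of $|A|_{\gr\alpha}$ is bounded by $|A|_{\gr\alpha}\bigl((t-s)|v|\bigr)^\alpha=|\ell|^\alpha|A|_{\gr\alpha}(t-s)^\alpha$, as required.

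For the second bound, applying additivity to both $\ell$ and $\bar\ell$ yields
\[
\ell_A(t)-\ell_A(s)-\bar\ell_A(t)+\bar\ell_A(s)=A(a)-A(b)\;,
\]
with $a\eqdef(x+sv,(t-s)v)$ and $b\eqdef(x+s\bar v,(t-s)\bar v)$, both of which lie in $\mcX$ since $(t-s)|v|\le|\ell|\le 1/4$.  The plan is to bound the right-hand side directly by $|A|_\alpha\rho(a,b)^\alpha$ and to prove the purely geometric estimate $\rho(a,b)\lesssim\sqrt{(t-s)\Area(\ell,\bar\ell)}$.  A direct computation (subtracting the triangles $T_r$ with vertices $x,x+rv,x+r\bar v$ for $r=s,t$) shows that the convex hull of $\{a_i,a_f,b_f,b_i\}$ has area $(t^2-s^2)\Area(\ell,\bar\ell)$.

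The crucial analytic input is the elementary identity $|v\times\bar v|=|v|\cdot|v-\bar v|\cos(\theta/2)$, where $\theta$ is the angle between $v$ and $\bar v$.  Since $\ell,\bar\ell$ form a vee, $|v-\bar v|\le|v|/4$, which forces $\sin(\theta/2)\le 1/8$ and hence bounds $\cos(\theta/2)$ below by a universal positive constant; this yields $|v-\bar v|\le C\,\Area(\ell,\bar\ell)/|v|$.  If $a,b$ are far then $\rho(a,b)=2(t-s)|v|$ and the far condition $t|v-\bar v|>(t-s)|v|/4$ combined with the above gives $(t-s)|v|^2\lesssim\Area(\ell,\bar\ell)$, so that $\rho(a,b)^2\lesssim(t-s)\Area(\ell,\bar\ell)$.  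If $a,b$ are not far then $\rho(a,b)=(s+t)|v-\bar v|+\sqrt{(t^2-s^2)\Area(\ell,\bar\ell)}$; the second term is immediately bounded by $\sqrt{2(t-s)\Area(\ell,\bar\ell)}$, and the not-far condition $t|v-\bar v|\le(t-s)|v|/4$ combined with $|v-\bar v|\lesssim\Area(\ell,\bar\ell)/|v|$ yields $|v-\bar v|^2\lesssim(t-s)\Area(\ell,\bar\ell)/t$, so that $(s+t)|v-\bar v|\le 2t|v-\bar v|\lesssim\sqrt{(t-s)\Area(\ell,\bar\ell)}$.

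The main obstacle lies in the ``not far'' subcase: the naive vee bound $|v-\bar v|\le|v|/4$ by itself is insufficient to control the translation term $(s+t)|v-\bar v|$ appearing in $\rho(a,b)$, and one must exploit the sharper quantitative inequality $|v-\bar v|\lesssim\Area(\ell,\bar\ell)/|v|$ coming from the vee geometry in order to recover the $(t-s)^{\alpha/2}$ scaling dictated by the H\"older seminorm of exponent $\alpha/2$.
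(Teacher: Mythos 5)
Your proof is correct and follows essentially the same route as the paper's: establish the geometric inequality $\rho\bigl(\ell^{s,t},\bar\ell^{s,t}\bigr)\lesssim |t-s|^{1/2}\Area(\ell,\bar\ell)^{1/2}$ by casing on whether the sub-segments are far or not, with the key input being $\Area(\ell,\bar\ell)\asymp |\ell|\,|\ell_f-\bar\ell_f|$ for a vee. The only difference is that you supply an explicit derivation of that equivalence via the cross product and a bound on $\cos(\theta/2)$, whereas the paper simply asserts it as a consequence of $|\ell|=|\bar\ell|$ and $\ell_i=\bar\ell_i$.
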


\begin{proof}
The first inequality is obvious by additivity of $A$.
For the second, let $0 \leq s < t \leq 1$ and denote by $\ell^{s,t}$ the sub-segment of $\ell = (x,v)$ with initial point $x+sv$ and final point $x+tv$.
We claim that
\begin{equ}\label{eq:rho_ell_st}
\rho(\ell^{s,t},\bar\ell^{s,t}) \lesssim |t-s|^{1/2}\Area(\ell,\bar\ell)^{1/2}\;.
\end{equ}
Indeed, observe that $\Area(\ell,\bar\ell) \asymp |\ell||\ell_f-\bar\ell_f|$ as a consequence of the fact that $|\ell| = |\bar \ell|$ and $\ell_i = \bar \ell_i$ by
the definition of ``forming a vee''. One furthermore has the identities $|\ell^{s,t}|=|\bar\ell^{s,t}|=|t-s||\ell|$, and $|\ell^{s,t}_f-\bar\ell^{s,t}_f| = t|\ell_f-\bar\ell_f|$.
Hence, if $\ell^{s,t},\bar\ell^{s,t}$ are far, then we must have
\begin{equ}
|t-s||\ell| \lesssim t|\ell_f-\bar\ell_f| \leq |\ell_f-\bar\ell_f|\;,
\end{equ}
and thus
\begin{equ}
\rho(\ell^{s,t},\bar\ell^{s,t}) = 2|t-s||\ell| \lesssim |t-s|^{1/2}\Area(\ell,\bar\ell)^{1/2}\;.
\end{equ}
On the other hand, if $\ell^{s,t},\bar\ell^{s,t}$ are not far, then we must have
\begin{equ}
t|\ell_f-\bar\ell_f| \lesssim |t-s||\ell| \;,
\end{equ}
and thus
\begin{equ}
t^{1/2}|\ell_f-\bar\ell_f| \lesssim |t-s|^{1/2}|\ell|^{1/2}|\ell_f-\bar\ell_f|^{1/2} \asymp |t-s|^{1/2}\Area(\ell,\bar\ell)^{1/2}\;.
\end{equ}
It follows that
\begin{equ}
\rho(\ell^{s,t},\bar\ell^{s,t}) \leq 2t|\ell_f-\bar\ell_f| + |t-s|^{1/2}\Area(\ell,\bar\ell)^{1/2} \lesssim |t-s|^{1/2}\Area(\ell,\bar\ell)^{1/2}\;,
\end{equ}
which proves~\eqref{eq:rho_ell_st}.
It follows that
\begin{equ}
|\ell_A(t)-\ell_A(s) - \bar\ell_A(t)+\bar\ell_A(s)| = |A(\ell^{s,t}) - A(\bar\ell^{s,t})| \lesssim |t-s|^{\alpha/2}\Area(\ell,\bar\ell)^{\alpha/2} |A|_{\alpha}\;,
\end{equ}
concluding the proof.
\end{proof}

\subsection{Extension to regular curves}
\label{subsec:regular_curves}

In this subsection we show that any element $A \in \Omega_\alpha$ extends to a well-defined functional on sufficiently regular curves $\gamma \colon [0,1] \to \T^2$.
Given that $A(\gamma)$ should be invariant under reparametrisation of $\gamma$, we first provide a way to
measure the regularity of $\gamma$ in a parametrisation invariant 
way, and later provide relations to more familiar spaces of paths (namely paths in $\mcC^{1,\beta}$).

For a function $\gamma \colon [s,t] \to \T^2$, we denote by
$\diam(\gamma) \eqdef \sup_{u,v\in[s,t]}|\gamma(u)-\gamma(v)|$ the diameter of $\gamma$.
We assume throughout this subsection that all 
functions $\gamma\colon[s,t] \to \T^2$ under consideration 
have diameter at most $\frac14$.

We call a \emph{partition} of an interval $[s,t]$ a finite collection of subintervals 
$D = \{[t_i,t_{i+1}] \ssep i \in \{0,\ldots, n-1\}\}$, with $t_0=s < t_1 < \ldots < t_{n-1} < t_n = t$, 
and we write $\CD([s,t])$ for the set of all partitions.
For a function $\gamma \colon [s,t] \to \T^2$ and $D \in \CD([s,t])$, let $\gamma^D$ be the piecewise affine interpolation of $\gamma$ along $D$.
Note that if $\gamma$ is piecewise affine, then there exists $D \in \CD([s,t])$ and elements $\ell_i = (x_i, v_i) \in \CX$ such
that, for $u \in [t_i,t_{i+1}]$, one has $\gamma(u) = x_i + v_i (u-t_i)/(t_{i+1}-t_i)$.
$A(\gamma)$ is then canonically defined by $A(\gamma) = \sum_i A(\ell_i)$. (This is independent of the choice of $t_i$ and $\ell_i$ parametrising $\gamma$.)

\begin{definition}
Let $A \in\Omega$ and $\gamma \colon [0,1] \to \T^2$.
We say that $A$ \emph{extends} to $\gamma$ if the limit
\begin{equ}\label{eq:A_gamma_limit}
\gamma_A(t) \eqdef \lim_{|D| \to 0} A(\gamma^D\restr_{[0,t]})
\end{equ}
exists for all $t \in [0,1]$,
where $D \in \CD([0,1])$ and
$|D|\eqdef \max_{[a,b] \in D}|b-a|$.
\end{definition}

The following definition provides a convenient, parametrisation invariant way to determine if a given $A \in \Omega$ extends to $\gamma$.

\begin{definition}\label{def:pvar}
Let $\gamma \colon [0,1] \to \T^2$ be a function.
The \emph{triangle process} associated to $\gamma$ is defined to be the function $P$ defined on $[0,1]^3$, taking values in the set of triangles, such that $P_{sut}$ is the triangle formed by $(\gamma(s),\gamma(u),\gamma(t))$.

For two functions $\gamma,\bar\gamma \colon [0,1] \to \T^2$, and a subinterval $[s,t] \subset [0,1]$, define
\begin{equ}
|\gamma;\bar\gamma|_{[s,t]} \eqdef \sup_{u \in [s,t]} |P_{sut} ; \bar P_{sut}|^{1/2}\;,
\end{equ}
where $P,\bar P$ are the triangle processes associated to $\gamma,\bar\gamma$ respectively.
For $\alpha \in [0,1]$, define further
\begin{equ}
|\gamma;\bar\gamma|_{\alpha;[s,t]} \eqdef \sup_{D \in \CD([s,t])} \sum_{[a,b] \in D} |\gamma;\bar\gamma|_{[a,b]}^{\alpha}\;.
\end{equ}
We denote $|\gamma|_{\alpha;[s,t]} \eqdef |\gamma;\bar\gamma|_{\alpha;[s,t]}$ where $\bar\gamma$ is any constant path.
We drop the reference to the interval $[s,t]$ whenever $[s,t]=[0,1]$.
\end{definition}

We note the following basic properties of $|\cdot;\cdot|_{\alpha}$:
\begin{itemize}
\item $|\cdot;\cdot|_{\alpha}$ is symmetric and satisfies the triangle inequality but defines only a pseudometric rather than a metric since any two affine paths are at distance $0$ from each other.

\item The map $\alpha\mapsto |\gamma;\bar\gamma|_{\alpha}$ is decreasing in $\alpha$ for any $\gamma,\bar\gamma\colon[0,1]\to \T^2$.

\item For a typical smooth curve, $|P_{sut}|$ is of order $|t-s|^3$ (cf.~\eqref{eq:P_Hol_bound} below).
It follows that $|\gamma|_\alpha<\infty$ for all smooth $\gamma\colon[0,1]\to\T^2$ if and only if $\alpha\geq \frac23$.
\end{itemize}

Recall (see, e.g., \cite[Def.1.6]{FV10}) that a \emph{control} is a continuous, super-additive function $\omega \colon \{(s,t) \ssep 0\leq s \leq t \leq 1\} \to \R_+$ such that $\omega(t,t) = 0$.
Here super-additivity means that $\omega(s,t)+\omega(t,u)\le \omega(s,u)$ for any $s\le t\le u$.

\begin{lemma}\label{lem:control}
Let $\gamma,\bar\gamma\in\mcC([0,1],\T^2)$ such that $|\gamma;\bar\gamma|_\alpha < \infty$.
Then $\omega \colon (s,t) \mapsto |\gamma;\bar\gamma|_{\alpha;[s,t]}$ is a control.
\end{lemma}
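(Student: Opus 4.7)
The plan is to verify the three defining properties of a control: $\omega(t,t) = 0$, super-additivity, and joint continuity on $\Delta = \{(s,t) : 0 \le s \le t \le 1\}$. The first is immediate, since the only partition of the degenerate interval $[t,t]$ contributes the empty sum. Super-additivity follows because for $s \le u \le t$ and $D_1 \in \CD([s,u])$, $D_2 \in \CD([u,t])$, their union $D_1 \cup D_2$ lies in $\CD([s,t])$ with
\begin{equ}
\sum_{[a,b] \in D_1 \cup D_2} |\gamma;\bar\gamma|_{[a,b]}^\alpha = \sum_{D_1} |\gamma;\bar\gamma|_{[a,b]}^\alpha + \sum_{D_2} |\gamma;\bar\gamma|_{[a,b]}^\alpha \le \omega(s,t)\;,
\end{equ}
and taking suprema over $D_1, D_2$ yields $\omega(s,u) + \omega(u,t) \le \omega(s,t)$.

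For continuity, the key preliminary observation is that the increment $v(s,t) \eqdef |\gamma;\bar\gamma|_{[s,t]}^\alpha$ is jointly continuous on $\{s \le t\}$ and satisfies $v(t,t) = 0$. This follows from continuity of $\gamma$ and $\bar\gamma$ together with joint continuity of the functional $|P_1; P_2|$ in the vertices of its two triangles; as either triangle degenerates, the two cases in the definition of $|P_1; P_2|$ agree, so no discontinuity arises across changes of orientation. The continuity of $\sup_{u \in [s,t]} |P_{sut}; \bar P_{sut}|^{1/2}$ in $(s,t)$ then follows by a standard compactness argument since the sup is over a continuously varying compact interval.

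First I would establish continuity at the diagonal, namely $\omega(s,t) \to 0$ as $t - s \to 0$ uniformly in $s$. Suppose otherwise: there exist $\epsilon > 0$ and intervals $[s_n, t_n]$ with $t_n - s_n \to 0$ but $\omega(s_n, t_n) \ge \epsilon$. Passing to a subsequence one can arrange the $[s_n, t_n]$ to be pairwise disjoint; for any $N$, the $N$ intervals $[s_1, t_1], \ldots, [s_N, t_N]$ together with the remaining gaps form a partition of $[0,1]$, so iterating super-additivity gives $\omega(0,1) \ge \sum_{n=1}^N \omega(s_n, t_n) \ge N\epsilon$. Since $N$ is arbitrary, this contradicts $|\gamma;\bar\gamma|_\alpha = \omega(0,1) < \infty$.

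For joint continuity at $(s,t) \in \Delta$, lower semi-continuity is a direct consequence of continuity of $v$: given a partition $D = \{s = u_0 < u_1 < \cdots < u_k = t\}$ with interior points in the open interval $(s,t)$ and $\sum v(D) > \omega(s,t) - \epsilon$, for $(s_n, t_n)$ close enough to $(s,t)$ the perturbed partition $D_n = \{s_n, u_1, \ldots, u_{k-1}, t_n\}$ lies in $\CD([s_n, t_n])$ with $\sum v(D_n) \to \sum v(D)$. The main obstacle is upper semi-continuity: since $v$ is not subadditive in general, one cannot directly relate near-optimal partitions of $[s_n, t_n]$ to those of $[s,t]$ by inserting $s$ and $t$. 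Combining monotonicity of $\omega$ (non-decreasing in $t$, non-increasing in $s$), the bound $\omega(s_n, t_n) \le \omega(s \wedge s_n, t \vee t_n)$, iterated super-additivity on $[0,1]$, the continuity at the diagonal established above, and the joint continuity of $v$, one carefully controls sums over partitions of $[s_n, t_n]$ in terms of $\omega(s,t)$, in close analogy with the classical argument that the $p$-variation of a continuous path with finite total variation defines a control function.
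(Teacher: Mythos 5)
The paper does not give its own proof of this lemma; it simply refers to the classical fact that $(s,t)\mapsto|\gamma|^p_{\var p;[s,t]}$ is a control for a continuous path of finite $p$-variation, citing Friz--Victoir Prop.~5.8, and notes that ``continuity is the only subtle part.'' Your overall plan (super-additivity, continuity of the increment $v$, continuity at the diagonal, then upper/lower semi-continuity) is therefore the right strategy and faithfully mirrors that classical argument. However, the one step you attempt to work out in full detail --- continuity at the diagonal --- contains a genuine error.

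The claim ``Passing to a subsequence one can arrange the $[s_n,t_n]$ to be pairwise disjoint'' is false. After passing to a subsequence you may assume $s_n,t_n\to s_0$ for some $s_0\in[0,1]$, and the offending scenario is exactly when all the intervals contain $s_0$, e.g.\ $s_n=s_0-1/n$, $t_n=s_0+1/n$: any two such intervals meet at $s_0$, so no disjoint subsequence exists, and the iterated super-additivity step cannot even get started. (Note that super-additivity gives $\omega(s_n,s_0)+\omega(s_0,t_n)\le\omega(s_n,t_n)$, which is the useless direction here.) This nested case is precisely where the continuity of $\gamma$, $\bar\gamma$ must enter, and it is the ``subtle part'' the paper warns about. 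One standard fix: for a near-optimal partition $D_n$ of $[s_n,t_n]$, let $[a_n,b_n]\ni s_0$ be the subinterval of $D_n$ straddling $s_0$; then $\sum_{D_n}\le\omega(s_n,a_n)+v(a_n,b_n)+\omega(b_n,t_n)$, the middle term tends to $0$ by your own continuity observation on $v$, and each of the two outer terms is bounded by the one-sided quantities $\omega(s_n,s_0)$, $\omega(s_0,t_n)$. One is then reduced to showing $\omega(s_0-,s_0)=\omega(s_0,s_0+)=0$, which \emph{can} be handled by a disjoint-subsequence (or, more simply, by the monotone-limit identity $\omega(s_0,s_0+\delta)\le\omega(s_0,1)-\omega(s_0+\delta,1)$ together with lower semi-continuity of $\omega(\cdot,1)$). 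As written, your proof of the diagonal step is not correct, though the remainder of the outline is sound and can be repaired along these lines.
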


The proof of Lemma~\ref{lem:control} follows in the same way as the more classical statement that $(s,t) \mapsto |\gamma|^p_{\var p;[s,t]}$ is a control, see e.g.\ the proof of~\cite[Prop.~5.8]{FV10} (note that continuity is the only subtle part).

\begin{theorem}\label{thm:extension_to_curves}
Let $0 \leq \alpha < \bar\alpha \leq 1$ and denote $\theta \eqdef \bar\alpha/\alpha$.
Let $A \in\Omega$ with $|A|_{\symm{\bar\alpha}} < \infty$ and $\gamma \in \mcC([0,1],\T^2)$ such that $|\gamma|_\alpha < \infty$.
Then $A$ extends to $\gamma$ and for any partition $D$ of $[0,1]$
\begin{equ}\label{eq:A_gamma_D_bound}
|A(\gamma^D) - A(\gamma)| \leq 2^{\theta}\zeta(\theta)|A|_{\symm{\bar\alpha}}\sum_{[s,t] \in D} |\gamma|_{\alpha;[s,t]}^{\theta}\;,
\end{equ}
where $\zeta$ is the classical Riemann zeta function.
Let $\bar\gamma \in \mcC([0,1], \T^2)$ be another path such that $|\bar\gamma|_\alpha < \infty$.
Then
\begin{equ}\label{eq:A_gamma_bar_gamma_bound}
|A(\gamma) - A(\bar\gamma)| \leq |A(\ell)-A(\bar\ell)| + 2^{\theta}\zeta(\theta)|A|_{\symm{\bar\alpha}} |\gamma;\bar\gamma|_\alpha^{\theta}\;,
\end{equ}
where $\ell,\bar\ell \in\mcX$ are the line segments connecting $\gamma(0),\gamma(1)$ and $\bar\gamma(0),\bar\gamma(1)$ respectively.
\end{theorem}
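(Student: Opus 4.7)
The plan is a Young-type sewing argument in which ``sewing away'' one interior point of a partition costs the $A$-integral around the triangle formed by its two neighbours, which is controlled by $|A|_{\symm{\bar\alpha}}$. Concretely, let $D=\{t_0<\cdots<t_n\}$ be a partition of a subinterval $[s,t]\subseteq[0,1]$ and let $D'$ be obtained by deleting an interior point $t_i$. Additivity of $A$ together with the definition of $|A|_{\symm{\bar\alpha}}$ (applied with a degenerate second triangle, so it coincides with the $|A|_{\tri{\bar\alpha}}$ bound) gives
\begin{equ}
|A(\gamma^{D})-A(\gamma^{D'})| = |A(\partial P_{t_{i-1}t_it_{i+1}})| \leq |A|_{\symm{\bar\alpha}}\,|P_{t_{i-1}t_it_{i+1}}|^{\bar\alpha/2}\;.
\end{equ}
Writing $\omega(a,b)\eqdef |\gamma|_{\alpha;[a,b]}$, a control by Lemma~\ref{lem:control}, and using $|P_{aub}|^{1/2}\leq |\gamma|_{[a,b]}\leq \omega(a,b)^{1/\alpha}$ (the latter from the trivial partition), the cost is at most $|A|_{\symm{\bar\alpha}}\,\omega(t_{i-1},t_{i+1})^\theta$ with $\theta\eqdef\bar\alpha/\alpha>1$.

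Splitting odd/even indices and using superadditivity of $\omega$ yields $\sum_{i=1}^{n-1}\omega(t_{i-1},t_{i+1})\leq 2\omega(s,t)$, so by the pigeonhole principle some interior point has removal cost at most $|A|_{\symm{\bar\alpha}}(2\omega(s,t)/(n-1))^\theta$. Iteratively deleting such cheapest interior points (the majorant $2\omega(s,t)$ is preserved because the endpoints $s,t$ never change) and summing the $n-1$ costs gives
\begin{equ}
|A(\gamma^{\{[s,t]\}}) - A(\gamma^{D})| \leq |A|_{\symm{\bar\alpha}}(2\omega(s,t))^\theta\sum_{k=1}^{n-1}k^{-\theta} \leq 2^\theta\zeta(\theta)|A|_{\symm{\bar\alpha}}\,\omega(s,t)^\theta\;.
\end{equ}
Applied cell-by-cell to any partition $D$ of $[0,1]$ and to any refinement $D''\supseteq D$, this yields $|A(\gamma^D)-A(\gamma^{D''})|\leq 2^\theta\zeta(\theta)|A|_{\symm{\bar\alpha}}\sum_{[s,t]\in D}\omega(s,t)^\theta$. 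Continuity of $\omega$ with $\omega(a,a)=0$ and $\theta>1$ give $\sum_{[s,t]\in D}\omega(s,t)^\theta\leq(\max_{[s,t]\in D}\omega(s,t))^{\theta-1}\omega(0,1)\to 0$ as $|D|\to 0$. Hence $\{A(\gamma^D)\}$ is Cauchy, which establishes the existence of the limit~\eqref{eq:A_gamma_limit}, and letting $|D''|\to 0$ in the estimate above delivers~\eqref{eq:A_gamma_D_bound}.

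For the two-curve bound~\eqref{eq:A_gamma_bar_gamma_bound}, I run the identical pigeonhole iteration along a common partition of $\gamma$ and $\bar\gamma$, replacing the single-step bound by
\begin{equ}
|A(\partial P_{aub})-A(\partial\bar P_{aub})| \leq |A|_{\symm{\bar\alpha}}|P_{aub};\bar P_{aub}|^{\bar\alpha/2}\leq |A|_{\symm{\bar\alpha}}|\gamma;\bar\gamma|_{\alpha;[a,b]}^\theta\;,
\end{equ}
using that $(a,b)\mapsto |\gamma;\bar\gamma|_{\alpha;[a,b]}$ is a control by Lemma~\ref{lem:control}. The same iteration bounds $|A(\gamma)-A(\bar\gamma)-A(\ell)+A(\bar\ell)|$ by $2^\theta\zeta(\theta)|A|_{\symm{\bar\alpha}}|\gamma;\bar\gamma|_\alpha^\theta$, and~\eqref{eq:A_gamma_bar_gamma_bound} follows by the triangle inequality. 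The main technical point is the preservation of the global majorant $2\omega(s,t)$ across the pigeonhole iteration, which holds precisely because we only ever delete interior points; everything else is a mechanical run of the classical Young sewing argument.
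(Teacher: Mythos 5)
Your proof is correct and is essentially the paper's own argument: both use Young's partition-coarsening, with the pigeonhole step coming from superadditivity of the control $\omega=|\gamma;\bar\gamma|_{\alpha;[\cdot,\cdot]}$ and the single-step cost controlled by $|A|_{\symm{\bar\alpha}}$ and the trivial-partition bound $|P_{sut};\bar P_{sut}|^{1/2}\le\omega(s,t)^{1/\alpha}$. The only presentational difference is that the paper proves the uniform two-curve bound \eqref{eq:gamma_D_unif_bound} first (recovering the single-curve estimate by taking $\bar\gamma$ constant) and then derives \eqref{eq:A_gamma_D_bound} by applying it cell-by-cell on refinements, whereas you run the single-curve coarsening first and then repeat the iteration for the pair $(\gamma,\bar\gamma)$; your explicit remark that the majorant $2\omega(s,t)$ is preserved because endpoints are never deleted, and your use of $\sum_{[s,t]\in D}\omega(s,t)^\theta\le(\max\omega)^{\theta-1}\omega(0,1)$ for the Cauchy property, are both correct and correspond to implicit steps in the paper.
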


\begin{proof}
Define $\omega(s,t) \eqdef |\gamma;\bar\gamma|_{\alpha;[s,t]}$, which we note is a control by Lemma~\ref{lem:control}.
Let $D$ be a partition of $[0,1]$.
We will apply Young's partition coarsening argument to show that
\begin{equ}\label{eq:gamma_D_unif_bound}
|A(\gamma^D) - A(\bar \gamma^D)| \leq |A(\ell)-A(\bar\ell)| + |\gamma;\bar\gamma|_{\alpha}^{\theta}2^{\theta}\zeta(\theta) |A|_{\symm{\bar\alpha}}\;.
\end{equ}
Let $n$ denote the number of points in $D$.
If $n=2$, then the claim is obvious.
If $n \geq 3$, then by superadditivity of $\omega$ there exist two adjacent subintervals $[s,u],[u,t] \in D$ such that $\omega(s,t) \leq 2\omega(0,1)/(n-1)$.
Let $P,\bar P$ denote the triangle process associated with $\gamma,\bar\gamma$ respectively.
Observe that
\begin{equs}
|A(\partial P_{sut}) - A(\partial \bar P_{sut})|
&\leq |\gamma;\bar\gamma|_{[s,t]}^{\bar\alpha}|A|_{\symm{\bar\alpha}}
\leq \omega(s,t)^{\theta} |A|_{\symm{\bar\alpha}}
\\
&\leq (2\omega(0,1)/(n-1))^{\theta}|A|_{\symm{\bar\alpha}}\;.
\end{equs}
Merging the intervals $[s,u],[u,t] \in D$ into $[s,t]$ yields a coarser partition $D'$ and we see that
\begin{equs}
\big|A(\gamma^D)-A(\bar\gamma^D) - \big(A(\gamma^{D'}) - A(\bar \gamma^{D'})\big)\big|
&= |A(\partial P_{sut}) - A(\partial \bar P_{sut})|
\\
&\leq (2\omega(0,1)/(n-1))^{\theta} |A|_{\symm{\bar\alpha}}\;.
\end{equs}
Proceeding inductively, we obtain~\eqref{eq:gamma_D_unif_bound}.
It remains only to show that~\eqref{eq:A_gamma_limit} exists for $t=1$ and satisfies~\eqref{eq:A_gamma_D_bound}.
By Lemma~\ref{lem:control}, we have
\begin{equ}
\lim_{\eps \to 0} \sup_{|D| < \eps} \sum_{[s,t] \in D} |\gamma|_{\alpha;[s,t]}^{\theta} 
 = 0\;.
\end{equ}
Observe that if $D'$ is a refinement of $D$, then we can apply the uniform bound~\eqref{eq:gamma_D_unif_bound} to every $[s,t]\in D$ to obtain
\begin{equ}
|A(\gamma^D)-A(\gamma^{D'})| \leq 2^{\theta}\zeta(\theta)|A|_{\symm{\bar\alpha}}\sum_{[s,t] \in D}|\gamma|_{\alpha;[s,t]}^{\theta}\;,
\end{equ}
from which the existence of~\eqref{eq:A_gamma_limit} and the bound~\eqref{eq:A_gamma_D_bound} follow.
\end{proof}

For a metric space $(X,d)$ and $p\in[1,\infty)$, recall that the $p$-variation $|x|_{\var p}$ of a path $x\colon[0,1]\to X$ is given by
\begin{equ}
|x|_{\var p}^p \eqdef \sup_{D \in \CD([0,1])} \sum_{[s,t] \in D} d(x(s),x(t))^p\;.
\end{equ}
Our interest in $p$-variation stems from the Young integral~\cite{Young, Lyons94, FrizHairer} which ensures that ODEs driven by finite $p$-variation paths are well-defined.

\begin{corollary}\label{cor:p-var_bound}
Let $0 \leq \alpha < \bar \alpha \leq 1$, $\eta \in (0,1]$, and $p \geq \frac1\eta$. Consider $\gamma \in \mcC([0,1],\T^2)$ with $|\gamma|_\alpha < \infty$ and $A \in \Omega$ with $|A|_{\symm{\bar\alpha}} + |A|_{\gr\eta} < \infty$.
Then
\begin{equ}
|\gamma_A|_{\var p} \leq |A|_{\gr\eta} |\gamma|^{\eta}_{\var{p\eta}} + 2^{\bar\alpha/\alpha}\zeta(\bar\alpha/\alpha) |A|_{\symm{\bar\alpha}}|\gamma|_{\alpha}^{\bar\alpha/\alpha}\;.
\end{equ}
\end{corollary}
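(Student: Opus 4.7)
\medskip

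\noindent\textbf{Proof proposal.} The plan is to reduce the $p$-variation estimate to a pointwise two-point bound on $\gamma_A(t)-\gamma_A(s)$ and then sum over a partition using Minkowski's inequality together with super-additivity of the control from Lemma~\ref{lem:control}.

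First, for any $0 \le s \le t \le 1$, let $\ell_{s,t} \in \mcX$ denote the line segment from $\gamma(s)$ to $\gamma(t)$ (which lies in $\mcX$ since $\diam(\gamma)\le\frac14$). Applying Theorem~\ref{thm:extension_to_curves} on the subinterval $[s,t]$ with $\bar\gamma$ taken to be the affine path between $\gamma(s)$ and $\gamma(t)$ (so $\bar\gamma_A(t)-\bar\gamma_A(s) = A(\ell_{s,t})$ and the triangle process of $\bar\gamma$ on $[s,t]$ is degenerate), one obtains
\begin{equ}
\bigl|\gamma_A(t) - \gamma_A(s) - A(\ell_{s,t})\bigr|
\leq 2^{\theta}\zeta(\theta)\,|A|_{\symm{\bar\alpha}}\, |\gamma|_{\alpha;[s,t]}^{\theta}\;,
\end{equ}
where $\theta \eqdef \bar\alpha/\alpha > 1$. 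Combining this with $|A(\ell_{s,t})|\le |A|_{\gr\eta}|\gamma(t)-\gamma(s)|^{\eta}$ gives the key pointwise estimate
\begin{equ}\label{eq:plan_pointwise}
|\gamma_A(t)-\gamma_A(s)| \leq |A|_{\gr\eta}\, |\gamma(t)-\gamma(s)|^{\eta} + 2^{\theta}\zeta(\theta)\,|A|_{\symm{\bar\alpha}}\, |\gamma|_{\alpha;[s,t]}^{\theta}\;.
\end{equ}

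Next, fix an arbitrary partition $D=\{[t_i,t_{i+1}]\}$ of $[0,1]$. Raising \eqref{eq:plan_pointwise} to the $p$th power, summing over $D$, and applying Minkowski's inequality (using $p\ge 1$ which is forced by $p\ge 1/\eta\ge 1$) yields
\begin{equ}
\Bigl(\sum_i |\gamma_A(t_{i+1})-\gamma_A(t_i)|^p\Bigr)^{1/p}
\leq |A|_{\gr\eta}\,S_1^{1/p} + 2^{\theta}\zeta(\theta)\,|A|_{\symm{\bar\alpha}}\,S_2^{1/p}\;,
\end{equ}
where $S_1 \eqdef \sum_i |\gamma(t_{i+1})-\gamma(t_i)|^{\eta p}$ and $S_2 \eqdef \sum_i |\gamma|_{\alpha;[t_i,t_{i+1}]}^{\theta p}$. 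Since $\eta p\ge 1$, the definition of $\var{\eta p}$-variation gives directly $S_1\le |\gamma|_{\var{\eta p}}^{\eta p}$. For $S_2$, observe that $\theta p\ge 1$ and that by Lemma~\ref{lem:control} the function $(s,t)\mapsto |\gamma|_{\alpha;[s,t]}$ is a control, in particular super-additive; hence for any super-additive nonnegative function $\omega$ on $D$ with $\sum_i \omega_i \le \omega(0,1)$ and any $r\ge 1$, one has $\sum_i \omega_i^r \le \omega(0,1)^{r-1}\sum_i \omega_i \le \omega(0,1)^r$, which applied with $r=\theta p$ and $\omega(0,1)=|\gamma|_{\alpha}$ gives $S_2\le |\gamma|_{\alpha}^{\theta p}$. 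Substituting both bounds, raising to the $1/p$ and taking the supremum over $D$ yields the claimed inequality.

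The arguments here are essentially routine once \eqref{eq:plan_pointwise} is in hand; the only slightly delicate point to verify is the application of Theorem~\ref{thm:extension_to_curves} with an affine $\bar\gamma$, namely that $|\gamma;\bar\gamma|_{\alpha;[s,t]}\le |\gamma|_{\alpha;[s,t]}$, which follows because the triangle process of $\bar\gamma$ on $[s,t]$ is everywhere degenerate so that $|P_{sut};\bar P_{sut}|\le |P_{sut}|$ for every $u\in[s,t]$ (either the two triangles share an orientation and the symmetric difference equals $\mathring P_{sut}$, or $\bar P_{sut}$ has zero area and the bound holds trivially). No other obstacle arises.
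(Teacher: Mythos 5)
Your proof is correct and follows essentially the same route as the paper. The paper derives the same pointwise bound on $|\gamma_A(t)-\gamma_A(s)|$ directly from \eqref{eq:A_gamma_bar_gamma_bound} (with $\bar\gamma$ the constant path, making $|\gamma;\bar\gamma|_{\alpha;[s,t]}=|\gamma|_{\alpha;[s,t]}$ true by the very definition of $|\gamma|_\alpha$, so the small lemma you verify about degenerate triangles is unnecessary with that choice), and then invokes ``Minkowski's inequality'' without further elaboration. Your write-up fills in the part the paper leaves implicit: the bound $\sum_i |\gamma|_{\alpha;[t_i,t_{i+1}]}^{\theta p}\le |\gamma|_\alpha^{\theta p}$ does require, beyond Minkowski, the super-additivity of $(s,t)\mapsto|\gamma|_{\alpha;[s,t]}$ from Lemma~\ref{lem:control}; spelling this out is a genuine and worthwhile addition. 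Everything checks out: $\eta p\ge 1$ and $\theta p>1$ hold under the hypotheses, so both applications of convexity/super-additivity are valid.
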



\begin{proof}
For any $[s,t] \subset [0,1]$,~\eqref{eq:A_gamma_bar_gamma_bound} implies that
\begin{equ}
|\gamma_A(t)-\gamma_A(s)| \leq |A|_{\gr\eta} |\gamma(t)-\gamma(s)|^{\eta} +  2^{\bar\alpha/\alpha}\zeta(\bar\alpha/\alpha) |A|_{\symm{\bar\alpha}}|\gamma|_{\alpha;[s,t]}^{\bar\alpha/\alpha}\;,
\end{equ}
from which the conclusion follows by Minkowski's inequality.
\end{proof}

The following result provides a convenient (now parametrisation dependent) way to control the quantity $|\gamma;\bar\gamma|_\alpha$.
For $\beta \in [0,1]$, let $\mcC^{1,\beta}([0,1],\T^2)$ denote the space of differentiable functions $\gamma \colon [0,1] \to \T^2$ with $\dot \gamma \in \mcC^\beta$.
Recall that $|\cdot|_\infty$ denotes the supremum norm.

\begin{proposition}\label{prop:gamma_alpha_Hol_bound}
There exists $C > 0$ such that for all $\alpha \in [\frac23,1]$, $\beta \in [\frac{2}{\alpha}-2,1]$, and $\kappa \in [\frac{2-\alpha}{\alpha(1+\beta)},1]$, and $\gamma,\bar\gamma \in\mcC^{1,\beta}([0,1],\T^2)$, it holds that
\begin{equ}
|\gamma;\bar\gamma|_{\alpha} \leq C\Big[(|\dot\gamma|_\infty + |\dot{\bar\gamma}|_\infty) (|\dot\gamma|_{\Hol\beta} + |\dot{\bar\gamma}|_{\Hol\beta})^{\kappa}|\gamma - \bar\gamma|_\infty^{1-\kappa}\Big]^{\alpha/2}\;.
\end{equ}
\end{proposition}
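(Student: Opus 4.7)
The plan is to establish two complementary bounds on the triangle-distance $|P_{sut}; \bar P_{sut}|$ appearing in the definition of $|\gamma;\bar\gamma|_{[s,t]}$, and then interpolate between them before summing over partitions.

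\textbf{Step 1 (regularity bound).} For $\gamma \in \mcC^{1,\beta}$ one has, for $u\in[s,t]$,
\[
\Big|\gamma(u) - \gamma(s) - \tfrac{u-s}{t-s}\big(\gamma(t)-\gamma(s)\big)\Big| \lesssim |\dot\gamma|_{\Hol\beta}\,|t-s|^{1+\beta}\,,
\]
by a standard Taylor/mean value argument. Since the area of $P_{sut}$ equals half the chord length (which is $\lesssim |\dot\gamma|_\infty |t-s|$) times this height, we obtain $|P_{sut}| \lesssim |\dot\gamma|_\infty|\dot\gamma|_{\Hol\beta}|t-s|^{2+\beta}$, and the analogous bound for $\bar P_{sut}$. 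Combined with the trivial $|P;\bar P|\leq |P|+|\bar P|$, this yields
\begin{equ}\label{eq:reg_bound_proposal}
|P_{sut};\bar P_{sut}| \lesssim \big(|\dot\gamma|_\infty+|\dot{\bar\gamma}|_\infty\big)\big(|\dot\gamma|_{\Hol\beta}+|\dot{\bar\gamma}|_{\Hol\beta}\big)\,|t-s|^{2+\beta}\,.
\end{equ}

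\textbf{Step 2 (proximity bound).} The corresponding vertices of $P_{sut}$ and $\bar P_{sut}$ lie within $|\gamma-\bar\gamma|_\infty$ of each other, and both triangles have diameter $D_{st}\lesssim (|\dot\gamma|_\infty+|\dot{\bar\gamma}|_\infty)|t-s|$. By the signed-area formula, the signed areas of $P_{sut}$ and $\bar P_{sut}$ differ by at most $O(|\gamma-\bar\gamma|_\infty D_{st})$. If the two triangles share orientation, a direct decomposition into three thin ``strips'' along the edges gives $|P_{sut}\triangle \bar P_{sut}|\lesssim |\gamma-\bar\gamma|_\infty D_{st}$; if they have opposite orientations (or one is degenerate), the signed-area bound forces $|P_{sut}|$ and $|\bar P_{sut}|$ each to be $\lesssim |\gamma-\bar\gamma|_\infty D_{st}$. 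In either case,
\begin{equ}\label{eq:prox_bound_proposal}
|P_{sut};\bar P_{sut}| \lesssim \big(|\dot\gamma|_\infty+|\dot{\bar\gamma}|_\infty\big)\,|\gamma-\bar\gamma|_\infty\,|t-s|\,.
\end{equ}

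\textbf{Step 3 (interpolation and summation).} Taking \eqref{eq:reg_bound_proposal}$^\kappa$ times \eqref{eq:prox_bound_proposal}$^{1-\kappa}$ and writing
\[
M \eqdef \big(|\dot\gamma|_\infty+|\dot{\bar\gamma}|_\infty\big)\big(|\dot\gamma|_{\Hol\beta}+|\dot{\bar\gamma}|_{\Hol\beta}\big)^\kappa |\gamma-\bar\gamma|_\infty^{1-\kappa}\,,
\]
we obtain $|P_{sut};\bar P_{sut}|\lesssim M\,|t-s|^{1+\kappa(1+\beta)}$. Taking $\sup_u$ and raising to $\alpha/2$ yields $|\gamma;\bar\gamma|_{[s,t]}^\alpha \lesssim M^{\alpha/2}|t-s|^{p}$ with $p\eqdef \alpha(1+\kappa(1+\beta))/2$. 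The hypothesis $\kappa\geq (2-\alpha)/[\alpha(1+\beta)]$ is precisely the condition $p\geq 1$; combined with $\kappa\leq 1$ and $\beta\geq 2/\alpha-2$, which together guarantee this range is nonempty, we get $\sum_{[s,t]\in D}|t-s|^p \leq \sum_{[s,t]\in D}|t-s| = 1$ for every $D\in\CD([0,1])$. Taking the supremum over $D$ yields the claimed estimate.

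The main obstacle is the proximity bound \eqref{eq:prox_bound_proposal}: when $P_{sut}$ and $\bar P_{sut}$ have opposite orientations we cannot invoke the symmetric-difference argument directly, and must instead use the signed-area comparison to conclude that both triangles are individually tiny. Everything else is interpolation and the (sharp) arithmetic matching the scaling exponents against the hypotheses on $\alpha$, $\beta$, and $\kappa$.
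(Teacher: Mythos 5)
Your proof is correct and follows essentially the same approach as the paper: derive a regularity bound of order $|t-s|^{2+\beta}$ and a proximity bound of order $|t-s|\,|\gamma-\bar\gamma|_\infty$ on $|P_{sut};\bar P_{sut}|$, interpolate with weight $\kappa$, and choose $\kappa$ so that the resulting exponent $\alpha(1+\kappa(1+\beta))/2$ is at least $1$. The only cosmetic difference is in the proximity bound: you argue geometrically by orientation cases, whereas the paper bounds $|P_{sut};\bar P_{sut}|$ in a single step by a sum over pairs of vertices of chord length times vertex displacement, which handles all cases uniformly.
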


\begin{proof}
Let $P,\bar P$ denote the triangle process associated to $\gamma,\bar\gamma$ respectively.
For $0 \leq s < u < t \leq 1$, observe that
\begin{equs}
|P_{sut}|
&\leq |\gamma(t)-\gamma(s)|\Big|\gamma(u)-\gamma(s) - \frac{|u-s|}{|t-s|}(\gamma(t)-\gamma(s))\Big|
\\
&\leq |t-s| |\dot\gamma|_\infty \Big| \frac{1}{|t-s|}\int_s^u \int_s^t |\dot\gamma(r) - \dot\gamma(q)| \mrd q \mrd r \Big|
\\
& \leq |t-s|^{2+\beta} |\dot\gamma|_\infty |\dot\gamma|_{\Hol\beta}\;.\label{eq:P_Hol_bound}
\end{equs}
Furthermore,
\begin{equ}
|P_{sut};\bar P_{sut}| \lesssim \sum_{q \neq r} (|\gamma(q) - \gamma(r)| + |\bar\gamma(q)-\bar\gamma(r)|) (|\gamma(q)-\bar\gamma(q)| + |\gamma(r)-\bar\gamma(r)|)\;,
\end{equ}
where the sum is over all $2$-subsets $\{q,r\}$ of $\{s,u,t\}$, whence
\begin{equ}
|P_{sut};\bar P_{sut}| \lesssim|t-s|(|\dot\gamma|_\infty + |\dot{\bar\gamma}|_\infty) |\gamma-\bar\gamma|_\infty\;.\label{eq:P_unif_bound}
\end{equ}
Interpolating between~\eqref{eq:P_Hol_bound} and~\eqref{eq:P_unif_bound}, we have for any $\kappa \in [0,1]$
\begin{equ}
|P_{sut};\bar P_{sut}| \lesssim (|\dot\gamma|_\infty + |\dot{\bar\gamma}|_\infty) (|\dot\gamma|_{\Hol\beta} + |\dot{\bar\gamma}|_{\Hol\beta})^{\kappa}|t-s|^{1+\kappa+\beta\kappa} |\gamma - \bar\gamma|_\infty^{1-\kappa}\;.
\end{equ}
The conclusion follows by taking $\kappa \geq \frac{2-\alpha}{\alpha(1+\beta)}$ so that $\alpha(1+\kappa+\beta\kappa)/2 \geq 1$.
\end{proof}

%
%

We end this subsection with a result on the continuity in $p$-variation of $\gamma_A$ jointly in $(A,\gamma)\in\Omega_\alpha\times\mcC^{1,\beta}([0,1],\T^2)$.
For $\beta\in[0,1]$, a \emph{ball} in $\mcC^{1,\beta}$ is any set of the form
\begin{equ}
\{\gamma\in \mcC^{1,\beta}([0,1],\T^2) \ssep
|\dot\gamma|_\infty + |\dot\gamma|_{\Hol\beta} \leq R\}
\end{equ}
for some $R\geq 0$.

\begin{proposition}\label{prop:p-var_cont}
Let $\alpha\in(\frac23,1]$, $p>\frac1\alpha$, and $\beta\in(\frac2{\alpha}-2,1]$.
There exists $\delta>0$ such that for all $A,\bar A\in \Omega_\alpha$
\begin{equ}
|\gamma_A-\bar\gamma_{\bar A}|_{\var{p}} \lesssim |A-\bar A|_{\alpha} + |A|_{\alpha} |\gamma-\bar\gamma|^\delta_\infty
\end{equ}
uniformly over $\gamma,\bar\gamma$ in balls of $\mcC^{1,\beta}$.
\end{proposition}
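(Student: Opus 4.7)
The plan is to decompose
\begin{equ}
\gamma_A - \bar\gamma_{\bar A} = (\gamma_A - \bar\gamma_A) + (\bar\gamma_A - \bar\gamma_{\bar A})
\end{equ}
and bound the $p$-variation of each summand separately. The two summands will yield respectively the contributions $|A|_\alpha|\gamma-\bar\gamma|_\infty^\delta$ and $|A-\bar A|_\alpha$ of the claimed estimate.

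For the second summand we use linearity of the extension, $\bar\gamma_A - \bar\gamma_{\bar A} = \bar\gamma_{A-\bar A}$. Using the strict inequalities $\beta > \tfrac2\alpha - 2$ and $p > \tfrac1\alpha$, we may pick $\alpha' \in (\tfrac23,\alpha)$ close enough to $\alpha$ that $\beta > \tfrac{2}{\alpha'} - 2$, together with $\eta \in (\tfrac1p,\alpha]$. Proposition~\ref{prop:gamma_alpha_Hol_bound} then bounds $|\bar\gamma|_{\alpha'}$ uniformly on balls of $\mcC^{1,\beta}$, and Corollary~\ref{cor:p-var_bound} applied to $A-\bar A$ on $\bar\gamma$ with parameters $(\alpha',\alpha,\eta)$ (using $|A-\bar A|_{\gr\eta} + |A-\bar A|_{\symm\alpha} \lesssim |A-\bar A|_\alpha$ from Theorem~\ref{thm:norm_equiv}) gives $|\bar\gamma_{A-\bar A}|_{\var p} \lesssim |A-\bar A|_\alpha$.

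For the first summand, we apply~\eqref{eq:A_gamma_bar_gamma_bound} from Theorem~\ref{thm:extension_to_curves} on each subinterval $[s,t]$ of a partition:
\begin{equ}
|(\gamma_A - \bar\gamma_A)(t) - (\gamma_A - \bar\gamma_A)(s)| \leq |A(\ell)-A(\bar\ell)| + C|A|_\alpha|\gamma;\bar\gamma|_{\alpha';[s,t]}^{\alpha/\alpha'}\;,
\end{equ}
where $\ell,\bar\ell$ are the chord segments from $\gamma(s)$ to $\gamma(t)$ and from $\bar\gamma(s)$ to $\bar\gamma(t)$, respectively. The triangle-process contribution is controlled by a local analogue of Proposition~\ref{prop:gamma_alpha_Hol_bound}: choosing $\kappa \in [\tfrac{2-\alpha'}{\alpha'(1+\beta)}, 1)$ large enough that $\alpha p(1+\kappa(1+\beta))/2 \geq 1$ (possible since $p > \tfrac1\alpha$), raising to the $\alpha p/\alpha'$-th power and summing over the partition gives a uniform bound of the form $R^{c_1}|\gamma-\bar\gamma|_\infty^{c_2}$ for some $c_1,c_2 > 0$.

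The main obstacle is the chord contribution $|A(\ell)-A(\bar\ell)|$: the direct bound $|A|_\alpha\rho(\ell,\bar\ell)^\alpha$ contains endpoint terms of order $|\gamma-\bar\gamma|_\infty^\alpha$ that do not decay with $|t-s|$, so summing the $p$-th powers over a fine partition diverges whenever $\alpha p < 2$. To resolve this, we exploit the far/not-far dichotomy in Definition~\ref{def:rho}: in the far case, the inequality $d(\ell,\bar\ell) > \tfrac14(|\ell|\wedge|\bar\ell|)$ together with $d(\ell,\bar\ell) \leq |\gamma-\bar\gamma|_\infty$ and the triangle inequality forces $|\ell|+|\bar\ell| \leq 10|\gamma-\bar\gamma|_\infty$, which combined with the not-far estimate yields the universal bound $\rho(\ell,\bar\ell) \lesssim |\gamma-\bar\gamma|_\infty + (R|t-s||\gamma-\bar\gamma|_\infty)^{1/2}$. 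Interpolating this with the trivial estimate $|A(\ell)-A(\bar\ell)| \leq 2|A|_\alpha(R|t-s|)^\alpha$ with parameter $\theta \in [0,1]$ gives
\begin{equ}
|A(\ell)-A(\bar\ell)|^p \lesssim |A|_\alpha^p(R|t-s|)^{\alpha p(1+\theta)/2}|\gamma-\bar\gamma|_\infty^{\alpha p(1-\theta)/2}\;,
\end{equ}
and choosing $\theta \in [\max(0,\tfrac{2}{\alpha p}-1),1)$, possible since $p > \tfrac1\alpha$, makes the exponent of $|t-s|$ at least $1$ (so the partition sum is finite) while the exponent of $|\gamma-\bar\gamma|_\infty$ remains strictly positive, providing the required $\delta > 0$.
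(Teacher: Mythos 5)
Your proof is correct, but it takes a substantially more laborious route than the paper's for the first summand $\gamma_A - \bar\gamma_A$. The paper exploits the standard interpolation inequality
\begin{equ}
|x|_{\var p} \leq \bigl(|x|_{\var{\frac1\alpha}}\bigr)^{\frac{1}{\alpha p}}\bigl(2|x|_\infty\bigr)^{1-\frac{1}{\alpha p}}
\end{equ}
for path-valued $x$, combined with two global facts: $|\gamma_A|_{\var{1/\alpha}} + |\bar\gamma_A|_{\var{1/\alpha}} \lesssim |A|_\alpha$ (from Corollary~\ref{cor:p-var_bound}, using that $|\gamma|_{\bar\alpha}$ is bounded on $\mcC^{1,\beta}$-balls) and $|\gamma_A - \bar\gamma_A|_\infty \lesssim |A|_\alpha|\gamma-\bar\gamma|_\infty^\eps$ (from~\eqref{eq:A_gamma_bar_gamma_bound} and Proposition~\ref{prop:gamma_alpha_Hol_bound}). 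This immediately yields the required $|\gamma-\bar\gamma|_\infty^\delta$ factor with $\delta = \eps(1-\frac{1}{\alpha p})$, and $|\gamma_{A-\bar A}|_{\var{1/\alpha}} \lesssim |A-\bar A|_\alpha$ then finishes. Your argument re-derives the content of this interpolation from scratch at the level of individual partition increments, which forces you to confront and resolve the genuine issue that the crude bound $|A(\ell)-A(\bar\ell)| \leq |A|_\alpha\rho(\ell,\bar\ell)^\alpha$ is not summable for $\alpha p < 2$: your far/not-far analysis of the chord term (yielding $\rho(\ell,\bar\ell)\lesssim|\gamma-\bar\gamma|_\infty+(R|t-s||\gamma-\bar\gamma|_\infty)^{1/2}$, where the far case gives $|\ell|+|\bar\ell|\lesssim|\gamma-\bar\gamma|_\infty$) is a nice observation, and your pointwise interpolation with parameter $\theta\in[\max(0,\frac{2}{\alpha p}-1),1)$ is correct upon noting that the two cases $R|t-s|\gtrless|\gamma-\bar\gamma|_\infty$ are each handled by one of the two ingredient bounds. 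The paper's approach sidesteps all of this because the global interpolation inequality never requires a per-increment comparison of $\gamma_A$ and $\bar\gamma_A$; it is worth internalising as the standard device for upgrading a sup-norm smallness estimate to a $p$-variation one when a higher-order variation bound is available.
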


\begin{proof}
Note that $|\gamma|_{\var1}$ is trivially bounded by $|\dot\gamma|_{\infty}$.
Furthermore, for $\bar\alpha\in[\frac23,1]$, it follows from Proposition~\ref{prop:gamma_alpha_Hol_bound} that $|\gamma|_{\bar\alpha}$ is uniformly bounded on balls in $\mcC^{1,\bar\beta}$ with $\bar\beta = \frac{2}{\bar\alpha}-2$.
As a consequence (using that $\beta>\frac2{\alpha}-2$), it follows from Corollary~\ref{cor:p-var_bound} that $|\gamma_A|_{\var{\frac1\alpha}} \lesssim |A|_{\alpha}$ uniformly on balls in $\mcC^{1,\beta}$.

On the other hand, by~\eqref{eq:A_gamma_bar_gamma_bound} and Proposition~\ref{prop:gamma_alpha_Hol_bound} (using again that $\beta>\frac2{\alpha}-2$), there exists $\eps>0$ such that $|\gamma_A-\bar\gamma_A|_\infty \lesssim |A|_{\alpha}|\gamma-\bar\gamma|^{\eps}_\infty$ uniformly over balls in $\mcC^{1,\beta}$.
Applying the interpolation estimate for $p>\frac1\alpha$
and $x\colon[0,1]\to E$
\begin{equ}
|x|_{\var p} \leq (|x|_{\var{\frac1\alpha}})^{\frac{1}{\alpha p}} (2|x|_\infty)^{1-\frac{1}{\alpha p}}\;,
\end{equ}
it follows that for some $\delta>0$
\begin{equ}
|\gamma_A-\bar\gamma_A|_{\var{p}} \lesssim |A|_{\alpha}|\gamma-\bar\gamma|^{\delta}_\infty
\end{equ}
uniformly on balls in $\mcC^{1,\beta}$.

Finally, it follows again from Corollary~\ref{cor:p-var_bound} and Proposition~\ref{prop:gamma_alpha_Hol_bound} that
\begin{equ}
|\gamma_{A}-\gamma_{\bar A}|_{\var{\frac1\alpha}} = |\gamma_{A-\bar A}|_{\var{\frac1\alpha}} \lesssim |A-\bar A|_{\alpha}
\end{equ}
uniformly over balls in $\mcC^{1,\beta}$, from which the conclusion follows.
\end{proof}

\subsection{Closure of smooth \texorpdfstring{$1$}{1}-forms}
\label{subsec:smooth_one_forms}

Recall that $\Omega\mcC\eqdef\Omega\mcC(\T^2,E)$ denotes the Banach space of continuous $E$-valued $1$-forms.
Following Remark~\ref{rem:line_integral}, there exists a canonical map 
\begin{equ}[e:inclusion]
\imath \colon \Omega\mcC \to \Omega_{\gr 1}
\end{equ}
defined by
\begin{equ}
\imath A(x,v) \eqdef \int_0^1 \sum_{i=1}^2 A_i(x+tv) v_i \mrd t\;,
\end{equ}
which is injective and satisfies $|\imath A|_{\gr1} \leq |A|_\infty$.

%

\begin{definition}\label{def:closure_smooth_1_forms}
For $\alpha\in [0,1]$, let $\Omega^1_\alpha$ and $\Omega^1_{\gr\alpha}$ denote the closure of $\imath(\Omega\mcC^\infty)$ in $\Omega_\alpha$ and $\Omega_{\gr\alpha}$ respectively.
\end{definition}

\begin{remark}\label{rem:Holder_Omega_embedding}
Recalling notation from Section~\ref{subsec:notation}, it is easy to see that $\imath$ embeds $\Omega\mcC^{\alpha/2}$ and $\Omega\mcC^{0,\alpha/2}$ continuously into $\Omega_{\alpha}$ and $\Omega^1_\alpha$ respectively (and that the exponent $\alpha/2$ is sharp in the sense that $\Omega\mcC^\beta$ and $\Omega\mcC^{0,\beta}$ do not embed into $\Omega_{\alpha}$ and $\Omega^1_\alpha$ for any $\beta<\alpha/2$).
\end{remark}

\begin{remark}\label{rem:separable}
Note that since any element of $\Omega\mcC^\infty$ can be approximated by a trigonometric polynomial with 
rational coefficients,
$\Omega^1_\alpha$ is a separable Banach space whenever $E$ is separable.
\end{remark}

We now construct a continuous, linear map $\pi\colon\Omega_{\gr\alpha} \to \Omega\mcC^{\alpha-1}$ which is a left inverse to $\imath$.
Consider $\alpha \in (0,1]$ and $A\in \Omega_{\gr\alpha}$.
For $z\in \T^2$, $i \in \{1,2\}$ and $t \in \R_+$, we then set
$X^{z}_{i}(t) \eqdef A(z,t e_i)$ (which is well-defined by the additivity of $A$ even
when $t \ge 1/4$). Note also that $|X^{z}_{i}|_{\Hol\alpha} \leq  |A|_{\gr\alpha}$ by Lemma~\ref{lem:Hol_paths}.
In a similar way, for $\psi \in \mcC^1(\T^2)$, consider $Y^z_i \in \mcC^1([0,1],\R)$ given by 
$Y^{z}_i (t)\eqdef \psi(z + t e_i)$.
We then define the $E$-valued distribution $\pi_i A\in\mcD'(\T^2,E)$ by
\begin{equ}
\scal{\pi_i A,\psi} \eqdef \int_{P_i}  \int_0^{1} Y^{z}_i(t) \mrd X^{z}_i(t)\mrd z\;,
\end{equ}
where the inner integral is in the Young sense and $P_i = \{z \in [0,1)^2\,:\, z_i = 0\}$.
Combining the components yields the linear map $\pi$.

 One can show that $|\pi A|_{\Omega\mcC^{\alpha-1}} \lesssim |A|_{\gr\alpha}$ (see~\cite[Prop.~3.21]{Chevyrev18YM}) and that $\pi$ is a left inverse of $\imath$ in the sense that, for all $A\in\Omega\mcC$, $\pi(\imath A) = A$ as distributions.
Furthermore, we see that $\pi$ is injective on $\Omega^1_{\gr\alpha}$ and one has a direct way to recover $A\in \Omega^1_{\gr\alpha}$ from $\pi A$ through mollifier approximations (see also Proposition~\ref{prop:strongly_continuous}).
In particular, we can identify $\Omega^1_{\gr\alpha}$ (and a fortiori $\Omega^1_\alpha$) with a subspace of $\Omega\mcC^{\alpha-1}$.

\begin{proposition}\label{prop:Omega^1_alpha_char}
Let $\alpha\in(0,1)$ and $A \in \Omega_{\gr\alpha}^1$.
Then  $A$ is a continuous function on $\mcX$, and
\begin{equ}\label{eq:A_ell_vanish}
\lim_{\eps\to0} \sup_{|\ell|<\eps} |\ell|^{-\alpha}|A(\ell)| = 0\;.
\end{equ}
\end{proposition}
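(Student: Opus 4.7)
The plan is to exploit the definition of $\Omega^1_{\gr\alpha}$ as the closure of $\imath(\Omega\mcC^\infty)$ in $\Omega_{\gr\alpha}$, reducing both claims to (i) elementary observations about smooth one-forms and (ii) uniform estimates that are immediate from the $|\cdot|_{\gr\alpha}$ bound combined with the fact that the line segments in $\mcX$ have length at most $\tfrac14$.

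The key preliminary observation is that for any $B \in \Omega\mcC^\infty$, the function $\imath B$ is continuous on $\mcX$ (it depends smoothly on $(x,v)$ through the integral formula of Section~\ref{subsec:smooth_one_forms}), and, by the trivial bound $|\imath B(\ell)|\leq |B|_\infty |\ell|$, satisfies
\begin{equ}
\sup_{|\ell|<\eps}|\ell|^{-\alpha}|\imath B(\ell)| \leq |B|_\infty\,\eps^{1-\alpha}\;,
\end{equ}
which tends to zero as $\eps\to 0$ since $\alpha<1$.

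For continuity of a general $A\in\Omega^1_{\gr\alpha}$, I would choose a sequence $B_n\in\Omega\mcC^\infty$ with $|A-\imath B_n|_{\gr\alpha}\to 0$. Then for every $\ell\in\mcX$,
\begin{equ}
|A(\ell)-\imath B_n(\ell)| \leq |A-\imath B_n|_{\gr\alpha}\,|\ell|^\alpha \leq (1/4)^\alpha |A-\imath B_n|_{\gr\alpha}\;,
\end{equ}
so $\imath B_n\to A$ \emph{uniformly} on $\mcX$. Since each $\imath B_n$ is continuous on $\mcX$, so is $A$.

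For the vanishing statement, fix $\delta>0$ and choose $B\in\Omega\mcC^\infty$ with $|A-\imath B|_{\gr\alpha}<\delta/2$. Then for every $\ell\in\mcX$ of positive length,
\begin{equ}
|\ell|^{-\alpha}|A(\ell)| \leq |\ell|^{-\alpha}|\imath B(\ell)| + |A-\imath B|_{\gr\alpha} \leq |B|_\infty|\ell|^{1-\alpha} + \delta/2\;.
\end{equ}
Taking $\eps$ small enough that $|B|_\infty\eps^{1-\alpha}<\delta/2$ yields $\sup_{|\ell|<\eps}|\ell|^{-\alpha}|A(\ell)| \leq \delta$, which is exactly \eqref{eq:A_ell_vanish}. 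Neither step presents any real obstacle; the only conceptual point is to notice that on the bounded set $\mcX$ the seminorm $|\cdot|_{\gr\alpha}$ dominates the sup-norm, so approximation in $\Omega_{\gr\alpha}$ automatically gives uniform approximation of the functions $A\colon\mcX\to E$.
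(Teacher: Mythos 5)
Your proof is correct and follows essentially the same strategy as the paper: approximate $A$ by $\imath B_n$ with $B_n\in\Omega\mcC^\infty$, observe that $|\cdot|_{\gr\alpha}$ controls the supremum of $\ell\mapsto|\ell|^{-\alpha}|A(\ell)|$ so approximation is uniform, and use $|\imath B(\ell)|\leq|B|_\infty|\ell|$ for the smooth case. The paper packages both conclusions into the single statement that $\ell\mapsto|\ell|^{-\alpha}A(\ell)$ (extended by zero) is continuous on $\mcX$, while you split it into two $\eps/2$-arguments, but the underlying estimates are identical.
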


\begin{proof}
Let $A \in \Omega^1_{\gr\alpha}$ and $\delta>0$.
Consider a sequence $(A^n)_{n \geq 1}$ in $\Omega\mcC^\infty$ such that $\lim_{n\to\infty}|\imath A^n - A|_{\gr\alpha} = 0$ and define functions $B^n \colon \mcX \to E$ by
\begin{equ}
B^n(\ell) \eqdef \begin{cases}
|\ell|^{-\alpha}\imath A^n(\ell) &\text{ if $|\ell|>0$,}
\\
0 &\text{ otherwise.}
\end{cases}
\end{equ}
We define $B\colon\mcX\to E$ in the same way with $\imath A^n$ replaced by $A$.
Observe that, since $A^n \in\Omega\mcC^\infty$, $B^n$ is a continuous function on $\mcX$.
Furthermore, $\lim_{n\to\infty}|\imath A^n - A|_{\gr\alpha} = 0$ is equivalent to
\begin{equ}
\lim_{n \to \infty} \sup_{\ell\in\mcX} |B^n(\ell) - B(\ell)|=0\;.
\end{equ}
Hence $B$ is continuous on $\mcX$, from which continuity of $A$ and~\eqref{eq:A_ell_vanish} follow.
\end{proof}

\subsection{Gauge transformations}
\label{subsec:gauge_transforms}

For the remainder of the section, we fix a compact Lie group $G$\label{G page ref}
with Lie algebra $\mfg$\label{mfg page ref}.
We equip $\mfg$ with an arbitrary norm and henceforth take $E=\mfg$ as our Banach space.
Since $G$ is compact, we can assume without loss of generality that $G$ 
(resp.\ $\mfg$) is a Lie subgroup of unitary matrices 
(resp.\ Lie subalgebra of anti-Hermitian matrices), so that both $G$ 
and $\mfg$ are embedded in some normed linear space $F$ of matrices.
When we write expressions of the form $|g-h|$ with $g,h \in G$, we
implicitly identify them in $F$ and interpret $|\cdot|$ as the norm of $F$. Different
choices of unitary representation yield equivalent distances, so the precise choice is unimportant.
For $g\in G$, we denote by $\Ad_g \colon \mfg \to \mfg$ the adjoint action $\Ad_g(X) = gXg^{-1}$.

For $\alpha \in [0,1]$ and a function $g\colon\T^2\to F$, recall the definition of the seminorm 
$|g|_{\Hol\alpha}$ and norm $|g|_\infty$.
We denote by $\mfG^\alpha$\label{mfG^alpha page ref} the subset $\mcC^\alpha(\T^2,G)$, which we 
note is a topological group.

\begin{definition}\label{def:gauge}
Let $\alpha \in (0,1]$, $A\in\Omega_{\gr\alpha}$, $\beta \in (\frac 12,1]$ with $\alpha + \beta > 1$, and $g \in \mfG^\beta$.
Define $A^g \in \Omega$ by
\begin{equ}
A^g(\ell) \eqdef \int_0^1 \bigl(\Ad_{g(x+tv)} \mrd \ell_A(t) - \big[\mrd g(x+tv) \big] g^{-1}(x+tv)\bigr)\;,
\end{equ}
where $\ell = (x,v) \in \mcX$, and where both terms make sense as $\mfg$-valued Young integrals since $\alpha+\beta > 1$ and $\beta > \frac12$.
In the case $\alpha > \frac12$, for $A,\bar A \in \Omega_{\gr\alpha}$ we write $A\sim\bar A$ if there exists $g \in \mfG^\alpha$ such that $A^g = \bar A$.
\end{definition}

Note that, in the case that $A$ is a continuous $1$-form and $g$ is $\mcC^1$, we have $\mrd \ell_A(t) = A(x+tv)(v) \mrd t$, hence
\begin{equ}
A^g(x) = \Ad_{g(x)} A(x) - \big[\mrd g(x)\big] g^{-1}(x) \;,
\end{equ}
as one expects from interpreting $A$ as a connection.
However, in the interpretation of $A$ as a $1$-form, the more natural map is $A\mapsto A^g-0^g$, which is linear and makes sense for any $\beta\in(0,1]$ such that $\alpha+\beta>1$
(here $0$ is an element of $\Omega_{\gr\alpha}$ and, despite the notation, $0^g$ is in general non-zero).

The main result of this subsection is the following.

\begin{theorem}\label{thm:grp_action}
Let $\beta \in (\frac23,1]$ and $\alpha \in (0,1]$ such that $\alpha + \frac\beta2 > 1$ and $\frac\alpha2 + \beta > 1$.
Then the map $(A,g) \mapsto A^g$ is a continuous map from $\Omega_\alpha\times\mfG^\beta$ (resp. $\Omega_{\gr\alpha}\times\mfG^\beta$) into $\Omega_{\alpha\wedge\beta}$ (resp. $\Omega_{\gr{\alpha\wedge\beta}}$)
and is furthermore uniformly continuous on every ball.
If $\alpha\leq \beta$, then this map defines a left-group action, i.e., $(A^h)^g = A^{gh}$.
\end{theorem}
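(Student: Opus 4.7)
The plan is to decompose $A^g$ as the sum of an $A$-linear piece
\begin{equ}
\Phi(A,g)(\ell) \eqdef \int_0^1 \Ad_{g(x+tv)}\mrd\ell_A(t)\;,
\end{equ}
and the purely gauge piece $0^g(\ell)=-\int_0^1[\mrd g(x+tv)]g^{-1}(x+tv)$. Under the standing assumptions $\beta>\tfrac23$ (so $2\beta>1$) and $\alpha+\beta>1$, both expressions exist as Young integrals: $\ell_A$ is $\alpha$-H\"older with $|\ell_A|_{\Hol\alpha}\le|\ell|^\alpha|A|_{\gr\alpha}$ by Lemma~\ref{lem:Hol_paths}, while $t\mapsto g(x+tv)$ is $\beta$-H\"older with norm at most $|g|_{\mcC^\beta}|\ell|^\beta$, so both pairings are admissible. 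Additivity of $A^g$ under the concatenation of line segments is inherited from concatenation of Young integrals, so $A^g\in\Omega$.

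To show $A^g\in\Omega_{\alpha\wedge\beta}$, Theorem~\ref{thm:norm_equiv} reduces matters to bounding $|A^g|_{\gr{\alpha\wedge\beta}}$ and $|A^g|_{\v{\alpha\wedge\beta}}$ separately; only the first is needed for the $\Omega_{\gr{\alpha\wedge\beta}}$ version of the statement. The growth bound follows from the standard Young estimate $|\int_0^1 f\mrd h-f(0)(h(1)-h(0))|\lesssim|f|_{\Hol\beta}|h|_{\Hol\alpha}$ applied to each of the two integrals, yielding $|A^g(\ell)|\lesssim|A|_{\gr\alpha}|\ell|^\alpha+|g|_{\mcC^\beta}^2|\ell|^\beta$. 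The main obstacle is the vee bound. Fix $\ell=(x,v),\bar\ell=(x,\bar v)$ forming a vee; the geometric identity $|v-\bar v|\asymp\Area(\ell,\bar\ell)/|v|$ (valid because ``not far'' forces the angle between $v$ and $\bar v$ to be bounded away from $\pi$) yields, for $F(t)\eqdef\Ad_{g(x+tv)}$ and $\bar F(t)\eqdef\Ad_{g(x+t\bar v)}$, the bounds $|F-\bar F|_\infty\lesssim|g|_{\Hol\beta}(\Area/|v|)^\beta$ and $|F|_{\Hol\beta}+|\bar F|_{\Hol\beta}\lesssim|g|_{\Hol\beta}|v|^\beta$. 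Interpolating produces the pivotal estimate
\begin{equ}
|F-\bar F|_{\Hol{\beta/2}}\lesssim|g|_{\Hol\beta}\Area(\ell,\bar\ell)^{\beta/2}\;,
\end{equ}
with an analogous bound for $t\mapsto g(x+tv)$ and $t\mapsto g(x+t\bar v)$ directly in place of their adjoint actions. Combined with the estimate $|\ell_A-\bar\ell_A|_{\Hol{\alpha/2}}\lesssim\Area^{\alpha/2}|A|_\alpha$ from Lemma~\ref{lem:Hol_paths}, the telescoping
\begin{equ}
\int_0^1 F\,\mrd\ell_A-\int_0^1\bar F\,\mrd\bar\ell_A=\int_0^1(F-\bar F)\,\mrd\ell_A+\int_0^1\bar F\,\mrd(\ell_A-\bar\ell_A)
\end{equ}
together with the standard Young estimate produces $|\Phi(A,g)(\ell)-\Phi(A,g)(\bar\ell)|\lesssim(|A|_\alpha+|A|_{\gr\alpha})|g|_{\mcC^\beta}\Area^{\alpha/2}$; here the assumptions $\alpha+\beta/2>1$ and $\alpha/2+\beta>1$ are precisely what make the two resulting Young pairings admissible. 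A parallel purely $g$-dependent argument bounds $|0^g(\ell)-0^g(\bar\ell)|\lesssim|g|_{\mcC^\beta}^2\Area^{\beta/2}$, which together with the previous estimate yields the vee bound with exponent $\alpha\wedge\beta$.

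Continuity, and uniform continuity on balls, follow by applying the same Young estimates to the differences $A^g-\bar A^g$ (which is $\Phi(A-\bar A,g)$, linear in $A-\bar A$, so all the above bounds go through verbatim) and $\bar A^g-\bar A^{\bar g}$; for the latter one uses $|\Ad_g-\Ad_{\bar g}|_\infty\lesssim|g-\bar g|_\infty$ together with the analogous H\"older-difference estimates, all of which are polynomial in $|g|_{\mcC^\beta},|\bar g|_{\mcC^\beta}$ times $|g-\bar g|_{\mcC^\beta}$. Finally, the group action identity $(A^h)^g=A^{gh}$ when $\alpha\le\beta$ (this condition ensures $A^h\in\Omega_\alpha$ so that the left-hand side is defined) reduces after unwinding the definitions to the Leibniz identity $\mrd(gh)(gh)^{-1}=\Ad_g(\mrd h\cdot h^{-1})+\mrd g\cdot g^{-1}$, which holds as a Young integral identity under the given regularity; alternatively it can be obtained by a density\slash continuity argument, since the identity is classical for smooth $A,g,h$ and both sides are continuous in $(A,g,h)$ by the estimates above.
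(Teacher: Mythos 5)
Your proposal is correct and follows the same underlying Young-integral machinery, but the route differs from the paper's in two places worth noting.

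\textbf{The additive decomposition.} You split $A^g = \Phi(A,g) + 0^g$ with $\Phi(A,g)(\ell)=\int_0^1\Ad_{g(\cdot)}\,\mrd\ell_A$, and bound $\Phi$ and $0^g$ directly. The paper instead bounds the shifted quantity $A^g-0^g-A = \Phi(A,g)-A = \int_0^1(\Ad_{\ell_g(t)}-1)\,\mrd\ell_A(t)$ (its Lemmas~\ref{lem:group_action_gr} and~\ref{lem:group_action_vee}). This extra subtraction is not needed for the boundedness claim (both routes give $A^g\in\Omega_{\alpha\wedge\beta}$), but it is precisely what makes the paper's continuity argument so light: the resulting bounds carry the ``small'' prefactor $|g-1|_\infty+|g|_{\Hol\beta}$, which vanishes as $g\to 1$.

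\textbf{Continuity in the $g$-variable.} For $A^g-B^h$ you propose to telescope and then carefully estimate Young integrals of the form $\int(\Ad_g-\Ad_{\bar g})\,\mrd\ell_A$, etc., noting that the H\"older-difference estimates are ``polynomial in $|g|_{\mcC^\beta},|\bar g|_{\mcC^\beta}$ times $|g-\bar g|_{\mcC^\beta}$''. That is true, but carrying it out requires establishing a full parallel family of \emph{doubly-telescoped} vee-type estimates for the difference $\Phi(\cdot,g)-\Phi(\cdot,\bar g)$ and for $0^g-0^{\bar g}$ (both of which mix changing integrand with changing integrator), plus second-order remainder terms in the Young bound. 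Your write-up gestures at this but doesn't spell it out; the details are not dangerous, but they are real work. The paper sidesteps all of this with the algebraic identity
\begin{equ}
A^g-B^h =   \bigl((A-B)^h-0^h - (A-B)\bigr) -\bigl((A^g)^{hg^{-1}} - 0^{hg^{-1}} - A^g\bigr) - 0^{hg^{-1}} + (A-B)\;,
\end{equ}
valid thanks to $(A^g)^{hg^{-1}}=A^h$ and linearity of $X\mapsto X^g-0^g$. Every term on the right is of the special form already estimated, with a $hg^{-1}$-close-to-$1$ factor doing the work, so uniform continuity on balls is immediate from the boundedness lemmas — no difference estimates for Young integrals are required. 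In short: your plan is sound, but the paper's extra $-A$ subtraction plus this identity buy a substantially shorter and cleaner continuity argument, which is the main thing you lose by not adopting it.

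The remaining items — the geometric equivalence $|v-\bar v|\asymp\Area(\ell,\bar\ell)/|v|$ for vees, the $\Hol{\beta/2}$ interpolated estimate on $\Ad_{g(\cdot)}$, and the reduction of the group-action identity to the Leibniz rule $\mrd(gh)(gh)^{-1}=\Ad_g[(\mrd h)h^{-1}]+(\mrd g)g^{-1}$ — all match the paper's Lemmas~\ref{lem:ell_g_bound}, \ref{lem:Hol_paths} and the final paragraph of its proof, and your invocation of Theorem~\ref{thm:norm_equiv} to reduce to the $\gr{}$- and $\v{}$-seminorms is exactly right.
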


We give the proof of Theorem~\ref{thm:grp_action} at the end of this subsection.
We begin by analysing the case $A=0$.

\begin{proposition}\label{prop:mfG_bound}
Let $\alpha \in (\frac23,1]$ and $g \in \mfG^\alpha$.
Then $|0^g|_\alpha \lesssim |g|_{\Hol\alpha}\vee|g|_{\Hol\alpha}^2$, where the proportionality constant depends only on $\alpha$.
\end{proposition}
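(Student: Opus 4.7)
By the norm equivalence of Theorem~\ref{thm:norm_equiv}, it suffices to bound $|0^g|_{\gr\alpha}$ and $|0^g|_{\v\alpha}$ separately. Both bounds follow from one-dimensional Young integration applied after parametrising the relevant line segments, with a compactness argument on $G$ to control inverses.

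For the graded norm, fix $\ell=(x,v)\in\mcX$ and set $\eta(t)\eqdef g(x+tv)$, so $\eta\colon[0,1]\to G$ satisfies $[\eta]_{\Hol\alpha,[0,1]}\le|g|_{\Hol\alpha}|v|^\alpha$. Since $2\alpha>1$, $0^g(\ell)=-\int_0^1 d\eta\cdot\eta^{-1}$ is a Young integral, and the standard estimate combined with the compactness of $G$ (so $|\eta^{-1}|_\infty\le 1$ and $[\eta^{-1}]_{\Hol\alpha,[0,1]}\lesssim[\eta]_{\Hol\alpha,[0,1]}$) gives
\begin{equ}
|0^g(\ell)|\lesssim|g|_{\Hol\alpha}|v|^\alpha+|g|_{\Hol\alpha}^2|v|^{2\alpha}\;.
\end{equ}
Since $|v|\le 1/4$ forces $|v|^{2\alpha}\le|v|^\alpha$, this yields $|0^g|_{\gr\alpha}\lesssim|g|_{\Hol\alpha}\vee|g|_{\Hol\alpha}^2$. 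For the vee seminorm, consider a vee pair $\ell=(x,v)$, $\bar\ell=(x,\bar v)$ with $|v|=|\bar v|$, and write $h_1(t)\eqdef g(x+tv)$, $h_2(t)\eqdef g(x+t\bar v)$, both starting at $g(x)$. I would decompose
\begin{equ}
0^g(\ell)-0^g(\bar\ell)=-\int_0^1(dh_1-dh_2)\,h_1^{-1}-\int_0^1 dh_2\,(h_1^{-1}-h_2^{-1})\;.
\end{equ}
Direct estimates give $|h_1-h_2|_\infty\lesssim|g|_{\Hol\alpha}|v-\bar v|^\alpha$ and $[h_1-h_2]_{\Hol\alpha,[0,1]}\lesssim|g|_{\Hol\alpha}|v|^\alpha$ (and analogously for $h_1^{-1}-h_2^{-1}$, which additionally vanishes at $t=0$). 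Interpolating between $L^\infty$ and $\mcC^{\Hol\alpha}$ then produces $[h_1-h_2]_{\Hol\beta,[0,1]}\lesssim|g|_{\Hol\alpha}|v-\bar v|^{\alpha-\beta}|v|^\beta$ for every $\beta\in[0,\alpha]$. Applying the Young estimate to each integral with the choice $\beta=\alpha/2$ (admissible since $\alpha+\beta=\tfrac{3\alpha}{2}>1$) and using that the second integral's boundary term vanishes, one obtains
\begin{equ}
|0^g(\ell)-0^g(\bar\ell)|\lesssim\bigl(|g|_{\Hol\alpha}\vee|g|_{\Hol\alpha}^2\bigr)(|v|\,|v-\bar v|)^{\alpha/2}\;,
\end{equ}
which is the desired bound on $|0^g|_{\v\alpha}$ since $\Area(\ell,\bar\ell)\asymp|v|\,|v-\bar v|$ for vees (Remark~\ref{rem:inframetric}).

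The main subtlety is the choice $\beta=\alpha/2$: the Young integrability condition forces $\beta>1-\alpha$, while producing area scaling at the end requires $\beta\le\alpha/2$ so that the residual factor $|v-\bar v|^{\alpha/2-\beta}$ does not blow up as $|v-\bar v|\to 0$. These two constraints are compatible precisely when $1-\alpha<\alpha/2$, i.e.\ when $\alpha>\frac23$, which is exactly the hypothesis of the proposition. Attempting instead to bound $|0^g|_{\tri\alpha}$ directly — for instance by applying the Baker--Campbell--Hausdorff formula to the trivial holonomy identity $\hol(0^g,\partial P)=e$ — would produce bracket terms of order $|g|_{\Hol\alpha}^2|\ell_i|^\alpha|\ell_j|^\alpha$ which cannot be controlled by $|P|^{\alpha/2}$ for long-thin triangles; using the vee seminorm sidesteps this difficulty because the two compared segments are constrained to share both an initial point and a common length.
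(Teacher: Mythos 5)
Your proposal is correct and follows essentially the same route as the paper: the paper also reduces to the $\gr\alpha$ and $\v\alpha$ norms via Theorem~\ref{thm:norm_equiv}, splits the vee difference into two Young integrals (the paper's decomposition in Lemma~\ref{lem:g_vee_bound} is the mirror image of yours, putting the increment on $g^{-1}$ and $\mrd g$ rather than on $\mrd g$ and $g^{-1}$), and controls the cross terms via the interpolated $\Hol{\alpha/2}$ bound that the paper packages as Lemma~\ref{lem:ell_g_bound} and you derive inline. The extra discussion of why $\beta=\alpha/2$ forces $\alpha>\tfrac23$, and why a direct BCH/triangle bound fails, is accurate but not needed for the proof.
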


For the proof of Proposition~\ref{prop:mfG_bound}, we require several lemmas.

\begin{lemma}\label{lem:ell_g_bound}
Let $\alpha \in [0,1]$, $g \in \mfG^\alpha$, and $\ell = (x,v)$, $\bar\ell=(\bar x, \bar v) \in \mcX$ forming a vee.
Consider the path $\ell_g \colon [0,1] \to G$ given by $\ell_g(t) = g(x+tv)$, and similarly for $\bar\ell_g$.
Then
\begin{equ}\label{eq:ell_g_Hol}
|\ell_g|_{\Hol\alpha} \leq |\ell|^\alpha|g|_{\Hol\alpha}
\end{equ}
and
\begin{equ}\label{eq:ell_g_Hol_diff}
|\ell_g-\bar\ell_g|_{\Hol{\alpha/2}} \lesssim |g|_{\Hol\alpha} \Area(\ell,\bar\ell)^{\alpha/2}
\end{equ}
for a universal proportionality constant.
\end{lemma}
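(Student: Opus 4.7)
The first bound is immediate: since $\ell_g(t)-\ell_g(s) = g(x+tv)-g(x+sv)$ and $|(tv)-(sv)|=|t-s|\,|v|=|t-s|\,|\ell|$, the definition of $|g|_{\Hol\alpha}$ yields $|\ell_g(t)-\ell_g(s)| \leq |g|_{\Hol\alpha}|t-s|^\alpha|\ell|^\alpha$.

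For the second bound, write
\begin{equ}
\Delta(s,t) \eqdef \ell_g(t)-\ell_g(s)-\bar\ell_g(t)+\bar\ell_g(s) = g(x+tv)-g(x+sv)-g(x+t\bar v)+g(x+s\bar v)\;,
\end{equ}
using $\ell_i=\bar\ell_i=x$. The plan is to obtain two estimates of $|\Delta(s,t)|$ and interpolate between them. The first, obtained by pairing $(g(x+tv),g(x+sv))$ and $(g(x+t\bar v),g(x+s\bar v))$, gives
\begin{equ}
|\Delta(s,t)| \leq 2|g|_{\Hol\alpha}|t-s|^\alpha|v|^\alpha\;,
\end{equ}
using $|v|=|\bar v|$. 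The second, obtained by pairing $(g(x+tv),g(x+t\bar v))$ and $(g(x+sv),g(x+s\bar v))$, gives
\begin{equ}
|\Delta(s,t)| \leq |g|_{\Hol\alpha}(t^\alpha+s^\alpha)|v-\bar v|^\alpha \leq 2|g|_{\Hol\alpha}|v-\bar v|^\alpha\;.
\end{equ}
Taking the geometric mean yields $|\Delta(s,t)| \leq 2|g|_{\Hol\alpha}|t-s|^{\alpha/2}|v|^{\alpha/2}|v-\bar v|^{\alpha/2}$.

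It remains to convert $|v|^{1/2}|v-\bar v|^{1/2}$ into $\Area(\ell,\bar\ell)^{1/2}$. Let $\theta$ be the angle between $v$ and $\bar v$; since $|v|=|\bar v|$ one has
\begin{equ}
\Area(\ell,\bar\ell) = \tfrac12|v\times\bar v| = |v|^2\sin(\theta/2)\cos(\theta/2)\;, \qquad |v-\bar v| = 2|v|\sin(\theta/2)\;.
\end{equ}
The ``not far'' condition in the definition of a vee forces $|v-\bar v| = d(\ell,\bar\ell)\leq \tfrac14|v|$, so $\sin(\theta/2)\leq 1/8$ and in particular $\cos(\theta/2)\geq 1/\sqrt2$. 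Therefore $\Area(\ell,\bar\ell)\gtrsim |v|^2\sin(\theta/2) = \tfrac12|v|\,|v-\bar v|$, giving $|v|^{\alpha/2}|v-\bar v|^{\alpha/2}\lesssim \Area(\ell,\bar\ell)^{\alpha/2}$. Combining with the interpolated estimate gives \eqref{eq:ell_g_Hol_diff}.

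The main content is the geometric comparison $|v|\,|v-\bar v|\lesssim \Area(\ell,\bar\ell)$, which is the place where the ``not far'' half of the vee condition is used; everything else is interpolation of Hölder bounds and is routine. No real obstacle is anticipated.
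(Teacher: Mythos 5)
Your proof is correct and follows essentially the same route as the paper's: interpolate between the two H\"older bounds (one via the pairing in $t$, one via the pairing in direction) to get a factor $|t-s|^{\alpha/2}|v|^{\alpha/2}|v-\bar v|^{\alpha/2}$, then use that $\Area(\ell,\bar\ell) \asymp |\ell|\,|\ell_f-\bar\ell_f|$ for a vee. The only difference is that the paper simply cites this last equivalence (it is part of Remark~\ref{rem:inframetric}), whereas you give the explicit trigonometric justification, including the use of the ``not far'' condition to bound $\sin(\theta/2)$ away from $1$ so that $\cos(\theta/2) \gtrsim 1$.
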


\begin{proof}
We have $|\ell_g(t) - \ell_g(s)| \leq |g|_{\Hol\alpha}|t-s|^\alpha|\ell|^\alpha$, which proves~\eqref{eq:ell_g_Hol}.
For~\eqref{eq:ell_g_Hol_diff}, we have
\begin{equs}
|\ell_g(t) - \ell_g(s) - \bar\ell_g(t)+\bar\ell_g(s)| &\leq |g|_{\Hol\alpha} \big[ (2t^\alpha|\ell_f-\bar\ell_f|^\alpha)\wedge (|t-s|^\alpha|\ell|^\alpha)\big]
\\
&\lesssim 2|g|_{\Hol\alpha} |t-s|^{\alpha/2}\Area(\ell,\bar\ell)^{\alpha/2}\;,
\end{equs}
where in the second inequality we used interpolation and the fact that $\Area(\ell,\bar\ell) \asymp |\ell_f-\bar\ell_f||\ell|$.
\end{proof}

\begin{lemma}\label{lem:g_gr_bound}
Let $\alpha \in (\frac12,1]$ and $g \in \mfG^\alpha$.
Then $|0^g|_{\gr\alpha} \lesssim |g|_{\Hol\alpha}\vee |g|_{\Hol\alpha}^2$, where the proportionality constant depends only on $\alpha$.
\end{lemma}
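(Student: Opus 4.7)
The plan is to estimate $|0^g(\ell)|$ directly from the defining Young integral. For $\ell = (x,v) \in \mcX$, write $\ell_g(t) = g(x+tv)$ as in Lemma~\ref{lem:ell_g_bound}, so that
$$0^g(\ell) = -\int_0^1 \mrd \ell_g(t) \cdot \ell_g^{-1}(t),$$
interpreted as a matrix-valued Young integral. This is well-defined since both $\ell_g$ and $\ell_g^{-1}$ are $\alpha$-H\"older with $2\alpha > 1$.

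First I would note that the inversion map $h \mapsto h^{-1}$ is Lipschitz on the compact subset $G$ of $F$, via the identity $h^{-1} - k^{-1} = h^{-1}(k-h)k^{-1}$ together with uniform boundedness of $|h^{-1}|$ for $h \in G$. Combined with~\eqref{eq:ell_g_Hol} from Lemma~\ref{lem:ell_g_bound}, this gives $|\ell_g^{-1}|_{\Hol\alpha} \lesssim |\ell|^\alpha |g|_{\Hol\alpha}$, with proportionality constant depending only on $G$.

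Next I would invoke the standard Young--L\'oeve estimate to obtain
$$\Bigl|\int_0^1 \mrd \ell_g \cdot \ell_g^{-1} - (\ell_g(1) - \ell_g(0))\,\ell_g^{-1}(0)\Bigr| \lesssim |\ell_g|_{\Hol\alpha}\,|\ell_g^{-1}|_{\Hol\alpha} \lesssim |g|_{\Hol\alpha}^2\,|\ell|^{2\alpha},$$
while the ``boundary term'' is estimated by $|\ell_g(1) - \ell_g(0)|\cdot|\ell_g^{-1}(0)| \lesssim |g|_{\Hol\alpha}\,|\ell|^\alpha$, using again that $G$ is uniformly bounded in $F$. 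Since $|\ell| \leq \tfrac14 \leq 1$ we have $|\ell|^{2\alpha} \leq |\ell|^\alpha$, so combining the two contributions yields $|0^g(\ell)| \lesssim (|g|_{\Hol\alpha} \vee |g|_{\Hol\alpha}^2)\,|\ell|^\alpha$ for every $\ell \in \mcX$, which is precisely the claimed bound on $|0^g|_{\gr\alpha}$.

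There is no real obstacle here beyond the bookkeeping of a matrix-valued Young integral and keeping track of the two regimes $|g|_{\Hol\alpha} \leq 1$ and $|g|_{\Hol\alpha} > 1$ (which is what produces the $\vee$ in the final bound); the heart of the argument is simply the Young--L\'oeve inequality applied to Lemma~\ref{lem:ell_g_bound}.
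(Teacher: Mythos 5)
Your proof is correct and is essentially the same argument the paper gives: both estimate the matrix-valued Young integral $\int_0^1 \mrd\ell_g\,\ell_g^{-1}$ using the H\"older bound $|\ell_g|_{\Hol\alpha}\le |\ell|^\alpha|g|_{\Hol\alpha}$ from Lemma~\ref{lem:ell_g_bound} together with the Young estimate, yielding the bound $(1+|g|_{\Hol\alpha}|\ell|^\alpha)|\ell|^\alpha|g|_{\Hol\alpha}$. You have merely unpacked ``Young's estimate'' into its boundary term and the Young--L\'oeve remainder, and made explicit that compactness of $G$ gives $|\ell_g^{-1}|_{\Hol\alpha}\lesssim |\ell|^\alpha|g|_{\Hol\alpha}$; the paper leaves these steps implicit.
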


\begin{proof}
Let $\ell = (x,v) \in \mcX$.
Then by~\eqref{eq:ell_g_Hol} and Young's estimate
\begin{equ}
|0^g(\ell)| = \Big|\int_0^1 \mrd g(x+tv)\,g^{-1}(x+tv) \Big| \lesssim (1+|g|_{\Hol\alpha}|\ell|^{\alpha})|\ell|^{\alpha}|g|_{\Hol\alpha}\;,
\end{equ}
which implies the claim.
\end{proof}

\begin{lemma}\label{lem:g_vee_bound}
Let $\alpha \in (\frac23,1]$ and $g \in \mfG^\alpha$.
Then $|0^g|_{\v\alpha} \lesssim |g|_{\Hol\alpha}\vee|g|_{\Hol\alpha}^2$, where the proportionality constant depends only on $\alpha$.
\end{lemma}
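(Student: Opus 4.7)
The plan is the following. Fix $\ell=(x,v),\bar\ell=(x,\bar v)\in\mcX$ forming a vee, set $h(t)\eqdef g(x+tv)$ and $\bar h(t)\eqdef g(x+t\bar v)$, and note that $0^g(\ell) = -\int_0^1 (\mrd h)\,h^{-1}$ as a Young integral (and similarly for $\bar\ell$), so that
\[
0^g(\ell) - 0^g(\bar\ell) \;=\; -\int_0^1 \mrd(h-\bar h)\,h^{-1} \;-\; \int_0^1 \mrd\bar h\,(h^{-1}-\bar h^{-1})\,.
\]
The two terms will be estimated via the standard Young bound
\[
\Big|\int_0^1 f\,\mrd k\Big| \;\lesssim\; |f|_\infty |k|_{\Hol\alpha} + |f|_{\Hol{\alpha/2}}|k|_{\Hol\alpha}\,,
\]
which is applicable since $\alpha + \alpha/2 > 1$ by the hypothesis $\alpha > 2/3$. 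From the definition of a vee we will use $|v|=|\bar v|$ and (since vees are not far) $|v-\bar v| \leq \tfrac14|v|$, together with $\Area(\ell,\bar\ell) \asymp |v|\,|v-\bar v|$.

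The inputs for the Young bound are provided by Lemma~\ref{lem:ell_g_bound}: $|h|_{\Hol\alpha},|\bar h|_{\Hol\alpha} \leq |g|_{\Hol\alpha}|v|^\alpha$ and $|h-\bar h|_{\Hol{\alpha/2}} \lesssim |g|_{\Hol\alpha}\Area(\ell,\bar\ell)^{\alpha/2}$. Since $G$ is a compact Lie group and matrix inversion is smooth on $G$, the same lemma applied to the map $g^{-1}$ (which has Hölder seminorm comparable to that of $g$) yields $|h^{-1}|_{\Hol\alpha} \lesssim |g|_{\Hol\alpha}|v|^\alpha$ and $|h^{-1}-\bar h^{-1}|_{\Hol{\alpha/2}} \lesssim |g|_{\Hol\alpha}\Area(\ell,\bar\ell)^{\alpha/2}$, while compactness of $G$ gives $|h^{-1}|_\infty \lesssim 1$. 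Plugging these into the Young bound, the first of the two integrals above is controlled by
\[
\lesssim \bigl(1 + |g|_{\Hol\alpha}|v|^\alpha\bigr)|g|_{\Hol\alpha}\Area(\ell,\bar\ell)^{\alpha/2} \;\lesssim\; \bigl(|g|_{\Hol\alpha}\vee|g|_{\Hol\alpha}^2\bigr)\Area(\ell,\bar\ell)^{\alpha/2}\,,
\]
using $|v|\leq 1/4$.

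The only nontrivial point will be controlling $|h^{-1}-\bar h^{-1}|_\infty$ (needed for the second integral) in terms of $\Area(\ell,\bar\ell)^{\alpha/2}$. From the identity $h^{-1}-\bar h^{-1} = h^{-1}(\bar h-h)\bar h^{-1}$ one obtains $|h^{-1}-\bar h^{-1}|_\infty \lesssim |g|_{\Hol\alpha}|v-\bar v|^\alpha$; this is then converted via
\[
|v-\bar v|^\alpha \;=\; (|v-\bar v|/|v|)^{\alpha/2}\Area(\ell,\bar\ell)^{\alpha/2} \;\lesssim\; \Area(\ell,\bar\ell)^{\alpha/2}\,,
\]
where the crucial ingredient is the vee bound $|v-\bar v|\leq\tfrac14|v|$. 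Combining these ingredients, the second integral is bounded by $\lesssim |g|_{\Hol\alpha}^2|v|^\alpha\Area(\ell,\bar\ell)^{\alpha/2} \lesssim |g|_{\Hol\alpha}^2\Area(\ell,\bar\ell)^{\alpha/2}$, and summing the two contributions yields $|0^g(\ell)-0^g(\bar\ell)| \lesssim (|g|_{\Hol\alpha}\vee|g|_{\Hol\alpha}^2)\Area(\ell,\bar\ell)^{\alpha/2}$, as required. No serious difficulty is expected beyond the geometric manipulation converting $|v-\bar v|^\alpha$ bounds into $\Area$ bounds through the vee hypothesis; the rest is bookkeeping of Young estimates using Lemma~\ref{lem:ell_g_bound}.
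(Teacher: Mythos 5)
Your proof is correct and follows essentially the same route as the paper: telescope the difference of the two Young integrals, apply Young's estimate, and feed in the Hölder bounds of Lemma~\ref{lem:ell_g_bound} for both $g$ and $g^{-1}$ (the latter being legitimate since $|g^{-1}|_{\Hol\alpha}\lesssim|g|_{\Hol\alpha}$ by compactness of $G$). The paper uses the complementary splitting $\int(Y-\bar Y)\,\mrd X+\int\bar Y\,\mrd(X-\bar X)$ and invokes $Y_0=\bar Y_0$ to avoid an $L^\infty$ term, whereas you control $|h^{-1}-\bar h^{-1}|_\infty$ directly via $|v-\bar v|^\alpha\lesssim\Area(\ell,\bar\ell)^{\alpha/2}$; these are equivalent given $\Area\asymp|v|\,|v-\bar v|$ on a vee. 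One presentational slip: the Young estimate as you state it (with $|f|_{\Hol{\alpha/2}}$ and $|k|_{\Hol\alpha}$) is the form suited to your \emph{second} integral; for the first, where $h-\bar h$ occupies the $\mrd k$-slot, you in fact use the dual form $|\int f\,\mrd k|\lesssim|f|_\infty|k|_{\Hol{\alpha/2}}+|f|_{\Hol\alpha}|k|_{\Hol{\alpha/2}}$ (still valid since $\alpha+\tfrac\alpha2>1$), as is evident from the bound you then write down, so this is a typo rather than a gap.
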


\begin{proof}
Let $\ell = (x,v), \bar\ell = (x, \bar v) \in \mcX$ form a vee.
Then, denoting $Y_t \eqdef g^{-1}(x+tv), \bar Y_t \eqdef g^{-1}(x+t\bar v)$, and $X_t \eqdef g(x+tv), \bar X_t \eqdef g(x+t\bar v)$, we have
\begin{equs}
|0^g(\ell) - 0^g(\bar\ell)|
&= \Big| \int_0^1 Y_t \mrd X_t - \int_0^1 \bar Y_t \mrd \bar X_t \Big|
\\
&\leq \Big| \int_0^1 (Y_t-\bar Y_t) \mrd X_t \Big| + \Big| \int_0^1 \bar Y_t \mrd (X_t - \bar X_t) \Big|\;.
\end{equs}
Using~\eqref{eq:ell_g_Hol_diff},~\eqref{eq:ell_g_Hol}, Young's estimate, and the fact that $Y_0 = \bar Y_0$, we have
\begin{equs}
\Big|\int_0^1 (Y_t-\bar Y_t) \mrd X_t\Big|
&\lesssim |Y-\bar Y|_{\Hol{\alpha/2}}|X|_{\Hol\alpha} 
\\
&\lesssim |g|_{\Hol\alpha}^2\Area(\ell,\bar\ell)^{\alpha/2}|\ell|^\alpha
\end{equs}
and
\begin{equs}
\Big| \int_0^1 \bar Y_t \mrd (X_t - \bar X_t) \Big|
&\lesssim (1+|\bar Y|_{\Hol{\alpha}})|X-\bar X|_{\Hol{\alpha/2}}
\\
&\lesssim (1+|\ell|^\alpha|g|_{\Hol\alpha})|g|_{\Hol\alpha}\Area(\ell,\bar\ell)^{\alpha/2}\;,
\end{equs}
thus concluding the proof.
\end{proof}

\begin{proof}[of Proposition~\ref{prop:mfG_bound}]
Combining the equivalence of norms $|\cdot|_\alpha \asymp |\cdot|_{\gr\alpha} + |\cdot|_{\v\alpha}$ from Theorem~\ref{thm:norm_equiv} with Lemmas~\ref{lem:g_gr_bound} and~\ref{lem:g_vee_bound} yields the proof.
\end{proof}

For the lemmas which follow, recall that the quantity $A^g-0^g$ makes sense for all $A\in\Omega_{\gr\alpha}$ and $g\in\mfG^\beta$ provided that $\alpha,\beta\in(0,1]$ with $\alpha+\beta>1$.

\begin{lemma}\label{lem:group_action_gr}
Let $\alpha,\beta \in (0,1]$ such that $\alpha+\beta>1$, $A\in\Omega_{\gr\alpha}$, and $g\in\mfG^\beta$. Then
\begin{equ}\label{eq:mfG_gr_bound}
|A^g - 0^g - A|_{\gr{\alpha}} \lesssim (|g-1|_\infty + |g|_{\Hol\beta})|A|_{\gr\alpha}\;,
\end{equ}
where the proportionality constant depends only on $\alpha$ and $\beta$.
\end{lemma}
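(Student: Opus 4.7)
The starting observation is that the two terms making up $A^g - 0^g - A$ can be combined into a single Young integral: by the definition in Definition~\ref{def:gauge},
\begin{equ}
A^g(\ell) - 0^g(\ell) - A(\ell) = \int_0^1 \bigl(\Ad_{g(x+tv)} - \mathrm{Id}\bigr)\,\mrd \ell_A(t)\;,
\end{equ}
for any $\ell = (x,v)\in\mcX$. The integrand is a $\beta$-H\"older path (valued in endomorphisms of $\mfg$) while $\ell_A$ is $\alpha$-H\"older by Lemma~\ref{lem:Hol_paths}, and the hypothesis $\alpha+\beta>1$ is exactly what is needed to invoke Young's estimate.

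\textbf{Step 1: pointwise estimate at $t=0$.} Since $G$ is compact and embedded in the matrix algebra $F$, the adjoint map $G\to \End(\mfg)$ is globally Lipschitz, so
\begin{equ}
\bigl|\Ad_{g(x)} - \mathrm{Id}\bigr| \lesssim |g(x) - 1| \le |g-1|_\infty\;.
\end{equ}
Combined with $|\ell_A(1)-\ell_A(0)| = |A(\ell)| \le |A|_{\gr\alpha}|\ell|^\alpha$, this controls the ``left-point contribution'' $\bigl(\Ad_{g(x)} - \mathrm{Id}\bigr)\bigl(\ell_A(1) - \ell_A(0)\bigr)$ in Young's expansion by $|g-1|_\infty |A|_{\gr\alpha}|\ell|^\alpha$.

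\textbf{Step 2: H\"older norms.} The composition $t\mapsto \Ad_{g(x+tv)}$ is $\beta$-H\"older with seminorm bounded by a constant multiple of $|\ell|^\beta |g|_{\Hol\beta}$, again using Lipschitz continuity of $\Ad$ and the fact that $t\mapsto g(x+tv)$ has $\Hol\beta$-seminorm $\le |\ell|^\beta |g|_{\Hol\beta}$. Similarly Lemma~\ref{lem:Hol_paths} gives $|\ell_A|_{\Hol\alpha}\le |\ell|^\alpha|A|_{\gr\alpha}$. The standard Young remainder estimate then bounds the integral minus its left-point contribution by a constant (depending only on $\alpha,\beta$) times
\begin{equ}
|g|_{\Hol\beta}\,|\ell|^\beta\cdot |A|_{\gr\alpha}\,|\ell|^\alpha \;=\; |g|_{\Hol\beta}|A|_{\gr\alpha}|\ell|^{\alpha+\beta}\;.
\end{equ}

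\textbf{Step 3: conclusion.} Summing the two contributions and using $|\ell|\le \tfrac14$ together with $\beta>0$ to absorb the extra $|\ell|^\beta$ factor into a constant yields
\begin{equ}
|A^g(\ell)-0^g(\ell)-A(\ell)| \;\lesssim\; \bigl(|g-1|_\infty + |g|_{\Hol\beta}\bigr)|A|_{\gr\alpha}|\ell|^\alpha\;,
\end{equ}
which is precisely~\eqref{eq:mfG_gr_bound}. There is no genuine obstacle here; the only thing to be mindful of is keeping the ``left-point'' term of Young's expansion separate so that one obtains the linear factor $|g-1|_\infty$ (rather than $1$) on the right-hand side, since $g$ is not assumed to be close to the identity in $\Hol\beta$.
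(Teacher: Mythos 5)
Your proof is correct and follows essentially the same route as the paper: the paper likewise rewrites $(A^g - 0^g - A)(\ell)$ as the Young integral $\int_0^1 (\Ad_{\ell_g(t)} - 1)\,\mrd\ell_A(t)$, invokes the H\"older bounds $|\ell_g|_{\Hol\beta}\le |\ell|^\beta|g|_{\Hol\beta}$ and $|\ell_A|_{\Hol\alpha}\le |\ell|^\alpha|A|_{\gr\alpha}$ from Lemmas~\ref{lem:ell_g_bound} and~\ref{lem:Hol_paths}, and applies the Young estimate (keeping the left-point term separate, exactly as you do) to obtain $|A^g(\ell)-0^g(\ell)-A(\ell)| \lesssim (|g-1|_\infty + |\ell|^\beta|g|_{\Hol\beta})|\ell|^\alpha|A|_{\gr\alpha}$, then absorbs the $|\ell|^\beta$ factor using $|\ell|\le\tfrac14$. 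No meaningful differences.
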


\begin{proof}
Let $\ell=(x,v)\in\mcX$, $A \in\Omega$, and $g \colon \T^2 \to G$.
Using notation from Lemma~\ref{lem:ell_g_bound}, note that
\begin{equ}\label{eq:Ag_A_diff}
(A^g-0^g-A)(\ell) = \int_0^1 (\Ad_{\ell_g(t)}-1)\mrd \ell_A (t)\;.
\end{equ}
Using~\eqref{eq:ell_A_Hol},~\eqref{eq:ell_g_Hol}, and Young's estimate, we obtain
\begin{equ}
|A^g(\ell)-0^g(\ell)-A(\ell)|
\lesssim (|g-1|_\infty + |\ell|^\beta|g|_{\Hol\beta})|\ell|^{\alpha}|A|_{\gr\alpha}\;,
\end{equ}
which proves~\eqref{eq:mfG_gr_bound}.
\end{proof}

\begin{lemma}\label{lem:group_action_vee}
Let $\alpha,\beta \in (0,1]$ such that $\frac\beta2+\alpha > 1$ and $\frac\alpha2 + \beta > 1$, $A \in \Omega_\alpha$, and $g \in \mfG^\beta$.
Then
\begin{equ}\label{eq:mfG_vee_bound}
|A^g -0^g - A|_{\v{\alpha\wedge \beta}} \lesssim (|g-1|_\infty + |g|_{\Hol\beta})|A|_{\alpha}\;,
\end{equ}
where the proportionality constant depends only on $\alpha$ and $\beta$.
\end{lemma}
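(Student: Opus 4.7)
The plan is to mimic the proof of Lemma~\ref{lem:g_vee_bound}, but starting from the identity~\eqref{eq:Ag_A_diff} that isolates the ``commutator'' part $A^g-0^g-A$. Fix $\ell=(x,v)$ and $\bar\ell=(x,\bar v)\in\mcX$ forming a vee, and write (using the notation of Lemma~\ref{lem:ell_g_bound})
\begin{equ}
(A^g-0^g-A)(\ell)-(A^g-0^g-A)(\bar\ell)
= I_1+I_2\;,
\end{equ}
where
\begin{equ}
I_1 = \int_0^1(\Ad_{\ell_g(t)}-\Ad_{\bar\ell_g(t)})\,\mrd\ell_A(t)\;,\qquad
I_2 = \int_0^1(\Ad_{\bar\ell_g(t)}-1)\,\mrd(\ell_A-\bar\ell_A)(t)\;.
\end{equ}
Both are Young integrals, and the strategy is to apply the standard Young estimate
\begin{equ}
\Bigl|\int_0^1 f\,\mrd h - f(0)(h(1)-h(0))\Bigr| \lesssim |f|_{\Hol{\gamma_1}}\,|h|_{\Hol{\gamma_2}}
\end{equ}
(valid when $\gamma_1+\gamma_2>1$) to each term, carefully exploiting vanishing boundary contributions.

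For $I_1$, I use $\ell_g(0)=\bar\ell_g(0)=g(x)$, so the integrand vanishes at $t=0$. Lemma~\ref{lem:ell_g_bound} (applied with exponent $\beta$) together with the Lipschitz property of $\Ad$ on bounded sets gives $|\Ad_{\ell_g}-\Ad_{\bar\ell_g}|_{\Hol{\beta/2}}\lesssim |g|_{\Hol\beta}\Area(\ell,\bar\ell)^{\beta/2}$, while Lemma~\ref{lem:Hol_paths} bounds $|\ell_A|_{\Hol\alpha}\leq |\ell|^\alpha |A|_{\gr\alpha}\lesssim |A|_\alpha$. Young (whose hypothesis $\beta/2+\alpha>1$ is exactly what is assumed) then yields
\begin{equ}
|I_1|\lesssim |g|_{\Hol\beta}\,|A|_\alpha\,\Area(\ell,\bar\ell)^{\beta/2}\;.
\end{equ}

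For $I_2$, the integrator $h\eqdef\ell_A-\bar\ell_A$ starts at zero since $\ell_A(0)=\bar\ell_A(0)=0$, but the integrand no longer vanishes at $t=0$. The Young estimate therefore produces a boundary term $|\Ad_{\bar\ell_g(0)}-1|\cdot|h(1)|$ which I bound by $|g-1|_\infty\cdot|A(\ell)-A(\bar\ell)|$; since $\ell,\bar\ell$ form a vee, $\rho(\ell,\bar\ell)\asymp\Area(\ell,\bar\ell)^{1/2}$, so this is $\lesssim |g-1|_\infty|A|_\alpha\Area(\ell,\bar\ell)^{\alpha/2}$. The remaining Young term is controlled using $|\Ad_{\bar\ell_g}-1|_{\Hol\beta}\lesssim|g|_{\Hol\beta}$ together with the second bound of Lemma~\ref{lem:Hol_paths}, $|h|_{\Hol{\alpha/2}}\lesssim \Area(\ell,\bar\ell)^{\alpha/2}|A|_\alpha$; the hypothesis $\alpha/2+\beta>1$ ensures applicability. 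This gives
\begin{equ}
|I_2|\lesssim (|g-1|_\infty+|g|_{\Hol\beta})\,|A|_\alpha\,\Area(\ell,\bar\ell)^{\alpha/2}\;.
\end{equ}

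Summing, using $\Area(\ell,\bar\ell)\lesssim 1$ to absorb the larger of the two exponents, yields the claimed estimate with exponent $(\alpha\wedge\beta)/2$ in the area, i.e.\ control in $|\cdot|_{\v{\alpha\wedge\beta}}$. The only nontrivial points are the accounting of which Young estimate applies (hence the two asymmetric exponent conditions in the hypothesis) and the observation that $\rho\asymp\Area^{1/2}$ on vees, which converts the boundary term in $I_2$ into an area bound; I do not expect any genuine obstacle here.
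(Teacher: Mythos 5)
Your proof is correct and follows the paper's argument essentially verbatim: the same splitting of $(A^g-0^g-A)(\ell)-(A^g-0^g-A)(\bar\ell)$ into the two Young integrals $I_1$ and $I_2$, the same appeals to Lemmas~\ref{lem:Hol_paths} and~\ref{lem:ell_g_bound}, and the same use of the two asymmetric Young conditions $\frac\beta2+\alpha>1$ and $\frac\alpha2+\beta>1$. The extra bookkeeping you do on boundary terms (and the observation $\rho\asymp\Area^{1/2}$ on vees) is implicit in the paper's invocation of Young's estimate but adds no new idea.
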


\begin{proof}
Let $\ell,\bar\ell \in \mcX$ form a vee.
Recall the identity~\eqref{eq:Ag_A_diff}.
By~\eqref{eq:ell_A_Hol},~\eqref{eq:ell_g_Hol_diff}, and Young's estimate (since $\frac\beta2 + \alpha > 1$), we have
\begin{equ}
\Big|\int_0^1 (\Ad_{\ell_g(t)}-\Ad_{\bar\ell_g(t)}) \mrd \ell_A (t) \Big| \lesssim |g|_{\Hol\beta}\Area(\ell,\bar\ell)^{\beta/2}|\ell|^\alpha |A|_{\gr\alpha}\;.
\end{equ}
Similarly, using~\eqref{eq:ell_A_Hol},~\eqref{eq:ell_g_Hol}, and Young's estimate (since $\beta+\frac\alpha2 > 1$), we have
\begin{equ}
\Big|\int_0^1 (\Ad_{\bar\ell_g(t)} -1)\mrd (\ell_A (t) - \bar\ell_A(t)) \Big| \lesssim (|g-1|_\infty +|g|_{\Hol\beta}|\ell|^\beta) |A|_\alpha \Area(\ell,\bar\ell)^{\alpha/2}\;.
\end{equ}
Note that the integrals on the left-hand sides of the previous two bounds add to $(A^g-0^g-A)(\ell) - (A^g-0^g-A)(\bar\ell)$,
from which~\eqref{eq:mfG_vee_bound} follows.
\end{proof}

\begin{proof}[of Theorem~\ref{thm:grp_action}]
The fact that the action of $\mfG^\beta$ maps $\Omega_{\gr\alpha}$ into $\Omega_{\gr{\alpha\wedge \beta}}$ follows from Proposition~\ref{prop:mfG_bound} and Lemma~\ref{lem:group_action_gr}.
The fact that the action of $\mfG^\beta$ maps $\Omega_\alpha$ into $\Omega_{\alpha\wedge \beta}$ follows by combining the equivalence of norms $|\cdot|_\alpha \asymp |\cdot|_{\gr\alpha} + |\cdot|_{\v\alpha}$ from Theorem~\ref{thm:norm_equiv} with Proposition~\ref{prop:mfG_bound} and Lemmas~\ref{lem:group_action_gr} and~\ref{lem:group_action_vee}.
The fact that $(A,g)\mapsto A^g$ is uniformly continuous on any given ball in $\Omega_{\gr\alpha}\times\mfG^\beta$ (resp. $\Omega_{\alpha}\times\mfG^\beta$)
follows from writing
\begin{equ}
A^g-B^h =   \big((A-B)^h-0^h - (A-B)\big) -\big((A^g)^{hg^{-1}} - 0^{hg^{-1}} - A^g\big) - 0^{hg^{-1}} + (A-B)
\end{equ}
and noting that, again by Proposition~\ref{prop:mfG_bound} and Lemma~\ref{lem:group_action_gr},
the $|\cdot|_{\gr\alpha}$ norm of all four terms is
of order
\begin{equ}
|A-B|_{\gr{\alpha\wedge\beta}} + |hg^{-1}|_{\Hol\alpha} + |hg^{-1}-1|_\infty
\end{equ}
uniformly over all $(A,g),(B,h)$ in the given ball (similarly for the $|\cdot|_\alpha$ norm using Lemma~\ref{lem:group_action_vee}).
Finally, if $\alpha \leq \beta$, the fact that $A^{g h} = (A^{h})^g$ follows from the identity
\begin{equ}
\mrd (g h)\,(gh)^{-1} = (\mrd g) \,g^{-1} + \Ad_g \big[ (\mrd h) \,h^{-1}\big]\;.
\end{equ}
Combining all of these claims completes the proof.
\end{proof}

\subsection{Holonomies and recovering gauge transformations}
\label{subsec:recover_gauge_transform}

The main result of this subsection, Proposition~\ref{prop:gauge_equiv}, provides a way to recover the gauge 
transformation that transforms between gauge equivalent elements of $\Omega_{\gr\alpha}$.
This result can be seen as a version of~\cite[Prop.~2.1.2]{Sengupta92} for the non-smooth case (see also~\cite[Lem.~3]{LevyNorris06}).

Let us fix $\alpha \in (\frac12,1]$ throughout this subsection.
For $\ell\in\mcX$ and $A \in \Omega_{\gr\alpha}$, the ODE
\begin{equ}
\mrd y(t) = y(t) \mrd \ell_A(t)\;,\qquad y(0) = 1\;,
\end{equ}
admits a unique solution $y\colon[0,1]\to G$ as a Young integral (thanks to Lemma~\ref{lem:Hol_paths}).
Furthermore, the map $\ell_A \mapsto y$ is locally Lipschitz when both sides are equipped with $|\cdot|_{\Hol\alpha}$.
We define the holonomy of $A$ along $\ell$ as $\hol(A,\ell) \eqdef y(1)$.
As usual, we extend the definition $\hol(A,\gamma)$ to any piecewise affine path $\gamma\colon[0,1]\to\T^2$ by taking the ordered product of the holonomies along individual line segments.
\begin{remark}\label{rem:extension_of_hol}
Recall from item~\ref{pt:hol_well-defined} of Theorem~\ref{thm:state_space} that,
provided $\alpha\in(\frac23,1]$ and $\beta\in(\frac2\alpha-2,1]$,
the holonomy $\hol(A,\gamma)$ is well-defined for all paths $\gamma$ piecewise in $\mcC^{1,\beta}$
(rather than only piecewise affine)
and all $A \in \Omega^1_\alpha$.
\end{remark}
For any $g\in\mfG^\alpha$ and any piecewise affine path $\gamma$, note the familiar identity
\begin{equ}\label{eq:hol_g_action}
\hol(A^g,\gamma) = g(\gamma(0)) \,\hol(A,\gamma)\, g(\gamma(1))^{-1}\;.
\end{equ}
For $x,y\in\T^2$, let $\mcL_{xy}$ denote the set of piecewise affine paths $\gamma\colon[0,1]\to\T^2$ with $\gamma(0)=x$ and $\gamma(1)=y$.

\begin{proposition}\label{prop:gauge_equiv}
Let $\alpha\in(\frac12,1]$ and $A,\bar A \in \Omega_{\gr\alpha}$.
Then the following are equivalent:
\begin{enumerate}[label=(\roman*)]
\item \label{point:A_sim} $A\sim \bar A$.
\item \label{point:hol_sim} there exists $x\in\T^2$ and $g_0 \in G$ such that $\hol(\bar A,\gamma)=g_0\hol(A,\gamma)g_0^{-1}$ for all $\gamma\in\mcL_{xx}$.
\item \label{point:hol_simStrong} for every $x\in\T^2$ there exists $g_x \in G$ such that $\hol(\bar A,\gamma)=g_x\hol(A,\gamma)g_x^{-1}$ for all $\gamma\in\mcL_{xx}$.
\end{enumerate}
Furthermore, if~\ref{point:hol_sim} holds, then there exists a unique $g \in \mfG^\alpha$ such that $g(x) = g_0$ and $A^g = \bar A$.
The element $g$ is determined by
\begin{equ}\label{eq:g_def}
g(y) = \hol(\bar A,\gamma_{xy})^{-1} g_0 \hol(A,\gamma_{xy})\;,
\end{equ}
where $\gamma_{xy}$ is any element of $\mcL_{xy}$,
and satisfies
\begin{equ}\label{eq:g_Hol_bound}
|g|_{\Hol\alpha} \lesssim |A|_{\gr\alpha}+|\bar A|_{\gr\alpha}\;.
\end{equ}
\end{proposition}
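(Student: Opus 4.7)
\medskip

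The implications \ref{point:A_sim} $\Rightarrow$ \ref{point:hol_simStrong} $\Rightarrow$ \ref{point:hol_sim} are the easy direction: the first follows immediately from the identity~\eqref{eq:hol_g_action} applied with $\gamma\in\mcL_{yy}$, which gives $g_y = g(y)$, and the second is trivial. The heart of the matter is showing \ref{point:hol_sim} $\Rightarrow$ \ref{point:A_sim}, and my plan is to build $g$ explicitly via~\eqref{eq:g_def} and verify the three properties: well-definedness, $\mcC^\alpha$-regularity with the bound~\eqref{eq:g_Hol_bound}, and the equality $A^g = \bar A$.

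\emph{Well-definedness of $g$.} For $\gamma_{xy},\gamma'_{xy}\in\mcL_{xy}$, the concatenation $\eta \eqdef \gamma_{xy}*(\gamma'_{xy})^{-1}$ lies in $\mcL_{xx}$, so by \ref{point:hol_sim} we have $\hol(\bar A,\eta) = g_0\hol(A,\eta)g_0^{-1}$. Using multiplicativity of holonomy under concatenation of piecewise affine paths, this rearranges precisely to
\begin{equ}
\hol(\bar A,\gamma_{xy})^{-1}g_0\hol(A,\gamma_{xy}) = \hol(\bar A,\gamma'_{xy})^{-1}g_0\hol(A,\gamma'_{xy})\;,
\end{equ}
so $g(y)$ is independent of the chosen path and takes values in $G$.

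\emph{H\"older regularity.} For two nearby points $y,y'$, pick $\gamma_{xy}\in\mcL_{xy}$ and set $\gamma_{xy'} \eqdef \gamma_{xy}*\ell$, where $\ell\in\mcX$ is the straight segment from $y$ to $y'$. Multiplicativity of holonomy yields
\begin{equ}
g(y') = \hol(\bar A,\ell)^{-1}\,g(y)\,\hol(A,\ell)\;,
\end{equ}
so the difference $g(y')-g(y)$ decomposes as $(\hol(\bar A,\ell)^{-1}-1)g(y)\hol(A,\ell) + g(y)(\hol(A,\ell)-1)$. Young ODE theory (cf.\ Lemma~\ref{lem:Hol_paths}) gives $|\hol(A,\ell)-1|\lesssim |A|_{\gr\alpha}|\ell|^\alpha$ and similarly for $\bar A$, together with $|g(y)|=1$ (in the unitary embedding), so that $|g(y)-g(y')|\lesssim (|A|_{\gr\alpha}+|\bar A|_{\gr\alpha})|y-y'|^\alpha$. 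This gives both continuity of $g$ and the bound~\eqref{eq:g_Hol_bound}; note that since $\alpha>\tfrac12$, we then have $g\in\mfG^\alpha$ and the action $A\mapsto A^g$ makes sense by Theorem~\ref{thm:grp_action}.

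\emph{Matching connections.} Define $B \eqdef A^g \in \Omega_{\gr\alpha}$. Using~\eqref{eq:hol_g_action} for the segment $\ell = (y_0,y_1-y_0)$ and combining with the explicit expression $g(y_i) = \hol(\bar A,\gamma_i)^{-1}g_0 \hol(A,\gamma_i)$ (with $\gamma_1 = \gamma_0 * \ell$), telescoping cancels the $g_0$'s and leaves $\hol(B,\ell) = \hol(\bar A,\ell)$ for every $\ell\in\mcX$. To upgrade this equality of holonomies to $B=\bar A$ as elements of $\Omega_{\gr\alpha}$, I will use that for $\ell=(x,v)$ with $v$ small, $\hol(A,\ell) = \one + A(\ell) + O(|\ell|^{2\alpha})$ by the standard expansion of the Young ODE; combined with additivity of $A\in\Omega$, one recovers $A(x,v)$ from the holonomies along line segments via
\begin{equ}
A(x,v) = \lim_{n\to\infty}\sum_{i=0}^{n-1}\bigl(\hol(A,\ell^{(n)}_i)-\one\bigr)\;,
\end{equ}
where $\ell^{(n)}_i$ is the uniform subdivision of $(x,v)$ into $n$ pieces and the $O(n^{1-2\alpha})$ error vanishes thanks to $\alpha>\tfrac12$. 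Applied to both $B$ and $\bar A$, this forces $B(\ell) = \bar A(\ell)$ for every $\ell$.

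\emph{Uniqueness.} If $g,g'\in\mfG^\alpha$ both satisfy $A^g = A^{g'} = \bar A$ and agree at $x$, then applying~\eqref{eq:hol_g_action} along any $\gamma\in\mcL_{xy}$ forces $g$ and $g'$ to satisfy the same formula~\eqref{eq:g_def}, hence $g=g'$. The main obstacle I anticipate is the last paragraph above: the ``holonomies determine the connection'' step needs the uniform expansion of the Young-ODE solution map with $\alpha$-H\"older control, ensuring that the Riemann-sum limit makes sense in the Banach space $F$ containing $\mfg$, uniformly in the base point and direction of~$\ell$.
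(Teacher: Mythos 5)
Your proposal is correct and fills in the details that the paper's proof omits (the published argument just asserts ``one can readily verify the bound~\eqref{eq:g_Hol_bound} and that $A^g = \bar A$''). The easy directions, well-definedness, H\"older regularity, and uniqueness all match the natural argument. Where you take a genuinely different route is in verifying $A^g = \bar A$: you first prove that $\hol(A^g,\ell) = \hol(\bar A,\ell)$ for all segments $\ell$, and then invoke a ``holonomies determine the connection'' principle via a Riemann-sum recovery $A(x,v) = \lim_n \sum_i (\hol(A,\ell_i^{(n)}) - \one)$, which is correct because the Young-ODE expansion $\hol(A,\ell) = \one + A(\ell) + O(|A|_{\gr\alpha}^2|\ell|^{2\alpha})$ together with additivity of $A$ and $\alpha > \tfrac12$ makes the $O(n^{1-2\alpha})$ error vanish. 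The more direct route is to parametrise a segment $\ell=(x_0,v)$, write $g(x_0+tv) = \bar y(t)^{-1}g(x_0)y(t)$ with $y,\bar y$ the holonomy paths of $A,\bar A$, and differentiate: using $\mrd y = y\,\mrd\ell_A$ and $\mrd\bar y = \bar y\,\mrd\bar\ell_{\bar A}$ in the sense of Young one obtains $(\mrd g)g^{-1} = -\mrd\bar\ell_{\bar A} + \Ad_g\,\mrd\ell_A$, so that $\Ad_g\,\mrd\ell_A - (\mrd g)g^{-1} = \mrd\bar\ell_{\bar A}$ and hence $A^g(\ell)=\bar A(\ell)$ by integration. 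Your version is marginally longer but buys a self-contained statement (equality of holonomies along all segments implies equality in $\Omega_{\gr\alpha}$) that is independently useful; the direct computation is shorter but requires being comfortable differentiating $g$ along the segment via the two Young ODEs. Either is an acceptable expansion of the paper's ``readily verify''.
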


\begin{proof}
The implication~\ref{point:A_sim}~$\Rightarrow$~\ref{point:hol_simStrong} is clear from~\eqref{eq:hol_g_action} and
the implication~\ref{point:hol_simStrong}~$\Rightarrow$~\ref{point:hol_sim} is trivial.
Hence suppose~\ref{point:hol_sim} holds.
Let us define $g(y)$ using~\eqref{eq:g_def}, which we note does not depend on the choice of path $\gamma_{xy}\in\mcL_{xy}$.
Then one can readily verify the bound~\eqref{eq:g_Hol_bound} and that $A^g = \bar A$, which proves~\ref{point:A_sim}.
The fact that $g$ is the unique element in $\mfG^\alpha$ such that $g(x)=g_0$ and $A^g=\bar A$ follows again from~\eqref{eq:hol_g_action}.
\end{proof}

\subsection{The orbit space}
\label{subsec:orbit_space}

We define and study in this subsection the space of gauge orbits of the Banach space $\Omega^1_\alpha$.
Let $\mfG^{0,\alpha}$\label{mfG^0,alpha page ref} denote the closure of $\mcC^\infty(\T^2,G)$ in $\mfG^{\alpha}$.
The following is a simple corollary of Theorem~\ref{thm:grp_action}.

\begin{corollary}\label{cor:group_action_smooth_closure}
Let $\alpha \in (\frac23,1]$.
Then $(A,g)\mapsto A^g$ is a continuous left group action of $\mfG^{0,\alpha}$ on $\Omega^1_\alpha$ and on $\Omega^1_{\gr\alpha}$. 
\end{corollary}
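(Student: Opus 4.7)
The plan is to deduce this from Theorem~\ref{thm:grp_action} together with a density argument.

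First, I would set $\beta = \alpha$ in Theorem~\ref{thm:grp_action} and check that the hypotheses hold: for $\alpha \in (\tfrac23,1]$ both conditions $\alpha + \tfrac{\beta}{2} > 1$ and $\tfrac{\alpha}{2} + \beta > 1$ collapse to $\tfrac{3\alpha}{2} > 1$, which is satisfied. Theorem~\ref{thm:grp_action} then immediately gives that $(A,g) \mapsto A^g$ is continuous (and uniformly continuous on balls) as a map $\Omega_\alpha \times \mfG^\alpha \to \Omega_\alpha$ and $\Omega_{\gr\alpha} \times \mfG^\alpha \to \Omega_{\gr\alpha}$. Moreover, since $\alpha \leq \beta = \alpha$, the same theorem gives the left action identity $(A^h)^g = A^{gh}$ for $g,h \in \mfG^\alpha$ and hence a fortiori for $g,h \in \mfG^{0,\alpha} \subset \mfG^\alpha$.

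The only remaining point is that the action preserves the distinguished closed subspaces, i.e.\ that $A^g \in \Omega^1_\alpha$ (resp.\ $\Omega^1_{\gr\alpha}$) whenever $A \in \Omega^1_\alpha$ (resp.\ $\Omega^1_{\gr\alpha}$) and $g \in \mfG^{0,\alpha}$. For this I would first observe that on smooth data the action is explicit: if $A = \imath B$ with $B \in \Omega\mcC^\infty$ and $g \in \mcC^\infty(\T^2,G)$, then the formula in Definition~\ref{def:gauge} reduces to $A^g = \imath B'$ with $B'_i(x) = \Ad_{g(x)} B_i(x) - (\partial_i g)(x)\, g(x)^{-1}$, which is manifestly in $\Omega\mcC^\infty$, hence $A^g \in \imath(\Omega\mcC^\infty) \subset \Omega^1_\alpha \cap \Omega^1_{\gr\alpha}$.

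Now pick any $(A,g) \in \Omega^1_\alpha \times \mfG^{0,\alpha}$ and, using the definitions of $\Omega^1_\alpha$ and $\mfG^{0,\alpha}$ as closures of smooth elements, choose sequences $A_n \in \imath(\Omega\mcC^\infty)$ and $g_n \in \mcC^\infty(\T^2,G)$ with $A_n \to A$ in $\Omega_\alpha$ and $g_n \to g$ in $\mfG^\alpha$. The continuity statement from Theorem~\ref{thm:grp_action} gives $A_n^{g_n} \to A^g$ in $\Omega_\alpha$. Since each $A_n^{g_n}$ lies in $\imath(\Omega\mcC^\infty) \subset \Omega^1_\alpha$, and $\Omega^1_\alpha$ is closed by definition, we conclude $A^g \in \Omega^1_\alpha$. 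The same argument with $\Omega_\alpha$ replaced by $\Omega_{\gr\alpha}$ handles the $\Omega^1_{\gr\alpha}$ case. Combined with the group action identity and the already-established joint continuity (which restricts automatically to the closed subspaces), this gives the claim.

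There is essentially no hard step here: the entire content is already in Theorem~\ref{thm:grp_action}, and the only thing that needs verification is that smooth data is mapped to smooth data, which is just the classical transformation formula for a smooth connection under a smooth change of gauge.
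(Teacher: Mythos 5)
Your proposal is correct and follows exactly the same approach as the paper's proof: invoke Theorem~\ref{thm:grp_action} for continuity and the group-action identity, observe that the action maps $\imath(\Omega\mcC^\infty)$ to itself under smooth gauge transformations, and conclude by density and closedness of $\Omega^1_\alpha$ and $\Omega^1_{\gr\alpha}$. The paper states this more tersely, but the content is identical.
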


\begin{proof}
It holds that $A^g\in\imath\Omega\mcC^\infty$ whenever $A\in\imath\Omega\mcC^\infty$ and $g\in\mcC^\infty(\T^2,G)$.
The conclusion follows from Theorem~\ref{thm:grp_action} by continuity of $(A,g) \mapsto A^g$.
\end{proof}

We are now ready to define our desired space of orbits.

\begin{definition}\label{def:orbit_space}
For $\alpha \in (\frac23,1]$, let $\mfO_{\alpha}$\label{mfO_alpha page ref} denote the space of orbits $\Omega^1_{\alpha}/\mfG^{0,\alpha}$ equipped with the quotient topology.
For every $A \in \Omega_\alpha^1$, let
$\mfO_\alpha \ni [A] \eqdef \{A^g \,:\, g \in \mfG^{0,\alpha}\} \subset \Omega_\alpha^1$ denote the corresponding
gauge orbit.
We likewise define $\mfO_{\gr\alpha}\eqdef \Omega^1_{\gr\alpha}/\mfG^{0,\alpha}$.
\end{definition}

We next show that the restriction to the subgroup $\mfG^{0,\alpha}$ is natural in the sense that $\mfG^{0,\alpha}$ is precisely the stabiliser of $\Omega^1_\alpha$ and $\Omega^1_{\gr\alpha}$. For this, we use the following
version of the standard fact that the closure of smooth functions yield the ``little H{\"o}lder'' spaces.
\begin{lemma}\label{lem:mfG_0_alpha_char}
For $\alpha\in (0,1)$, one has $g\in\mfG^{0,\alpha}$ if and only if 
$\lim_{\eps\to0} \sup_{|x-y|<\eps}|x-y|^{-\alpha}|g(x)-g(y)| = 0$.\qed
\end{lemma}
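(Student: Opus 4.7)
The plan is to prove both inclusions of the claimed equality by introducing the set $X\subset\mfG^\alpha$ of $g$ for which $\omega_g(\eps)\eqdef\sup_{|x-y|<\eps}|x-y|^{-\alpha}|g(x)-g(y)|\to 0$ as $\eps\downarrow 0$, and showing $X=\mfG^{0,\alpha}$.

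For $\mfG^{0,\alpha}\subseteq X$, I will first observe that any smooth $g\in\mcC^\infty(\T^2,G)$ lies in $X$ since $\omega_g(\eps)\lesssim\eps^{1-\alpha}\to 0$, and then check that $X$ is closed in $\mfG^\alpha$. The latter follows from the triangle inequality $\omega_g(\eps)\le\omega_{g_n}(\eps)+|g-g_n|_{\Hol\alpha}$ together with a standard $\delta/2$-argument: given $\delta>0$, pick $n$ with $|g-g_n|_{\Hol\alpha}<\delta/2$ and then $\eps_0$ with $\omega_{g_n}(\eps_0)<\delta/2$ to conclude $\omega_g(\eps)<\delta$ for all $\eps<\eps_0$.

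For the reverse inclusion $X\subseteq\mfG^{0,\alpha}$, I will construct a smooth $G$-valued approximating sequence by mollifying in the ambient matrix space $F$ (into which $G$ is embedded, as discussed before Definition~\ref{def:gauge}) and then projecting back onto $G$. Concretely, let $\moll^\eps$ be a standard mollifier on $\T^2$, and, viewing $g\in X$ as $F$-valued, set $\tilde g_\eps\eqdef g\ast\moll^\eps\in\mcC^\infty(\T^2,F)$. A direct computation using the defining property of $X$ yields $|\tilde g_\eps-g|_\infty\le\omega_g(\eps)\eps^\alpha$ and, by splitting pairs $(x,y)$ according to whether $|x-y|\le\eps$ or $|x-y|>\eps$, the bound $|\tilde g_\eps-g|_{\Hol\alpha}\lesssim\omega_g(\eps)\to 0$. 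Hence $\tilde g_\eps\to g$ in $\mcC^\alpha(\T^2,F)$. Since $G$ sits in $F$ as a smooth compact submanifold, it admits a tubular neighbourhood $U\subset F$ with a smooth retraction $\pi\colon U\to G$ satisfying $\pi|_G=\id_G$; for $\eps$ small enough $\tilde g_\eps$ takes values in $U$, so $g_\eps\eqdef\pi\circ\tilde g_\eps\in\mcC^\infty(\T^2,G)$. Continuity of composition with a smooth function on $\mcC^\alpha$-spaces (which is a routine consequence of a first-order Taylor expansion of $\pi$ combined with the uniform boundedness of $\tilde g_\eps$ in $\mcC^\alpha$) then yields $g_\eps\to\pi\circ g=g$ in $\mfG^\alpha$, placing $g$ in $\mfG^{0,\alpha}$.

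The main technical point is the convergence $|\tilde g_\eps-g|_{\Hol\alpha}\to 0$: without the little H{\"o}lder hypothesis one would only obtain the weaker bound $|\tilde g_\eps-g|_{\Hol\alpha}\lesssim|g|_{\Hol\alpha}$, which does not vanish in the limit. The other steps (the closedness of $X$ and the continuity of composition with the smooth retraction) are essentially routine.
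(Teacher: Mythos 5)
The paper gives no proof of this lemma — it is stated as a ``standard fact'' and tagged with $\qed$ — so there is no argument of the authors' to compare against. Your proof is correct and is the textbook argument: the forward inclusion via the increasing modulus $\omega_g$ and closedness of the little-H{\"o}lder condition, the reverse inclusion via mollification in the ambient matrix space $F$ followed by a smooth retraction onto the compact submanifold $G$. The two analytic points you isolate — the bound $|\tilde g_\eps - g|_{\Hol\alpha}\lesssim\omega_g(\eps)$ (split at $|x-y|\lessgtr\eps$) and the continuity of $f\mapsto\pi\circ f$ on bounded subsets of $\mcC^\alpha$ — are exactly the ones that need the little-H{\"o}lder hypothesis and the smoothness of $\pi$ respectively, and both are handled correctly.
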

We then have the following general statement.

\begin{proposition}\label{prop:mfG_0_alpha_action}
Let $\alpha\in(\frac12,1]$ and $A\in\Omega^1_{\gr\alpha}$.
Suppose that $A^g\in\Omega^1_{\gr\alpha}$ for some $g\in\mfG^\alpha$.
Then $g\in\mfG^{0,\alpha}$.
\end{proposition}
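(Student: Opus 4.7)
The plan is to use the explicit recovery formula from Proposition~\ref{prop:gauge_equiv} to express $g(y)-g(x)$, for small $|y-x|$, purely in terms of the values of $A$ and $\bar A \eqdef A^g$ on a single short line segment, and then to invoke the little-H\"older-type characterisations of $\Omega^1_{\gr\alpha}$ and $\mfG^{0,\alpha}$ (Proposition~\ref{prop:Omega^1_alpha_char} and Lemma~\ref{lem:mfG_0_alpha_char}).

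First, I would fix $x,y\in\T^2$ with $|y-x|<\tfrac14$ and set $\ell=(x,y-x)\in\mcX$. By Proposition~\ref{prop:gauge_equiv} applied to the pair $(A,\bar A)$ with the piecewise affine path $\gamma_{xy}=\ell$, one has
\[
g(y) \;=\; \hol(\bar A,\ell)^{-1}\, g(x)\, \hol(A,\ell)\;.
\]
Next, I would establish the short-segment expansion
\[
\hol(A,\ell) \;=\; 1 + A(\ell) + R_A(\ell)\;, \qquad |R_A(\ell)| \;\lesssim\; |A|_{\gr\alpha}^2\,|\ell|^{2\alpha}\;,
\]
with an analogous identity for $\bar A$. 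This follows from the defining ODE $\mrd y(t)=y(t)\,\mrd\ell_A(t)$, $y(0)=1$: writing $\hol(A,\ell)-1-A(\ell) = \int_0^1 (y(t)-1)\,\mrd\ell_A(t)$, one bounds $|y-1|_{\Hol\alpha} \lesssim |\ell_A|_{\Hol\alpha} \lesssim |A|_{\gr\alpha}|\ell|^\alpha$ via Lemma~\ref{lem:Hol_paths} and applies the usual Young estimate (valid since $2\alpha>1$), with all constants uniform in $\ell\in\mcX$.

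Substituting both expansions into the formula for $g(y)$ and using that $|\hol(\bar A,\ell)^{-1}|$ and $|g(x)|$ are bounded thanks to the unitarity of $G$, I would obtain
\[
|g(y)-g(x)| \;\lesssim\; |A(\ell)| + |\bar A(\ell)| + \bigl(|A|_{\gr\alpha}^2 + |\bar A|_{\gr\alpha}^2\bigr)|\ell|^{2\alpha}\;,
\]
with an implicit constant independent of $x,y$. Since $\alpha>\tfrac12$, one has $|\ell|^{2\alpha}=o(|\ell|^\alpha)$, and Proposition~\ref{prop:Omega^1_alpha_char} gives $|A(\ell)|,|\bar A(\ell)|=o(|\ell|^\alpha)$ uniformly in $x\in\T^2$ as $|\ell|\to 0$. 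Dividing the above display by $|y-x|^\alpha$ and letting $|y-x|\to 0$, the right-hand side tends to $0$ uniformly in $x$, so Lemma~\ref{lem:mfG_0_alpha_char} yields $g\in\mfG^{0,\alpha}$.

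The only delicate point is that all error terms must be uniform in the basepoint $x$; this is automatic here because both Proposition~\ref{prop:Omega^1_alpha_char} and the bound in Lemma~\ref{lem:Hol_paths} are phrased with a supremum over $\ell\in\mcX$. The boundary case $\alpha=1$ lies outside the direct scope of Lemma~\ref{lem:mfG_0_alpha_char} (where the stated equivalence restricts to $\alpha\in(0,1)$); it can be handled either by running the same argument with $\alpha'\in(\tfrac12,1)$ arbitrary to obtain $g\in\bigcap_{\alpha'<1}\mfG^{0,\alpha'}$ combined with the explicit $|g|_{\Hol 1}$ bound from \eqref{eq:g_Hol_bound}, or by a direct mollification argument; neither affects the main strategy above.
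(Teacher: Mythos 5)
For $\alpha\in(\tfrac12,1)$ your argument is the same as the paper's: both invoke Proposition~\ref{prop:Omega^1_alpha_char}, the recovery formula \eqref{eq:g_def}, and Lemma~\ref{lem:mfG_0_alpha_char}. You have simply filled in the holonomy expansion $\hol(A,\ell)=1+A(\ell)+O(|A|_{\gr\alpha}^2|\ell|^{2\alpha})$ (valid since $2\alpha>1$ allows Young's estimate), which is correct and a useful elaboration.

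Your treatment of the boundary case $\alpha=1$, however, has a genuine gap. The first alternative you propose does not work: knowing $g\in\bigcap_{\alpha'<1}\mfG^{0,\alpha'}$ and $|g|_{\Hol 1}<\infty$ is strictly weaker than $g\in\mfG^{0,1}$. Indeed, $\mfG^{0,1}$ is the closure of smooth maps in the Lipschitz norm, which is the class of $C^1$ maps (the paper records this as $\mfG^{0,1}=\{g\in\mfG^1:\mrd g\in\CC\}$). A function can be Lipschitz and satisfy the little-H\"older condition for every $\alpha'<1$ while failing to be $C^1$: for instance $x\mapsto|x|$ near the origin is Lipschitz, and $\big||x|-|y|\big|/|x-y|^{\alpha'}\le|x-y|^{1-\alpha'}\to 0$ for every $\alpha'<1$, yet the derivative has a jump. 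The second alternative, "a direct mollification argument," is not carried out, and to make it work you would need to establish continuity of $\mrd g$ by other means. The paper closes this case with a separate one-line argument you should adopt: for $\alpha=1$ one has $\Omega^1_{\gr 1}=\Omega\CC$, so $A$ and $A^g$ are continuous $1$-forms, and the algebraic identity $(\mrd g)g^{-1}=\Ad_g A-A^g$ then shows that $\mrd g$ is continuous, i.e.\ $g\in\mfG^{0,1}$.
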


%

\begin{proof}
Suppose first $\alpha\in (\frac12,1)$.
Then by Proposition~\ref{prop:Omega^1_alpha_char},
\begin{equ}\label{eq:A_A_g_lim}
\lim_{\eps\to0}\sup_{|\ell|<\eps} \frac{|A(\ell)|+|A^g(\ell)|}{|\ell|^{\alpha}} = 0\;.
\end{equ}
Combining~\eqref{eq:A_A_g_lim} with the expression for $g$ in~\eqref{eq:g_def}, we conclude that
 $g\in\mfG^{0,\alpha}$ by Lemma~\ref{lem:mfG_0_alpha_char}.
Suppose now $\alpha=1$.
Then $\Omega^1_{\gr 1}=\Omega\CC$
and $\mfG^{0,1}=\{g\in\mfG^1\,:\,\mrd g\in \CC\}$.
Furthermore $(\mrd g) g^{-1}=\Ad_g A-A^g$,
from which continuity of $\mrd g$ follows.
\end{proof}

In general, the quotient of a Polish space by the continuous action of a Polish group has no nice properties.
In the remainder of this subsection, we show that the space $\mfO_\alpha$
for $\alpha\in(\frac23,1)$ is itself a Polish space 
and we exhibit a metric $D_\alpha$ for its topology. 
We first show that these orbits are very well behaved in the following sense.

\begin{lemma}\label{lem:closed}
Let $\alpha\in(\frac23,1]$.
For every $A \in \Omega_\alpha^1$ (resp. $A \in \Omega_{\gr\alpha}^1$),  the gauge orbit $[A]$ is closed in $\Omega^1_\alpha$ (resp. $\Omega^1_{\gr\alpha}$).
\end{lemma}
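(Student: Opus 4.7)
The plan is to show $[A]$ is closed via an Arzel{\`a}--Ascoli argument on representatives, with the key quantitative input being~\eqref{eq:g_Hol_bound} of Proposition~\ref{prop:gauge_equiv}. Suppose $A^{g_n} \to B$ in $\Omega^1_\alpha$ with $g_n \in \mfG^{0,\alpha}$; the goal is to exhibit $g \in \mfG^{0,\alpha}$ with $A^g = B$. Fix a basepoint $x_0 \in \T^2$. Since $G$ is compact, I first pass to a subsequence along which $g_n(x_0)$ converges in $G$. Then Proposition~\ref{prop:gauge_equiv} applied with $x = x_0$ yields
\begin{equ}
|g_n|_{\Hol\alpha} \lesssim |A|_{\gr\alpha} + |A^{g_n}|_{\gr\alpha}\;,
\end{equ}
and since $|\cdot|_{\gr\alpha}\le|\cdot|_\alpha$ the right-hand side is uniformly bounded.

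Together with the trivial bound $|g_n|_\infty \le \diam(G)$, Arzel{\`a}--Ascoli yields a further subsequence converging uniformly to some $g\colon\T^2\to G$, and Fatou gives $g \in \mfG^\alpha$. Interpolating uniform convergence against the uniform Hölder-$\alpha$ bound produces $g_n \to g$ in $\mfG^{\alpha'}$ for every $\alpha' \in (\tfrac23,\alpha)$; since $\mfG^{0,\alpha}\subset\mfG^{0,\alpha'}$ and $\mfG^{0,\alpha'}$ is closed in $\mfG^{\alpha'}$, in fact $g\in\mfG^{0,\alpha'}$. Picking such an $\alpha'$ \dash possible precisely because $\alpha>\tfrac23$ \dash the pair $(\alpha,\alpha')$ verifies the hypotheses of Theorem~\ref{thm:grp_action} ($\alpha+\alpha'/2>1$ and $\alpha/2+\alpha'>1$), so Corollary~\ref{cor:group_action_smooth_closure} gives $A^{g_n}\to A^g$ in $\Omega^1_{\alpha'}$. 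Combined with $A^{g_n}\to B$ in $\Omega^1_\alpha\hookrightarrow\Omega^1_{\alpha'}$, uniqueness of limits forces $B=A^g$ as additive functions on $\mcX$, and in particular $A^g\in\Omega^1_\alpha$.

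The one step that is not automatic from this soft compactness argument is the upgrade $g\in\mfG^\alpha \Rightarrow g\in\mfG^{0,\alpha}$, which is required for $B$ to lie in the orbit under the restricted gauge group $\mfG^{0,\alpha}$ (otherwise one would only conclude that $[A]$ is closed within the larger orbit space for $\mfG^\alpha$). This upgrade is supplied by Proposition~\ref{prop:mfG_0_alpha_action}: both $A$ and $A^g=B$ lie in $\Omega^1_{\gr\alpha}$, so the proposition directly delivers $g\in\mfG^{0,\alpha}$. Hence $B=A^g\in[A]$, establishing closedness. The $\Omega^1_{\gr\alpha}$ case is handled identically; indeed, Proposition~\ref{prop:mfG_0_alpha_action} is already stated in that setting and Corollary~\ref{cor:group_action_smooth_closure} provides continuity of the action on $\Omega^1_{\gr\alpha}$.
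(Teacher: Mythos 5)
Your proof is correct and follows essentially the same route as the paper: use~\eqref{eq:g_Hol_bound} to get a uniform H{\"o}lder bound on the $g_n$, extract a convergent subsequence in a weaker H{\"o}lder topology (the paper cites the compact embedding $\mfG^\alpha\subset\mfG^\beta$, you run Arzel{\`a}--Ascoli plus interpolation by hand), pass to the limit in the group action, and upgrade $g\in\mfG^\alpha$ to $g\in\mfG^{0,\alpha}$ via Proposition~\ref{prop:mfG_0_alpha_action}. Two small redundancies worth noting: the initial subsequence extraction so that $g_n(x_0)$ converges is unnecessary (compactness of $G$ already bounds $|g_n|_\infty$, and the H{\"o}lder bound applies to $g_n$ with $g_0=g_n(x_0)$ regardless), and the detour through $g\in\mfG^{0,\alpha'}$ in order to invoke Corollary~\ref{cor:group_action_smooth_closure} can be skipped by applying Theorem~\ref{thm:grp_action} directly with $g\in\mfG^{\alpha'}$, as the paper does.
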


\begin{proof}
Since $\Omega_\alpha^1$ is a separable Banach space, it suffices to show that,
for every $B \in \Omega_\alpha^1$ and any sequence $A_n \in [B]$ such that $A_n \to A$ in $\Omega_\alpha^1$,
one has $A \in [B]$. 
Since the $A_n$ are uniformly bounded, the corresponding gauge transformations $g_n$ such that $A_n = B^{g_n}$ are uniformly bounded
in $\mfG^\alpha$ by~\eqref{eq:g_Hol_bound}.
Since $\mfG^\alpha \subset \mfG^\beta$ compactly for $\beta < \alpha$, 
we can assume modulo passing to a subsequence
that $g_n \to g$ in $\mfG^\beta$, which implies that $A = B^g$ by Theorem~\ref{thm:grp_action}. 
Since however we know that $A \in \Omega_\alpha^1$, we conclude that $g \in \mfG^{0,\alpha}$ by Proposition~\ref{prop:mfG_0_alpha_action},
so that $A \in [B]$ as required. 
The proof for $\Omega^1_{\gr\alpha}$ is the same.
\end{proof}

In the next step, we introduce a complete metric $k_\alpha$ on $\Omega_\alpha^1$ which generates the same topology as $|\cdot|_\alpha$, but 
shrinks distances at infinity so that, for large $r$, points on the sphere with radius $r$ are close to each other but such that the spheres with radii $r$ and $2r$ are still far apart.
We then define the metric $D_\alpha$ on $\mfO_\alpha$ as the Hausdorff distance associated with $k_\alpha$.

Until the end of the section, let $(\CO,\|\cdot\|)$ be a Banach space.

\begin{definition}
For $A,B\in\CO$, set
\begin{equ}
K(A,B) \eqdef \frac{|\|A\|-\|B\|| + 1}{(\|A\|\wedge\|B\|) + 1} (\|A-B\| \wedge 1)\;,
\end{equ}
and define the metric
\begin{equ}[e:defkalpha]
k(A,B) \eqdef \inf_{Z_0,\ldots, Z_n} \sum_{i=1}^n K(Z_{i-1},Z_i)\;,
\end{equ}
where the inf is over all finite sequences $Z_0,\ldots, Z_n\in\CO$ with $Z_0=A$ and $Z_n=B$.
\end{definition}

Note that
\begin{equ}\label{eq:K_upper_bound}
k(A,B)\leq K(A,B)\leq \frac{1}{r+1}
\end{equ}
for all $A,B$ in the sphere
\begin{equ}
S_r \eqdef \{C\in \CO \,:\, \|C\|=r\}\;.
\end{equ}
On the other hand, for $r_1,r_2 > 0$, if $A\in S_{r_1}$ and $B\in S_{r_2}$, then
\begin{equ}\label{eq:K_lower_bound_sphere}
K(A,B) \geq \frac{|r_1-r_2|}{r_1\wedge r_2+1}\;,
\end{equ}
and if $r>0$ and $A,B$ are in the ball
\begin{equ}
B_r \eqdef \{C\in\CO \,:\, \|C\| \leq r\}\;,
\end{equ}
then
\begin{equ}\label{eq:K_lower_bound_ball}
K(A,B) \geq \frac{\|A-B\|\wedge 1}{r+1}\;.
\end{equ}

\begin{lemma}\label{lem:k_lower_bound}
If $A\in S_r$ and $B\in S_{r+h}$ for some $r,h>0$, then
\begin{equ}
k(A,B) \geq \log\Big(1+\frac{h}{r+1}\Big)\;.
\end{equ}
\end{lemma}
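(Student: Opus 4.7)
The plan is to reduce the problem to a one-dimensional estimate on the norms along an arbitrary chain, then bound each increment by a difference of logarithms so the bound telescopes.

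The first step is to extract from the definition of $K$ a lower bound that depends only on the norms, valid for all pairs (not just pairs on spheres):
\begin{equ}
K(U,V) \geq \frac{|\|U\|-\|V\||}{(\|U\|\wedge\|V\|)+1}\;,
\end{equ}
for all $U,V\in\CO$. This is obtained by splitting on whether $\|U-V\|\geq 1$ or not and using $|\|U\|-\|V\||\leq \|U-V\|$ in the latter case, exactly as in the derivation of \eqref{eq:K_lower_bound_sphere} with the sphere restriction dropped.

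Next, given any chain $Z_0=A, Z_1,\ldots, Z_n=B$, I would set $r_i \eqdef \|Z_i\|$, so $r_0=r$ and $r_n=r+h$. The key per-edge inequality is
\begin{equ}
\frac{|r_i-r_{i-1}|}{(r_i\wedge r_{i-1})+1} \geq \bigl|\log(r_i+1) - \log(r_{i-1}+1)\bigr|\;,
\end{equ}
which follows from $\log(1+t)\leq t$ for $t\geq 0$: assuming without loss of generality $r_i\geq r_{i-1}$, the right-hand side equals $\log\bigl(1+(r_i-r_{i-1})/(r_{i-1}+1)\bigr)\leq (r_i-r_{i-1})/(r_{i-1}+1)$, and $r_{i-1} = r_i\wedge r_{i-1}$ in this case.

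Combining these two bounds and applying the ordinary triangle inequality for $|\cdot|$ in a telescoping sum,
\begin{equ}
\sum_{i=1}^n K(Z_{i-1},Z_i) \geq \sum_{i=1}^n \bigl|\log(r_i+1)-\log(r_{i-1}+1)\bigr| \geq \log\frac{r+h+1}{r+1} = \log\Bigl(1+\frac{h}{r+1}\Bigr)\;,
\end{equ}
and taking the infimum over chains in the definition \eqref{e:defkalpha} of $k$ gives the claim. There is no real obstacle here: the only point worth flagging is that intermediate chain points $Z_i$ need not lie on any prescribed sphere, which is precisely why we need the norm-only lower bound on $K$ above; once that is in hand the problem is entirely one-dimensional and the logarithmic comparison is classical.
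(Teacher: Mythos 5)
Your proof is correct and follows essentially the same route as the paper's: the same per-chain, norm-only lower bound on $K$, followed by the observation that each increment dominates a logarithmic difference, which telescopes. The only cosmetic difference is that you verify the per-step inequality via $\log(1+t)\le t$ whereas the paper writes it as $\frac{|r_i-r_{i-1}|}{r_i\wedge r_{i-1}+1} \geq \int_{r_i\wedge r_{i-1}}^{r_i\vee r_{i-1}}\frac{\mrd x}{x+1}$ and sums the integrals; these are two phrasings of the same bound.
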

%

\begin{proof}
Consider a sequence $Z_0=A,Z_1,\ldots, Z_n=B$
and let $r_i \eqdef \|Z_i\|$.
Then
\begin{equs}
\sum_{i=1}^n K(Z_{i-1},Z_i)
&\geq \sum_{i=1}^n \frac{|r_{i}-r_{i-1}|}{
r_i\wedge r_{i-1}+1}
\geq
\sum_{i=1}^n
\int_{r_i\wedge r_{i-1}}^{r_i \vee r_{i-1}}
\frac{1}{x+1} \mrd x
\\
&\geq \int_r^{r+h} \frac{1}{x+1}\mrd x
=\log\Big(\frac{r+h+1}{r+1}\Big)\;.
\end{equs}
where in the first bound we used~\eqref{eq:K_lower_bound_sphere}.
\end{proof}

\begin{proposition}
The metric space $(\CO,k)$ is complete and $k$ metrises the same topology as $\|\cdot\|$.
\end{proposition}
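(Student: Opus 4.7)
The plan is to establish both claims by reducing everything to the behaviour of $k$ on bounded balls. The core observation is that on any ball $B_R$, the metric $k$ is bilipschitz-equivalent to $\|\cdot-\cdot\|\wedge 1$ (with constants depending on $R$), while Lemma~\ref{lem:k_lower_bound} prevents $k$-Cauchy sequences from escaping to infinity. Once both facts are in hand, the two statements of the proposition follow by routine combination.

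\textbf{Step 1 (local equivalence).} For $A,B\in B_R$, the upper bound $K(A,B)\le (2R+1)(\|A-B\|\wedge 1)$ is immediate from the definition of $K$, and hence $k(A,B) \le K(A,B) \le (2R+1)(\|A-B\|\wedge 1)$. For the matching lower bound, set $\delta_R \eqdef \log(1+1/(R+1))$. I claim that any path $Z_0=A,\dots,Z_N=B$ with $\sum_i K(Z_{i-1},Z_i)<\delta_R$ must stay inside $B_{R+1}$: indeed, if some $\|Z_j\|>R+1$, then the argument of Lemma~\ref{lem:k_lower_bound} applied to the subpath $Z_0,\dots,Z_j$ yields $\sum_{i\le j}K(Z_{i-1},Z_i)\ge \log(1+1/(R+1))=\delta_R$, a contradiction. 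Once the path is confined to $B_{R+1}$, the bound~\eqref{eq:K_lower_bound_ball} gives $K(Z_{i-1},Z_i)\ge (R+2)^{-1}(\|Z_i-Z_{i-1}\|\wedge 1)$, and since $(x,y)\mapsto \|x-y\|\wedge 1$ is itself a metric on $\CO$, summing along the path yields $\sum_i K(Z_{i-1},Z_i)\ge (R+2)^{-1}(\|A-B\|\wedge 1)$. Thus whenever $k(A,B)<\delta_R$ and $A,B\in B_R$, one has $k(A,B)\ge (R+2)^{-1}(\|A-B\|\wedge 1)$.

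\textbf{Step 2 (norm is controlled by $k$).} Lemma~\ref{lem:k_lower_bound} implies, by a symmetry argument in the roles of $r$ and $r+h$, that
\begin{equ}
k(A,B)\ge \bigl|\log(\|A\|+1)-\log(\|B\|+1)\bigr|
\end{equ}
for all $A,B\in\CO$. Consequently, for any $k$-Cauchy sequence $(A_n)$ the real sequence $\log(\|A_n\|+1)$ is Cauchy, so $\|A_n\|$ converges in $[0,\infty)$ and in particular is bounded; the same argument shows that any $k$-convergent sequence is norm-bounded.

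\textbf{Step 3 (conclusion).} For the topology: if $\|A_n-A\|\to 0$, then $\{A_n\}\cup\{A\}\subset B_R$ for some $R$ and Step~1 gives $k(A_n,A)\le (2R+1)\|A_n-A\|\to 0$. Conversely, if $k(A_n,A)\to 0$, Step~2 bounds $\|A_n\|$ uniformly, so eventually $A_n,A\in B_R$ and Step~1 forces $\|A_n-A\|\wedge 1\to 0$, i.e.\ $\|A_n-A\|\to 0$. For completeness: any $k$-Cauchy sequence is norm-bounded by Step~2, hence lies in some $B_R$, and by Step~1 is $\|\cdot\|$-Cauchy; completeness of $(\CO,\|\cdot\|)$ furnishes a norm-limit $A$, and the already-established topology equivalence gives $k(A_n,A)\to 0$. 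The only delicate point is Step~1's trapping argument via Lemma~\ref{lem:k_lower_bound}, which is really the content of the proposition; everything else is bookkeeping.
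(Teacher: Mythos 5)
Your proof is correct and follows the same three-step outline as the paper's (terser) argument: $k$ is trivially dominated by the norm, Lemma~\ref{lem:k_lower_bound} gives norm-boundedness of $k$-Cauchy sequences, and \eqref{eq:K_lower_bound_ball} supplies the local lower bound. The one thing you make explicit that the paper compresses into "it readily follows" is the path-confinement step in your Step~1: \eqref{eq:K_lower_bound_ball} only controls individual edges $K(Z_{i-1},Z_i)$ when both endpoints lie in a fixed ball, and your $\delta_R$-argument correctly shows that any chain leaving $B_{R+1}$ already has total $K$-cost at least $\delta_R$, so the infimum in \eqref{e:defkalpha} may as well be taken over confined paths.
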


\begin{proof}
It is obvious that $k$ is weaker than the metric induced by $\|\cdot\|$.
On the other hand, if $A_n$ is a $k$-Cauchy sequence, then $\sup \|A_n\|<\infty$ by Lemma~\ref{lem:k_lower_bound}.
It readily follows from~\eqref{eq:K_lower_bound_ball} that $k(A_n,A)\to 0$ and $\|A-A_n\|\to 0$ for some $A\in\CO$.
\end{proof}

We now apply the above construction with $\CO=\Omega^1_\alpha$ and $\CO=\Omega^1_{\gr\alpha}$
and denote the corresponding metric by $k_\alpha$ and $k_{\gr\alpha}$ respectively.

\begin{definition}
Let $\alpha\in(\frac23,1]$.
We denote by $D_\alpha$ (resp. $D_{\gr\alpha}$) the Hausdorff distance on $\mfO_\alpha$ (resp. $\mfO_{\gr\alpha}$) associated with $k_\alpha$ (resp. $k_{\gr\alpha}$).
\end{definition}

\begin{theorem}\label{thm:Polish}
Let $\alpha\in(\frac23,1]$.
The metric space $(\mfO_\alpha,D_\alpha)$ is complete and $D_\alpha$ metrises the quotient topology on $\mfO_\alpha$.
In particular, $\mfO_\alpha$ is a Polish space.
The same holds for $(\mfO_{\gr\alpha},D_{\gr\alpha})$.
\end{theorem}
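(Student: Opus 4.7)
The strategy is to exploit the complete metric $k_\alpha$ on $\Omega^1_\alpha$ (from the preceding proposition) and the closedness of gauge orbits (Lemma~\ref{lem:closed}) to deduce all three properties for the quotient. The proof for $(\mfO_{\gr\alpha}, D_{\gr\alpha})$ is identical and I suppress it below.

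Since orbits are closed, distinct orbits have positive $k_\alpha$-Hausdorff distance, so $D_\alpha$ is a genuine metric; symmetry and the triangle inequality are standard for Hausdorff distances. The crucial step is to show that the projection $\pi\colon(\Omega^1_\alpha,|\cdot|_\alpha)\to(\mfO_\alpha,D_\alpha)$ is continuous, which reduces to proving $D_\alpha([A],[B])\to 0$ as $|A-B|_\alpha\to 0$. To this end, I would pair each $X=A^g\in[A]$ with $Y=B^g\in[B]$ in the sup-inf defining the Hausdorff distance. Since $A^g-B^g = \Ad_g(A-B)$ as $1$-forms, arguments analogous to those in Lemmas~\ref{lem:group_action_gr} and~\ref{lem:group_action_vee} give
\[
|A^g-B^g|_\alpha \lesssim (1+|g|_{\Hol\alpha})|A-B|_\alpha.
\]
The structure of $K$, in particular the shrinking denominator $((|A^g|_\alpha\wedge|B^g|_\alpha)+1)^{-1}$ and the $\wedge 1$ truncation, cancels the growth in $|g|_{\Hol\alpha}$ and yields $\sup_g K(A^g,B^g) \lesssim |A-B|_\alpha$, establishing continuity of $\pi$ and hence that the $D_\alpha$-topology is coarser than the quotient topology.

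For the reverse inclusion, I would argue by contradiction: given a quotient-open $U$ with $[A]\in U$, if no $D_\alpha$-ball around $[A]$ is contained in $U$, pick $[A_n]\notin U$ with $D_\alpha([A_n],[A])\to 0$; by the Hausdorff distance definition I can find $B_n\in[A_n]$ with $k_\alpha(B_n,A)\to 0$, i.e.\ $B_n\to A$ in $|\cdot|_\alpha$. Since $\pi^{-1}(U)$ is open and contains $A$, eventually $B_n\in\pi^{-1}(U)$, so $[A_n]=[B_n]\in U$, contradicting $[A_n]\notin U$. Completeness of $D_\alpha$ is handled by the same representative-selection trick: given a Cauchy sequence, pass to a subsequence with $D_\alpha([A_n],[A_{n+1}])<2^{-n}$ and inductively choose $B_n\in[A_n]$ with $k_\alpha(B_n,B_{n+1})<2^{-n+1}$. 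Then $(B_n)$ is $k_\alpha$-Cauchy, hence converges to some $B\in\Omega^1_\alpha$ by completeness of $k_\alpha$, and continuity of $\pi$ gives $[A_n]\to[B]$ in $D_\alpha$. Separability of $\mfO_\alpha$ follows from that of $\Omega^1_\alpha$ (Remark~\ref{rem:separable}) via the continuous surjection $\pi$, so $(\mfO_\alpha,D_\alpha)$ is Polish.

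The main technical obstacle is the uniform-in-$g$ bound $K(A^g,B^g)\lesssim|A-B|_\alpha$: both the difference $|A^g-B^g|_\alpha$ and the individual norms $|A^g|_\alpha, |B^g|_\alpha$ grow with $|g|_{\Hol\alpha}$, so the inequality fails without the precise form of $K$. The argument requires a case split based on whether $(1+|g|_{\Hol\alpha})|A-B|_\alpha$ exceeds the truncation threshold $1$, with both the shrinking factor and the $\wedge 1$ truncation in $K$ playing essential roles; this is precisely why we replaced the original norm $|\cdot|_\alpha$ with the shrunken metric $k_\alpha$ in the first place.
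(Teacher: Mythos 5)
Your proposal is correct and reaches the same conclusion via the same high-level architecture as the paper (closedness of orbits from Lemma~\ref{lem:closed}, completeness of $k_\alpha$, group-action Lipschitz bounds from Section~\ref{subsec:gauge_transforms}), but the central estimate is organised differently. The paper proves the continuity statement (its Lemma~\ref{lem:k_to_D_conv}) by splitting the gauge orbit into a part inside a large ball $B^r_\alpha$ and a part outside it, invoking Lemma~\ref{lem:points_on_spheres} together with~\eqref{eq:K_upper_bound} to handle the far-away part and the bound $|g|_{\Hol\alpha} \lesssim r$ from~\eqref{eq:g_Hol_bound} for the near part. You instead pair $A^g$ with $B^g$ for \emph{every} $g$ simultaneously and bound $K(A^g,B^g)$ by casing on the size of $|g|_{\Hol\alpha}$. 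This avoids Lemma~\ref{lem:points_on_spheres} entirely, which is a genuine simplification of the logic, at the price of a slightly more delicate calculation in the regime where $|g|_{\Hol\alpha}$ is large but $(1+|g|_{\Hol\alpha})|A-B|_\alpha \le 1$: there one needs both the lower bound $|A^g|_\alpha \wedge |B^g|_\alpha \gtrsim |g|_{\Hol\alpha} - C(|A|_\alpha\vee|B|_\alpha)$ coming from~\eqref{eq:g_Hol_bound}, \emph{and} the fact that the $\wedge 1$ truncation caps the second factor, in order to absorb the $(1+|g|_{\Hol\alpha})$ growth in the numerator. Your sketch signals the right case split but does not carry out this calculation; once one does, the bound $\sup_g K(A^g,B^g)\lesssim|A-B|_\alpha$ (with an implicit constant depending on $|A|_\alpha$, valid for $|A-B|_\alpha$ small) does hold and the rest of your argument — the contradiction argument for the converse topological inclusion and the $2^{-n}$-chain construction for completeness in place of the paper's ``almost minimal representative'' selection in Lemma~\ref{lem:D_to_k_conv} — goes through without issue.
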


We prove Theorem~\ref{thm:Polish} only for $(\mfO_\alpha,D_\alpha)$; the proof for $(\mfO_{\gr\alpha},D_{\gr\alpha})$ is exactly the same.

\begin{lemma}\label{lem:points_on_spheres}
Let $\alpha\in(\frac23,1]$ and $x\in\mfO_\alpha$.
Then for all $r> \inf_{A\in x} |A|_\alpha$ there exists $A\in x$ with $|A|_\alpha=r$.
\end{lemma}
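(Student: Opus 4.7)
My plan is to fix $A\in x$ with $|A|_\alpha < r$ (which exists by definition of the infimum and the assumption $r > \inf_{A\in x}|A|_\alpha$) and to construct a continuous path $s\mapsto g_s$ in $\mfG^{0,\alpha}$ with $g_0 = 1$ and $|A^{g_1}|_\alpha > r$. Since $(A,g)\mapsto A^g$ is continuous from $\Omega^1_\alpha\times\mfG^{0,\alpha}$ into $\Omega^1_\alpha$ by Theorem~\ref{thm:grp_action} combined with Corollary~\ref{cor:group_action_smooth_closure}, the map $s\mapsto |A^{g_s}|_\alpha$ is then continuous and the intermediate value theorem produces some $s^*\in(0,1)$ with $|A^{g_{s^*}}|_\alpha = r$. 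We may assume $\mfg\neq\{0\}$, since otherwise $\Omega^1_\alpha=\{0\}$ and the statement is vacuous.

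To implement the construction, fix any nonzero $X\in\mfg$ and, for $n\in\N$, define the smooth functions $f_n(y)\eqdef n^{-\alpha/2}\sin(2\pi n y_1)$ on $\T^2$ and the gauge transformations $g_n(y)\eqdef\exp(f_n(y)X)\in \mcC^\infty(\T^2,G)\subset\mfG^{0,\alpha}$. Since $g_n$ takes values in the abelian one-parameter subgroup generated by $X$, one has $(\mrd g_n)g_n^{-1} = X\,\mrd f_n$, hence for every $\ell=(x,v)\in\mcX$,
\begin{equ}
0^{g_n}(\ell) = -X\bigl(f_n(\ell_f)-f_n(\ell_i)\bigr)\;.
\end{equ}
A direct computation then yields $|0^{g_n}|_{\gr\alpha} = |X|\cdot|f_n|_{\Hol\alpha}\sim n^{\alpha/2}\to\infty$, while the standard interpolation $|f_n|_{\Hol\beta}\lesssim |f_n|_\infty^{1-\beta}|f_n|_{\Hol 1}^\beta\lesssim n^{\beta-\alpha/2}$ (valid for $\beta\in(0,1)$) shows that $|g_n-1|_\infty + |g_n|_{\Hol\beta}\to 0$ for every $\beta<\alpha/2$.

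Because $\alpha>\frac23$, the interval $(1-\alpha,\alpha/2)$ is non-empty; picking $\beta$ therein, Lemma~\ref{lem:group_action_gr} applies and gives
\begin{equ}
|A^{g_n}-0^{g_n}-A|_{\gr\alpha}\lesssim (|g_n-1|_\infty+|g_n|_{\Hol\beta})|A|_{\gr\alpha}\to 0\;.
\end{equ}
Combined with the elementary bound $|A|_\alpha\geq |A|_{\gr\alpha}$ (obtained by restricting the supremum in \eqref{e:normA} to pairs where $\bar\ell$ is a degenerate segment far from $\ell$), this yields $|A^{g_n}|_\alpha\geq |0^{g_n}|_{\gr\alpha}-|A^{g_n}-0^{g_n}|_{\gr\alpha}\to\infty$. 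Fixing $n$ large enough that $|A^{g_n}|_\alpha>r$ and taking the linear interpolation $g^s\eqdef \exp(sf_nX)$ for $s\in[0,1]$, which is a continuous curve in $\mcC^\infty(\T^2,G)\subset\mfG^{0,\alpha}$ with $g^0 = 1$ and $g^1 = g_n$, the intermediate value theorem applied to the continuous map $s\mapsto|A^{g^s}|_\alpha$ furnishes the desired element of $x$. The main subtlety is to ensure that the divergence of $|0^{g_n}|_{\gr\alpha}$ is not cancelled by the non-linear correction $A^{g_n}-0^{g_n}$; this is arranged precisely by choosing $f_n$ so that $|f_n|_\infty\to 0$ while $|f_n|_{\Hol\alpha}\to\infty$, a trade-off that is simultaneously compatible with the Young condition $\alpha+\beta>1$ of Lemma~\ref{lem:group_action_gr} only when $\alpha>\frac23$.
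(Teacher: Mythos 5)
Your proof is correct and follows essentially the same strategy as the paper's (construct a continuous path of gauge transformations starting at the identity along which $|A^{g}|_\alpha\to\infty$, then invoke continuity of the group action and the intermediate value theorem); the paper's proof is just stated more tersely, citing~\eqref{eq:hol_g_action} as motivation for the construction rather than exhibiting an explicit family $g_n=\exp(f_nX)$ as you do. Your explicit verification that $|0^{g_n}|_{\gr\alpha}\to\infty$ while $|A^{g_n}-0^{g_n}-A|_{\gr\alpha}\to 0$ via Lemma~\ref{lem:group_action_gr}, together with the observation that the window $\beta\in(1-\alpha,\alpha/2)$ is non-empty exactly when $\alpha>\tfrac23$, is a valid and somewhat more self-contained way of filling in what the paper calls "readily constructed."
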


\begin{proof}
For any $A\in \Omega^1_\alpha$,
from the identity~\eqref{eq:hol_g_action},
we can readily construct a continuous function $g \colon [0,\infty) \to\mfG^{0,\alpha}$
such that $g(0)\equiv 1$ and $\lim_{t\to\infty}|A^{g(t)}|_\alpha=\infty$.
The
conclusion follows by continuity of $g\mapsto |A^g|_\alpha$ (Corollary~\ref{cor:group_action_smooth_closure}).
\end{proof}

\begin{lemma}\label{lem:k_to_D_conv}
Suppose $\alpha\in(\frac23,1]$ and $k_\alpha(A_n,A) \to 0$. Then $D_\alpha([A],[A_n]) \to 0$.
\end{lemma}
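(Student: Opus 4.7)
The plan is to establish Hausdorff convergence of the orbits by splitting $[A]$ into a bounded part (where we transport by the same gauge element) and an unbounded part (where we exploit that $k_\alpha$ shrinks distances on large spheres).

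First I would note that the hypothesis $k_\alpha(A_n,A)\to 0$ in fact implies $|A_n-A|_\alpha\to 0$ and $M\eqdef\sup_n|A_n|_\alpha<\infty$: this follows exactly as in the proof of completeness of $(\CO,k)$ given right after~\eqref{e:defkalpha}, using Lemma~\ref{lem:k_lower_bound} to bound the norms and~\eqref{eq:K_lower_bound_ball} to upgrade $k_\alpha$-convergence to norm convergence on bounded sets. Fix $\epsilon>0$ and choose $R>M$ with $\frac{1}{R+1}<\epsilon$. Since the Hausdorff distance is symmetric and the argument below is symmetric in the roles of $A$ and $A_n$, it suffices to show that, for $n$ large enough, every $B\in[A]$ admits some $B'\in[A_n]$ with $k_\alpha(B,B')<2\epsilon$.

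\emph{Unbounded part:} if $|B|_\alpha>R$, then $\inf_{C\in[A_n]}|C|_\alpha\le|A_n|_\alpha\le M<|B|_\alpha$, so by Lemma~\ref{lem:points_on_spheres} there exists $B'\in[A_n]$ with $|B'|_\alpha=|B|_\alpha$. Since both points lie on the same sphere of radius $>R$, \eqref{eq:K_upper_bound} gives $k_\alpha(B,B')\le K(B,B')\le\frac{1}{|B|_\alpha+1}<\epsilon$, and this bound is uniform in $B$.

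\emph{Bounded part:} if $|B|_\alpha\le R$, write $B=A^g$; Proposition~\ref{prop:mfG_0_alpha_action} guarantees $g\in\mfG^{0,\alpha}$, and the bound~\eqref{eq:g_Hol_bound} together with $|\cdot|_{\gr\alpha}\le|\cdot|_\alpha$ gives $|g|_{\Hol\alpha}\lesssim|A|_\alpha+R$; since $G$ is compact, $|g|_\infty$ is automatically controlled. Hence $g$ ranges over a bounded set $\mathcal K\subset\mfG^{0,\alpha}$ depending only on $|A|_\alpha$ and $R$. By Theorem~\ref{thm:grp_action} and Corollary~\ref{cor:group_action_smooth_closure}, the map $(A',g)\mapsto (A')^g$ is uniformly continuous on balls, so there is a modulus $\omega$ with $|A^g-A_n^g|_\alpha\le\omega(|A-A_n|_\alpha)$ uniformly in $g\in\mathcal K$; this tends to $0$ as $n\to\infty$. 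For large $n$ both $B=A^g$ and $A_n^g\in[A_n]$ lie in the ball of radius $R+1$, so $K(B,A_n^g)\le (R+2)\bigl(|A^g-A_n^g|_\alpha\wedge 1\bigr)<\epsilon$ uniformly in $B$, and thus $k_\alpha(B,A_n^g)<\epsilon$.

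The main obstacle is uniformity of the bound in Case 2 across all $B\in[A]$ with $|B|_\alpha\le R$ simultaneously: this is precisely what the uniform-on-balls modulus of continuity from Theorem~\ref{thm:grp_action} combined with~\eqref{eq:g_Hol_bound} is designed to supply. Without the special metric $k_\alpha$ the unbounded part of the orbits would be intractable, but $k_\alpha$ is tailored so that points on spheres of large radius automatically contribute negligibly to the Hausdorff distance, reducing the problem to the manageable bounded regime.
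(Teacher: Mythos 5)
Your proof is correct and follows essentially the same strategy as the paper's: you split each orbit into a large-norm part, handled by Lemma~\ref{lem:points_on_spheres} together with the shrinking of $k_\alpha$ on far-out spheres via~\eqref{eq:K_upper_bound}, and a bounded part, handled by transporting with the same $g$ and using the bound~\eqref{eq:g_Hol_bound} to control $|g|_{\Hol\alpha}$ in terms of $r$. The paper's proof simply spells out the Lipschitz estimate $|X^g-Y^g|_\alpha\lesssim(1+|g|_{\Hol\alpha})|X-Y|_\alpha$ directly from Lemmas~\ref{lem:group_action_gr} and~\ref{lem:group_action_vee}, whereas you invoke the uniform-on-balls continuity from Theorem~\ref{thm:grp_action}; these are the same thing (that theorem is proved from those two lemmas).
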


\begin{proof}
Consider $\eps>0$.
Observe that~\eqref{eq:K_upper_bound}, the fact 
that $\sup_n |A_n|_\alpha < \infty$ by Lemma~\ref{lem:k_lower_bound}, and Lemma~\ref{lem:points_on_spheres} together imply that 
there exists $r>0$ sufficiently large such that 
the Hausdorff distance for $k_\alpha$ between
$[A_n]\cap (\Omega^1_\alpha\setminus B^r_\alpha)$
and $[A]\cap (\Omega^1_\alpha\setminus B^r_\alpha)$ is at most $\eps$ for all $n$ sufficiently large.
On the other hand, for any $r>0$, $g\in\mfG^{0,\alpha}$, and $X,Y\in\Omega^1_\alpha$ such that $X,X^g \in B^r_\alpha$, it follows from Lemmas~\ref{lem:group_action_gr} and~\ref{lem:group_action_vee} and the identity
\begin{equ}
X^g-Y^g = \big((X-Y)^g-0^g-(X-Y)\big) + (X-Y)
\end{equ}
that
\begin{equ}
|X^g - Y^g|_\alpha \lesssim (1+|g|_{\Hol\alpha})|X-Y|_\alpha
\end{equ}
where $|g|_{\Hol\alpha} \lesssim r$ due to~\eqref{eq:g_Hol_bound}.
It follows that
\begin{equ}
\sup_{X\in[A_n]\cap B^r_\alpha} \inf_{Y\in [A]} k_\alpha(X,Y) + \sup_{X\in[A]\cap B^r_\alpha} \inf_{Y\in [A_n]} k_\alpha(X,Y) \to 0\;,
\end{equ}
which concludes the proof.
\end{proof}

\begin{lemma}\label{lem:D_to_k_conv}
Let $\alpha\in(\frac23,1]$ and suppose that $[A_n]$ is a $D_\alpha$-Cauchy sequence.
Then there exist $B\in\Omega^1_\alpha$ and representatives $B_n\in[A_n]$ 
such that $k_\alpha(B_n,B)\to 0$.
\end{lemma}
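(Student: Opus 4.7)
The plan is to leverage the completeness of $(\Omega^1_\alpha, k_\alpha)$, established earlier in the section, together with the definition of $D_\alpha$ as the Hausdorff distance associated with $k_\alpha$. The crucial elementary fact is the following: if $D_\alpha([A_n],[A_m]) < \eps$ then for any $X \in [A_n]$ and any $\delta > 0$, one can find $Y \in [A_m]$ with $k_\alpha(X,Y) < \eps + \delta$, directly from the definition of the Hausdorff distance (no closedness of orbits needed at this stage).

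First I would extract a subsequence $(n_k)_{k \geq 1}$ with $D_\alpha([A_{n_k}], [A_{n_{k+1}}]) < 2^{-k}$, which is possible because $([A_n])$ is $D_\alpha$-Cauchy. Starting from an arbitrary $B_{n_1} \in [A_{n_1}]$, I would then recursively select $B_{n_{k+1}} \in [A_{n_{k+1}}]$ satisfying $k_\alpha(B_{n_k}, B_{n_{k+1}}) < 2^{1-k}$ using the remark above with $\delta = 2^{-k}$. Summability of $2^{1-k}$ makes $(B_{n_k})_{k}$ a $k_\alpha$-Cauchy sequence, so by completeness of $(\Omega^1_\alpha, k_\alpha)$ it converges to some $B \in \Omega^1_\alpha$.

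Next, to produce $B_n$ for every $n$ (and not only along the subsequence), I would proceed as follows: for each $n$ larger than $n_1$, let $k(n)$ be the largest index with $n_{k(n)} \leq n$ and pick $B_n \in [A_n]$ with $k_\alpha(B_n, B_{n_{k(n)}}) < D_\alpha([A_n],[A_{n_{k(n)}}]) + 2^{-k(n)}$; for the finitely many indices $n < n_1$, pick any representative. Because $([A_n])$ is $D_\alpha$-Cauchy and $k(n)\to\infty$, both $D_\alpha([A_n],[A_{n_{k(n)}}])$ and $2^{-k(n)}$ tend to $0$, so $k_\alpha(B_n, B_{n_{k(n)}}) \to 0$. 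Combined with $k_\alpha(B_{n_k}, B) \to 0$ via the triangle inequality, this yields $k_\alpha(B_n, B) \to 0$ for the full sequence, as required.

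I do not anticipate any substantial obstacle: the whole argument is the standard trick for passing from a Cauchy sequence of sets (under Hausdorff distance) to a convergent sequence of points, once one has a complete ambient metric. The only thing to be mindful of is that the auxiliary metric $k_\alpha$, rather than $|\cdot|_\alpha$, is used here, but the completeness of $(\Omega^1_\alpha, k_\alpha)$ has already been proved in the proposition preceding the lemma. The closedness of orbits (Lemma~\ref{lem:closed}) is not invoked at this stage; it was needed only to justify that $D_\alpha$ is a genuine metric on $\mfO_\alpha$.
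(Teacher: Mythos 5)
Your proof is correct, but it follows a different path from the paper's. The paper's argument first chooses ``almost minimal'' representatives $A_n$ with $|A_n|_\alpha \le 1 + \inf_{g}|A_n^g|_\alpha$ and then invokes Lemma~\ref{lem:k_lower_bound} together with the Hausdorff--Cauchy condition to conclude that $\sup_n|A_n|_\alpha<\infty$; the extraction of a $k_\alpha$-Cauchy sequence $B_n\in[A_n]$ is then left as an easy step. Your argument bypasses the uniform-boundedness step altogether and uses the standard ``fast subsequence'' device: pass to $n_k$ with $D_\alpha([A_{n_k}],[A_{n_{k+1}}])<2^{-k}$, recursively choose representatives with summable gaps, invoke completeness of $(\Omega^1_\alpha,k_\alpha)$ to get a limit $B$, and then patch representatives for the full sequence. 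Both are valid; yours is closer to a textbook Hausdorff-completeness argument and more explicit about how the Cauchy representatives are produced, at the cost of the extra bookkeeping around the index $k(n)$. The paper's proof is terser and more directly exposes the role of the shrinking-at-infinity metric $k_\alpha$ (via Lemma~\ref{lem:k_lower_bound}), but the uniform boundedness it establishes is, as you observe, not strictly necessary for the statement at hand. One small cosmetic remark: if $n$ equals one of the $n_k$, your second-stage choice of $B_n$ may differ from the $B_{n_k}$ chosen in the first stage; you should either overwrite the latter or simply note that both choices converge to $B$, so the notational clash is harmless.
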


\begin{proof}
We can assume that $A_n$ are ``almost minimal'' representatives of $[A_n]$ in the sense that $|A_n|_\alpha \leq 1+\inf_{g\in\mfG^{0,\alpha}} |A_n^g|_\alpha$.
By Lemma~\ref{lem:k_lower_bound} and the definition of the Hausdorff distance, we see that $\sup_{n \geq 1} |A_n|_\alpha < \infty$, from which it is easy to extract a $k_\alpha$-Cauchy sequence $B_n\in [A_n]$.
\end{proof}

\begin{proof}[of Theorem~\ref{thm:Polish}]
Since every $x\in\mfO_\alpha$ is closed by Lemma~\ref{lem:closed}, $D_\alpha(x,y)=0$ if and only if $x=y$.
The facts that $D_\alpha$ is a complete metric and that it metrises the quotient topology both follow from Lemmas~\ref{lem:k_to_D_conv} and~\ref{lem:D_to_k_conv}.
\end{proof}

\section{Stochastic heat equation}
\label{sec:SHE}

We investigate in this section the regularity of the stochastic heat equation (which is the ``rough part'' of the SYM) with respect to the spaces introduced in Section~\ref{sec:state_space}.
For the remainder of the article, we will focus on the space of ``$1$-forms'' $\Omega^1_\alpha$.

\subsection{Regularising operators}

The main result of this subsection, Proposition~\ref{prop:convolution_estimate}, provides a convenient way to extend regularising properties of an operator $K$ to the spaces $\Omega^1_\alpha$. 
This will be particularly helpful in deriving Schauder estimates and controlling the effect of mollifiers (Corollaries~\ref{cor:Schauder} and~\ref{cor:mollif_estimate}).

Let $E$ be a Banach space throughout this subsection, and consider a linear map $K\colon\mcC^{\infty}(\T^2,E) \to \mcC(\T^2,E)$.
We denote also by $K$ the linear map $K\colon\Omega\mcC^\infty \to \Omega\mcC$ obtained by componentwise extension.
We denote by $\hat K \colon \imath\Omega\mcC^\infty \to \imath \Omega\mcC$ the natural ``lift'' of $K$ given by $\hat K(\imath A) \eqdef \imath(KA)$.
We say that $K$ is translation invariant if $K$ commutes with all translation operators $T_v\colon f\mapsto f(\cdot+v)$.
For $\theta\geq 0$, we denote
\begin{equ}
|K|_{\mcC^{\theta}\to L^\infty} \eqdef \sup\{|K(f)|_\infty \ssep f\in\mcC^\infty(\T^2,E)\;,\;\; |f|_{\mcC^{\theta}}=1\}\;.
\end{equ}
In general, for normed spaces $X,Y$ and a linear map $K\colon D(K) \to Y$, where $D(K) \subset X$, we denote
\begin{equ}
|K|_{X\to Y} \eqdef \sup \{|K (x)|_Y \ssep x\in D(K)\;,\; |x|_X = 1 \} \;.
\end{equ}
If $D(K)$ is dense in $X$, then $K$ does of course extend uniquely to all of $X$ if 
$|K|_{X\to Y} < \infty$. The reason for this setting is that it will be convenient to consider $D(K)$ as fixed
and to allow $X$ to vary.



\begin{proposition}\label{prop:convolution_estimate}
Let $0 < \bar\alpha \leq \alpha \leq 1$.
Let $K\colon\mcC^{\infty}(\T^2,E) \to \mcC(\T^2,E)$ be a translation invariant linear map.
Then
\begin{equ}\label{eq:rho_conv_estimate}
|\hat K|_{\Omega_{\alpha}^1\to\Omega_{\bar\alpha}^1} \lesssim |K|_{\mcC^{(\alpha-\bar\alpha)/2}\to L^\infty}\;.
\end{equ}
Furthermore, if $\bar\alpha \in [\frac\alpha2,\alpha]$, then for all $A\in\imath \Omega\mcC^\infty$
\begin{equ}\label{eq:gr_conv_estimate}
|\hat K A|_{\gr{\bar\alpha}} \lesssim |K|_{\mcC^{\alpha-\bar\alpha}\to L^\infty} |A|_\alpha^{2(\alpha-\bar\alpha)/\alpha} |A|_{\gr\alpha}^{(2\bar\alpha-\alpha)/\alpha}\;.
\end{equ} 
The proportionality constants in both inequalities are universal.
\end{proposition}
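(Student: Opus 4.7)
The plan is to exploit translation invariance to reduce both bounds to classical estimates for $K$ applied to an auxiliary family of functions whose Hölder regularity is controlled by $|A|_\alpha$ and $|A|_{\gr\alpha}$. For $A = \imath a$ with $a$ smooth and $v \in B_{1/4}$, define $G_v^A \colon \T^2 \to E$ by $G_v^A(y) \eqdef A(y,v)$; by linearity and translation invariance of $K$ (together with a Riemann sum approximation of the line integral defining $\imath$) one obtains the key identity $(\hat K A)(x,v) = K(G_v^A)(x)$. The problem then reduces to estimating Hölder norms of $G_v^A$ and of the differences $G_v^A - G_{\bar v}^A$.

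In view of Theorem~\ref{thm:norm_equiv}, inequality~\eqref{eq:rho_conv_estimate} reduces to controlling $|\hat K A|_{\gr{\bar\alpha}}$ and $|\hat K A|_{\v{\bar\alpha}}$ in terms of $|A|_\alpha$. For the former, the elementary additivity bound $|G_v^A|_{L^\infty} \le |A|_{\gr\alpha}|v|^\alpha$ together with the estimate $|G_v^A|_{\Hol{\alpha/2}} \lesssim |A|_\alpha |v|^{\alpha/2}$ (proved by distinguishing whether $(y,v)$ and $(\bar y,v)$ are ``far'' or form an almost-parallelogram of area $\lesssim |v||y-\bar y|$, so that $\rho((y,v),(\bar y,v)) \lesssim |v|^{1/2}|y-\bar y|^{1/2}$) combined with a standard interpolation at exponent $(\alpha-\bar\alpha)/2$ yields the claim. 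Any favourable extra power of $|v|$ produced by the interpolation is harmlessly absorbed using $|v| \le 1/4$.

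For the vee-seminorm, if $\ell = (x,v)$ and $\bar\ell = (x,\bar v)$ form a vee, the identity gives $(\hat K A)(\ell) - (\hat K A)(\bar\ell) = K(G_v^A - G_{\bar v}^A)(x)$. The key observation is that for every $y \in \T^2$ the segments $(y,v)$ and $(y,\bar v)$ also form a vee of the same area (translation invariance of area), so $|G_v^A - G_{\bar v}^A|_{L^\infty} \lesssim |A|_\alpha \Area(\ell,\bar\ell)^{\alpha/2}$, while the triangle inequality gives $|G_v^A - G_{\bar v}^A|_{\mcC^{\alpha/2}} \lesssim |A|_\alpha |v|^{\alpha/2}$; interpolation between these at exponent $(\alpha-\bar\alpha)/2$ produces the required bound $\lesssim |A|_\alpha \Area(\ell,\bar\ell)^{\bar\alpha/2}$. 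Inequality~\eqref{eq:gr_conv_estimate} follows from the same argument for $|\hat K A|_{\gr{\bar\alpha}}$ but carrying $|A|_{\gr\alpha}$ and $|A|_\alpha$ separately through the interpolation between $|G_v^A|_{L^\infty}$ and $|G_v^A|_{\mcC^{\alpha/2}}$; the hypothesis $\bar\alpha \ge \alpha/2$ is exactly what ensures that the resulting exponent on $|A|_{\gr\alpha}$ is non-negative.

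The main subtlety to address is that the codomain in~\eqref{eq:rho_conv_estimate} is $\Omega^1_{\bar\alpha}$ rather than merely $\Omega_{\bar\alpha}$. Once the uniform bounds are established on $\imath\Omega\mcC^\infty$, $\hat K$ extends by continuity to $\Omega^1_\alpha$; to ensure the image lies in $\Omega^1_{\bar\alpha}$, one uses translation invariance to upgrade the regularity of $Ka_n$ for smooth $a_n$ (via $Kf(\cdot + h) - Kf(\cdot) = K(T_h f - f)$ and the hypothesis on $K$) to any Hölder exponent less than $1$, and then approximates $Ka_n$ by its own mollifications to conclude that each $\hat K A_n$ already lies in the closure of smooth $1$-forms in $\Omega_{\bar\alpha}$. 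Closedness of $\Omega^1_{\bar\alpha}$ in $\Omega_{\bar\alpha}$ then delivers the conclusion.
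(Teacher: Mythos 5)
Your proposal is correct and follows the same underlying ideas as the paper: the identity $\hat K(\imath A)(x,v) = K[\imath A(\cdot,v)](x)$ from translation invariance, the $L^\infty$ bound $|\imath A(\cdot,v)|_\infty \le |A|_{\gr\alpha}|v|^\alpha$, the H\"older bound $|\imath A(\cdot,v)|_{\Hol{\alpha/2}} \lesssim |A|_\alpha|v|^{\alpha/2}$, and interpolation. The one genuine structural difference is in the proof of~\eqref{eq:rho_conv_estimate}: the paper bounds the $\rho$-H\"older seminorm directly for an arbitrary non-far pair $\ell = (x,v)$, $\bar\ell = (\bar x, \bar v)$ by introducing the auxiliary function $\Psi(y) \eqdef \imath A(y,v) - \imath A(y + \bar x - x, \bar v)$, which translates the second segment so both can be fed to $K$ at a common base point, whereas you invoke Theorem~\ref{thm:norm_equiv} to reduce to the gr- and vee-seminorms of $\hat K A$, and in the vee case ($\bar x = x$) no translation is needed. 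Since Theorem~\ref{thm:norm_equiv} is available anyway, this is a perfectly valid repackaging of the same estimate; the paper's route is marginally more self-contained, while yours defers the combinatorics of far/not-far line segments to that theorem. Your final paragraph on showing the image lies in $\Omega^1_{\bar\alpha}$ (not merely $\Omega_{\bar\alpha}$) is handled in the paper in the paragraph immediately following the proposition rather than inside the proof; your argument via $T_h (Ka) - Ka = K(T_h a - a)$ is the intended one.
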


\begin{proof}
We suppose that $|K|_{\mcC^{\alpha-\bar\alpha}\to L^\infty}<\infty$, as otherwise there is nothing to prove.
Let $A\in\Omega\mcC^\infty$ and observe that, for $(x,v)\in\mcX$,
\begin{equs}
\int_0^1 (K A_i)(x+tv) \mrd t
&= \int_0^1 T_{tv}(KA_i)(x) \mrd t = \int_0^1 K [T_{tv} A_i] (x) \mrd t
\\
&= K \Big[\int_0^1 T_{tv} A_i \mrd t\Big](x) = K\Big[\int_0^1 A_i(\cdot + tv) \mrd t\Big](x)\;,
\end{equs}
where we used translation invariance of $K$ in the second equality, and the boundedness of $K$ in the third equality.
In particular, it follows from the definition of $\imath\colon\Omega\mcC\to\Omega$ that
\begin{equ}\label{eq:hat_K_identity}
\hat K(\imath A)(x,v) = K[\imath A(\cdot,v)](x)\;.
\end{equ}
We will first prove~\eqref{eq:gr_conv_estimate}.
We claim that for any $\theta\in[0,1]$
\begin{equ}\label{eq:imath_A_mcC_bound}
|\imath A(\cdot,v)|_{\mcC^{(\theta\alpha/2)}} \lesssim |\imath A|_\alpha^\theta |\imath A|_{\gr\alpha}^{1-\theta} |v|^{\alpha(1-\theta/2)}
\end{equ}
for a universal proportionality constant.
Indeed, note that $|\imath A(x,v)|_\infty \leq |\imath A|_{\gr\alpha}|v|^\alpha$ which is bounded above by the right-hand side of~\eqref{eq:imath_A_mcC_bound} for any $\theta\in[0,1]$.
Furthermore, we have for all $x,y\in\T^2$
\begin{equ}\label{eq:A_x_y_bound}
|\imath A(x,v)-\imath A(y,v)| \lesssim \big[|\imath A|_{\alpha} |v|^{\alpha/2} |x-y|^{\alpha/2}\big] \wedge \big[|\imath A|_{\gr\alpha} |v|^{\alpha}\big]\;,
\end{equ}
for a universal proportionality constant,
from which~\eqref{eq:imath_A_mcC_bound} follows by interpolation.
If $\bar\alpha\in[\frac\alpha2,\alpha]$, then we can take $\theta = 2(\alpha-\bar\alpha)/\alpha$
in~\eqref{eq:imath_A_mcC_bound} and combine with~\eqref{eq:hat_K_identity} to obtain~\eqref{eq:gr_conv_estimate}.

We now prove~\eqref{eq:rho_conv_estimate}.
Consider $\ell = (x,v), \bar\ell = (\bar x,\bar v) \in \mcX$.
If $\ell,\bar\ell$ are far, then the necessary estimate follows from~\eqref{eq:gr_conv_estimate}.
Hence, suppose $\ell,\bar\ell$ are not far.
Consider the function $\Psi\in\mcC^\infty(\T^2,E)$ given by $\Psi(y) \eqdef \imath A(y,v)-\imath A(y+\bar x-x,\bar v)$.
Note that~\eqref{eq:hat_K_identity} implies
\begin{equ}\label{eq:K_Psi_identity}
(K\Psi)(x) = \hat K(\imath A)(\ell) - \hat K(\imath A)(\bar \ell)\;.
\end{equ}
We claim that for any $\theta\in[0,1]$
\begin{equ}\label{eq:Psi_mcC_bound}
|\Psi|_{\mcC^{\alpha\theta/2}} \lesssim |A|_\alpha |v|^{\alpha\theta/2} \rho(\ell,\bar\ell)^{\alpha(1-\theta)}\;.
\end{equ}
Indeed, note that $|\Psi|_\infty \leq |A|_{\alpha}\rho(\ell,\bar\ell)^{\alpha}$, which is bounded above (up to a universal constant) by the right-hand side of~\eqref{eq:Psi_mcC_bound} for any $\theta\in[0,1]$.
Furthermore, since $(y,v),(y+\bar x-x,\bar v)$ are also not far for every $y \in \T^2$, and since $|v| \asymp |\bar v|$, we have
\begin{equs}
|\Psi(y)-\Psi(z)|
&=
|A(y,v)-A(y+\bar x-x,\bar v) - A(z, v) + A(z+\bar x-x,\bar v)| \\
&\lesssim |A|_\alpha \big[ \rho(\ell,\bar\ell)^{\alpha} \wedge \big( |v|^{\alpha/2} |y- z|^{\alpha/2} \big) \big]\label{eq:A_x_y_rho_bound}
\end{equs}
for a universal proportionality constant,
from which~\eqref{eq:Psi_mcC_bound} follows by interpolation.
Taking $\theta = \frac{\alpha-\bar\alpha}{\alpha} \Leftrightarrow \bar\alpha = \alpha(1-\theta)$
in~\eqref{eq:Psi_mcC_bound} and combining with~\eqref{eq:K_Psi_identity} proves~\eqref{eq:rho_conv_estimate}.
\end{proof}

As a consequence of Proposition~\ref{prop:convolution_estimate}, any linear map $K\colon\mcC^{\infty}(\T^2,E) \to \mcC(\T^2,E)$ with $|K|_{\mcC^{(\alpha-\bar\alpha)/2}\to L^\infty} < \infty$ uniquely determines a bounded linear map $\hat K\colon \Omega^1_\alpha\to \Omega^{1}_{\bar\alpha}$ which intertwines with $K$ through the embedding $\imath \colon \Omega\mcC \to \Omega$.
The same applies to $\Omega^1_{\gr\alpha}\to \Omega^{1}_{\gr{\alpha}}$ if $|K|_{L^\infty\to L^\infty} < \infty$.
In the sequel, we will denote $\hat K$ by the same symbol $K$ without further notice.

We give two useful corollaries of Proposition~\ref{prop:convolution_estimate}.
For $t\geq 0$, let $e^{t\Delta}$ denote the heat semigroup acting on $\mcC^\infty(\T^2,E)$.

\begin{corollary}\label{cor:Schauder}
Let $0 < \bar\alpha \leq \alpha \leq 1$.
Then for all $A \in \Omega^1_\alpha$, it holds that
\begin{equ}
\label{eq:rho_Schauder}
|(e^{t\Delta}-1)A|_{\bar\alpha} \lesssim t^{(\alpha-\bar\alpha)/4}|A|_{\alpha}\;,
\end{equ}
where the proportionality constant depends only on $\alpha-\bar\alpha$.
\end{corollary}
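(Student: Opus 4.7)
The plan is to reduce the statement to a direct application of Proposition~\ref{prop:convolution_estimate} with $K = e^{t\Delta} - 1$, treating the heat semigroup as an operator on $\mcC^\infty(\T^2,E)$ with $E = \mfg$.

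First I would check the two structural hypotheses of Proposition~\ref{prop:convolution_estimate}. Translation invariance of $K = e^{t\Delta} - 1$ is immediate since the heat kernel is a convolution kernel on $\T^2$ (equivalently, $\Delta$ commutes with all translations $T_v$, hence so does $e^{t\Delta}$).

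Next I would establish the classical scalar Schauder estimate
\begin{equ}
|(e^{t\Delta}-1)f|_{\infty} \lesssim t^{\theta/2}|f|_{\mcC^\theta}
\end{equ}
for $\theta \in [0,1]$ and all $f\in\mcC^\infty(\T^2,E)$, with proportionality constant depending only on $\theta$. This follows from the standard representation $(e^{t\Delta}f)(x) - f(x) = \int_{\T^2} p_t(y)(f(x-y)-f(x))\,\mrd y$ together with $|f(x-y)-f(x)| \leq |f|_{\mcC^\theta}|y|^\theta$ and the bound $\int p_t(y)|y|^\theta\,\mrd y \lesssim t^{\theta/2}$ coming from Gaussian moment estimates on the torus heat kernel. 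Specialising to $\theta = (\alpha-\bar\alpha)/2 \in [0,\tfrac12]$ yields
\begin{equ}
|e^{t\Delta}-1|_{\mcC^{(\alpha-\bar\alpha)/2}\to L^\infty} \lesssim t^{(\alpha-\bar\alpha)/4}\;.
\end{equ}

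Plugging this bound into the estimate~\eqref{eq:rho_conv_estimate} of Proposition~\ref{prop:convolution_estimate} (applied componentwise to the two components of $A\in \Omega\mcC^\infty$ and then extended to $\Omega^1_\alpha$ by density, exactly as discussed after the proof of the Proposition) yields the desired bound for $A$ in the dense subspace $\imath\Omega\mcC^\infty$. Since both sides are continuous in $A$ with respect to $|\cdot|_\alpha$ and $|\cdot|_{\bar\alpha}$ respectively, the estimate extends to all $A\in \Omega^1_\alpha$.

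I do not anticipate a real obstacle: this is essentially a bookkeeping corollary, the only minor subtlety being to verify the scalar Schauder estimate with the correct exponent $\theta/2$ (rather than $\theta$), which reflects the parabolic scaling built into the convention $|(t,x)| = \sqrt{|t|}+|x|$ used throughout the paper, and to be mindful that Proposition~\ref{prop:convolution_estimate} costs a factor of $1/2$ in the regularity exponent \dash so we must feed it the $\mcC^{(\alpha-\bar\alpha)/2}$ norm bound to recover the final rate $t^{(\alpha-\bar\alpha)/4}$.
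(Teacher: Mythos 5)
Your proof is correct and follows essentially the same route as the paper: both invoke the classical Schauder bound $|(e^{t\Delta}-1)|_{\mcC^{\kappa}\to L^\infty}\lesssim t^{\kappa/2}$ and feed it into estimate~\eqref{eq:rho_conv_estimate} of Proposition~\ref{prop:convolution_estimate} with $\kappa=(\alpha-\bar\alpha)/2$. The only difference is that you spell out the standard heat-kernel moment argument for the classical estimate and the density extension, both of which the paper takes as given.
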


\begin{proof}
Recall the classical estimate for $\kappa\in[0,1]$
\begin{equ}
|(e^{t\Delta}-1)|_{\mcC^{\kappa}\to L^\infty} \leq |(e^{t\Delta}-1)|_{\mcC^{\Hol\kappa}\to L^\infty} \lesssim t^{\kappa/2}\;.
\end{equ}
The claim then follows from~\eqref{eq:rho_conv_estimate} by taking $\kappa=(\alpha-\bar\alpha)/2$.
\end{proof}

\begin{remark}
The appearance of $t^{(\alpha-\bar\alpha)/4}$ in~\eqref{eq:rho_Schauder} may seem unusual since one instead has $t^{(\alpha-\bar\alpha)/2}$ in the classical Schauder estimates for the H{\"o}lder norm $|\cdot|_{\mcC^{\alpha}}$.
The exponent $(\alpha-\bar\alpha)/4$ is however sharp (which can be seen by looking at the Fourier basis), and is consistent with the embedding of $\mcC^{\alpha/2}$ into $\Omega_\alpha$ (Remark~\ref{rem:Holder_Omega_embedding}).
\end{remark}

\begin{corollary}\label{cor:mollif_estimate}
Let $0<\bar\alpha\leq\alpha\leq1$ and $\kappa\in[0,1]$.
Let $\moll$ be a mollifier on $\R\times \R^2$ and consider a function $A\colon\R\to \Omega_\alpha^1$. 
Then for any interval $I\subset\R$
\begin{equ}
\sup_{t\in I} |(\moll^\eps*A) (t) - A(t)|_{\bar\alpha} \lesssim |\moll|_{L^1}\Big(\eps^{(\alpha-\bar\alpha)/2}\sup_{t\in I_{\eps}}|A(t)|_\alpha + \eps^{2\kappa}|A|_{\mcC^{\Hol\kappa}(I_{\eps},\Omega_{\bar\alpha})}\Big)\;,
\end{equ}
where $I_\eps$ is the $\eps^2$ fattening of $I$, and the proportionality constant is universal.
\end{corollary}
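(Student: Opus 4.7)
The plan is to split $(\moll^\eps * A)(t) - A(t)$ into a piece that measures spatial mollification at fixed time and a piece that measures the time Hölder fluctuation of $A$, and then to control each piece using Proposition~\ref{prop:convolution_estimate} and the normalisation $\int \moll = 1$. Throughout, for $r \in \R$ let $M_r$ denote the spatial convolution operator (acting on $\Omega^1_\alpha$ via $\imath$) with kernel $\moll^\eps(r,\cdot) \in \mcC^\infty(\R^2)$, and set $k_r \eqdef \int_{\R^2} \moll^\eps(r,y)\,\mrd y$, so that $\int_\R k_r\,\mrd r = 1$ and $\int_\R M_r\,\mrd r$ equals spatial convolution by the purely spatial mollifier $\tilde\moll^\eps(y) \eqdef \int_\R \moll^\eps(r,y)\,\mrd r$, which satisfies $\int\tilde\moll^\eps = 1$, $|\tilde\moll^\eps|_{L^1(\R^2)} \le |\moll|_{L^1}$, and is supported in a ball of radius $\lesssim \eps$.

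Changing variables $r = t-s$ and using $\int_\R k_r\,\mrd r = 1$, I write
\begin{equ}
(\moll^\eps * A)(t) - A(t) = \int_\R M_r\bigl(A(t-r) - A(t)\bigr)\,\mrd r + \bigl(\tilde\moll^\eps *_x A(t) - A(t)\bigr)\;.
\end{equ}
For the first term, Proposition~\ref{prop:convolution_estimate} with $\alpha=\bar\alpha$ gives $|M_r|_{\Omega^1_{\bar\alpha}\to\Omega^1_{\bar\alpha}} \lesssim |\moll^\eps(r,\cdot)|_{L^1(\R^2)}$ (by Young's inequality). Because $\moll^\eps$ is supported in the parabolic ball of radius $\eps/4$, the $r$-integration is confined to $|r| \lesssim \eps^2$, so $t-r \in I_\eps$ whenever $t \in I$, and I may bound $|A(t-r)-A(t)|_{\bar\alpha} \le |A|_{\mcC^{\Hol\kappa}(I_\eps,\Omega_{\bar\alpha})}|r|^\kappa \lesssim \eps^{2\kappa}|A|_{\mcC^{\Hol\kappa}(I_\eps,\Omega_{\bar\alpha})}$. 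Integrating out the $L^1$ norm of $\moll^\eps$ in both variables (which equals $|\moll|_{L^1}$ by the parabolic scaling) yields the second summand in the claim.

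For the second term, I apply Proposition~\ref{prop:convolution_estimate} to the translation-invariant operator $K f \eqdef \tilde\moll^\eps *_x f - f$. Since $\int\tilde\moll^\eps = 1$ and $\tilde\moll^\eps$ is supported in $\{|y| \lesssim \eps\}$, a standard mollification estimate gives
\begin{equ}
|Kf|_{L^\infty} = \Bigl|\int \tilde\moll^\eps(y)\bigl(f(\cdot-y)-f(\cdot)\bigr)\mrd y\Bigr|_{L^\infty} \lesssim \eps^{(\alpha-\bar\alpha)/2}|\moll|_{L^1}\,|f|_{\mcC^{(\alpha-\bar\alpha)/2}}\;,
\end{equ}
so that $|K|_{\mcC^{(\alpha-\bar\alpha)/2}\to L^\infty} \lesssim \eps^{(\alpha-\bar\alpha)/2}|\moll|_{L^1}$. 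Proposition~\ref{prop:convolution_estimate} then upgrades this to $|\tilde\moll^\eps *_x A(t) - A(t)|_{\bar\alpha} \lesssim \eps^{(\alpha-\bar\alpha)/2}|\moll|_{L^1}|A(t)|_\alpha$, and bounding $|A(t)|_\alpha$ by $\sup_{s\in I_\eps}|A(s)|_\alpha$ gives the first summand. Summing the two contributions yields the claimed inequality; the only substantive point is the use of Proposition~\ref{prop:convolution_estimate} to transfer the scalar mollification estimates (which are classical) into the spaces $\Omega^1_{\bar\alpha}$, and no further obstacle is anticipated.
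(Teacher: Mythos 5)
Your argument is correct and uses the same two ingredients as the paper's proof (Proposition~\ref{prop:convolution_estimate} to transfer the spatial mollification estimate, and the time-H\"older seminorm for the temporal error); the only difference is a cosmetic one in the telescoping: the paper writes $\moll^\eps(s)A(t-s) = (\moll^\eps(s)-m(s))A(t-s) + m(s)(A(t-s)-A(t)) + m(s)A(t)$ where $m(s)$ is the scalar mass of the time slice, whereas you write $\moll^\eps(s)A(t-s) = \moll^\eps(s)(A(t-s)-A(t)) + \moll^\eps(s)A(t)$, so your spatial-error term is evaluated at the fixed time $t$ and your time-error term is convolved by the full operator $M_s$ rather than multiplied by the scalar $m(s)$. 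Both lead to exactly the same bound after applying Proposition~\ref{prop:convolution_estimate} and integrating in $s$, so this is essentially the paper's proof.
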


\begin{proof}
For $t\in\R$ define $m(t) \eqdef \int_{\T^2} \moll^\eps(t,x)\mrd x$ and denote by $\moll^\eps(t)$ the convolution operator $[\moll^\eps(t) f](x) \eqdef \scal{\moll^\eps(t,x-\cdot),f(\cdot)}$ for $f\in\mcD'(\T^2)$.
Observe that for any $\theta\in[0,1]$
\begin{equ}
|m(t) f - \moll^\eps(t) f|_{L^\infty} \leq \eps^{\theta}|\moll^\eps(t,\cdot)|_{L^1(\T^2)} |f|_{\mcC^{\Hol\theta}(\T^2)}\;.
\end{equ}
In particular, $|m(t) - \moll^\eps(t)|_{\mcC^{\Hol{(\alpha-\bar\alpha)/2}} \to L^\infty} \leq \eps^{(\alpha-\bar\alpha)/2}|\moll^\eps(t,\cdot)|_{L^1(\T^2)}$.
Hence, for any $t\in I$,
\begin{equs}
|(\moll^\eps*A)(t) - A(t)|_{\bar\alpha}
&\leq \int_\R |(\moll^\eps(s) - m(s))A(t-s)|_{\bar\alpha} \mrd s
\\
&\quad + \int_\R |m(s)(A(t-s) - A(t))|_{\bar\alpha} \mrd s
\\
&\lesssim \int_{\R} |\moll^\eps(s,\cdot)|_{L^1} \eps^{(\alpha-\bar\alpha)/2} |A(t-s)|_\alpha \mrd s
\\
&\quad + \int_\R |m(s)| |s|^{\kappa}|A|_{\mcC^{\Hol\kappa}(I_\eps,\Omega_{\bar\alpha})} \mrd s
\\
&\leq |\moll|_{L^1} \eps^{(\alpha-\bar\alpha)/2} \sup_{t\in I_\eps}|A(t)|_\alpha + \eps^{2\kappa} |\moll|_{L^1}|A|_{\mcC^{\Hol\kappa}(I_\eps,\Omega_{\bar\alpha})}\;,
\end{equs}
where we used~\eqref{eq:rho_conv_estimate} in the second inequality.
\end{proof}

Another useful property is that the heat semigroup is 
strongly continuous on $\Omega^1_\alpha$.
To show this, we need the following lemma.
For a function $\omega\colon\R_+ \to \R_+$, let $\mcC^\omega(\T^2,E)$
denote the space of continuous functions $f\colon \T^2\to E$ with
\begin{equ}
|f|_{\mcC^\omega} \eqdef \sup_{x\neq y} \frac{|f(x)-f(y)|}{\omega(|x-y|)} < \infty\;.
\end{equ}

\begin{lemma}\label{lem:K_modulus_bound}
Let $\alpha\in(0,1]$ and $K\colon\mcC^\infty(\T^2,E) \to \mcC(\T^2,E)$ be a translation invariant linear map with $|K|_{L^\infty\to L^\infty}<\infty$.
Let $A\in\Omega^1_\alpha$ and $\omega\colon\R_+ \to \R_+$, and
suppose that for all
$x,y\in\T^2$, $v,\bar v\in B_{1/4}$, and $h\in\R^2$,
\begin{equ}\label{eq:A_x_y_bound_2}
|A(x,v)-A(y,v)| \leq |A|_{\gr\alpha}|v|^\alpha\omega(|x-y|)
\end{equ}
and
\begin{equ}\label{eq:A_x_y_rho_bound_2}
|A(x,v)-A(x+h,\bar v) - A(y,v)+A(y+h,\bar v)|
\leq |A|_\alpha\rho(\ell,\bar \ell)^\alpha \omega(|x-y|)\;,
\end{equ}
where $\ell=(x,v)$ and $\bar\ell=(x+h,\bar v)$.
Then
\begin{equ}
|K A|_{\gr\alpha} \leq |K|_{\mcC^\omega\to L^\infty} |A|_{\gr\alpha}
\qquad \text{ and }
\qquad |K A|_{\alpha} \leq |K|_{\mcC^\omega\to L^\infty} |A|_{\alpha}\;.
\end{equ}
\end{lemma}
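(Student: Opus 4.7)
The strategy parallels that of Proposition~\ref{prop:convolution_estimate}, but rather than interpolating between H\"older norms, the extra regularity given by~\eqref{eq:A_x_y_bound_2} and~\eqref{eq:A_x_y_rho_bound_2} is used directly through the operator norm $|K|_{\mcC^\omega\to L^\infty}$. I would first establish both bounds when $A = \imath B$ with $B \in \Omega\mcC^\infty$, since translation invariance of $K$ then yields the pointwise identity (exactly as in~\eqref{eq:hat_K_identity})
\begin{equ}\label{eq:pointwise_K_id_lem}
\hat K A(x,v) = K[A(\cdot,v)](x)\;,\qquad (x,v)\in\mcX\;.
\end{equ}

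For the first inequality, I would fix $\ell = (x,v)\in\mcX$ with $|v|>0$ and observe that~\eqref{eq:A_x_y_bound_2} gives $|A(\cdot,v)|_{\mcC^\omega}\leq |A|_{\gr\alpha}|v|^\alpha$, so~\eqref{eq:pointwise_K_id_lem} immediately yields $|\hat K A(\ell)| \leq |K|_{\mcC^\omega \to L^\infty}|A|_{\gr\alpha}|v|^\alpha$. For the second inequality, given $\ell = (x,v), \bar\ell = (x+h,\bar v)\in\mcX$, the natural auxiliary function is $\Psi(y) \eqdef A(y,v) - A(y+h,\bar v)$: translation invariance of $K$ combined with~\eqref{eq:pointwise_K_id_lem} yields $K\Psi(x) = \hat K A(\ell) - \hat K A(\bar\ell)$, while~\eqref{eq:A_x_y_rho_bound_2} gives $|\Psi|_{\mcC^\omega}\leq |A|_\alpha\rho(\ell,\bar\ell)^\alpha$, so
\begin{equ}
|\hat K A(\ell) - \hat K A(\bar\ell)| \leq |K|_{\mcC^\omega\to L^\infty}|A|_\alpha\rho(\ell,\bar\ell)^\alpha\;,
\end{equ}
as desired.

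To pass from smooth $A$ to general $A\in \Omega^1_\alpha$, I would approximate by a spatial mollification on $\T^2$: for a smooth mollifier $\phi_\eps$ set $A_\eps(x,v) \eqdef \int_{\T^2} A(x-z,v)\phi_\eps(z)\mrd z$, which coincides with $\imath(\phi_\eps*\pi A) \in \imath\Omega\mcC^\infty$. The assumption $|K|_{L^\infty\to L^\infty}<\infty$ combined with Proposition~\ref{prop:convolution_estimate} guarantees that $\hat K$ is continuous from $\Omega^1_\alpha$ to itself, and that spatial convolution with $\phi_\eps$ is a bounded operator on $\Omega^1_\alpha$ converging strongly to the identity as $\eps \downarrow 0$, so $A_\eps \to A$ and hence $\hat K A_\eps \to \hat K A$ in $\Omega^1_\alpha$. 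The main point to verify is that $A_\eps$ inherits~\eqref{eq:A_x_y_bound_2} and~\eqref{eq:A_x_y_rho_bound_2} \emph{with the same modulus} $\omega$, and that $|A_\eps|_{\gr\alpha}\leq |A|_{\gr\alpha}$ and $|A_\eps|_\alpha\leq |A|_\alpha$; this is the step where one cannot use an arbitrary approximating sequence from $\imath\Omega\mcC^\infty$, but both claims follow at once from the linearity of both sides of~\eqref{eq:A_x_y_bound_2}--\eqref{eq:A_x_y_rho_bound_2} in $A$ together with the translation invariance of the quantities $\rho$ and $|\cdot|_{\gr\alpha}, |\cdot|_\alpha$ on $\T^2$. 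Applying the smooth-case bounds to $A_\eps$ and letting $\eps \downarrow 0$ concludes the proof.
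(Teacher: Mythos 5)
Your proof is correct and follows essentially the same route as the paper's (deliberately terse) proof, which simply observes that the argument of Proposition~\ref{prop:convolution_estimate} can be repeated verbatim with the two interpolation estimates~\eqref{eq:A_x_y_bound}, \eqref{eq:A_x_y_rho_bound} replaced by the hypotheses~\eqref{eq:A_x_y_bound_2}, \eqref{eq:A_x_y_rho_bound_2}: for smooth $A$ the identity~\eqref{eq:hat_K_identity} reduces $|\hat K A|_{\gr\alpha}$ and $|\hat K A|_\alpha$ to $|K|_{\mcC^\omega\to L^\infty}$ applied to $A(\cdot,v)$ and to $\Psi$, exactly as you set it up. You additionally spell out the passage from $A\in\imath\Omega\mcC^\infty$ to general $A\in\Omega^1_\alpha$ via spatial mollification; this is a reasonable clarification (the paper does not address it explicitly, and in the only application — Proposition~\ref{prop:strongly_continuous} — the lemma is invoked for smooth $A$ anyway). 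One small imprecision in the final step: ``linearity of both sides of~\eqref{eq:A_x_y_bound_2}--\eqref{eq:A_x_y_rho_bound_2} in $A$'' is not literally correct, since the right-hand sides involve the norms $|A|_{\gr\alpha}$ and $|A|_\alpha$. What your averaging argument actually gives — and all that is needed — is that $A_\eps$ satisfies \eqref{eq:A_x_y_bound_2}--\eqref{eq:A_x_y_rho_bound_2} with the \emph{same} constants $|A|_{\gr\alpha}$, $|A|_\alpha$ on the right (not $|A_\eps|_{\gr\alpha}$, $|A_\eps|_\alpha$), by the triangle inequality under the mollifying integral together with translation invariance of $\rho$ and $|x-y|$. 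This yields $|\hat K A_\eps|_{\gr\alpha}\leq |K|_{\mcC^\omega\to L^\infty}|A|_{\gr\alpha}$ and $|\hat K A_\eps|_\alpha\leq |K|_{\mcC^\omega\to L^\infty}|A|_\alpha$, which then pass to the limit, so the argument is sound.
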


\begin{proof}
The proof is essentially the same as that of Proposition~\ref{prop:convolution_estimate};
one simply replaces~\eqref{eq:A_x_y_bound} by~\eqref{eq:A_x_y_bound_2}
and~\eqref{eq:A_x_y_rho_bound} by~\eqref{eq:A_x_y_rho_bound_2}.
\end{proof}

\begin{proposition}\label{prop:strongly_continuous}
Let $\alpha\in(0,1]$.
The heat semigroup $e^{t\Delta}$ is strongly 
continuous on $\Omega^{1}_{\gr\alpha}$ and $\Omega^1_\alpha$.
\end{proposition}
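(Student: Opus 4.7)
The plan is the standard density argument: establish uniform boundedness of $e^{t\Delta}$ on both spaces and strong convergence on the dense subspace $\imath\Omega\mcC^\infty$, then conclude by a $3\eps$-argument.

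First, for uniform boundedness: applying Proposition~\ref{prop:convolution_estimate} with $\bar\alpha=\alpha$ gives
\begin{equ}
|e^{t\Delta}|_{\Omega^1_\alpha\to\Omega^1_\alpha}\lesssim |e^{t\Delta}|_{L^\infty\to L^\infty}\leq 1\;,
\end{equ}
uniformly in $t\geq 0$. For the $|\cdot|_{\gr\alpha}$ norm, the identity $\hat K(\imath B)(x,v) = K[\imath B(\cdot,v)](x)$ from the proof of Proposition~\ref{prop:convolution_estimate} yields the trivial bound $|e^{t\Delta}\imath B|_{\gr\alpha}\leq |\imath B|_{\gr\alpha}$ (also a special case of Lemma~\ref{lem:K_modulus_bound}). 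Since $e^{t\Delta}$ preserves smooth $1$-forms, it preserves $\Omega^1_\alpha$ and $\Omega^1_{\gr\alpha}$.

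Next, for convergence on the dense subspace, take $A = \imath B$ with $B\in\Omega\mcC^\infty$ and write $B_t = e^{t\Delta}B - B$, which is smooth and converges to $0$ in every $\mcC^k$ norm. The $\gr\alpha$ part is easy: since line segments in $\mcX$ have length $\leq\frac14$, one has $|\imath B_t|_{\gr\alpha}\lesssim |B_t|_\infty\to 0$. For the full norm on $\Omega^1_\alpha$, invoke the equivalence $|\cdot|_\alpha \asymp |\cdot|_{\gr\alpha}+|\cdot|_{\tri\alpha}$ from Theorem~\ref{thm:norm_equiv}. The remaining ingredient, which is the only non-immediate step, is to bound $|\imath B_t|_{\tri\alpha}$. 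Here Stokes' theorem gives, for any triangle $P$,
\begin{equ}
\imath B_t(\partial P) = \int_{\mathring P}\curl B_t\,\mrd x\;,
\end{equ}
so that $|\imath B_t(\partial P)|\leq |\curl B_t|_\infty |P|$; since $|P|$ is bounded by a constant depending only on the diameter, $|\imath B_t|_{\tri\alpha}\lesssim |\curl B_t|_\infty$. As $\curl$ commutes with $\Delta$ and $\curl B$ is smooth, $|\curl B_t|_\infty\to 0$, giving the result. (When $\alpha<1$ an alternative is to apply Corollary~\ref{cor:Schauder} with source exponent $\alpha'\in(\alpha,1]$, using that $|A|_{\alpha'}<\infty$ for smooth $A$.)

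Finally, the density argument: for arbitrary $A\in\Omega^1_\alpha$ and $\eps>0$ pick smooth $A_n$ with $|A-A_n|_\alpha<\eps$ and estimate
\begin{equ}
|e^{t\Delta}A-A|_\alpha \leq |e^{t\Delta}(A-A_n)|_\alpha + |e^{t\Delta}A_n-A_n|_\alpha + |A_n-A|_\alpha\;.
\end{equ}
The first and third terms are controlled by uniform boundedness, while the middle term vanishes as $t\to 0$ by the previous step, yielding $\limsup_{t\to 0}|e^{t\Delta}A-A|_\alpha \lesssim \eps$. The same argument works for $\Omega^1_{\gr\alpha}$, where only the $\gr\alpha$ part of Step 2 is needed. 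The main obstacle throughout is Step 2 for the full norm: Corollary~\ref{cor:Schauder} by itself gives convergence only in a strictly weaker norm $|\cdot|_{\bar\alpha}$ with $\bar\alpha<\alpha$, so extracting convergence in $|\cdot|_\alpha$ on smooth inputs requires the extra algebraic input provided by Stokes' theorem (or, equivalently, exploiting that smooth $1$-forms live in $\Omega^1_{\alpha'}$ for every $\alpha'\leq 1$).
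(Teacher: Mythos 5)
Your proof is correct, and it takes a genuinely different route from the paper's. The paper first establishes a separate lemma (Lemma~\ref{lem:K_modulus_bound}) asserting that translation-invariant operators bounded from $\mcC^\omega$ to $L^\infty$ (for a modulus of continuity $\omega$) act boundedly on $\Omega_\alpha$ and $\Omega_{\gr\alpha}$ with operator norm controlled by $|K|_{\mcC^\omega\to L^\infty}$, provided $A$ satisfies the corresponding modulus-of-continuity estimates. It then observes that every smooth $A$ admits such a bounded modulus and that $|e^{t\Delta}-1|_{\mcC^\omega\to L^\infty}\to 0$ for any bounded modulus, which yields convergence on the dense subspace for \emph{both} norms in one stroke, before finishing by density exactly as you do. You instead handle the $\gr\alpha$ part directly (trivial uniform estimate $|\imath B_t|_{\gr\alpha}\lesssim|B_t|_\infty$) and supply the full norm via the equivalence $|\cdot|_\alpha\asymp|\cdot|_{\gr\alpha}+|\cdot|_{\tri\alpha}$ plus a Stokes' theorem estimate $|\imath B_t(\partial P)|\le|\curl B_t|_\infty|P|\lesssim|\curl B_t|_\infty|P|^{\alpha/2}$. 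Both arguments are sound. The paper's is slightly more uniform and avoids invoking the $\tri\alpha$ norm or Stokes' theorem at this stage; yours is perhaps more concrete and makes explicit the algebraic mechanism behind the convergence of the curl part. Your alternative remark (using Corollary~\ref{cor:Schauder} with a strictly larger source exponent $\alpha'$, valid since $\imath\Omega\mcC^\infty\subset\Omega^1_{\alpha'}$ for all $\alpha'\le1$) is also a perfectly workable shortcut for $\alpha<1$, and is closer in spirit to the paper's use of the $\mcC^\omega\to L^\infty$ operator-norm estimate.
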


\begin{proof}
Observe that for every $A\in\imath\Omega\mcC^\infty$ 
there exists a bounded modulus of continuity $\omega\colon\R_+\to\R_+$ such that~\eqref{eq:A_x_y_bound_2} 
and~\eqref{eq:A_x_y_rho_bound_2} hold.
On the other hand, recall that for every bounded modulus of continuity $\omega\colon\R_+\to\R_+$
\begin{equ}
\lim_{t\to0}|e^{t\Delta}-1|_{\mcC^\omega\to L^\infty} = 0\;.
\end{equ}
It follows from Lemma~\ref{lem:K_modulus_bound} that
$\lim_{t\to0} |e^{t\Delta}A-A|_{\gr\alpha} = 0$
for every $A\in\Omega\mcC^\infty$,
and the same for the norm $|\cdot|_\alpha$, from which 
the conclusion follows by density of $\imath\Omega\mcC^\infty$ in $\Omega^{1}_{\gr\alpha}$ and $\Omega^1_\alpha$.
\end{proof}

\subsection{Kolmogorov bound}

In this subsection, let $\xi$ be a $\mfg$-valued Gaussian random distribution on $\R\times \T^2$.
We assume that there exists $C_\xi > 0$ such that
\begin{equ}\label{eq:covariance_bound}
\E[|\scal{\xi,\phi}|^2] \leq C_\xi|\phi|_{L^2(\R\times\T^2)}^2
\end{equ}
for all smooth compactly supported $\phi \colon \R\times\T^2\to\mfg$.
Let $\xi_1,\xi_2$ be two i.i.d.\ copies of $\xi$, and let $\Psi = \sum_{i=1}^2 \Psi_i \mrd x_i$ solve the stochastic heat equation $(\partial_t - \Delta) \Psi = \sum_{i=1}^2\xi_i \mrd x_i$ on $\R_+ \times \T^2$ with initial condition $\iota\Psi(0) \in \Omega^1_\alpha$.

\begin{lemma}\label{lem:tri_Sobolev}
Let $P$ be a triangle with inradius $h$.
Let $\kappa \in (0,1)$ and $p \in [1,\kappa^{-1})$, and let $W^{\kappa,p}$ denote the Sobolev--Slobodeckij space on $\T^2$.
Then $|\bone_{\mathring P}|_{W^{\kappa,p}}^p \lesssim |P| h^{-\kappa p}$, where the proportionality constant depends only on $\kappa p$.
\end{lemma}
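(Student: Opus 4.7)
My plan is to expand the Sobolev--Slobodeckij norm as
\[
|\bone_{\mathring P}|_{W^{\kappa,p}}^p = |P| + 2\int_{\mathring P}\int_{\T^2 \setminus \mathring P} \frac{dy\,dx}{|x-y|^{2+\kappa p}},
\]
using that the indicator takes only the values $0$ and $1$, so that $|\bone_{\mathring P}(x)-\bone_{\mathring P}(y)|^p$ is itself the indicator of the symmetric difference. Since the triangle has diameter at most $1/4$ (by the $n$-gon conventions of Section~\ref{sec:state_space}), distances on $\T^2$ agree with Euclidean distances in a neighbourhood of $P$, and I proceed as if working on $\R^2$; the tail of the torus integral over $|x-y| \gtrsim 1$ contributes only $O(|P|)$ and is harmless.

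First I will bound the inner integral. For $x\in\mathring P$, set $d(x)\eqdef\mathrm{dist}(x,\partial P)$. The open ball of radius $d(x)$ around $x$ lies inside $\mathring P$, so passing to polar coordinates yields
\[
\int_{\T^2\setminus\mathring P}\frac{dy}{|x-y|^{2+\kappa p}}\lesssim \int_{|y-x|\geq d(x)}\frac{dy}{|x-y|^{2+\kappa p}} = \frac{2\pi}{\kappa p}\, d(x)^{-\kappa p},
\]
which is finite since $\kappa p>0$. It therefore remains to prove the purely geometric bound $\int_{\mathring P}d(x)^{-\kappa p}\,dx \lesssim |P|\, h^{-\kappa p}$.

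For this I will use two elementary facts about a triangle with inradius $h$: the classical identity $|P|=h\cdot\perim{P}/2$ (area equals inradius times semi-perimeter), and the estimate $|\{x\in\mathring P:d(x)\leq t\}|\leq \perim{P}\cdot t$ valid for all $t\in[0,h]$, obtained by covering the $t$-neighbourhood of $\partial P$ by strips along each of the three sides. The layer-cake formula then gives
\[
\int_{\mathring P}d(x)^{-\kappa p}\,dx \leq |P|\,h^{-\kappa p}+\kappa p \cdot \perim{P}\int_0^h t^{-\kappa p}\,dt \lesssim |P|\,h^{-\kappa p},
\]
where integrability at $t=0$ uses $\kappa p<1$, which is precisely the hypothesis $p<\kappa^{-1}$; this is the only mild subtlety in the argument. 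Since $h\leq 1/4$ in our setting, the contribution $|P|$ from the $L^p$-norm is dominated by a constant multiple of $|P|\,h^{-\kappa p}$, completing the proof with an implicit constant depending only on $\kappa p$.
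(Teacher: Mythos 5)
Your proof is correct and follows essentially the same route as the paper's: both bound the inner integral by $d(x,\partial P)^{-\kappa p}$, then use a layer-cake argument together with the estimate $|\{x \in \mathring P : d(x,\partial P) < t\}| \lesssim |\partial P|\, t$ and the relation $|P| \asymp |\partial P|\,h$ to conclude. The only (harmless) cosmetic differences are that you carry along the $L^p$ term of the norm explicitly and note the torus-vs-Euclidean point, both of which the paper elides.
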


\begin{proof}
Using the definition of Sobolev--Slobodeckij spaces, we have
\begin{equs}
|\bone_{\mathring P}|^p_{W^{\kappa,p}}
&= \int_{\T^2}\int_{\T^2} \frac{|\bone_{\mathring P}(x)-\bone_{\mathring P}(y)|^p}{|x-y|^{\kappa p + 2}} \mrd x \mrd y
\\
&= 2\int_{\mathring P} \int_{\T^2\setminus \mathring P} |x-y|^{-\kappa p -2} \mrd x\mrd y
\\
&\leq 2\int_{\mathring P} \mrd x \int_{|y-x|>d(x,\partial P)} \mrd y |x-y|^{-\kappa p -2}
\\
&\lesssim \int_{\mathring P} d(x,\partial P)^{-\kappa p} \mrd x
\\
&= \int_0^1 |\{x \in \mathring P \ssep d(x,\partial P) < \delta\}| \delta^{-1-\kappa p}\mrd \delta\;.
\end{equs}
Note that the integrand is non-zero only if $\delta < h$, in which case $|\{x \in \mathring P \ssep d(x,\partial P) < \delta\}| \lesssim |\partial P|\delta$, where $|\d P|$ denotes the length of the perimeter.
Hence, since $\kappa p < 1$,
\begin{equ}
|\bone_{\mathring P}|^p_{W^{\kappa,p}} \lesssim \int_0^h |\partial P| \delta^{-\kappa p} \mrd \delta \lesssim |\partial P| h^{-\kappa p + 1} \asymp |P| h^{-\kappa p}\;,
\end{equ}
as claimed.
\end{proof}

\begin{lemma}\label{lem:Kolmog_tri_bound}
Let $\kappa \in (0,1)$ and suppose $\Psi(0) = 0$.
Then for any triangle $P$ with inradius $h$
\begin{equ}
\E[|\Psi(t) (\partial P)|^2 ] \lesssim C_\xi t^\kappa|P|h^{-2\kappa} \leq C_\xi t^\kappa |P|^{1-\kappa}\;,
\end{equ}
where the proportionality constant depends only on $\kappa$.
\end{lemma}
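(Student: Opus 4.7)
The key idea is Stokes' theorem: since $\Psi(t)$ is a $1$-form on $\T^2$, one can rewrite the boundary integral $\Psi(t)(\partial P)$ as the integral over $\mathring P$ of the exterior derivative $\mrd \Psi(t) = \partial_1 \Psi_2(t) - \partial_2\Psi_1(t)$. Integration by parts then moves the derivatives onto the indicator function:
\begin{equ}
\Psi(t)(\partial P) \;=\; \int_{\mathring P}\bigl(\partial_1\Psi_2-\partial_2\Psi_1\bigr)(t,x)\,\mrd x \;=\; \scal{\partial_2\bone_{\mathring P},\Psi_1(t)} - \scal{\partial_1\bone_{\mathring P},\Psi_2(t)}\;.
\end{equ}
This replaces the problem of controlling a line integral of a rough random field by that of controlling a Gaussian random variable whose variance is explicit in terms of Sobolev norms of $\bone_{\mathring P}$.

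Next, since $\Psi(0)=0$, Duhamel's formula gives $\Psi_i(t) = \int_0^t e^{(t-s)\Delta}\xi_i(s)\,\mrd s$. For any $g\in \CC^\infty(\T^2,\mfg)$, self-adjointness of $e^{s\Delta}$ yields $\scal{g,\Psi_i(t)} = \int_0^t \scal{e^{(t-s)\Delta}g,\xi_i(s)}\,\mrd s$, so the covariance bound~\eqref{eq:covariance_bound} gives
\begin{equ}
\E[|\scal{g,\Psi_i(t)}|^2] \;\le\; C_\xi\int_0^t |e^{(t-s)\Delta}g|_{L^2(\T^2)}^2\,\mrd s\;.
\end{equ}
Applying this with $g = \partial_j \bone_{\mathring P}$ for $j\in\{1,2\}$ reduces everything to bounding $\int_0^t |e^{s\Delta}\partial_j\bone_{\mathring P}|_{L^2}^2\,\mrd s$.

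To estimate this integral I would pass to the Fourier side on $\T^2$. By Parseval,
\begin{equ}
\int_0^t |e^{s\Delta}\partial_j f|_{L^2}^2\,\mrd s \;=\; \sum_{n\in\Z^2} n_j^2\,|\hat f(n)|^2\,\frac{1-e^{-2t|n|^2}}{2|n|^2}\;.
\end{equ}
Using the elementary inequality $1-e^{-2t|n|^2}\le (2t|n|^2)^{\kappa}$ for $\kappa\in(0,1)$ bounds the right-hand side by a constant times $t^{\kappa}\sum_n |n|^{2\kappa}|\hat f(n)|^2 \asymp t^\kappa |f|_{H^\kappa}^2$, where $H^\kappa = W^{\kappa,2}$. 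Taking $f=\bone_{\mathring P}$, Lemma~\ref{lem:tri_Sobolev} (applied with $p=2$, which is the range $\kappa<\tfrac12$; the regime $\kappa\in[\tfrac12,1)$ follows from the case $\kappa<\tfrac12$ since the bound is increasing in $\kappa$ at fixed $P,h$) yields $|\bone_{\mathring P}|_{H^\kappa}^2\lesssim |P|h^{-2\kappa}$, and summing over $i,j$ gives the first claimed inequality
\begin{equ}
\E[|\Psi(t)(\partial P)|^2]\;\lesssim\; C_\xi\, t^\kappa\, |P|\, h^{-2\kappa}\;.
\end{equ}

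Finally, a triangle with inradius $h$ contains an inscribed disk of area $\pi h^2$, so $|P|\gtrsim h^2$ and therefore $h^{-2\kappa}\lesssim |P|^{-\kappa}$, which gives $|P|h^{-2\kappa}\lesssim |P|^{1-\kappa}$ and completes the proof. The only mildly subtle point is the interplay between the Fourier/Sobolev argument and the range of admissible $\kappa$ imposed by Lemma~\ref{lem:tri_Sobolev}; everything else is routine manipulation of the heat semigroup and Gaussian calculus.
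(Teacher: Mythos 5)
Your argument through the Sobolev estimate of the indicator function is essentially the paper's proof: Stokes' theorem, Duhamel and the covariance bound \eqref{eq:covariance_bound}, and then a reduction to $|\bone_{\mathring P}|_{H^\kappa}^2\lesssim|P|h^{-2\kappa}$ via Lemma~\ref{lem:tri_Sobolev} with $p=2$. Your Fourier-series computation using $1-e^{-x}\le x^\kappa$ is a cosmetic variant of the paper's route, which instead invokes the semigroup smoothing bound $|e^{s\Delta}f|_{L^2}\lesssim s^{(\kappa-1)/2}|f|_{H^{\kappa-1}}$ followed by $|\partial_j f|_{H^{\kappa-1}}\lesssim|f|_{H^\kappa}$ and then integrates $s^{\kappa-1}$ over $[0,t]$; these two routes produce the same estimate and carry the same restriction $\kappa<\frac12$ (since $p=2\in[1,\kappa^{-1})$ forces $\kappa<\frac12$, which is also when $\bone_{\mathring P}\in H^\kappa$).

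There are two gaps. First, the extension to $\kappa\in[\frac12,1)$ by monotonicity is wrong: $t^\kappa|P|h^{-2\kappa}=(t/h^2)^\kappa\,|P|$ is \emph{decreasing} in $\kappa$ whenever $t<h^2$, so the bound for small $\kappa$ does not imply the bound for larger $\kappa$ in that regime. In fact the stated estimate really does fail for $\kappa>\frac12$: for $\xi$ white noise and $t\ll h^2$ one has $\int_0^t|e^{s\Delta}\partial_j\bone_{\mathring P}|_{L^2}^2\,\mrd s\asymp|\partial P|\,t^{1/2}$ (the heat kernel contracted against arc-length measure gives $\asymp|\partial P|\,s^{-1/2}$ for $s$ small), and since $|P|\asymp h\,|\partial P|$ the ratio of the true value to $t^\kappa|P|h^{-2\kappa}$ is $(h^2/t)^{\kappa-1/2}\to\infty$. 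The paper's own proof is silently subject to the same restriction $\kappa<\frac12$ and never removes it; this is harmless because all downstream uses (Lemma~\ref{lem:Kolmog_small_time}, Theorem~\ref{thm:SHE_regularity}) take $\kappa$ small, and indeed the companion Lemma~\ref{lem:Kolmog_gr_bound} explicitly states $\kappa\in(0,\frac12)$. You should simply state the constraint rather than try to remove it. Second, the very last step has a sign error (shared with the paper's statement): from $\pi h^2\le|P|$ one gets $h^{-2\kappa}\ge\pi^\kappa|P|^{-\kappa}$ and hence $|P|h^{-2\kappa}\ge\pi^\kappa|P|^{1-\kappa}$, \emph{not} $\le$. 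To get a bound in terms of $|P|$ alone one should instead use the perimeter constraint ($|\partial P|\lesssim1$ since the triangle lies in a ball of diameter $\le\frac14$), which gives $h\asymp|P|/|\partial P|\gtrsim|P|$ and hence $|P|h^{-2\kappa}\lesssim|P|^{1-2\kappa}$; this weaker power is still sufficient for the Kolmogorov argument after relabelling exponents.
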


\begin{proof}
By Stokes' theorem, we have
\begin{equ}
|\Psi(t)(\partial P)| = |\scal{\partial_1 \Psi_2(t) - \partial_2 \Psi_1(t) , \bone_{\mathring P}}|\;.
\end{equ}
Observe that
\begin{equ}
\scal{\partial_1 \Psi_2(t),\bone_{\mathring P}} = -\int_{\R\times \T^2}\xi_2(s,y) \bone_{s \in [0,t]} [e^{(t-s)\Delta}\partial_1\bone_{\mathring P}](y) \mrd s\mrd y\;.
\end{equ}
Hence, by~\eqref{eq:covariance_bound},
\begin{equ}
\E[|\scal{\partial_1 \Psi_2(t),\bone_{\mathring P}}|^2] \leq C_\xi\int_{0}^t |e^{s\Delta}\partial_1\bone_{\mathring P}|_{L^2}^2 \mrd s\;.
\end{equ}
By the estimate $|e^{s\Delta} f|_{L^2} \lesssim s^{(-1+\kappa)/2}|f|_{H^{-1+\kappa}}$, we have
\begin{equ}
|e^{s\Delta}\partial_1\bone_{\mathring P}|_{L^2}^2 \lesssim s^{-1+\kappa} |\partial_1 \bone_{\mathring P}|_{H^{-1+\kappa}}^2\;.
\end{equ}
Since $|\partial_1 f|_{H^{-1+\kappa}} \lesssim |f|_{H^{\kappa}}$, we have by Lemma~\ref{lem:tri_Sobolev}
\begin{equ}
\E[|\scal{\partial_1 \Psi_2(t),\bone_{\mathring P}}|^2] \lesssim C_\xi \int_0^t s^{-1+\kappa} |P| h^{-2\kappa} \mrd s \lesssim C_\xi t^{\kappa} |P|h^{-2\kappa}\;.
\end{equ}
Likewise for the term $\scal{\partial_2 \Psi_1(t),\bone_{\mathring P}}$, and the conclusion follows from the inequality $\pi h^2 \leq |P|$.
\end{proof}

\begin{lemma}\label{lem:delta_ell_Sobolev}
Let $\ell = (x,v) \in \mcX$ and consider the distribution $\scal{\delta_\ell,\psi} \eqdef \int_0^1 |v|\psi(x+tv) \mrd t$.
Then, for any $\kappa \in (\frac{1}{2},1)$,
\begin{equ}
|\delta_\ell|_{H^{-\kappa}} \lesssim |\ell|^\kappa\;,
\end{equ}
where the proportionality constant depends only on $\kappa$.
\end{lemma}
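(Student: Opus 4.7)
I would proceed by duality, writing $|\delta_\ell|_{H^{-\kappa}} = \sup_\psi |\scal{\delta_\ell,\psi}|$ over $\psi \in H^\kappa(\T^2)$ with $|\psi|_{H^\kappa} \le 1$. Parametrising $\ell = (x,v)$ by arc length via $\gamma(s) = x + s v/|v|$ for $s \in [0,L]$ with $L = |\ell|$, the change of variables $t = s/L$ gives
\[
\scal{\delta_\ell,\psi} = \int_0^L \psi(\gamma(s))\mrd s\;,
\]
so it suffices to establish $\bigl|\int_0^L \psi\circ\gamma\,\mrd s\bigr| \lesssim L^\kappa |\psi|_{H^\kappa(\T^2)}$ uniformly in the position and direction of $\ell$ and in $L \in (0, \tfrac14]$.

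The argument is a short three-step chain. First, since $L \le \tfrac14$, I lift $\ell$ to a genuine line segment in $\R^2$ inside a ball $B$ of fixed radius; the periodic lift of $\psi$ satisfies $|\psi|_{H^\kappa(B)} \lesssim |\psi|_{H^\kappa(\T^2)}$, and for $\kappa > \tfrac12$ the standard trace theorem (rotation invariant) yields $\psi \circ \gamma \in H^{\kappa - 1/2}$ on a fixed-length interval containing $[0,L]$, with norm $\lesssim |\psi|_{H^\kappa(\T^2)}$. Second, since $\kappa - \tfrac12 \in (0, \tfrac12)$, the one-dimensional Sobolev embedding gives $H^{\kappa - 1/2} \hookrightarrow L^p$ with $1/p = 1 - \kappa$, and the embedding constant is uniform in $L$ since the ambient interval has fixed length. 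Third, H{\"o}lder's inequality on $[0,L]$ with conjugate exponent $q = 1/\kappa$ gives
\[
\Big|\int_0^L \psi(\gamma(s))\mrd s\Big| \le L^{1/q}\|\psi\circ\gamma\|_{L^p([0,L])} \lesssim L^\kappa |\psi|_{H^\kappa(\T^2)}\;,
\]
as required.

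The only point requiring care is the uniformity of the trace and Sobolev constants in the direction and length of $\ell$, but this follows from standard scaling arguments since both the ambient ball and ambient interval are chosen of fixed size. A more hands-on alternative is to compute
\[
\hat\delta_\ell(k) = |v|\, e^{-2\pi i k\cdot x}\int_0^1 e^{-2\pi i t\, k\cdot v}\mrd t\;,
\]
yielding $|\hat\delta_\ell(k)| \le \min(|v|, \pi^{-1}|k\cdot v|^{-1})$, and then to estimate $\sum_{k\in\mathbb{Z}^2}(1+|k|^2)^{-\kappa}|\hat\delta_\ell(k)|^2$ by splitting the sum according to whether $|k\cdot v| \le 1$ or $>1$ and carrying out lattice-point counting in the resulting strip; this works but is substantially more laborious than the trace-based route above.
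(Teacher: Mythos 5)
Your main argument is correct and takes a genuinely different route from the paper. The paper computes the Fourier coefficients of $\delta_\ell$ directly (after reducing to $\ell = (0,|\ell|e_1)$ by translation and rotation invariance), collapses the 2D sum to a 1D sum by performing the $k_2$-sum (which is where $\kappa>\tfrac12$ enters), and then splits the remaining $k_1$-sum at $|k_1|\asymp|\ell|^{-1}$ \dash exactly the ``laborious'' alternative you sketch at the end. Your chain trace $\to$ Sobolev embedding $\to$ H{\"o}lder is a valid replacement: for $\kappa\in(\tfrac12,1)$ the trace $H^\kappa(\R^2)\to H^{\kappa-\frac12}$ of a line has constant independent of the line by rotation/translation invariance of the norm (and the localisation from $\T^2$ to a fixed ball of $\R^2$ is standard via a cutoff), $H^{\kappa-\frac12}\hookrightarrow L^p$ with $1/p=1-\kappa$ since $\kappa-\tfrac12\in(0,\tfrac12)$, and H{\"o}lder on $[0,L]$ supplies the factor $L^{1/q}=L^\kappa$. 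The exponent bookkeeping checks out. What your approach buys is that it is conceptually modular and localises the use of $\kappa>\tfrac12$ to a single cited theorem, at the cost of invoking the trace theorem; the paper's Fourier argument is self-contained and elementary, at the cost of a mildly fiddly sum split. Both are uniform in the segment's position and direction for the reasons you state.
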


\begin{proof}
By rotation and translation invariance, we may assume $\ell = (0,|\ell| e_1)$.
For $k = (k_1,k_2) \in \Z^2$, we have $\scal{\delta_\ell, e^{2\pi i \scal{k,\cdot}}} = (e^{2\pi i k_1 |\ell|} - 1)/(2\pi i k_1)$ if $k_1\neq 0$ and $\scal{\delta_\ell, e^{2\pi i \scal{k,\cdot}}}=|\ell|$ if $k_1=0$.
Hence
\begin{equs}
|\delta_\ell|_{H^{-\kappa}}^2
&= \sum_{k \in \Z^2} |\scal{\delta_\ell, e^{2\pi i \scal{k,\cdot}}}|^2 (1+k_1^2+k_2^2)^{-\kappa}
\\
&\lesssim \sum_{k \in \Z^2} (|\ell|^2 \wedge k_1^{-2})(1+k_1^2 + k_2^2)^{-\kappa}
\\
&\lesssim \sum_{k\in\Z} (|\ell|^2 \wedge k^{-2})(1+k)^{1-2\kappa}\;.
\end{equs}
Splitting the final sum into $|k| \leq |\ell|^{-1}$ and $|k| > |\ell|^{-1}$ yields the desired result.
\end{proof}

\begin{lemma}\label{lem:Kolmog_gr_bound}
Let $\kappa \in (0,\frac12)$ and suppose $\Psi(0) = 0$.
Then for any $\ell \in \mcX$
\begin{equ}
\E[|\Psi(t)(\ell)|^2] \lesssim C_\xi t^{\kappa} |\ell|^{2-2\kappa}\;,
\end{equ}
where the proportionality constant depends only on $\kappa$.
\end{lemma}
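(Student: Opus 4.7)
The plan is to use the mild solution representation of the stochastic heat equation combined with the Sobolev estimate on $\delta_\ell$ from Lemma~\ref{lem:delta_ell_Sobolev}. Writing $\ell = (x,v)$, I first observe that the line integral can be expressed as
\begin{equ}
\Psi(t)(\ell) = \int_0^1 v_i \Psi_i(t, x+sv) \mrd s = \sum_{i=1}^2 \frac{v_i}{|v|} \scal{\delta_\ell, \Psi_i(t)}\;,
\end{equ}
so (using the finite-dimensionality of $\mfg$ and Cauchy--Schwarz) it suffices to bound $\E[|\scal{\delta_\ell, \Psi_i(t)}|^2]$ for each component $i$.

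Next I would write $\Psi_i(t) = \int_0^t e^{(t-s)\Delta} \xi_i(s,\cdot)\,\mrd s$ as a mild solution (starting from zero initial condition), so that by self-adjointness of the heat semigroup,
\begin{equ}
\scal{\delta_\ell, \Psi_i(t)} = \int_0^t \int_{\T^2} [e^{(t-s)\Delta} \delta_\ell](z)\, \xi_i(s,z) \,\mrd s\, \mrd z\;.
\end{equ}
The covariance bound~\eqref{eq:covariance_bound} then yields
\begin{equ}
\E[|\scal{\delta_\ell, \Psi_i(t)}|^2] \leq C_\xi \int_0^t |e^{s\Delta} \delta_\ell|_{L^2}^2 \,\mrd s\;.
\end{equ}

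The key step is to choose the dual Sobolev exponent $\kappa' \eqdef 1-\kappa \in (\frac12, 1)$, which is allowable since $\kappa \in (0,\frac12)$. Using the standard estimate $|e^{s\Delta} f|_{L^2} \lesssim s^{-\kappa'/2} |f|_{H^{-\kappa'}}$ together with Lemma~\ref{lem:delta_ell_Sobolev} (which gives $|\delta_\ell|_{H^{-\kappa'}} \lesssim |\ell|^{\kappa'}$), one obtains
\begin{equ}
\int_0^t |e^{s\Delta} \delta_\ell|_{L^2}^2 \,\mrd s \lesssim |\ell|^{2\kappa'} \int_0^t s^{-\kappa'} \,\mrd s \lesssim t^{1-\kappa'} |\ell|^{2\kappa'} = t^\kappa |\ell|^{2-2\kappa}\;,
\end{equ}
where the integrability at $s=0$ uses $\kappa' < 1$. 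Summing over $i$ gives the claimed bound.

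There is no real obstacle here: the argument is a straightforward duality\slash Kolmogorov-type computation, and the only point requiring a small observation is the matching $\kappa' = 1-\kappa$ between the range $\kappa \in (0, \frac12)$ in the target estimate and the range $\kappa' \in (\frac12, 1)$ in which Lemma~\ref{lem:delta_ell_Sobolev} applies. This is entirely analogous to the proof of Lemma~\ref{lem:Kolmog_tri_bound}, with Stokes' theorem replaced by the direct dual-pairing representation and with $\bone_{\mathring P}$ replaced by $\delta_\ell$.
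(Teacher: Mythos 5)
Your proof is correct and follows essentially the same route as the paper's: pair against $\delta_\ell$, apply the covariance bound~\eqref{eq:covariance_bound} and the smoothing estimate $|e^{s\Delta}f|_{L^2}\lesssim s^{(\kappa-1)/2}|f|_{H^{\kappa-1}}$, then invoke Lemma~\ref{lem:delta_ell_Sobolev} at exponent $1-\kappa\in(\frac12,1)$. The paper's argument is the same computation with the exponent written directly as $\kappa-1$ rather than introducing $\kappa'=1-\kappa$.
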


\begin{proof}
Observe that $\Psi(t)(\ell) = \sum_{i=1}^2 |v|^{-1}v_i\scal{\Psi_i(t),\delta_\ell}$, where we used the notation of Lemma~\ref{lem:delta_ell_Sobolev}.
Furthermore,
\begin{equ}
\scal{\Psi_i(t),\delta_\ell} = \int_{\R\times \T^2}\xi_i(s,y) \bone_{s \in [0,t]} [e^{(t-s)\Delta}\delta_\ell](y) \mrd s\mrd y\;.
\end{equ}
Hence, by~\eqref{eq:covariance_bound},
\begin{equ}
\E\big[\scal{\Psi_i(t),\delta_\ell}^2\big] \leq C_\xi \int_{0}^t |e^{s\Delta}\delta_\ell|_{L^2}^2 \mrd s\;.
\end{equ}
The estimate $|e^{s\Delta} f|_{L^2} \lesssim s^{(\kappa-1)/2}|f|_{H^{\kappa-1}}$ implies
$|e^{s\Delta}\delta_\ell|_{L^2}^2 \lesssim s^{\kappa-1} |\delta_\ell|_{H^{\kappa-1}}^2$.
Hence, by Lemma~\ref{lem:delta_ell_Sobolev},
\begin{equ}
\E\big[\scal{\Psi_i(t),\delta_\ell}^2\big] \lesssim C_\xi \int_0^t s^{-1+\kappa} |\ell|^{2-2\kappa} \mrd s \lesssim C_\xi t^{\kappa} |\ell|^{2-2\kappa}\;,
\end{equ}
and the claim follows from the bound $||v|^{-1}v_i| \lesssim 1$.
\end{proof}

Since our ``index space'' $\mcX$ and ``distance'' function $\rho$ are not entirely standard, we spell out the following Kolmogorov-type criterion.

\begin{lemma}\label{lem:Kolmogorov_fixed_time}
Let $A$ be a $\mfg$-valued stochastic process indexed by $\mcX$ such that,
 for all joinable $\ell,\bar\ell \in \mcX$,
$A(\ell\sqcup\bar\ell) = A(\ell)+A(\bar\ell)$ almost surely.
Suppose that there exist $p \geq 1$, $M > 0$, and $\alpha \in (0,1]$ such that for all $\ell\in\mcX$
\begin{equ}
\E[|A(\ell)|^p] \leq M |\ell|^{p\alpha}\;,
\end{equ}
and for all triangles $P$
\begin{equ}
\E[|A(\partial P)|^p ] \leq M |P|^{p\alpha/2}\;.
\end{equ}
Then there exists a modification of $A$ (which we denote by the same letter) which is a.s.\ a continuous function on $\mcX$.
Furthermore, for every $\bar\alpha \in (0,\alpha-\frac{16}{p})$, there exists $\lambda > 0$, depending only on $p,\alpha,\bar\alpha$, such that
\begin{equ}
\E[|A|_{\bar\alpha}^p] \leq \lambda M\;.
\end{equ}
\end{lemma}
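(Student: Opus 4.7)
The plan is to combine the equivalence $|A|_{\bar\alpha} \asymp |A|_{\gr{\bar\alpha}} + |A|_{\tri{\bar\alpha}}$ from Theorem~\ref{thm:norm_equiv} with two Kolmogorov-type chaining arguments, one over a dense countable family of line segments and one over a dense family of triangles, followed by an additivity-based extension to all of $\mcX$. I will bound $\E[|A|_{\gr{\bar\alpha}}^p]$ and $\E[|A|_{\tri{\bar\alpha}}^p]$ separately, each by a constant multiple of $M$, and then invoke Theorem~\ref{thm:norm_equiv} to conclude. Continuity of the modification will follow from uniform continuity (in $\rho$) of the restriction of $A$ to the countable subfamily, together with additivity.

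For the $|A|_{\gr{\bar\alpha}}$ bound I would introduce dyadic families $\mcX_n \subset \mcX$ of cardinality $O(2^{4n})$ consisting of segments of length comparable to $2^{-n}$, with endpoints on a $2^{-n}$-grid and direction in a $2^{-n}$-net of the unit circle. A Markov-plus-union-bound at each scale, combined with additivity (to telescope any $\ell \in \mcX$ down to dyadic approximants by splitting it into $O(1)$ pieces and absorbing the resulting tilt into short ``correction'' triangles of area $\lesssim 2^{-2n}$), yields the desired moment bound once $p(\alpha - \bar\alpha)$ exceeds the relevant dimensional threshold for segments. For $|A|_{\tri{\bar\alpha}}$ I would perform the analogous chaining on a sequence $\mcT_n$ of dyadic triangles with vertices on the $2^{-n}$-grid, of cardinality $O(2^{6n})$: a generic triangle with $|P| \sim 2^{-2n}$ is approximated by a dyadic one by perturbing each vertex within distance $2^{-n}$, and the difference of oriented boundaries decomposes into a signed sum of $O(1)$ smaller triangles of area $\lesssim 2^{-2n}$, to each of which the second moment hypothesis applies. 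The two dimensional thresholds, after passing through Theorem~\ref{thm:norm_equiv}, combine to produce the $16/p$ loss stated in the lemma.

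Once both norms are known a.s.\ finite on the countable dense subfamily, the additive structure of $A$ together with the resulting uniform continuity in $\rho$ let one define a continuous modification on all of $\mcX$ by extension; this modification automatically inherits the $p$th moment bound on $|A|_{\bar\alpha}$. The main obstacle I expect is the geometric bookkeeping in the chaining step: since $\rho$ is only an inframetric (Remark~\ref{rem:inframetric}) and mixes lengths with areas, the ``far'' and ``close'' cases of Definition~\ref{def:rho} have to be handled consistently, and the dyadic approximation of a generic segment or triangle must be organised so that every chaining increment is expressible as a controlled signed combination of short segments and small triangles. The explicit construction suggested by Figure~\ref{fig:as} (using intermediate segments $a$ and $\bar a$ sharing an endpoint with $\ell$) gives one practical recipe for such a decomposition so that precisely the two assumed moment bounds suffice.
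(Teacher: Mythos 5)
Your overall plan (reduce to a countable dyadic family, chain, extend by additivity) is the right shape, but it diverges from the paper's proof in a way that loses the cleanest step, and one of your decompositions does not work as stated.

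The paper does \emph{not} route through Theorem~\ref{thm:norm_equiv}. Instead it first converts the two moment hypotheses into a single $\rho$-H\"older moment bound: for any $\ell,\bar\ell\in\mcX$ one writes
$A(\ell)-A(\bar\ell)=A(\partial P_1)+A(\partial P_2) + A(a) - A(b)$ with $|P_1|+|P_2|\leq\rho(\ell,\bar\ell)^2$ and $|a|+|b|\leq\rho(\ell,\bar\ell)$, whence $\E[|A(\ell)-A(\bar\ell)|^p]\lesssim M\rho(\ell,\bar\ell)^{p\alpha}$. With this single bound in hand, one chains once, in the $\rho$-inframetric, over a countable family $D=\bigcup_N D_N$ of segments with dyadic endpoints. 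The threshold $16/p$ comes from the specific scaling mismatch between $\rho$ and the grid: to cover $\mcX$ by $\rho$-balls of radius $\sim 2^{-N}$ one needs endpoints resolved at scale $2^{-2N}$ (because $\rho$ scales like the square root of area $\approx$ length $\times$ perturbation), so $|D_{2N}|\sim 2^{8N}$, and the crude bound on pairs gives $2^{16N}$. Your proposal never produces the intermediate $\rho$-moment bound explicitly; without it you cannot even run a chain for $|A|_{\gr{\bar\alpha}}$, since the telescoping increments $A(\ell_{n+1})-A(\ell_n)$ are themselves controlled by exactly this $\rho$-bound. So the ``decoupling'' into $|\cdot|_{\gr{\bar\alpha}}$ and $|\cdot|_{\tri{\bar\alpha}}$ is illusory: both chains would require both hypotheses, and the detour through Theorem~\ref{thm:norm_equiv} is unnecessary.

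There is also a concrete flaw in the triangle chain. A generic triangle $P$ with $|P|\sim 2^{-2n}$ need not have diameter $\sim 2^{-n}$; it can be a thin sliver with diameter $\sim 1$ and height $\sim 2^{-2n}$. Perturbing the three vertices by $2^{-n}$, as you propose, is then far larger than the triangle's smallest dimension and can flip its orientation or collapse it entirely, so the ``difference of boundaries decomposes into $O(1)$ smaller triangles of area $\lesssim 2^{-2n}$'' does not hold. To faithfully approximate a triangle of area $2^{-2n}$ you must put vertices on a grid at scale comparable to the triangle's inradius, which for slivers is $\sim 2^{-2n}$; this inflates the cardinality well past $O(2^{6n})$. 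This is exactly why the paper's grid $D_{2N}$ is twice as fine as the nominal $\rho$-scale $2^{-N}$, and why the loss in the lemma is $16/p$ rather than something smaller. Your stated cardinalities $O(2^{4n})$ and $O(2^{6n})$ do not survive this correction, and the claim that ``the two dimensional thresholds \ldots combine to produce the $16/p$ loss'' is not substantiated. If you want to pursue the two-norm route, you should (i) still prove the $\rho$-moment bound as a lemma, (ii) base the dyadic grid on the \emph{square} of the target resolution, and (iii) redo the cardinality count for slivers before claiming the threshold.
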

\begin{proof}
Observe that for any $\ell,\bar\ell\in\mcX$, we can write $A(\ell)-A(\bar\ell)=A(\partial P_1)+A(\partial P_2) + A(a) - A(b)$, where $|P_1|+|P_2| \leq \rho(\ell,\bar\ell)^2$ and $|a|+|b|\leq \rho(\ell,\bar\ell)$
(if $\ell,\bar\ell$ are far, then $a=\ell$, $b=\bar\ell$, and $P_1,P_2$ are empty).
It follows that for all $\ell,\bar\ell \in \mcX$
\begin{equ}\label{eq:rho_p_bound}
\E[|A(\ell)-A(\bar\ell)|^p ] \lesssim M \rho(\ell,\bar\ell)^{p\alpha}\;,
\end{equ}
where the proportionality constant depends only on $p,\alpha$.
For $N \geq 1$ let $D_N$ denote the set of line segments in $\mcX$ whose start and end points have dyadic coordinates of scale $2^{-N}$, and let $D=\cup_{N\geq 1} D_N$.
For $r>0,\ell\in\mcX$, define $B_{\rho}(r,\ell) \eqdef \{\bar\ell \in \mcX \ssep \rho(\bar\ell,\ell) \leq r\}$.
From Definition~\ref{def:rho} and Remark~\ref{rem:inframetric}, we see that for some $K >0$, the family $\{B_{\rho}(K2^{-N},\ell)\}_{\ell\in D_{2N}}$ covers $\mcX$ (quite wastefully) for every $N \geq 1$.
It readily follows, using~\eqref{eq:inframetric}, that for any $\bar\alpha \in (0,1]$
\begin{equ}\label{eq:A_dyadic_bound}
\sup_{\ell,\bar\ell \in D} \frac{|A(\ell)-A(\bar\ell)|^p}{\rho(\ell,\bar\ell)^{\bar\alpha p}} \lesssim \sum_{N \geq 1} \sum_{\substack{a,b\in D_{2N}\\ \rho(a,b) \leq K2^{-N}}} 2^{N\bar\alpha p} |A(a)-A(b)|^p\;.
\end{equ}
Observe that $|D_{2N}| \leq 2^{8N}$, and thus the second sum has at most $2^{16N}$ terms.
Hence, for $\bar\alpha \in(0,\alpha-\frac{16}{p})\Leftrightarrow 16+p(\bar\alpha-\alpha)<0$, we see from~\eqref{eq:rho_p_bound} that the expectation of the right-hand side of~\eqref{eq:A_dyadic_bound} is bounded by $\lambda(p,\alpha,\bar\alpha)M$. The conclusion readily follows as in the classical Kolmogorov continuity theorem.
\end{proof}
By equivalence of moments for Gaussian random variables, Lemmas~\ref{lem:Kolmog_tri_bound},~\ref{lem:Kolmog_gr_bound}, and~\ref{lem:Kolmogorov_fixed_time} yield the following lemma.
\begin{lemma}\label{lem:Kolmog_small_time}
Suppose $\Psi(0) = 0$.
Then for any $p > 16$, $\alpha \in (0,1)$, and $\bar\alpha \in (0,\alpha-\frac{16}{p})$, there exists $C > 0$, depending only on $p,\bar\alpha,\alpha$, such that for all $t \geq 0$
\begin{equ}
\E[|\Psi(t)|_{\bar\alpha}^p] \leq C C_\xi^{p/2} t^{p(1-\alpha)/2}\;.
\end{equ}
\end{lemma}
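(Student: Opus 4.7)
The strategy is a direct application of the Kolmogorov-type criterion in Lemma~\ref{lem:Kolmogorov_fixed_time}, fed by the second-moment bounds already established in Lemmas~\ref{lem:Kolmog_tri_bound} and~\ref{lem:Kolmog_gr_bound}, with the passage from $L^2$ to $L^p$ moments handled by Gaussian equivalence of moments. The whole argument is a matter of matching exponents.

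First, I would observe that for each fixed $t \geq 0$ and each $\ell \in \mcX$, resp.\ triangle $P$, the $\mfg$-valued random variable $\Psi(t)(\ell)$, resp.\ $\Psi(t)(\partial P)$, is a centred Gaussian obtained as a continuous linear functional of $(\xi_1,\xi_2)$ --- this is already explicit in the computations performed in the proofs of Lemmas~\ref{lem:Kolmog_tri_bound} and~\ref{lem:Kolmog_gr_bound}. Since $\mfg$ is finite dimensional, equivalence of Gaussian moments gives, for every $p \geq 2$, a constant $C_p$ such that $\E|Z|^p \leq C_p (\E|Z|^2)^{p/2}$. Setting $\kappa = 1-\alpha$ (which lies in $(0,\tfrac12)$ under the implicit restriction $\alpha \in (\tfrac12,1)$, which I briefly discuss below), Lemmas~\ref{lem:Kolmog_gr_bound} and~\ref{lem:Kolmog_tri_bound} yield
\begin{equation*}
\E[|\Psi(t)(\ell)|^p] \lesssim_p C_\xi^{p/2}\, t^{p(1-\alpha)/2}\, |\ell|^{p\alpha}\;, \qquad
\E[|\Psi(t)(\partial P)|^p] \lesssim_p C_\xi^{p/2}\, t^{p(1-\alpha)/2}\, |P|^{p\alpha/2}\;.
\end{equation*}

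Next, I would apply Lemma~\ref{lem:Kolmogorov_fixed_time} to the process $\ell \mapsto \Psi(t)(\ell)$ with exponent $\alpha$ (in the notation of that lemma) and $M = C_p C_\xi^{p/2} t^{p(1-\alpha)/2}$. The required almost sure additivity along joinable pairs is immediate from the defining formula $\Psi(t)(\ell) = \sum_{i=1}^2 |v|^{-1} v_i \langle \Psi_i(t),\delta_\ell\rangle$ combined with the obvious additivity of $\ell \mapsto \delta_\ell$ (modulo a null set depending on the pair). Lemma~\ref{lem:Kolmogorov_fixed_time} then produces a continuous modification and gives, for every $\bar\alpha \in (0, \alpha - 16/p)$, the bound $\E[|\Psi(t)|_{\bar\alpha}^p] \leq \lambda M$, with $\lambda$ depending only on $p,\alpha,\bar\alpha$. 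This is precisely the claimed estimate.

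No step presents a serious obstacle: the only bookkeeping issue is that Lemma~\ref{lem:Kolmog_gr_bound} genuinely requires $\kappa \in (0,\tfrac12)$, so the choice $\kappa = 1-\alpha$ forces $\alpha \in (\tfrac12, 1)$ for a direct match; for $\alpha \in (0,\tfrac12]$ one reads the result as following from the case $\alpha' \in (\tfrac12,1)$ applied with $\bar\alpha$ in the (automatically smaller) admissible window, together with the trivial observation that $\bar\alpha < \alpha-16/p$ implies $\bar\alpha < \alpha'-16/p$ for some $\alpha' > \tfrac12$. The identification of the modification produced by the Kolmogorov criterion with $\Psi(t)$ as an element of $\Omega$ is routine: both are continuous additive functions on $\mcX$ agreeing almost surely on a countable dense set of pairs, hence everywhere.
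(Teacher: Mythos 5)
Your main argument is exactly the paper's one-line proof: Gaussian moment equivalence applied to the $L^2$ bounds of Lemmas~\ref{lem:Kolmog_tri_bound} and~\ref{lem:Kolmog_gr_bound} (with $\kappa = 1-\alpha$), fed into the Kolmogorov criterion of Lemma~\ref{lem:Kolmogorov_fixed_time}. For $\alpha \in (\tfrac12,1)$ this goes through cleanly, and you were right to flag that the constraint $\kappa \in (0,\tfrac12)$ in Lemma~\ref{lem:Kolmog_gr_bound} forces that range.

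The patch you offer for $\alpha \in (0,\tfrac12]$ does not work, though, and it is not just bookkeeping. Applying the result for some $\alpha' \in (\tfrac12,1)$ with $\bar\alpha < \alpha'-16/p$ yields $\E[|\Psi(t)|_{\bar\alpha}^p] \leq C C_\xi^{p/2} t^{p(1-\alpha')/2}$, and since $\alpha' > \alpha$ this is a strictly \emph{weaker} bound than $t^{p(1-\alpha)/2}$ for $t < 1$; the small-$t$ regime is precisely where the exponent matters downstream (e.g.\ in Theorem~\ref{thm:SHE_regularity}). In fact the stated exponent cannot hold for $\alpha < \tfrac12$: taking $\xi$ to be actual white noise, a direct computation gives $\E|\Psi(t)(\ell)|^2 \asymp C_\xi |\ell|\,t^{1/2}$ for $0 < t < |\ell|^2$, and since $|\Psi(t)|_{\bar\alpha} \geq |\Psi(t)(\ell)|/|\ell|^{\bar\alpha}$ this forces, for a fixed $\ell$,
\[
\E[|\Psi(t)|_{\bar\alpha}^p] \gtrsim C_\xi^{p/2}\,|\ell|^{p(1/2-\bar\alpha)}\, t^{p/4}\;,
\]
which dominates $t^{p(1-\alpha)/2}$ as $t\to 0$ whenever $\alpha < \tfrac12$. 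The correct reading is that the lemma's hypothesis is implicitly $\alpha \in (\tfrac12,1)$, which is in any case the only range in which it is invoked (in the proof of Theorem~\ref{thm:SHE_regularity} the parameter plugged into this lemma is always larger than $\alpha > \tfrac23$). So your instinct was sound, but the suggested reduction is not a fix; the restriction should simply be stated.
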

We are now ready to prove the following continuity theorem.
\begin{theorem}\label{thm:SHE_regularity}
Let $0 < \bar\alpha < \alpha < 1$, $\kappa \in (0,\frac{\alpha-\bar\alpha}{4})$, and suppose $\Psi(0) \in \Omega_\alpha$.
Then for all $p\geq 1$ and any $T > 0$
\begin{equ}
\E\Big[ \sup_{0 \leq s < t \leq T} \frac{|\Psi(t)-\Psi(s)|_{\bar\alpha}^p}{|t-s|^{p\kappa}}\Big]^{1/p} \lesssim |\Psi(0)|_\alpha + C_\xi^{1/2}
\end{equ}
where the proportionality constant depends only on $p,\alpha,\bar\alpha,\kappa,T$.
\end{theorem}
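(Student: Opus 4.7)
The plan is to split $\Psi(t) = e^{t\Delta}\Psi(0) + \tilde\Psi(t)$, where $\tilde\Psi$ is the solution of the SHE with vanishing initial condition, and bound the two resulting pieces separately. For the deterministic part, since the heat kernel on $\T^2$ is a probability measure, Lemma~\ref{lem:K_modulus_bound} shows that $e^{s\Delta}$ is a contraction on $\Omega^1_\alpha$ uniformly in $s \ge 0$; combining with the Schauder estimate from Corollary~\ref{cor:Schauder} gives, for $0 \le s < t \le T$,
\begin{equ}
|e^{t\Delta}\Psi(0) - e^{s\Delta}\Psi(0)|_{\bar\alpha} = |(e^{(t-s)\Delta}-1)e^{s\Delta}\Psi(0)|_{\bar\alpha} \lesssim (t-s)^{(\alpha-\bar\alpha)/4} |\Psi(0)|_\alpha\;.
\end{equ}
This already handles the deterministic contribution with a time exponent strictly larger than $\kappa$.

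For the stochastic contribution, I use the Markov decomposition
\begin{equ}
\tilde\Psi(t) - \tilde\Psi(s) = (e^{(t-s)\Delta}-1)\tilde\Psi(s) + \bigl[\tilde\Psi(t) - e^{(t-s)\Delta}\tilde\Psi(s)\bigr]\;.
\end{equ}
The bracketed term is independent of $\mcF_s$ and, by the linearity of the SHE and the stationarity in time of the driving noise, equal in distribution to the value at time $t-s$ of the SHE started from $0$; hence Lemma~\ref{lem:Kolmog_small_time}, applied with some auxiliary $\alpha^\dagger \in (\bar\alpha, 1)$, yields
\begin{equ}
\E\bigl[|\tilde\Psi(t) - e^{(t-s)\Delta}\tilde\Psi(s)|_{\bar\alpha}^p\bigr]^{1/p} \lesssim C_\xi^{1/2}(t-s)^{(1-\alpha^\dagger)/2}\;,
\end{equ}
valid once $p > 16/(\alpha^\dagger - \bar\alpha)$. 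For the first term, Corollary~\ref{cor:Schauder} gives $|(e^{(t-s)\Delta}-1)\tilde\Psi(s)|_{\bar\alpha} \lesssim (t-s)^{(\alpha-\bar\alpha)/4}\,|\tilde\Psi(s)|_\alpha$, and Lemma~\ref{lem:Kolmog_small_time} applied with an auxiliary $\alpha^\ddagger \in (\alpha,1)$ controls $\E[|\tilde\Psi(s)|_\alpha^p]^{1/p} \lesssim C_\xi^{1/2} T^{(1-\alpha^\ddagger)/2}$ uniformly in $s \in [0,T]$. Combining the two estimates and using the assumption $\kappa < (\alpha-\bar\alpha)/4$, we can choose $\alpha^\dagger$ close enough to $\bar\alpha$ (which is possible since $\kappa < (\alpha-\bar\alpha)/4 \le (1-\bar\alpha)/2$ forces $1-2\kappa > \bar\alpha$) and $p$ large enough that both $(\alpha-\bar\alpha)/4$ and $(1-\alpha^\dagger)/2$ strictly exceed $\kappa + 1/p$. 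This gives a pointwise two-parameter estimate
\begin{equ}
\E\bigl[|\tilde\Psi(t)-\tilde\Psi(s)|_{\bar\alpha}^p\bigr]^{1/p} \lesssim C_\xi^{1/2}(t-s)^\beta\qquad \text{with } \beta > \kappa + \tfrac1p\;.
\end{equ}

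The conclusion then follows from the standard Kolmogorov continuity theorem applied to $t \mapsto \tilde\Psi(t)$ as an $\Omega^1_{\bar\alpha}$-valued process on $[0,T]$, which upgrades the pointwise bound to the desired supremum bound with exponent $\kappa$. The result for small $p$ follows from the result for large $p$ by Jensen. The main (mild) delicacy is bookkeeping: simultaneously satisfying the constraints $\bar\alpha < \alpha^\dagger < 1-2\kappa-2/p$, $\alpha < \alpha^\ddagger < 1$ with $\alpha^\ddagger > \alpha + 16/p$, and $p(\beta - \kappa) > 1$. All the nontrivial PDE input is already encoded in Corollary~\ref{cor:Schauder} and Lemma~\ref{lem:Kolmog_small_time}, so there is no further stochastic analysis to carry out.
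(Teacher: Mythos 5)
Your argument is correct and follows essentially the same route as the paper's own proof. You use the same three-term decomposition $\Psi(t)-\Psi(s) = (e^{(t-s)\Delta}-1)e^{s\Delta}\Psi(0) + (e^{(t-s)\Delta}-1)\tilde\Psi(s) + [\tilde\Psi(t) - e^{(t-s)\Delta}\tilde\Psi(s)]$, bound each piece via Corollary~\ref{cor:Schauder} and Lemma~\ref{lem:Kolmog_small_time} exactly as the paper does (you are slightly more explicit about the boundedness of $e^{s\Delta}$ on $\Omega_\alpha$, which the paper leaves implicit), and close with the Kolmogorov continuity criterion and Jensen for small $p$.
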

\begin{proof}
Let $0 \leq s \leq t \leq T$ and observe that
\begin{equ}
\Psi(t)-\Psi(s) = (e^{(t-s)\Delta}-1)e^{s\Delta}\Psi(0) + (e^{(t-s)\Delta}-1)\tilde\Psi(s) + \hat\Psi(t)\;,
\end{equ}
where $\tilde\Psi \colon [0,s] \to \Omega_{\bar\alpha}$ and $\hat\Psi \colon [s,t] \to \Omega_{\bar\alpha}$ driven by $\xi$ with zero initial conditions. 
By Corollary~\ref{cor:Schauder},
\begin{equ}
|(e^{(t-s)\Delta}-1)e^{s\Delta}\Psi(0)|_{\bar\alpha} \lesssim |t-s|^{(\alpha-\bar\alpha)/4}|\Psi(0)|_\alpha\;.
\end{equ}
Likewise, by Corollary~\ref{cor:Schauder} and Lemma~\ref{lem:Kolmog_small_time}, for any $\beta > \alpha+\frac{16}{p}$
\begin{equ}
\E[|(e^{(t-s)\Delta}-1)\tilde\Psi(s)|^p_{\bar\alpha}] \lesssim C_\xi^{p/2} |t-s|^{p(\alpha-\bar\alpha)/4}s^{p(1-\beta)/2}\;.
\end{equ}
Finally, by Lemma~\ref{lem:Kolmog_small_time}, for any $\beta > \bar\alpha+\frac{16}{p}$
\begin{equ}
\E[|\hat\Psi(t)|^p_{\bar\alpha}] \lesssim C_\xi^{p/2} |t-s|^{p(1-\beta)/2}\;.
\end{equ}
In conclusion, for all $p$ sufficiently large,
\begin{equ}
\E[|\Psi(t)-\Psi(s)|^p_{\bar\alpha}] \lesssim |t-s|^{p(\alpha-\bar\alpha)/4} (|\Psi(0)|_\alpha^p+ C_\xi^{p/2})\;.
\end{equ}
The conclusion follows by the classical Kolmogorov continuity criterion.
\end{proof}
\begin{corollary}\label{cor:SHE_mollif_converge}
Let $\moll$ be a mollifier on $\R\times\T^2$.
Suppose that $\xi$ is a $\mfg$-valued white noise and denote $\xi^\eps\eqdef \moll^\eps*\xi$.
Suppose that $\Psi(0)=0$ and let $\Psi^\eps$ solve $(\partial_t-\Delta)\Psi^\eps=\xi^\eps$ on $\R_+\times \T^2$ with zero initial condition $\Psi^\eps(0)=0$.
Let $\alpha\in(0,1)$, $T>0$, $\kappa\in(0,\frac{1-\alpha}{4})$, and $p \geq 1$.
Then
\begin{equ}
\E\Big[\sup_{t\in[0,T]}|\Psi^\eps(t)-\Psi(t)|_{\alpha}^p\Big]^{1/p} \lesssim \eps^{2\kappa}|\moll|_{L^1}\;,
\end{equ}
where the proportionality constant depends only on $\alpha,\kappa,T,p$.
\end{corollary}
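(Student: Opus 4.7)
The plan is to reduce the corollary to Corollary~\ref{cor:mollif_estimate} together with the moment bounds from Theorem~\ref{thm:SHE_regularity}. The key identification is that, because $\moll^\eps *$ commutes with $\partial_t - \Delta$, the object $\moll^\eps * \Psi$ (with $\Psi$ extended by zero for $t < 0$, which introduces no discontinuity since $\Psi(0) = 0$) essentially equals $\Psi^\eps$ on $[0,T]$: exactly so for non-anticipative $\moll$ (the convolution at time $t \ge 0$ then only sees $\Psi(s)$ with $s \ge 0$, where the PDE is satisfied classically), and in general up to an error that lives in the time layer $[0,\eps^2/4]$, arising from the contribution of $\bone_{s<0}\xi$ after mollification and heat-propagation. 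This error is Gaussian and can be bounded in $\Omega^1_\alpha$-norm by a direct second-moment computation analogous to Lemmas~\ref{lem:Kolmog_tri_bound}--\ref{lem:Kolmog_gr_bound}, giving a contribution of the desired order $\eps^{2\kappa}|\moll|_{L^1}$.

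Granting this reduction, fix $\alpha' \in (\alpha + 4\kappa, 1)$, which is possible since $\kappa < (1-\alpha)/4$. Applying Corollary~\ref{cor:mollif_estimate} with output regularity $\alpha$, input regularity $\alpha'$, and time-H{\"o}lder exponent $\kappa$ to the $\Omega^1_{\alpha'}$-valued path $t \mapsto \Psi(t)$, one obtains
\begin{equ}
\sup_{t \in [0,T]} \big|(\moll^\eps * \Psi)(t) - \Psi(t)\big|_{\alpha} \lesssim |\moll|_{L^1}\Big(\eps^{(\alpha'-\alpha)/2}\!\!\sup_{s\in [0,T]_\eps}\!|\Psi(s)|_{\alpha'} \,+\, \eps^{2\kappa}|\Psi|_{\mcC^{\Hol\kappa}([0,T]_\eps,\, \Omega_\alpha)}\Big)\;.
\end{equ}
Since $(\alpha'-\alpha)/2 > 2\kappa$ by the choice of $\alpha'$, both $\eps$-powers are at least $\eps^{2\kappa}$, so it remains only to take $L^p$-moments of the two bracketed quantities.

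Both are handled by Theorem~\ref{thm:SHE_regularity} applied to $\Psi$ (with $\Psi(0)=0$ and $C_\xi = 1$ for white noise). For the first, one applies it with $(\bar\alpha, \alpha) \leftarrow (\alpha', \alpha'')$ for any $\alpha'' \in (\alpha', 1)$, which yields a uniform bound $\E[\sup_{s \in [0,T]_\eps}|\Psi(s)|_{\alpha'}^p]^{1/p} \lesssim 1$ (the supremum over $[0,T]_\eps$ is bounded by the supremum over $[0, T+1]$, say). For the second, one applies it with output regularity $\alpha$ and intermediate regularity $\alpha''' \in (\alpha + 4\kappa, 1)$, yielding $\E[|\Psi|_{\mcC^{\Hol\kappa}([0,T]_\eps, \Omega_\alpha)}^p]^{1/p} \lesssim 1$ since $\kappa < (\alpha''' - \alpha)/4$. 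Combining all the pieces produces the asserted bound.

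The only mildly subtle point is the boundary accounting near $t=0$ when $\moll$ is not non-anticipative; this is purely technical and does not obstruct the argument, because the error localised to $[0,\eps^2/4]$ is controlled by the same type of Gaussian second-moment estimates used to prove Theorem~\ref{thm:SHE_regularity} in the first place. Everything else is a direct application of Corollary~\ref{cor:mollif_estimate} combined with that theorem.
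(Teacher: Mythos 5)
Your overall strategy is the paper's: compare $\Psi$ with $\moll^\eps*\Psi$ via Corollary~\ref{cor:mollif_estimate} and the moment bounds of Theorem~\ref{thm:SHE_regularity}, then compare $\moll^\eps*\Psi$ with $\Psi^\eps$. The Schauder and moment bookkeeping you give for the $\Psi - \moll^\eps*\Psi$ piece (choosing $\alpha'$ so that $(\alpha'-\alpha)/2 > 2\kappa$, etc.) is correct and matches the paper.

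However, the reduction step contains two inaccuracies. First, the aside that $\moll^\eps*\Psi$ equals $\Psi^\eps$ \emph{exactly} on $[0,T]$ for non-anticipative $\moll$ is false: with $\moll^\eps$ supported in $\{r \ge 0\}$, the convolution $(\moll^\eps*\Psi)(t) = \int_{r\ge 0}\moll^\eps(r)\,\Psi(t-r)\,\mrd r$ still samples $\Psi$ at negative times whenever $0 \le t < r$, so the claimed cancellation fails for $t$ smaller than the temporal radius of the support of $\moll^\eps$. Second, and more importantly, the error $v \eqdef \moll^\eps*\Psi - \Psi^\eps$ does \emph{not} live in a time layer near $t=0$: what is supported there is only the \emph{source} mismatch $\moll^\eps*(\bone_+\xi) - \bone_+(\moll^\eps*\xi)$, whereas $v$ itself solves a heat equation with that source and therefore propagates to all later times. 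A direct second-moment bound on the boundary layer alone does not close the argument; one also needs the contraction property of the heat semigroup on $\Omega^1_\alpha$ (a consequence of Proposition~\ref{prop:convolution_estimate}, cf.\ Corollary~\ref{cor:Schauder}) to conclude $|v(t)|_\alpha \lesssim |v(\eps^2)|_\alpha$ for $t \ge \eps^2$. The paper makes exactly this sequence of moves: it first handles $[0,\eps^2]$ by showing $\Psi$ and $\Psi^\eps$ are both small there via Theorem~\ref{thm:SHE_regularity}, then on $[\eps^2,T]$ treats $\Psi - \moll^\eps*\Psi$ via Corollary~\ref{cor:mollif_estimate}, and for $\moll^\eps*\Psi - \Psi^\eps$ observes that both solve the same inhomogeneous heat equation on $[\eps^2,\infty)$, estimates their values at time $\eps^2$, and invokes Corollary~\ref{cor:Schauder}. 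Your proof reaches the right conclusion, but you should make the heat-flow propagation explicit rather than describing the error as confined to a boundary layer.
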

\begin{proof}
Observe that, by Theorem~\ref{thm:SHE_regularity},
\begin{equ}\label{eq:Psi_E_bound}
\E\Big[\sup_{t\in[0,\eps^2]}|\Psi(t)|_{\alpha}^p\Big]^{1/p} \lesssim \eps^{2\kappa}\;.
\end{equ}
Furthermore, for any $\phi\in L^2(\R\times\T^2)$, by Young's inequality,
\begin{equ}
\E[\scal{\xi^\eps,\phi}^2]=|\moll^\eps*\phi|_{L^2}^2 \leq |\phi|_{L^2}^2|\moll^\eps|^2_{L^1}\;.
\end{equ}
Hence $\xi^\eps$ satisfies~\eqref{eq:covariance_bound} with $C_{\xi^\eps} \eqdef |\moll|_{L^1}^2$.
It follows again by Theorem~\ref{thm:SHE_regularity} that
\begin{equ}\label{eq:Psi_eps_E_bound}
\E\Big[\sup_{t\in[0,\eps^2]}|\Psi^\eps(t)|_{\alpha}^p\Big]^{1/p} \lesssim \eps^{2\kappa}|\moll|_{L^1}\;.
\end{equ}
It remains to estimate $\E[\sup_{t\in[\eps^2,T]}|\Psi(t)-\Psi^\eps(t)|_{\alpha}^p]$.

Denoting $I\eqdef[\eps^2,T]$, observe that by Corollary~\ref{cor:mollif_estimate}, for any $\bar\alpha\in[\alpha,1]$
\begin{multline*}
\E\Big[\sup_{t\in I}|\Psi(t) - \moll^\eps*\Psi(t)|^p_{\alpha}\Big]^{1/p}
\\
\lesssim |\moll|_{L^1} \Big\{\eps^{(\bar\alpha-\alpha)/2} \E\Big[\sup_{t\in I_\eps}|\Psi(t)|_{\bar\alpha}^p\Big]^{1/p} + \eps^{2\kappa}\E\Big[|\Psi|_{\mcC^{\Hol\kappa}(I_\eps,\Omega_{\alpha})}^p\Big]^{1/p}\Big\}\;.
\end{multline*}
Both expectations are finite provided $\bar\alpha<1$, and thus the right-hand side 
is bounded above by a multiple of $\eps^{2\kappa}|\moll|_{L^1}$.

We now estimate $\E[\sup_{t\in I}|\moll^\eps*\Psi-\Psi^\eps|^p_\alpha]$.
Let us denote by $\bone_+$ the indicator on the set $\{(t,x)\in\R\times\T^2\ssep t \geq 0\}$.
Observe that $\bone_+(\moll^\eps * \xi)(t,x)$ and $\moll^\eps*(\bone_+\xi)(t,x)$ both vanish if $t<-\eps^2$ and agree if $t>\eps^2$.
In particular, $\moll^\eps*\Psi$ and $\Psi^\eps$ both solve the (inhomogeneous) heat equation on $[\eps^2,\infty)\times\T^2$ with the same source term but with possibly different initial conditions.
To estimate these initial conditions, for $s\in[-\eps^2,\eps^2]$, let us denote by $\moll^\eps(s)$ the convolution operator $[\moll^\eps(s) f](x) \eqdef \scal{\moll^\eps(s,x-\cdot),f(\cdot)}$ for $f\in\mcD'(\T^2)$.
Observe that
\begin{equ}
|\moll^\eps(s)|_{L^\infty\to L^\infty} \leq \mu(s)\eqdef \int_{\T^2} |\moll^\eps(s,x)|\mrd x\;,
\end{equ}
and thus $|\moll^\eps(s)A|_{\alpha} \lesssim \mu(s) |A|_{\alpha}$ for any $A\in\Omega_\alpha$ by~\eqref{eq:rho_conv_estimate}.
Hence
\begin{equs}
\E[|\moll^\eps*\Psi(\eps^2)|^p_{\alpha}]^{1/p}
&= \E\Big[\Big|\int_{\R}\moll^\eps(s)\Psi(\eps^2-s)\mrd s \Big|_{\alpha}^p\Big]^{1/p}
\\
&\lesssim |\moll|_{L^1} \E\Big[\sup_{t\in[0,2\eps^2]} |\Psi|^p_{\alpha}\Big]^{1/p}
\lesssim |\moll|_{L^1}\eps^{2\kappa}\;.
\end{equs}
As a result, by Theorem~\ref{thm:SHE_regularity} and recalling that $\xi^\eps$ satisfies~\eqref{eq:covariance_bound} with $C_{\xi^\eps}=|\moll|^2_{L^1}$, we obtain
\begin{equ}
\E[|\moll^\eps*\Psi(\eps^2)|^p_{\alpha}]^{1/p} + \E[|\Psi^\eps(\eps^2)|^p_{\alpha}]^{1/p} \lesssim \eps^{2\kappa}|\moll|_{L^1}\;.
\end{equ}
Finally,
\begin{equ}
\E[\sup_{t\geq \eps^2}|\moll^\eps*\Psi(t) - \Psi^\eps(t)|_{\alpha}^p ]^{1/p} \lesssim \E[|\moll^\eps*\Psi(\eps^2) - \Psi^\eps(\eps^2)|_{\alpha}^p ]^{1/p} \lesssim \eps^{2\kappa}|\moll|_{L^1}
\end{equ}
where we used Corollary~\ref{cor:Schauder} in the first inequality.
\end{proof}

\section{Regularity structures for vector-valued noises}
\label{sec:vector-valued_noises}

\subsection{Motivation}
\label{sec:motivation}

As already mentioned in the introduction, the aim of this section
is to provide a solution \slash renormalisation theory for SPDEs of the form 
\begin{equ}\label{e:SPDE}
(\partial_t - \mathscr{L}_\mft) A_\mft = F_{\mft}(\mbA, \bxi)\;, \quad \mft\in\mfL_+\;,
\end{equ}
where the nonlinearities $(F_\mft)_{\mft \in \Lab_+}$, linear operators $(\mathscr{L}_\mft)_{\mft \in \Lab_+}$,
and noises $(\xi_\mft)_{\mft \in \Lab_-}$, satisfy the assumptions required for the  general theory
of \cite{Hairer14,CH16,BCCH21,BHZ19} to apply. The problem is that this theory assumes that 
the different components of the solutions $A_\mft$ and of its driving noises $\xi_\mft$ 
are scalar-valued. While this is not a restriction
in principle (simply expand solutions and noises according to some arbitrary basis 
of the corresponding spaces), it makes it rather unwieldy to obtain an expression
for the precise form of the counterterms generated by the renormalisation procedure
described in \cite{BCCH21}. 

Instead, one would much prefer a formalism in which the vector-valued natures of
both the solutions and the driving noises are preserved. To motivate our construction,
consider the example
of a $\mfg$-valued noise $\xi$, where $\mfg$ is some finite-dimensional vector space. One way of describing
it in the context of \cite{Hairer14} would be to choose a basis $\{e_1,\ldots,e_n\}$ of $\mfg$ and to consider
a regularity structure $\CT$ with basis vectors $\Xi_i$ endowed with a model $\PPi$ such that 
$\PPi \Xi_i = \xi_i$ with $\xi_i$ such that $\xi = \sum_{i=1}^n \xi_i e_i$. 
We could then also consider the element $\boldsymbol{\Xi} \in \CT \otimes \mfg$
obtained by setting $\boldsymbol{\Xi} = \sum_{i=1}^n\Xi_i \otimes e_i$. 
When applying the model to $\boldsymbol{\Xi}$ (or rather its first factor), we
then obtain $\PPi \boldsymbol{\Xi} = \sum_{i=1}^n\xi_i\, e_i = \xi$ as expected.
A cleaner coordinate-independent way of achieving the same result is to view the subspace
 $\CT[\Xi]\subset \CT$ spanned by the $\Xi_i$ as a copy
of $\mfg^*$, with $\PPi$ given by $\PPi \Xi_g = g(\xi)$ for any element $\Xi_g$ in this copy of $ \mfg^*$.  
In this way, 
$\boldsymbol{\Xi} \in \CT[\Xi] \otimes \mfg \simeq \mfg^* \otimes \mfg$ is simply given 
by $\boldsymbol{\Xi} = \id_{\mfg}$, where $\id_{\mfg}$ denotes the identity map $\mfg \to \mfg$, modulo
the canonical correspondence $L(X,Y) \simeq X^* \otimes Y$.
(Here and below we will use the notation $X\simeq Y$ to denote the existence of a
\textit{canonical} isomorphism between objects $X$ and $Y$.)
	
\begin{remark}
This viewpoint is consistent with the natural correspondence between a $\mfg$-valued rough path $\mbX$ and a model $\PPi$.
Indeed, while $\mbX$ takes values in $\mcH^*$, 
the tensor series \slash Grossman--Larson algebra over $\mfg$,
one evaluates the model $\PPi$ against elements of its predual $\mcH$,
the tensor \slash Connes--Kreimer algebra over $\mfg^*$, see~\cite[Sec.~4.4]{Hairer14} and~\cite[Sec.~6.2]{BCFP19}.
\end{remark}

Imagine now a situation in which we are given $\mfg_1$ and $\mfg_2$-valued noises $\xi_1 = \<XiR>$
and $\xi_2 = \<XiY>$, as well as an integration kernel $K$ which we draw as a plain line,
and consider the symbol $\<IXi^2asym>$. It seems natural in view of the above discussion to 
associate it with a subspace of $\CT$ isomorphic to $\mfg_1^*\otimes \mfg_2^*$ 
(let's borrow the notation from \cite{Mate2} and denote this subspace as 
$\<IXi^2asym> \otimes \mfg_1^*\otimes \mfg_2^*$) and 
to have the canonical model act on it as
\begin{equ}
\PPi (\<IXi^2asym> \otimes g_1 \otimes g_2) = \bigl(K * g_1(\xi_1)\bigr)\,\bigl(K * g_2(\xi_2)\bigr)\;,
\end{equ}
for $g_i \in \mfg_i^*$.
It would appear that such a construction necessarily breaks the commutativity of the product
since in the same vein one would like to associate to $\<IXi^2asym2>$ a copy of 
$\mfg_2^*\otimes \mfg_1^*$, but this can naturally be restored by simply postulating that in $\CT$
one has the identity
\begin{equ}[e:postulate-comm]
(\<IXired> \otimes g_1)\cdot(\<IXigreen> \otimes g_2) = \<IXi^2asym> \otimes g_1 \otimes g_2 = \<IXi^2asym2> \otimes g_2 \otimes g_1\
= (\<IXigreen> \otimes g_2) \cdot (\<IXired> \otimes g_1)\;.
\end{equ}
This then forces us to associate to $\<IXi^2>$ a copy of the symmetric tensor product $\mfg_1^*\otimes_s \mfg_1^*$.
The goal of this section is to provide a functorial description of such considerations which allows us to 
transfer algebraic identities for regularity structures of trees of the type considered in \cite{BHZ19} to the present
setting where each noise (or edge) type $\mft$ is associated to a vector space $\mfg_{\mft}^*$.
This systematises previous constructions like \cite[Sec.~3.1]{Mate2} or \cite[Sec.~3.1]{PhilippMall}
where similar considerations were made in a rather ad hoc manner.
Our construction bears a resemblance to that of \cite{TomTrees} who introduced a similar formalism in the context
of rough paths, but our formalism is more functorial and better suited for our purposes.

\begin{remark}
We will see that our framework will also allow us to easily accommodate vector valued kernels, see the discussion around \eqref{e:KSPDE} and Remark~\ref{rem:vect_kernel_example}.
\end{remark}

\subsection{Symmetric sets and symmetric tensor products}
\label{subsec:sym_tensor_prod}

Fix a collection $\Lab$ of types and recall that a ``typed set'' $T$ consists of a finite set (which we denote again by $T$)
together with a map $\mft \colon T \to \Lab$.
For any two typed sets $T$ and $\bar T$, write $\Iso(T, \bar T)$ for the set of all type-preserving
bijections from $T \to \bar T$. 
\begin{definition}\label{def:sym-typed-set} 
A \textit{symmetric set} $\symset$ consists of a non-empty index set $A_\symset$, as well as a triple 
$\symset = (\{T_\symset^a\}_{a\in A_\symset},\{\mft^a_\symset\}_{a\in A_\symset}, \{\Gamma_\symset^{a,b}\}_{a,b \in A_\symset})$ where 
$(T_\symset^a ,\mft_\symset^a)$ is a typed set
and $\Gamma_\symset^{a,b} \subset \Iso(T_\symset^b, T_\symset^a)$ are non-empty sets such that, for any $a,b,c\in A_\symset$,
\begin{equs}
\gamma \in \Gamma_\symset^{a,b} \quad&\Rightarrow \quad \gamma^{-1} \in \Gamma_\symset^{b,a}\;,\\
\gamma \in \Gamma_\symset^{a,b}\;,\quad
\bar \gamma \in \Gamma_\symset^{b,c}\quad&\Rightarrow \quad
\gamma \circ \bar \gamma \in \Gamma_\symset^{a,c}\;.
\end{equs}
In other words, a symmetric set is a connected groupoid inside $\Set_\Lab$, 
the category of typed sets endowed with type-preserving maps.
\end{definition}

\begin{example}\label{ex:trees}
An example of a symmetric set is obtained via ``a tree with $\Lab$-typed leaves''.
A concrete tree $\tau$ is defined by fixing a vertex set $V$, which we can take without loss of generality
as a finite subset of $\N$ (which we choose to play the role of the set of all possible vertices), together with an (oriented) edge set $E \subset V \times V$ so that the
resulting graph is a rooted tree with all edges oriented towards the root, as well
as a  labelling $\mft \colon L \to \Lab$, with $L \subset V$ the set of leaves.
However, when we draw\footnote{By convention, we always draw the root at the bottom of the tree.} a``tree with $\Lab$-typed leaves'' such as $\<IXi^3-typed112>$, we are actually specifying an isomorphism class of trees since we are not specifying $V$, $E$, and $\mft$ as concrete sets. Thus $\<IXi^3-typed112>$ corresponds to an infinite isomorphism class of trees $\ttau$
with each $\tau \in \ttau$ being a concrete representative of $\ttau$. 

For any two concrete representatives $\tau_1 = (V_1, E_1, \mft_1) $ and $\tau_2 = (V_2, E_2, \mft_2)$ in the isomorphism class $\<IXi^3-typed112>$, we have two distinct tree isomorphisms $\gamma\colon V_1 \sqcup E_1 \to V_2\sqcup E_2$ 
which preserve
the typed tree structure, since it doesn't matter how the two vertices of type \<Xi> get mapped
onto each other. In this way, we have an unambiguous way of viewing
$\ttau = \<IXi^3-typed112>$ as a symmetric set with local symmetry 
group isomorphic to $\Z_2$.
\end{example}

\begin{remark}
Each of the sets $\Gamma_\symset^{a,a}$ forms a group and, by connectedness, these are all (not necessarily canonically) isomorphic.
We will call this isomorphism class the ``local symmetry group'' of $\symset$.
\end{remark}

Given a typed set $T$ and a symmetric set $\symset$, we define 
\begin{equ}
\Homb(T,{\symset}) = \Big(\bigcup_{a\in A_\symset} \Iso(T, T_{\symset}^a)\Big) \Big/ \Gamma_{{\symset}}\;,
\end{equ}
i.e.\ we postulate that $\Iso(T,T_\symset^a) \ni \phi \sim \tilde \phi \in \Iso(T,T_\symset^b)$
if and only if there exists $\gamma \in \Gamma_{\symset}^{a,b}$ such that $\phi = \gamma \circ \tilde \phi$. 
Note that, by connectedness of $\Gamma_\symset$, any equivalence class in $\Homb(T,{\symset})$ has, for every $a \in A_{\symset}$, at least one representative $\phi^{a} \in \Iso(T, T_{\symset}^a) $. 
Given two symmetric sets $\symset$ and $\bar\symset$, we also define the set $\VHomb(\symset,\bar \symset)$
of ``sections'' by
\begin{equ}
\VHomb(\symset,\bar \symset) = \big\{\Phi = (\Phi_a)_{a \in A_\symset}\,:\, \Phi_a \in \Vec \big(\Homb(T_\symset^a,\bar \symset)\big)\big\}\;,
\end{equ}
where $\Vec(X)$ denotes the real vector space spanned by a set $X$.
We then have the following definition.
\begin{definition}\label{def:morphism}
A \textit{morphism} between two symmetric sets $\symset$ and $\bar{\symset}$ is a $\Gamma_\symset$-invariant
section; namely, an element of 
\begin{equ}
\Hom(\symset,\bar{\symset}) = \{\Phi \in \VHomb(\symset,\bar \symset) \,:\, \Phi_b = \Phi_a \circ \gamma_{a,b} \quad \forall a,b\in A_\symset\;,\; \forall \gamma_{a,b} \in \Gamma_{\symset}^{a,b}\}\;.
\end{equ}
Here, we note that right composition with $\gamma_{a,b}$  gives a well-defined map from $\Homb(T_\symset^b,\bar \symset)$ to $\Homb(T_\symset^a,\bar \symset)$ and we extend this to $\Vec(\Homb(T_\symset^b,\bar \symset))$ by linearity.
Composition of morphisms is defined in the natural way by
\begin{equ}
(\bar \Phi\circ \Phi)_a = \bar \Phi_{\bar a} \circ \Phi_a^{(\bar a)}\;,
\end{equ}
where $\Phi_a^{(\bar a)}$ denotes an arbitrary representative of $\Phi_a$ in 
$\Vec\big(\Iso(T_\symset^a, T_{\bar \symset}^{\bar a})\big)$ and composition is
extended bilinearly.
It is straightforward to verify that this is independent of the choice of $\bar a$
and of representative $\Phi_a^{(\bar a)}$ thanks to the invariance property 
$\bar \Phi_{\bar a} \circ \gamma_{\bar a,\bar b} = \bar \Phi_{\bar b}$, as well as the postulation of the equivalence relation in the definition of $\Homb(T_{\symset}^a,\bar\symset)$.
\end{definition}
\begin{remark}\label{rem:generalisation}
A natural generalisation of this construction is obtained by replacing $\Lab$ by an arbitrary finite category.
In this case, typed sets are defined as before, with each element having as type an object of $\Lab$.
Morphisms between typed sets $A$ and $\bar A$ are then given by maps
$\phi\colon A \to \bar A \times \Hom_\Lab$ such that, writing $\phi = (\phi_0, \vec \phi)$,
one has $\vec\phi(a) \in \Hom_\Lab(\mft(a),\mft(\phi_0(a)))$ for every $a \in A$. Composition is defined
in the obvious way by ``following the arrows'', namely
\begin{equ}
(\psi \circ \phi)_0 = \psi_0 \circ  \phi_0\;,\qquad (\psi \circ \phi)(a) = \vec \psi(\phi_0(a))  \circ \vec\phi(a)\;,
\end{equ}
where the composition on the right takes place in $\Hom_\Lab$. 
The set $\Iso(A,\bar A)$ is then defined as those morphisms $\phi$ such that $\phi_0$ is a bijection, but we
do \textit{not} impose that $\vec \phi(a)$ is an isomorphism in $\Lab$ for $a \in A$.
\end{remark}

\begin{remark}
Note that, for any symmetric set $\symset$, there is a natural identity element $\id_{\symset} \in \Hom(\symset,\symset)$ given by
$a \mapsto [\id_{T_{\symset}^a}]$, with $[\id_{T_{\symset}^a}]$ denoting the equivalence class of $\id_{T_{\symset}^a}$
in $\Homb(T_\symset^a, \symset)$. 
In particular, symmetric sets form a category, 
which we denote by $\SSet$ (or $\SSet_\Lab$).\label{def:SSet}
\end{remark}
\begin{remark}
We choose to consider formal linear combinations
in our definition of $\VHomb$ since otherwise the resulting definition of $\Hom(\symset,\bar \symset)$
would be too small for our purpose.
\end{remark}
\begin{remark}\label{rem:singleton}
An important special case is given by the case when $A_\symset$ and $A_{\bar \symset}$ are singletons.
In this case, $\Hom(\symset,\bar{\symset})$ can be viewed as a subspace of $\Vec\big(\Homb(T_\symset, \bar \symset)\big)$,
$\Homb(T_\symset, \bar \symset) = \Iso(T_\symset, T_{\bar \symset})/\Gamma_{\bar \symset}$, and $\Gamma_{\bar \symset}$
is a subgroup of $\Iso(T_{\bar \symset}, T_{\bar \symset})$.
\end{remark}
\begin{remark}
An alternative, more symmetric, way of viewing morphisms of $\SSet$ is as two-parameter maps
\begin{equ}
A_\symset \times A_{\bar \symset} \ni (a,\bar a) \mapsto \Phi_{\bar a, a} \in \Vec \bigl(\Iso(T_\symset^a, T_{\bar \symset}^{\bar a})\bigr)\;,
\end{equ}
which are invariant in the sense that, for any 
$\gamma_{a,b} \in \Gamma_\symset^{a,b}$ and
$\bar \gamma_{\bar a,\bar b} \in \Gamma_{\bar \symset}^{\bar a,\bar b}$, one 
has the identity
\begin{equ}[e:Phi-aa-gamma-ab]
\Phi_{\bar a,a}\circ \gamma_{a,b} = \bar \gamma_{\bar a,\bar b} \circ \Phi_{\bar b,b}\;.
\end{equ}
Composition is then given by 
\begin{equ}
(\bar \Phi\circ \Phi)_{\bbar a, a} = \bar \Phi_{\bbar a, \bar a} \circ \Phi_{\bar a, a}\;,
\end{equ}
for any \textit{fixed} choice of $\bar a$ (no summation).
Indeed, it is easy to see that for any choice of $\bar{a}$, $\bar \Phi\circ \Phi$ satisfies \eqref{e:Phi-aa-gamma-ab}.
To see that our definition does not depend on the choice of  $\bar{a}$, note that, for any $\bar{b} \in A_{\bar{\symset}}$, we can take an element $\gamma_{\bar a,\bar b}\in \Gamma_\symset^{\bar a,\bar b}$ 
(which is non-empty set by definition) and use \eqref{e:Phi-aa-gamma-ab} to write
\[
\id_{T^{\bbar a}_{\symset},T^{\bbar a}_{\symset}} \circ \bar \Phi_{\bbar a, \bar b} \circ \Phi_{\bar b, a}
= \bar \Phi_{\bbar a, \bar a}\circ \bar\gamma_{\bar a,\bar b} \circ \Phi_{\bar b, a}
 = \bar \Phi_{\bbar a, \bar a} \circ \Phi_{\bar a, a}\circ \id_{T^a_{\symset},T^{a}_{\symset}}\;.
 \]
We write $\Hom_{2}(\symset,\bar{\symset})$ of the set of morphisms, as described above, between $
\symset$ and $\bar{\symset}$.
To see that this notion of morphism gives an equivalent category note that the map(s) $\iota_{\symset,\bar{\symset}}\colon \Hom(\symset,\bar{\symset}) \rightarrow \Hom_{2}(\symset,\bar{\symset})$, 
given by mapping $\Gamma_{\symset}$ equivalence classes to their symmetrised sums, 
is a bijection and maps compositions in $\Hom$ to the corresponding compositions in $\Hom_{2}$. 
\end{remark}
\begin{remark}\label{rem:tensor-symset}
The category $\SSet$ of symmetric sets just described is an $\R$-linear symmetric monoidal category, 
with tensor product $\symset \otimes \bar{\symset}$ given by 
\begin{equs}
{}&A  = A_\symset \times A_{\bar \symset}\;,\quad
T^{(a,\bar a)} = T_\symset^a \sqcup T_{\bar \symset}^{\bar a}\;,\quad
\mft^{(a,\bar a)} = \mft_\symset^a \sqcup \mft_{\bar \symset}^{\bar a}\;,\\
\quad
{}&\Gamma^{(a,\bar a),(b,\bar b)} =
\{ \gamma \sqcup \bar{\gamma}: \gamma \in  \Gamma_\symset^{a,b},\; \bar{\gamma} \in  \Gamma_{\bar \symset}^{\bar a,\bar b}
\}\;.
\end{equs} 
and unit object $\one$ given by $A_{\one} = \{\bullet\}$ a singleton and $T_\one^\bullet =\emptyset$.
\end{remark}
\begin{remark}\label{rem:canonicalIsomorphism}
We will sometimes encounter the situation where a pair $(\symset, \bar \symset)$ of symmetric sets
naturally comes with elements $\Phi_a \in \Homb(T_\symset^a, \bar \symset)$ such that 
$\Phi \in \Hom(\symset, \bar \symset)$. In this case, $\Phi$ is necessarily an isomorphism 
which we call the ``canonical isomorphism'' between $\symset$ and $\bar \symset$.
Note that this notion of ``canonical'' is not intrinsic to $\SSet$ but relies on additional 
structure in general.

More precisely, consider a category $\CC$ that is concrete over typed sets (i.e.\ such that objects
of $\CC$ can be viewed as typed sets and morphisms as type-preserving maps between them). Then, 
any collection $(T^a)_{a \in A}$ of isomorphic objects of $\CC$ yields a symmetric set $\symset$ by taking
for $\Gamma$ the groupoid of all $\CC$-isomorphisms between them. Two symmetric sets obtained in this way
such that the corresponding collections $(T^a)_{a \in A}$ and $(\bar T^b)_{b \in \bar A}$ 
consist of objects that are $\CC$-isomorphic are then canonically isomorphic 
(in $\SSet$) by taking for $\Phi_a$ the set of all $\CC$-isomorphisms from $T^a$ to any of the $\bar T^b$.
Note that this does \textit{not} in general mean that there isn't another isomorphism between these
objects in $\SSet$!
\end{remark}
\begin{example}\label{ex:Hom}
We now give an example where we compute $\Hom(\act,\act)$ and $\Homb(\act,\act)$. 
Consider $\ttau = \<I[IXi^2]IXi-typed112>$,
fix some representative $\tau\in\ttau$, and write $T_{\tau}=\{x,y,z\} \subset V_\tau \subset \N$, with 
$\mft_{\tau}(x)=\mft_{\tau}(y)=\<Xi>$ 
and $\mft_{\tau}(z_1)=\<XiY>$ \dash the local symmetry group is then isomorphic to $\Z_2$,
acting on $T_\tau$ by permuting $\{x,y\}$.
We also introduce a second isomorphism class $ \bar{\ttau}  = \<I[IXi^2]IXi-typed211>$
which has trivial local symmetry group and fix a representative $\bar{\tau}$ of $ \bar{\ttau} $ which coincides, as a typed set, with $\tau$. 

It is easy to see that $\Homb(T_{\bar{\tau}},\scal{\ttau})$ 
consists of only one equivalence class, while  
$\Homb(T_\tau,\scal{\bar{\ttau}})$ consists of two equivalence classes, 
which we call $\phi$ and $\tilde\phi$.
$\Hom(\langle \ttau \rangle,\langle \bar{\ttau} \rangle)$ then consists of the linear span of a ``section'' $\Phi$
such that, restricting to the representative $\tau$,
$\Phi_\tau=\phi+\tilde\phi$, since the action of 
$\Z_2$ on $\tau$ swaps $\phi$ and $\tilde \phi$.
\end{example}

\subsubsection{Symmetric tensor products}

A \emph{space assignment} $V$ for $\Lab$ is a tuple of vector spaces $ V = (V_\mft)_{\mft\in \Lab}$.
We say a space assignment $V$ is finite-dimensional if $\dim(V_{\mft}) < \infty$ for every $\mft \in \Lab$. 
For the rest of this subsection we fix an arbitrary (not necessarily finite-dimensional) space assignment $(V_\mft)_{\mft\in \Lab}$. 

For any (finite) typed set $T$, we 
write  $V^{\otimes T}$ for the tensor product defined as the linear span of elementary
tensors of the form $v = \bigotimes_{x \in T} v_x$ with $v_x \in V_{\mft(x)}$, subject to the
usual identifications suggested by the notation.
Given $\psi \in \Iso(T,\bar T)$ for two typed sets, we can then interpret it as a linear map 
$V^{\otimes T} \to V^{\otimes \bar T}$ by
\begin{equ}[e:actionMaps]
v = \bigotimes_{x \in T} v_x \mapsto
\psi \cdot v = \bigotimes_{y \in \bar T} v_{\psi^{-1}(y)}\;.
\end{equ}
In particular, given a symmetric set $\symset$, elements $a,b \in A_\symset$, and $\gamma \in \Gamma_\symset^{a,b}$,
we view $\gamma$ as a map from $V^{\otimes T_\symset^b}$ to $V^{\otimes T_\symset^a}$.
We then define the vector space $V^{\otimes \symset} \subset \prod_{a \in A_\symset}V^{\otimes T_\symset^a}$ by
\begin{equ}[e:def-V-tensor-symset]
V^{\otimes \symset} = \Big\{(v^{(a)})_{a\in A_\symset} 
\,:\, v^{(a)} = \gamma_{a,b} \cdot v^{(b)}\quad \forall a,b\in A_\symset\;,\; \forall \gamma_{a,b} \in \Gamma_{\symset}^{a,b}\Big\}\;.
\end{equ}
Note that for every $a \in A_\symset$, we have a natural symmetrisation map $\pi_{\symset,a} \colon V^{\otimes T_\symset^a} \to V^{\otimes \symset}$ given by
\begin{equ}\label{eq:translating_on_section}
(\pi_{\symset,a} v)^{(b)} = {\frac{1}{|\Gamma_\symset^{b,a}|}} \sum_{\gamma \in \Gamma_\symset^{b,a}} \gamma \cdot v\;,
\end{equ}
an important property of which is that
\begin{equ}[e:covpi]
\pi_{\symset,a} \circ \gamma_{a,b} = \pi_{\symset,b}\;,\quad \forall a,b \in A_\symset\;,\quad \forall \gamma_{a,b} \in \Gamma_\symset^{a,b}\;.
\end{equ}
Furthermore, these maps are left inverses to the natural inclusions $\iota_{\symset,a} \colon V^{\otimes \symset} \to V^{\otimes T_\symset^a}$
given by $(v^{(b)})_{b\in A_\symset} \mapsto v^{(a)}$.
\begin{remark}\label{rem:universal_property_and_motivation}
Suppose that we are given a symmetric set $\symset$. 
For each $a \in A_{\symset}$, if we view $\symset_{a}$ as a symmetric set in its own right with $A_{\symset_a} = \{a\}$, then $V^{\otimes \symset_{a}}$ is a partially symmetrised tensor product. 
In particular $V^{\otimes \symset_{a}}$ is again characterised by a universal property, namely it allows one to uniquely factorise multilinear maps on $V^{T^{a}_{\symset}}$ that are, for every $\gamma \in \Gamma_{\symset}^{a,a}$, $\gamma$-invariant in the sense that they are invariant under a permutation of their arguments like \eqref{e:actionMaps} with $\psi = \gamma$. 

This construction (where $|A_{\symset}| = 1$) is already enough to build the vector spaces that we would want to associate to combinatorial trees as described in Section~\ref{sec:motivation}.
A concrete combinatorial tree, that is a tree with a fixed vertex set and edge set along with an associated type map, will allow us to construct a symmetric set with  $|A_{\symset}| = 1$. 

We now turn to another feature of our construction, namely that we allow $|A_{\symset}| > 1$. 
The main motivation is that when we work with combinatorial trees, what we really want is to work with are \emph{isomorphism classes of such trees}, and so we want our construction to capture that we can allow for many different ways for the same concrete combinatorial tree to be realised. 
In particular we will use $A_{\symset}$ to index a variety of different ways to realise the same combinatorial trees as a concrete set of vertices and edges with type map. 
The sets $\Gamma^{a,b}_{\symset}$ then encode a particular set of chosen isomorphisms linking different combinatorial trees in the same isomorphism class.  
Once they are fixed, the maps \eqref{eq:translating_on_section} allow us to move between the different vector spaces that correspond to different concrete realisations of our combinatorial trees.
In particular, once $\symset$ has been fixed, for every $a, b \in A_\symset$ one has fixed canonical isomorphisms 
\begin{equ}\label{eq:canonical_iso_one_to_all}
V^{\otimes \symset_{a}} \simeq V^{\otimes \symset_{b}} \simeq V^{\otimes \symset}
\end{equ}
which can be written explicitly using the maps $\pi_{\symset,\bullet}$ of \eqref{eq:translating_on_section}.

In addition to meaningfully resolving\footnote{See for instance Remarks~\ref{rem:canonical} and~\ref{rem:echo-moti5-2}.} the ambiguity between working with isomorphism classes of objects like trees and concrete instances in those isomorphism classes, this flexibility is crucial for the formulation and proof of Proposition~\ref{prop:nat_transform}.  
\end{remark}
\begin{remark}
Given a space assignment $V$ there is a natural notion of a dual space assignment given by $V^{\ast} = (V_{\mft}^{\ast})_{\mft \in \Lab}$.
There is then a canonical inclusion 
\begin{equ}\label{eq:dual_tensors} 
(V^{\ast})^{\otimes \symset} \hookrightarrow (V^{\otimes \symset})^{\ast} \;.
\end{equ}
Thanks to \eqref{eq:canonical_iso_one_to_all} it suffices to prove \eqref{eq:dual_tensors} when $A_{\symset} = \{a\}$. 

Let $\iota$ be the canonical inclusion from $(V^{\ast})^{\otimes T_\symset^a}$ into $(V^{\otimes T_\symset^a})^{\ast}$ and let $r$ be the canonical surjection from $\big(
V^{\otimes T_\symset^a}
\big)^{\ast}$ to $(V^{\otimes \symset})^{\ast}$. 
The desired inclusion in \eqref{eq:dual_tensors} is then given by the restriction of $r \circ \iota$ to $(V^{\ast})^{\otimes \symset}$. 
To see the claimed injectivity of this map, suppose that for some $w \in (V^{\ast})^{\otimes \symset}$ one has $\iota(w)(v) = 0$ for all $v \in V^{\otimes \symset}$. 
Then, we claim that $\iota(w) = 0$ since for arbitrary $v' \in V^{\otimes T_\symset^a}$ we have 
\begin{equ}
\iota(w)(v') = \iota\Big( 
| \Gamma_{\symset}^{a,a}|^{-1} 
\sum_{\gamma \in \Gamma_{\symset}^{a,a}} \gamma \cdot w
\Big)
(v')=
\iota(
 w
)
\Big( 
|\Gamma_{\symset}^{a,a}|^{-1}
\sum_{\gamma \in \Gamma_{\symset}^{a,a}}
\gamma^{-1} \cdot v'
\Big) = 0
\end{equ}
where in the first equality we used that $w \in (V^{\ast})^{\otimes \symset}$ while in the last equality we used that the sum in the expression before is in $V^{\otimes \symset}$. 

If the space assignment $V$ is finite-dimensional then our argument above shows that we have a canonical isomorphism
\begin{equ}\label{eq:dual_tensors_finite_dim} 
(V^{\ast})^{\otimes \symset} \simeq (V^{\otimes \symset})^{\ast} \;.
\end{equ}
\end{remark}
\begin{example}
Continuing with Example~\ref{ex:Hom}
and denoting $\symset = \langle \tau\rangle$,
given  vector spaces $V_{\<Xi>}$, $V_{\<XiY>}$, an element in $V^{\otimes \symset}$
can be identified with a formal sum over all representations of vectors of the form
\[
v^{\<Xi>}_{x_1} \otimes v^{\<Xi>}_{y_1} \otimes v^{\<XiY>}_{z_1} 
+v^{\<Xi>}_{y_1} \otimes v^{\<Xi>}_{x_1} \otimes v^{\<XiY>}_{z_1} \;,
\]
namely, it is partially symmetrised such that
for another representation, the two choices of $\gamma$ both satisfy the requirement in \eqref{e:def-V-tensor-symset}.
The projection $\pi_{\symset,a} $ then plays the role of symmetrisation.
\end{example}

We now fix two symmetric sets $\symset$ and $\bar \symset$.
Given $\Phi \in \Homb(T_\symset^a,\bar{\symset})$, it 
naturally defines a linear map $F^a_\Phi\colon V^{\otimes T_\symset^a} \to V^{\otimes \bar{\symset}}$ by
\begin{equ}[e:defFPhi]
F^a_\Phi v = \pi_{\bar \symset, \bar a} \bigl(\Phi^{(\bar a)} \cdot v\bigr)\;,
\end{equ}
where $\bar a \in A_{\bar \symset}$ and $\Phi^{(\bar a)}$ denotes any representative of $\Phi$ in $\Iso(T_\symset^a, T_{\bar \symset}^{\bar a})$.
Since any other such choice $\bar b$ and $\Phi^{(\bar b)}$ is related to the previous one by composition to the left with an element of 
$\Gamma_{\bar \symset}^{\bar a,\bar b}$, it follows from \eqref{e:covpi} that \eqref{e:defFPhi} is independent of these choices.
We extend \eqref{e:defFPhi} to $\Vec\big(\Homb(T_\symset^a,\bar{\symset})\big)$ by linearity.

Since $(\phi \circ \psi) \cdot v = \phi \cdot (\psi\cdot v)$ by the definition \eqref{e:actionMaps}, we conclude that,
for $v = (v^{(a)})_{a \in A_\symset} \in V^{\otimes \symset}$, $\Phi = (\Phi_a)_{a \in A_\symset} \in \Hom(\symset,\bar \symset)$, 
as well as $\gamma_{a,b} \in \Gamma_{\symset}^{a,b}$, we have the identity
\begin{equ}
F_{\Phi_b}^b v^{(b)} = F_{\Phi_a\circ \gamma_{a,b}}^b v^{(b)}  = F_{\Phi_a}^a (\gamma_{a,b} \cdot v^{(b)}) = F_{\Phi_a}^a v^{(a)}\;,
\end{equ}
so that $F_\Phi$ is well-defined as a linear map from $V^{\otimes \symset}$ to $V^{\otimes \bar \symset}$ by
\begin{equ}[e:def-F_Phi]
F_\Phi v = F_{\Phi_a}^a \iota_{\symset,a} v\;,
\end{equ}
which we have just seen is independent of the choice of $a$.

The following lemma shows that this construction defines a monoidal functor $\Func_V$ mapping $\symset$ to $V^{\otimes \symset}$ and  
$\Phi$ to $F_\Phi$ between the category $\SSet$ of symmetric sets and the category $\Vec$ of 
vector spaces. \label{def:CF-V}
%
\begin{lemma}
Consider symmetric sets $\symset$, $\bar \symset$, $\bbar\symset$,
and morphisms $\Phi\in\Hom(\symset,\bar \symset)$ and $\bar\Phi\in \Hom(\bar \symset,\bbar\symset)$.
Then $F_{\bar\Phi\circ\Phi}= F_{\bar\Phi}\circ F_{\Phi}$.
\end{lemma}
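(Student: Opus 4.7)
The plan is to unravel both sides on an arbitrary element $v \in V^{\otimes \symset}$, using the definitions and the independence-of-choice properties of \eqref{e:defFPhi} and \eqref{e:def-F_Phi}. First I would fix auxiliary indices $a \in A_\symset$, $\bar a \in A_{\bar\symset}$, $\bbar a \in A_{\bbar\symset}$ together with representatives $\Phi_a^{(\bar a)} \in \Iso(T_\symset^a, T_{\bar\symset}^{\bar a})$ and $\bar\Phi_{\bar a}^{(\bbar a)} \in \Iso(T_{\bar\symset}^{\bar a}, T_{\bbar\symset}^{\bbar a})$, which we are free to do since the construction of $F_\Phi$ does not depend on such a choice. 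Writing $v^{(a)} = \iota_{\symset,a} v$, the definition gives $F_\Phi v = \pi_{\bar\symset,\bar a}\bigl(\Phi_a^{(\bar a)} \cdot v^{(a)}\bigr) \in V^{\otimes \bar\symset}$, and then
\begin{equ}
(F_{\bar\Phi} \circ F_\Phi)(v) = F^{\bar a}_{\bar\Phi_{\bar a}} \circ \iota_{\bar\symset,\bar a} \circ \pi_{\bar\symset,\bar a}\bigl(\Phi_a^{(\bar a)} \cdot v^{(a)}\bigr)\;.
\end{equ}

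The main technical step is to see that the symmetrisation $\iota_{\bar\symset,\bar a} \circ \pi_{\bar\symset,\bar a}$ can be dropped. By definition of $\pi$, $\iota_{\bar\symset,\bar a} \circ \pi_{\bar\symset,\bar a}(w) = |\Gamma^{\bar a,\bar a}_{\bar\symset}|^{-1} \sum_{\gamma} \gamma \cdot w$ for $w \in V^{\otimes T_{\bar\symset}^{\bar a}}$, so it suffices to show that $F^{\bar a}_{\bar\Phi_{\bar a}} \circ \gamma = F^{\bar a}_{\bar\Phi_{\bar a}}$ for every $\gamma \in \Gamma^{\bar a,\bar a}_{\bar\symset}$. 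Using that actions compose contravariantly as in \eqref{e:actionMaps}, this reduces to the identity $\bar\Phi_{\bar a} \circ \gamma = \bar\Phi_{\bar a}$ in $\Vec(\Homb(T_{\bar\symset}^{\bar a},\bbar\symset))$, which is precisely the $\Gamma_{\bar\symset}$-invariance built into Definition~\ref{def:morphism} applied with $\bar a = \bar b$.

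With this in hand, one has
\begin{equ}
(F_{\bar\Phi} \circ F_\Phi)(v) = F^{\bar a}_{\bar\Phi_{\bar a}}\bigl(\Phi_a^{(\bar a)} \cdot v^{(a)}\bigr) = \pi_{\bbar\symset,\bbar a}\bigl(\bar\Phi_{\bar a}^{(\bbar a)} \cdot \Phi_a^{(\bar a)} \cdot v^{(a)}\bigr) = \pi_{\bbar\symset,\bbar a}\bigl((\bar\Phi_{\bar a}^{(\bbar a)} \circ \Phi_a^{(\bar a)}) \cdot v^{(a)}\bigr)\;.
\end{equ}
Since $\bar\Phi_{\bar a}^{(\bbar a)} \circ \Phi_a^{(\bar a)}$ is a representative of $(\bar\Phi \circ \Phi)_a$ by the definition of composition in $\SSet$, this last expression equals $F^a_{(\bar\Phi \circ \Phi)_a}(v^{(a)}) = F_{\bar\Phi \circ \Phi}(v)$, as required.

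I do not expect any real obstacle beyond carefully tracking which space each map lives in; the only nontrivial point is recognising that the $\pi \circ \iota$ symmetrisation is absorbed precisely by the invariance condition characterising morphisms in $\SSet$.
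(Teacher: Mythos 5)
Your proof is correct and follows essentially the same route as the paper's: expand both sides with \eqref{e:def-F_Phi}, use the $\Gamma_{\bar\symset}$-invariance encoded in Definition~\ref{def:morphism} to absorb the intermediate $\iota_{\bar\symset,\cdot}\circ\pi_{\bar\symset,\cdot}$ symmetrisation, and recognise $\bar\Phi_{\bar a}^{(\bbar a)}\circ\Phi_a^{(\bar a)}$ as a representative of $(\bar\Phi\circ\Phi)_a$. The only cosmetic difference is that you fix $\bar b=\bar a$ throughout, whereas the paper allows two independent choices $\bar a, \bar b\in A_{\bar\symset}$ and uses the invariance of $\bar\Phi$ across $\Gamma^{\bar b,\bar a}_{\bar\symset}$; both are legitimate since all constructions are independent of the choice of base point. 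One small slip of language: the identity $\psi\cdot(\phi\cdot v)=(\psi\circ\phi)\cdot v$ that you invoke is a \emph{covariant} composition law for the action \eqref{e:actionMaps}, not a contravariant one; the computation you carry out is nevertheless the right one.
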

\begin{proof}
Since we have by definition
\begin{equ}
F_{\bar\Phi} F_{\Phi} v = F_{\bar\Phi} \pi_{\bar \symset, \bar a} \bigl(\Phi_a^{(\bar a)} \cdot \iota_{\symset, a}v\bigr)
=  \pi_{\bbar \symset, \bbar a} \bigl(\bar \Phi_{\bar b}^{(\bbar a)} \cdot \iota_{\bar \symset, \bar b}\pi_{\bar \symset, \bar a} \bigl(\Phi_a^{(\bar a)} \cdot \iota_{\symset, a}v\bigr)\bigr)\;,
\end{equ}
for any arbitrary choices of $a \in A_\symset$, $\bar a, \bar b \in A_{\bar\symset}$, $\bbar a \in A_{\bbar\symset}$, 
it suffices to note that, for any $w = (w^{\bar{c}})_{\bar{c} \in A_{\bar{\symset}}} \in V^{\otimes \bar{\symset}}$, 
\begin{equ}
\bar \Phi_{\bar b}^{(\bbar a)} \cdot \iota_{\bar \symset, \bar b}\pi_{\bar \symset, \bar a} w
= {\frac{1}{|\Gamma_{\bar \symset}^{\bar b, \bar a}|}} \sum_{\gamma \in \Gamma_{\bar \symset}^{\bar b, \bar a}} (\bar \Phi_{\bar b}^{(\bbar a)} \circ \gamma) \cdot w^{(\bar{a})} = \bar \Phi_{\bar a}^{(\bbar a)} \cdot w^{(\bar{a})}\;,
\end{equ}
as an immediate consequence of the definition of $\Hom(\bar \symset, \bbar\symset)$.
\end{proof}
\begin{remark}\label{rem:U-on-Vtensor}
A useful property is the following.
Given two space assignments $V$ and $W$ and a collection of linear maps $U_\mft\colon V_\mft \to W_\mft$, this induces a natural
transformation $\Func_V \to \Func_W$.
Indeed, for any typed set $\symset$, it yields a collection of linear maps $U_\symset^a \colon V^{\otimes T_\symset^a} \to W^{\otimes T_\symset^a}$
by
\begin{equ}
U_\symset^a \bigotimes_{x \in T_\symset^a} v_x = \bigotimes_{x \in T_\symset^a} U_{\mft_\symset^a(x)} v_x\;.
\end{equ}
This in turn defines a linear map $U_\symset\colon  V^{\otimes \symset} \to W^{\otimes \symset}$ in the natural way.
It is then immediate that, for any $\Phi \in \Hom(\symset,\bar{\symset})$, one has the identity
\begin{equ}
U_{\bar{\symset}} \circ \Func_V(\Phi) = \Func_W(\Phi) \circ U_{\symset}\;,
\end{equ}
so that this is indeed a natural transformation.
\end{remark}

\begin{remark}
In the more general context of Remark~\ref{rem:generalisation}, this construction proceeds similarly. The only
difference is that now a space assignment $V$ is a functor $\Lab \to \Vec$ mapping objects
$\mft$ to spaces $V_\mft$ and morphisms $\phi \in \Hom_\Lab(\mft, \bar \mft)$ to linear
maps $V_\phi \in L(V_\mft, V_{\bar \mft})$. In this case, an element $\phi \in \Iso(T, \bar T)$
naturally yields a linear map $V^{\otimes T} \to V^{\otimes \bar T}$ by
\begin{equ}
v = \bigotimes_{x \in T} v_x \mapsto
\phi \cdot v = \bigotimes_{y \in \bar T} V_{\vec \phi(\phi_0^{-1}(y))} v_{\phi_0^{-1}(y)}\;.
\end{equ}
The remainder of the construction is then essentially the same.
\end{remark}

\begin{remark}
One may want to restrict oneself to a smaller category than $\Vec$ by enforcing additional
``nice'' properties on the spaces $V_\mft$. For example, it will be convenient below to replace it 
by some category of topological vector spaces. 
Other natural examples of replacements for $\Vec$ could be the category of Banach spaces endowed with a 
choice of cross-product, the category of Hilbert spaces, the category of finite-dimensional vector bundles
over a fixed base manifold, etc.
\end{remark}

\subsubsection{Typed structures}\label{subsec:typed_struct}

It will be convenient to consider the larger category $\TStruc$ of \textit{typed structures}. 
\begin{definition}\label{def:typed_struct}
We define $\TStruc$ to be the category obtained by freely adjoining 
countable products to $\SSet$.
We write $\TStruc_\Lab$ for $\TStruc$ when we want to emphasize the dependence of this category on 
the underlying label set $\Lab$.
\end{definition}
\begin{remark}\label{rem:TStruc}
An object $\symcol$ in the category $\TStruc$ can be viewed as a countable (possibly finite) index set $\CA$ and, 
for every $\alpha \in \CA$, a symmetric set $\symset_\alpha \in \ob(\SSet)$.
This typed structure is then equal to $\prod_{\alpha \in \CA} \symset_{\alpha}$, where $\prod$ denotes the 
categorical product. (When $\CA$ is finite it coincides with the coproduct and we will then also
write $\bigoplus_{\alpha \in \CA} \symset_{\alpha}$ and call it the ``direct sum'' in the sequel.)
Morphisms
between $\symcol$ and $\bar \symcol$ can be viewed as ``infinite matrices'' $M_{\bar \alpha, \alpha}$ with $\alpha \in \CA$, 
$\bar \alpha \in \bar \CA$,
 $M_{\bar \alpha, \alpha} \in \Hom(\symset_\alpha,\symset_{\bar \alpha})$ and the property that, 
 for every $\bar\alpha \in \bar\CA$, one has $M_{\bar \alpha, \alpha} = 0$
 for all but finitely many values of $\alpha$. 
Composition of morphisms is performed in the natural way, analogous to matrix multiplication. 

We remark that the index set $\CA$ here
has nothing to do with the index set $A_\symset$
in Definition~\ref{def:sym-typed-set}.
\end{remark}

Note that $\TStruc$ is still symmetric monoidal with the tensor product 
behaving distributively over the direct sum if we enforce $(\prod_{\alpha \in \CA} \symset_{\alpha} )\otimes ( \prod_{\beta \in \CB} \symset_{\beta}) 
= \prod_{(\alpha,\beta) \in \CA\times \CB} \big(\symset_{\alpha}\otimes \symset_{\beta}\big)$,
with $\symset_{\alpha}\otimes \symset_{\beta}$ as in Remark~\ref{rem:tensor-symset}, and define
the tensor product of morphisms in the natural way.

\begin{remark}
The choice of adjoining countable products (rather than coproducts) is that we will use this construction
in Section~\ref{subsec:nonlinearities} to describe the general solution to the algebraic fixed point problem associated
to \eqref{e:SPDE} as an infinite formal series.
\end{remark}

If the space assignments $V_\mft$ are finite-dimensional, the functor $\Func_V$ then naturally extends to 
an additive monoidal functor from $\TStruc$ to the category of topological vector spaces (see for 
example \cite[Sec.~4.5]{MonCat}). Note that in particular one has $\Func_V(\symcol) = \prod_{\alpha\in\CA}\Func_V(\symset_\alpha)$.
\subsection{Direct sum decompositions of symmetric sets}\label{subsec: direct sum decomp}
In Section~\ref{subsec:sym_tensor_prod}
we showed how, given a set of labels $\Lab$ and space assignment $(V_{\mft})_{\mft \in \Lab}$, we 
can ``extend'' this space assignment so that we get an appropriately symmetrised vector space 
$V^{\otimes \symset}$ for any symmetric set $\symset$ (or, more generally,
for any typed structure) typed by $\Lab$. 
In this subsection we will investigate how this construction behaves under a direct sum decomposition for the space assignment $(V_{\mft})_{\mft \in \Lab}$ that is encoded via a corresponding ``decomposition'' on the set $\Lab$.
\begin{definition}\label{def:label-decompose}
Let $\CP(A)$ denote the powerset of a set $A$.\label{powerset page ref}
Given two distinct finite sets of labels $\Lab$ and $\bar \Lab$ as well as a 
map $\proj \colon \Lab \to \CP(\bar \Lab) \setminus \{\emptyset\}$,
such that $\{ \proj(\mft): \mft \in \Lab\}$ is a partition of $\bar\Lab$, we call $\bar{\Lab}$ a type decomposition of $\Lab$ under $\proj$.
If we are also given space assignments $(V_{\mft})_{\mft \in \mfL}$ for $\mfL$ and $(\bar{V}_{\mfl})_{\mfl \in \bar{\mfL}}$ for $\bar{\mfL}$ with the property that
\begin{equ}\label{eq: type vector space decomp}
V_\mft = \bigoplus_{\mfl \in \proj(\mft)} \bar V_\mfl \qquad\textnormal{ for every }\mft \in \mfL\;,
\end{equ} 
then we say that $(\bar{V}_{\mfl})_{\mfl \in \bar{\mfL}}$ is a decomposition of $(V_{\mft})_{\mft \in \mfL}$. 
For $\mfl \in \proj(\mft)$, we write $\PP_{\mfl} \colon V_{\mft} \to \bar V_{\mfl}$
for the projection induced by \eqref{eq: type vector space decomp}.
\end{definition}
For the remainder of this subsection we fix a set of labels $\Lab$, a space assignment $(V_{\mft})_{\mft \in \mfL}$, along with a type decomposition $\bar{\Lab}$ of $\Lab$ under  $\proj$ and a space assignment $(\bar{V}_{\mfl})_{\mfl \in \bar{\mfL}}$ that is a decomposition of $(V_{\mft})_{\mft \in \mfL}$. 
To shorten notations, for functions $\mft \colon B \to \Lab$ and $\mfl \colon B \to \bar\Lab$
with  any set $B$, we write 
$\mfl \tto \mft$ as a shorthand for the relation $\mfl(p) \in \proj(\mft(p))$ for every $p \in B$.
Given any symmetric set
$\symset$ with label set $\Lab$ and any $a \in A_\symset$,
we write $\hat L_\symset^a = \{\mfl\colon T_\symset^a \to \bar \Lab\,:\, \mfl \tto \mft_\symset^a\}$,
and we consider on $\hat L_\symset = \bigcup_{a\in A_\symset} \hat L_\symset^a$ the equivalence 
relation $\sim$ given by
\begin{equ}[e:defsim]
\hat L_\symset^a\ni \mfl \sim \bar \mfl \in \hat L_\symset^b \quad\Leftrightarrow\quad \exists \gamma_{b,a} \in \Gamma_\symset^{b,a}\,:\,  
\mfl = \bar\mfl \circ \gamma_{b,a}\;.
\end{equ}
We denote by $L_\symset \eqdef \hat L_\symset / {\sim}$ the set of such equivalence classes,
which we note is finite.
\begin{example}
In this example we describe the vector space associated to $\<IXi^2>$.
For our space assignment we start with a labelling set $\mfL=\{\star\}$, where $\star$ represents the noise, and we assume that the actual noise driving the equation we're interested in 
is a distribution with values in a finite-dimensional vector space $V_{\star}$. 
If we are given a direct sum decomposition $V_{\star} = V_{(\star,1)} \oplus V_{(\star,2)}$, 
we can ``split'' the noise into two by introducing a new set of labels 
$\bar\mfL=\mfL\times \{1,2\}$ and setting $\proj(\star)=\{(\star,1),(\star,2)\}$. 

Recall our convention described in 
Example~\ref{ex:trees}: $\<IXi^2>$ represents an isomorphism class of trees and
any concrete tree $\tau$ in that class is realised by a vertex set which is a subset of
3 elements of $\N$ and in which 2 of those elements are the leaves labelled by $\star$.
Let $\symset$ be the symmetric set associated to $\<IXi^2>$ and $\tau \in A_{\symset}$ be a concrete tree with $T_\tau=\{x,y\}$, $\mft_\tau(x)=\mft_\tau(y)=\star$. The local symmetry group in this 
example is isomorphic to $\Z_2$. 

The set $\hat L^{\tau}_\symset$
 consists of 4 elements which we denote by
\begin{equ}\label{eq:expansion_of_tree}
 \<rIXi^2>,\; \<IXi^2green>,\; \<IXi^2asym>, \textnormal{ and } \<IXi^2asym2>\;.
\end{equ} 
In the symbols above, the left leaf corresponds to $x$ and the right one to $y$.
We thus obtain four labellings on  $T_\tau $ by $\bar{\Lab}$ where
$(\star,1)$ is associated to $\<Xired>$
and 
$(\star,2)$ is  associated to $\<Xigreen>$.

However, if we were to interpret the symbols of \eqref{eq:expansion_of_tree} as isomorphism classes of trees labelled by $\bar{\Lab}$ then $\<IXi^2asym>$ and $\<IXi^2asym2>$ are the same isomorphism class and this is reflected by the fact that $|L_\symset|=3$.
The isomorphism class of $\<IXi^2>$ is associated to a vector space isomorphic to $V_{\star}\otimes_{s} V_{\star}$.
Our construction will decompose (see Proposition~\ref{prop:nat_transform}) the vector space for $\<IXi^2>$ into a direct sum of three vector spaces corresponding to the isomorphism classes 
$\<rIXi^2>$, $\<IXi^2green>$, and $\<IXi^2asym>$ which are, respectively, isomorphic to $V_{(\star,1)}\otimes_{s} V_{(\star,1)}$, $V_{(\star,2)}\otimes_{s} V_{(\star,2)}$, and $V_{(\star,1)} \otimes V_{(\star,2)}$.
\end{example}
Given an equivalence class $Y \in L_\symset$ and $a \in A_{\symset}$, we define $Y_a = Y \cap \hat L_\symset^a$ (which we note is non-empty due to the connectedness of $\Gamma_\symset$).
We then define a symmetric set
$\symset_Y$ by
\begin{equs}
A_{\symset_Y} &= \{(a, \mfl)\,:\, a \in A_\symset\;,\quad \mfl \in Y_a\}\;,\qquad
T_{\symset_Y}^{(a,\mfl)} = T_\symset^a\;,\qquad
\mft_{\symset_Y}^{(a,\mfl)} = \mfl\;,\\
\Gamma_{\symset_Y}^{(a,\mfl),(b,\bar \mfl)} &= \{\gamma \in \Gamma_{\symset}^{a,b}\,:\, \bar \mfl = \mfl \circ \gamma\}\;.
\end{equs}
\begin{remark}\label{rem:Y}
The definition \eqref{e:defsim} of our equivalence relation 
guarantees that $\Gamma_{\symset_Y}$ is connected. The definition of $\Gamma_{\symset_Y}$ furthermore 
yields a morphism of groupoids $\Gamma_{\symset_Y} \to \Gamma_{\symset}$
which is easily seen to be surjective.
\end{remark}
With these notations at hand, we can define a functor 
$\proj^*$ from $\SSet_\Lab$ to $\TStruc_{\bar\Lab}$ as follows. 
Given any $\symset\in \ob(\SSet_\Lab)$, we define
\begin{equ}[e:linkpi3]
\proj^* \symset = \bigoplus_{Y \in L_\symset} \symset_Y \in \ob\big(\TStruc_{\bar \Lab} \big)\;.
\end{equ}
To describe how $\proj^*$ acts on morphisms,
let us fix two symmetric sets $\symset, \bar \symset \in \ob(\SSet_\Lab)$, a choice 
of $Y \in L_{\symset}$, $(a,\mfl) \in A_{\symset_{Y}}$, as well as an element 
$\phi \in \Homb(T_\symset^a,\bar \symset)$. We then let $\phi\cdot\mfl \subset \hat L_{\bar \symset}$ 
be given by 
\begin{equ}
\phi \cdot \mfl = \mfl \circ \phi^{-1} \eqdef \{ \bar{\mfl}\ :\ \exists \psi \in \phi  \textnormal{ with } \bar{\mfl} = \mfl \circ \psi^{-1}\}\;, 
\end{equ} 
where we recall that $\phi \subset \bigcup_{\bar{a} \in A_{\bar{\symset}}} \Iso(T^{a}_{\symset},T^{\bar{a}}_{\bar{\symset}})$ is a $\Gamma_{\bar\symset}$-equivalence class of bijections. 
It follows from the definitions of the equivalence relation on $\hat L_{\bar \symset}$ and of $\Homb(T_\symset^a,\bar \symset)$ that one actually has $\phi \cdot \mfl \in L_{\bar \symset}$.
We then define
\begin{equ}
\proj^{\ast}_{(a,\mfl)}
\phi \in \bigoplus_{\bar Y \in L_{\bar \symset}} \Vec \bigl(\Homb(T^{(a,\mfl)}_{\symset_Y}, \bar \symset_{\bar Y})\bigr)\;,
\end{equ}
by simply setting
\begin{equ}[e:defproj*]
\proj^{\ast}_{(a,\mfl)}
\phi
= \phi \in \Homb(T^{(a,\mfl)}_{\symset_Y}, \bar \symset_{\phi \cdot \mfl})
\subset \bigoplus_{\bar Y \in L_{\bar \symset}} \Vec \bigl(\Homb(T^{(a,\mfl)}_{\symset_Y}, \bar \symset_{\bar Y})\bigr)\;,
\end{equ}
which makes sense since $T^{(a,\mfl)}_{\symset_Y} = T_\symset^a$, $T_{\bar \symset_{\bar Y}}^{(\bar a, \bar \mfl)} = T_{\bar \symset}^{\bar a}$, and since $\Gamma_{\bar \symset_{\phi \cdot \mfl}} \to \Gamma_{\bar \symset}$ is surjective.

We take a moment to record an important property of this construction.
Given any $(a,\mfl), (b,\hat{\mfl}) \in A_{\symset_{Y}}$, $\gamma \in \Gamma^{(a,\mfl), (b,\hat{\mfl})}_{\symset_{Y}}$, and $\phi \in \Homb(T_{\symset}^{a},\bar{\symset})$, it follows from 
our definitions that
\begin{equ}\label{eq: moving domain basepoint}
(\proj^{\ast}_{(a,\mfl)} \phi) \circ \gamma
=
\proj^{\ast}_{(b,\hat \mfl)} (\phi \circ \gamma)
\end{equ}
where on the right-hand side of \eqref{eq: moving domain basepoint} we are viewing $\gamma$ as an element of $\Gamma^{a,b}_{\symset}$, which indeed maps $\Homb(T^{a}_{\symset},\bar{\symset})$ into $\Homb(T^{b}_{\symset},\bar{\symset})$ by right composition. 

Extending \eqref{e:defproj*} by linearity, we obtain a map
\[
\proj^{\ast}_{(a,\mfl)}\colon\Vec \big( \Homb(T_\symset^a,\bar \symset)\big) \rightarrow \bigoplus_{\bar Y \in L_{\bar \symset}} \Vec \big( \Homb(T^{(a,\mfl)}_{ \symset_{Y}},\bar{\symset}_{\bar Y}) \big)\;.
\]
We then use this to construct a map $\proj^{\ast}_{Y}\colon\Hom(\symset,\bar \symset) \rightarrow \Hom(\symset_Y, \proj^{\ast}\bar\symset)$ as follows. 
For any $\Phi = (\Phi_a)_{a\in A_\symset} \in  \Hom(\symset,\bar \symset)$, we set
\[
(\proj^{\ast}_{Y}\Phi)_{(a,\mfl)}
= \proj^{\ast}_{(a,\mfl)} \Phi_{a}\;,\qquad \forall (a,\mfl) \in A_{\symset_Y}\;.
\]
To show that this indeed belongs to $\Hom(\symset_Y, \proj^{\ast}\bar\symset)$,
note that, for any $(a,\mfl), (b,\hat{\mfl}) \in A_{\symset_Y}$ and $\gamma \in \Gamma^{(a,\mfl),(b,\hat{\mfl})}_{\symset_{Y}} \subset \Gamma_\symset^{a,b}$, we have
\begin{equs}
(\proj_{Y}^{\ast}\Phi)_{(a,\mfl)} \circ \gamma 
&= (\proj_{(a,\mfl)}^{\ast}\Phi_a) \circ \gamma
= \big(\proj_{(b,\hat{\mfl})}^{\ast}(\Phi_a\circ \gamma)\big) 
= \big(\proj_{(b,\hat{\mfl})}^{\ast}\Phi_b\big) 
= (\proj_{Y}^{\ast}\Phi)_{(b,\hat{\mfl})} \;.
\end{equs}
In the second equality we used the property \eqref{eq: moving domain basepoint} and in the 
third equality we used that $\Phi_{a} \circ \gamma = \Phi_{b}$ which follows from our assumption that $\Phi \in \Hom(\symset,\bar{\symset})$ \dash recall that here we are viewing $\gamma$ as an element of $\Gamma^{a,b}_{\symset}$.

Finally, we then obtain the desired map $\proj^{\ast}\colon\Hom(\symset,\bar{\symset}) \rightarrow \Hom(\proj^{\ast}\symset,\proj^{\ast}\bar{\symset})$ by setting, for $\Phi \in \Hom(\symset,\bar{\symset})$, 
\[
\proj^{\ast}\Phi = 
\bigoplus_{Y \in L_{\symset}}
\proj^{\ast}_{Y}\Phi\;.
\] 
The fact that $\proj^*$ is a functor (i.e.\ preserves composition of morphisms) is an almost
immediate consequence of \eqref{e:defproj*}. Indeed, given 
$\phi \in \Homb(T_\symset^a, \bar \symset)$ and $\bar \Phi \in \Hom(\bar \symset, \bbar \symset)$,
it follows immediately from \eqref{e:defproj*} that
\begin{equ}
\proj^*_{\phi \cdot \mfl} \bar \Phi \circ \proj^*_{(a,\mfl)}\phi
= \proj^*_{(a,\mfl)} \bigl(\bar \Phi \circ \phi\bigr)\;,
\end{equ}
where we view $\bar \Phi \circ \phi$ as an element of $\Vec \bigl(\Homb(T_\symset^a, \bbar \symset)\bigr)$. 
It then suffices to note that $\proj^*_{\bar Y} \bar \Phi \circ \proj^*_{(a,\mfl)}\phi = 0$
for $\bar Y \neq \phi \cdot \mfl$, which then implies that 
\begin{equ}
\proj^* \bar \Phi \circ \proj^*_{(a,\mfl)}\phi
= \proj^*_{(a,\mfl)} \bigl(\bar \Phi \circ \phi\bigr)\;,
\end{equ}
and the claim follows.
Note also that $\proj^*$ is monoidal in the sense that 
$\proj^*(\symset \otimes \bar \symset) = \proj^*(\symset) \otimes \proj^*(\bar \symset)$ and similarly for morphisms,
modulo natural transformations. 
\begin{remark}
One property that can be verified in a rather straightforward way is that
if we define $(\bar\proj \circ  \proj)(\mft) = \bigcup_{\mfl \in \proj(\mft)}\bar\proj(\mfl)$, then
\begin{equ}
(\bar\proj \circ  \proj)^* =  \bar\proj^* \circ  \proj^*\;,
\end{equ}
again modulo natural transformations. This is because triples $(a,\mfl,\bar \mfl)$
with $\bar \mfl \tto \mfl \tto \mft_a$ are in natural bijection with pairs $(a,\bar \mfl)$.
Note that this identity crucially uses that the sets $\proj(\mft)$ are all disjoint.
\end{remark}
Our main interest in the functor $\proj^*$ is that it will perform the corresponding direct sum decompositions
at the level of partially symmetric tensor products of the spaces $V_\mft$. 
This claim is
formulated as the following proposition.
\begin{proposition}\label{prop:nat_transform}
One has $\Func_{\bar V} \circ \proj^* = \Func_{V}$, modulo natural transformation.
\end{proposition}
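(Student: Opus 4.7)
The plan is to construct, for each $\symset \in \ob(\SSet_\Lab)$, an explicit canonical vector space isomorphism $\Psi_\symset \colon \Func_{\bar V}(\proj^* \symset) \to \Func_V(\symset)$ and then verify naturality in $\symset$. The isomorphism simply implements, on the level of the partially symmetrised tensor spaces, the direct sum decomposition $V_\mft = \bigoplus_{\mfl \in \proj(\mft)} \bar V_\mfl$. Extension from $\SSet$ to $\TStruc$ is then automatic by additivity, since $\proj^*$ takes countable products to countable products by construction and both $\Func_V$ and $\Func_{\bar V}$ preserve them.

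The construction of $\Psi_\symset$ goes as follows. For each $a \in A_\symset$, the direct sum decomposition of $V$ induces a canonical type-grading
\begin{equation*}
V^{\otimes T_\symset^a} = \bigoplus_{\mfl \in \hat L_\symset^a} \bar V^{\otimes(T_\symset^a,\mfl)}\;,\qquad \bar V^{\otimes(T_\symset^a,\mfl)} \eqdef \bigotimes_{x \in T_\symset^a} \bar V_{\mfl(x)}\;,
\end{equation*}
and any $\gamma \in \Iso(T_\symset^b, T_\symset^a)$ sends the $\bar\mfl$-summand to the $(\bar\mfl \circ \gamma^{-1})$-summand. Hence the $\Gamma_\symset$-orbits on $\hat L_\symset$ are exactly the equivalence classes $L_\symset$, and the definition of $\Gamma_{\symset_Y}$ is precisely the subgroupoid of $\Gamma_\symset$ preserving each type within an orbit. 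Given $Y \in L_\symset$ and $w = (w^{(a,\mfl)})_{(a,\mfl) \in A_{\symset_Y}} \in \bar V^{\otimes \symset_Y}$, I set $\Psi_\symset^{(Y)}(w)^{(a)} \eqdef \sum_{\mfl \in Y_a} \iota_\mfl(w^{(a,\mfl)})$, where $\iota_\mfl$ is the natural inclusion of the type-$\mfl$ summand, and define $\Psi_\symset = \bigoplus_{Y \in L_\symset} \Psi_\symset^{(Y)}$. Checking that $\Psi_\symset^{(Y)}(w)$ lies in $V^{\otimes \symset}$ is straightforward: for $\gamma \in \Gamma_\symset^{a,b}$ and $\mfl \in Y_a$, setting $\bar\mfl \eqdef \mfl \circ \gamma \in Y_b$, one has $\gamma \in \Gamma_{\symset_Y}^{(a,\mfl),(b,\bar\mfl)}$, so the $\symset_Y$-equivariance of $w$ translates, after summation over $\mfl$, into the $\symset$-equivariance of $\Psi_\symset^{(Y)}(w)$. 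The inverse extracts the type-graded components $v^{(a,\mfl)}$ of $v^{(a)}$ and regroups them by orbit; its well-definedness is immediate from uniqueness of direct sum decompositions.

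The delicate step is naturality: for $\Phi \in \Hom(\symset, \bar\symset)$, proving $\Psi_{\bar\symset} \circ \Func_{\bar V}(\proj^* \Phi) = \Func_V(\Phi) \circ \Psi_\symset$. By linearity I reduce to the case where $\Phi_a$ is supported on a single $\phi \in \Homb(T_\symset^a, \bar\symset)$ with chosen representative $\phi^{(\bar a)}$. The core observation is that for $\mfl \in Y_a$, the map $\phi^{(\bar a)} \cdot$ sends the $\mfl$-summand of $V^{\otimes T_\symset^a}$ into the $(\mfl \circ (\phi^{(\bar a)})^{-1})$-summand of $V^{\otimes T_{\bar\symset}^{\bar a}}$, and on these summands (identified with the underlying $\bar V$-tensors) its action coincides with that of $\proj^*_{(a,\mfl)}\phi$. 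It remains to compare the two symmetrisations. Partitioning $\Gamma_{\bar\symset}^{\bar b, \bar a} = \bigsqcup_{\bar\mfl' \in \bar Y_{\bar b}} \Gamma_{\bar\symset_{\bar Y}}^{(\bar b, \bar\mfl'), (\bar a, \bar\mfl)}$ with $\bar Y = \phi \cdot \mfl$, and using the orbit--stabiliser identity $|\Gamma_{\bar\symset}^{\bar a, \bar a}| = |\bar Y_{\bar a}| \cdot |\Gamma_{\bar\symset_{\bar Y}}^{(\bar a, \bar\mfl),(\bar a, \bar\mfl)}|$, one finds that $\pi_{\bar\symset,\bar a}$ applied to a type-$\bar\mfl$ vector produces, in the $\bar Y$-component, a factor of $|\bar Y_{\bar a}|^{-1}$ times $\pi_{\bar\symset_{\bar Y},(\bar a, \bar\mfl)}$ applied to the underlying $\bar V$-vector.

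The main obstacle is to see that this combinatorial factor $|\bar Y_{\bar a}|^{-1}$ is exactly compensated on the left-hand side, where $\Func_V(\Phi) \circ \Psi_\symset$ involves a sum over $\mfl \in Y_a$: the $\Gamma_\symset^{a,a}$-invariance of $\Phi_a$ (which is part of the definition of a morphism in $\SSet$) forces the $\mfl$ contributing to a fixed $\bar Y$-component to come in orbits of size $|\bar Y_{\bar a}|$, cancelling the factor. Once this bookkeeping is carried out, naturality follows. The categorical extension to $\TStruc_\Lab$ is then a formal consequence of the universal property used to define it and the monoidal/additive structure of $\proj^*$, $\Func_V$, and $\Func_{\bar V}$.
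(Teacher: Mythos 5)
Your approach is the same as the paper's: construct the canonical isomorphism $\Psi_\symset \colon \Func_{\bar V}(\proj^*\symset) \to \Func_V(\symset)$ implementing the direct sum decomposition (this is exactly the paper's $\iota_\symset^{-1}$), check it lands in $V^{\otimes\symset}$, and then verify naturality. The construction and the equivariance check both match the paper's argument.

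However, the naturality step contains a genuine gap. The claim that the combinatorial factor $|\bar Y_{\bar a}|^{-1}$ arising from $\pi_{\bar\symset,\bar a}$ is ``exactly compensated'' because ``the $\mfl$ contributing to a fixed $\bar Y$-component come in orbits of size $|\bar Y_{\bar a}|$'' is not established and is not true in general, since $|Y_a|$ and $|\bar Y_{\bar a}|$ need not coincide. A concrete failure: take $\Lab = \{\star\}$, $\proj(\star) = \{\star_1,\star_2\}$, $T_\symset^a = T_{\bar\symset}^{\bar a} = \{1,2\}$ both of $\Lab$-type $\star$, with $\Gamma_\symset^{a,a}$ trivial but $\Gamma_{\bar\symset}^{\bar a,\bar a} \simeq \Z_2$ acting by the transposition. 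The set $\Homb(T_\symset^a,\bar\symset)$ consists of a single class $\phi$, and $\Phi_a = \phi$ does define a valid morphism in $\Hom(\symset,\bar\symset)$ (the $\Gamma_\symset^{a,a}$-invariance is vacuous). For the orbit $Y = \{(\star_1,\star_2)\}$ one has $|Y_a| = 1$, yet $\bar Y = \phi\cdot(\star_1,\star_2) = \{(\star_1,\star_2),(\star_2,\star_1)\}$ has $|\bar Y_{\bar a}| = 2$. The sum over $\mfl \in Y_a$ is a single term and supplies no compensating orbit of size $2$. Explicitly, on a type-$Y$ vector $w = w_1\otimes w_2$ with $w_i \in \bar V_{\star_i}$, one computes
\begin{equation*}
\bigl(\Func_V(\Phi)\Psi_\symset w\bigr)^{(\bar a)} = \pi_{\bar\symset,\bar a}(w_1\otimes w_2) = \tfrac12(w_1\otimes w_2 + w_2\otimes w_1)\;,
\end{equation*}
whereas $\bigl(\Psi_{\bar\symset}\Func_{\bar V}(\proj^*\Phi)w\bigr)^{(\bar a)} = w_1\otimes w_2 + w_2\otimes w_1$ (since $\pi_{\bar\symset_{\bar Y},(\bar a,\bar\mfl)}$ has trivial stabiliser). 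The two sides differ by the factor $|\bar Y_{\bar a}| = 2$, so the bookkeeping does not close as you describe. A correct verification of naturality must confront this mismatch head-on rather than appeal to a compensation that is absent precisely when the source groupoid is ``less symmetric'' than the target; the paper itself leaves this as a one-line ``straightforward to verify'', so it is worth sorting out what the compatible normalisation actually is before relying on the formula.
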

\begin{proof}
Fix $\symset \in \SSet_\Lab$. 
Given any $a \in A_{\symset}$ and elementary tensor $v^{(a)} \in V^{\otimes T^{a}_\symset}$ of the form
$
v^{(a)} = \bigotimes_{x \in T^{a}_\symset} v^{(a)}_x
$,
we first note that we have the identity
\begin{equ}[e:decompv]
v^{(a)} = \bigotimes_{x \in T^{a}_\symset} \sum_{\mfl \in \proj(\mft_\symset(x))} \PP_{\mfl} v^{(a)}_x
= \sum_{\mfl\tto \mft_\symset} \bigotimes_{x \in T^{a}_\symset} \PP_{\mfl(x)} v^{(a)}_x\;,
\end{equ}
where $\PP_{\mfl}$ is defined below \eqref{eq: type vector space decomp}.
This suggests the following definition for a map 
\begin{equ}
\iota_\symset \colon \prod_{a \in A_\symset} V^{\otimes T_\symset^a}
\to \prod_{(a,\mfl) \in \hat L_\symset}
\bar V^{\otimes T_{\symset_{[a,\mfl]}}^{(a,\mfl)}}\;,
\end{equ}
where $[a,\mfl]\in L_{\symset}$ is the equivalence class that $(a,\mfl)$ belongs to.
Given $v = (v^{(a)})_{a\in A_\symset}$ with $v^{(a)} = \bigotimes_{x \in T_\symset^a} v_x^{(a)}$,  
we set
\begin{equ}
(\iota_\symset v)_{(a,\mfl)} \eqdef \bigotimes_{x \in T_\symset^a} \PP_{\mfl(x)} v_x^{(a)}\;,
\end{equ}
which is clearly invertible with inverse given by 
$
(\iota_\symset^{-1} w)_{a} = \sum_{\mfl \tto \mft_a} \bigotimes_{x \in T_\symset^a} w_x^{(a,\mfl)}
$.
Note now that 
\begin{equs}
 \Func_V(\symset) &\subset \prod_{a \in A_\symset} V^{\otimes T_\symset^a}\;, \\
\Func_{\bar V}(\proj^*\symset)
&= \bigoplus_{Y \in L_\symset} \Func_{\bar V}(\symset_Y)
\subset \bigoplus_{Y \in L_\symset} \prod_{(a,\mfl) \in A_{\symset_Y}}
\bar V^{\otimes T_{\symset_Y}^{(a,\mfl)}}
\simeq \prod_{(a,\mfl) \in \hat L_\symset}
\bar V^{\otimes T_{\symset_{[a,\mfl]}}^{(a,\mfl)}}\;,
\end{equs}
where we used that $L_\symset$ is finite in the final line.
Furthermore, both $\iota_\symset$ and $\iota_\symset^{-1}$ preserve these subspaces,
and we can thus view
$\iota_\symset$ as an isomorphism of vector spaces between
$\Func_V(\symset)$ and $\Func_{\bar V}(\proj^*\symset)$. The fact that, for 
$\Phi \in \Hom(\symset, \bar \symset)$, one has
\begin{equ}
\iota_{\bar \symset} \circ \Func_V(\Phi) = \Func_{\bar V}(\proj^*\Phi) \circ\iota_\symset\;,
\end{equ}
is then straightforward to verify.
\end{proof}
\subsection{Symmetric sets from trees and forests}
\label{sec:symtrees}
Most of the symmetric sets entering our constructions will be generated from {\it finite labelled rooted trees}  (sometimes just called ``trees'' for simplicity) and their associated automorphisms. 
A finite labelled rooted tree $\tau = (T,\rho,\mft,\mfn)$  consists of a tree 
$T = (V,E)$ with finite vertex set $V$, edge set $E \subset V\times V$ 
and root $\rho \in V$, 
endowed with a type $\mft \colon E \to \Lab$ and label $\mfn \colon V \cup E \to \N^{d+1}$.
We also write $\mfe \colon E \to \Lab \times \N^{d+1}$ for the map $\mfe = (\mft, \mfn \restr E)$.
Note that the ``smallest'' possible tree, usually denoted by $\bone$, is given by $V = \{\rho\}$, $E = \emptyset$ 
and $\mfn(\rho) = 0$;
we denote by $\mbX^k$ with $k\in  \N^{d+1}$ the same tree but with  $\mfn(\rho) = k$.
For convenience, we consider edges as directed towards the root in the sense that we always
have $e = (e_-, e_+)$ with $e_+$ nearer to the root. 
Note that one can naturally 
extend the map $\mft$ to $V \setminus \{\rho\}$ by setting 
$\mft(v) = \mft(e)$ for the unique edge $e$ such that $e_- = v$.

An {\it isomorphism} between two labelled rooted trees is a bijection between their edge and
vertex sets that preserves their connective structure, their roots, and their labels $\mft$ and $\mfn$.
We then denote by $\mfT$ \label{mfT page ref} the set of isomorphism classes of rooted labelled trees with vertex sets
that are subsets of $\N$.\footnote{The choice of $\N$ here is of course irrelevant; any 
set of infinite cardinality would do. The only reason for this restriction is to make
sure that elements of $\mfT$ are sets.} 

Given $\ttau \in \mfT$, we assign to it a symmetric set $\symset = \scal{\ttau}$.
In particular, we fix $A_\symset = \ttau$ and, for every
$\tau \in \ttau$, we set
$T_{\symset}^\tau = E_\tau$ (the set of edges of $\tau$), $\mft_\symset^\tau$ the type map of $\tau$,
and, for $\tau_1,\tau_2 \in \ttau$, we let $\Gamma_\symset^{\tau_1,\tau_2}$
be the set of all elements of $\Iso(T_{\symset}^{\tau_2},T_{\symset}^{\tau_1})$
obtained from taking a tree isomorphism from $\tau_{2}$ to $\tau_{1}$ and then
restricting this map to the set of edges $E_{\tau_2}$.
We also define the object $\scal{\mfT}$ in $\TStruc$
given by $\scal{\mfT} = \prod_{\ttau \in \mfT} \scal{\ttau}$.

\label{def:scal-tau}
Given an arbitrary labelled rooted tree $\tau$, we also write $\scal{\tau}$ for the symmetric set
with $A_{\scal{\tau}}$ a singleton, 
$T_{\scal{\tau}}$ and $\mft_{\scal{\tau}}$
as above, and $\Gamma$ the set of all automorphisms of $\tau$.
The following remark is crucial for our subsequent use of notations.
\begin{remark}\label{rem:canonical}
By definition, given any labelled rooted tree $\tau$, there exists exactly 
one $\ttau \in \mfT$ such that its 
elements are tree isomorphic to $\tau$ and exactly \textit{one} element of $\Homb(T_{\scal{\tau}}, \scal{\ttau})$ whose
representatives are tree isomorphisms, so we are in the setting of Remark~\ref{rem:canonicalIsomorphism}.
As a consequence, we can, for all intents and purposes, identify $\scal{\tau}$ with 
$\scal{\ttau}$.
As an example, in Section~\ref{sec:Products and coproducts} this observation allows us to define various morphisms on $\langle \mfT \rangle$ and $\langle \mfT \rangle \otimes \langle \mfT \rangle$ by defining operations at the level of trees $\tau \in \ttau \in \mfT$ with fully specified vertex and edge sets rather than working with the isomorphism class $\ttau$.
\end{remark}
\begin{remark}\label{rem:echo-moti5-2}
In the example in Section~\ref{sec:motivation},
the two labelled rooted trees $\tau=\<IXi^2asym>$ and $ \bar\tau=\<IXi^2asym2> $ (both with trivial labels $\mfn=0$ say)
are isomorphic. The canonical isomorphism $\Phi \in \Hom(\scal{\tau},\scal{\bar\tau})$ is then simply the map matching the same types.
The map $\Func_V(\Phi)\colon  V^{\otimes \scal{\tau}}\to  V^{\otimes \scal{\bar\tau}}$  is then a canonical isomorphism \dash this is where the 
middle identity in the motivation \eqref{e:postulate-comm} is encoded.
\end{remark} 
A labelled rooted forest $ f = (F,\mathfrak{P},\mft,\mfn)$ is defined as consisting of a 
finite forest\footnote{Recall that a forest is a graph without cycles, so that every connected component is a tree.} 
$F = (V,E)$, where again $V$ is the set of vertices, $E$ the set of edges, each connected component $T$ of 
$F$ has a unique distinguished root $\rho $ with $\mathfrak{P} \subset V$ the set of all these roots, and $\mft$ and $\mfn$ are both as 
before. Note that we allow for the empty forest, that is the case where $V = E = \emptyset$, and that any finite 
labelled rooted tree $(T,\rho,\mft,\mfn)$ is also a forest (where $\mathfrak{P} = \{\rho\}$). 

Two labelled rooted forests are considered isomorphic if there is a bijection between their edge and
vertex sets that preserves their connective structure, their roots, and their labels $\mft$ and $\mfn$ \dash note that we allow automorphisms of labelled rooted forests to swap connected components of the forest. 
Given a labelled rooted forest $f$, we then write $\scal{f}$ for the corresponding symmetric set 
constructed similarly to above, now with tree automorphisms replaced by forest automorphisms. 

We denote by $\mfF$\label{mfF page ref} the set of isomorphism classes of rooted labelled forests 
with vertex sets in $\N$,
which can naturally be viewed as the unital commutative monoid 
generated by $\mfT$ with unit given by the empty forest. 
In the same way as above, we assign to an element $\ff\in\mfF$
a symmetric set $\scal{\ff}$
and we write $\scal{\mfF} = \prod_{\ff \in \mfF} \scal{\ff}\in \ob(\TStruc)$.

Before continuing our discussion we take a moment to describe where we are going. 
In many previous works on regularity structures, in particular in \cite{BHZ19}, the vector space 
underlying a regularity structure is given by $\Vec(\mfT(R))$ for a subset $\mfT(R) \subset \mfT$ 
determined by some rule $R$. 
The construction and action of the structure and renormalisation groups was then described by 
using combinatorial operations on elements of $\mfT$ and $\mfF$.

Here our point of view is different.
Our concrete regularity structure will be obtained by applying the functor $\Func_{V}$ to $\scal{\mfT(R)}$, an object in $\TStruc$. 
We will refer to $\scal{\mfT(R)}$ as an ``abstract'' regularity structure. 
In particular, trees $\ttau \in \mfT(R)$ will not be interpreted as basis vectors for our regularity structure 
anymore, but instead serve as an indexing set for subspaces canonically isomorphic to $\Func_{V}(\scal{\ttau})$. 
In the case when $V_\mft \simeq \R$ for all $\mft \in \Lab$, this is of course equivalent,
but in general it is not.
Operations like integration, tree products, forest products, and co-products on the regularity structures defined in \cite{BHZ19} were previously given in terms of operations on $\mfT$ and \slash or $\mfF$. 
In order to push these operations to our concrete regularity structure, we will in the next section describe how to interpret them as morphisms between the corresponding typed structures, which then 
allows us to push them through to ``concrete'' regularity structures using $\Func_{V}$. 
\begin{remark}\label{rem:trees_remain_trees}
Although the definition of $\mfT_{\Lab}$ depends on the choice of $\Lab$, this definition is compatible with $\proj^{\ast}$ in the following sense.
Given $\ttau  \in \mfT_{\Lab}$, if $L_{\scal{\ttau}}$ is defined as in the definition immediately below \eqref{e:defsim}, then $L_{\scal{\ttau}}$ can be identified with a subset of $\mfT_{\bar{\Lab}}$. 
In particular, we overload notation and define a map $\proj\colon \mfT_{\Lab} \rightarrow \CP (\mfT_{\bar{\Lab}})\setminus \{\emptyset\}$ by setting $\proj (\ttau) = L_{\scal{ \ttau}}$ so $\proj^{\ast} \langle \ttau \rangle \simeq \bigoplus_{\bar{\ttau} \in \proj(\ttau)} \scal{ \bar{\ttau}}$. 
It is also easy to see that $\{ \proj(\ttau): \ttau \in \mfT_{\Lab}\}$ is a partition of $\mfT_{\bar{\Lab}}$ so that
\[
\proj^{\ast}
\scal{\mfT_{\Lab}}
\simeq
\scal{\mfT_{\bar{\Lab}}}\;.
\]
Analogous statements hold for the sets of forests $\mfF_{\Lab}$ and $\mfF_{\bar{\Lab}}$. 
\end{remark}
\subsubsection{Integration and products}
\label{sec:Products and coproducts}
We start by recalling the tree product. 
Given two rooted labelled trees $\tau = (T,\rho,\mft,\mfn)$ and $\bar{\tau} = (\bar{T},\bar{\rho},\bar{\mft},\bar{\mfn})$ the tree product of $\tau$ and $\bar{\tau}$, which we denote $\tau  \bar{\tau}$, is a rooted labelled tree defined as follows. 
Writing $\tau \bar{\tau} = (\hat{T},\hat{\rho},\hat{\mft},\hat{\mfn})$, one sets $\hat{T} \eqdef (T \sqcup \bar{T})/\{\rho,\bar{\rho}\}$, namely $\hat{T}$ is the rooted tree obtained by taking the rooted trees $T$ and $\bar{T}$ and identifying the roots $\rho$ and $\bar{\rho}$ into a new root $\hat{\rho}$. 
Writing $T = (V,E)$, $\bar{T} = (\bar{V},\bar{E})$, and $\hat{T} = (\hat{V},\hat{E})$, we have a canonical identification of $\hat{E}$ with $E \sqcup \bar{E}$ and $\hat{V} \setminus \{\hat{\rho}\}$ with $(V \sqcup \bar{V}) \setminus \{\rho,\bar{\rho}\}$. 
With these identifications in mind, $\hat{\mft}$ is obtained from the concatenation of $\mft$ and $\bar{\mft}$. 
We also set
\[
\hat{\mfn}(a) \eqdef
\begin{cases}
\mfn(a) 
& 
\textnormal{ if }a \in E \sqcup (V \setminus \{\rho\}),\\
\bar{\mfn}(a) 
& 
\textnormal{ if }a \in \bar{E} \sqcup (\bar{V} \setminus \{\bar{\rho}\}),\\
\mfn(\rho) + \bar{\mfn}(\bar{\rho})
&
\textnormal{ if }a = \hat{\rho}\;.
\end{cases}
\]
We remark that the tree product is well-defined and commutative at the level of isomorphism classes.  

In order to push this tree product through our functor, we want to encode it as a morphism
$\CM \in \Hom(\scal{\mfT} \otimes \scal{\mfT}, \scal{\mfT})$. It is of course sufficient for this to 
define elements $\CM \in \Hom(\scal{\ttau} \otimes \scal{\bar \ttau}, \scal{\ttau \bar \ttau})$
for any $\ttau, \bar \ttau \in \mfT$, which in turn is given by
\begin{equ}[e:idenIso]
\scal{\ttau} \otimes \scal{\bar \ttau} \simeq \scal{\tau} \otimes \scal{\bar \tau} \to \scal{\tau \bar \tau} \simeq \scal{\ttau \bar \ttau}\;,
\end{equ}
where the two canonical isomorphisms are the ones given by Remark~\ref{rem:canonicalIsomorphism}
and the morphism in $\Hom\big (\langle \tau \rangle \otimes \langle \bar{\tau} \rangle, \langle \tau \bar{\tau} \rangle)$ is obtained
as follows.
Note that the same typed set $(\hat{E},\hat{\mft})$ underlies both the symmetric sets $\langle \tau \rangle \otimes \langle \bar{\tau} \rangle$ and $\langle \tau \bar{\tau} \rangle$ and that $\Gamma_{\langle \tau \rangle \otimes \langle \bar{\tau} \rangle}$ is a subgroup (possibly proper) of $\Gamma_{\langle \tau \bar{\tau} \rangle}$. 
Therefore, the only natural element of $\Hom(\scal{\tau} \otimes \scal{\bar \tau}, \scal{\tau \bar \tau})$ is the 
equivalence class of the identity in 
$\Homb(E \sqcup \bar E,\langle \tau \bar{\tau} \rangle)$ (in the notation of Remark~\ref{rem:singleton}).
It is straightforward to verify that $\CM$ constructed in this way is independent 
of the choices $\tau \in \ttau$ and $\bar \tau \in \bar \ttau$.

The ``neutral element'' $\eta \in \Hom(I,\scal{\mfT})$ for $\CM$,
where $I$ denotes the unit object 
in $\TStruc$ (corresponding to the empty symmetric set), is given by 
the canonical isomorphism $I \to \scal{\bone}$
with $\bone$ denoting the 
tree with a unique vertex and $\mfn = 0$ as before,
composed with the canonical inclusion $\scal{\bone} \to \scal{\mfT}$. One does indeed have
$\CM\circ (\eta \otimes \id) = \CM\circ (\id \otimes \eta) = \id$, with equalities 
holding modulo the identifications $\scal{\mfT} \simeq \scal{\mfT} \otimes I \simeq I\otimes \scal{\mfT}$.
Associativity holds in a similar way, namely $\CM\circ(\id\otimes\CM) = \CM\circ(\CM\otimes \id)$
as elements of $\Hom(\scal{\mfT}\otimes\scal{\mfT}\otimes\scal{\mfT},\scal{\mfT})$.
\begin{remark}
Another important remark is that the construction of the product $\CM$ respects the 
functors $\proj^*$ in the same way as the construction of $\scal{\mfT}$ does.
\end{remark}
As mentioned above, $\mfF$ is viewed as the free unital commutative monoid 
generated by $\mfT$ with unit given by the empty forest (which we denote by $\emptyset$). 
We can interpret this product in the following way. 
Given two rooted labelled forests $f = (F,\mathfrak{P},\mft,\mfn)$ and $\bar{f} = (\bar{F},\bar{\mathfrak{P}}, \bar{\mft}, \bar{\mfn})$ we define the forest product  $f \cdot \bar{f} = (\hat{F}, \hat{\mathfrak{P}},\hat{\mft},\hat{\mfn})$ by $\hat{F} = F \sqcup  \bar{F}$, $\hat{\mathfrak{P}} = \mathfrak{P} \sqcup \bar{\mathfrak{P}}$, $\hat{\mft} = \mft \sqcup \bar{\mft}$, and $\hat{\mfn} = \mfn \sqcup \bar{\mfn}$. 
Again, it is easy to see that this product is well-defined and commutative at the level 
of isomorphism classes. 
As before, writing $\hat{F} = (\hat{V},\hat{E})$ and noting that
the same typed set $(\hat{E},\hat{\mft}) = (E \sqcup \bar E, \mft\sqcup \bar \mft)$ 
underlies both symmetric sets
$\langle f \rangle \otimes \langle \bar{f} \rangle$ and $\langle f \cdot \bar{f} \rangle$ and that the symmetry group of the former is a subgroup of that of the latter,
there is a natural morphism $\Hom(\langle f \rangle \otimes \langle \bar{f} \rangle,\langle f \cdot \bar{f} \rangle)$ given by the 
equivalence class of the identity.
As before, this yields a product morphism in $\Hom(\scal{\mfF} \otimes \scal{\mfF}, \scal{\mfF})$, this time with 
the canonical isomorphism between $I$ and $\scal{\emptyset}$ (with $\emptyset$ the empty forest) playing the role of the neutral element.

We now turn to integration.
Given any $\mfl \in \mfL$ and $\tau = (T,\rho,\mft,\mfn) \in \mfT$ we define a new rooted labelled tree $\mcb{I}_{(\mfl,0)}(\tau) = (\bar{T},\bar{\rho},\bar{\mft},\bar{\mfn}) \in \mfT$ as follows. 
The tree $\bar{T} = (\bar{V},\bar{E})$ is obtained from $T = (V,E)$ by setting $\bar{V} \eqdef V \sqcup \{\bar{\rho}\}$ and $\bar{E} = E \sqcup \{\bar{e}\}$ where $\bar{e} = (\rho,\bar{\rho})$, that is one adds a new root vertex to the tree $T$ and connects it to the old root with an edge. 
We define $\bar{\mft}$ to be the extension of $\mft$ to $\bar{E}$ obtained by setting $\mft(\bar{e}) = \mfl$ and $\bar{\mfn}$ to be the extension of $\mfn$ obtained by setting $\bar{\mfn}(\bar{e}) = \bar{\mfn}(\bar{\rho}) = 0$. 
We encode this into a morphism $\mcb{I}_{(\mfl,0)} \in \Hom( \scal{\mfT} \otimes\scal{\mfl}, \scal{\mfT})$
where $\scal{\mfl}$ denotes the symmetric set with a single element $\bullet$ of type $\mfl$.
For this, it suffices to exhibit natural morphisms    
\begin{equ}[e:planting-morph]
\Hom\big(\scal{ \tau }\otimes\scal{\mfl} , \scal{ \mcb{I}_{(\mfl,0)}(\tau)}\big)\;,
\end{equ}
which are given by the equivalence class of $\iota\colon  E \sqcup\{\bullet\} \rightarrow \bar{E}$ 
in $\Homb( E \sqcup\{\bullet\}, \scal{ \mcb{I}_{(\mfl,0)}(\tau)})$, where $\iota$
is the identity on $E$ and  $\iota(\bullet) = \bar{e}$. It is immediate that this 
respects the automorphisms of $\tau$ and therefore defines indeed an element of $\Hom\big(\scal{ \tau }\otimes\scal{\mfl} , \scal{ \mcb{I}_{(\mfl,0)}(\tau)}\big)$.
The construction above also gives us corresponding morphisms $\mcb{I}_{(\mfl,p)} \in \Hom(\scal{\mfT}\otimes\scal{\mfl}, \scal{\mfT})$, for any $p \in \N^{d+1}$, if we exploit the canonical isomorphism $\scal{ \mcb{I}_{(\mfl,0)}(\tau)} \simeq \scal{\mcb{I}_{(\mfl,p)}(\tau)}$ where $\mcb{I}_{(\mfl,p)}(\tau)$ is constructed just as $\mcb{I}_{(\mfl,0)}(\tau)$, the only difference being that one sets $\bar{\mfn}(\bar{e}) = p$. 
\subsubsection{Coproducts}\label{subsec:coprod}
In order to build a regularity structure, we will also need analogues of the maps $\Deltap$ and $\Deltam$
as defined in \cite{BHZ19}.
The following construction will
be very useful: given $\tau \in \ttau \in \mfT$ and $f \in \ff \in \mfF$, we write $f \subforest \tau$ for 
the specification of an injective map $\iota \colon T_f \to T_{\tau}$ which preserves connectivity,
orientation, and type (but roots of $f$ may be mapped to arbitrary vertices of $\tau$).
We also impose that $\mfn_{f}(e) = \mfn_\tau(\iota e)$ for every edge $e \in E_{f}$ and that polynomial 
vertex labels are increased by $\iota$ in the sense that $\mfn_f(x) \le \mfn_\tau(\iota x)$
for all $x \in V_f$. 
Given $f \hookrightarrow \tau$, we also write $\d E_f \subset E_\tau \setminus \iota(E_f)$ for the set of 
edges $e$ ``incident to $f$'' in the sense that $e_+ \in \iota(V_f)$. 
We consider inclusions $\iota \colon f \hookrightarrow \tau$ and 
$\bar \iota \colon \bar f \hookrightarrow \tau$ to be ``the same'' if there exists a forest isomorphism
$\phi \colon f \to \bar f$ such that $\iota = \bar \iota \circ \phi$.
(We do however consider them as distinct if they differ by a tree isomorphism of the target $\tau$!)

Given a label $\mfe \colon \d E_f \to \N^{d+1}$,
we write $\pi \mfe \colon V_f \to \N^{d+1}$ for the map given by 
$\pi \mfe(x) = \sum_{e_+ = \iota x} \mfe(e)$
and we write $f_\mfe$ for the forest $f$, but with $\mfn_f$ replaced by $\mfn_f + \pi \mfe$.
We then write $\tau / f_\mfe \in \mfT$ for the tree constructed as follows.
Its vertex set is given by $V_{\tau} / {\sim_f}$, where ${\sim_f}$ is the equivalence relation
given by $x \sim_f y$ if and only if $x,y \in \iota(V_f)$ and $\iota^{-1}x$ and $\iota^{-1}y$
belong to the same connected component of $f$.
The edge set of $\tau / f_\mfe$ is given by $E_{\tau} \setminus \iota(E_f)$, and types  and the root 
are inherited from $\tau$. Its edge label is given by $e \mapsto \mfn_\tau(e) + \mfe(e)$.
Noting that vertices of $\tau / f_\mfe$ are subsets of $V_\tau$,
its vertex label is given by $x \mapsto \sum_{y \in x} \big(\mfn_\tau(y) - \mfn_f(\iota^{-1} y)\big)$
with the convention that $\mfn_f$ is extended additively to subsets.
(This is positive by our definition of ``inclusion''.) 

This construction then naturally defines an `extraction \slash contraction' operation
$(f \hookrightarrow \tau)_\mfe \in \Homb(E_\tau, \scal{f_\mfe} \otimes \scal{\tau / f_\mfe})$ similarly to above.
(Using $\iota$, the edge set of $\tau$ is canonically identified with the disjoint union 
of the edge set of $f_\mfe$ with that of $\tau / f_\mfe$.) Note that this is well-defined in the sense
that two identical (in the sense specified above) inclusions yield identical (in the sense of
canonically isomorphic) elements 
of $\Homb(E_\tau, \scal{f_\mfe} \otimes \scal{\tau / f_\mfe})$.
We also define $\bar f / f_\mfe$ and $(f \hookrightarrow \bar f)_\mfe$ for a forest $\bar f$ in the analogous way.

We also define a ``cutting'' operation in a very similar way. Given two trees $\tau$ and $\bar \tau$, 
we write $\bar \tau \subroot \tau$ if $\bar \tau \subforest \tau$ (viewing $\bar \tau$ as a forest with a single tree) 
and the injection $\iota$ furthermore
maps the root of $\bar \tau$ onto that of $\tau$.
With this definition at hand, we define ``extraction'' and ``cutting'' operators
\begin{equs}
\Deltaex[\tau] &\in \Vec \big( \Homb(E_\tau,\scal{\mfF} \otimes \scal{\mfT}) \big) \;,&\qquad
\Deltaex[\tau] &= \sum_{f \hookrightarrow \tau} \sum_{\mfe}{\frac{1}{\mfe!}}\binom{\tau}{f} (f \hookrightarrow \tau)_\mfe\;, \\
\Deltacut[\tau] &\in \Vec \big( \Homb(E_\tau,\scal{\mfT} \otimes \scal{\mfT})
\big) \;,&\qquad
\Deltacut[\tau] &= \sum_{\bar\tau \subroot \tau} \sum_{\mfe}{\frac{1}{\mfe!}}\binom{\tau}{\bar \tau} (\bar \tau \subroot \tau)_\mfe\;.
\end{equs}
Here, the inner sum runs over $\mfe \colon \d E_f \to \N^d$ ($\mfe \colon \d E_{\bar \tau} \to \N^d$ in the second case) and 
the binomial coefficient $\binom{\tau}{f}$ is defined as
\begin{equ}
\binom{\tau}{f} = \prod_{x \in V_f} \binom{\mfn_\tau(\iota x)}{\mfn_f(x)}\;.
\end{equ}
We also view $\Homb(E_\tau, \scal{f_\mfe} \otimes \scal{\tau / f_\mfe})$
as a subset of $\Homb(E_\tau,\scal{\mfF} \otimes \scal{\mfT})$ via the canonical
maps $\scal{\tau} \simeq \scal{\ttau} \hookrightarrow \scal{\mfT}$ and similarly for $\scal{\mfF}$.
\begin{lemma}
One has $\Deltaex[\tau] \in \Hom(\scal{\tau},\scal{\mfF} \otimes \scal{\mfT})$
as well as $\Deltacut[\tau] \in \Hom(\scal{\tau},\scal{\mfT} \otimes \scal{\mfF})$.
\end{lemma}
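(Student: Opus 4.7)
By Remark~\ref{rem:singleton}, since $A_{\scal{\tau}}$ is a singleton with symmetry group $\Gamma_{\scal{\tau}} = \Aut(\tau)$ acting on $T_{\scal{\tau}} = E_\tau$, the two claims reduce to showing that $\Deltaex[\tau]$ and $\Deltacut[\tau]$, viewed as elements of $\Vec\bigl(\Homb(E_\tau, \scal{\mfF}\otimes\scal{\mfT})\bigr)$ and $\Vec\bigl(\Homb(E_\tau, \scal{\mfT}\otimes\scal{\mfT})\bigr)$ respectively, are invariant under right-composition by every $\gamma \in \Gamma_{\scal{\tau}}$. I will carry out the argument for $\Deltaex[\tau]$ in detail; the case of $\Deltacut[\tau]$ will follow by the same bookkeeping, with the added observation that any $\gamma \in \Aut(\tau)$ fixes the root, hence preserves the condition $\bar\tau \subroot \tau$.

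\emph{Step 1: set up the action of $\Aut(\tau)$ on the indexing set.} Each summand of $\Deltaex[\tau]$ is determined by a pair $\bigl((f\subforest\tau),\mfe\bigr)$ consisting of an equivalence class of inclusions and a map $\mfe\colon \d E_f \to \N^{d+1}$. Given $\gamma \in \Aut(\tau)$, precomposing a representative injection $\iota\colon V_f \to V_\tau$ with $\gamma^{-1}$ produces a new injection $\gamma^{-1}\circ\iota$ which still defines an inclusion of the \emph{same} forest $f$, and the bijection $\gamma^{-1}\colon E_\tau\to E_\tau$ identifies $\d E_f$ (for $\iota$) with $\d E_f$ (for $\gamma^{-1}\circ\iota$); the pullback $\gamma^*\mfe \eqdef \mfe\circ\gamma$ then defines the corresponding label. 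This construction is well-defined on equivalence classes (composing on the source side with a forest automorphism of $f$ is unaffected by the left action of $\gamma$) and yields a bijection on the set of pairs.

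\emph{Step 2: identify right-composition with the action.} I would then unpack the equivalence class $(f\subforest\tau)_\mfe \in \Homb(E_\tau, \scal{f_\mfe}\otimes\scal{\tau/f_\mfe})$ at the level of a representative map $E_\tau \to E_f \sqcup E_{\tau/f_\mfe}$ induced by a choice of $\iota$. Right-composition by $\gamma$ on $E_\tau$ then literally equals the representative associated to $\bigl((f\subforest\tau)\text{ via }\gamma^{-1}\circ\iota,\;\gamma^*\mfe\bigr)$, because relabelling the edges of $\tau$ by $\gamma$ is exactly equivalent to relabelling the inclusion by $\gamma^{-1}$. Consequently $(f\subforest\tau)_\mfe \circ \gamma$ coincides in $\Homb$ with the summand indexed by the image pair from Step 1.

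\emph{Step 3: check that the coefficients are preserved.} The binomial coefficient $\binom{\tau}{f} = \prod_{x\in V_f}\binom{\mfn_\tau(\iota x)}{\mfn_f(x)}$ is unchanged when $\iota$ is replaced by $\gamma^{-1}\circ\iota$ because $\mfn_\tau\circ\gamma^{-1} = \mfn_\tau$ (automorphisms preserve labels); the factorial $\mfe!$ is invariant under the bijection $\gamma^*$ of $\d E_f$. Combining the bijectivity of the action (Step 1), the identification in Step 2, and this invariance, every summand of $\Deltaex[\tau]\circ\gamma$ is a summand of $\Deltaex[\tau]$ with the same coefficient, and vice versa, so $\Deltaex[\tau]\circ\gamma = \Deltaex[\tau]$, as required. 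The argument for $\Deltacut[\tau]$ is the same, using that the bijection of Step~1 preserves the root-containment constraint $\bar\tau\subroot\tau$. The main obstacle is really only the bookkeeping in Step~2, namely cleanly identifying right-composition on $\Homb(E_\tau,-)$ by an automorphism of $\tau$ with the inverse left-action on inclusions, and tracking the associated pullback on the labels $\mfe$; once this identification is in place the invariance of coefficients is immediate.
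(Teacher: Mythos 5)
Your proof is correct and follows essentially the same approach as the paper: the key observation, appearing in your Step~2, is exactly the identity $(f\hookrightarrow\tau)_\mfe\circ\gamma = (f\gamma\hookrightarrow\tau)_{\mfe\gamma}$ with $f\gamma$ represented by $\gamma^{-1}\circ\iota$ and $\mfe\gamma = \mfe\circ\gamma$, which is precisely what the paper notes. Your Step~3 spells out the invariance of the coefficients $\binom{\tau}{f}$ and $\mfe!$, a point the paper leaves implicit but which is immediate for the reason you give.
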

\begin{proof}
Given any isomorphism $\phi$ of $\tau$, it suffices to note that, in $\Hom(\scal{\tau},\scal{\mfF} \otimes \scal{\mfT})$, we have the identity
\begin{equ}
(f \hookrightarrow \tau)_\mfe \circ \phi = (f\phi \hookrightarrow \tau)_{\mfe\phi}\;,
\end{equ}
where, if $f \hookrightarrow \tau$ is represented by $\iota$, then 
$f\phi \hookrightarrow \tau$ is represented by $\phi^{-1}\circ \iota$
and $\mfe\phi = \mfe \circ \phi$. It follows that $\Deltaex[\tau] \circ \phi = \Deltaex[\tau]$ as required.
The argument for $\Deltacut$ is virtually identical.
\end{proof}
It also follows from our construction that, given $\ttau \in \mfT$, 
$\Deltaex[\tau]$ and $\Deltacut[\tau]$ are independent of $\tau \in \ttau$, modulo canonical isomorphism as
in Remark~\ref{rem:canonicalIsomorphism} (see also \eqref{e:idenIso} above), so that
we can define $\Deltaex[\ttau] \in \Hom(\scal{\ttau},\scal{\mfF} \otimes \scal{\mfT})$ and similarly
for $\Deltacut$.
\subsection{Regularity structures generated by rules}\label{subsec: regstruct_gen_rules} 
We now show how regularity structures generated by rules as in \cite{BHZ19} can be recast
in this framework. 
This then allows us to easily formalise constructions of the type 
``attach a copy of $V$ to every noise / kernel'' as was done in a somewhat ad hoc fashion
in \cite[Sec.~3.1]{Mate2}. 

We will restrict ourselves to the setting of \emph{reduced} abstract regularity structures 
(as in the language of \cite[Section~6.4]{BHZ19}).
The extended label (as in \cite[Section~6.4]{BHZ19}) will not play an explicit role here 
but appears behind the scenes when we use the black box of \cite{BHZ19} to build a 
corresponding scalar reduced regularity structure, which is then identified,
via the natural transformation of Proposition~\ref{prop:nat_transform},
with the concrete regularity structure obtained by applying $\Func_{V}$ to our abstract regularity structure.  

We start by fixing a degree map $\deg \colon \Lab \to \R$ (where $\Lab$ was our previously fixed set of labels), 
a ``space'' dimension $d \in \N$, and a scaling\footnote{In \cite{Hairer14} scalings are integers, which was used in the proof of the reconstruction theorem using wavelets. More recent proofs \cite{FrizHairer} do not make use of this and allow for arbitrary scalings. Furthermore, the algebraic structure (which is the relevant aspect in this section) is impervious to
the scaling.} $\s \in [1,\infty)^{d+1}$.
Multiindices 
$k \in \N^{d+1}$ are given a scaled degree $|k|_\s = \sum_{i=0}^d k_i\s_i$.
(We use the convention that the $0$-component denotes the time direction.)

We then define, as in \cite[Eq.~(5.5)]{BHZ19}, the sets $\CE$ of edge labels and
$\CN$ of node types by
\begin{equ}
\CE = \Lab \times \N^{d+1}\;,\qquad \CN = \hat \CP(\CE)\;,
\end{equ} 
where $\hat \CP(A)$ denotes the set of all multisets with elements from $A$.
With this notation, we fix a ``rule'' $R \colon \Lab \to \CP(\CN) \setminus \{\emptyset\}$.\label{rule page ref}
We will only consider rules that are 
subcritical and complete in the sense of \cite[Def.~5.22]{BHZ19}.

Given $\tau \in \ttau\in\mfT$
with underlying tree $T = (V,E)$, every vertex $v \in V$ is naturally associated with a node type 
$\CN(v)\eqdef \big(o(e)\,:\, e_+ = v\big) \in \CN$, where we set $o(e) = (\mft(e), \mfn(e)) \in \CE$.
The degree of $\tau$ is given by 
\begin{equ}
\deg\tau = \sum_{v \in V} |\mfn(v)|_\s + \sum_{e \in E} \bigl(\deg \mft(e) - |\mfn(e)|_\s\bigr)\;.
\end{equ}
We say that $\tau$ \emph{strongly conforms} to $R$ if
\begin{enumerate}[label=(\roman*)]
\item \label{pt:above_root} for every $v \in V \setminus \{\rho\}$, one has
$\CN(v) \in R(\mft(v))$, and
\item \label{pt:root} there exists $\mft \in \Lab$ such that $\CN(\rho) \in R(\mft)$.
\end{enumerate}
We say that $\tau$ is \emph{planted} if $\sharp \CN(\rho) = 1$ and $\mfn(\rho) = 0$ and \emph{unplanted} otherwise.

The map $\deg$, the above properties,
and the label $\mfn(\rho)\in\N^{d+1}$ depend only on the isomorphism class $\ttau\ni\tau$,
and we shall use the same terminology for $\ttau$.
We write $\mfT(R) \subset \mfT$\label{mfT(R) page ref} for the set
of $\ttau$ that strongly conform to $R$.
We further write $\mfT_\star(R)\subset\mfT$ for the set of planted trees 
satisfying condition~\ref{pt:above_root}.

We now introduce the algebras of trees / forests that are used for negative and positive renormalisation.
We write $\mfF(R) \subset \mfF$ for the unital monoid generated (for the forest product) by $\mfT(R)$, $\mfF_-(R) \subset \mfF(R)$ for 
the unital monoid generated by
\begin{equ}[e:def-mfT-]
\mfT_-(R) \eqdef 
\{\ttau \in \mfT(R)\,:\, \deg \ttau < 0,\ \mfn(\rho) = 0,\ \ttau \textnormal{ unplanted}\}\;,
\end{equ}
and $\mfT_+(R)\subset \mfT$ 
for the unital monoid generated (for the tree product) by 
\begin{equ}
\{\mbX^k\,:\, k \in \N^d\} \cup \{\ttau \in \mfT_\star(R)\,:\, \deg \ttau > 0\}\;.
\end{equ}
We are now ready to construct our abstract regularity structure in the
category $\TStruc$ that was defined in Definition~\ref{def:typed_struct}.
We define $\ST, \STp, \STm, \SF, \SFm \in \ob(\TStruc)$ by  \label{STSF page ref}
\begin{equs}
\ST &\eqdef \scal{\mfT(R)} \;,
\quad \STp \eqdef \scal{\mfT_+(R)}\;,
\quad \STm \eqdef \scal{\mfT_-(R)}\;,
\\
\quad \SF&\eqdef \scal{\mfF(R)}\;,
\quad \SFm\eqdef \scal{\mfF_-(R)}\;.
\end{equs}
%
As mentioned earlier, we think of $\ST$ as an ``abstract'' regularity structure with ``characters on $\STp$'' forming its structure group
and ``characters on $\SFm$'' forming its renormalisation group.
Write
\begin{equ}
\pi_+ \in \Hom(\ST,\STp)\;,\qquad
\pi_- \in \Hom(\SF,\SFm)\;,
\end{equ}
for the natural projections. 
These allow us to define 
\begin{equs}
\Hom(\ST,\ST \otimes \STp) \ni \Deltap
&\eqdef \sum_{\ttau \in \mfT(R)} (\id \otimes \pi_+)\Deltacut[\ttau]\;,\\
\Hom(\ST,\SFm \otimes \ST) \ni \Deltam
&\eqdef \sum_{\ttau \in \mfT(R)} (\pi_- \otimes \id)\Deltaex[\ttau]\;.
\end{equs}
%
%

\begin{remark}
Here and below, it is not difficult to see that these expressions do indeed define morphisms of $\TStruc$.
Regarding $\Deltap$ for example, it suffices to note that, given any $\ttau \in \mfT(R)$ and $\ttau_+ \in \mfT_+(R)$,
there exist only finitely many pairs of trees $\ttau^{(2)} \subroot \ttau^{(1)}$ in $\mfT(R)$ 
and edge labels $\mfe$ in Section~\ref{subsec:coprod},
such that $\ttau_+ = \ttau^{(1)} / \ttau^{(2)}_\mfe$ and $\ttau = \ttau^{(2)}_\mfe$.
\end{remark}
In an analogous way, we also define
\begin{equ}
\Deltap_s \in \Hom(\STp,\STp\otimes \STp)\;,\qquad
\Deltam_s \in \Hom(\SFm,\SFm \otimes \SFm)\;,
\end{equ}
by
\begin{equ}
\Deltap_s
\eqdef \sum_{\ttau \in \mfT_+(R)}(\pi_+ \otimes \pi_+) \Deltacut[\ttau]\;,\qquad
\Deltam_s
\eqdef \sum_{\ff \in \mfF_-(R)}(\pi_- \otimes \pi_-) \Deltaex[\ff]\;.
\end{equ}
As in \cite{BHZ19}, one has the identities
\begin{equ}
\bigl(\Deltam_s \otimes \id\bigr)\circ \Deltam
= \bigl(\id \otimes \Deltam\bigr) \circ \Deltam\;,\qquad 
\bigl(\Deltam_s \otimes \id\bigr)\circ \Deltam
= \bigl(\id \otimes \Deltam\bigr) \circ \Deltam\;,
\end{equ}
as well as the coassociativity property for $\Deltam_s$, multiplicativity of $\Deltap$ and $\Deltap_s$
with respect to the tree product, and multiplicativity of $\Deltam$ and $\Deltam_s$
with respect to the forest product. 
\subsection{Concrete regularity structure}
\label{sec:concrete}
Recall that the label set $\Lab$ splits as $\Lab = \Lab_+ \cup \Lab_-$, where $\Lab_-$ indexes the set of
``noises'' while $\Lab_+$ indexes the set of kernels, which in the setting of \cite{BCCH21} equivalently
indexes the components of the class of SPDEs under consideration. 

It is then natural to introduce another space assignment called a target space assignment $(W_{\mft})_{\mft \in \mfL}$ where, for each $\mft \in \Lab$,
the vector space $W_\mft$ is the target space for the corresponding noise or component of the solution. 
As we already saw in the discussion at the start of Section~\ref{sec:motivation}, given a 
noise taking values in some space $W_\mft$ for some $\mft\in\mfL_-$, it is natural to
assign to it a subspace of the regularity structure that is isomorphic to the (algebraic) dual space $W_\mft^*$.

Similarly, there are circumstances in which our integration kernels will not be maps from our space-time into $\R$ but instead be vector valued.
Therefore it is natural to introduce a kernel space assignment $\mathcal{K} = (\mathcal{K}_{\mft})_{\mft \in \Lab_{+}}$, so that we can accommodate integral equations of the form
\begin{equ}\label{e:KSPDE}
 A_\mft = K_{\mft} \ast F_{\mft}(\mbA, \bxi)\;, \quad \mft\in\mfL_+\;,
\end{equ}
where $K_{\mft}$ takes values in $\mathcal{K}_{\mft}$ and the non-linearity $F_{\mft}(\mbA, \bxi)$ would take values in $\mathcal{K}_{\mft}^* \otimes W_{\mft}$ so that the right hand side takes values in $W_{\mft}$ as desired. 
Accordingly, edges that correspond to a type $\mft \in \Lab_{+}$ should be associated to $\mathcal{K}_{\mft}^*$. 
\begin{remark}\label{rem:vect_kernel_example}
For an example where a formulation such as \eqref{e:KSPDE} may be useful, suppose that we are looking at a system of equations where, for some fixed $\mft \in \Lab_{+}$, the equation for the $\mft$ component is of the form
\begin{equ}\label{eq:vect_kernel_example}
(\partial_{t} - \Delta)A_{\mft} = \sum_{j=1}^{d} \partial_{j} G_{j}(\mathbf{A},\zeta)\;.
\end{equ}
Such an equation does fit into our framework by just choosing $F_{\mft}(\mathbf{A},\zeta)$ to be the right hand side of the above equation after one expands the derivatives.  

However, this choice may not be optimal in some situations. 
For instance, let us now further assume that we already know that $G_{j}(\mathbf{A},\zeta)$ do not generate any renormalisation. 
We would also expect that the non-linear expression $F_{\mft}(\mathbf{A},\zeta)$ obtained after differentiation would also not generate any renormalisation but proving this implication can be non-trivial since the application of Leibniz rule may generate many new singular terms. 

A more lightweight approach is to formulate things as in \eqref{e:KSPDE} where we set 
\begin{equs}
\mathcal{K}_{\mft} 
&= \R^{d}\;,
\quad
K_{\mft}(x) 
= \big(\nabla(\partial_{t} - \Delta)^{-1}\big)(x) \in \mathcal{K}_{\mft} \;,\\
{}&
\textnormal{and} \quad 
F_{\mft}(\mathbf{A},\zeta) 
= 
\big(G_{j}(\mathbf{A},\zeta)\big)_{j=1}^{d} 
\in \mathcal{K}_{\mft}^{\ast} \otimes W_{\mft}\;.
\end{equs}
With this choice, our framework will, without any extra effort, allow us to infer that we do not see any renormalisation in \eqref{eq:vect_kernel_example}.  
\end{remark}
When we want to refer to a set of labels $\Lab$ (which comes with a split $\Lab = \Lab_{+} \sqcup \Lab_{-}$) along with an associated target space assignment $W = (W_{\mft})_{\mft \in \Lab}$ and kernel space assignment $\mathcal{K} = (\mathcal{K}_{\mft})_{\mft \in \Lab_{+}}$ we will often write them together as a triple $(\Lab,W,\mathcal{K})$
 
Then, given $(\Lab,W,\mathcal{K})$, for fixing the space assignment $(V_{\mft})_{\mft \in \mfL}$ used in the category theoretic constructions earlier in this section, our discussion above motivates space assignments of the form
\begin{equ}[e:defV]
V_\mft \eqdef 
\left\{\begin{array}{cl}
	W_\mft^* & \text{for $\mft \in \Lab_-$,} \\
	\mathcal{K}_{\mft}^* & \text{for $\mft \in \Lab_+$.}
\end{array}\right.
\end{equ} 
Given a space assignment $V$ of the form \eqref{e:defV}, we then use the functor $\Func_V$ defined in Section~\ref{subsec:sym_tensor_prod} to define the vector spaces
$\CT,\CT_+,\CT_-,\CF,\CF_-$ by
\begin{equ}[e:defCTtau]
\CU \eqdef \Func_V(\SU) = \prod_{\ttau \in \mfU(R)} \CU[\ttau] \;,
\qquad
\CU[\ttau] \eqdef \Func_V(\scal{\ttau})=V^{\otimes \scal{\ttau}}
\end{equ}
where, respectively,
\begin{itemize}
\item $\CU$ is one of $\CT,\CT_+,\CT_-,\CF,\CF_-$, 
\item $\SU$ is one of $\ST,\STp,\STm,\SF,\SFm$, and
\item $\mfU$ is one of $\mfT,\mfT_+,\mfT_-,\mfF,\mfF_-$ ($\ttau$ in \eqref{e:defCTtau} can denote either a tree or a forest).
\end{itemize}
We also adopt a similar notation for linear maps $h \colon \CU \to X$ (for any vector space $X$) by writing $h[\ttau]$ for the restriction of $h$ to $\CU[\ttau]$ for any $\ttau \in \mfU(R)$.
Note that the forest product turns $\CF$ and $\CF_-$ into algebras, while the tree product turns $\CT_+$ into an algebra.
We call $\CT$ the concrete regularity structure built from $\ST$. 
The notion of sectors of $\CT$ is defined as before in \cite[Def.~2.5]{Hairer14}.
\begin{remark}\label{rem:proj extend to E} 
Given a label decomposition $\bar{\Lab}$ of $\Lab$  under $\proj$,  we will naturally ``extend'' $\proj$ to a map $\proj\colon\CE \rightarrow \CP(\bar{\CE})$, where $\bar{\CE} = \bar{\Lab} \times \N^{d+1}$, by setting, for $o = (\mft, p) \in  \CE$, $\proj(o) = \proj(\mft) \times \{p\}$.

If we have a splitting of labels $\Lab = \Lab_{+} \sqcup \Lab_{-}$ then we implicitly work with a corresponding splitting $\bar{\Lab} = \bar{\Lab}_{+} \sqcup \bar{\Lab}_{-}$ given by $\bar{\Lab}_{\pm} = \bigsqcup_{\mft \in \Lab_{\pm}} \proj(\mft)$. 

Additionally, given a rule $R$ with respect to the labelling set $\Lab$, we obtain a corresponding rule $\bar{R}$ with respect to $\bar{\Lab}$ 
 by setting, for each $\bar{\mft} \in \bar{\Lab}$, 
\[
\bar{R}(\bar{\mft}) = \Big\{ \bar N \in \hat \CP(\CE)\,:\, \exists N \in R(\mft) \quad\text{with}\quad \bar N \tto N \Big\}\;,
\]
where $\mft$ is the unique element of $\Lab$ with $\bar{\mft} \in \proj(\mft)$ and we say that $\bar N \tto N$ if there is a bijection from $\bar{N}$ to $N$ respecting\footnote{Respecting $\proj$ means that if $\bar{N} \ni \bar{o} \mapsto o \in N$ then $\bar{o} \in \proj(o)$ where $\proj\colon\CE \rightarrow \CP(\bar{\CE})$ as above.} $\proj$. 
If we are also given a notion of degree $\deg\colon \mfL \rightarrow \R$ then we also have an induced degree $\deg\colon \bar{\mfL} \rightarrow \R$ by setting, for $\bar{\mft}$ and $\mft$ as above, $\deg(\bar{\mft}) = \deg(\mft)$. 
It then follows that subcriticality or completeness hold for $\bar{R}$ if and only if they hold for $R$

Linking back to Remark~\ref{rem:trees_remain_trees}, we also mention that $\{\proj(\ttau): \ttau \in \mfT_{\mfL}(R)\}$ is a partition of $\mfT_{\bar{\Lab}}(\bar{R})$. 
\end{remark}
\begin{remark}\label{rem:type_space_decomposition}
We say $(\bar{\Lab},\bar{W},\bar{\mathcal{K}})$ is a decomposition of $(\Lab,W,\mathcal{K})$ under $\proj$ if $\bar{\Lab}$ is a decomposition of $\Lab$ under $\proj$ and  
\begin{equs}\label{eq:space-decompositions}
W_{\mft} &= 
\begin{cases}
\bigoplus_{\bar{\mft} \in \proj(\mft)} W_{\bar{\mft}} & \text{ for all }\mft \in \Lab_{-}\;,\\
\bar{W}_{\bar{\mft}} & \text{ for all }\mft \in \Lab_{+} \text{ and } \bar{\mft} \in \proj(\mft)\;,
\end{cases}\\
\mathcal{K}_{\mft} &= \bigoplus_{\bar{\mft} \in \proj(\mft)} \bar{\mathcal{K}}_{\bar{\mft}} \text{ for all }\mft \in \Lab_{+}\;.
\end{equs}
 
The regularity structure $\CT$ defined in \eqref{e:defCTtau} is then fixed, up to natural transformation, under noise decompositions of $\Lab$.
In this case, the space assignment $\bar{V} = (\bar{V}_{\mfl})_{\mfl \in \bar{\Lab}}$ built from $(\bar{\Lab},\bar{W},\bar{\mathcal{K}})$ using \eqref{e:defV} is a decomposition of $(V_{\mft})_{\mft \in \Lab}$ built from $(\Lab,W,\mathcal{K})$. 

Note that our decompositions do not ``split-up'' the spaces $W_{\mft}$ for $\mft \in \Lab_{+}$, that is our decompositions for kernel types will decompose the spaces where our kernels take values but not the spaces where our solutions take values. 
\end{remark}
\begin{remark}\label{rem:scalar_decompositions}
We call $(\Lab, W,\mathcal{K})$ \emph{scalar}  if, for every $\mft \in \Lab_{-}$, $\dim(W_{\mft}) = 1$ and, for every $\mft \in \Lab_{+}$, $\dim(\mathcal{K}_{\mft}) = 1$.

Our construction of regularity structures and renormalisation groups in this section will match the constructions in \cite{Hairer14,BHZ19} when we have a scalar $(\Lab, W,\mathcal{K})$ and so we will have all the machinery developed in \cite{Hairer14,CH16,BHZ19,BCCH21} available. 

We say $(\bar{\Lab},\bar{W},\bar{\mathcal{K}})$ is a \emph{scalar decomposition} of $(\Lab,W,\mathcal{K})$ under $\proj$ if it is a decomposition under $\proj$ as described in Remark~\ref{rem:type_space_decomposition} and $(\bar{W},\bar{\mathcal{K}})$ is  scalar. 

In this situation natural transformations given by Proposition~\ref{prop:nat_transform} allow us to identify the regularity structure and renormalisation group built from $(\Lab,W,\mathcal{K})$ with those built from $(\bar{\Lab},\bar{W},\bar{\mathcal{K}})$ . 
Thanks to this we can leverage the machinery of \cite{Hairer14,CH16,BHZ19,BCCH21} for the regularity structure and renormalisation group built from $(\Lab,W,\mathcal{K})$. 
\end{remark} 
\begin{remark}
One remaining difference between the setting of a scalar  and the setting of \cite{Hairer14,BHZ19} is that in \cite{Hairer14,BHZ19} one also enforces $\dim(W_{\mft})=1$ for $\mft \in \Lab_{+}$ \dash this constraint enforces solutions to also be scalar-valued. 
However, while a scalar noise assignment allows $\dim(W_{\mft}) > 1$ for $\mft \in \Lab_{+}$, our decision to enforce $V_{\mft} = \R$ in \eqref{e:defV} means that we require that the ``integration'' encoded by edges of type $\mft$ acts diagonally on $W_{\mft}$, i.e., it doesn't mix components. 
In particular, with this constraint the difference between working with vector-valued solutions versus the corresponding system of equations with scalar solutions is completely cosmetic \dash the underlying regularity structures are the same and the only difference is how one organises the space of modelled distributions. 
\end{remark}
\begin{remark}
While the convention \eqref{e:defV} is natural in our setting,
an example where it must be discarded is the setting of \cite{Mate2}.
In \cite{Mate2} combinatorial trees also index subspaces of the regularity structure which generically are not one-dimensional. 
To start translating \cite{Mate2} into our framework one would want to take
$\mfL_{+} = \{\mft_{+}\}$ and set $V_{\mft_{+}} = \mcb{B}$
for $\mcb{B}$ an appropriate space of distributions. 
However, since $\mcb{B}$ is infinite-dimensional in this case, the machinery we develop in the remainder of this section 
does not immediately extend to this context. See however \cite{AndrisKonstantin} for a trick allowing to 
circumvent this in some cases.
\end{remark}
At this point we make the following assumption. 
\begin{assumption}\label{assump:finite_dim} 
Our target space assignments $W$ and kernel space assignments $\mathcal{K}$ are always finite-dimensional space assignments (which means the corresponding $V$ given by \eqref{e:defV} are finite-dimensional). 	
\end{assumption} 
\begin{remark}
There are several ways in which we use Assumption~\ref{assump:finite_dim} in the rest of this section. 
One key fact is that for vector spaces $X$ and $Y$ one has $L(X,Y) \simeq X^{\ast} \otimes Y$ provided that either $X$ or $Y$ is finite-dimensional \dash this is especially important in the context of Remark~\ref{rem:identity_is_special}. 
Another convenience of working with finite-dimensional space assignments is that we are then allowed to assume the existence of a scalar noise decomposition which lets us leverage the machinery of \cite{Hairer14,CH16,BHZ19,BCCH21}. 
\end{remark}
\subsection{The renormalisation group}
\label{sec:renorm}
Our construction also provides us with a ``renormalisation group'' that remains fixed under noise decompositions.
Recalling the set of forests $\mfF_{-}(R)$ and the associated algebra $\CF_{-}$ introduced in Section~\ref{sec:concrete} (in particular Eq.~\eqref{e:defCTtau}), we note that the map $\Deltam_s$ introduced in Section~\ref{subsec: regstruct_gen_rules} \dash or rather
its image under the functor $\Func_V$ \dash turns $\CF_{-}$ into a bialgebra. 
Moreover, we can use the number of edges of each element in $\mfF_{-}(R)$ to grade $\CF_{-}$. 
Since $\CF_{-}$ is connected by \eqref{e:def-mfT-} (i.e.\ its subspace of degree $0$ is generated by the unit),
it admits an antipode $\mcA_{-}$ turning it into a commutative
Hopf algebra and we denote by $\mathcal{G}_{-}$ the associated group of characters. 
It is immediate that, up to natural isomorphisms, the Hopf algebra $\CF_{-}$
and character group $\mathcal{G}_{-}$ remain fixed under noise decompositions. 
Given $\ell \in \mathcal{G}_{-}$, we define a corresponding renormalisation operator $M_{\ell}$, which is a linear operator
\begin{equ}\label{eq:renorm_op}
M_\ell \colon \CT \to \CT\;,\qquad
M_{\ell} 
\eqdef 
(\ell \otimes \id_{\CT})\Deltam \;.
\end{equ}

\begin{remark}
Note that the action of $M_{\ell}$ would not in general be well-defined on the direct product $\Func_{V}(\mfT)$ but it is well-defined on $\CT$ thanks to the assumption of subcriticality.
\end{remark}
Also note that there is a canonical isomorphism 
\begin{equ}\label{eq:character_group_iso}
\CG_{-} \simeq \bigoplus_{\ttau \in\mfT_{-}(R)}
\CT[\ttau]^{\ast}\;.
\end{equ}
In particular, given $\ell \in \CG_{-}$ and $\ttau \in\mfT_{-}(R)$, we write $\ell[\ttau]$ for the component of $\ell$ in $\CT[\ttau]^{\ast}$ above. 
\subsubsection{Canonical lifts}
For the remainder of this section we impose the following assumption.
\begin{assumption}\label{assump_noises-terminal}
The rule $R$ satisfies $R(\mfl) = \{()\}$ for every $\mfl \in \Lab_{-}$.
\end{assumption}
A kernel assignment is a collection of kernels $K = (K_{\mft}: \mft \in \Lab_{+})$ where each $K_{\mft}$ is a smooth compactly supported function from $\R^{d+1} \setminus \{0\}$ to $\mathcal{K}_{\mft}$. 
A smooth noise assignment is a tuple $\zeta = (\zeta_{\mft}: \mft \in \Lab_{-})$ where each $\zeta_{\mft}$ is a smooth function from $\R^{d+1}$ to $W_{\mft}$. 

Note that the set of kernel (or smooth noise) assignments for $\Lab$ and $W$ can be identified with the set of kernel (or smooth noise) assignments for any $\bar{\Lab}$ and $\bar{W}$ obtained via noise decomposition of the label set $\Lab$ and $W$. 
For smooth noise assignments this identification is given by the correspondence
\[
\big( 
\zeta_{\mfl}: \mfl \in \mfL_{-} 
\big) 
\leftrightarrow 
\big(
\zeta_{\bar{\mfl}} = \PP_{\bar{\mfl}} \zeta_{\mfl}: 
\mfl \in \Lab_{-},\ \bar{\mfl} \in \proj(\mfl) 
\big)\;.
\]
If we are working with scalar noises then, upon fixing kernel and smooth noise assignments $K$ and $\zeta$, \cite{Hairer14} introduces a map $\PPi_{\can}$ which takes trees $\ttau \in \mfT(R)$ into $\mcC^{\infty}(\R^{d+1})$. This map gives a correspondence between combinatorial trees and the space-time functions/distributions they represent (without incorporating any negative or positive renormalisation),  and $\PPi_{\can}$ is extended linearly to $\CT$. 
In the general case with vector valued noise we can appeal to any scalar noise decomposition $\bar\Lab$ of $\Lab$ and $W$ to again obtain a linear map $\PPi_{\can}\colon\CT_{\bar\Lab} \rightarrow \mcC^{\infty}(\R^{d+1})$ \dash this map is of course  independent of the particular scalar noise decomposition we appealed to for its definition. 

In order to make combinatorial arguments which use the structure of the trees of our abstract regularity structure, it is convenient to have an explicit vectorial formula for $\PPi_{\can}$. 

Given $\ttau \in \mfT(R)$, we have, by Assumption~\ref{assump:finite_dim} and~\eqref{eq:dual_tensors_finite_dim}, 
\begin{equ}\label{eq:dual_of_tree_space}
(V^{\ast})^{\otimes \scal{\ttau}}
\simeq
 \CT[\ttau]^{\ast}\;.
 \end{equ}
Writing, for any $\ttau \in \mfT(R)$, $\PPi_{\can}[\ttau]$ for the restriction of $\PPi_{\can}$ to $\CT[\ttau]$, we will realise $\PPi_{\can}[\ttau]$ as an element 
\[
\PPi_{\can}[\ttau] \in 
\mcC^{\infty}(\R^{d+1},(V^{\ast})^{\otimes \scal{\ttau}})\;,
\] 
where we remind the reader that \eqref{eq:dual_of_tree_space} gives us 
\[
\mcC^{\infty}(\R^{d+1},(V^{\ast})^{\otimes \scal{\ttau}})
\simeq 
L(\CT[\ttau],\mcC^{\infty}(\R^{d+1}))\;.
\] 
The explicit vectorial formula for $\PPi_{\can}$ mentioned above is then given by
\begin{equs}\label{eq:bold_faced_pi}
\PPi_{\can}[\ttau](z)
=&
\int_{(\R^{d+1})^{N(\tau)}} 
\mrd x_{N(\tau)}
\delta(x_{\rho} - z)
\Big(
\prod_{v \in N(\tau)} x_{v}^{\mfn(v)}
\Big)\\
{}&
\enskip
\Big(
\bigotimes_{
e \in K(\tau)}
D^{\mfn(e)}K_{\mft(e)}(x_{e_{+}} - x_{e_{-}})
\Big)
\Big(
\bigotimes_{e \in L(\tau)}
D^{\mfn(e)}\zeta_{\mft(e)}(x_{e_{+}})
\Big)
\end{equs}
where we have taken an arbitrary $\tau\in\ttau$ and set
\begin{equs}\label{eq: kernels and nodes}
L(\tau) &\eqdef \{e \in  E_{\tau}: \mft(e) \in \Lab_{-}\}\;,\\
K(\tau) &\eqdef E_{\tau} \setminus L(\tau)
\textnormal{ and }
N(\tau) \eqdef \big\{
v \in V_{\tau}: v \not = e_{-} 
\textnormal{ for any }e \in L(\tau)
\big\}\;.
\end{equs}
Thanks to Assumption~\ref{assump_noises-terminal}, all the integration variables $x_{v} \in \R^{d+1}$ appearing on the right-hand side of \eqref{eq:bold_faced_pi} satisfy $v \in N(\tau)$. 
Moreover, while the right-hand side of \eqref{eq:bold_faced_pi} is written as an element of $\big( \bigotimes_{e \in L(\tau)} W_{\mft(e)}\big) \otimes \big( \bigotimes_{e \in K(\tau)} \mathcal{K}_{\mft(e)} \big)$, due to the symmetry of the integrand it can canonically be identified with an element of $(V^{\ast})^{\otimes \scal{\ttau}}$. 

%
\subsubsection{The BPHZ character} 
In the scalar noise setting, upon fixing a kernel assignment $K$ and a \emph{random}\footnote{With appropriate finite moment conditions.} smooth noise assignment $\zeta$,~\cite[Sec.~6.3]{BHZ19} introduces a multiplicative linear functional $\bar{\PPi}_{\can}$
(denoted by $g^-(\PPi_{\can})$ therein)
on $\CF_{-}$ obtained by setting, for $\ttau \in \mfT(R)$, $\bar{\PPi}_{\can}[\ttau] = \E(\PPi_{\can}[\ttau](0))$ and then extending multiplicatively and linearly to the algebra $\CF_{-}$. 
\cite{BHZ19} also introduces a corresponding BPHZ renormalisation character $\ell_{\BPHZ} \in \CG_{-}$ given by
\begin{equ}[e:defBPHZ]
\ell_{\BPHZ} 
=
\bar{\PPi}_{\can} \circ \tilde\mcA_-\;,
\end{equ}
where $\tilde\mcA_-$ is the \emph{negative twisted antipode}, an algebra homomorphism from $\CF_{-}$ to $\CF$
determined by enforcing the condition that   
\begin{equ}\label{eq:recursive_antipode}
\mathcal{M}(\tilde\mcA_- \otimes \Id) \Deltam = 0
\end{equ}
on the subspace of $\CT$ generated by $\mfT_{-}(R)$. Here, $\mathcal{M}$ denotes the (forest) multiplication map from $\CF \otimes \CT$ into $\CF$.
We remind the reader that condition \eqref{eq:recursive_antipode}, combined with multiplicativity of $\tilde\mcA_-$, gives a 
recursive method for computing $\tilde\mcA_- \ttau$ where the recursion is in $|E_{\tau}|$.

In the general vector-valued case we note that, analogously to \eqref{eq:recursive_antipode}, we can consider
for a morphism $\boldsymbol{\tilde\CA_-}\in\Hom(\SFm,\SF)$
the identity
\begin{equ}\label{eq:recursive_antipode_vec}
\mathcal{M}(\boldsymbol{\tilde\CA_-} \otimes \Id) \Deltam = 0
\end{equ}
as an identity in $\Hom(\STm,\SF)$. If we furthermore impose that $\boldsymbol{\tilde\CA_-}$ is multiplicative in the sense
that $\boldsymbol{\tilde\CA_-} \circ \CM = \CM \circ (\boldsymbol{\tilde\CA_-} \otimes \boldsymbol{\tilde\CA_-})$
as morphisms in $\Hom(\SFm\otimes\SFm,\SF)$
and
$\boldsymbol{\tilde\CA_-}[\emptyset] = \id_{\SF[\emptyset]}$,\footnote{Recall that $\emptyset$ denotes the empty forest and is the unit of the algebras $\CF$ and  $\CF_-$, while $\bone$ is the tree with a single vertex and $0$ is just the zero of a vector space, so these three notations are completely different.}
 then we can proceed again by induction on $|E_{\tau}|$ to uniquely determine
$\boldsymbol{\tilde\CA_-} \in \Hom(\SFm,\SF)$.

Analogously to~\eqref{eq:dual_of_tree_space}, one has $\CT[\ff]^{\ast} \simeq W^{\otimes \scal{L(\ff)}}$ where $\scal{L(\ff)}$ is the symmetric set obtained by restricting the forest symmetries of every $f\in\ff$ to the set  of leaves $L(f) = \bigcup_{\tau \in f} L(\tau)$,
where the union runs over all the trees $\tau$ in $f$. 
This shows that, if we set again
\begin{equ}
\bar{\PPi}_{\can}[\ttau] = \E\big(\PPi_{\can}[\ttau](0)\big)\;,
\end{equ}
with $\PPi_{\can}$ given in \eqref{eq:bold_faced_pi}, we can view $\bar{\PPi}_{\can}[\ttau]$ as an element of $\CT[\ttau]^{\ast}$,
and, extending its definition multiplicatively, as an element of $\CT[\ff]^{\ast}$.
Hence~\eqref{e:defBPHZ} yields again an element of $\CG_-$, provided that we set $\tilde \CA_- = \Func_V(\boldsymbol{\tilde\CA_-})$.
\begin{remark}
This construction is consistent with \cite{BHZ19} in the sense that if we consider $\ell_\BPHZ$ as in \cite{BHZ19} for 
any scalar noise decomposition $\proj$ of $\Lab$, then this agrees with the construction we just described, provided that 
the corresponding spaces are identified via the functor $\proj^*$. 
\end{remark}

\subsection{Non-linearities, coherence, and the  map \texorpdfstring{$\Upsilon$}{Upsilon}}\label{subsec:nonlinearities} 
In this subsection we will use type decompositions to show that the formula for the $\Upsilon$ map which appears in the description of the renormalisation of systems of scalar equations in \cite{BCCH21} has an analogue in our setting of vector-valued regularity structures. 
We again fix a triple $(\Lab,W,\mathcal{K})$ which determines a space assignment $(V_\mft)_{\mft\in\Lab}$ by \eqref{e:defV}.
\begin{remark}
Up to now we were consistently working with isomorphism classes $\ttau\in\mfT$.
For brevity, we will henceforth work with concrete trees $\tau\in\ttau$,
all considerations for which will depend
only on the symmetric set $\scal{\ttau}$, which, by Remark~\ref{rem:canonical},
we canonically identify with $\scal{\tau}$.
We will correspondingly abuse notation and write $\tau\in\mfT$.
\end{remark}
\begin{remark}\label{remark:labels-edges}
In what follows we will often identify $\Lab$ with a subset of $\CE=\Lab \times \N^{d+1}$ by associating $\mft \mapsto (\mft,0)$.
\end{remark}
We define $\mcb{A} \eqdef \prod_{o\in\CE} W_o$ \label{def:mcbA}
where the $\big( W_{(\mft,k)}: k \in \N^{d+1} \big)$ are distinct copies of the space $W_\mft$. 
One should think of $\mathbf{A} \in \mcb{A}$ as describing the jet of both the noise
and the solution to a system of PDEs of the form~\eqref{e:SPDE}.
We equip $\mcb{A}$ with the product topology.

Given any two topological vector spaces $U$ and $B$, we write $\smooth(U,B)$
for the space of all maps $F \colon U \to B$ with the property that, for every
element $\ell \in B^*$, there exists a continuous linear map $\bar\ell\colon U\to \R^n$
and a smooth function $F_{\ell,\bar\ell} \colon \R^n \to \R$ such that, for every $u \in U$, 
\begin{equ}\label{def:smooth_funcs}
\scal{\ell, F(u)} = F_{\ell,\bar\ell} \bigl(\bar \ell(u)
\bigr)\;.
\end{equ}
When our domain is $U=\mcb{A}$ we often just write $\smooth(B)$\label{PB page ref} instead of $\smooth(\mcb{A},B)$.
Note that when $B$ is finite-dimensional then for each $F \in \smooth(B)$, $F(\mathbf{A})$ is a smooth function of $(A_{o}: o \in \CE_{F})$ for some \emph{finite} subset $\CE_{F} \subset \CE$.
%
%
\begin{remark}\label{rem:noises_are_upgraded}
One difference in the point of view of the present article versus that of \cite{BCCH21} is that here we will 
treat the solution and noise on a more equal footing. 
As an example, in \cite{BCCH21} the domain of our smooth functions would be a direct product indexed by 
$\Lab_{+} \times \N^{d+1}$ rather than one indexed by $\CE$. The fact that, in the case  of the stochastic 
Yang--Mills equations considered here, the dependence on the noise variables has to be affine is enforced
when assuming that the nonlinearity obeys our rule $R$, see Definition~\ref{def:conforming_nonlin} below.

In particular, when defining the $\Upsilon$ map in \cite{BCCH21} through an induction on trees $\tau$, the symbols associated to the noises (and derivatives and products thereof)\footnote{Here we are referring to the ``drivers'' of \cite{BCCH21}.} were treated as ``generators''\dash the base case of the $\Upsilon$ induction.
In our setting, however, the sole such generator will be the symbol $\bone$ and noises will be treated as branches \slash edges $\mcb{I}_{\mft}(\bone)$ for $\mft \in \Lab_{-}$. See also Remark~\ref{rem:noise is I-1} below.
\end{remark}
A specification of the right-hand side of our equation determines an element in
\begin{equ}[e:Qcirc]
\mathring{\mcb{Q}}
\eqdef  
\smooth\Big( \bigoplus_{\mft \in \Lab}\big(V_\mft \otimes W_{\mft}\big) \Big)
\simeq
\bigoplus_{\mft \in \Lab} 
\smooth\big(V_\mft \otimes W_{\mft} \big)\;.
\end{equ}
Writing an element $F \in \mathring{\mcb{Q}}$ as $F = \bigoplus_{\mft \in \Lab} F_{\mft}$ with $F_{\mft} \in \smooth(V_\mft \otimes W_{\mft})$, $F_{\mft}$ for $\mft \in \Lab_+$ plays the role of the function appearing on the right-hand 
side of~\eqref{e:KSPDE} namely a smooth function in the 
variable\footnote{The $(\mathbf{A},\bxi)$ written in \eqref{e:KSPDE} corresponds to the $\mathbf{A}$ here, see 
Remark~\ref{rem:noises_are_upgraded}.} $\mathbf{A} \in \mcb{A}$ taking values in $V_\mft \otimes W_{\mft} \simeq W_\mft$.
  
%
\begin{remark}\label{rem:identity_is_special}
Note that the vector space $\mcb{A}$ depends on $(\Lab,W,\mathcal{K})$.
However, if $(\bar{\Lab},\bar{W},\bar{\mathcal{K}})$ is a decomposition of $(\Lab,W,\mathcal{K})$ under $\proj$ and we define $\bar{\mcb{A}} \eqdef \prod_{\bar{o} \in\CE} \bar{W}_{\bar{o}}$ then there is a  linear natural surjection $\iota: \bar{\mcb{A}} \twoheadrightarrow \mcb{A}$
given by setting, for each $\bar{\mathbf{A}} = (\bar{A}_{\bar{o}})_{\bar{o} \in \bar{\CE}}$, $\iota(\bar{\mathbf{A}}) = \mathbf{A} = (A_{o})_{o \in \CE}$ to be given by
\begin{equ}\label{eq:jets_under_decomp}
A_{o} = \begin{cases}
\displaystyle\sum_{ \bar{o} \in \proj(o)} \bar{A}_{\bar{o}} & \text{ if }o \in \Lab_{+} \times \N^{d+1}\;,\\ \vspace{.2cm}
\displaystyle\bigoplus_{ \bar{o} \in \proj(o)} \bar{A}_{\bar{o}} & \text{ if }o \in \Lab_{-} \times \N^{d+1}\;,\\
 \end{cases}
\end{equ}
where $\proj(o)$ is understood as in Remark~\ref{rem:proj extend to E}.

Observe that there is an induced injection 
\[
 \smooth(B) = \smooth(\mcb{A},B) \hookrightarrow \smooth(\bar{\mcb{A}},B)\;,
 \]
 given by $G \mapsto G \circ \iota$ \dash we will often treat this as an inclusion and identify $\smooth(B)$ as a subset of $\smooth(\bar{\mcb{A}},B)$

It follows that, for any $\mft \in \Lab_{+}$ and $F_{\mft} \in \smooth(V_{\mft} \otimes W_{\mft})$ one has
\[
F_{\mft} = \bigoplus_{\bar{\mft} \in \proj(\mft)} F_{\bar{\mft}}
\in 
\bigoplus_{\bar{\mft} \in \proj(\mft)}
\smooth(V_{\bar{\mft}} \otimes W_{\bar{\mft}})
\subset
\bigoplus_{\bar{\mft} \in \proj(\mft)}
\smooth(\bar{\mcb{A}}, V_{\bar{\mft}} \otimes W_{\bar{\mft}})\;.
\]
In the first equality above we are using that 
\begin{equ}\label{eq:expansion_of_nonlinearity}
V_{\mft} \otimes W_{\mft} = \bigoplus_{\bar{\mft} \in \proj(\mft)} V_{\bar{\mft}} \otimes W_{\mft}
=
\bigoplus_{\bar{\mft} \in \proj(\mft)} V_{\bar{\mft}} \otimes W_{\bar{\mft}}\;.
\end{equ}
This shows that our definitions allow the $\Lab_{+}$-indexed non-linearities to make sense as $\bar{\Lab}_{+}$-indexed non-linearities after decomposition.
However, this does not hold for $\mfl \in \Lab_{-}$ since the \eqref{eq:expansion_of_nonlinearity} fails in this case. 
 
Indeed, given a finite-dimensional vector space $B$, the identity $\id_{B}$ is the unique (up to multiplication by a scalar) 
element of $L(B,B) \simeq B^{\ast} \otimes B$ such that, for every decomposition $B = \bigoplus_{i} B_{i}$ one has $\id_{B} \in \bigoplus_{i} L(B_{i},B_{i}) \simeq \bigoplus_{i} \big(B_{i}^{\ast} \otimes B_{i}\big)$.
This suggests that if we want to have nonlinearities that continue to make sense under decomposition (and constant
for $\mfl \in \Lab_-$ as enforced by Assumption~\ref{assump_noises-terminal}), we should
set $F_\mfl = \id_{W_\mfl}$ for $\mfl \in \Lab_-$. 

This is indeed the case and will be enforced in
Definition~\ref{def:CQ} below.
\end{remark} 
\subsubsection{Derivatives}
\label{sec:Monomials}
Just as in \cite{BCCH21} we introduce two families of differentiation operators, the first $\{D_{o}\}_{o \in \CE}$ corresponding to derivatives with respect to the components of the jet $\mcb{A}$ and the second $\{\partial_{j}\}_{j=0}^{d}$ corresponding to derivatives in the underlying space-time. 

Consider locally convex topological vector spaces $U$ and $B$.
Suppose that $B=\prod_{i\in I}B_i$, where each $B_i$ is finite-dimensional,
equipped with the product topology.
Let $F\in\smooth(U,B)$
and $\ell\in B^*$, $\bar\ell$, and $F_{\ell,\bar\ell}$ as in~\eqref{def:smooth_funcs}.
For $m \geq 0$ and $u\in U$,
consider the symmetric $m$-linear map
\begin{equ}\label{eq:dual_deriv}
U^m \ni(v_1,\ldots, v_m) \mapsto 
D^mF_{\ell,\bar\ell}(\bar\ell(u))(\bar\ell(v_1),\ldots, \bar\ell(v_m)) \in \R\;.
\end{equ}
For fixed $u,v_1,\ldots,v_m$,
the right-hand side of~\eqref{eq:dual_deriv}
defines a linear function of $\ell$
which one can verify is independent of the choice of $\bar\ell$.
Since $B^*=\bigoplus_{i\in I}B_i^*$, the algebraic dual of which is again $B$,
there exists an element $D^mF(u)(v_1,\ldots,v_m)\in B$
such that $\scal{\ell,D^mF(u)(v_1,\ldots,v_m)}$ agrees with the right-hand side of~\eqref{eq:dual_deriv}.
It is immediate that $U^m \ni(v_1,\ldots, v_m) \mapsto D^mF(u)(v_1,\ldots,v_m)\in B$ is symmetric and $m$-linear for every $u\in U$.

Turning to the case $U=\mcb{A}$,
for $o_1,\ldots,o_m\in\CE$ and $\mbA\in\mcb{A}$,
we define
\begin{equ}
D_{o_{1}} \cdots D_{o_{m}}F(\mbA) = D^m F(\mbA)\restr_{W_{o_1}\times\ldots\times W_{o_m}} \in L(W_{o_1},\ldots,W_{o_m}; B)\;.
\end{equ}
Due to the finite-dimensionality of $W_{o_i}$ by Assumption~\ref{assump:finite_dim}, the map \begin{equ}
D_{o_{1}} \cdots D_{o_{m}} F\colon \mbA\mapsto D_{o_{1}} \cdots D_{o_{m}} F(\mbA)
\end{equ}
is an element of $\smooth(L(W_{o_1},\ldots,W_{o_m}; B))$.
The operators $\{D_{o}\}_{o \in \CE}$ naturally commute, modulo reordering the corresponding factors.
\begin{remark}\label{rem:decom_of_deriv}
If $(\bar{\Lab},\bar{W},\bar{\mathcal{K}})$ is a decomposition of $(\Lab,W,\mathcal{K})$ under $\proj$ then our definitions give us another set of derivative operators $\{D_{\bar{o}}\}_{\bar{o} \in \bar{\CE}}$. 
Via the identifications $W_{o} = \bigoplus_{\bar{o} \in \proj(o)} W_{\bar{o}}$ for any $o \in \CE$, we have for any $F \in \smooth(B)$
\begin{equ}[e:decompDF]
D_{o}F =
\begin{cases}
\displaystyle\sum_{\bar{o} \in \proj(o)} 
D_{\bar{o}}F
& \text{ if }o \in \Lab_{+} \times \N^{d+1}\;,\\ \vspace{.2cm}
\displaystyle\bigoplus_{ \bar{o} \in \proj(o)} D_{\bar{o}}F 
& \text{ if }o \in \Lab_{-} \times \N^{d+1}\;.
\end{cases}
\end{equ}
For $o \in \Lab_{+} \times \N^{d+1}$ this equality follows from a simple computation using the chain rule to handle the composition with $\iota$ and the right hand side is a sum of elements of $\smooth(L(W_{o};B))$ \dash recall that $W_{\bar{o}} = W_{o}$ for all $\bar{o} \in \proj(o)$.
For $o \in \Lab_{-} \times \N^{d+1}$ the equality follows from the fact that $W_{o} = \sum_{\bar{o} \in \proj(o)} \bar{W}_{\bar{o}}$ so $\smooth(L(W_{o};B)) =\bigoplus_{\bar{o} \in \proj(o)} \smooth(L(W_{\bar{o}};B))$.  

Analogous identities involving iterated sums hold for iterated derivatives.
\end{remark}
\begin{remark}
Note that we will not always distinguish direct sums from regular sums in what follows \dash we chose to do so in Remarks~\ref{rem:identity_is_special} and \ref{rem:decom_of_deriv} simply to illustrate the slight difference in how types in $\Lab_{+}$ and $\Lab_{-}$ behave under decompositions.  
\end{remark}
For $j\in\{0,1,\ldots, d\}$ and $o = (\mft,p) \in \CE$, we first define $\d_j o \in \CE$ 
by $\d_j o = (\mft,p + e_{j})$.  
We then define operators $\d_j$ on $\smooth(B)$ by setting, for $\mathbf{A} = (\mathbf{A}_{o})_{o \in \CE} \in \mcb{A}$
and $F \in \smooth(B)$, 
\begin{equ}[e:def-partial]
\bigl(\d_j F\bigr)(\mathbf{A}) \eqdef \sum_{o \in \CE} \big(D_o F\big)(\mathbf{A})\,\mathbf{A}_{\d_j o}\;.
\end{equ}
Note that the operators $\{\partial_{j}\}_{j=0}^{d}$ commute amongst themselves and so $\partial^p$ is well-defined for any $p\in\N^{d+1}$.
\begin{remark}
Combining \eqref{e:decompDF} with \eqref{e:def-partial}, we see that the action of the derivatives $\{\partial_{j}\}_{j=0}^{d}$ 
remains unchanged under decompositions of $(\Lab,W,\mathcal{K})$, thus justifying our notation.   
\end{remark}
Just as in \cite{BCCH21}, we want to restrict ourselves to $F \in \mathring{\mcb{Q}}$ that \emph{obey}\footnote{The notion of obey (and the set $\mcb{Q}$) we choose here is analogous to item~(ii) of \cite[Prop.~3.13]{BCCH21} rather than \cite[Def.~3.10]{BCCH21}. In particular, our definition is not based on expanding a nonlinearity in terms of a polynomial in the rough components of $\mathbf{A}$ and a smooth function in the regular components of $\mathbf{A}$. This means we don't need to impose \cite[Assump.~3.12]{BCCH21} to use the main results of \cite{BCCH21}.}  the rule $R$ we use to construct our regularity structure.  
\begin{definition}\label{def:conforming_nonlin}
We say $F \in \mathring{\mcb{Q}}$ obeys a rule $R$ if, for each $\mft \in \Lab$ and $o_{1},\dots,o_{n} \in \CE$, 
\begin{equ}[e:conformRule]
(o_{1},\cdots,o_{n}) \not \in R(\mft) \quad\Rightarrow\quad 
D_{o_{1}} \cdots D_{o_{n}}F_{\mft} = 0\;.
\end{equ}
Note that, for any type decomposition $\bar{\Lab}$ of $\Lab$ under $\proj$, $F$ obeys a rule $R$ if and only if it obeys $\bar{R}$ as defined in Remark~\ref{rem:proj extend to E}.
\end{definition}
\begin{definition}\label{def:CQ}
Given a subcritical and complete rule $R$, define $\mcb{Q} \subset \mathring{\mcb{Q}}$ to be the set of 
$F$ obeying $R$ such that furthermore $F_\mfl(\mathbf{A}) = \id_{W_\mfl}$ for all $\mfl \in \Lab_-$.
\end{definition}
Recall that, by Assumption~\ref{assump_noises-terminal}, for any $F$ obeying the rule $R$,
$F_\mfl(\mathbf{A})$ is independent of $\mathbf{A}$ for $\mfl \in \Lab_-$. The reason for imposing the specific
choice $F_\mfl(\mathbf{A}) = \id_{W_\mfl}$ is further discussed in Remark~\ref{rem:identity_is_special} above 
and Remark~\ref{rem:GHM20} below.
%
%
%
%
\subsubsection{Coherence and the definition of $\Upsilon$} 
\label{sec:Coherent expansions}
In this subsection we formulate the notion of coherence from \cite[Sec.~3]{BCCH21} in the setting of vector regularity structures. 
In particular, in Theorem~\ref{thm:upsilon_summation}, we show that the coherence constraint is preserved under noise decompositions.  

We first introduce some useful notation.  
For $o = (\mft,p) \in \CE$
we set
\begin{equ}
\mcb{B}
\eqdef
\Func_V\big(\scal{\mfT}\big)
=\prod_{\tau\in\mfT}V^{\otimes \scal{\tau}} \;,\qquad
\mcb{B}_{o}
\eqdef
\Func_V\big(\scal{\mcb{I}_o\mfT}\big) \subset \mcb{B}\;,
\end{equ}
and equip $\mcb{B}$ with the product topology.
As usual, we use the notation $\mcb{B}_\mft = \mcb{B}_{(\mft,0)}$. 
Note that $\mcb{B}$ is an algebra when equipped with the tree product, or rather its image
under $\Func_V$.
The following remark, where we should have in mind the case $\mcb{B}_+
= \Func_V\big(\scal{\mfT \setminus \{\bone\}}\big) $, is crucial 
for the formulation of our construction.

\begin{remark}\label{rem:tensorPoly}
Let $\mcb{B}_+$ be an algebra such that $\mcb{B}_+ = \varprojlim_{n} \mcb{B}_+^{(n)}$ with each $\mcb{B}_+^{(n)}$ 
nilpotent, let $U$ and $B$ be locally convex spaces where $B$ is of the same form as in Section~\ref{sec:Monomials}, and
let $F\in \smooth (U, B)$. 
Write $\mcb{B} = \R\oplus \mcb{B}_+$, which is then a unital algebra. Then $F$ can be extended  to a 
map $\mcb{B} \otimes U \to \mcb{B} \otimes B$ as follows: for $u \in U$ and $\tilde u \in \mcb{B}_+ \otimes U$, we set
\begin{equ}[e:defFextension]
F(1 \otimes u + \tilde u) \eqdef \sum_{m \in \N} \frac{D^m F(u)}{m!}(\tilde u,\ldots, \tilde u)\;, 
\end{equ}
where $D^m F(u)\in L(U,\ldots, U; B)$ for each $u\in U$ 
naturally extends to a $m$-linear map $(\mcb{B} \otimes U)^m \to \mcb{B} \otimes B$ by imposing that 
\begin{equ}[e:defPextension]
D^m F(u)(b_1\otimes v_1,\ldots,b_m\otimes v_m) 
= (b_1\cdots b_m)\otimes  D^m F(u) (v_1,\ldots,v_m)\;.
\end{equ}
Note that the first term of this series~\eqref{e:defFextension} belongs to $\R \otimes B$
while all other terms belong to  $\mcb{B}_+ \otimes B$.
Since $\tilde u \in \mcb{B}_+ \otimes U$, the projection of this series onto any of 
the spaces $\mcb{B}_+^{(n)} \otimes B$ contains
only finitely many non-zero  terms by nilpotency, so that it is guaranteed to converge.
If $B$ and all the $\mcb{B}_+^{(n)}$ are finite-dimensional,
then the extension of $F$ defined in \eqref{e:defFextension} actually belongs to 
$\smooth(\mcb{B} \otimes U, \mcb{B} \otimes B)$,
where $\mcb{B}_+$ is equipped with the projective limit topology, under which it is nuclear, and $\mcb{B} \otimes U$, is equipped with the
projective tensor product.
Furthermore, in this case,
every element of $\smooth(\mcb{B} \otimes U, \mcb{B} \otimes B)$
extends to an element of $\smooth(\mcb{B}\hotimes U,\mcb{B}\otimes B)$,
where $\hotimes$ denotes the (completion of the) projective tensor product.
\end{remark}
We introduce a space\footnote{The space $\expan$ introduced here plays the role of the space $\mcb{H}^{\ex}$ in \cite[Sec.~3.7]{BCCH21}.} of expansions
$\expan = \bigoplus_{\mft \in \Lab} \expan_\mft$
with
\begin{equ}
\expan_\mft \eqdef   \big(
\mcb{B}_\mft\oplus \bar\CT
\big)\otimes W_\mft \subset \mcb{B} \otimes W_\mft\;,\qquad \bar \CT \eqdef \prod_{k \in \N^{d+1}}
\CT[\mbX^{k}]\;.
\end{equ}
Given $\mcA \in \expan$, we write $\mcA = \sum_{\mft \in \Lab} \mcA_{\mft}$ with
\begin{equ}\label{eq:coherent_jet}
\mcA_{\mft} = \mcA^{R}_{\mft} +
\Big(
\sum_{k \in \N^{d+1}}
\frac{\mathbf{X}^{k}}{k!}
\otimes
\mathbf{A}_{(\mft,k)}
\Big)
\in \expan_\mft\;,\qquad \mcA^{R}_{\mft} \in \mcb{B}_\mft\otimes W_\mft\;,
\end{equ}
and write $\mathbf{A}^{\mcA} = (\mathbf{A}_{o})_{o \in \CE} \in \mcb{A}$, where the coefficients $\mathbf{A}_{(\mft,k)}$ are as in \eqref{eq:coherent_jet}.\footnote{As a component of $\mathbf{A}^{\mcA} \in \mcb{A}$, $\mathbf{A}_{o}\in W_o$, while as a term of \eqref{eq:coherent_jet}, $\mathbf{A}_{o}\in W_\mft$. This is of course not a problem since $ W_o \simeq W_\mft$.}
Note that~\eqref{eq:coherent_jet} gives a natural inclusion $\mcb{A} \subset \expan$.
\begin{remark}
In \eqref{eq:coherent_jet} and several places to follow, we write $\sum$ to denote an element of a direct product.
This will simplify several expressions below, e.g.~\eqref{eq:upsilon_identity}.
\end{remark}
For $o = (\mft,p) \in \CE$ we also define 
\begin{equ}[e:def-cA_o]
\mcA_{o}
\eqdef
\mcA^{R}_{o} + 
\Big(
\sum_{k \in \N^{d+1}}
\frac{\mathbf{X}^{k}}{k!}
\otimes
\mathbf{A}_{(\mft,p+k)}
\Big) \in \mcb{B} \otimes W_o
\;,
\end{equ}
where $\mcA^{R}_{o} \in \mcb{B}_o \otimes W_o$
 is given by the image of $\mcA^{R}_{\mft}$ under the canonical 
 isomorphism  $\mcb{B}_o \otimes W_o \simeq \mcb{B}_\mft\otimes W_\mft$ 
 (but note that $\mcb{B}_o$ and $\mcb{B}_\mft$ are different subspaces of $\mcb{B}$
 when $p \neq 0$). 
 Collecting the $\mcA_{o}$ into one element $\hat{\mcA} \eqdef (\mcA_{o}: o \in \CE)$,
 we see that $\hat{\mcA}$ is naturally viewed as an element of $\mcb{B} \hotimes \mcb{A}$.

Note also that, for any $o =(\mft,p)\in \CE$, our construction of the morphism $\mcb{I}_{o}$ on symmetric sets built from trees in Section~\ref{sec:Products and coproducts} gives us, via the functor $\Func_{V}$, an isomorphism 
$\mcb{I}_{o} \colon \mcb{B} \otimes V_{\mft}
\to
\mcb{B}_o$.

We fix for the rest of this subsection a choice of $F \in \mcb{Q}$.  
Then the statement that $\mcA \in \expan$ algebraically solves \eqref{e:SPDE}  corresponds to
\begin{equ}\label{eq:fixedptpblm}
\mcA^{R}_{\mft} = (\mcb{I}_{\mft} \otimes \id_{W_\mft})\big( F_{\mft}(\hat\mcA) \big)\;.
\end{equ}
Here, we used Remark~\ref{rem:tensorPoly} to view $F_\mft \colon \mcb{A} \to V_\mft \otimes W_\mft$ as a map
from $\mcb{B} \hotimes \mcb{A}$ into $\mcb{B} \otimes V_\mft \otimes W_\mft$, which 
$\mcb{I}_{\mft}$ then maps into $\mcb{B}_\mft \otimes W_\mft$.

The coherence condition then encodes the constraint \eqref{eq:fixedptpblm} as a functional dependence 
of $\mcA^{R} = \bigoplus_{\mft \in \Lab} \mcA^{R}_{\mft}$  on $\mathbf{A}^{\mcA}$. 
This functional dependence will be formulated by defining a pair of (essentially equivalent) maps $\bUpsilon$ and $\bar{\bUpsilon}$ where 
\begin{equ}[e:Ups-Ups-intro]
\bar{\bUpsilon} \in 
\prod_{\mft \in \Lab}
\smooth(\mcb{B} \otimes V_\mft \otimes W_\mft)
\quad\textnormal{ and }\quad
\bUpsilon
\in 
\prod_{o \in \CE}
\smooth(\mcb{B}_{o} \otimes W_o)\;.
\end{equ}
The coherence condition on $\mcA \in \expan$ will be formulated as $\mcA^{R} = \bUpsilon(\mathbf{A}^{\mcA})$.
%

To define $\bar{\bUpsilon}$ and $\bUpsilon$, we first define corresponding maps $\bar{\Upsilon}$ 
and 
$\Upsilon$ (belonging to the same respective spaces) from which $\bar{\bUpsilon}$ and $\bUpsilon$ will be obtained by including some combinatorial factors (see \eqref{eq:Upsilon_with_sym}).
We will write, for $\mft \in \Lab$, $o \in \CE$, and  $\tau \in \mfT$, $\Upsilon_{o}[\tau]$ for the component of $\Upsilon$ in $\smooth(\mcb{B}[\mcb{I}_o\tau] \otimes W_o)$ and $\bar{\Upsilon}_{\mft}[\tau]$ for its component in $\smooth(\mcb{B}[\tau] \otimes V_\mft \otimes W_\mft)$.\footnote{This means that our notation for $\Upsilon[\tau]$ breaks the notational convention we've used so far for other
elements of spaces of this type (direct products of $\Func_V(\scal{\tau})$, possibly tensorised with  some fixed space). 
The reason we do this is to be compatible with the notations of~\cite{BCCH21}, and also to keep notations
in Sections~\ref{sec:BPHZ-YM2} and \ref{sec:renorm_for_system} cleaner.}
We will define $\Upsilon$ and $\bar{\Upsilon}$ by specifying the components $\Upsilon_{o}[\tau]$ and $\bar{\Upsilon}_{\mft}[\tau]$ through an induction in $\tau$.
Before describing this induction, we make another remark about notation.
\begin{remark} \label{rem:grafting_with_symmetry}
For $\mft\in\Lab$,
$G \in \smooth(V_{\mft} \otimes W_{\mft})$, $(o_{1},\tau_{1}),\dots,(o_{m},\tau_{m}) \in \CE \times \mfT$, and $k \in \N^{d+1}$, consider the map
\begin{equ}
\partial^{k}D_{o_{1}} \cdots D_{o_{m}}
G \in \smooth(L(W_{o_{1}},\dots,W_{o_{m}};V_{\mft} \otimes W_{\mft}))\;.
\end{equ}
It follows from Remark~\ref{rem:tensorPoly} that if we are given elements $\Theta_i \in \smooth(\mcb{B} \otimes W_{o_i})$  we 
have a canonical interpretation for
\begin{equ}[e:interpret]
\big( \partial^{k} D_{o_{1}} \cdots D_{o_{m}}
G(\mathbf{A})\big)(\Theta_{1}(\mathbf{A}),\ldots,\Theta_{m}(\mathbf{A})) \in \mcb{B}\otimes V_{\mft} \otimes W_{\mft}\;,
\end{equ}
which, as a function of $\mbA$, is an element of 
$\smooth(\mcb{B} \otimes V_{\mft} \otimes W_{\mft})$
which we denote by $\big( \partial^{k} D_{o_{1}} \cdots D_{o_{m}}G\big)(\Theta_{1},\dots,\Theta_{m})$.

We further note that if $\Theta_i \in \smooth(\mcb{B}_i \otimes W_{o_i})$ for some subspaces $\mcb{B}_i \subset \mcb{B}$,
then \eqref{e:interpret} belongs to $\hat{\mcb{B}}\otimes V_{\mft} \otimes W_{\mft}$, where $\hat{\mcb{B}} \subset \mcb{B}$
is the smallest closed linear space containing all products of the form $b_1\cdots b_m$ with $b_i \in \mcb{B}_i$.
\end{remark}
Now consider an isomorphism class of trees $\tau \in \mfT$.
Then $\tau$ can be written as
\begin{equ}\label{eq:general_tree_added}
\mbX^k \prod_{i=1}^m \mcb{I}_{o_i}(\tau_{i})\;,
\end{equ}
where $k = \mfn(\rho)$,
$m \geq 0$, $\tau_{i} \in \mfT$, and $o_i \in \CE$.
\begin{remark}\label{rem:noise is I-1}
Following up on Remark~\ref{rem:noises_are_upgraded}, in the analogous expression \cite[Eq.~(2.11)]{BCCH21} a tree $\tau$ 
may also contain a factor $\Xi$ representing a noise. 
However, in \eqref{eq:general_tree_added} a noise (or a derivative of a noise) is represented by 
$\mcb{I}_{(\mfl,p)}({\bf 1})$ with $ \mfl \in \mfL_-$.
\end{remark}
Given $\mft \in \Lab$ and $\tau$ of the form~\eqref{eq:general_tree_added}, 
$\bar{\Upsilon}_{\mft}[\tau]$ and 
$\Upsilon_{(\mft,p)}[\tau]$ are inductively defined   by first setting
\begin{equ}[eq:base_case_for_upsilon]
\bar{\Upsilon}_{\mft}[\bone] \eqdef
\bone \otimes F_{\mft}\;,
\end{equ}
which belongs to 
$\mcb{B}[\bone] \otimes  \smooth  (V_{\mft} \otimes W_\mft) 
\simeq  \smooth(\mcb{B}[\bone] \otimes V_{\mft} \otimes W_\mft)\subset \smooth(\mcb{B} \otimes V_{\mft} \otimes W_\mft)$
(the first isomorphism follows from the fact that $V_{\mft} \otimes W_\mft$ is finite dimensional by assumption) 
so this is indeed of the desired type. We then set
\begin{equs}[e:def-Upsilon-added]
\bar{\Upsilon}_{\mft} [\tau] 
&\eqdef \mbX^k 
\Big[ 
\partial^k D_{o_1}\cdots D_{o_m}
\bar{\Upsilon}_{\mft}[\bone]
\Big]
\big(\Upsilon_{o_1}[\tau_1],\ldots,\Upsilon_{o_m}[\tau_m]\big)
 \;,\\
\Upsilon_{(\mft,p)}[\tau]
&\eqdef
\big(\mcb{I}_{(\mft,p)} \otimes \id_{W_\mft}\big)(\bar{\Upsilon}_{\mft} [\tau] )\;.
\end{equs}
We explain some of the notation and conventions used in \eqref{e:def-Upsilon-added}.
By Remark~\ref{rem:grafting_with_symmetry}, the term following $\mbX^k$ in the right-hand side for
$\bar{\Upsilon}_{\mft}[\tau]$ is an element of
\begin{equ}
\smooth \Big(\mcb{B}\Big[\prod_{j=1}^{m}\mcb{I}_{o_{j}}(\tau_j) \Big] \otimes V_{\mft} \otimes W_{\mft}\Big)\;.
\end{equ}
We then interpret $\mbX^{k} \bullet$ as the canonical isomorphism 
\begin{equ}[e:Xk-multiplication]
\mcb{B}\Big[\prod_{j=1}^{m}\mcb{I}_{o_{j}}(\tau_j) \Big] \simeq
\mcb{B}\Big[\mbX^{k}\prod_{j=1}^{m}\mcb{I}_{o_{j}}(\tau_j) \Big]
= \mcb{B}[\tau]\;,
\end{equ} 
acting on the first factor  of the tensor product,
hence the right-hand side of the definition of $\bar\Upsilon_{\mft}[\tau]$
belongs to 
$\smooth(\mcb{B}[\tau] \otimes V_\mft \otimes W_{\mft})$, which is mapped to $\smooth(\mcb{B}[\mcb{I}_{(\mft,p)}\tau] \otimes W_{\mft})$
by $\mcb{I}_{(\mft,p)} \otimes \id_{W_\mft}$ as desired. 

\begin{remark}\label{rem:noise_edges_terminal}
We have two important consequences of \eqref{e:def-Upsilon-added} and \eqref{eq:base_case_for_upsilon}: 
\begin{enumerate}[label=(\roman*)]
\item\label{pt:conform} Since $F$ obeys $R$, we have, for any $o = (\mft,p) \in \CE$, $\Upsilon_{o}[\tau]=0$ and $\bar{\Upsilon}_{\mft}[\tau] = 0$ unless $\mcb{I}_{\mft}(\tau) \in \mfT(R)$. 
\item\label{pt:leaves} For any $\mft \in \mfL_{-}$, $p\in \N^{d+1}$ and $\tau \in \mfT \setminus \{\bone\}$, one has $\Upsilon_{(\mft,p)}[\tau]=0$ and $\bar{\Upsilon}_{\mft}[\tau] = 0$,
due to annihilation by the operators $\partial$ and $D$.
\end{enumerate}
In particular, our assumption that the rule $R$ is subcritical guarantees that for any given degree
$\gamma$, only finitely many of the components $\bar{\Upsilon}_{\mft}[\tau]$ with $\deg\tau < \gamma$
are non-vanishing.
\end{remark}
\begin{remark}\label{rem:point_of_view_upsilon}
Although it plays exactly the same role, the map $\Upsilon$ introduced in \cite{BCCH21} is of a slightly different type 
than the maps $\Upsilon$ and $\bar\Upsilon$ introduced here. 
More precisely, in \cite{BCCH21}, $\Upsilon_{o}[\tau](\mathbf{A}) \in \R$ played the role of a coefficient\footnote{The coefficient of $\mcb{I}_{o}(\tau)$ in a coherent jet and the coefficient of $\tau$ in the expansion of the non-linearities evaluated on a coherent jet.} of a basis vector in the regularity structure.  
In the present article on the other hand, $\Upsilon_{o}[\tau](\mathbf{A}) \in \mcb{B}[\mcb{I}_o\tau] \otimes W_o$.
In the setting of~\cite{BCCH21}, these spaces are canonically isomorphic to $\R$ and our definitions are consistent modulo this isomorphism.
\end{remark}
The $\bar{\Upsilon}$ and $\Upsilon$ defined in \eqref{e:def-Upsilon-added} are missing the combinatorial symmetry factors $S(\tau)$ associated to a tree $\tau \in \mfT$,
which we define in the same way as in~\cite{BCCH21}.
For this we represent $
\tau$ more explicitly than  \eqref{eq:general_tree_added}
by writing
\begin{equ}\label{eq:general_tree_withsym} 
\tau = \mbX^{k} \prod_{j=1}^{\ell}\mcb{I}_{o_{j}}(\tau_{j})^{\beta_{j}}\;,
\end{equ} 
with $\ell \ge 0 $, $\beta_{j} > 0$, and \emph{distinct} $(o_{1},\tau_{1}),\dots,(o_{\ell},\tau_{\ell}) \in \CE \times \mfT$,
and define
\begin{equ}\label{eq:sym_factor}
S(\tau) \eqdef
k! \Big( \prod_{j=1}^{\ell}S(\tau_{j})^{\beta_{j}} \beta_{j}! \Big)\;.
\end{equ}
We then set, for $\mft \in \Lab$, $o \in \CE$, and $\tau \in \mfT$,
\begin{equs}[eq:Upsilon_with_sym]
\bUpsilon &= \sum_{o \in \CE, 
\tau \in \mfT} \bUpsilon_{o}[\tau] \;,&\qquad \bUpsilon_{o}[\tau] &\eqdef
\Upsilon_{o}[\tau]/S(\tau)\;,\qquad\\
\bbUpsilon &= \sum_{\mft \in \Lab,
\tau \in \mfT} \bbUpsilon_{\mft}[\tau]\;,&\qquad \bbUpsilon_{\mft}[\tau] &\eqdef \bar{\Upsilon}_{\mft}[\tau]/S(\tau)\;.
\end{equs} 
%
%
%
We can now state the main theorem of this section.
\begin{theorem}\label{thm:upsilon_summation}
$\bbUpsilon$ and $\bUpsilon$ as defined in \eqref{eq:Upsilon_with_sym} are left unchanged under decompositions of $(\Lab,W,\mathcal{K})$. 
Precisely, given a decomposition $(\bar{\Lab},\bar{W},\bar{\mathcal{K}})$ of $(\Lab,W,\mathcal{K})$ under $\proj$, for any $\mft \in \Lab$, $\tau \in \mfT$ and $\mathbf{A} \in \mcb{A}$,  
\begin{equation}\label{eq:upsilon_identity}
\bUpsilon_{\mft}[\tau](\mathbf{A})
=
\sum_{\mfl \in \proj(\mft),
\bar{\tau} \in \proj(\tau)}
\bUpsilon_{\mfl}[\bar{\tau}](\mathbf{A})\;,
\end{equation} 
where $(\bUpsilon_\mfl)_{\mfl\in\bar\Lab}$ on the right-hand side is defined as above but with $(\bar{\Lab},\bar{W},\bar{\mathcal{K}})$ in its construction, and $\proj(\tau)$ is defined as in Remark~\ref{rem:trees_remain_trees}.
The equality \eqref{eq:upsilon_identity} also holds when $\bUpsilon$ is replaced by $\bbUpsilon$. 
\end{theorem}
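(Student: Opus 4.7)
The plan is to proceed by induction on $|E_{\tau}|$, proving the identity simultaneously for both the un-symmetrised maps $\Upsilon,\bar{\Upsilon}$ (equipped with appropriate multinomial factors) and the symmetrised maps $\bUpsilon,\bbUpsilon$. The identity for $\bUpsilon$ reduces immediately to that for $\bbUpsilon$ after applying $\mcb{I}_{(\mft,p)}\otimes\id$, which is itself natural under $\proj^{\ast}$, so we focus on $\bbUpsilon$.

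For the base case $\tau=\bone$, we have $\proj(\bone)=\{\bone\}$ and $S(\bone)=1$, so the statement reduces to the identity $F_{\mft}=\sum_{\mfl\in\proj(\mft)}F_{\mfl}$ (viewed in the common ambient space $V_{\mft}\otimes W_{\mft}=\bigoplus_{\mfl\in\proj(\mft)}V_{\mfl}\otimes W_{\mfl}$, where for $\mft \in \Lab_+$ we identify $W_\mfl = W_\mft$). For $\mft\in\Lab_{+}$ this is the decomposition discussed in Remark~\ref{rem:identity_is_special}, and for $\mft\in\Lab_{-}$ it follows from $F_{\mft}=\id_{W_{\mft}}=\bigoplus_{\mfl\in\proj(\mft)}\id_{W_{\mfl}}=\bigoplus_{\mfl}F_{\mfl}$, which is precisely the reason for the normalisation imposed in Definition~\ref{def:CQ}.

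For the inductive step, we write $\tau$ in the form \eqref{eq:general_tree_withsym}, i.e.\ $\tau=\mbX^{k}\prod_{j=1}^{\ell}\mcb{I}_{o_{j}}(\tau_{j})^{\beta_{j}}$ with distinct $(o_{j},\tau_{j})$, so that by \eqref{e:def-Upsilon-added} and \eqref{eq:sym_factor}
\begin{equ}
\bbUpsilon_{\mft}[\tau]
=\frac{\mbX^{k}}{k!}\prod_{j=1}^{\ell}\frac{1}{\beta_{j}!}\bigl[\partial^{k}D_{o_{1}}^{\beta_{1}}\cdots D_{o_{\ell}}^{\beta_{\ell}}F_{\mft}\bigr]\bigl(\bbUpsilon_{o_{1}}[\tau_{1}]^{\otimes\beta_{1}},\ldots,\bbUpsilon_{o_{\ell}}[\tau_{\ell}]^{\otimes\beta_{\ell}}\bigr)\;.
\end{equ}
Applying the base case to $F_{\mft}$, the derivative decomposition $D_{o_{j}}=\sum_{\bar{o}_{j}\in\proj(o_{j})}D_{\bar{o}_{j}}$ from Remark~\ref{rem:decom_of_deriv}, and the inductive hypothesis $\bbUpsilon_{o_{j}}[\tau_{j}]=\sum_{\bar{o}_{j},\bar{\tau}_{j}}\bbUpsilon_{\bar{o}_{j}}[\bar{\tau}_{j}]$ to each argument, and then expanding multilinearly, we obtain a large sum indexed by $\mfl\in\proj(\mft)$ and, for each $j$ and each $s\in\{1,\ldots,\beta_{j}\}$, by a refined pair $(\bar{o}_{j,s},\bar{\tau}_{j,s})$.

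The combinatorial heart of the argument is to reorganise this sum according to the isomorphism class $\bar{\tau}\in\proj(\tau)$ of the resulting tree. For a given such $\bar{\tau}$, say $\bar{\tau}=\mbX^{k}\prod_{j,k}\mcb{I}_{\bar{o}_{j,k}}(\bar{\tau}_{j,k})^{\gamma_{j,k}}$ with $\sum_{k}\gamma_{j,k}=\beta_{j}$ for each $j$, the number of ordered tuples $(\bar{o}_{j,s},\bar{\tau}_{j,s})_{s}$ producing $\bar{\tau}$ is exactly the multinomial coefficient $\prod_{j}\binom{\beta_{j}}{\gamma_{j,1},\gamma_{j,2},\ldots}$. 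Combining this with the prefactor $\prod_{j}1/\beta_{j}!$ yields $\prod_{j,k}1/\gamma_{j,k}!$, which is precisely the coefficient needed to produce $\bbUpsilon_{\mfl}[\bar{\tau}]$ from the inductive recipe applied to $\bar{\tau}$, since $S(\bar{\tau})=k!\prod_{j,k}\gamma_{j,k}!\,S(\bar{\tau}_{j,k})^{\gamma_{j,k}}$. This combinatorial bookkeeping \dash verifying that the multinomial counting exactly matches the ratio of symmetry factors $S(\tau)$ and $S(\bar{\tau})$ for all refinements simultaneously \dash is the main (though essentially routine) obstacle; once carried out, the identity $\bbUpsilon_{\mft}[\tau]=\sum_{\mfl\in\proj(\mft)}\sum_{\bar{\tau}\in\proj(\tau)}\bbUpsilon_{\mfl}[\bar{\tau}]$ follows, and the corresponding statement for $\bUpsilon$ is obtained by applying $\mcb{I}_{(\mft,p)}\otimes\id_{W_{\mft}}$, using that this operation is natural with respect to $\proj^{\ast}$ by the construction in Section~\ref{sec:Products and coproducts}.
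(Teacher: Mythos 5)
Your proposal is correct and follows essentially the same route as the paper's proof: induction on $|E_\tau|$, base case from Remark~\ref{rem:identity_is_special} and the normalisation $F_\mfl = \id_{W_\mfl}$ of Definition~\ref{def:CQ}, the derivative decomposition of Remark~\ref{rem:decom_of_deriv}, and a combinatorial count matching the ratio of symmetry factors. Your multinomial-coefficient bookkeeping for $\bbUpsilon$ is just a reformulation of the paper's tuple count $|\{(l,\sigma)\in d(\tau):\tau(l,\sigma)=\bar\tau\}| = S(\tau)/S(\bar\tau)$, and the order in which one deduces the $\bUpsilon$ versus $\bbUpsilon$ statement is immaterial.
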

\begin{proof}
We prove \eqref{eq:upsilon_identity} inductively in the number of edges of $\tau$. 
Writing $\tau$ in the form \eqref{eq:general_tree_added}, our base case corresponds to $m = 0$, i.e.\ $\tau = \mbX^{k}$ for $k \in \N^{d+1}$,
so that $\proj(\mbX^{k}) = \{\mbX^{k}\}$ and $S(\mbX^{k}) = k!$.
This case is covered by Remark~\ref{rem:identity_is_special}.
For our inductive step, we may assume that $m \ge 1$ in \eqref{eq:general_tree_added}. 
 
Then, inserting our inductive hypothesis in \eqref{e:def-Upsilon-added} (including the special case $\Upsilon_{\mft}[\bone] = \sum_{\mfl \in \proj(\mft)} \Upsilon_{\mfl}[\bone]$) and also applying  Remark~\ref{rem:decom_of_deriv} we see that \eqref{eq:upsilon_identity} follows if we can show that, for any $\mfl \in \proj(\mft)$, 
\begin{equs}
\sum_{(l,\sigma) \in d(\tau)}
(\mcb{I}_{\mft} \otimes \id_{W_\mft})
\Big[
\mbX^{k}&
\Big(\partial^{k}
\big(D_{l_{1}} \cdots  
D_{l_{m}}
\bar{\Upsilon}_{\mft}[\bone]\big)
\big(\Upsilon_{l_1}[\sigma_1],\ldots,\Upsilon_{l_m}[\sigma_m]\big)
\Big)
\Big]\\
&=
\sum_{
\bar{\tau} \in \proj(\tau)}
\frac{S(\tau)}{S(\bar{\tau})}
\Upsilon_{\mfl}[\bar{\tau}]\;,\label{eq:main_claim_for_upsilonident}
\end{equs}
where $d(\tau)$ consists of all pairs of tuples $(l,\sigma)$ with $l = (l_{i})_{i=1}^{m}$, $\sigma = (\sigma_{i})_{i=1}^{m}$, $l _{i} \in \proj(o_{i})$ and $\sigma_{i} \in \proj(\tau_{i})$. 
Given $(l,\sigma) \in d(\tau)$ we write $\tau(l,\sigma) \eqdef \mathbf{X}^{k}\prod_{i=1}^{m} \mcb{I}_{l_{i}}(\sigma_{i}) \in \mfT_{\bar{\Lab}}$.
Clearly one has $\tau(l,\sigma) \in \proj(\tau)$ and for fixed $(l,\sigma)$ the corresponding summand on the left-hand side of \eqref{eq:main_claim_for_upsilonident} is simply $\Upsilon_{\mfl}[\tau(l,\sigma)]$. 
Finally, for any $\bar{\tau} \in \proj(\tau)$, it is straightforward to prove, using a simple induction and manipulations of multinomial coefficients, that
\begin{equ}
\frac{S(\tau)}{S(\bar{\tau})}
=
|\{(l,\sigma) \in d(\tau): \tau(l,\sigma) = \bar{\tau}\}|\;,
\end{equ}
which shows~\eqref{eq:main_claim_for_upsilonident}.
Using natural isomorphisms between the spaces where $\bbUpsilon$ and $\bUpsilon$ live, it follows that \eqref{eq:upsilon_identity} also holds for $\bbUpsilon$. 
\end{proof} 

\begin{remark}\label{rem:GHM20}
We used Remark~\ref{rem:identity_is_special} in a crucial way to start the induction, which shows that
since we consider rules with $R(\mfl) = \{()\}$ for $\mfl \in \Lab_-$, the choice $F_\mfl = \id_{W_\mfl}$ is
the only one that complies with \eqref{e:conformRule} and also guarantees invariance under noise decompositions.
See however \cite{AndrisKonstantin} for an example where $R(\mfl) \neq \{()\}$ and it is natural to make a different choice for $F_\mfl$. 
\end{remark}

We now precisely define coherence in our setting.
For $L \in \N \cup \{ \infty \}$, we denote by $p_{\le L}$  the projection map on $\bigoplus_{\mft \in \Lab}\mcb{B} \otimes W_{\mft}$ which vanishes on any subspace of the form $\CT[\tau] \otimes W_{\mft}$ if 
\[
|E_{\tau}| + \sum_{v \in V_{\tau}} |\mfn(v)| > L
\]
and is the identity otherwise. Above 
we write $E_{\tau}$ for the set of edges of $\tau$, $V_{\tau}$ for the set of nodes of $\tau$, and $\mfn$ for the label on $\tau$.
Note that $p_{\le \infty}$ is just the identity operator.
\begin{definition}\label{def:coherence}
We say $\mcA \in \expan$ is coherent to order $L \in \N \cup \{\infty\}$
with $F$ if
 \begin{equ}\label{eq:coherence_def}
p_{\le L} \mcA^{R} = p_{\le L} \bUpsilon(\mathbf{A}^{\mcA})\;,
\end{equ} 
where $\mcA^{R} = \bigoplus_{\mft \in \Lab} \mcA^{R}_{\mft}$ with $\mcA^{R}_{\mft}$ 
determined from $\mcA$ as in \eqref{eq:coherent_jet}. 
\end{definition}
%
%
Note that, by Theorem~\ref{thm:upsilon_summation}, coherence to any order $L$ is preserved under noise decompositions.
Thanks to Theorem~\ref{thm:upsilon_summation} we can reformulate \cite[Lem.~3.21]{BCCH21} to show that our definition of $\bUpsilon$ encodes the condition \eqref{eq:fixedptpblm}; we state this as a lemma. 
\begin{lemma}\label{lem:analog_of_lemma3.16}
$\mcA \in \expan$ is coherent to order $L\in\N\cup\{\infty\}$ 
with $F$ if and only if, for each $\mft \in \Lab$, 
\begin{equ}\label{eq:abs_fixedpt}
p_{\le L}
\mcA^{R}_{\mft}	
=
p_{\le L} 
(\mcb{I}_{\mft} \otimes \id_{W_\mft}) 
F_\mft(\mcA)\;.
\end{equ} 
\end{lemma}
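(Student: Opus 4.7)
The plan is to verify this equivalence by expanding both sides as formal series in the ``scalar jet'' $\mathbf{A}^{\mcA} \in \mcb{A}$ and matching them term-by-term, using induction on $L$. First, interpreting $F_\mft(\hat\mcA)$ as an element of $\mcb{B} \hotimes V_\mft \otimes W_\mft$ via Remark~\ref{rem:tensorPoly}, I would expand it as a Taylor series around the base point $\mathbf{A}^{\mcA}$:
\begin{equ}
F_\mft(\hat\mcA) = \sum_{m \geq 0} \frac{1}{m!} \sum_{(o_1, \ldots, o_m) \in \CE^m} D_{o_1}\cdots D_{o_m} F_\mft(\mathbf{A}^{\mcA})(\tilde\mcA_{o_1}, \ldots, \tilde\mcA_{o_m})\;,
\end{equ}
where, for $o = (\mft', p) \in \CE$, one has $\tilde\mcA_o = \mcA_o - \mathbf{A}_o \bone = \mcA^{R}_o + \sum_{k \in \N^{d+1}\setminus\{0\}} \frac{\mathbf{X}^k}{k!} \otimes \mathbf{A}_{(\mft', p+k)}$. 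Assuming coherence, each $\mcA^{R}_o$ equals (modulo $p_{\leq L}$) the sum $\bUpsilon_o(\mathbf{A}^{\mcA}) = \sum_{\sigma \in \mfT} \Upsilon_o[\sigma](\mathbf{A}^{\mcA})/S(\sigma)$; substituting this yields a double sum over tuples of trees and polynomial corrections which I would reorganise according to the combined tree structure they encode.

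The next step is to group the contributions in this expansion by the resulting tree $\tau = \mathbf{X}^{k}\prod_{j=1}^{\ell} \mcb{I}_{o_j}(\tau_j)^{\beta_j}$, with distinct $(o_j, \tau_j)$ and $\sum_j \beta_j = m$. The factor $\mathbf{X}^{k}$ at the root (together with the $\partial^k$ derivative acting on $F_\mft$ in the recursive formula \eqref{e:def-Upsilon-added}) arises from collecting the polynomial-correction pieces of $\tilde\mcA$: by the very definition \eqref{e:def-partial} of $\partial_j$, summing $\sum_{o' \in \CE} D_{o'} F_\mft \cdot \mathbf{A}_{\partial_j o'}$ over all edges and all orderings reproduces exactly one application of $\partial^k$, with a $\mathbf{X}^k / k!$ prefactor on the $\mcb{B}$-side. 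After this re-summation, the remaining $\sum_j \beta_j$ positions in the Taylor sum are filled solely with tree parts $\Upsilon_{o_j}[\tau_j](\mathbf{A}^{\mcA})/S(\tau_j)$.

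For the combinatorial bookkeeping, the multinomial coefficient $\binom{m}{\beta_1,\ldots,\beta_\ell}$ arising from distributing the $m$ Taylor positions among the $\ell$ distinct types exactly cancels the Taylor factor $1/m!$, leaving $\prod_j 1/\beta_j!$. Combined with the factor $\prod_j 1/S(\tau_j)^{\beta_j}$ from the inductive substitution of $\bUpsilon_{o_j}[\tau_j]$ and the $1/k!$ from the $\partial^k$ reorganisation, this reproduces precisely $1/S(\tau)$, matching \eqref{eq:sym_factor}. Applying $\mcb{I}_\mft \otimes \id_{W_\mft}$ then turns each $\bar\Upsilon_\mft[\tau]/S(\tau)$ into $\Upsilon_\mft[\tau](\mathbf{A}^{\mcA})/S(\tau)$, whose sum over $\tau$ is exactly $\bUpsilon_\mft(\mathbf{A}^{\mcA})$, that is the right-hand side of the coherence identity \eqref{eq:coherence_def}.

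The induction on $L$ is well-founded because $\mcb{I}_\mft$ strictly increases the measure $|E_\tau| + \sum_{v} |\mfn(v)|$ by at least one unit, so that only finitely many trees $\tau$ contribute to $p_{\leq L}$ on either side; this also justifies all rearrangements as manipulations of finite sums. The main technical obstacle I anticipate is the careful combinatorial matching in the previous paragraph, in particular verifying that the polynomial-correction absorption is compatible with the symmetric-set formalism so that the resulting element of $\mcb{B}[\mcb{I}_\mft\tau] \otimes W_\mft$ is indeed equal, rather than merely proportional, to $\Upsilon_\mft[\tau](\mathbf{A}^{\mcA})/S(\tau)$. Since the entire expansion is reversible modulo $p_{\le L}$, both implications of the ``if and only if'' follow from the same identity.
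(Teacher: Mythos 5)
Your proof takes a genuinely different route from the paper's. The paper's proof is a one-line reduction: it invokes Theorem~\ref{thm:upsilon_summation}, which establishes that $\bUpsilon$ and hence coherence are preserved under noise decompositions, to pass to a scalar decomposition of $(\Lab,W,\mathcal{K})$ and then cites \cite[Lem.~3.21]{BCCH21} directly. The whole point of Theorem~\ref{thm:upsilon_summation} in the surrounding framework is precisely to make statements like this ``free'' in the vector setting. You instead give a direct computational proof in the vector-valued formalism by Taylor-expanding $F_\mft(\hat\mcA)$ around $\mathbf{A}^{\mcA}\cdot\bone$, regrouping by tree shape, and matching combinatorial factors against the recursion \eqref{e:def-Upsilon-added} and the symmetry factor \eqref{eq:sym_factor}; this amounts to transplanting the proof of \cite[Lem.~3.21]{BCCH21} into the vector setting. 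Both routes are valid: yours is self-contained and does not rely on Theorem~\ref{thm:upsilon_summation}, at the cost of redoing bookkeeping that the paper inherits from the scalar case.

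The combinatorial obstacle you flag is real and is where the bulk of the work hides if one follows your route. You need to verify (i) that the polynomial-correction arguments in the Taylor series reassemble into $\sum_{k}\frac{\mbX^{k}}{k!}\partial^{k}$ acting on $F_\mft$ \dash this is a multi-index Taylor-jet identity, and since $\partial_j$ is defined by \eqref{e:def-partial} (not as a genuine partial derivative) one must unwind the Leibniz structure and check that the factors $\frac{1}{k_1!\cdots k_q!}$ and $\frac{1}{q!}$ from picking $q$ polynomial corrections with multi-indices $k_1,\ldots,k_q$ summing to $k$ indeed reproduce $\frac{1}{k!}\partial^k$; and (ii) that the placement of $\partial^k$ outside $D_{o_1}\cdots D_{o_m}$ in \eqref{e:def-Upsilon-added} is consistent with the order in which the regrouping produces these operators (they do not commute, since $\partial_j$ involves multiplication by $\mathbf{A}_{\partial_j o}$). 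Also a small notational slip: you use $m$ both for the total Taylor degree and for $\sum_j\beta_j$, the number of tree-type arguments remaining after the polynomial re-summation; these differ and should be kept apart, though this does not affect the correctness of the factor $1/S(\tau)$ once the bookkeeping is done carefully.
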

\begin{remark}\label{rem:alt_way_calc_ups}
Combining Lemma~\ref{lem:analog_of_lemma3.16} with Definition~\ref{def:coherence} shows that 
$\bUpsilon$ does indeed have the advertised property, namely it yields a formula for the 
``non-standard part'' of the expansion of any solution to the algebraic counterpart \eqref{eq:fixedptpblm}
of the mild formulation of the original problem \eqref{e:SPDE}.

Conversely, this
provides us with an alternative method for computing $\bUpsilon(\mathbf{A})[\tau]$ for any $\tau \in \mfT$.
Given $\mathbf{A} \in \mcb{A}$, set $\mcA^{(0)}  = \mathbf{A} \in \expan$ (recall \eqref{eq:coherent_jet} for the identification of $\mcb{A}$ as a subspace of $\expan$) and then proceed iteratively by setting
\begin{equ}
\mcA^{(n+1)}_\mft = \mathbf{A}_\mft + (\mcb{I}_{\mft} \otimes \id_{W_\mft}) F_\mft(\mcA^{(n)})\;.
\end{equ}
Subcriticality then guarantees that any of the projections $\mcA^{(n)}[\tau]$ stabilises 
after a finite number of steps, and one has $\bUpsilon_{\mft}[\tau](\mathbf{A}) = \mcA^{(\infty)}[\tau]$. 
\end{remark}
\begin{remark}\label{rem:analytic-fix-pt}
The material discussed in Section~\ref{subsec:nonlinearities} up to this point has been devoted to treating \eqref{e:SPDE} as an \emph{algebraic} 
fixed point problem in the space $\expan$. 
We also want to solve an \emph{analytic} fixed point problem in a space of modelled distributions, namely in a space of $\expan$-valued functions 
over some space-time domain. 

Posing the analytic fixed point problem requires us to start with more input than we needed for the algebraic one. 
After fixing $F \in \mcb{Q}$ one also needs to fix suitable\footnote{Here, ``suitable'' means sufficiently large so that the fixed point
problem is well-posed. Subcriticality guarantees that setting $\gamma_\mft = \gamma$ for all $\mft$ is a suitable choice provided that $\gamma \in \R_+$ is sufficiently large.} regularity exponents $(\gamma_\mft : \mft \in \Lab)$ for the modelled distribution spaces involved and initial data $(u_{\mft}: \mft \in \Lab_{+})$ for the problem.
Moreover, one prescribes a modelled distribution expansion for each noise, namely for every $\mfl \in \Lab_-$,
we fix a modelled distribution $\mathcal{O}_{\mfl}$ of regularity $\cD^{\gamma_\mfl}$ of the form 
\begin{equ}\label{eq:input_for_noise}
\mathcal{O}_{\mfl}(z)
=
\sum_{k \in \N^{d+1}}
O_{(\mfl,k)}(z)\mbX^{k}
+
\mcb{I}_{\mfl}(\bone)\;.
\end{equ}
The corresponding analytic fixed point problem \cite[Eq.~(5.6)]{BCCH21} is then posed on a space of modelled distributions 
$\mcU = (\mcU_{\mft}: \mft \in \Lab_{+})$ such that $\mcU_{\mft} \in \cD^{\gamma_\mft}$ (at least locally).
On some space-time domain $D$ (typically of the form $[0,T] \times \R^d$), the fixed point problem is of the form
\begin{equ}
\mcU_{\mft}
=
\CP_\mft \bone_{t > 0} F_\mft\big((\mcU\sqcup \mcO)(\bigcdot)\big)
+
G_{\mft}u_{\mft}\;.
\end{equ}
In this identity, $\CP_\mft$ is an operator of the form
\begin{equ}
\big(\CP_\mft \mathcal{F}\big)(z) =  \mathfrak{p}_{\le \gamma_{\mft}} \mcb{I}_{\mft} \mathcal{F}(z) + (\ldots)\;,
\end{equ}
where $(\ldots)$ takes values in $\bar \CT_\mft \eqdef \bigoplus_{ k \in \N^{d+1}} \CT[\mbX^{k}] \otimes W_{\mft}$,
and $G_\mft$ is the ``harmonic extension map'' as in \cite[(7.13)]{Hairer14} associated to $(\partial_{t} - \mathscr{L}_{\mft})^{-1}$
(possibly with suitable boundary conditions). Here, $\mathfrak{p}_{\le \gamma_{\mft}}$ is the projection onto components of degree 
less than $\gamma_{\mft}$. 
Since $G_\mft$ also takes values in $\bar \CT_\mft$, it follows that for any solution $\mcU$ to such a fixed 
point problem and any space-time point $z \in D$, $\mcU(z) \sqcup \mcO(z)$ is coherent with $F$ to 
some order $L$ which depends on the exponents $(\gamma_\mft : \mft \in \Lab)$; see~\cite[Thm.~5.7]{BCCH21} for a precise statement.

Note that, depending on the degrees of our noises, there can be some freedom in our choice of \eqref{eq:input_for_noise} depending on how we choose to have our model act on symbols $\mcb{I}_{\mfl}(\bone)$ for $\mfl \in \Lab_{-}$ \dash the key fact is that $\mathcal{O}_{\mfl}$
represents the corresponding driving noise in our problem, not necessarily $\mcb{I}_{\mfl}(\bone)$. 
However, when $\deg(\mfl) < 0$, a natural choice for the input \eqref{eq:input_for_noise} is to simply set $O_{(\mfl,k)}(z) = 0$ for all 
$k \in \N^{d+1}$ and this is the convention we use in Sections~\ref{sec:solution_theory} and \ref{sec:gauge_equivar}. 
\end{remark}

\subsubsection{Renormalised equations}\label{subsec:renorm_eq}
We now describe the action of the renormalisation group $\CG_{-}$ on nonlinearities, which is how it produces counterterms in equations.
We no longer treat $F \in \mcb{Q}$ as fixed and when we want to make the dependence of $\bar{\bUpsilon}$ on $F \in \mcb{Q}$  explicit we write  $\bar{\bUpsilon}^{F}$.  
We re-formulate the main algebraic results of \cite{BCCH21} in the following proposition;
the proof is obtained by using Theorem~\ref{thm:upsilon_summation} to restate \cite[Lem.~3.22, Lem.~3.23, and Prop.~3.24]{BCCH21}.
\begin{proposition}\label{prop:renormalisation_preserves_coherence}
Fix $\ell \in \mathcal{G}_{-}$.
There is a map $F \mapsto M_{\ell}F$, taking $\mcb{Q}$ to itself, defined by, for $\mft \in \Lab$ and $\mathbf{A} \in \mcb{A}$,
\begin{equs}[e:MellF]
(M_{\ell}F)_{\mft}(\mathbf{A})
& \eqdef 
(p_{\bone,\mft} M_{\ell}\otimes \id_{V_\mft \otimes W_{\mft}}) \bar{\bUpsilon}_{\mft}^{F}(\mathbf{A})
\\
&=
F_{\mft}(\mathbf{A})
+
\sum_{
\tau \in \mfT_{-}(R)}
(\ell \otimes \id_{V_\mft \otimes W_{\mft}})
\bar{\bUpsilon}^{F}_{\mft}[\tau](\mathbf{A})\;,
\end{equs}
where $p_{\bone,\mft}$ denotes the projection onto $\CT[\bone]$ and the operator $M_{\ell}$ on the right-hand side is given by \eqref{eq:renorm_op}.

Moreover, for any $L \in \N \cup \{\infty\}$,
\begin{equ}\label{eq:renormalised_coherence}
p_{\le L}\Big(\bigoplus_{\mft \in \Lab} M_{\ell} \otimes \id_{V_\mft \otimes W_{\mft}}\Big)
 \bar{\bUpsilon}^{F} = p_{\le L} \bar{\bUpsilon}^{M_{\ell}F}\;,
\end{equ}
and there exists $\bar L \in \N \cup \{\infty\}$,
which depends only on $L$ and the rule $R$ and which can be taken finite if $L$ is finite, such that if $\mcA \in \expan$ is coherent to order $\bar L$ with $F \in \mcb{Q}$ then $(M_{\ell} \otimes \id)\mcA$ is coherent to order $L$ with $M_{\ell}F$. 
\end{proposition}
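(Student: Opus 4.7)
The plan is to reduce the statement to its scalar counterparts \cite[Lem.~3.22, Lem.~3.23, Prop.~3.24]{BCCH21} via a scalar noise decomposition, using Theorem~\ref{thm:upsilon_summation} as the bridge. First, I would fix an arbitrary scalar decomposition $(\bar{\Lab}, \bar W, \bar{\mathcal{K}})$ of $(\Lab, W, \mathcal{K})$ under some $\proj$ (which exists by Assumption~\ref{assump:finite_dim} and Remark~\ref{rem:scalar_decompositions}). As noted in Section~\ref{sec:renorm}, the Hopf algebra $\CF_-$, the character group $\CG_-$, and the renormalisation operator $M_\ell$ are all preserved (up to natural isomorphism) under this decomposition, so $\ell \in \CG_-$ can equivalently be viewed in the scalar setting. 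Similarly, $F \in \mcb{Q}$ can be identified, via the inclusion $\smooth(B) \hookrightarrow \smooth(\bar{\mcb{A}}, B)$ from Remark~\ref{rem:identity_is_special}, with a member of the corresponding scalar $\bar{\mcb{Q}}$ (it still obeys the rule $\bar R$ of Remark~\ref{rem:proj extend to E}, and the identity normalisation on $\Lab_-$ passes correctly under $\proj$).

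Next I would check that $M_\ell F$ as defined by \eqref{e:MellF} is a well-defined element of $\mcb{Q}$. The equality of the two expressions in \eqref{e:MellF} is immediate from the definition \eqref{eq:renorm_op} of $M_\ell$ together with the decomposition of $\CT$ along $\mfT_-(R)$ and the fact that $p_{\bone,\mft}\bar{\bUpsilon}^F_\mft[\bone] = F_\mft$ by \eqref{eq:base_case_for_upsilon}. That $M_\ell F \in \mcb{Q}$ \dash i.e., that it again obeys $R$ and satisfies $(M_\ell F)_\mfl = \id_{W_\mfl}$ for $\mfl \in \Lab_-$ \dash is the scalar statement of \cite[Lem.~3.23]{BCCH21} transported back through Theorem~\ref{thm:upsilon_summation}: indeed, that theorem ensures that $\bbUpsilon^F_\mft[\tau]$ in our setting equals the sum over $\bar\mft \in \proj(\mft)$ and $\bar\tau \in \proj(\tau)$ of the scalar $\bbUpsilon^F_{\bar\mft}[\bar\tau]$, so compatibility with $R$ (respectively constancy on $\Lab_-$ by Remark~\ref{rem:noise_edges_terminal}\ref{pt:leaves}) lifts from the scalar side.

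Then the identity \eqref{eq:renormalised_coherence} is the main algebraic content. In the scalar case it is exactly \cite[Prop.~3.24]{BCCH21}, which says that the action of $M_\ell$ intertwines $\bar{\bUpsilon}^F$ and $\bar{\bUpsilon}^{M_\ell F}$. Applying $\proj^*$ and using the functoriality of $\Func_V$ together with Theorem~\ref{thm:upsilon_summation}, both sides of \eqref{eq:renormalised_coherence}, truncated at level $L$, decompose as finite direct sums over $\bar\mft \in \proj(\mft)$ and $\bar\tau$ of the corresponding scalar quantities, and the identity transfers term by term. The coherence statement for $(M_\ell \otimes \id)\mcA$ then follows by combining \eqref{eq:renormalised_coherence} with Lemma~\ref{lem:analog_of_lemma3.16}: apply $p_{\le L}$ to $(M_\ell \otimes \id)\mcA^R$, use \eqref{eq:renormalised_coherence} to rewrite the right-hand side of the coherence relation in terms of $\bar{\bUpsilon}^{M_\ell F}(\mathbf{A}^{\mcA})$, and observe that $\mathbf{A}^{(M_\ell \otimes \id)\mcA} = \mathbf{A}^{\mcA}$ since $M_\ell$ acts as the identity on the polynomial sector $\bar\CT$.

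The only non-routine point is the existence of the threshold $\bar L$ and the fact that it can be taken finite when $L$ is finite. This is the step I expect to require the most care: because $\Deltam \tau$ produces terms of the form $\ff \otimes \sigma$ where $\ff \in \mfF_-(R)$ is extracted from $\tau$ and $\sigma = \tau/\ff_\mfe$, the application of $M_\ell$ at ``depth $L$'' in $\mcA^R$ requires knowing $\mcA^R$ up to trees whose size is bounded by $L$ plus the largest size of a subtree in $\ff$. Since $R$ is subcritical and complete, only finitely many $\ff \in \mfF_-(R)$ contribute to the extraction at each bounded depth, which yields a finite $\bar L = \bar L(L, R)$. This quantitative bookkeeping is the finite-$L$ analogue of the proof of \cite[Prop.~3.24]{BCCH21} and transfers directly via Theorem~\ref{thm:upsilon_summation}.
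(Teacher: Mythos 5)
Your proof is correct and follows exactly the same route as the paper, which (very tersely) states that the result is ``obtained by using Theorem~\ref{thm:upsilon_summation} to restate \cite[Lem.~3.22, Lem.~3.23, and Prop.~3.24]{BCCH21}'' via a scalar noise decomposition. You have usefully spelled out the details left implicit there, including the observations that $\CG_-$ and $M_\ell$ are invariant under noise decomposition, that the base case of \eqref{e:MellF} comes from \eqref{eq:base_case_for_upsilon}, and that $M_\ell$ fixes the polynomial sector (hence $\mathbf{A}^{(M_\ell\otimes\id)\mcA}=\mathbf{A}^{\mcA}$), so no corrections are needed.
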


\begin{remark}
Note that for $\mft \in \Lab$, $\bar{\bUpsilon}^{F}_{\mft}[\tau](\mathbf{A})\in \CT[\tau]\otimes V_\mft \otimes W_\mft$, so every term on the right-hand side of \eqref{e:MellF} is an element of $V_\mft \otimes W_\mft$.
\end{remark}

%

\section{Solution theory of the SYM equation}
\label{sec:solution_theory}
In this section we make rigorous the solution theory for \eqref{eq:SYM} and provide the proof of Theorem~\ref{thm:local_exist}. 
In particular, we explicitly identify the counterterms appearing in the renormalised equation as this will be needed for the proof of gauge covariance in Section~\ref{sec:gauge_equivar}.
Recalling Remark~\ref{rem:simple_vs_reductive} we make the following assumption. 
\begin{assumption}\label{as:simple}
The Lie algebra $\mfg $ is simple.
\end{assumption}
The current section is split into two parts.
In Section~\ref{subsec:Recasting vec reg struct} we recast \eqref{eq:SYM} into the framework of regularity structures with vector-valued noise using Section~\ref{sec:vector-valued_noises}.  
In Section~\ref{sec:BPHZ-YM2} we invoke the black box theory of \cite{Hairer14,CH16,BHZ19,BCCH21} to prove convergence of our mollified \slash renormalised solutions and then explicitly compute our renormalised equation (using Proposition~\ref{prop:renormalisation_preserves_coherence}) in order to show that, when $d=2$, the one counterterm appearing converges to a finite value as $\eps \downarrow 0$. 
\subsection{Regularity structure for the SYM equation}
\label{subsec:Recasting vec reg struct}
We set up our regularity structure for formulating \eqref{eq:SYM} in $d=2$ or $d=3$ space dimensions. 
However, when, computing counterterms, we will again fix $d = 2$. We write $[d]\eqdef \{1,\cdots,d\}$.

Our space-time scaling $\s \in [1,\infty)^{d+1}$ is given by setting $\s_{0} = 2$ and $\s_{i} = 1$ for $i \in [d]$. 
We define $\mfL_+ \eqdef \{\mfa_i\}_{i=1}^{d}$ and $\mfL_- \eqdef \{\mfl_i\}_{i=1}^{d}$.
We define a degree $\deg\colon \mfL \rightarrow \R$ on our label set by setting 
\begin{equ}[e:setting-deg]
\deg( \mft) 
\eqdef
\begin{cases}
2 & \mft \in \mfL_{+}\;,\\
-d/2 - 1 - \kappa & \mft \in \mfL_{-}	
\end{cases}
\end{equ}
where we fix $\kappa \in (0,1/4)$.
 
Looking at equation \eqref{eq:renormalised_SYM} leads us to consider the rule $\mathring{R}$ given by setting, for each $i \in [d]$, $\mathring{R}(\mfl_i) \eqdef \{\emptyset\}$ and
\begin{equ}\label{eq:sym_rule}
\mathring{R}(\mfa_i)
\eqdef
\left\{
\begin{array}{c}
\{(\mfl_i,0)\},\ \{(\mfa_i,0),(\mfa_j,0),(\mfa_j,0)\}\\
\{(\mfa_j,0),(\mfa_j,e_i)\},\ \{(\mfa_j,0),(\mfa_i,e_j)\}
\end{array}
: j \in [d] \right\}
\;.
\end{equ} 
It is straightforward to verify that $\mathring{R}$ is subcritical. 
The rule $\mathring{R}$ has a smallest normal \cite[Definition~5.22]{BHZ19} extension and this extension admits a completion $R$ as constructed in \cite[Proposition~5.21]{BHZ19} \dash $R$ is also subcritical and will be the rule used to define our regularity structure. 

We fix our target space assignment 
$(W_{\mft})_{\mft \in \mfL}$ and kernel space assignment by setting 
\begin{equ}[e:YM2-target space assignment]
W_{\mft} \eqdef \mfg
\quad \forall\mft \in \mfL 
\qquad 
\text{and}
\qquad 
\mathcal{K}_{\mft} = \R 
\quad
\forall\mft \in \mfL_{+}\;.
\end{equ}
The space assignment $(V_{\mft})_{\mft \in \mfL}$ used in the construction of our concrete regularity structure via the functor  $\Func_{V}$ is then given by \eqref{e:defV}. 
\begin{remark}\label{rem:switching_to_standard_notation}
While the notation $\mathbf{A} =(\mathbf{A}_{(\mft,p)}: (\mft,p) \in \CE) \in \mcb{A}$ was convenient for the formulation and proof of the statements of Section~\ref{subsec:nonlinearities}, it would make the computations of this section and Section~\ref{sec:gauge_equivar} harder to follow.
We thus go back to using the symbol $A$ for the components  
$\mathbf{A}_{(\mft,p)}$ of $\mathbf{A}$ with $\mft \in \Lab_{+}$ and
the symbol $\xi$ for the components $\mathbf{A}_{(\mfl,0)}$ with $\mfl \in \Lab_-$.
To streamline notations, we also write the subscript $p$ as a derivative, namely we write
\begin{equ}\label{eq:switching_to_standard_notation}
\xi_{i} = \mathbf{A}_{(\mfl_i,0)}\;,\qquad
A_{i} = \mathbf{A}_{(\mfa_i,0)}
\qquad
 \mbox{and}
 \qquad
\d_j A_{i} = \mathbf{A}_{(\mfa_i,e_{j})}\;.
\end{equ}
\end{remark} 

Regarding the specification of the right-hand side $F = \bigoplus_{\mft \in \Lab} F_{\mft} \in \mcb{Q}$, we set, for each $i \in [d]$ and $\mathbf{A}  \in \mcb{A}$,  $F_{\mfl_i}(\mathbf{A}) = \id_{\mfg}$ and
\begin{equs}
F_{\mfa_i}(\mathbf{A}) 
&=  \mathbf{A}_{(\mfl_i,0)}+
\sum_{j=1}^{d}
[\mathbf{A}_{(\mfa_j,0)},2 \mathbf{A}_{(\mfa_i,e_j)} - \mathbf{A}_{(\mfa_j,e_i)} + [\mathbf{A}_{(\mfa_j,0)},\mathbf{A}_{(\mfa_i,0)}]] \\
&=  \xi_{i}+
\sum_{j=1}^{d}
[A_{j},2\d_j A_{i} - \d_i A_{j} + [A_{j},A_{i}]]\;,\label{e:YM-Fi}
\end{equs}
where the identification of the two lines uses the notations of Remark~\ref{rem:switching_to_standard_notation}.

The right-hand side of \eqref{e:YM-Fi} is clearly a polynomial in a finite number of components of $\mathbf{A}$ taking values in $W_{\mfa_i} = \mfg$, 
so indeed $F \in \mathring{\mcb{Q}}$.
The derivatives $D_{o_{1}}\cdots D_{o_{m}}F_{\mfa_i}(\mathbf{A})$, for $o_{1},\dots,o_{m} \in \CE$, are not difficult to compute. 
For instance, for fixed $\mathbf{A} \in \mcb{A}$, $D_{(\mfa_j,0)}F_{\mfa_i}(\mathbf{A}) \in L(W_{(\mfa_j,0)},W_{(\mfa_i,0)}) = L(\mfg,\mfg)$ is given by
\begin{equs}
\big(
D_{(\mfa_j,0)}F_{\mfa_i}(\mathbf{A})
\big)(\bullet)
&= [\bullet,2\d_j A_{i} - \d_i A_{j}
+
[A_{j},A_{i}]] +
[A_{j}, [\bullet,A_{i}]] \\
&\qquad +
\delta_{i,j} \sum_{k=1}^d [A_{k}, [A_k,\bullet]]\;.
\end{equs}
It is then straightforward to see that $F \in \mcb{Q}$, namely, it obeys the rule $R$ in the sense of Definition~\ref{def:conforming_nonlin}. 
\subsection{The BPHZ model \texorpdfstring{\slash}{/} counterterms for the SYM equation in \texorpdfstring{$d=2$}{d=2}}
\label{sec:BPHZ-YM2}

We remind the reader that we now restrict to the case $d=2$ and $|\mfl|_\s = -2-\kappa$ for every $\mfl \in \mfL_{-}$.

As mentioned in Remark~\ref{rem:noise is I-1},
we will use the symbol $\Xi_i \eqdef \mcb{I}_{(\mfl_i,0)}({\bf 1})$ for the noise.\label{Xi page ref}
Similarly to Remark~\ref{rem:switching_to_standard_notation}, we also use the notations $\mcb{I}_{i}$ and $\mcb{I}_{i,j}$
as shorthands for $\mcb{I}_{(\mfa_i,0)}$ and $\mcb{I}_{(\mfa_i,e_j)}$ respectively.

Below we introduce a graphical notation to describe forms of relevant trees. 
The noises $\Xi_{i}$ are circles $\<Xi>$, noises with polynomials 
$\mbX^{e_j}\Xi$ with $j \in [d]$ are crossed circles $\<XXi>$, and edges $\mcb{I}_{i}$ and $\mcb{I}_{i,j}$ 
are thin and thick grey lines respectively. 
It is always assumed that the indices $i$ and $j$ appearing on occurrences of $\Xi_{i}$, $\mcb{I}_{i}$, and $\mcb{I}_{i,j}$ throughout the tree are constrained by the requirement that our trees conform to the rule $R$. 

We now give a complete list of the forms of trees in $\mfT(R)$ with negative degree, the form is listed on the top and the degree below it. 
\begin{center}
\begin{tabular}{ccccccc}
\toprule
$\<Xi>$
&
$\<IXiI'Xi_notriangle>$
&
$\<XXi>$ $\<I'Xi_notriangle>$
&
$\begin{array}{c}
\<IXi^3_notriangle>\ \<IXiI'[IXiI'Xi]_notriangle>\\ 
\<I[IXiI'Xi]I'Xi_notriangle>
\end{array}$
&
$\begin{array}{c}
\<I'[IXiI'Xi]_notriangle>\ \<I[I'Xi]I'Xi_notriangle>\ \<IXiI'[I'Xi]_notriangle>,\\
\<IXi^2>\ \<IXiI'XXi_notriangle>\ \<IXXiI'Xi_notriangle>
\end{array}$
&
$\begin{array}{c}
\<IXi>\ \<I'XXi_notriangle>\ \<I'[I'Xi]_notriangle>\\
 \mbX^k\Xi_{i},\ |k|_\s = 2
\end{array}$

\\
\midrule
$-2-\kappa$
&
$-1-2\kappa$
&
$-1-\kappa$
&
$-3\kappa$
&
$-2\kappa$
&
$-\kappa$
\\
\bottomrule
\end{tabular}
\end{center}
Note that each symbol above actually corresponds to a family of trees, determined by assigning indices in a way that conforms to the rule $R$. 
For instance, when we say that $\tau$ is of the form $\<I[IXiI'Xi]I'Xi_notriangle> $, then $\tau$ could be any tree of the type
\[
\mcb{I}_{i_{1}}\big(\mcb{I}_{i_{2}}(\Xi_{i_2}) \mcb{I}_{i_{3},j_{3}}(\Xi_{i_3})\big)
\mcb{I}_{i_4,j_4}(\Xi_{i_4})
\]
for any $i_{1},i_{2},i_{3},i_{4},j_{3},j_{4} \in [d]$
satisfying both of the following two constraints:
first, one must have either $i_{1} = i_{4}$ \emph{or} $j_{4} = i_{1}$,
and, second, one must have either $i_{2} = i_{3}$ and $j_{3} = i_{1}$
\emph{or} $i_{2} = j_{3}$ and $i_{3} = i_{1}$. 

Note that a circle $\<Xi>$ or a crossed circle $\<XXi>$ actually represents an \textit{edge} when we 
think of any of the corresponding typed combinatorial trees; for instance, in the sense of Section~\ref{sec:symtrees}, 
$\<IXiI'Xi_notriangle>$ has \textit{four} edges and not two.
In Section~\ref{subsec:comp_of_ups}, we will further colour our graphical symbols to encode constraints on indices.

\subsubsection{Kernel and noise assignments for \eqref{eq:SYM}}\label{subsubsec:kernelandnoiseforsym}
We fix a kernel assignment by setting, for every $\mft \in \Lab_{+}$, $K_{\mft} = K$ where we fix $K$ to be a truncation of the Green's function $G(z)$ of the heat operator which satisfies the following properties: 
\begin{enumerate}
\item $K(z)$ is smooth on $\R^{d+1} \setminus \{0\}$.
\item $K(z) = G(z)$ for $0 \le |z|_{\s} \le 1/2$.
\item $K(z) = 0$ for $|z|_{\s} > 1$
\item\label{pt:K_non-ant} Writing $z =(t,x)$ with $x \in \T^{2}$, $K(0,x) = 0$ for $x \not = 0$, and $K(t,x) = 0$ for $t < 0$.  
\item Writing $z = (t,x_{1},x_{2})$ with $x_{1},x_{2}$ the spatial components, $K(t,x_{1},x_{2}) = K(t,-x_{1},x_{2}) = K(t,x_{1},-x_{2})$ and $K(t,x_{1},x_{2}) = K(t,x_{2},x_{1})$. 
\end{enumerate}
We will also use the shorthand $K^{\eps} = K \ast \moll^{\eps}$.
\begin{remark}\label{rem:kernel_assump}
Property~\ref{pt:K_non-ant} is not strictly necessary for the proof of Theorem~\ref{thm:state_space} but will be convenient for proving item \ref{pt:unique_Markov} of Theorem~\ref{thm:gauge_covar} in Section~\ref{sec:gauge_equivar} so we include it here for convenience.  Property 5 is also not strictly necessary, but convenient if we want certain BPHZ renormalisation constants to vanish rather than just being finite.

Note that we do not assume a moment vanishing condition here as in \cite[Assumption~5.4]{Hairer14} \dash the only real change from the framework of \cite{Hairer14} that dropping this assumption entails is that, for $ p,k \in \N^{d+1}$, we can have presence of expressions such as $\mcb{I}_{(\mfm,p)}(\mbX^{k})$ when we write out trees in $\mfT(R)$. 
Works such as \cite{CH16}, \cite{BHZ19}, \cite{BCCH21} already assume trees containing such expressions are allowed to be present.	
\end{remark}
Next, we overload notation and introduce a random smooth noise assignment $\zeta^{\eps} = (\zeta_{\mfl})_{\mfl \in \Lab_{-}}$ by setting, for $i \in [d]$, $\zeta_{\mfl(i)} = \xi_{i}^{\eps}$ where we recall that $\xi_{i}^{\eps} = \xi_{i} \ast \moll^{\eps}$ and  $(\xi_{i})_{i=1}^{d}$ are the i.i.d. $\mfg$-valued space-time white noises introduced as the beginning of the paper. 
With this fixed choice of kernel assignment and random smooth noise assignment $\zeta^{\eps}$ for $\eps > 0$ we have a corresponding BPHZ renormalised model $Z^{\eps}_{\BPHZ}$. 
We also write $\ell_{\BPHZ}^{\eps} \in \CG_{-}$ for the corresponding BPHZ character. 
\subsubsection{Convergence of models for \eqref{eq:SYM}}
We now apply \cite[Theorem~2.15]{CH16} to prove the following.
\begin{lemma}\label{lem:conv_of_models}
The random models $Z^{\eps}_{\BPHZ}$ converge in probability, as $\eps \downarrow 0$, to a limiting random model $Z_{\BPHZ}$. 
\end{lemma}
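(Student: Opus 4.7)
The plan is to invoke \cite[Thm.~2.15]{CH16}, which provides a general criterion for convergence in probability of BPHZ-renormalised random models associated to a regularity structure. That theorem is stated for scalar noises, so the first step is to pass to a scalar decomposition of our setting as described in Remark~\ref{rem:scalar_decompositions}. Concretely, fix an orthonormal basis $(e_a)_{a=1}^{\dim \mfg}$ of $\mfg$ and let $\bar{\mfL}_-$ consist of pairs $(\mfl_i, a)$, with $\proj(\mfl_i) = \{(\mfl_i, a) : 1 \le a \le \dim\mfg\}$ and $W_{(\mfl_i,a)} = \R$. The corresponding decomposition of the white noises $\xi_i$ into independent scalar space--time white noises $\xi_i^a = \scal{\xi_i, e_a}$ yields, via Proposition~\ref{prop:nat_transform}, a regularity structure in the sense of \cite{Hairer14,BHZ19} together with smooth noise and kernel assignments of the scalar type considered in \cite{CH16}, and BPHZ models that are canonically identified with the $Z^\eps_{\BPHZ}$ built above.

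Once in the scalar setting, it remains to verify the hypotheses of \cite[Thm.~2.15]{CH16}. The kernels $K_\mft = K$ are translation invariant truncations of the heat kernel and thus satisfy the standard admissibility and non-anticipativity assumptions. The driving noises are Gaussian space--time white noises, so their cumulants vanish beyond order two and the second cumulant is a delta, which trivially satisfies the required cumulant bounds with the degree assignment $-\frac{d}{2}-1-\kappa = -2-\kappa$ prescribed in \eqref{e:setting-deg}. The only genuinely combinatorial point is the power-counting condition: for each tree $\tau \in \mfT_-(R)$ (i.e.\ each unplanted tree of negative degree with $\mfn(\rho)=0$) and each subtree $\sigma \subset \tau$, one must check that $\deg \sigma > -|\s|/2 = -2$, and analogously for contracted trees appearing in the expansion. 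This follows directly from the explicit table of trees of negative degree displayed above together with the fact that $\kappa \in (0,\tfrac14)$: each entry of the table has degree strictly greater than $-2$, and every subtree or contracted tree arising from these is either of positive degree or appears in the same list, so the bounds hold with uniform room. This is exactly the manifestation of subcriticality of the rule $R$ at $d=2$.

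With all hypotheses of \cite[Thm.~2.15]{CH16} verified, the conclusion gives convergence in probability of the BPHZ-renormalised scalar models, and hence of the $Z^\eps_{\BPHZ}$, to a limit $Z_{\BPHZ}$. The only step that could require genuine attention rather than bookkeeping is the power-counting verification, but since the list of negative-degree trees is finite and explicit, and $\kappa$ is small, this reduces to checking a short list of inequalities and presents no real obstacle.
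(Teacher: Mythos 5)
Your overall strategy matches the paper's: pass to a scalar noise decomposition via Remark~\ref{rem:scalar_decompositions} and Proposition~\ref{prop:nat_transform}, then apply \cite[Thm.~2.15]{CH16}. However, your verification of the power-counting hypotheses contains a factual error and a mischaracterisation that, taken at face value, would sink the argument.

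You claim that ``each entry of the table [of negative-degree trees] has degree strictly greater than $-2$.'' This is false: the first entry is the bare noise $\<Xi>$, which has degree $-2-\kappa < -2$. More importantly, your stated formulation of the power-counting requirement --- that ``for each tree $\tau \in \mfT_-(R)$ and each subtree $\sigma\subset\tau$, one must check $\deg\sigma > -|\s|/2 = -2$'' --- is not what \cite[Thm.~2.15]{CH16} asks. That theorem has three distinct conditions. The third condition requires $\deg\tau > -|\s|/2$ only for trees $\tau$ with $|N(\tau)|>1$, i.e.\ containing more than one non-noise node --- this is exactly what excludes the bare noise from the constraint, and why the worst case is $\<IXiI'Xi_notriangle>$ with degree $-1-2\kappa > -2$, not $\<Xi>$. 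The second condition is a separate constraint on the noise degree itself, checked against $-|\s|/2 - \kappa$. And the first condition is a super-regularity inequality of the form $|S|_\s + |\mfl|_\s + |A||\s| > 0$ for a tree $\tau$, a subtree $S$, and a node set $A$ --- not merely a lower bound on the degree of subtrees. The paper verifies this first condition explicitly in the worst case $\tau = \<IXiI'Xi_notriangle>$, $S = \<I'Xi_notriangle>$, $A$ a single noise, obtaining $-1-\kappa - 2-\kappa + 4 = 1-2\kappa > 0$. Your sketch elides all of this into a single inequality that, if actually applied to every tree in the table as you suggest, would fail on $\<Xi>$. To repair the argument, you should state the three conditions of \cite[Thm.~2.15]{CH16} correctly (or at least note their distinct scopes), observe that the third condition excludes single-noise trees, and carry out the explicit worst-case computation for the first condition.
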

\begin{proof}
We note that \cite[Theorem~2.15]{CH16} is stated for the scalar noise setting so to be precise one must verify its conditions after applying some choice of scalar noise decomposition. 
However, it is not hard to see that the conditions of the theorem are completely insensitive to the choice of scalar noise decomposition. 
Let $\zeta = (\zeta_{\mfl})_{\mfl \in \Lab_{-}}$ be the unmollified random noise assignment, that is,
$\zeta_{\mfl(i)} = \xi_{i}$. 

 For any scalar noise decomposition, it is straightforward to verify the condition that the random smooth noise assignments $\zeta^{\eps}$ are a uniformly compatible family of Gaussian noises that converge to the Gaussian noise $\zeta$ (again, seen as a rough, random, noise assignment for scalar noise decomposition in the natural way). 

The first three listed conditions of \cite[Theorem~2.15]{CH16} refer to power-counting considerations written in terms of the degrees of the combinatorial trees spanning the scalar regularity structure and the degrees of the noises. 
Since this power-counting is not affected by decompositions, they can be checked directly on the trees of $\mfT(R)$. 
We note that 
\[
\min \{ \deg(\tau): \tau \in \mfT(R), |N(\tau)| > 1\}
=
-1 - 2\kappa > -2
\]
is achieved for $\tau$ of the form $\<IXiI'Xi_notriangle>$ \dash here $N(\tau)$ is as defined in \eqref{eq: kernels and nodes}.
This is greater than $-|\s|/2$, so the third criterion is satisfied.
Combining this with the fact that $\deg(\mfl) = -|\s|/2 - \kappa$ for every $\mfl \in \Lab_{-}$ guarantees that the second criterion is satisfied. 
Finally, the worst case scenario for the first condition is for $\tau$ of the form $\<IXiI'Xi_notriangle>$ 
with the subtree $S$ therein being $\<I'Xi_notriangle>$
and the set  $A$ therein being a single noise,
 so 
 \[
 |S|_{\s}+|\mfl|_{\s}+|A||\s| = -1-\kappa-2-\kappa+4=1-2\kappa>0
 \]
as required. 
\end{proof}
\subsubsection{The BPHZ renormalisation constants}
The set of trees $\mfT_{-}(R)$ is given by all trees of the form 
\begin{equ}
\<IXiI'Xi_notriangle>, \enskip
\<IXi^3_notriangle>, \enskip
\<IXiI'[IXiI'Xi]_notriangle>, \enskip
\<I[IXiI'Xi]I'Xi_notriangle>, \enskip
\<I[I'Xi]I'Xi_notriangle>, \enskip
\<IXiI'[I'Xi]_notriangle>, \enskip
\<IXi^2>, \enskip
\<IXiI'XXi_notriangle>, \enskip
\textnormal{or }
\<IXXiI'Xi_notriangle> \enskip.
 \end{equ}
Our remaining objective for this section is to compute the counterterms
\begin{equ}[e:Main-Renorm-todo]
\sum_{
\tau \in \mfT_{-}(R)}
(\ell_{\BPHZ}^{\eps}[\tau] \otimes \id_{W_{\mft}})
\bar\bUpsilon^{F}_{\mft}[\tau](\mathbf{A})\;
\end{equ}
for each $\mft \in \Lab_{+}$. 
In what follows, we perform separate computations for the character $\ell_{\BPHZ}^{\eps}[\tau]$ and for
$\bar\bUpsilon^{F}$, before combining them to compute \eqref{e:Main-Renorm-todo}. 
The following lemma identifies some cases where $\ell_{\BPHZ}^{\eps}[\tau] = 0$. 
\begin{lemma}\label{lem:symbols_vanish}
\begin{enumerate}[label=(\roman*)]
\item\label{item:odd}
$\ell_{\BPHZ}^{\eps}[\tau]=0$
 for each $ \tau$ consisting of an odd number of noises, that is any $\tau$ of the form
 $\<IXi^3_notriangle>$, $\<IXiI'[IXiI'Xi]_notriangle>$, and $\<I[IXiI'Xi]I'Xi_notriangle>$.
 \item\label{item:antipode_minus} On every subspace $\CT[\tau]$ of the regularity structure with  $\tau$ of the form $\<IXiI'Xi_notriangle>, \<IXi^2>$, $\<I[I'Xi]I'Xi_notriangle>$ or $\<IXiI'[I'Xi]_notriangle>$, one has $\tilde\mcA_-   = -\id$, so that $\ell_{\BPHZ}^{\eps}[\tau] = -\bar{\PPi}_{\can}[\tau]$.
 \item\label{item:BPHZ_zero}
 $\ell_{\BPHZ}^{\eps}[\tau] = 0$
 for every $\tau$ of the form $\<IXiI'Xi_notriangle>$. 
\item\label{item:noises_same} For $\tau$ of the form $\<IXi^2>$, $\<I[I'Xi]I'Xi_notriangle>$ or $\<IXiI'[I'Xi]_notriangle>$, one has $\ell_{\BPHZ}^{\eps}[\tau]=0$ unless the two noises $\Xi_{i_{1}}$ and $\Xi_{i_{2}}$ appearing in $\tau$ carry the same index, that is $i_{1}=i_{2}$.
\item\label{item:derivs_same} For $\tau$ of the form $\<I[I'Xi]I'Xi_notriangle>$ or $\<IXiI'[I'Xi]_notriangle>$, one has $\ell_{\BPHZ}^{\eps}[\tau] = 0$ unless the two spatial derivatives appearing on the two thick edges in $\tau$ carry the same index.
\end{enumerate}
\end{lemma}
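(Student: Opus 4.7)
The plan is to dispatch each item by combining Wick's theorem, the recursive definition \eqref{eq:recursive_antipode_vec} of the twisted antipode, and the parity properties of the kernel $K$ listed in Section~\ref{subsubsec:kernelandnoiseforsym}. Throughout, we repeatedly use the explicit integral formula \eqref{eq:bold_faced_pi} for $\PPi_\can[\tau]$ together with the fact that, since the $\xi_i$ are i.i.d.\ $\mfg$-valued white noises, one has
$\E[\scal{\xi_i,\phi}\otimes\scal{\xi_j,\psi}] = \delta_{ij}\,C_\mfg\,(\phi,\psi)_{L^2}$
for a fixed element $C_\mfg \in \mfg\otimes\mfg$, so $\bar\PPi_\can[\tau]$ is computed by Wick contractions of the noise leaves.

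For \ref{item:odd}, it suffices to observe that the extraction/contraction morphism $\Deltam$ preserves the parity of the total number of noise leaves (every noise of $\tau$ ends up either in the extracted forest or in the quotient tree). The multiplicative character $\bar\PPi_\can$ vanishes on any forest with an odd total number of noise leaves by Wick's theorem, so the recursion $\tilde\mcA_-\ttau = -\mcM (\tilde\mcA_-\otimes \id)(\Deltam\ttau - \ttau\otimes \bone)$ together with $\ell_\BPHZ = \bar\PPi_\can\circ \tilde\mcA_-$ forces $\ell_\BPHZ[\tau]=0$ on trees with an odd number of noises.

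For \ref{item:antipode_minus}, I would inspect $\Deltam[\tau]$ directly for each of the four tree shapes. In each case, any proper subforest $f\hookrightarrow \tau$ with $\pi_-\scal{f}\neq 0$ must itself lie in $\mfT_-(R)$ and must have a strictly smaller number of noises than $\tau$; by \ref{item:odd} together with a check that no remaining shape with two noises embeds nontrivially, $\bar\PPi_\can$ annihilates the corresponding $(\tilde\mcA_-\otimes \id)$-image. Hence only the term $\ttau\otimes \bone$ in $\Deltam\ttau$ contributes, yielding $\tilde\mcA_- = -\id$ on $\CT[\tau]$, and $\ell_\BPHZ[\tau] = -\bar\PPi_\can[\tau]$.

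Parts \ref{item:BPHZ_zero}, \ref{item:noises_same}, \ref{item:derivs_same} then follow from part \ref{item:antipode_minus} by explicit parity analysis of $\bar\PPi_\can[\tau](0)$. Pairing the two noise leaves via Wick gives a $\delta_{i_1i_2}$ factor, yielding \ref{item:noises_same} immediately. For \ref{item:BPHZ_zero}, taking $\tau$ of the form $\mcb{I}_{i}(\Xi_a)\,\mcb{I}_{j,k}(\Xi_b)$ and Wick-contracting produces an integral proportional to $\delta_{ab}\int K(z)\,(\partial_k K^\eps\!*\!K^\eps)(z)\,dz$ (up to obvious variants); the integrand is odd under $x_k\mapsto -x_k$ thanks to the symmetry assumptions on $K$ and $\moll$, so the integral vanishes. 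For \ref{item:derivs_same}, the two derivatives appearing on the thick edges come with two spatial indices $k_1,k_2$; Wick contraction produces an integral whose integrand is, whenever $k_1\neq k_2$, odd under reflection in the $x_{k_1}$ (or $x_{k_2}$) coordinate, hence integrates to zero. The only real obstacle is bookkeeping \dash verifying cleanly that in each shape the Wick-contracted integrand is a product of $K$'s and $\partial K$'s whose parity in the relevant coordinate is indeed odd \dash but this is a finite case check over the six shapes listed.
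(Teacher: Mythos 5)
Your overall route is the same as the paper's (Wick parity for item (i), the twisted-antipode recursion for item (ii), and reflection arguments for items (iii)--(v)), and your arguments for (i), (iii), (iv), (v) are in substance the paper's. However, your argument for item~(ii) has a genuine logical gap.

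You claim to prove that $\tilde\mcA_- = -\id$ on $\CT[\tau]$, but what your argument actually shows is only the weaker consequence $\ell_{\BPHZ}^{\eps}[\tau] = -\bar\PPi_\can[\tau]$. You argue that $\bar\PPi_\can$ annihilates the contributions from proper subforests in the recursion, and then write ``Hence only the term $\ttau\otimes\bone$ contributes, yielding $\tilde\mcA_- = -\id$.'' That ``hence'' is a non-sequitur: $\tilde\mcA_-$ is a morphism on the abstract regularity structure, defined without any reference to $\bar\PPi_\can$, so an argument that goes through $\bar\PPi_\can$ cannot establish a purely algebraic identity for $\tilde\mcA_-$. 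The correct (and simpler) route, which is what the paper's citation of \cite[Prop.~6.6]{BHZ19} encodes, is to observe that for the four listed shapes \emph{no proper subforest lies in $\mfF_-(R)$ at all}: any candidate single-noise subtree such as $\Xi_i$, $\mcb{I}_i(\Xi_i)$ or $\mcb{I}_{i,j}(\Xi_i)$ is planted and hence excluded from $\mfT_-(R)$ by \eqref{e:def-mfT-}, while noise-free subforests have non-negative degree. Consequently every proper extraction term in $\Deltam\tau$ is killed by $\pi_-$, and the recursion \eqref{eq:recursive_antipode_vec} immediately yields $\tilde\mcA_-\tau = -\tau$. The statement about $\ell_{\BPHZ}^\eps$ then follows from \eqref{e:defBPHZ} as you say. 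You should replace the $\bar\PPi_\can$-annihilation step with this direct $\pi_-$-vanishing argument; item~(i) is not needed to prove item~(ii).
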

\begin{proof}
Item~\ref{item:odd} is true for every Gaussian noise.
For item~\ref{item:antipode_minus}, the statement about the abstract regularity structure is a direct consequence of the definition \eqref{eq:recursive_antipode_vec} of the twisted antipode (see also \cite[Prop.~6.6]{BHZ19}) and the statement about $\ell^{\eps}_{\BPHZ}[\tau]$ then follows from \eqref{e:defBPHZ}. 

For item~\ref{item:BPHZ_zero} if we write $\tau =  \mcb{I}_{i}(\Xi_{i}) \mcb{I}_{j,l}(\Xi_{j})$ then
 \[
 \bar{\PPi}_{\can}[\tau]
 =
 \int \mrd u \mrd v\ K(-u)\partial_{l}K(-v) \E[ \xi_{i}^{\eps}(u) \otimes \xi_{j}^{\eps}(v)]\;.
 \]
Performing a change of variable by flipping the sign of the $l$-component of $v$, followed by exploiting the equality in law of $\xi^{\eps}$ and the change in sign of $\partial_{l}K$ under such a reflection, shows that the integral above vanishes.

For item~\ref{item:noises_same}, the fact that $\E[\xi_{i}^{\eps}(u) \otimes \xi_{j}^{\eps}(v)]=0$ if $i \not = j$ enforces the desired constraint. 

For item~\ref{item:derivs_same}, the argument is similar to that of item~\ref{item:BPHZ_zero} - namely the presence of precisely one spatial derivative in a given direction allows one to argue that $\bar{\PPi}_{\can}[\tau]$ vanishes by performing a reflection in the appropriate integration variable in that direction. 
\end{proof}
\begin{remark}
We now start to use splotches of colour such as $\<green>$ or $\<red>$ to represent indices in $[d]$, since they will allow us to work with expressions that would become unwieldy when using letters. 
We also use Kronecker notation to enforce the equality of indices represented by colours, for instance writing $\delta_{\<green>,\<red>}$. 

We can use colours to include indices in our graphical notation for trees in an unobtrusive way, for instance writing $\<IXigreen> = \mcb{I}_{\<green>}(\<Xigreen>) = \mcb{I}_{\<green>}(\Xi_{\<green>})$. 
Note that the splotch of $\<green>$ in the symbol $\<IXigreen>$ fixes the two indices in $\mcb{I}_{\<green>}(\Xi_{\<green>})$ which have to be equal for any tree conforming to our rule $R$. 
The edges corresponding to integration can be decorated by derivatives which introduce a new index, so we introduce 
notation such as $\<I'Xigreen-red> = \mcb{I}_{\<green>,\<red>}(\Xi_{\<green>})$, where the colour of a thick edge 
determines the index of its derivative. 

For drawing a tree like $\mcb{I}_{\<green>}(\Xi_{\<green>})\mcb{I}_{\<orange>,\<red>}(\mcb{I}_{\<green>,\<red>}(\Xi_{\<green>}))$, our earlier way of drawing $\<IXiI'[I'Xi]_notriangle>$ didn't give us a node to colour $\<orange>$, so we add small triangular nodes to our drawings to allow us to display the colour 
determining the type of the edge incident to that node from below, for example
$\<IXiI'[I'Xi]_typed>
=
\mcb{I}_{\<green>}(\Xi_{\<green>})\mcb{I}_{\<orange>,\<red>}(\mcb{I}_{\<green>,\<red>}(\Xi_{\<green>}))$.
\end{remark}

We will see 
by Lemma~\ref{lem:bUps-shortlist}
 that $\Upsilon_{i}[\tau] = 0$ for any $\tau$ of the form $\<IXiI'XXi_notriangle>$ or  $\<IXXiI'Xi_notriangle>$. 
Therefore, \eqref{e:Main-Renorm-todo} will only have contributions from trees of the form
\begin{equ}\label{eq: trees for renorm constants}
\<I[I'Xi]I'Xi_typed>\;,\quad
\<IXiI'[I'Xi]_typed>\;,\quad\text{or}\quad 
\<IXi^2green>\;.
\end{equ}
Thanks to the invariance of our driving noises under the action of the Lie group we will see in Lemma~\ref{lem:tree_calcs} below that $\ell_{\BPHZ}^{\eps}[\tau]$ has to be a scalar multiple of the Casimir element (in particular, it belongs to the subspace $\mfg \otimes_{s} \mfg \subset \mfg \otimes \mfg$). 
This is an immediate consequence of using noise that is white with respect to our inner product $\langle \cdot, \cdot \rangle$ on $\mfg$. 
In particular, note that this inner product on $\mfg$ induces an inner product $\langle \cdot, \cdot \rangle_{2}$ on $\mfg \otimes \mfg$ and that there is a unique element $\Cas \in \mfg \otimes_{s} \mfg \subset \mfg \otimes \mfg$ with the property that, for any $h_{1},h_{2} \in \mfg$, 
\begin{equ}\label{eq:def_of_cas}
\langle \Cas, h_{1} \otimes h_{2} \rangle_{2} 
=
\langle h_{1},h_{2} \rangle\;.
\end{equ}
One can write $\Cas$ explicitly as $\Cas = \sum_{i} e_{i} \otimes e_{i}$ for any orthonormal basis of $\mfg$ but we will refrain from doing so since we want to perform computations without fixing a basis. 
$\Cas$ should be thought of as the covariance of the $\mfg$-valued white noise, in particular for $i,j \in \{1,2\}$ we have 
\begin{equ}\label{eq:explicit_covar_noise}
\E[\xi_{i}(t,x) \otimes \xi_{j}(s,y)] = \delta_{i,j} \delta(t-s)\delta(x-y)\, \Cas\;.
\end{equ}
Thanks to \eqref{eq:def_of_cas}, $\Cas$ is invariant under the action of the Lie group $G$ in the sense that
\begin{equ}[e:propC]
(\Ad_{g} \otimes \Ad_{g})\, \Cas = \Cas\;, \qquad \forall g \in G\,.
\end{equ}
The identity \eqref{e:propC} is of course just a statement about the rotation invariance of our noise. 
Alternatively, we can interpret $\Cas$ as an element of $U(\mfg)$, the universal enveloping algebra of $\mfg$.
The following standard fact will be crucial in the sequel.
\begin{lemma}\label{lem:centre}
$\Cas$ belongs to the centre of $U(\mfg)$.
\end{lemma}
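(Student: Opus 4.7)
The plan is to promote the $\Ad$-invariance of $\Cas$ recorded in \eqref{e:propC} to a centrality statement in $U(\mfg)$ via a straightforward equivariance-plus-differentiation argument. The first step is to transfer $\Cas$ from $\mfg \otimes_s \mfg$ into $U(\mfg)$ using the multiplication map $m \colon \mfg \otimes \mfg \to U(\mfg)$. The key observation is that $m$ intertwines the diagonal adjoint action of $G$ on $\mfg \otimes \mfg$ with the adjoint action of $G$ on $U(\mfg)$ (the latter being defined as the unique extension of $\Ad$ on $\mfg$ to an action by algebra automorphisms on $U(\mfg)$). Consequently, \eqref{e:propC} implies that $\Ad_g(m(\Cas)) = m(\Cas)$ for every $g \in G$, and we continue to denote this element by $\Cas \in U(\mfg)$.

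The second step is to differentiate this identity at the identity of $G$. For any $X \in \mfg$, setting $g = \exp(tX)$ and differentiating $\Ad_{\exp(tX)}(\Cas) = \Cas$ at $t=0$ yields $\ad_X(\Cas) = [X,\Cas] = 0$ inside $U(\mfg)$. Since $\mfg$ generates $U(\mfg)$ as an associative algebra, an element commuting with all of $\mfg$ automatically commutes with every element of $U(\mfg)$, and this is by definition what it means for $\Cas$ to lie in the centre.

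No real obstacle is expected, as the result is classical; the only minor point requiring care is the passage from $\mfg \otimes_s \mfg$ to $U(\mfg)$, which can also be made explicit by choosing an orthonormal basis $\{e_i\}$ of $\mfg$ with respect to the $\Ad$-invariant inner product, so that $\Cas = \sum_i e_i \otimes e_i$ maps to $\sum_i e_i^2 \in U(\mfg)$; in this concrete form the commutation $[X, \sum_i e_i^2] = 0$ for $X \in \mfg$ recovers the standard one-line computation using the structure constants and the total antisymmetry of $\langle [X,e_i], e_j\rangle$ in $(i,j)$ that follows from $\Ad$-invariance of the inner product.
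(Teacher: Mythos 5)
Your proof is correct and takes essentially the same route as the paper's: both derive centrality by differentiating the $\Ad_g$-invariance of $\Cas$ at $g = e$ to obtain $[X,\Cas] = 0$ in $U(\mfg)$ for all $X \in \mfg$, and then use that $\mfg$ generates $U(\mfg)$. The only difference is presentational: you make explicit the $G$-equivariance of the multiplication map $\mfg\otimes\mfg \to U(\mfg)$ and differentiate the invariance directly in $U(\mfg)$, whereas the paper encodes $\Cas$ as $\E[\theta\otimes\theta]$ for a random $\theta\in\mfg$, differentiates \eqref{e:propC} to get the skew-symmetry relation \eqref{e:asym}, and then verifies $[h,\Cas]=0$ by an explicit telescoping in $U(\mfg)$ — which is just a concrete rendering of the same equivariance fact.
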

\begin{proof}
Let $h \in \mfg$ and let $\theta$ be a random element of $\mfg$ with  $\E(\theta \otimes \theta) = \Cas$.
Differentiating $\E[\Ad_{g}\theta \otimes \Ad_{g}\theta]$ at $g = e$ in the direction of $h$ yields
\begin{equ}[e:asym]
\E \bigl([h,\theta] \otimes \theta\bigr) = -\E \bigl(\theta\otimes [h,\theta]\bigr)\;.
\end{equ}
We conclude that
\begin{equ}{}
[h,\Cas] = \bigl[h,\E \bigl(\theta \otimes \theta\bigr)\bigr]
= \E \bigl(h \otimes \theta \otimes \theta - \theta \otimes \theta \otimes h\bigr)
= \E \bigl([h,\theta] \otimes \theta + \theta \otimes [h,\theta]\bigr)= 0\;,
\end{equ}
as claimed, where we used \eqref{e:asym} in the last step.
\end{proof}
\begin{remark}\label{rem:action_of_casimir}
We note that $\Cas$ is of course just the quadratic Casimir. 
Moreover, recall that every element $h \in U(\mfg)$ yields a linear operator $\ad_h \colon \mfg \to \mfg$
by setting
\begin{equ}
\ad_{h_1 \otimes \cdots \otimes h_k} X = \bigl[h_1,\cdots[h_k,X]\cdots\bigr]\;.
\end{equ}
With this notation, Lemma~\ref{lem:centre} implies that $\ad_{\Cas}$ commutes with every other operator
of the form $\ad_h$ for $h \in \mfg$ (and therefore also $h \in U(\mfg)$). 
If $\mfg$ is simple, then this implies that $\ad_{\Cas} = \lambda \id_{\mfg}$.
\end{remark}
We now describe $\ell_{\BPHZ}^{\eps}[\tau]$ for $\tau$ of the form \eqref{eq: trees for renorm constants}. 
We define  
\begin{equ}[e:defConstants]
\bar C^\eps \eqdef \int \mrd z\ K^\eps(z)^{2},
\quad
\hat{C}^\eps
\eqdef \int \mrd z\ \partial_{j}K^\eps(z)(\partial_{j}K*K^\eps)(z),
\quad 
C_{\sym}^{\eps} \eqdef   4\hat{C}^\eps - \bar C^{\eps} 
\end{equ}
where, on the right-hand side of the second equation one can choose either $j =1$ or $2$ \dash they both give the same value.
\begin{lemma}\label{lem:tree_calcs}
For $\hat{C}^{\eps}$ and $\bar{C}^{\eps}$ as in \eqref{e:defConstants}, one has 
\begin{equ}[e:valueRenorm]
\ell_{\BPHZ}^{\eps}[\<I[I'Xi]I'Xi_typed>] = 
-\ell_{\BPHZ}^{\eps}[\<IXiI'[I'Xi]_typed>] = - \hat{C}^{\eps}\Cas\;,\qquad
\ell_{\BPHZ}^{\eps}[\<IXi^2green>] =- \bar{C}^\eps \Cas\;.
\end{equ}
Furthermore,
$C_{\sym}^{\eps}$ as defined in \eqref{e:defConstants} converges to a finite value $C_{\sym}$ as $\eps \to 0$.
\end{lemma}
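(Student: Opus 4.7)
The plan is to handle the identifications and the convergence statement separately. First, I would invoke Lemma~\ref{lem:symbols_vanish}\ref{item:antipode_minus} to write $\ell^{\eps}_{\BPHZ}[\tau] = -\bar\PPi_{\can}[\tau]$ for each of the three trees, and note that, by Lemma~\ref{lem:symbols_vanish}\ref{item:noises_same}, in every such tree the two noise leaves carry the same index. The noise covariance \eqref{eq:explicit_covar_noise} then factors $\bar\PPi_{\can}[\tau]$ as a scalar integral of products of heat kernels (and one copy of $\moll^{\eps} * \check\moll^{\eps}$) multiplied by $\Cas$, explaining the common factor of $\Cas$ in \eqref{e:valueRenorm}.

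Next I would carry out the three scalar integrals. For $\<IXi^2green>$, substituting the noise covariance and using $K^{\eps} = K*\moll^{\eps}$ collapses the integral immediately to $\int K^{\eps}(z)^2\mrd z = \bar C^{\eps}$. For the two derivative trees I would pass to Fourier space, where the constraint that the root is evaluated at $z=0$ forces the two leaf frequencies to be opposite, reducing each computation to a single Fourier integral. A direct inspection then shows that the two integrals coincide up to a global sign: in $\<IXiI'[I'Xi]_typed>$ the two derivative edges lie in the same branch and contribute $(ik_j)^2 = -k_j^2$, whereas in $\<I[I'Xi]I'Xi_typed>$ the two derivative edges sit on opposite branches carrying opposite frequencies and contribute $(ik_j)(-ik_j) = +k_j^2$. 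Parseval then identifies the common value with $\hat C^{\eps}$, up to sign, as defined in \eqref{e:defConstants}.

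The delicate step, which I expect to be the main obstacle, is the convergence of $C^{\eps}_{\sym}$: both $\hat C^{\eps}$ and $\bar C^{\eps}$ diverge logarithmically as $\eps \downarrow 0$ in two dimensions, so finiteness of their combination is a genuine cancellation. My plan is to derive the cleaner identity
\begin{equ}\label{e:SymPlan}
4\hat C^{\eps} - \bar C^{\eps} = \int \moll^{\eps}(z)(K*K^{\eps})(z)\mrd z + O(1)
\end{equ}
by integration by parts. Summing over $j=1,2$ in \eqref{e:defConstants}, spatial integration by parts yields $2\hat C^{\eps} = -\int K^{\eps}(z)(\Delta K * K^{\eps})(z)\mrd z$. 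Using the identity $\Delta K = \partial_t K - \delta - R$ with $R$ smooth (because $K$ is a truncation of the heat kernel) and then integrating by parts in $t$ to convert $\int \partial_t K^{\eps}(K*K^{\eps})$ back into $-2\hat C^{\eps} + \bar C^{\eps}$ plus bounded terms, one collects everything and \eqref{e:SymPlan} drops out.

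To conclude, I would exploit the explicit 2D identity $(K*K)(t,x) = (4\pi)^{-1}e^{-|x|^2/(4t)}$ (up to smooth truncation corrections), which is obtained from the reproducing property of the heat kernel under convolution. This shows that $K*K$ is a bounded, parabolically scale-invariant function near the origin, so that by a change of variables
\begin{equ}
\int \moll^{\eps}(z)(K*K^{\eps})(z)\mrd z = \int (K*K)(z)(\moll^{\eps}*\check{\moll}^{\eps})(z)\mrd z
\end{equ}
converges to a finite, mollifier-dependent limit as $\eps \downarrow 0$. Combined with \eqref{e:SymPlan}, this yields the existence of $C_{\sym} = \lim_{\eps \downarrow 0} C^{\eps}_{\sym}$ and completes the proof.
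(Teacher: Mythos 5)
Your proposal is correct and takes essentially the same approach as the paper's proof: both reduce the identities to direct computation via Lemma~\ref{lem:symbols_vanish}\ref{item:antipode_minus} and the noise covariance~\eqref{eq:explicit_covar_noise}, and both derive the convergence of $C_{\sym}^\eps$ by combining the heat-kernel identity $(\partial_t-\Delta)K=\delta_0+Q$ with integration by parts in $t$ and $x$ and the boundedness of $K*K$ (via $(G*G)(t,x)=tG(t,x)$). One small slip: when you ``convert $\int\partial_t K^\eps (K*K^\eps)$ back,'' the identity should read $\int\partial_t K^\eps (K*K^\eps)=2\hat C^\eps-\bar C^\eps + O(1)$ (not $-2\hat C^\eps+\bar C^\eps$); equating this with the expansion $\int\partial_t K^\eps(K*K^\eps)=-2\hat C^\eps+\int\moll^\eps(K*K^\eps)+O(1)$ obtained by writing $\partial_t K^\eps=\Delta K^\eps+\moll^\eps+Q*\moll^\eps$ yields your~\eqref{e:SymPlan}, which matches the paper's identity~\eqref{eq:integrals_for_csym}.
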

\begin{remark}
The last statement of Lemma~\ref{lem:tree_calcs} is the special feature of working in two spatial dimensions; see Remark~\ref{rem:2d_no_renorm}.
\end{remark}
\begin{remark}
The first identity of \eqref{e:valueRenorm} makes sense since, even though there are two natural
isomorphisms $\CT[\<I[I'Xi]I'Xi_typed>]^{\ast} \approx \mfg \otimes \mfg$ and
$\CT[\<IXiI'[I'Xi]_typed>]^{\ast}\approx \mfg \otimes \mfg$ (corresponding to the two
ways of matching the two noises), $\Cas$ is invariant under that transposition.
For the second identity, note that $\CT[\<IXi^2green>]^*  \simeq \mfg \otimes_{s} \mfg$. 
\end{remark}

\begin{proof}
The identities \eqref{e:valueRenorm} readily follow from direct computation once one uses that in 
all cases $\ell_{\BPHZ}^{\eps}[\tau] = -\bar{\PPi}_{\can}[\tau]$ (this is item~\ref{item:antipode_minus} of 
Lemma~\ref{lem:symbols_vanish}), writes down the corresponding expectation\slash integral, moves the 
mollification from the noises to the kernels, and uses \eqref{eq:explicit_covar_noise}.

Regarding the last claim of the lemma, since $K$ is a truncation of the heat kernel, observe that
\begin{equ}[e:KdeltaQ]
(\partial_t - \Delta) K = \delta_0 + Q\;,
\end{equ}
where $Q$ is smooth and supported away from  the origin. 
Using the shorthand $\mathrm{Int}[F] = \int \mrd z\ F(z)$
it follows that
\begin{equ}\label{eq:Delta_K_1}
\mathrm{Int}[(\Delta K * K^\eps) K^\eps]
= -\mathrm{Int}[(K^\eps)^2] + \mathrm{Int}[(\partial_t K * K^\eps)K^\eps] - \mathrm{Int}[(Q*K^\eps)K^\eps]\;.
\end{equ}
On the other hand, we also have
\begin{equs}\label{eq:Delta_K_2}
\mathrm{Int}[&(\Delta K * K^\eps) K^\eps]
= \mathrm{Int}[ (K * K^\eps) \Delta K^\eps]
\\
&= -\mathrm{Int}[(K*K^\eps)\moll^\eps] - \mathrm{Int}[(\partial_t K * K^\eps) K^\eps] - \mathrm{Int}[(K*K^\eps)(Q*\moll^\eps)]\;.
\end{equs}
Observe that $\mathrm{Int}[(K*K^\eps)\moll^\eps]$, $\mathrm{Int}[(Q*K^\eps)K^\eps]$, and $\mathrm{Int}[(K*K^\eps)(Q*\moll^\eps)]$ all converge\footnote{These facts follow easily from the fact that $K \ast K$ is well-defined and bounded and also continuous away from the origin. To see this note that the semigroup property gives $(G \ast G)(t,x) = tG(t,x)$  and $G-K$ is smooth and supported away from the origin.} as $\eps \to 0$.
Hence, adding~\eqref{eq:Delta_K_1} and~\eqref{eq:Delta_K_2}, we obtain that 
\begin{equs}\label{eq:integrals_for_csym}
{}& - 2\mathrm{Int}[(\Delta K * K^\eps)K^\eps] - \mathrm{Int}[(K^\eps)^2]\\
{}&= \mathrm{Int}[(K*K^\eps)\moll^\eps] + \mathrm{Int}[(K*K^\eps)(Q*\moll^\eps)] +
 \mathrm{Int}[(Q*K^\eps)K^\eps]
\end{equs}
converges as $\eps\to0$. We now note that the quantity above equals $C_{\sym}^{\eps}$ since $\bar C^\eps = \mathrm{Int}[(K^\eps)^2]$ and, by integration by parts, 
\[
\hat{C}^\eps = -\frac{1}{2}\mathrm{Int}[(\Delta K * K^\eps)K^\eps]
\;,
\] 
which completes the proof.
\end{proof}
\subsubsection{Computation of $\bar{\bUpsilon}^{F}$}\label{subsec:comp_of_ups}
Before continuing, we introduce some notational conventions that will be convenient when we calculate $\bar{\bUpsilon}^{F}$.
 
Recall that, for $\<green> \in [d]$, the symbol $\Xi_{\<green>}$ is a tree that indexes a subspace $\CT[\Xi_{\<green>}]$ of our concrete regularity structure $\CT$.
We introduce a corresponding notation $\boldsymbol{\Xi}_{\<green>} \in \CT[\Xi_{\<green>}] \otimes \mfg$ which, under the isomorphism $\CT[\Xi_{\<green>}]\otimes \mfg \simeq \mfg^{\ast} \otimes \mfg \simeq L(\mfg,\mfg)$, is given by $\boldsymbol{\Xi}_{\<green>} = \id_{\mfg}$. 
The expression $\boldsymbol{\Xi}_{\<green>}$ really represents the corresponding noise in the sense that $(\PPi_{\can} \boldsymbol{\Xi}_{\<green>})(\cdot) = \xi^{\eps}_{\<green>}(\cdot)$, where we are abusing notation by having $\PPi_{\can}$ only act on the left factor of the tensor product. 

Continuing to develop this notation, we also define 
\[
\Psi_{\<green>} 
= \mcb{I}_{\<green>}\boldsymbol{\Xi}_{\<green>}
\in
\CT[\<IXigreen>] \otimes \mfg\;.
\] 
where we continue the same notation abuse, namely $\mcb{I}_{\<green>}$ acts only on the left factors appearing in $\boldsymbol{\Xi}_{\<green>}$.
In particular, we have $\PPi_{\can} \Psi_{\<green>}(\cdot) = (K \ast \xi^{\eps}_{\<green>})(\cdot)$.
We also have a corresponding notation 
\[
\Psi_{\<green>,\<red>} = \mcb{I}_{\<green>,\<red>}\boldsymbol{\Xi}_{\<green>}\in \CT[\<I'Xigreen-red>] \otimes \mfg\;.
\]
We now show how this notation is used for products\slash non-linear expressions. 
Given some $h \in \mfg$, we may write an expression such as
\begin{equ}\label{eq:nonlinear}
[\Psi_{\<green>},[h,\Psi_{\<green>,\<red>}]] \in 
\CT[\<IXiI'Xi_typed2_notriangle>] \otimes \mfg\;.
\end{equ}
In an expression like this, we apply the multiplication $\CT[\<IXigreen>] \otimes \CT[\<I'Xigreen-red>] \rightarrow \CT[\<IXiI'Xi_typed2_notriangle>]$ to combine the left factors of $\Psi_{\<green>}$ and $\Psi_{\<green>,\<red>}$. 
The right $\mfg$-factors of $\Psi_{\<green>}$ and $\Psi_{\<green>,\<red>}$ are used as the actual arguments of the brackets above, yielding the new $\mfg$-factor on the right. 
\begin{remark}
For what follows, given $i,j \in [d]$, we write $\Upsilon_{i}$ and $\Upsilon_{i,j}$ for $\Upsilon_{\mfa_{i}}$ and $\Upsilon_{(\mfa_{i},e_{j})}$ respectively. In particular, we will use notation such as $\Upsilon_{\<green>}$ and $\Upsilon_{\<green>,\<red>}$. We extend this convention, also writing $\bUpsilon_{\<green>}$ and $\bar{\bUpsilon}_{\<green>}$ along with $W_{\<green>}$ and $W_{\<green>,\<red>}$.
\end{remark}
With these conventions in place the following computations follow quite easily from our definitions:
\begin{equ}[e:barUpsilonXi]
\bar\Upsilon_{\<green>}^F[\<Xired>]  = \delta_{\<green>,\<red>}
\boldsymbol{\Xi}_{\<green>}\;,\quad
\Upsilon_{\<green>}^F[\<Xired>] = \delta_{\<green>,\<red>} \Psi_{\<green>}\;,\quad
\Upsilon_{\<green>,\<orange>}^F[\<Xired>] 
 = \delta_{\<green>,\<red>} \Psi_{\<green>,\<orange>}\;.
\end{equ}
(Since $S(\tau) = 1$ for these trees, the corresponding $\bUpsilon$ are identical.)
Moreover, for $k \in \N^{d+1}$ with $k \not = 0$, 
\begin{equ}[e:barUpsilonXXi]
\Upsilon_{\<green>}^F[\mbX^{k}\Xi_{\<red>}]  =
\Upsilon_{\<green>,\<orange>}^F[\mbX^{k}\Xi_{\<red>}] = 0\;.
\end{equ}
Note that the left-hand sides of \eqref{e:barUpsilonXi} are in principle allowed to depend on an argument $\mathbf{A} \in \mcb{A}$, but here they are constant in $\mathbf{A}$, so we are using a canonical identification of constants with constant functions here.\footnote{In \eqref{e:barUpsilonXi} we are also exploiting canonical isomorphisms between $W_{\<green>}$ and $W_{\<green>,\<orange>}$ and $\mfg$. 
For instance, one also has $\bar\Upsilon_{\<green>,\<orange>}^F[\<Xired>]  = \delta_{\<green>,\<red>}
\boldsymbol{\Xi}_{\<green>}$ but here the last $\mfg$ factor on the right-hand side should be interpreted via the isomorphism with $W_{\<green>,\<orange>}$ rather than $W_{\<green>}$ as in the first equality of \eqref{e:barUpsilonXi}.}
We now compute $\bar\Upsilon^F$ for all the trees appearing in \eqref{eq: trees for renorm constants}.
\begin{lemma}\label{lem:bUps-shortlist}
\begin{equs}\label{e:bUps-shortlist}
\bar\bUpsilon^F_{\<dblue>} [\<IXi^2green>](\mathbf A)
&= \bone_{\<dblue>\neq \<green>} 
[ \Psi_{\<green>},[\Psi_{\<green>}, A_{\<dblue>}]] \\[.2em]
\bar\bUpsilon^F_{\<dblue>}[\<I[I'Xi]I'Xi_typed>](\mathbf A)
& =
 (2\delta_{\<green>,\<dblue>}\delta_{\<orange>,\<red>}  -\delta_{\<dblue>,\<red>}\delta_{\<orange>,\<green>})\,[[2\delta_{\<green>,\<orange>}A_{\<red>} - \delta_{\<red>,\<orange>}A_{\<green>} \,,\, 
\mcb{I}_{\<orange>}(\Psi_{\<green>,\<red>})]\,,\,
\Psi_{\<green>,\<red>}]\\[.2em]
\bar\bUpsilon^F_{\<dblue>}[\<IXiI'[I'Xi]_typed>](\mathbf A)
& =(2\delta_{\<orange>,\<dblue>}\delta_{\<green>,\<red>}  -\delta_{\<dblue>,\<red>}\delta_{\<green>,\<orange>})
[\Psi_{\<green>},[2\delta_{\<green>,\<orange>}A_{\<red>} - \delta_{\<red>,\<orange>}A_{\<green>}, \mcb{I}_{\<orange>,\<red>}( \Psi_{\<green>,\<red>})]] 
\end{equs}
Moreover, for any $\tau$ of the form $\<IXiI'XXi_notriangle>$ or $\<IXXiI'Xi_notriangle>$, $\bar{\bUpsilon}^{F}_\mft[\tau]=0$ for every $\mft\in\Lab_+$.
\end{lemma}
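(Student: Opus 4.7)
The plan is to apply the inductive definition \eqref{e:def-Upsilon-added} of $\bar\Upsilon$ (with base case \eqref{eq:base_case_for_upsilon}) directly to each of the listed trees, using the explicit form of $F_{\mfa_i}$ given in \eqref{e:YM-Fi}, and then normalise by the combinatorial factor $S(\tau)$ from \eqref{eq:sym_factor}. The only derivatives of $F_{\mfa_i}$ that will actually enter the computations are the first and second derivatives with respect to the components $A_j$ and $\partial_k A_j$, since the trees we consider have depth two.

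I would first dispense with the vanishing claim. For any $\tau$ of the form $\<IXiI'XXi_notriangle>$ or $\<IXXiI'Xi_notriangle>$, one of the two subtrees attached at the root is of the form $\mcb{I}_{(\mfa_j,p)}(\mbX^{e_k}\Xi_l)$. By \eqref{e:barUpsilonXXi}, $\Upsilon^F_{(\mfa_j,p)}[\mbX^{e_k}\Xi_l]=0$, and this factor appears as one of the arguments of the multilinear map in \eqref{e:def-Upsilon-added}, so the whole expression vanishes.

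For the three non-vanishing trees, I would carry out the induction explicitly. For $\<IXi^2green>$, the underlying tree is $\mcb{I}_{\<dblue>}\big(\mcb{I}_{\<green>}(\Xi_{\<green>})\cdot \mcb{I}_{\<green>}(\Xi_{\<green>})\big)$, so the relevant derivative is $D_{(\mfa_{\<green>},0)}^2 F_{\<dblue>}$ applied twice to $\Psi_{\<green>}$. Using \eqref{e:YM-Fi}, the only term of $F_{\<dblue>}$ producing a non-trivial second $A_{\<green>}$-derivative is $[A_{\<green>},[A_{\<green>},A_{\<dblue>}]]$ (present only when $j=\<green>$ and, via the Jacobi-type vanishing $[X,[X,X]]=0$, contributing only when $\<dblue>\neq\<green>$); the symmetrised second derivative gives $2[\Psi_{\<green>},[\Psi_{\<green>},A_{\<dblue>}]]$, and dividing by $S(\tau)=2$ yields the stated formula. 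For $\<I[I'Xi]I'Xi_typed>$ and $\<IXiI'[I'Xi]_typed>$, the symmetry factor is $1$ and one has to compute the mixed derivative $D_{(\mfa_{\<orange>},0)}D_{(\mfa_{\<green>},e_{\<red>})}F_{\<dblue>}$, applied to $\Upsilon^F_{(\mfa_{\<orange>},0)}[\<IXigreen-red>] $ and $\Upsilon^F_{(\mfa_{\<green>},e_{\<red>})}[\Xi_{\<green>}]$ (or the analogue for the other tree), and read off the two contributing Lie-bracket terms $[A_j,2\partial_j A_i - \partial_i A_j]$ in \eqref{e:YM-Fi}. The prefactor $(2\delta_{\<green>,\<dblue>}\delta_{\<orange>,\<red>}-\delta_{\<dblue>,\<red>}\delta_{\<orange>,\<green>})$ and its analogue then come from matching the three possible index-assignments in the commutator against which derivative is taken.

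The main bookkeeping obstacle will be tracking, for each tree, exactly which subspace $\CT[\tau]\otimes W_{\mfa_\mft}$ each term of $D_{o_1}\cdots D_{o_m}F_\mft$ contributes to, together with the constraints on the colour-indices that are forced by the rule $R$ via \eqref{eq:sym_rule}. These constraints are precisely what produces the Kronecker deltas in \eqref{e:bUps-shortlist}, and they need to be read off correctly from the list of allowed $(\mfa_j,p)$-multisets appearing in $\mathring R(\mfa_\mft)$ before symmetrising via $S(\tau)$. Alternatively, one can circumvent some of this combinatorics by using the iterative fixed-point characterisation of $\bUpsilon$ described in Remark~\ref{rem:alt_way_calc_ups}, which is particularly transparent for trees of this depth: one substitutes the Ansatz $\mcA_{\mft}=\mathbf{A}_\mft+(\mcb{I}_\mft\otimes\id)F_\mft(\mcA^{(n)})$ and simply reads off the coefficient of each tree.
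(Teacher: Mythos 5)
Your plan follows the paper's proof essentially line for line: the vanishing claim via \eqref{e:barUpsilonXXi}, the inductive application of \eqref{e:def-Upsilon-added} reading off the relevant second derivatives of $F_{\mfa_i}$ from \eqref{e:YM-Fi}, and normalisation by the symmetry factor \eqref{eq:sym_factor} (trivial except for the cherry, where $S=2$). One small terminological nit: the vanishing when $\<dblue>=\<green>$ is just $[X,X]=0$, not a Jacobi-type identity, and your symbol for the planted subtree should be $\<I'Xigreen-red>$ (thick edge) rather than what you wrote.
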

\begin{proof} 
Let $\tau = \mcb{I}_{\<red>}(\tau_{1}) \mcb{I}_{\<orange>}(\tau_{2})$ for 
trees $\tau_1$ and $\tau_2$.
Then, by~\eqref{e:def-Upsilon-added},
\begin{equs}
\bar\Upsilon^F_{\<dblue>} [\tau](\mathbf A)
& = \bone_{\<red>=\<orange>\neq \<dblue>} \Big([\Upsilon^F_{\<red>}[\tau_{1}](\mathbf A),[\Upsilon^F_{\<orange>}[\tau_{2}](\mathbf A), A_{\<dblue>}]] 
\\
&\qquad \qquad\qquad+ [\Upsilon^F_{\<orange>}[\tau_{2}](\mathbf A),[\Upsilon^F_{\<red>}[\tau_{1}](\mathbf A), A_{\<dblue>}]]\Big)
\\
&  + \bone_{\<red>=\<dblue>\neq\<orange>}
\Big([\Upsilon^F_{\<orange>}[\tau_{2}](\mathbf A),[A_{\<orange>},\Upsilon^F_{\<red>}[\tau_{1}](\mathbf A)]] 
\\
&\qquad \qquad\qquad+ [A_{\<orange>},[\Upsilon^F_{\<orange>}[\tau_{2}](\mathbf A),\Upsilon^F_{\<red>}[\tau_{1}](\mathbf A)]]\Big)
\\
&+ \bone_{\<dblue>=\<orange>\neq\<red>} \Big([\Upsilon^F_{\<red>}[\tau_{1}](\mathbf A),[A_{\<red>},\Upsilon^F_{\<orange>}[\tau_{2}](\mathbf A)]] 
\\
&\qquad \qquad\qquad+ [A_{\<red>},[\Upsilon^F_{\<red>}[\tau_{1}](\mathbf A),\Upsilon^F_{\<orange>}[\tau_{2}](\mathbf A)]]\Big)\;.
\end{equs}
Specifying to
 $\tau=\<IXi^2_typed>$,
 using \eqref{e:barUpsilonXi} in the above identity gives
\begin{equs}[e:Upsilon-cherry-asym]
\bar\Upsilon^F_{\<dblue>} [\<IXi^2_typed>](\mathbf A)
& = \bone_{\<orange>=\<green>\neq \<dblue>} \Big(
[ \Psi_{\<orange>},[\Psi_{\<green>}, A_{\<dblue>}]] 
+ [\Psi_{\<green>},[\Psi_{\<orange>}, A_{\<dblue>}]]\Big)
\\
& \qquad + \bone_{\<orange>=\<dblue>\neq\<green>}
\Big([\Psi_{\<green>},[A_{\<green>},\Psi_{\<orange>}]] 
+ [A_{\<green>},[\Psi_{\<green>},\Psi_{\<orange>}]]\Big)
\\
&\qquad + \bone_{\<dblue>=\<green>\neq\<orange>}
\Big([\Psi_{\<orange>},[A_{\<orange>},\Psi_{\<green>}]] 
+ [A_{\<orange>},[\Psi_{\<orange>},\Psi_{\<green>}]]\Big)\;.
\end{equs}
Therefore,\footnote{Note that $\bone_{\<dblue>\neq \<green>}$ here is necessary, because different colours only means ``not necessarily identical''!}
\begin{equ}[e:Ups-cherry]
\bar\Upsilon^F_{\<dblue>} [\<IXi^2green>](\mathbf A)
 = 2 \, \bone_{\<dblue>\neq \<green>} 
[ \Psi_{\<green>},[\Psi_{\<green>}, A_{\<dblue>}]]  \;.
\end{equ}
By \eqref{eq:sym_factor} we have $S(\<IXi^2green>) = (2!) \,S(\<Xigreen>)^2=2$ and so the first identity of \eqref{e:bUps-shortlist} follows.

Before moving onto the second identity we recall that, again using \eqref{e:def-Upsilon-added}, 
\begin{equs}[e:calculationSimple]
\bar\Upsilon^F_{\<orange>} [\<I'Xigreen-red>](\mathbf A)
&= [2\delta_{\<green>,\<orange>}A_{\<red>}-\delta_{\<red>,\<orange>}A_{\<green>}, 
\Upsilon^F_{\<green>,\<red>}[\<Xigreen>](\mathbf A)]
\\
&= [2\delta_{\<green>,\<orange>}A_{\<red>}-\delta_{\<red>,\<orange>}A_{\<green>},\Psi_{\<green>,\<red>}]\;.
\end{equs}
It follows that 
 \begin{equs}
\bar\Upsilon^F_{\<dblue>}[\<I[I'Xi]I'Xi_typed>](\mathbf A)
&=
(2\delta_{\<green>,\<dblue>}\delta_{\<orange>,\<red>}-\delta_{\<dblue>,\<red>}\delta_{\<orange>,\<green>})
[\Upsilon^F_{\<orange>}[\<I'Xigreen-red>](\mathbf A),
\Upsilon^F_{\<green>,\<red>}[\<Xigreen>](\mathbf A)] \\
&=
(2\delta_{\<green>,\<dblue>}\delta_{\<orange>,\<red>}-\delta_{\<dblue>,\<red>}\delta_{\<orange>,\<green>})\,[[2\delta_{\<green>,\<orange>}A_{\<red>} - \delta_{\<red>,\<orange>}A_{\<green>} \,,\, 
\mcb{I}_{\<orange>}(\Psi_{\<green>,\<red>})]\,,\,
\Psi_{\<green>,\<red>}]\;,
\end{equs}
where again, in $\mcb{I}_{\<orange>}(\Psi_{\<green>,\<red>}) \in\CT[\<I[I'Xi]_typed>] \otimes \mfg$,
 the operator $\mcb{I}_{\<orange>}$ is acting only on the left factor of $\Psi_{\<green>,\<red>} \in \CT[\<I'Xigreen-red>] \otimes \mfg$.
We then obtain the second identity of \eqref{e:bUps-shortlist} since $S(\<I[I'Xi]I'Xi_typed>) = 1$. 

For the third identity we recall that, by \eqref{e:def-Upsilon-added} and \eqref{e:calculationSimple},
\begin{equ}
\Upsilon^F_{\<orange>,\<red>}[\<I'Xigreen-red>](\mathbf A)
= 
\mcb{I}_{\<orange>,\<red>} (\bar\Upsilon^F_{\<orange>}[\<I'Xigreen-red>])(\mathbf A)
= \big[2\delta_{\<green>,\<orange>}A_{\<red>}-\delta_{\<red>,\<orange>}A_{\<green>},\mcb{I}_{\<orange>,\<red>}(\Psi_{\<green>,\<red>})\big]\;,
\end{equ}
so that 
\begin{equs}
\bar\Upsilon^F_{\<dblue>}[\<IXiI'[I'Xi]_typed>](\mathbf A)
&=
(2\delta_{\<orange>,\<dblue>}\delta_{\<green>,\<red>}-\delta_{\<dblue>,\<red>}\delta_{\<green>,\<orange>})
[\Upsilon^F_{\<green>}[\<Xigreen>](\mathbf A),
\Upsilon^F_{\<orange>,\<red>}[\<I'Xigreen-red>](\mathbf A)] \\
&=
(2\delta_{\<orange>,\<dblue>}\delta_{\<green>,\<red>}-\delta_{\<dblue>,\<red>}\delta_{\<green>,\<orange>})
\big[\Psi_{\<green>},\big[2\delta_{\<green>,\<orange>}A_{\<red>} - \delta_{\<red>,\<orange>}A_{\<green>}, \mcb{I}_{\<orange>,\<red>} (\Psi_{\<green>,\<red>})\big]\big]\;.
\end{equs}
Since $S(\<IXiI'[I'Xi]_typed>) = 1$ we obtain the desired result.

The final claim of the lemma regarding trees of the form $\<IXiI'XXi_notriangle>$ or $\<IXXiI'Xi_notriangle>$ follows immediately from the induction \eqref{e:def-Upsilon-added} combined with \eqref{e:barUpsilonXXi}.
\end{proof}

\subsubsection{Putting things together and proving Theorem~\ref{thm:local_exist}}\label{subsec:proof_of_localexist}
Before proceeding, we give more detail on how to use our notation for computations. 
We note that, given any $w \in \mfg \otimes_{s} \mfg$, one can use the isomorphism\footnote{Again, this is only canonical up to permutation of the factors, but doesn't matter since $w$ is symmetric.}  
$\CT[\<IXiI'Xi_typed2_notriangle>]^{\ast} \simeq \mfg \otimes \mfg$ to view $w$ as acting on the expression \eqref{eq:nonlinear} via an adjoint action, namely, 
\begin{equ}[e:adjointAction]
(w \otimes \id_{\mfg})
\big[\Psi_{\<green>},[h,\Psi_{\<green>,\<red>}]\big] 
=
-(w \otimes \id_{\mfg})
\big[\Psi_{\<green>},[\Psi_{\<green>,\<red>},h]\big]
=
-\ad_{w}h
\;.
\end{equ}
\begin{lemma}\label{lemma:final_ups_calc}
\begin{equ}\label{eq:final_ups_calc}
\sum_{\tau \in \mfT_{-}(R)}
(\ell_{\BPHZ}^{\eps}[\tau] \otimes \id_{\mfg})
\bar\bUpsilon^{F}_{\<dblue>}[\tau](\mathbf{A})
 = \lambda C_{\sym}^{\eps} A_{\<dblue>}\;,
\end{equ}
where $\lambda$ is the constant given in Remark~\ref{rem:action_of_casimir} and $C_{\sym}^{\eps}$ is as in \eqref{e:defConstants}. 
\end{lemma}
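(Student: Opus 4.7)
The plan is to first reduce the sum on the left-hand side of \eqref{eq:final_ups_calc} to contributions from the three tree shapes in \eqref{eq: trees for renorm constants}, and then to compute each contribution explicitly using the Casimir identity.

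First, I would use Lemma~\ref{lem:symbols_vanish} to eliminate most of the $\mfT_{-}(R)$. Items~\ref{item:odd} and~\ref{item:BPHZ_zero} kill the trees with an odd number of noises and all trees of the form $\<IXiI'Xi_notriangle>$. The last claim of Lemma~\ref{lem:bUps-shortlist} then kills the contributions from the trees of the form $\<IXiI'XXi_notriangle>$ and $\<IXXiI'Xi_notriangle>$. Items~\ref{item:noises_same} and~\ref{item:derivs_same} of Lemma~\ref{lem:symbols_vanish} enforce that for the remaining trees the two noise indices agree and (when applicable) the two derivative indices agree, leaving exactly the three coloured shapes listed in \eqref{eq: trees for renorm constants}.

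Second, I would assemble the remaining contributions from Lemmas~\ref{lem:tree_calcs} and~\ref{lem:bUps-shortlist}. The crucial algebraic input is Remark~\ref{rem:action_of_casimir}: since $\mfg$ is simple, $\ad_{\Cas} = \lambda\,\id_{\mfg}$ as an operator on $\mfg$. Writing $\Cas = \sum_i e_i \otimes e_i$ in an orthonormal basis and using antisymmetry of the bracket, this yields the three pointwise identities
\begin{equ}
\sum_i [e_i,[e_i,B]] = \sum_i [[B,e_i],e_i] = \lambda B\;,\qquad
\sum_i [e_i,[B,e_i]] = -\lambda B\;,
\end{equ}
valid for every $B \in \mfg$. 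Applying $(\Cas \otimes \id_{\mfg})$ to each of the bracket expressions in \eqref{e:bUps-shortlist} reduces them to scalar multiples of the linear factors $B = 2\delta_{\<green>,\<orange>}A_{\<red>} - \delta_{\<red>,\<orange>}A_{\<green>}$ (respectively $B = A_{\<dblue>}$ for $\<IXi^2green>$).

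Third, I would sum over the colour indices $\<green>,\<orange>,\<red> \in [d]$ with $d=2$ fixed. For $\<IXi^2green>$, using $\ell^{\eps}_{\BPHZ}[\<IXi^2green>] = -\bar{C}^\eps \Cas$ and the constraint $\<dblue>\neq\<green>$, one obtains the contribution $-(d-1)\bar{C}^\eps\lambda A_{\<dblue>} = -\bar{C}^\eps\lambda A_{\<dblue>}$. For $\<I[I'Xi]I'Xi_typed>$ and $\<IXiI'[I'Xi]_typed>$, after pulling out the respective Casimir scalars (with opposite signs prescribed by Lemma~\ref{lem:tree_calcs}), the remaining summands are the four-fold products of Kronecker deltas multiplied by $B$. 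Expanding these and summing over $(\<green>,\<orange>,\<red>) \in \{1,2\}^3$ term by term yields contributions $3\hat{C}^\eps \lambda A_{\<dblue>}$ and $\hat{C}^\eps \lambda A_{\<dblue>}$ respectively. Adding the three contributions gives $(4\hat{C}^\eps - \bar{C}^\eps)\lambda A_{\<dblue>} = \lambda C_{\sym}^\eps A_{\<dblue>}$, as required.

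The computation itself is routine bookkeeping: the only real point of care is to track the signs arising from the three nested-bracket topologies $[[B,X_1],X_2]$, $[X_1,[B,X_2]]$, and $[X_1,[X_2,A_{\<dblue>}]]$ when the two Casimir slots are inserted, which is exactly where the antisymmetry-induced sign in $\sum_i [e_i,[B,e_i]] = -\lambda B$ is needed; the critical combinatorial cancellation depends on these signs matching those of $\ell^\eps_{\BPHZ}$ prescribed by \eqref{e:valueRenorm}. Given these, the fact that only the finite constant $C_{\sym}^\eps$ (rather than any of $\bar{C}^\eps$, $\hat{C}^\eps$ individually) survives is guaranteed by Lemma~\ref{lem:tree_calcs}, so the $\eps\downarrow 0$ limit exists and produces the claimed shape of the counterterm.
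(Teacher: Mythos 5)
Your proposal is correct and follows essentially the same route as the paper's proof: both assemble the contributions from the three coloured tree shapes using the computations in Lemma~\ref{lem:tree_calcs} and Lemma~\ref{lem:bUps-shortlist}, apply the Casimir contraction $\ad_{\Cas} = \lambda\id_\mfg$ (paying attention to the sign dictated by the bracket topology), and sum over the colour indices in $d=2$ to obtain $3\hat C^\eps + \hat C^\eps - \bar C^\eps = C_{\sym}^\eps$. The preliminary reduction to the three shapes that you spell out in your first paragraph is performed in the paper in the text preceding the lemma rather than inside the proof, but it is the same argument.
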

\begin{proof}
By \eqref{e:bUps-shortlist} and Lemma~\ref{lem:tree_calcs}, 
\begin{equs}
{}&\sum_{\<green>,\<red>,\<orange>} 
(\ell_{\BPHZ}^{\eps}[\<I[I'Xi]I'Xi_typed>] \otimes \id_{\mfg})
\bar{\bUpsilon}_{\<dblue>}[\<I[I'Xi]I'Xi_typed>](\mathbf{A})
\\
& = - (\hat C^\eps \Cas \otimes \id_{\mfg})\Big(
4[[A_{\<dblue>},\mcb{I}_{\<dblue>} \Psi_{{\<dblue>},{\<dblue>}}], \Psi_{{\<dblue>},{\<dblue>}}]
- \sum_{{\<red>}} 2 [[A_{\<dblue>},\mcb{I}_{\<red>} \Psi_{{\<dblue>},{\<red>}}], \Psi_{{\<dblue>},{\<red>}}]
\\
& \qquad\qquad \qquad - \sum_{\<orange>} 2 [[A_{\<dblue>},\mcb{I}_{\<orange>} \Psi_{\<orange>,{\<dblue>}}], \Psi_{\<orange>,{\<dblue>}}]
+[[A_{\<dblue>},\mcb{I}_{\<dblue>} \Psi_{{\<dblue>},{\<dblue>}}], \Psi_{{\<dblue>},{\<dblue>}}]
\Big)
\\
&= 3 \hat C^\eps \ad_{\Cas} A_{{\<dblue>}}\;,
\end{equs}
where  we used $d=2$ to sum over the free indices ${\<red>}$ and $\<orange>$, as well as 
\eqref{e:adjointAction} in the last step.

Also, by \eqref{e:bUps-shortlist} and Lemma~\ref{lem:tree_calcs} 
\begin{equs}
{}&\sum_{\<green>,\<orange>,\<red>} 
(\ell_{\BPHZ}^{\eps}[\<IXiI'[I'Xi]_typed>] \otimes \id_{\mfg})
\bar{\bUpsilon}_{\<dblue>}[\<IXiI'[I'Xi]_typed>](\mathbf{A})
\\
& = (\hat C^\eps \Cas  \otimes \id_{\mfg})\Big(
4[\Psi_{\<dblue>},[A_{\<dblue>},\mcb{I}_{{\<dblue>},{\<dblue>}} \Psi_{{\<dblue>},{\<dblue>}}]]
-  2 [\Psi_{{\<dblue>}},[A_{\<dblue>},\mcb{I}_{\<dblue>,\<dblue>} \Psi_{\<dblue>,\<dblue>}]]
\\
& \qquad\qquad 
- \sum_{\<green>} 2[\Psi_{\<green>},[A_{\<dblue>},\mcb{I}_{\<green>,{\<dblue>}} \Psi_{\<green>,{\<dblue>}}]]
+ [\Psi_{\<dblue>},[A_{\<dblue>},\mcb{I}_{\<dblue>,\<dblue>} \Psi_{\<dblue>,\<dblue>}]]
\Big)
\\
&= \hat C^\eps \ad_{\Cas} A_{\<dblue>}\;,
\end{equs}
where $d=2$ and \eqref{e:adjointAction} are used again.
Finally by \eqref{e:bUps-shortlist} and Lemma~\ref{lem:tree_calcs}, 
\begin{equ}
\sum_{\<green>} (\ell_{\BPHZ}^{\eps}[\<IXi^2green>] \otimes \id_{\mfg}) 
 \bar\bUpsilon^F_{\<red>} [\<IXi^2green>](\mathbf A) 
= -\bar{C}^\eps \ad_{\Cas} A_{\<red>}\;.
\end{equ}
Adding these three terms and recalling Remark~\ref{rem:action_of_casimir} gives \eqref{eq:final_ups_calc}.
\end{proof}
With these calculation, we are ready for the proof of Theorem~\ref{thm:local_exist}.
\begin{remark}
It would be desirable to apply the black box convergence theorem~\cite[Thm.~2.21]{BCCH21} directly.
However, we are slightly outside its scope since we are 
working with non-standard spaces $\Omega^1_\alpha$ and 
are required to show continuity at time $t=0$ for the solution $A^\eps\colon[0,T]\to \Omega^1_\alpha$.
Nonetheless, we can instead use several more general results from~\cite{BCCH21, CH16, Hairer14}.
\end{remark}
\begin{proof}[of Theorem~\ref{thm:local_exist}]
Consider the lifted equation associated to~\eqref{eq:SPDE_for_A} in the bundle of modelled distributions $\cD^{\gamma,\eta}_{-\kappa} \ltimes \mathscr{M}$ for $\gamma>1+\kappa$ and $\eta\in(\frac\alpha4-\frac12,\alpha-1]$, where $\alpha\in(\frac23,1)$ as before.
We further take $\kappa\in(0,\frac14)$ sufficiently small such that $-2\kappa>\eta$.
Note that $\gamma>1+\kappa$ and $\eta>-\frac12$ ensure that, by the same argument as in~\cite[Sec.~9.4]{Hairer14}, the lifted equation admits a unique fixed point $\mcA \in \cD^{\gamma,\eta}_{-\kappa}$ and is locally Lipschitz in $(a, Z)\in\Omega\mcC^{\eta} \times \mathscr{M}$, where $\mathscr{M}$ is the space of models on the associated regularity structure equipped a metric analogous to that in~\cite[Prop.~9.8]{Hairer14}.
Specialising to $Z = Z^{\eps}_{\BPHZ}$, the computation of Lemma~\ref{lemma:final_ups_calc} along with \cite[Thm.~5.7]{BCCH21} (and its partial reformulation in the vector case via Proposition~\ref{prop:renormalisation_preserves_coherence}) show that the reconstruction $A^\eps \eqdef \mcR\mcA(a,Z^{\eps}_{\BPHZ})$ is the maximal solution in $\Omega\mcC^\eta$ to the PDE~\eqref{eq:SPDE_for_A} starting from $a$ with $C$ replaced by $C^{\eps}$ given by 
\begin{equ}\label{eq:BPHZ_mass}
C^{\eps} = \lambda C^{\eps}_{\sym} \;,
\end{equ}
where $C^{\eps}_{\sym}$ is as in Lemma~\ref{lem:tree_calcs}.

We now show that $A^\eps$ converges in the space $(\Omega^1_\alpha)^\sol$.
To this end, let us decompose $A^\eps = \Psi^\eps + B^\eps$, where $\Psi^\eps$ solves $\partial_t \Psi^\eps = \Delta\Psi^\eps + \xi^\eps$ on $\R_+\times \T^2$ with initial condition $a\in\Omega^1_\alpha$.
Write also $\Psi$ for the solution to $\partial_t\Psi = \Delta\Psi + \xi$ on $\R_+\times \T^2$ with initial condition $a\in\Omega^1_\alpha$.
Combining Theorem~\ref{thm:SHE_regularity} and Proposition~\ref{prop:strongly_continuous}, we see that $\Psi\in\mcC(\R_+,\Omega^1_\alpha)$,
and, by Corollary~\ref{cor:SHE_mollif_converge}, $\Psi^\eps \to \Psi$ in $\mcC(\R_+,\Omega^1_\alpha)$.

Observe that $B^\eps= \mcR(\mcP\bone_+ H(\mcA))$, where $H$ is the part of the non-linearity $F$ in~\eqref{e:YM-Fi} involving the Lie brackets, and that $H(\mcA) \in \cD^{\gamma-1-\kappa,2\eta-1}_{-1-2\kappa}$.
It follows from the convergence of models given by Lemma~\ref{lem:conv_of_models}, the continuity of the reconstruction map,  and~\cite[Thm.~7.1]{Hairer14}, that $B^\eps$ converges in $(\Omega\CC^{0,\alpha/2})^\sol$ with $B^\eps(0)=0$,
where we used the condition $\eta>\frac\alpha4-\frac12\Leftrightarrow 2\eta+1>\frac\alpha2$.
Due to the embedding $\Omega\mcC^{0,\alpha/2} \hookrightarrow \Omega^1_\alpha$ (Remark~\ref{rem:Holder_Omega_embedding})
and the fact that we can start the equation from any element of $\Omega^1_\alpha$,
it readily follows that $A^\eps = \Psi^\eps+B^\eps$ converges in $(\Omega^1_\alpha)^\sol$ as $\eps\to 0$.

Finally, observe that we can perturb the constants $C^{\eps}$ in \eqref{eq:SPDE_for_A} by any bounded quantity while retaining convergence of maximal solutions to \eqref{eq:SPDE_for_A} and so, thanks to the convergence of \eqref{eq:BPHZ_mass} promised by Lemma~\ref{lem:tree_calcs}, we obtain the desired convergence for \emph{any} family $(C^\eps)_{\eps\in(0,1]}$ such that $\lim_{\eps\to 0}C^\eps$ exists and is finite.
\end{proof}
\begin{remark}\label{rem:sym_indep_of_mollifier}
The proof of Theorem~\ref{thm:local_exist} allows us to make the following important observation about dependence of the solution on the mollifier. Namely, given any fixed constant $\delta C \in \R$, the limiting maximal solution to \eqref{eq:SPDE_for_A} obtained as one takes $\eps \downarrow 0 $ with $C= \delta C + \lambda C_{\sym}$ is independent of the choice of mollifier, namely all dependence of the solution on the mollifier is cancelled by $C_{\sym}$'s dependence on the mollifier.\footnote{This is because $\lambda C_{\sym}$ is the renormalisation arising from limiting BPHZ model $Z_{\BPHZ}$ which is independent of the mollifier.}    

Moreover, recall that $C_{\sym}$ is the $\eps \downarrow 0$ limit of the right-hand side of \eqref{eq:integrals_for_csym} and 
\[
\lim_{\eps \downarrow 0} \int \mrd z\ (K*K^\eps)(z)(Q*\moll^\eps)(z)
\quad\textnormal{and}\quad
\lim_{\eps \downarrow 0} \int \mrd z\ (Q*K^\eps)(z)K^\eps(z)
\]
are both independent of $\moll$,
where $Q$ is as in \eqref{e:KdeltaQ}.
In particular, if one chooses 
\[
C =
\lambda \lim_{\eps \downarrow 0} 
\int \mrd z\ \moll^\eps(z) (K*K^\eps)(z)
\]
then the limiting solution to \eqref{eq:SPDE_for_A} is independent of the mollifier $\moll$.  
\end{remark}

\section{Gauge covariance}\label{sec:gauge_equivar}
The aim of this section is to show that the projected process $[A(t)]$ on the 
orbit space is again a Markov process, which is a very strong form of gauge invariance.
The first three subsections will be devoted to proving Theorem~\ref{thm:gauge_covar} \dash 
most of our work will be devoted to part~\ref{pt:gauge_covar} and we will obtain part~\ref{pt:indep_moll} afterwards 
by a short computation with renormalisation constants. 
We close with Section~\ref{subsec:Markov_process} where we construct the desired Markov process. 
\subsection{The full gauge transformed system of equations}\label{subsec:preprocessing_system}

One obstruction we encounter when trying to directly treat the systems \eqref{eq:SPDE_for_B} and  \eqref{eq:SPDE_for_bar_A} using currently available tools 
is that the evolution for $g$ takes place in the non-linear space $G$. 
Fortunately, it is possible to rewrite the equations in such a way that the role of $g$ is 
played by variables that both live in linear spaces. 

Recall that $\mathring{\mfG}^{0,\alpha}$ was defined as a quotient of $\mfG^{0,\alpha}$ but we embed it into a linear space by appealing to the inclusion
\begin{equ}\label{eq:inclusion_of_reduced_group}
\mathring{\mfG}^{0,\alpha} \hookrightarrow 
\hat{\mathfrak{G}}^{0,\alpha}
=
 \CC^{0,\alpha} \big(\T^2,L(\mfg,\mfg) 
\big) 
\times
\Omega^1_{\gr\alpha}\;,
\end{equ}
given by $[g] \mapsto (U,h)$ where, for $g \in \hat{\mathfrak{G}}^{0,\alpha}$, 
one defines
\begin{equ}\label{eq:h_and_U_def}
h \eqdef (\mrd g )g^{-1} = -0^{g}
\qquad\textnormal{and}\qquad
U
\eqdef
\Ad_{g}\;.
\end{equ} 
\begin{remark}\label{rem:Uh_vs_g}
Note that the inclusion in \eqref{eq:inclusion_of_reduced_group} is a homeomorphism onto its image, so we often identify $\mathring{\mfG}^{0,\alpha}$ with its image under this inclusion \eqref{eq:inclusion_of_reduced_group}, and write $(U,h)$ rather than $[g]$ to denote an element of $\mathring{\mfG}^{0,\alpha}$ where it is understood that $U$ and $h$ are of the form \eqref{eq:h_and_U_def} for some $g \in \mfG^{0,\alpha}$. 
With this identification, $\mathring{\mfG}^{0,\alpha}$ is a closed, nonlinear subset of $\hat{\mfG}^{0,\alpha}$. 
\end{remark}

We can rewrite the first equation in~\eqref{eq:SPDE_for_B} and~\eqref{eq:SPDE_for_bar_A} respectively as
\begin{equ}[eq:B_final]
\d_t B_i = \Delta B_i  + [B_j, 2\d_j B_i - \d_i B_j + [B_j,B_i]]
+ U \chi^{\eps} \ast \xi_i
+ C B_i + C h_{i}
\end{equ}
and
\begin{equ}[eq:bar_A_final]
\d_t \bar{A}_i = \Delta \bar{A}_i  + [\bar{A}_j, 2\d_j \bar{A}_i - \d_i \bar{A}_j + [\bar{A}_j,\bar{A}_i]]
+ \chi^{\eps} \ast (\bar U \xi_i) + C \bar{A}_i + (C-\bar C) \bar{h}_{i}
\end{equ}
where $U$ and $h$ are defined from the $g$ of  \eqref{eq:SPDE_for_B} via \eqref{eq:h_and_U_def} and $\bar{U}$ and $\bar{h}$ are defined analogously from the $\bar{g}$ of \eqref{eq:SPDE_for_bar_A}. 

The following lemma identifies the equations solved by $U,h$ and $\bar U,\bar h$.
\begin{lemma}\label{lemma:linear_g_system}
Consider any smooth $B\colon(0,T]\to\Omega\mcC^\infty$ and suppose $g$ solves the second equation in~\eqref{eq:SPDE_for_B}.
Then $h$ and $U$ defined by \eqref{eq:h_and_U_def} satisfy 
\begin{equs}[e:final_system]
\d_t h_{i} &= 
\Delta h_i - [h_j,\d_j h_i] + [[B_j, h_j],h_i] + \d_i [B_j, h_j]\;,
\\
\d_{t} U &=
\Delta U - [h_j,[h_j, \cdot]] \circ U+ [[B_j, h_j],\cdot] \circ U \;.
\end{equs}
In particular $(h,U)$ (resp. $(\bar h,\bar U)$)
defined below \eqref{eq:B_final}-\eqref{eq:bar_A_final}
satisfies the above equation with $B$ solving \eqref{eq:B_final}
(resp. with $B$ replaced by $\bar A$ which solves \eqref{eq:bar_A_final}).
\end{lemma}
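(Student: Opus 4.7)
The proof is a direct computation, so the plan is to differentiate the defining relations $h_i = (\partial_i g) g^{-1}$ and $U = \Ad_g$ in time, use the PDE for $g$ (with $k \eqdef (\partial_t g) g^{-1} = \partial_j h_j + [B_j, h_j]$), and then reorganise the resulting expressions into the parabolic form advertised in \eqref{e:final_system}. All the identities below are pointwise in $(t,x)$ and are justified by the assumed smoothness of $B$ and hence of $g$.

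First, I would derive the basic commutation identity
\begin{equ}\label{e:flat-plan}
\partial_i h_j - \partial_j h_i = [h_i, h_j]\;,
\end{equ}
by directly differentiating $h_j = (\partial_j g) g^{-1}$ and using $\partial_i(g^{-1}) = -g^{-1}(\partial_i g) g^{-1}$; the cross-derivatives $\partial_i\partial_j g$ cancel, and the remaining quadratic terms assemble into the stated commutator. This is the ``pure gauge / flat connection'' relation that one expects of $(\mrd g)g^{-1}$ and is the only nontrivial algebraic ingredient; it is the main (very mild) obstacle in the sense that one has to be careful with the sign convention here since it propagates into the form of the final equation.

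Next, for the $h$-equation, differentiating $h_i = (\partial_i g)g^{-1}$ in $t$ and using $\partial_t g = kg$ gives, after the same cancellations,
\begin{equ}
\partial_t h_i = \partial_i k + [k, h_i]\;.
\end{equ}
Substituting $k = \partial_j h_j + [B_j, h_j]$ yields four terms. The two terms involving $B_j$ immediately produce $\partial_i[B_j, h_j] + [[B_j, h_j], h_i]$ as required. To handle the remaining pair $\partial_i\partial_j h_j + [\partial_j h_j, h_i]$ I would apply $\partial_j$ to \eqref{e:flat-plan} (written as $[h_j,h_i] = \partial_j h_i - \partial_i h_j$) and expand the Leibniz rule; this identifies $\partial_i\partial_j h_j + [\partial_j h_j, h_i]$ with $\Delta h_i - [h_j, \partial_j h_i]$, which is exactly the first line of \eqref{e:final_system}.

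For the $U$-equation, I would observe the two elementary identities $\partial_t U = \ad_k \circ U$ and $\partial_i U = \ad_{h_i} \circ U$ (acting on $\mfg$), both obtained by the same one-line calculation as for $h$. Iterating the spatial one gives $\Delta U = \ad_{\partial_j h_j}\circ U + [h_j,[h_j,\cdot]]\circ U$, so subtracting from $\partial_t U = \ad_{\partial_j h_j + [B_j,h_j]}\circ U$ produces exactly the second line of \eqref{e:final_system}. Finally, the last sentence of the lemma follows tautologically, since the derivation of the $g$-equation did not use any specific form of $B$: the same computation applies with $B$ replaced by $\bar A$ and $(h,U)$ replaced by $(\bar h, \bar U)$, because the second equation in \eqref{eq:SPDE_for_bar_A} has the same structure as the second equation in \eqref{eq:SPDE_for_B}.
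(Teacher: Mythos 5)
Your proof is correct and follows essentially the same route as the paper: the flat-connection identity $\partial_j h_i - \partial_i h_j = [h_j,h_i]$, the formula $\partial_t h_i = \partial_i k + [k,h_i]$ (the paper writes this as $\partial_t h_i = [(\partial_t g)g^{-1},h_i] + \partial_i((\partial_t g)g^{-1})$), and the identities $\partial_i U = \ad_{h_i}\circ U$, $\partial_t U = \ad_k\circ U$ are exactly the ingredients used there. The only cosmetic difference is that the paper records the consequence $\Delta h_i - \partial_i\div h = \partial_j[h_j,h_i]$ as a separate displayed identity before substituting, whereas you arrive at the same cancellation by differentiating the flat-connection relation in line.
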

\begin{proof}
By definition \eqref{eq:h_and_U_def} of $h$ and the equation for $g$ in \eqref{eq:SPDE_for_B}, one has the following identities
\begin{equs}[e:convenient-identities]
 (\d_t g)g^{-1} &= \div h + [B_j, h_j]\;,\\
\d_j h_i - \d_i h_j &= [h_j, h_i]\;, \\
\Delta h_i - \d_i \div h &= \d_j [h_j,h_i]\;,
\end{equs}
where the last identity follows from the second.	
One then obtains
\begin{equs}
\d_t h_i &= [(\d_t g)g^{-1},h_i] + \d_i ((\d_t g)g^{-1}) \\
&= [\div h + [B_j, h_j],h_{i}] + \d_i \div h + \d_i [ B_j, h_j] \\
&= \Delta h_i - [ h_j,\d_j h_i] + [[B_j, h_j],h_i] + \d_i [B_j, h_j]\;.
\end{equs}
For the $U$ equation, we start by noting that 
\begin{equ}[e:partialU]
\d_i U =  [h_i, \cdot] \circ U
\end{equ}
and therefore
\begin{equ}[e:DeltaU]
\Delta U = [\div h, \cdot] \circ U + [h_i,[h_i, \cdot]] \circ U \;.
\end{equ}
By the first identity in \eqref{e:convenient-identities}
\begin{equ}
\d_t U = [(\d_t g)g^{-1}, \cdot] \circ U
= [\div h + [B_j, h_j],  \cdot] \circ U\;,
\label{eq:Ad}
\end{equ}
and the claim follows from \eqref{e:DeltaU}.
\end{proof}
%
%
%
\subsection{Regularity structure for the gauge transformed system}
\label{subsec:regstruct_for_system}
To recast~\eqref{eq:B_final},~\eqref{eq:bar_A_final}, and~\eqref{e:final_system} in the language of regularity structures, we use the label sets 
\[
\mfL_+ \eqdef \{\mfa_{i},\mfh_i, \mfm_{i}\}_{i=1}^{d} \cup \{\mfu\}  \quad\textnormal{and}\quad \mfL_- \eqdef \{\bar{\mfl}_{i},\mfl_i\}_{i=1}^{d}\;.
\]
Our approach is to work with {\it one single} regularity structure to study the systems $(B,U,h)$ and $(\bar A, \bar U, \bar h)$, allowing us to compare their solutions at the abstract level of modelled distributions. 

Our particular choice of label sets and abstract non-linearities also involves some pre-processing to allow us to use the machinery of Section~\ref{subsec:renorm_eq} to obtain the form of our renormalised equation.  
  The label $\mfh_{i}$ indexes the solutions $h_{i}$ or $\bar{h}_{i}$, $\mfu$ indexes the solutions $U$ or $\bar{U}$, and $\mfl_{i}$ indexes the noise $\xi_{i}$ (while $\bar\mfl_{i}$ indexes a noise mollified at scale $\eps$, see below). 
 
The other labels are used to describe the $B_{i}$ and $\bar A_i$ equations~\eqref{eq:B_final} and~\eqref{eq:bar_A_final}. 
To explain our strategy, we first note that (ignoring for the moment the contribution coming from the initial condition) 
the equation for $\bar{A}$ can be written as the integral fixed point equation 
\begin{equ}
\bar{A}_i =  G \ast
\big( [\bar{A}_j, 2\d_j \bar{A}_i - \d_i \bar{A}_j + [\bar{A}_j,\bar{A}_i]]
+
C\bar{A}_{i} + (C - \bar{C}) \bar{h}_{i}
\big) 
+
G_{\eps} \ast (\bar{U} \xi_{i})\;,
\end{equ}
where $G_{\eps} = \chi^{\eps} \ast G$. 
While this can be cast as an abstract fixed point problem at the level of jets\slash modelled distributions, it does not quite fit into the framework of Section~\ref{subsec:renorm_eq} since it involves multiple kernels on the right-hand side. 
We can deal with this problem by introducing a component $\mfm_{i}$ to index a new component of our solution that is only used to represent the term $G_{\eps} \ast (\bar{U} \xi_{i})$. 
The label $\mfa_{i}$ then represents the remainder $\bar A_i - G_{\eps} \ast (\bar{U} \xi_{i})$; see~\eqref{e:convention-bfA} and the non-linearity $\bar F$ below.

Turning to the equation for $B$, the corresponding fixed point problem is
\[
B_{i}
=
G \ast
\big( 
[B_j, 2\d_j B_i - \d_i B_j + [B_j,B_i]]
+
CB_{i}
+
Ch_{i}
+
U \chi^{\eps} \ast \xi_i
\big)\;.
\]
Note that we cannot combine the mollification by $\chi^{\eps} $ with a kernel that acts on the whole RHS above, so we instead use the 
label $\bar{\mfl}_{i}$ to represent $\chi^{\eps} \ast \xi_i$ which we treat, at a purely algebraic level, as a 
completely separate noise from $\xi_{i}$.   

Turning to our kernel space assignment $\mathcal{K} = (\mathcal{K}_{\mft})_{\mft \in \Lab_{+}}$ and target space assignment  $(W_{\mft})_{\mft \in \mfL}$ we set
\begin{equ}[e:system-target space assignment]
\mathcal{K}_{\mft} = \R \;\; 
\forall \mft \in \Lab_{+} 
\quad \enskip
\text{and}
\quad \enskip
W_{\mft} \eqdef 
\begin{cases}
\mfg
& \mft = \mfa_{i}, \mfh_{i}, \mfl_{i}, \bar{\mfl}_{i}, \textnormal{ or }  \mfm_{i}\;,\\
L(\mfg,\mfg) 
&\mft = \mfu\;.
\end{cases} 
\end{equ} 
The space assignment $(V_{\mft})_{\mft \in \mfL}$ used to build our regularity structure is then again given by \eqref{e:defV}.
We also define $\deg\colon \mfL \rightarrow \R$ by setting
\[
\deg(\mft) \eqdef 
\begin{cases}
2 - \kappa 
& \mft  = \mfa_{i} \textnormal{ or } \mfm_{i}\;,\\
2
& \mft = \mfh_{i} \textnormal{ or } \mfu\;,\\
-d/2 - 1 - \kappa 
&\mft = \mfl_{i} \textnormal{ or } \bar{\mfl}_{i}\;,
\end{cases}
\] 
where $\kappa \in (0,\frac{1}{12})$ is such that $2\kappa<1-\alpha$ for $\alpha\in(\frac23,1)$ as before.

The systems of equations~\eqref{eq:B_final},~\eqref{eq:bar_A_final}, and~\eqref{e:final_system} and earlier discussion about the roles of our labels lead us to the rule $\mathring{R}$ given by setting\footnote{The node type $\{\mfu \mfl_{i}\} \in \mathring{R}(\mfa_{i})$ does not appear in the expansion of~\eqref{eq:B_final} or~\eqref{eq:bar_A_final} individually but naturally appears when we want to compare their fixed point problems, see \eqref{e:GUhatXi-compare}.}  
\begin{equs}[e:rule-gauged]
\mathring{R}(\mfl_{i}) &= \mathring{R}(\bar{\mfl}_{i}) = \{\emptyset\}\;,\quad 
\mathring{R}(\mfm_{i}) = \big\{\mfu \mfl_{i} \big\}\;,
\\
\mathring{R}(\mfu) &= \{\mfu\mfh_j^2,\; \mfu\mfq_j\mfh_j\,:\, \mfq \in \{\mfa,\mfm\},\, j \in [d]\}\;,
\\
\mathring{R}(\mfh_i) &= \{\mfh_j \d_j \mfh_i,\; \mfq_j\mfh_j\mfh_i,\; \mfh_j \d_i \mfq_j,\; \mfq_j \d_i\mfh_j\,:\, \mfq \in \{\mfa,\mfm\},\, j \in [d]\}\;,
\\
\mathring{R}(\mfa_{i}) &= 
\{\mfq_{i},\; \mfh_{i},\; \mfq_i \hat{\mfq}_{j} \tilde{\mfq}_{j},\; \mfq_j \d_i \hat{\mfq}_j,\; \mfq_j \d_j \hat{\mfq}_i,\; \mfu \mfl_{i},\; \mfu \bar{\mfl}_{i}:\,  \mfq,\hat{\mfq},\tilde{\mfq} \in \{\mfa,\mfm\},\, j \in [d]\}\;.
\end{equs}
Here we are using monomial notation for node types: a type $\mft \in \Lab$ should be associated with $(\mft,0)$ and the symbol $\d_{j}\mft$ represents $(\mft,e_{j})$. 
We write products to represent multisets, for instance $ \mfa_j^2\d_{k}\mfa_i = \{ (\mfa_{i},e_{k}), (\mfa_{j},0), (\mfa_{j},0)\}$. 
We write $\mfq$, $\hat{\mfq}$, and $\tilde{\mfq}$ as dummy symbols since any occurrence of $B_{i}$ or $\bar{A}_{i}$ can correspond to an occurrence of $\mfa_{i}$ or $\mfm_{i}$. 

It is straightforward to check that $\mathring{R}$ is subcritical and as in Section~\ref{subsec:Recasting vec reg struct}
the rule $\mathring{R}$ has a smallest normal extension which admits a completion $R$ which is also subcritical. This is the rule
that is used to define the set of trees $\mfT(R)$ which is used to build our regularity structure.

We adopt conventions analogous to those of Remark~\ref{rem:switching_to_standard_notation} and \eqref{eq:switching_to_standard_notation}, writing (using our monomial notation)
\begin{equs}[e:convention-bfA]
{}&\pr{\bar{A}_{i}} = \pr{\mathbf{A}_{\mfa_{i}}} + \pr{\mathbf{A}_{\mfm_{i}}}\,,\, 
\pr{\d_{j}\bar{A}_{i}} = \pr{\mathbf{A}_{\d_{j}\mfa_{i}}} + \pr{\mathbf{A}_{\d_{j}\mfm_{i}}}\,,\,\\ 
{}&\pr{B_{i}} = \pr{\mathbf{A}_{\mfa_{i}}} \,,\,
\pr{\d_{j}B_{i}} = \pr{\mathbf{A}_{\d_{j} \mfa_{i}}}\,,\,
 \pr{U} = \pr{\bar{U}} =\pr{\mathbf{A}_{\mfu}} \,,\, \pr{\partial_{i}U} = \pr{\partial_{i}\bar{U}} = \pr{\mathbf{A}_{\partial_{i}\mfu}}\;,\\ 
{}&\chi^{\eps} \ast \xi_{i}
=
\xi_{\bar{\mfl}_{i}}\,,\ 
\xi_{i}
=
\xi_{\mfl_{i}}\,,\, 
\pr{h_{i}} = \pr{\bar{h}_{i}}  =\pr{\mathbf{A}_{\mfh_{i}}}   \,,\ 
 \pr{\d_{j}h_{i}} = \pr{\d_{j} \bar{h}_{i}} =\pr{\mathbf{A}_{\d_{j}\mfh_{i}}} \,. 
\end{equs} 
Here, we choose to typeset components of $\pr{\mathbf A}$ in \pr{purple} in order to be able to identify them at a glance
as a solution\footnote{Components of $\pr{\mathbf A}$ corresponding to the noise such as $\xi_{\mfl_{i}}$ and $\xi_{\bar{\mfl}_{i}}$ will be left in black since their values are not solution-dependent.}-dependent element. This will be convenient later on when we manipulate expressions belonging to 
$\CT \otimes W$ for some vector space $W$ (typically $W = \mfg$ or $W = L(\mfg,\mfg)$), in which case purple variables
are always elements of the second factor $W$. 
Note that, when referring to components
of $\pr{\mathbf{A}} = (\pr{\mathbf{A}_{o}})_{ o \in \CE}$ the symbols $\pr{\bar{U}}$, $\pr{\bar{h}}$, and $\pr{\d_{j} \bar{h}_{i}}$ are identical to their unbarred versions but we still use both notations depending on which system of equations we are working with. 

We now fix two non-linearities $F = \bigoplus_{\mft \in \Lab} F_{\mft},\ \bar{F} = \bigoplus_{\mft \in \Lab} \bar{F}_{\mft} \in \mcb{Q}$, which encode our systems \eqref{eq:B_final} and~\eqref{eq:bar_A_final} respectively. 
For some constants $\mathring{C}_{1}$, and $\mathring{C}_{2}$ to be fixed later\footnote{One will see that these constants are shifts of the constant $C$ by some finite constants that depend only on our truncation of the heat kernel $K$.} we set  $F_\mft$ and $\bar F_\mft$ to be $\id_\mfg$ for $\mft\in \Lab_-$ and 
\begin{equs}
F_{\mft}(\pr{\mathbf{A}})
\eqdef&
\begin{cases}
[\pr{B_j}, 2\pr{\d_j B_i} - \pr{\d_i B_j} + [\pr{B_j},\pr{B_i}]] + \mathring{C}_{1} \pr{B_{i}} + \mathring{C}_{2} \pr{h_{i}} + \pr{U} \chi^{\eps} \ast \xi_{i}& \textnormal{ if }
\mft = \mfa_{i}\;,\\
- [\pr{h_j},\pr{\d_jh_i}] + [[\pr{B_j}, \pr{h_j}],\pr{h_i}] + \pr{\d_i} [\pr{B_j}, \pr{h_j}] & \textnormal{ if }
\mft = \mfh_{i}\;,\\
- [\pr{h_j},[\pr{h_j}, \cdot]] \circ \pr{U} + [[\pr{B_j}, \pr{h_j}],\cdot] \circ \pr{U} & \textnormal{ if }
\mft = \mfu\;,\\
0 & \textnormal{ if }
\mft = \mfm_{i}\;.
\end{cases}
\end{equs}
(The term $\pr{\d_i} [\pr{B_j}, \pr{h_j}]$ should be interpreted by formally applying the Leibniz rule.)
For $\bar{F}_{\mft}(\pr{\mathbf{A}})$ we set
\begin{equs}[eq:bar_F_def]
\bar{F}_{\mft}(\pr{\mathbf{A}})
\eqdef&
\begin{cases}
[\pr{\bar{A}_j}, 2\pr{\d_j \bar{A}_i} - \pr{\d_i \bar{A}_j} + [\pr{\bar{A}_j},\pr{\bar{A}_i}]] + \mathring{C}_{1} \pr{\bar{A}_{i}} + \mathring{C}_{2} \pr{\bar{h}_{i}}& \textnormal{ if }
\mft = \mfa_{i}\;,\\
- [\pr{\bar{h}_j},\pr{\d_j \bar{h}_i}] + [[\pr{\bar{A}_j}, \pr{\bar{h}_j}],\pr{\bar{h}_i}] + \pr{\d_i} [\pr{\bar{A}_j}, \pr{\bar{h}_j}] & \textnormal{ if }
\mft = \mfh_{i}\;,\\
- [\pr{\bar{h}_j},[\pr{\bar{h}_j}, \cdot]] \circ \pr{\bar{U}}+ [[\pr{\bar{A}_j}, \pr{\bar{h}_j}],\cdot] \circ \pr{\bar{U}} & \textnormal{ if }
\mft = \mfu\;,\\
\pr{\bar{U}} \xi_{i} & \textnormal{ if }
\mft = \mfm_{i}\;.
\end{cases}
\end{equs}
For $j\in \{0\}\sqcup [d]$ we also introduce the shorthands
\begin{equs}
{}&\Xi_{i} = \mcb{I}_{(\mfl_{i},0)}(\bone),\ 
\overline{\Xi}_{i} = \mcb{I}_{(\bar{\mfl}_{i},0)}(\bone),\ 
\mcb{I}_{i,j}(\cdot) = \mcb{I}_{(\mfa_{i},j)}(\cdot),\ 
\bar{\mcb{I}}_{i,j}(\cdot) = \mcb{I}_{(\mfm_{i},j)}(\cdot),\\
{}&
\mcb{I}^{\mfh}_{i,j}(\cdot) = \mcb{I}_{(\mfh_{i},j)}(\cdot),\ 
\mcb{I}^{\mfu}(\cdot) = \mcb{I}_{(\mfu,0)}(\cdot)\;.
\end{equs}
When  $j=0$ in the above notation we sometimes suppress this index, 
for instance writing $\mcb{I}_{i}(\cdot)$ instead of $\mcb{I}_{i,0}(\cdot)$. 
\subsubsection{Kernel \slash noise assignments and BPHZ models} 
We write $K^{(\eps)} = (K_{\mft}^{(\eps)}: \mft \in \Lab_{+})$ for the kernel assignment given by setting
\begin{equ}[e:K-Keps-assign]
K_\mft^{(\eps)} = 
\left\{\begin{array}{cl}
	K & \text{for $\mft = \mfa_{i}, \mfh_{i}$, or $\mfu$,} \\
	K^\eps = K \ast \chi^{\eps} & \text{for $\mft = \mfm_{i}$.}
\end{array}\right.
\end{equ}
We also write $\mathscr{M}$ for the space of all models and, for $\eps \in [0,1]$, we write $\mathscr{M}_{\eps} \subset \mathscr{M}$ for the family of $K^{(\eps)}$-admissible models.\label{model page ref}

Note that in our choice of degrees we enforced $\deg(\mfa_{i}) = \deg(\mfm_{i}) = 2 - \kappa$ rather than $2$.
The reason is that this allows us to extract a factor $\eps^{\kappa}$ from any occurrence of $K - K^{\eps}$,
which is crucial for the estimates in Section~\ref{sec:Comparing fixed point problems} below.

We make this more precise now. 
Recall first the notion of a $\beta$-regularising kernel from \cite[Assumption~5.1]{Hairer14}.
We introduce some terminology so that we can use that notion in a slightly more quantitative sense. 
For $\beta, R > 0$, $r \ge 0$ we say that a kernel $J$ is $(r,R,\beta)$-regularising, if one can find a decomposition of the form \cite[(5.3)]{Hairer14} such that the estimates \cite[(5.4), (5.5)]{Hairer14} hold with the same choice of $C=R$ for all multi-indices $k,l$ with $|k|_{\s}, |l|_{\s} < r$.
We use the norms $\$ \act \$_{\alpha,m}$ on functions with prescribed singularities at the origin that were defined in \cite[Definition~10.12]{Hairer14}. 
If $J$ is a smooth function (except possibly at the origin), satisfies \cite[Assumption~5.4]{Hairer14} with for some $r \ge 0$, and is supported on the ball $|x| \le 1$, then it is straightforward to show that $J$ is $(r,2\$J\$_{\beta-|\s|,r},\beta)$-regularising. 
We then have the following key estimate. 
\begin{lemma}\label{lemma:control_of_kernels}
For any $m \in \N$ one has $\$K\$_{2,m} < \infty$ and there exists $R$ such that $K - K^{\eps}$ is $(m,\eps^{\kappa} R,2-\kappa)$-regularising for all $\eps \in [0,1]$. 
\end{lemma}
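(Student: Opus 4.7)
The first claim follows by fixing a decomposition $K = \sum_{n\ge 0} K_n$ from \cite[Assumption~5.1]{Hairer14} guaranteed by the properties of $K$ stated at the start of Section~\ref{subsubsec:kernelandnoiseforsym}: since $K$ agrees with the heat Green's function on a neighbourhood of the origin and is smooth and compactly supported away from it, it is a $2$-regularising kernel in the sense of \cite[(5.3)--(5.5)]{Hairer14}, and the quantitative $\$K\$_{2,m}$ bound reduces to the standard parabolic scaling estimates $|\partial^{k}K_n(z)|\lesssim 2^{n(|\s|-2+|k|_\s)}$ together with the moment\slash support bounds, which hold uniformly in $n$ for $|k|_\s < m$.

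The heart of the argument is the second claim. The plan is to construct a decomposition of $K-K^{\eps}$ adapted to the scale $\eps$ by setting $N=N(\eps)$ to be the unique integer with $2^{-N-1} \le \eps < 2^{-N}$ and defining
\begin{equ}
L_n \eqdef K_n - K_n * \chi^{\eps} \quad \text{for } n\le N\;,\qquad L_{N+1}^{\star}\eqdef \sum_{n\ge N+1}(K_n-K_n*\chi^{\eps})\;,
\end{equ}
with $L_n \eqdef 0$ for $n\ge N+2$ and $L_{N+1}^{\star}$ absorbed into the $N$-th component (after possibly relabelling to keep the support requirement $\mathrm{supp}\,L_n\subset B(0,2^{-n})$). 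For $n\le N$, the enlargement of the support caused by convolving $K_n$ with $\chi^{\eps}$ is at most $\eps\le 2^{-N}\le 2^{-n}$, so up to a harmless constant rescaling of $2^{-n}$ this preserves the support condition; the tail $L_{N+1}^{\star}$ is supported in a ball of radius $\lesssim 2^{-N}\asymp \eps$.

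The two key estimates are then obtained from a single interpolation argument. For $n\le N$, write
\begin{equ}
\partial^{k}L_n(z) = \int \chi^{\eps}(y)\big(\partial^{k}K_n(z)-\partial^{k}K_n(z-y)\big)\mrd y\;,
\end{equ}
and combine the Taylor bound $|\partial^{k}K_n(z)-\partial^{k}K_n(z-y)|\lesssim \eps\, 2^{n(|\s|-1+|k|_\s)}$ (valid for $|y|\le \eps$) with the trivial estimate $|\partial^{k}L_n(z)|\lesssim 2^{n(|\s|-2+|k|_\s)}$, interpolating with exponent $\kappa$ to obtain
\begin{equ}
|\partial^{k}L_n(z)|\lesssim (\eps\, 2^n)^{\kappa}\,2^{n(|\s|-2+|k|_\s)}=\eps^{\kappa}\,2^{n(|\s|-(2-\kappa)+|k|_\s)}\;,
\end{equ}
which is exactly the $(2-\kappa)$-regularising bound at scale $2^{-n}$ with prefactor $\eps^{\kappa}R$. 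For the tail $L_{N+1}^{\star}$ at scale $\asymp 2^{-N}\asymp \eps$, one uses only the trivial bound and the fact that $\eps\asymp 2^{-N}$ together with the same algebra to get $\eps^{\kappa}\,2^{N(|\s|-(2-\kappa)+|k|_\s)}$, as required. Summing the resulting bounds over the (at most countably many) components yields the $(m,\eps^{\kappa}R,2-\kappa)$-regularising property with an $R$ depending only on $\$K\$_{2,m}$ and $|\chi|_{L^{1}}$.

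The one potentially delicate point is bookkeeping the support conditions after convolving with $\chi^{\eps}$: this is where the split at $N$ is used. Beyond that the argument is a routine Littlewood--Paley style interpolation, and so I expect no further obstacles.
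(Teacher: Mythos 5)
The paper disposes of this lemma by citing \cite[Lemma~10.17]{Hairer14}, which packages precisely the mollification estimate you are re-deriving. So your argument is a more elementary, self-contained unrolling of that citation, and the interpolation for the components with $n\le N$ is correct: the two-point bound $\min(a,b)\le a^{\kappa}b^{1-\kappa}$ applied to the Taylor estimate and the trivial estimate gives exactly the claimed $\eps^{\kappa}2^{n(|\s|-(2-\kappa)+|k|_\s)}$ bound at scale $2^{-n}$.

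However, there is a genuine gap in the treatment of the tail. You define $L_{N+1}^{\star}=\sum_{n>N}(K_n-K_n*\chi^{\eps})$, claim it is supported in a ball of radius $\lesssim 2^{-N}\asymp\eps$, and then "use only the trivial bound" at scale $2^{-N}$. But $\sum_{n>N}K_n = K - \sum_{n\le N}K_n$ still carries the full parabolic singularity of $K$ at the origin (only the mollified summand $\bigl(\sum_{n>N}K_n\bigr)*\chi^{\eps}$ is smooth), so $L_{N+1}^{\star}$ is \emph{unbounded} and cannot be absorbed into a single dyadic piece at scale $2^{-N}$ with a uniform $C^{m}$ estimate. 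The fix is to keep the dyadic resolution of the singular part: for $n>N$, retain $K_n$ itself (without subtracting its mollification) as the $n$-th component; since $\eps 2^{n}\ge 1$ there, the unmodified bound $|\partial^{k}K_n|\lesssim 2^{n(|\s|-2+|k|_\s)}$ is automatically $\le\eps^{\kappa}2^{n(|\s|-(2-\kappa)+|k|_\s)}$. Only the genuinely smooth piece $-\bigl(\sum_{n>N}K_n\bigr)*\chi^{\eps}$ should be aggregated at scale $2^{-N}$; there one verifies directly (e.g.\ by Young's inequality using $\|\sum_{n>N}K_n\|_{L^{1}}\lesssim\eps^{2}$) that its $|k|_\s$-th derivatives are $\lesssim\eps^{2-|\s|-|k|_\s}=\eps^{\kappa}2^{N(|\s|-(2-\kappa)+|k|_\s)}$, which is the required bound. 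With this re-organisation of the tail, the rest of your argument goes through, and you also need to note (routinely) that the moment bounds corresponding to \cite[(5.5)]{Hairer14} are preserved by these manipulations.
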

\begin{proof}
The first statement is standard.
The second statement follows from combining the first statement, our conditions on the kernel $K$, \cite[Lemma~10.17]{Hairer14}, and the observations made above. 
\end{proof}
We now turn to our random noise assignments.
In \eqref{eq:B_final} and \eqref{eq:bar_A_final}
both a mollified noise $\chi^{\eps} \ast \xi_{i} = \xi_{i}^{\eps}$ and an un-mollified noise $\xi_i$ appear. 
In order to start our analysis with smooth models, we replace the un-mollified noise with one mollified at scale $\delta$. 
In particular, given $\eps,\delta \in (0,1]$ we define a random noise assignment $\zeta^{\delta,\eps} = (\zeta_{\mfl}: \mfl \in \mfL_{-})$ by setting 
\begin{equ}
\zeta_{\mfl} = 
\left\{\begin{array}{cl}
	\chi^{\delta} \ast \xi_{i} = \xi_{i}^{\delta} & \text{for $\mfl = \mfl_{i}$} \\
	\chi^{\eps} \ast \xi_{i} = \xi_{i}^{\eps} & \text{for $\mfl = \bar{\mfl}_{i}$}\;.
\end{array}\right.
\end{equ}
We also define $Z^{\delta,\eps}_{\BPHZ} = (\Pi^{\delta,\eps},\Gamma^{\delta,\eps}) \in \mathscr{M}_{\eps}$ to be the BPHZ lift associated to the kernel assignment $K^{(\eps)}$ and random noise assignment $\zeta^{\delta,\eps}$.
We will first take $\delta \downarrow 0$ followed by $\eps \downarrow 0$ \dash the first limit is a minor technical point while the second limit is the limit referenced in part \ref{pt:gauge_covar} of Theorem~\ref{thm:gauge_covar}. 

Note that we have ``doubled'' our noises in our noise assignment by having two sets of noise labels $\{ \mfl_{i}\}_{i=1}^{d}$ 
and $\{ \bar{\mfl}_{i}\}_{i=1}^{d}$ \dash we will want to use the fact that these two sets of noises take values in the 
same space $\mfg$ (and in practice, differ only by mollification).  This is formalised by noting that there are canonical 
isomorphisms $\CT[\Xi_{i}] \simeq \mfg^{\ast} \simeq \CT[\bar{\Xi}_{i}]$ for each $i \in [d]$, which we combine into an isomorphism
\begin{equ}\label{eq:noise_iso}
\sigma\colon \bigoplus_{i=1}^{d} \CT[\Xi_{i}] 
\rightarrow \bigoplus_{i=1}^{d} \CT[\overline{\Xi}_{i}]\;.
\end{equ}
\subsubsection{$\eps$-dependent regularity structures}\label{subsec:eps_reg_structs}
In the framework of regularity structures,
 analytic statements regarding models and modelled distributions reference norms $\|\act \|_{\ell}$ on
 the vector space   $T_{\ell}$ of all elements of degree $\ell \in \deg(R) = \{ \deg(\tau): \tau \in \mfT(R)\}$
   \dash in our setting this is given by
\[
T_{\ell} = 
\bigoplus_{
\tau \in \mfT(\ell,R)}
\CT[\tau]\;,
\]
where $\mfT(\ell,R) = \{ \tau \in \mfT(R): \deg(\tau) = \ell\}$. 
In many applications the spaces $T_{\ell}$ are finite-dimensional and there is no need to 
specify the norm $\| \act \|_{\ell}$ on $T_{\ell}$ (since they are all equivalent).

While the spaces $T_{\ell}$ are also finite-dimensional in our setting, we want to encode the fact that $K^{\eps}$ is converging to $K$ and $\xi^{\eps}$ is converging to $\xi$ as $\eps \downarrow 0$ in a way that allows us to treat discrepancies between these quantities as small at the level of our abstract formulation of the fixed point problem. 
We achieve this by defining, for each $\ell \in \deg(R)$, a family of norms $\{ \| \act \|_{\ell, \eps}: \eps \in (0,1]\}$ on $T_{\ell}$.
Our definition will depend on a small parameter $\theta \in  (0,\kappa]$ which we treat as fixed in what follows. 

Heuristically and pretending for a moment that we are in the scalar noise setting,  we  define these $\| \act \|_{\ell, \eps}$ norms by performing a ``change of basis'' and writing out trees in terms of the noises $\Xi_{i}$, $\bar{\Xi}_{i} - \Xi_{i}$, operators $\mcb{I}_{i,p}$, $\bar{\mcb{I}}_{i,p} -\mcb{I}_{i,p}$, $\mcb{I}^{\mfh}_{i,p}$ and $\mcb{I}^{\mfu}$ 
instead of $\Xi_{i}$, $\bar{\Xi}_{i}$, $\mcb{I}_{i,p}$, $\bar{\mcb{I}}_{i,p}$, $\mcb{I}^{\mfh}_{i,p}$ and $\mcb{I}^{\mfu}$, respectively.  For instance, we rewrite
\[
\bar{\mcb{I}}_i (\Xi_i) =  (\bar{\mcb{I}}_i - \mcb{I}_i)(\Xi_i) + \mcb{I}_i(\Xi_i)  
\quad
\text{and}
\quad
\mcb{I}_i (\bar{\Xi}_i) = \mcb{I}_i (\bar{\Xi}_i - \Xi_{i}) 
+
\mcb{I}_i(\Xi_{i})\;.
\]
We then define, for any $\ell \in \deg(R)$ and $v = \sum_{\tau \in \mfT(\ell,R)} v_{\tau} \tau \in T_{\ell}$,
\[
\| v \|_{\ell,\eps} = \max \{ \eps^{ m(\tau) \theta}|v_{\tau}| : \tau \in \mfT(\ell,R)\}\;,
\]
where $m(\tau)$ counts the number of occurrence of $\bar{\mcb{I}}_{i,p} -\mcb{I}_{i,p}$ and $\bar{\Xi}_{i} - \Xi_{i}$ in $\tau$. 

We now make this idea more precise and formulate it our setting of vector-valued noise. 
Recall that in our new setting the trees serve as indices for subspaces of our regularity structure, instead of  basis vectors, so we do not really ``change basis''.
We note that there is a (unique) isomorphism $\Theta\colon \CT \rightarrow \CT$ with the following properties.
\begin{itemize}
\item $\Theta$ preserves the domain of $\mcb{I}_{i,p}$, $\mcb{I}_{i,p}^{\mfh}$ and $\mcb{I}^{\mfu}$ and commutes with these operators on their domain.
\item For any $\tau$ with $\bar{\mcb{I}}_{i,j}(\tau) \in \mfT(R)$, one has $\Theta \circ \bar{\mcb{I}}_{i,j} (\tau)= (\bar{\mcb{I}}_{i,j} + \mcb{I}_{i,j}) \circ \Theta (\tau)$.
\item For any $u,v \in \CT$ with $uv \in \CT$ one has $\Theta(u)\Theta(v) = \Theta(uv)$ \dash here we are referencing the partially defined product on $\CT$ induced by the partially defined tree product on $\mfT(R)$. 
\item The restriction of $\Theta$ to $\CT[\bar{\Xi}_{i}]$ is given by $\id + \sigma^{-1}$ where $\sigma^{-1}$ is the inverse of the map $\sigma$ given in \eqref{eq:noise_iso}.
\item  $\Theta$ restricts to the corresponding identity map on both $\CT[\mbX^{k}]$ and $\CT[\Xi_{i}]$. 
\end{itemize} 
It is immediate that $\Theta$ furthermore preserves $T_{\ell}$ for every $\ell \in \deg(R)$. 

We now fix, for every $\tau \in \mfT(R)$, some norm $\| \bigcdot \|_{\tau}$ on $\CT[\tau]$.
Since each $\CT[\tau]$ is isomorphic to a subspace of $(\mfg^*)^{\otimes n}$ and the isomorphism
is furthermore canonical up to permutation of the factors, this can be done by choosing a
norm on $\mfg^*$ as well as a choice of uniform crossnorm (for example the projective crossnorm).

We then define a norm $\wnorm{\act}_{\ell, \eps}$ on $T_{\ell}$ by setting, for any $v \in T_{\ell}$, 
\[
\wnorm{v}_{\ell,\eps}
=
\max\{ \eps^{m(\tau) \theta} \|P_{\tau} v\|_{\tau}: \tau \in \mfT(R,\ell)\}\;,
\]
where $P_{\tau}$ is the projection from $T_{\ell}$ to $\CT[\tau]$ and now $m(\tau)$ counts the number of occurrences of the labels $\{ \bar{\mfl}_{i}, \mfm_{i}\}_{i=1}^{d}$ appearing in $\tau$. 
Finally, the norm $\| \act \|_{\ell, \eps}$ is given by setting $\|v\|_{\ell,\eps} = \wnorm{ \Theta v}_{\ell,\eps}$.  \label{norm ell eps page ref}

The following lemma, which is straightforward to prove, states that these norms have the desired qualities.
\begin{lemma}\label{lem:compare on T}
\ 
\begin{itemize}
\item Let  $\ell \in \deg(R)$ and $v \in T_{\ell}$ with  $v$  in the domain of the operator $\bar{\mcb{I}}_{i,p} - \mcb{I}_{i,p}$. Then one has, uniform in $\eps$, 
\begin{equ}\label{eq:diff_of_integral_est}
\| (\bar{\mcb{I}}_{i,p} - \mcb{I}_{i,p})(v) \|_{\ell + 2 - \kappa,\eps} \lesssim \eps^{\theta} \| v \|_{\ell,\eps}\;.
\end{equ}
\item For any $u \in \CT[\Xi_{i}]$ one has, uniformly in $\eps$, 
\begin{equ}\label{eq:diff_of_noise_est}
\|\sigma(u)-u\|_{-d/2 -1 - \kappa,\eps}
\lesssim
\eps^{\theta}
\|u\|_{-d/2- 1 -\kappa,\eps}\;.
\end{equ}
\end{itemize}
\end{lemma}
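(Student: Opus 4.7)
The plan is to unwind the definition $\|v\|_{\ell,\eps} = \wnorm{\Theta v}_{\ell,\eps}$ in both parts and exploit the fact that $\Theta$ has been engineered precisely so that the operators $\bar{\mcb{I}}_{i,p}-\mcb{I}_{i,p}$ and $\sigma - \id$ become, after conjugation by $\Theta$, simply ``plant a $\mfm_i$-edge at the root'' and ``replace the single $\Xi_i$ by $\bar{\Xi}_i$''. Each such operation increments the counter $m(\cdot)$ by exactly $1$ and therefore, by the definition of $\wnorm{\cdot}_{\ell,\eps}$, contributes a factor of $\eps^\theta$.

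For part 1, I would first verify the commutation identity
\begin{equ}
\Theta \circ (\bar{\mcb{I}}_{i,p}-\mcb{I}_{i,p}) = \bar{\mcb{I}}_{i,p}\circ\Theta\;,
\end{equ}
which is immediate from the two listed properties ``$\Theta$ commutes with $\mcb{I}_{i,p}$'' and ``$\Theta\circ\bar{\mcb{I}}_{i,p}=(\bar{\mcb{I}}_{i,p}+\mcb{I}_{i,p})\circ\Theta$''. This reduces the desired bound to showing that $\wnorm{\bar{\mcb{I}}_{i,p}(w)}_{\ell+2-\kappa,\eps}\lesssim \eps^\theta\wnorm{w}_{\ell,\eps}$ for $w=\Theta v$. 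Decomposing $w = \sum_{\tau\in\mfT(R,\ell)} P_\tau w$, each summand is sent by $\bar{\mcb{I}}_{i,p}$ into $\CT[\bar{\mcb{I}}_{i,p}(\tau)]$; because $V_{\mfm_i}=\R$ and planting an edge at the root leaves the symmetry group of $\tau$ unchanged, the chosen tree norms satisfy $\|\bar{\mcb{I}}_{i,p}(P_\tau w)\|_{\bar{\mcb{I}}_{i,p}(\tau)} \lesssim \|P_\tau w\|_\tau$ uniformly in $\tau$. Combining this with the combinatorial identity $m(\bar{\mcb{I}}_{i,p}(\tau)) = m(\tau)+1$ and maximising over $\tau$ gives \eqref{eq:diff_of_integral_est}.

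For part 2, applying $\Theta$ to $\sigma(u)-u$ with $u\in\CT[\Xi_i]$ and using the remaining two listed properties of $\Theta$ (namely $\Theta|_{\CT[\Xi_i]}=\id$ and $\Theta|_{\CT[\bar{\Xi}_i]} = \id+\sigma^{-1}$) yields
\begin{equ}
\Theta(\sigma(u)-u) = \Theta\sigma(u) - u = \sigma(u) + u - u = \sigma(u) \in \CT[\bar{\Xi}_i]\;.
\end{equ}
Since $m(\bar{\Xi}_i)=1$ while $m(\Xi_i)=0$, and since $\sigma$ is the canonical (hence isometric for the chosen tree norms) identification of the two copies of $\mfg^*$, one reads off $\wnorm{\sigma(u)}_{-d/2-1-\kappa,\eps}\lesssim \eps^\theta\|u\|_{\Xi_i}=\eps^\theta\|u\|_{-d/2-1-\kappa,\eps}$, which is \eqref{eq:diff_of_noise_est}.

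I do not expect any serious obstacle: both inequalities are essentially a bookkeeping exercise once the role of $\Theta$ is made explicit. The only point requiring mild care is the uniformity in $\tau$ of the operator-norm bound for $\bar{\mcb{I}}_{i,p}\colon\CT[\tau]\to\CT[\bar{\mcb{I}}_{i,p}(\tau)]$, but this is transparent from constructing the tree norms $\|\cdot\|_\tau$ from a fixed uniform crossnorm on tensor powers of $\mfg^*$, together with the fact that subcriticality ensures that for each fixed $\ell$ only finitely many trees $\tau$ contribute.
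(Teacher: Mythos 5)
Your proof is correct and is exactly the "bookkeeping exercise" the paper has in mind when it states that the lemma "is straightforward to prove" (no proof is given in the text). Both bullets hinge on the same two observations: the commutation identity $\Theta\circ(\bar{\mcb{I}}_{i,p}-\mcb{I}_{i,p})=\bar{\mcb{I}}_{i,p}\circ\Theta$ (and its noise analogue $\Theta\circ(\sigma-\id)=\sigma$ on $\CT[\Xi_i]$), which you derive correctly from the listed defining properties of $\Theta$, and the fact that the operations $\bar{\mcb{I}}_{i,p}$ and $\sigma$ raise the counter $m(\cdot)$ by exactly one. Your handling of the uniform-in-$\tau$ operator norm is also the right justification: $V_{\mfm_i}=\R$ so planting an $\mfm_i$-edge at the root is a canonical isomorphism onto a one-dimensional extension, the automorphism groups of $\tau$ and $\bar{\mcb{I}}_{i,p}(\tau)$ are naturally identified since any automorphism of the latter fixes the new root and hence the new root edge, and subcriticality bounds the number of trees per degree. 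No gap.
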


Once we fix these $\eps$-dependent norms on our regularity structure we also obtain corresponding \label{eps norms page ref}
\begin{itemize}
\item $\eps$-dependent seminorms and pseudo-metrics on models which we  denote by $\$\act\$_{\eps}$ and $d_{\eps}(\act,\act)$ respectively; and
\item $\eps$-dependent seminorms on  $\cD^{\gamma,\eta} \ltimes \mathscr{M}_\eps$  which we  denote by $| \act |_{\gamma,\eta,\eps}$.  
\end{itemize}
The seminorms $\$\act\$_{\eps}$ and pseudo-metrics $d_\eps\equiv \$\act;\act\$_\eps$ are defined as in Appendix~\ref{app:reconstruct}.
We recall that they and $| \act |_{\gamma,\eta,\eps}$ are indexed by compact sets.
When the compact set is $O_\tau \eqdef [-1,\tau]\times\T^d$, we will simply write $| \act |_{\gamma,\eta,\eps;\tau}$ for $| \act |_{\gamma,\eta,\eps;O_\tau}$, and likewise for $\$\act\$_{\eps}$ and $d_\eps$.

\begin{remark}\label{rem:norms_on_tensor_products}
Recall that modelled distributions in the scalar setting take values in the regularity structure $\CT$, so in the definition of a norm on modelled distributions we reference norms $\| \act \|_{\ell}$ on the spaces $T_{\ell}$. 
When our noises\slash solutions  live in finite-dimensional vector spaces, our modelled distributions will take values in $\CT \otimes W$ for some finite-dimensional vector space $W$, so when specifying a norm on such modelled distributions we will need to reference norms on $T_{\ell} \otimes W$.
We thus assume that we have already fixed
an $\eps$-independent norm $\| \act \|_{W}$  on the space $W$. Then we view our norm  on $T_{\ell} \otimes W$ as induced by the norm on $T_{\ell}$ by taking some choice of crossnorm (the particular choice does not matter). 
\end{remark}
\begin{remark}
Clearly, all of our $\eps$-dependent seminorms\slash metrics on models are equivalent for different values of $\eps \in (0,1]$, but not uniformly so as $\eps \downarrow 0$. The distances for controlling models (resp.\ modelled distributions) become stronger (resp.\ weaker) as one takes $\eps$ smaller.  
\end{remark} 
\begin{remark}
In general, one would not expect the estimates of the extension theorem \cite[Thm.~5.14]{Hairer14} to hold uniformly as we take $\eps \downarrow 0$. 
However, it is straightforward to see from the proof of \cite[Thm.~5.14]{Hairer14} that they do hold uniformly in $\eps$ for models in $\mathscr{M}_{\eps}$ (and, more trivially, $\mathscr{M}_{0}$) thanks to Lemma~\ref{lemma:control_of_kernels} and the fact that $\theta \le \kappa$. 
\end{remark}
\subsubsection{Comparing fixed point problems}\label{sec:Comparing fixed point problems}
For sufficiently small $\tilde{\theta} > 0$, one has a classical Schauder estimate 
\begin{equ}
|G \ast f- G_{\eps} \ast f|_{\mcC^{2+\ell}} \lesssim \eps^{\tilde{\theta}}|f|_{\mcC^{\ell+\tilde{\theta}}}\;,
\end{equ}
which holds for all distributions $f$ and non-integer regularity exponents. 
Our conditions on our $\eps$-dependent norms let us prove an analogous estimate at 
the level of modelled distributions.
In what follows we write $\mcb{K}_{i}$, $\bar{\mcb{K}}_{i}$ for the abstract integration operators on modelled distributions associated to $\mfa_{i}$, and $\mfm_{i}$, respectively.

We will be careful in this section to ensure that convolution estimates on a time interval $[-1,\tau]$ depend only on the size of the model on $[-1,\tau]$.
This is used in the proof of Proposition~\ref{prop:SPDEs_conv_zero}
to ensure that the existence time $\tau>0$ of the fixed points~\eqref{eq:abstract_fixed_point_eq}-\eqref{eq:abstract_fixed_point_eq_2}
is a stopping time.

We will assume throughout this subsection that $\moll$ is non-anticipative.
This implies, in particular, that $K^\eps$ is non-anticipative.

\begin{lemma}\label{lem:J^eps_diff_bound-2}
Fix $i \in [d]$ and let $\CV$
 be a sector of regularity $\alpha$ in  our regularity structure which is in the domain of both $\mcb{I}_{i}$ and $\bar{\mcb{I}}_{i}$. 
Fix $\gamma  > 0$ and $\eta < \gamma$  such that $\gamma + 2 - \kappa \not \in \N$, $\eta + 2 - \kappa \not \in \N$, and $\eta \wedge \alpha > -2$.  

Then, for fixed $M > 0$, one has for all $\tau\in(0,1)$
\begin{equ}
|\mcb{K}_{i} \bone_+ f - \bar{\mcb{K}}_{i} \bone_+ f|_{\gamma + 2 - \kappa,\bar{\eta},\eps;\tau} \lesssim \eps^{\theta} |f|_{\gamma,\eta,\eps;\tau}
\end{equ}
uniformly in $\eps \in (0,1]$, $Z \in \mathscr{M}_{\eps}$ with $\$Z\$_{\eps;\tau} \le M$, and $f \in \cD^{\gamma,\eta}(\CV) \ltimes Z$,  
where $\theta \in  (0,\kappa]$ is the fixed small parameter as above and $\bar{\eta}  = (\eta \wedge \alpha) + 2 - \kappa$.
\end{lemma}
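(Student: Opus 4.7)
The plan is to view $\bar{\mcb{K}}_i - \mcb{K}_i$ as an abstract integration operator associated to the kernel $K^\eps - K$ and then apply the standard multilevel Schauder estimate for singular modelled distributions (as in \cite[Sec.~6]{Hairer14}, or the variant developed in Appendix~\ref{app:Singular modelled distributions}) after extracting the quantitative smallness from Lemma~\ref{lemma:control_of_kernels}.

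First, recall that any admissible abstract integration operator has the form $\mcb{K}_i f(z) = \mcb{I}_i f(z) + J_i(z) f(z) + (\mcN_i f)(z)$, where $J_i$ and $\mcN_i$ are defined in terms of the model and the kernel $K$ (the latter via a Taylor remainder of $K(z-\cdot)\mcR f$). The operator $\bar{\mcb{K}}_i$ has exactly the same form with $K$ replaced by $K^\eps$ and $\mcb{I}_i$ replaced by $\bar{\mcb{I}}_i$. Taking differences, one obtains
\begin{equ}
(\bar{\mcb{K}}_i - \mcb{K}_i) \bone_+ f(z) = (\bar{\mcb{I}}_i - \mcb{I}_i)(\bone_+ f)(z) + (\bar J_i - J_i)(z)(\bone_+ f)(z) + ((\bar{\mcN}_i - \mcN_i) \bone_+ f)(z).
\end{equ}
The first summand is estimated pointwise in $\eps$-norm using \eqref{eq:diff_of_integral_est} of Lemma~\ref{lem:compare on T}, which produces exactly the desired factor of $\eps^\theta$. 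The second and third summands involve convolution of $\mcR(\bone_+ f)$ against $K^\eps - K$ and its derivatives, projected against the appropriate polynomial bases.

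For these analytic terms, the key input is Lemma~\ref{lemma:control_of_kernels}, which tells us that $K^\eps - K$ is $(m,\eps^\kappa R,2-\kappa)$-regularising uniformly in $\eps \in [0,1]$. Plugging this into the proof of the multilevel Schauder estimate (e.g.~\cite[Thm.~5.12]{Hairer14}) with input $\bone_+ f \in \cD^{\gamma,\eta}(\CV)$ and noting that the regularising constant appears linearly in the bounds yields an $\eps^\kappa$ gain in the standard (i.e.~$\eps = 1$) seminorms on $\cD^{\gamma+2-\kappa,\bar\eta}$. The role of $\bar\eta = (\eta \wedge \alpha) + 2 - \kappa$ is standard: the reconstruction picks up the sector regularity $\alpha$ at $t = 0$ because $\bone_+ f$ only reconstructs as $\mcR(\bone_+ f) = \bone_+ \mcR f$ after appropriate cancellation. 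The non-anticipativity of $K$ and $K^\eps$, following from property~\ref{pt:K_non-ant} of Section~\ref{subsubsec:kernelandnoiseforsym} and the standing assumption that $\moll$ is non-anticipative, ensures that these estimates localise to times $t \le \tau$ and depend only on $\$Z\$_{\eps;\tau}$ and $|f|_{\gamma,\eta,\eps;\tau}$.

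To pass from standard to $\eps$-dependent seminorms, observe that the output of $(\bar{\mcb{K}}_i - \mcb{K}_i) \bone_+ f$ lies in subspaces indexed by trees each of whose top edge is either of type $\mfm_i$ or of the form $\mfa_i$ resulting from the $\bone_+ f$ factor. In the former case the extra factor of $\eps^{-\theta}$ coming from the $\eps$-dependent norm on the output is absorbed by the $\eps^\kappa \ge \eps^\theta$ gain; in the latter case it comes directly from \eqref{eq:diff_of_integral_est}. Similarly, any occurrence of the isomorphism $\sigma$ in expanding input trees is controlled using \eqref{eq:diff_of_noise_est}. The main obstacle is bookkeeping: one has to make sure that the $\eps^\theta$ factor survives the application of the isomorphism $\Theta$ used in defining $\wnorm{\cdot}_{\ell,\eps}$ and the subsequent reassembly into standard subspaces $\CT[\tau]$. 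This follows because $\Theta$ preserves the number of $\mfm$/$\bar{\mfl}$ occurrences modulo terms of strictly higher $m(\tau)$, so the worst-case estimate in $\eps$-norm still gains $\eps^\theta$.
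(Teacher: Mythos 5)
Your proof takes essentially the same route as the paper's: both reduce the estimate to Lemma~\ref{lemma:control_of_kernels} (which gives the $(\cdot,\eps^{\kappa}R,2-\kappa)$-regularisation of $K-K^{\eps}$), to \eqref{eq:diff_of_integral_est} of Lemma~\ref{lem:compare on T} (which bounds the operator norm of $\bar{\mcb{I}}_{i}-\mcb{I}_{i}$ by $\eps^{\theta}$), and to Lemma~\ref{lem:reconstruct} for the localisation to $O_{\tau}$ under non-anticipativity. The one cosmetic difference is that the paper keeps the whole package together by treating $\mcb{I}_{i}-\bar{\mcb{I}}_{i}$ as a single abstract integrator realising the kernel $K-K^{\eps}$ and citing the generic Schauder bound, whereas you unpack the $\mcb{I}/J/\mcN$ decomposition explicitly; this is the same argument, only written at one level lower.

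One caution about your final paragraph on the $\eps$-dependent norms: the sign is backwards. A component of the output landing in $\CT[\bar{\mcb{I}}_{i}\tau]$ does not incur an ``extra factor of $\eps^{-\theta}$'' in the $\eps$-norm; since $m(\bar{\mcb{I}}_{i}\tau)=m(\tau)+1$ and $\Theta$ maps $(\bar{\mcb{I}}_{i}-\mcb{I}_{i})$ to $\bar{\mcb{I}}_{i}$, the definition of $\wnorm{\cdot}_{\ell,\eps}$ gives exactly the $\eps^{+\theta}$ \emph{gain} recorded in \eqref{eq:diff_of_integral_est}. For the polynomial ($J$/$\mcN$) components the output has $m=0$; here the $\eps^{\kappa}$ gain comes from the kernel smallness, and the $\eps^{-m(\tau)\theta}$ cost of converting $|f|_{\gamma,\eta,\eps}$ to the standard norm is precisely cancelled by the $\eps^{+m(\tau)\theta}$ improvement built into the $\eps$-dependent model norm $\$Z\$_{\eps}$. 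Since you cite the correct lemmas and arrive at the correct conclusion, this is an expository slip rather than a gap, but as written the explanation would not parse.
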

\begin{proof}
This result follows from the proof of \cite[Thm.~5.12 and Prop.~6.16]{Hairer14} together with the improved reconstruction theorem Lemma~\ref{lem:reconstruct}. 
Indeed, in the context of this reference, and working with some fixed norm on the given regularity structure, if the abstract integrator $\mcb{I}(\cdot)$ of order $\beta$ in question has an operator norm (as an operator on the regularity structure) bounded by $\tilde{M}$, and the kernel $\mcb{I}$ realises is $(\gamma + \beta,\tilde{M},\beta)$-regularising, then as long as  $\gamma +\beta \not \in \N$ and $\eta + \beta \not \in \N$,
one has
\[
|\mcb{K}\bone_+f|_{\gamma + \beta,(\eta \wedge \alpha) + \beta;\tau} \lesssim \tilde{M} |f|_{\gamma,\eta;\tau}\;.
\]
Here, $\mcb{K}$ is the corresponding  integration  on modelled distributions and the  proportionality constant only depends on the size of the model in the model norm (which corresponds to the fixed norm on the regularity structure).  
The fact that the compact set for the model norm can be taken as $O_\tau$ (rather than $[-1,2]\times \T^d$ as in~\cite{Hairer14})
is due to Lemma~\ref{lem:reconstruct} substituting the role of~\cite[Lem.~6.7]{Hairer14}
together with the fact that $K$ is non-anticipative.

Our result then follows by combining this observation with the fact that we can view $\mcb{I}_{i} - \bar{\mcb{I}}_{i}$ as an abstract integrator of order $2 - \kappa$ on our regularity structure realising the kernel $K - K^{\eps}$ which is  $(m,\eps^{\kappa} R,2-\kappa)$-regularising by Lemma~\ref{lemma:control_of_kernels}, and the fact that $\mcb{I}_{i} - \bar{\mcb{I}}_{i}$ has norm bounded by $\eps^{\theta}$ by \eqref{eq:diff_of_integral_est}.
\end{proof}

To state the next two lemmas, we refer to Appendix~\ref{app:Singular modelled distributions}
for definitions of integration operators of the form $\mcb{K}^\omega$
as well as spaces $\hat\cD$. The $\eps$-dependent norms on $\cD^{\gamma,\eta} \ltimes \mathscr{M}_\eps$
 in particular induces 
$\eps$-dependent norms 
on $\hat\cD^{\gamma,\eta} \ltimes \mathscr{M}_\eps$
denoted by $|\cdot |_{\hat\cD^{\gamma,\eta,\eps}}$.

\begin{lemma}\label{lem:Schauder-input-KK}
Under the same assumptions as Lemma~\ref{lem:Schauder-input}, and assuming $\moll$ and $K$ are non-anticipative,
one has for fixed $M>0$ and all $\tau\in(0,1)$
\begin{equ}[e:K-barK-zeta]
| \mcb{K}^\omega \bone_+f -\bar{\mcb{K}}^\omega \bone_+f |_{\bar\gamma-\kappa,\bar\eta,\eps;\tau}
\lesssim \eps^\theta
| f |_{\gamma,\eta,\eps;\tau}\;,
\end{equ} 
uniformly in $\eps \in (0,1]$, $Z \in \mathscr{M}_{\eps}$ with $\$Z\$_{\eps;\tau} \le M$, and $f \in \cD^{\gamma,\eta}(\CV) \ltimes Z$.
\end{lemma}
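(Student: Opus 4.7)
The plan is to mimic the proof of Lemma~\ref{lem:J^eps_diff_bound-2} but work in the modified spaces $\hat\cD^{\gamma,\eta}$, using Lemma~\ref{lem:Schauder-input} in place of the classical Schauder estimate. The key observation is that the difference $\mcb{K}^\omega - \bar{\mcb{K}}^\omega$ can be interpreted as the singular-modelled-distribution lift of integration against the single kernel $K - K^\eps$: on one hand, the $\omega$-dependent boundary correction appearing in the construction of $\mcb{K}^\omega$ in Appendix~\ref{app:Singular modelled distributions} depends only on the auxiliary datum $\omega$ (which is common to both operators) and cancels in the difference; on the other hand, the remaining ``bulk'' part is genuinely an abstract integration against $K$ minus the same construction against $K^\eps$. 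Hence $\mcb{K}^\omega - \bar{\mcb{K}}^\omega$ acts on $\hat\cD^{\gamma,\eta}$ exactly as the operator associated to the kernel $K-K^\eps$ and to the abstract integrator $\mcb{I}_i - \bar{\mcb{I}}_i$.

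First, I would apply Lemma~\ref{lem:Schauder-input} to $K - K^\eps$ rather than to $K$ itself. The hypotheses on $\gamma,\eta,\bar\gamma,\bar\eta$ carry over verbatim since the degree of the abstract integrator is $2-\kappa$ (built into our choice \eqref{e:setting-deg} precisely to accommodate this loss). By Lemma~\ref{lemma:control_of_kernels} the kernel $K-K^\eps$ is $(m,\eps^\kappa R,2-\kappa)$-regularising uniformly in $\eps\in(0,1]$, which supplies one factor of $\eps^\kappa$ in the analytic bounds of the extension theorem. Second, by \eqref{eq:diff_of_integral_est} the abstract integrator $\mcb{I}_i - \bar{\mcb{I}}_i$ has $\eps$-dependent operator norm bounded by $\eps^\theta$ on each $T_\ell$, which accounts for the factor $\eps^\theta$ at the algebraic level. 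Since $\theta\le\kappa$, these two effects combine to give the announced $\eps^\theta$ prefactor.

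Third, I would verify that the dependence on the compact set $O_\tau=[-1,\tau]\times\T^d$ is the correct one. As in the proof of Lemma~\ref{lem:J^eps_diff_bound-2}, this is where non-anticipativeness of $K$ and $\moll^\eps$ (hence of $K^\eps$ and of $K-K^\eps$) is used: it ensures that $\mcb{K}^\omega \bone_+f$ and $\bar{\mcb{K}}^\omega\bone_+f$ restricted to $O_\tau$ depend only on $f$ and $Z$ restricted to $O_\tau$, so that Lemma~\ref{lem:reconstruct} and the extension theorem can be applied with a norm $\$Z\$_{\eps;\tau}\le M$ on the same time slab. This is the reason for restricting the statement to non-anticipative $\moll$ and $K$ and for writing $[-1,\tau]$ rather than a larger interval.

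The main obstacle, and the only place where one must be careful, is the interaction between the kernel difference and the $\omega$-correction in $\mcb{K}^\omega$. If the construction in Appendix~\ref{app:Singular modelled distributions} is such that the $\omega$-piece of $\mcb{K}^\omega$ depends on the kernel (e.g.\ through some harmonic extension built from $K$), then $\mcb{K}^\omega - \bar{\mcb{K}}^\omega$ will contain an extra boundary term and one needs a separate estimate for it, of the form ``$\omega$-boundary correction for $K$ minus $\omega$-boundary correction for $K^\eps$''. In that case the desired bound should still follow from the same $(m,\eps^\kappa R,2-\kappa)$-regularising property of $K-K^\eps$ together with the a priori bounds on $\omega$ coming from the stochastic estimates used to define it, but the bookkeeping of degrees (ensuring that all of $\gamma-\kappa$, $\bar\gamma-\kappa$, $\bar\eta$ stay away from $\N$ and that the loss $\kappa$ in the target regularity is absorbed by the improvement $2-\kappa$ of the integrator) requires care. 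Once this boundary piece is controlled, the conclusion \eqref{e:K-barK-zeta} follows by the triangle inequality with the bulk estimate from Lemma~\ref{lem:Schauder-input} applied to $K-K^\eps$.
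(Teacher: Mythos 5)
Your approach is essentially the paper's — view $\mcb{K}^\omega - \bar{\mcb{K}}^\omega$ as the $\omega$-modified integration realised by the kernel $K - K^\eps$ together with the abstract integrator $\mcb{I}_i - \bar{\mcb{I}}_i$, and then run the combined arguments of Lemma~\ref{lem:J^eps_diff_bound-2} and Lemma~\ref{lem:Schauder-input} — but your first paragraph contains an internal contradiction that your last paragraph only partially patches. You claim that the $\omega$-dependent boundary correction ``depends only on the auxiliary datum $\omega$ \dots and cancels in the difference''. This is false: in the construction from Appendix~\ref{subapp:input_distr}, $\omega$ replaces $\CR f$ inside integrals tested against derivatives of the kernel, so the $\omega$-term in $\mcb{K}^\omega$ tests $\omega$ against $D^k_1 K$, while the corresponding term in $\bar{\mcb{K}}^\omega$ tests $\omega$ against $D^k_1 K^\eps$; that piece depends on the kernel no less than on $\omega$ and certainly does not cancel. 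Your own next step betrays this — if the $\omega$-part genuinely cancelled, the difference would be a plain abstract integration of $f$ and Lemma~\ref{lem:J^eps_diff_bound-2} alone would do; you would not need to invoke the $\omega$-Schauder estimate of Lemma~\ref{lem:Schauder-input} at all.

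The observation you actually need (and which makes the hedging in your final paragraph unnecessary) is that the construction of $\mcb{K}^\omega f$ is term-by-term linear in the kernel, with the kernel and the abstract integrator each appearing exactly once per term, including in the $\omega$-correction. Taking the difference with the same $\omega$ therefore substitutes $K \to K - K^\eps$ and $\mcb{I}_i \to \mcb{I}_i - \bar{\mcb{I}}_i$ uniformly throughout the construction; $\mcb{K}^\omega - \bar{\mcb{K}}^\omega$ \emph{is} the $\omega$-integration for $K - K^\eps$, and there is no ``extra boundary term'' requiring a separate triangle-inequality step. One then applies the $\omega$-Schauder estimate of Lemma~\ref{lem:Schauder-input} directly: Lemma~\ref{lemma:control_of_kernels} gives that $K - K^\eps$ is $(m,\eps^\kappa R,2-\kappa)$-regularising, and \eqref{eq:diff_of_integral_est} gives that $\mcb{I}_i - \bar{\mcb{I}}_i$ has $\eps$-dependent operator norm of size $\eps^\theta$, exactly as in Lemma~\ref{lem:J^eps_diff_bound-2}. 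Non-anticipativity of $\moll$ and $K$ is used exactly as you describe, via Lemma~\ref{lem:reconstruct}, to localise the model dependence to $O_\tau$, yielding \eqref{e:K-barK-zeta}.
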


\begin{proof}
This follows verbatim as in the proofs of 
Lemma~\ref{lem:J^eps_diff_bound-2} and \cite[Lem.~4.12]{MateBoundary}.
\end{proof}

\begin{lemma}\label{lem:K-barK-hat}
Fix $\gamma  > 0$ and $\eta < \gamma$  such that $\gamma + 2 - \kappa \not \in \N$, $\eta + 2 - \kappa \not \in \N$, and $\eta>-2$.
Suppose that $K^{\eps}$ is non-anticipative.
Then, for fixed $M > 0$ and all $\tau\in(0,1)$
\[
|\mcb{K} f - \bar{\mcb{K}} f|_{\hat\cD^{\gamma + 2 - \kappa, \eta  + 2 - \kappa,\eps};\tau} \lesssim \eps^{\theta} |f|_{\hat\cD^{\gamma,\eta,\eps};\tau}
\]
uniformly in $\eps \in (0,1]$, $Z \in \mathscr{M}_{\eps}$ with $\$Z\$_{\eps;\tau} \le M$, and $f \in \hat\cD^{\gamma,\eta}(\CV) \ltimes Z$.
\end{lemma}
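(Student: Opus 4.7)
The plan is to mimic the proof of Lemma~\ref{lem:J^eps_diff_bound-2}, but carried out in the space $\hat\cD^{\gamma,\eta}$ from Appendix~\ref{app:Singular modelled distributions} instead of the classical space $\cD^{\gamma,\eta}$. The key point is that the difference $\mcb{K} - \bar{\mcb{K}}$ realises the kernel $K - K^\eps$, and two independent sources of an $\eps^\theta$ gain are available: first, Lemma~\ref{lemma:control_of_kernels} guarantees that $K - K^\eps$ is $(m, \eps^\kappa R, 2-\kappa)$-regularising (so in particular the Schauder-type bounds in the extension/multilevel Schauder theorem pick up a factor $\eps^\kappa$); second, and used more fundamentally in our $\eps$-dependent framework, Lemma~\ref{lem:compare on T} gives that $\bar{\mcb{I}}_{i,p} - \mcb{I}_{i,p}$ has operator norm bounded by $\eps^\theta$ with respect to the $\eps$-dependent norms $\|\cdot\|_{\ell,\eps}$. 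Since $\theta \leq \kappa$, it is the $\eps^\theta$ factor that will appear in the final estimate.

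First I would unfold the definition of the $\hat\cD^{\gamma,\eta}$ norm from Appendix~\ref{app:Singular modelled distributions}, which decomposes an element $f \in \hat\cD^{\gamma,\eta}$ into a ``regular'' part lying in $\cD^{\gamma,\eta}$ together with singular components controlled by the auxiliary data $\omega$ near the $t=0$ slice. Correspondingly, $\mcb{K} f - \bar{\mcb{K}} f$ splits into: (i) a contribution from the regular part, for which Lemma~\ref{lem:J^eps_diff_bound-2} already provides the desired bound $\eps^\theta | f |_{\gamma,\eta,\eps;\tau}$; and (ii) a contribution from the singular part, which can be handled by the same reasoning as in Lemma~\ref{lem:Schauder-input-KK}, with $K$ replaced by $K - K^\eps$ throughout. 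For both contributions the non-anticipative property of $K$ and $K^\eps$ lets us (via Lemma~\ref{lem:reconstruct}) keep the compact set on the right-hand side equal to $O_\tau$, rather than enlarging it, which is crucial for the stopping-time argument later.

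The bound on the regular part is immediate from Lemma~\ref{lem:J^eps_diff_bound-2} with the role of $\mcb{I} - \bar{\mcb{I}}$ there played by the present one, once one observes that the two estimates share the same hypotheses ($\gamma + 2 - \kappa \notin \N$, $\eta + 2-\kappa \notin \N$, $\eta > -2$, fixed model bound $M$). For the singular part, the proof of Lemma~\ref{lem:Schauder-input-KK} provides a template in which the classical Schauder bounds on $\mcb{K}^\omega$ are applied componentwise; repeating that argument with kernel $K - K^\eps$ in place of $K$, and invoking Lemma~\ref{lemma:control_of_kernels} together with \eqref{eq:diff_of_integral_est} to get the operator-norm bound $\eps^\theta$, yields the matching estimate $\eps^\theta | f |_{\hat\cD^{\gamma,\eta,\eps};\tau}$. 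Summing the two contributions completes the proof.

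The only nontrivial obstacle I anticipate is bookkeeping: one must verify that every step of the singular-modelled-distribution Schauder argument in Appendix~\ref{app:Singular modelled distributions} is compatible with the $\eps$-dependent norms, so that the $\eps^\theta$ factor supplied by Lemma~\ref{lem:compare on T} is not dissipated by some other estimate that degenerates as $\eps \downarrow 0$. Since all the ingredients (extension theorem, reconstruction theorem, multilevel Schauder) are used with models in $\mathscr{M}_\eps$, where they hold uniformly in $\eps$ by the remark following Lemma~\ref{lemma:control_of_kernels}, this verification is routine but should be carried out explicitly.
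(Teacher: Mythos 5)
Your proposal misreads what the space $\hat\cD^{\gamma,\eta}$ actually is, and as a consequence the argument would not deliver the claimed exponent. Looking back at Appendix~\ref{app:Singular modelled distributions}, $\hat\cD^{\gamma,\eta}$ is simply the closed subspace of $\bar\cD^{\gamma,\eta}=\cD^{\gamma,\eta}\cap\cD^\eta$ consisting of those $f$ that vanish for $t\le 0$. There is no decomposition of an element of $\hat\cD^{\gamma,\eta}$ into a ``regular part lying in $\cD^{\gamma,\eta}$'' plus ``singular components controlled by $\omega$''. The auxiliary distribution $\omega$ only appears in the separate construction $\mcb{K}^\omega$ of Appendix~\ref{subapp:input_distr}, which is a different mechanism: it is used when the reconstruction of $f$ is not canonically determined, not to express elements of $\hat\cD^{\gamma,\eta}$. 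Your split of $\mcb{K}f-\bar{\mcb{K}}f$ into a contribution from Lemma~\ref{lem:J^eps_diff_bound-2} and one from Lemma~\ref{lem:Schauder-input-KK} is therefore not a decomposition of anything.

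Even if one ignores the conceptual issue and just applies Lemma~\ref{lem:J^eps_diff_bound-2} to $f\in\hat\cD^{\gamma,\eta}\subset\cD^{\gamma,\eta}$, one obtains a bound in $\cD^{\gamma+2-\kappa,(\eta\wedge\alpha)+2-\kappa}$, which is strictly weaker than the claimed bound in $\hat\cD^{\gamma+2-\kappa,\eta+2-\kappa}$ whenever $\alpha<\eta$ (and in the applications here one has $\alpha$ as negative as $-2-\kappa$, well below $\eta$). Moreover Lemma~\ref{lem:J^eps_diff_bound-2} requires $\eta\wedge\alpha>-2$, which is precisely the hypothesis this lemma is designed to avoid. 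The crucial missing ingredient is Theorem~\ref{thm:integration}: for non-anticipative kernels, it gives a Schauder estimate mapping $\hat\cD^{\gamma,\eta}$ to $\hat\cD^{\gamma+\beta,\eta+\beta}$ with the improved output exponent $\eta+\beta$ and under only $\eta>-2$. The correct argument runs the proof of Lemma~\ref{lem:J^eps_diff_bound-2} but with the improved Schauder estimate of Theorem~\ref{thm:integration} playing the role that~\cite[Prop.~6.16]{Hairer14} plays there: treat $\mcb{I}-\bar{\mcb{I}}$ as an abstract integrator of order $2-\kappa$ realising $K-K^\eps$, which is $(m,\eps^\kappa R,2-\kappa)$-regularising by Lemma~\ref{lemma:control_of_kernels} and has operator norm $\lesssim\eps^\theta$ by~\eqref{eq:diff_of_integral_est}, and then feed this into the proof of Theorem~\ref{thm:integration}, tracking the $\eps^\theta$ factor. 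Without invoking Theorem~\ref{thm:integration} the stated exponent $\eta+2-\kappa$ is simply out of reach.
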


\begin{proof}
This follows in the same way as Lemma~\ref{lem:J^eps_diff_bound-2}
(with an obvious change that $(\eta \wedge \alpha)$ is replaced by $\eta $)
combined with Theorem~\ref{thm:integration} (for non-anticipative kernels).
\end{proof}

We also remark that the  standard multiplication bound \cite[Prop.~6.12]{Hairer14}
and Lemma~\ref{lem:multiply-hatD} also hold for these $\eps$-dependent norms.

Now we  define, as in Section~\ref{sec:solution_theory}, $\boldsymbol{\Xi}_{i}  \in \CT[\Xi_{i}] \otimes \mfg$ and $\boldsymbol{\bar{\Xi}}_{i}  \in \CT[\bar{\Xi}_{i}] \otimes \mfg$ to be given by ``$\id_{\mfg}$'' via the canonical isomorphisms 
 $\CT[\bar{\Xi}_{i}] \otimes \mfg \simeq \CT[\Xi_{i}] \otimes \mfg \simeq \mfg^{\ast} \otimes \mfg \simeq L(\mfg,\mfg)$. 
Note that we have $\sigma \boldsymbol{\Xi}_{i}  = \boldsymbol{\bar{\Xi}}_{i}$
where we continue our abuse of notation with $\sigma$ acting only on the left factor. 
We also remark that, as $\mfg$-valued modelled distributions, $\boldsymbol{\Xi}_{i}$, $\boldsymbol{\bar{\Xi}}_{i} \in \cD^{\infty,\infty}_{-d/2 - 1 - \kappa}$. 
Then, thanks to \eqref{eq:diff_of_noise_est}
and Remark~\ref{rem:norms_on_tensor_products} one has, uniform in $\eps \in (0,1]$, 
\begin{equ}\label{eq:diff_of_noises_tensor}
|\boldsymbol{\Xi}_{i} - \boldsymbol{\bar{\Xi}}_{i}|_{\infty,\infty,\eps} \lesssim \eps^{\theta}\;.
\end{equ}
Note that 
 $\hat{\cD}^{\infty,-2-\kappa}_{-2-\kappa}$  
 coincides with the space of elements in $\cD^{\infty,-2-\kappa}_{-2-\kappa}$ which vanish on $\{t\le 0\}$. 
 One also has
\begin{equ}\label{eq:diff_of_noises}
|\mathbf{1}_+ \boldsymbol{\Xi} - \mathbf{1}_+ \boldsymbol{\bar{\Xi}}|_{\hat\cD^{\infty,-2-\kappa,\eps}}
 \lesssim \eps^{\theta}\;,
\end{equ}
where $\mathbf{1}_{+}$\label{one_+_page_ref} is the map that restricts modelled distributions to non-negative times.

We now write out the analytic fixed point problems for~\eqref{eq:B_final},~\eqref{eq:bar_A_final}, and~\eqref{e:final_system}. 
We introduced the labels $\mfm_{i}$ just to assist with deriving the renormalised equation and so when we pose our analytic fixed point problem we stray from the formulation given in Remark~\ref{rem:analytic-fix-pt} and instead eliminate the components $\mfm_{i}$ appearing in \eqref{eq:bar_A_final} by performing a substitution. 

In what follows, we write $\mathcal{R}$ for the reconstruction operator. 
Recall that $\mcb{K}_{i}$, $\bar{\mcb{K}}_{i}$ are the abstract integration operators associated to $\mfa_{i}$ and $\mfm_{i}$; we also write $\mcb{K}_{\mfh_i}$ and $\mcb{K}_{\mfu}$ for the abstract integration operators on modelled distributions corresponding to $\mcb{I}^{\mfh}_{i}$ and $\mcb{I}_{\mfu}$, and $R$  the operator realising convolution with $G - K$ as a map from appropriate H{\"o}lder--Besov functions into modelled distributions as in \cite[(7.7)]{Hairer14}.

We assume henceforth that $d=2$.
Recall that we have fixed $\alpha\in(\frac23,1)$ and $\eta\in (\frac\alpha4-\frac12,\alpha-1]$.
Given initial data 
\begin{equ}\label{eq:initial_data_for_gauge_sys}
(B^{(0)},U^{(0)},h^{(0)}) \in \Omega^\init \eqdef \Omega\mcC^{\eta} \times \mcC^{\alpha}(\T^{2},L(\mfg,\mfg)) \times \Omega\mcC^{\alpha - 1} \;, 
\end{equ}
the fixed point problem associated with \eqref{eq:B_final} and~\eqref{e:final_system} for the $\mfg$-valued modelled distributions $(\mathcal{B}_{i})_{i=1}^{2}$,$(\mathcal{H}_{i})_{i=1}^{2}$ and $L(\mfg,\mfg)$-valued modelled distribution $\mathcal{U}$ is
\begin{equs}
\mathcal{B}_{i}
&=
\CG_i
\mathbf{1}_{+}\Big(
[\mathcal{B}_j, 2\d_j \mathcal{B}_i - \d_i \mathcal{B}_j + [\mathcal{B}_j,\mathcal{B}_i]] 
+ \mathring{C}_{1} \mathcal{B}_{i} + \mathring{C}_{2} \mathcal{H}_{i}
+\hat{\mathcal{U}} \boldsymbol{\bar{\Xi}}_{i}\Big)
\\
 &\qquad \qquad\qquad\qquad
  + \CG_i^{\bar\omega_i} (GU^{(0)} \boldsymbol{\bar{\Xi}}_{i})
  + GB^{(0)}_{i}\;,
 \label{eq:abstract_fixed_point_eq}
 \\
\mathcal{H}_{i}
&=
\CG_{\mfh_{i}}
\mathbf{1}_{+}
\Big(
 [\mathcal{H}_j,\d_j \mathcal{H}_i] + [[\mathcal{B}_j, \mathcal{H}_j],\mathcal{H}_i]  
  + \d_i  [\mathcal{B}_j, \mathcal{H}_j]\Big) + Gh^{(0)}_{i}\;,
  \\
\hat{\mathcal{U}} &=
\CG_{\mfu}
\mathbf{1}_{+} \Big( - [\mathcal{H}_j,[\mathcal{H}_j, \cdot]] \circ \mathcal{U}
+ [[\mathcal{B}_j, \mathcal{H}_j],\cdot] \circ \mathcal{U} \Big) \;,
\\
\mathcal{U} &=  GU^{(0)}+\hat{\mathcal{U}} 
\end{equs}
where
$\CG_\mft \eqdef \mcb{K}_{\mft} + R \mathcal{R}$
and $G\act$ refers to the ``harmonic extension'' map of \cite[(7.13)]{Hairer14}.
Here, $\bar\omega_i$ is compatible with $GU^{(0)}\bar\bXi_i$ and we refer to Appendix~\ref{subapp:input_distr} for the operator $\CG_i^{\bar\omega_i}$.
We remark that the input to the fixed point problem is a model $Z$, the initial conditions, and the distributions $\bar\omega_i$.

\begin{remark}
The reason that we introduced the ``intermediate'' object $\hat{\mathcal{U}}$
is that  it will be in a $\hat\cD$ space (see Appendix~\ref{app:Singular modelled distributions}) which has improved behaviour 
near $t=0$, so that we can apply integration operator to $\hat{\mathcal{U}} \boldsymbol{\bar{\Xi}}_{i}$ using Theorem~\ref{thm:integration}.
Note that the standard integration result \cite[Prop.~6.16]{Hairer14} would require 
that the lowest degree of $\hat{\mathcal{U}} \boldsymbol{\bar{\Xi}}_{i}$ is larger than $-2$ (for purpose of reconstruction) which is not true here;
but  Theorem~\ref{thm:integration}  does not require this assumption.
The distribution $\bar\omega_i$ in our case will simply be $GU^{(0)}\xi^\eps_i$.
\end{remark}

The modelled distribution fixed point problem for the $(\bar A,\bar U,\bar h)$ system~\eqref{eq:bar_A_final} and~\eqref{e:final_system} is the same as \eqref{eq:abstract_fixed_point_eq} except that the first equation is replaced by
\begin{equs}[eq:abstract_fixed_point_eq_2]
\mathcal{B}_{i}
&=
\CG_i
\mathbf{1}_{+}
\Big(
[\mathcal{B}_j, 2\d_j \mathcal{B}_i - \d_i \mathcal{B}_j + [\mathcal{B}_j,\mathcal{B}_i]] + \mathring{C}_{1} \mathcal{B}_{i} + \mathring{C}_{2}\mathcal{H}_{i} 
\Big)\\
{}&\qquad
+ \bar\CG_i^{\omega_i} (GU^{(0)} \boldsymbol{\Xi}_{i})
+ \CW_i
+  
\bar{\CG}_i
\mathbf{1}_{+}
(\hat{\mathcal{U}} \boldsymbol{\Xi}_{i})
+
GB^{(0)}_{i}\;,
\end{equs}
where $\bar{\CG}_i\eqdef \bar{\mcb{K}}_{i} + \bar{R} \mathcal{R}$ with
$\bar{R}$  defined just like $R$ but with $G - K$ replaced by $G^{\eps} - K^{\eps}$.
As before, $\omega_i$ is compatible with $GU^{(0)}\bXi_i$ and is part of the input which we will later take as $\omega_i=GU^{(0)}\xi^\delta_i$.
The modelled distribution $\CW_i$ takes values in the polynomial sector and is likewise part of the input -- we will later take $\CW_i$ as the
canonical lift of the (smooth) function
$G*(\bone_+ \moll^\eps*(\xi_i\bone_{-}))$ for $\eps>0$,
and simply as $\CW_i=0$ for $\eps=0$.
Here $\bone_-$ is the indicator function of set $\{(t,x)\in\R\times \T^2\,:\,t<0\}$.

In \eqref{eq:abstract_fixed_point_eq_2} we have written $\mathcal{B}_{i}$ instead of something like $\bar{\mcA}_{i}$ to make it clearer that we are comparing two fixed point problems which have ``almost'' the same form 
\dash only the terms involving the noises $\boldsymbol{\bar{\Xi}}_{i}$ or $ \boldsymbol{\Xi}_{i}$ and the term $\CW_i$ are different.
We can now make precise what we mean by the two problems being ``close''.

For $r,\tau>0$, we call an \emph{$\eps$-input of size $r$ over time $\tau$} a collection of the following objects:
a model $Z\in\mathscr{M}_\eps$, distributions $\bar\omega_i,\omega_i$ compatible with $GU^{(0)}\bar\bXi_i,GU^{(0)}\bXi_i$,
initial data $(B^{(0)},U^{(0)},h^{(0)})\in\Omega^\init$,
and a modelled distribution $\CW=(\CW_i)_{i=1}^2$
such that
\begin{multline}\label{eq:def_inputs_size}
\$Z\$_{\eps;\tau}
+ |(B^{(0)},U^{(0)},h^{(0)})|_{\Omega^\init}
+|\omega_i|_{\CC^{-2-\kappa}_\tau}
+|\bar\omega_i|_{\CC^{-2-\kappa}_\tau} + \eps^{-\theta}|\omega_i-\bar\omega_i|_{\CC^{-2-\kappa}_\tau}
\\
+\eps^{-\kappa}|\CW|_{\frac32,-2\kappa;\tau}
+|\PPi^Z\mcb{I}[\bar\bXi]|_{\CC([0,\tau],\Omega^1_\alpha)}
+
|\PPi^Z\bar{\mcb{I}}[\bXi]|_{\CC([0,\tau],\Omega^1_\alpha)}
\leq r\;,
\end{multline}
where $|\act|_{\CC^{-2-\kappa}_\tau}$ is shorthand for $|\act|_{\CC^\alpha(O_\tau)}$
which is defined in Appendix~\ref{subapp:input_distr} and $\PPi^Z\eqdef \Pi_0$ where we denote as usual $Z=(\Pi,\Gamma)$.
(The choice `$0$' in $\Pi_0$ is of course arbitrary since $\Pi_x \mcb{I}[\bar\bXi]=\Pi_0\mcb{I}[\bar\bXi]$ for all $x\in\R\times\T^2$.)
\begin{remark}
The left-hand side of~\eqref{eq:def_inputs_size} is increasing in $\tau$, hence
an $\eps$-input of size $r$ over time $\tau$ is also an $\eps$-input of size $\bar r$ over time $\bar \tau$ for all $\bar\tau\in (0,\tau)$ and $\bar r>r$.
\end{remark}
\begin{remark}
The final two terms on the left-hand side of~\eqref{eq:def_inputs_size} play no role in Lemma~\ref{lemma:fixedptpblmclose},
but will be important in Lemma~\ref{lem:fixed_close_enhanced}.
\end{remark}
\begin{lemma}\label{lemma:fixedptpblmclose} 
Consider the bundle of modelled distributions
\begin{equ}\label{eq:fixed_pt_space}
\Big( \bigoplus_{\mft = \mfa_{i},\mfh_{i},\mfu} \cD^{\gamma_{\mft},\eta_{\mft}}_{\alpha_{\mft}} \Big) \ltimes \mathscr{M}
\textnormal{ where }
(\gamma_{\mft},\alpha_{\mft}, \eta_{\mft}) = 
\begin{cases}
(1+3\kappa, - 2\kappa, \eta) & \textnormal{ if }\mft = \mfa_{i}\,,\\
(2 + 2 \kappa, 0, \eta + 1) & \textnormal{ if }\mft = \mfu\,,\\
(1 + 3\kappa, 0, \eta ) & \textnormal{ if }\mft = \mfh_{i}\,.
\end{cases}
\end{equ}
Consider further $r>0$ and $\eps\in[0,1]$.
Then there exists $\tau\in(0,1)$, depending only on $r$,
such that, for every $\eps$-input of size $r$ over time $\tau$,
there exist solutions $\CS$ and $\bar\CS$
to \eqref{eq:abstract_fixed_point_eq} and \eqref{eq:abstract_fixed_point_eq_2} respectively
in the space of modelled distributions defined by~\eqref{eq:fixed_pt_space}
on the time interval $(0,\tau)$.
Furthermore, uniformly in $\eps\in[0,1]$ and all $\eps$-inputs of size $r$,
\begin{equ}\label{eq:abstract_soln_plus_reconstruction_estimate}
|\mathcal{S} - \bar{\mathcal{S}}|_{\vec{\gamma},\vec{\eta},\eps;\tau} 
\lesssim 
\eps^{\theta}\;.
\end{equ}
Here $| \act |_{\vec{\gamma},\vec{\eta},\eps;\tau} $ is the multi-component modelled distribution seminorm for \eqref{eq:fixed_pt_space} over the time interval $(0,\tau)$.
Finally, $\CS$ and $\bar\CS$ are each locally uniformly continuous with respect to the input when the space of models is equipped with $d_{1;\tau}$.
\end{lemma}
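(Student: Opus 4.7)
\smallskip

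\textbf{Proof proposal.}
The plan is to solve each fixed point problem by a standard Picard iteration on the space \eqref{eq:fixed_pt_space} restricted to $(0,\tau)$ for $\tau$ small, uniformly in $\eps\in[0,1]$, and then compare the two fixed points termwise using the estimates of Lemmas~\ref{lem:J^eps_diff_bound-2}--\ref{lem:K-barK-hat} together with \eqref{eq:diff_of_noises_tensor}--\eqref{eq:diff_of_noises}. All the multiplication and integration bounds we need are available in the $\eps$-dependent norms of Section~\ref{subsec:eps_reg_structs}, and moreover these $\eps$-dependent norms only strengthen the model topology and weaken the modelled-distribution topology, so the constants in all Schauder and multiplication bounds can be chosen uniformly in $\eps\in(0,1]$. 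The choice of the exponents in \eqref{eq:fixed_pt_space} is tuned so that (i) the product $\hat\mcU\bar\bXi_i$ lives in $\hat\cD^{\bar\gamma,\bar\eta}$ with $\bar\eta>-2$ in the sense of Appendix~\ref{app:Singular modelled distributions} (so that the integration operator $\CG_i$ from Theorem~\ref{thm:integration} applies and gains $\eps^\theta$ when replaced by $\bar\CG_i$), and (ii) all the other nonlinearities gain a positive power of $\tau$ after integration, which is what produces the contraction for small $\tau$.

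\smallskip

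For existence on a common time interval, I would fix $R>0$ depending only on $r$ and look for a fixed point in the ball of radius $R$ of \eqref{eq:fixed_pt_space} on $(0,\tau)$. The harmonic extensions $GB^{(0)}_i$, $Gh^{(0)}_i$, $GU^{(0)}$ provide uniformly bounded ``off-line'' terms, as do the input modelled distributions $\CG_i^{\bar\omega_i}(GU^{(0)}\bar\bXi_i)$, $\bar\CG_i^{\omega_i}(GU^{(0)}\bXi_i)$, and $\CW_i$, all bounded by the $\eps$-input size $r$ via Lemma~\ref{lem:Schauder-input} (and its $\hat\cD$ variant), \eqref{eq:diff_of_noises_tensor}, and the definition \eqref{eq:def_inputs_size}. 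The remaining nonlinear terms are handled by standard multiplication estimates together with Lemma~\ref{lem:multiply-hatD}, Lemma~\ref{lem:J^eps_diff_bound-2}, and Theorem~\ref{thm:integration}, the key point being that each of them gains a positive power of $\tau$ (either directly from the Schauder estimate, or from the fact that $\eta_\mft$ is strictly smaller than $\gamma_\mft - 2 + \kappa$ so the $(\eta\wedge\alpha)+2-\kappa$ exponent produces a gain). Crucially, since the Schauder and reconstruction bounds (via Lemma~\ref{lem:reconstruct}) depend only on the model on $O_\tau$, the resulting $\tau$ depends only on $r$, so it is a stopping time when the input is random and adapted. Banach's fixed point theorem then produces $\CS$ and $\bar\CS$, and local uniform continuity in the input in the $d_{1;\tau}$ metric is automatic.

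\smallskip

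For the comparison estimate $|\CS-\bar\CS|_{\vec\gamma,\vec\eta,\eps;\tau}\lesssim\eps^\theta$, I would write $\CS-\bar\CS = \Phi(\CS)-\bar\Phi(\bar\CS) = (\Phi-\bar\Phi)(\bar\CS) + (\Phi(\CS)-\Phi(\bar\CS))$, where $\Phi,\bar\Phi$ are the right-hand sides of \eqref{eq:abstract_fixed_point_eq} and \eqref{eq:abstract_fixed_point_eq_2} respectively. The second term is bounded by a contraction factor (at most $\frac12$ for $\tau$ small) times $|\CS-\bar\CS|_{\vec\gamma,\vec\eta,\eps;\tau}$ and can be absorbed into the left-hand side. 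The first term reduces to controlling the three differences
\begin{equs}
\CG_i^{\bar\omega_i}(GU^{(0)}\bar\bXi_i) &- \bar\CG_i^{\omega_i}(GU^{(0)}\bXi_i) - \CW_i\;,\\
\CG_i\mathbf{1}_+(\hat\mcU\bar\bXi_i) &- \bar\CG_i\mathbf{1}_+(\hat\mcU\bXi_i)\;,
\end{equs}
which are handled by splitting into (a) replacing $\bXi_i$ by $\bar\bXi_i$, which costs $\eps^\theta$ by \eqref{eq:diff_of_noises_tensor}--\eqref{eq:diff_of_noises}, (b) replacing $\omega_i$ by $\bar\omega_i$, which costs $\eps^\theta$ by the input bound \eqref{eq:def_inputs_size} and Lemma~\ref{lem:Schauder-input}, (c) replacing $\bar\CG_i$ by $\CG_i$, which costs $\eps^\theta$ by Lemmas~\ref{lem:J^eps_diff_bound-2} and~\ref{lem:K-barK-hat}, and (d) the $\CW_i$ term, which is already bounded by $r\eps^\kappa\leq r\eps^\theta$ by \eqref{eq:def_inputs_size}. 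The other components $\mcH_i,\mcU$ do not involve any noise, so the corresponding differences are purely of the contraction type.

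\smallskip

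The main technical subtlety will be handling the singular product $\hat\mcU\bar\bXi_i$ (and $\hat\mcU\bXi_i$), whose reconstruction lies just outside the range of the classical reconstruction theorem; this is precisely why the $\hat\cD$ spaces of Appendix~\ref{app:Singular modelled distributions} were introduced and why one needs the improved integration result of Theorem~\ref{thm:integration} together with the improved reconstruction of Lemma~\ref{lem:reconstruct}. Verifying that the $\eps^\theta$ gain propagates through this non-standard integration \emph{with constants depending only on the $O_\tau$-seminorm of the model} is the real work, but it is a straightforward adaptation of the arguments already used in the proofs of Lemmas~\ref{lem:J^eps_diff_bound-2}--\ref{lem:K-barK-hat}.
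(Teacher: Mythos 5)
Your proposal is correct and follows essentially the same route as the paper: set up the fixed point by Picard iteration in the space~\eqref{eq:fixed_pt_space} for a small common $\tau$ depending only on $r$, check that $\hat\mcU\bar\bXi_i$ lives in a $\hat\cD$-space with $\eta$-exponent above $-2$ so Theorem~\ref{thm:integration} applies, and then compare the two solutions by splitting off the noise/kernel replacements, each of which costs $\eps^\theta$, and absorbing the contraction term. The only small slip is in your step~(c): for the comparison $(\CG_i^{\omega_i}-\bar\CG_i^{\omega_i})(GU^{(0)}\bXi_i)$ you cite Lemmas~\ref{lem:J^eps_diff_bound-2} and~\ref{lem:K-barK-hat}, but that particular term involves the input-distribution integration operator $\mcb{K}^\omega$ and is covered by Lemma~\ref{lem:Schauder-input-KK}; Lemmas~\ref{lem:J^eps_diff_bound-2} and~\ref{lem:K-barK-hat} are for the plain $\mcb{K}\bone_+$ and the $\hat\cD$ versions respectively. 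This does not change the structure of the argument.
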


\begin{proof}
We first prove well-posedness of \eqref{eq:abstract_fixed_point_eq}
in the space \eqref{eq:fixed_pt_space}, where
\[
\hat{\mathcal U}\in \hat\cD^{\gamma_\mfu,\eta_\mfu}_{0} = \hat\cD^{2+2\kappa,\eta+1}_{0}\;.
\]
Since $\mathbf{1}_{+}   \bar{\boldsymbol{\Xi}}_{i} \in  \hat\cD^{\infty,-2-\kappa}_{-2-\kappa}$,
by Lemma~\ref{lem:multiply-hatD} one has
$\hat{\mathcal U}  \bar{\boldsymbol{\Xi}}_{i} \in \hat\cD^{\kappa,\eta-1-\kappa}_{-2-\kappa}$.
Since $\eta-1-\kappa > -2$, by Theorem~\ref{thm:integration}, one has
\[
\CG_i (\hat{\mathcal U}  \bar{\boldsymbol{\Xi}}_{i} ) \in \hat\cD^{2-\kappa,\eta+1-2\kappa}_{-\kappa}
\subset \cD^{\gamma_{\mfa_i},\eta_{\mfa_i}}_{\alpha_{\mfa_i}}\;.
\]
Also, since $GU^{(0)}\in \cD^{\infty,\alpha}_0$,
by standard multiplication bounds \cite[Prop.~6.12]{Hairer14}, followed by Lemma~\ref{lem:Schauder-input}, one has
\[
GU^{(0)} \boldsymbol{\bar{\Xi}}_{i} \in \cD^{\infty,\alpha-2-\kappa}_{-2-\kappa}
\; \Rightarrow \;
\CG_i^{\bar\omega_i} (GU^{(0)} \boldsymbol{\bar{\Xi}}_{i})
\in \cD^{\infty,-2\kappa}_{-2\kappa}
\subset \cD^{\gamma_{\mfa_i},\eta_{\mfa_i}}_{\alpha_{\mfa_i}}\;,
\]
where we used $-2\kappa>1-\alpha\geq \eta$.
Here, note that the compatibility condition of Lemma~\ref{lem:Schauder-input} is satisfied by assumption.

It is standard to check that the other terms in  \eqref{eq:abstract_fixed_point_eq} belong to the desired spaces.
The only subtlety is to check that 
the right-hand side of the equation for $\hat{\mathcal U}$
indeed belongs to $\hat\cD^{\gamma_\mfu,\eta_\mfu}_{0}$.
For this we note that 
 $\mathcal U \in \cD^{\gamma_\mfu,\eta_\mfu}_0$, so 
 one can check that the two terms
$ [\mathcal{H}_j,[\mathcal{H}_j, \cdot]] \circ \mathcal{U}$
and $ [[\mathcal{B}_j, \mathcal{H}_j],\cdot] \circ \mathcal{U}$
both belong to $\cD^{1+2\kappa, 2\eta}_{-2\kappa}$
which is identical with
$ \hat\cD^{1+2\kappa, 2\eta}_{-2\kappa}$, since these modelled distributions are supported on $\{t\ge 0\}$ and
$2\eta \leq -2\kappa$.
Integration using Theorem~\ref{thm:integration} then gives elements in  $\hat\cD^{\gamma_\mfu,\eta_\mfu}_{0}$.

The fact that, for sufficiently small $\tau>0$, the corresponding map stabilises and is a contraction on a ball in~\eqref{eq:fixed_pt_space}
follows from the short-time convolution estimates~\eqref{eq:short_time} and Proposition~\ref{prop:short_time}.
It follows that the solution map $\CS$ for~\eqref{eq:abstract_fixed_point_eq} is locally well-posed.
The local uniform continuity of $\CS$ follows from stability of the fixed point established in the proof of \cite[Thm~7.8]{Hairer14}
together with~\eqref{eq:short_time} and Lemma~\ref{lem:Schauder-input}.
Local well-posedness and local uniform continuity of the solution map $\bar\CS$ for~\eqref{eq:abstract_fixed_point_eq_2} follows in the same way.

It remains to prove~\eqref{eq:abstract_soln_plus_reconstruction_estimate}.
Note that the two fixed point problems differ in the $\mathcal{B}_{i}$ components.
By standard multiplication bounds and Lemma~\ref{lem:Schauder-input},
\begin{equs}
|\CG_i^{\bar\omega_i} & (GU^{(0)} \boldsymbol{\bar{\Xi}}_{i})
-
\CG_i^{\omega_i} (GU^{(0)} \boldsymbol{\Xi}_{i})|_{\infty,\alpha-\kappa,\eps} 
\\
&\lesssim
|GU^{(0)} (\boldsymbol{\bar{\Xi}}_{i} -  \boldsymbol{\Xi}_{i})|_{\infty,\alpha-2-\kappa,\eps}
+ |\bar\omega_i-\omega_i|_{\CC^{-2-\kappa}} \lesssim \eps^\theta\;.
\end{equs}
Moreover, by Lemma~\ref{lem:Schauder-input-KK},
\begin{equ}
|(\CG_i^{\omega_i} 
-
\bar\CG_i^{\omega_i} )(GU^{(0)} \boldsymbol{\Xi}_{i})|_{\infty,\alpha-\kappa,\eps} 
\lesssim
\eps^\theta\;.
\end{equ}
Also, by similar arguments as above using Lemma~\ref{lem:multiply-hatD} and Theorem~\ref{thm:integration}, 
but combined with the difference estimates \eqref{eq:diff_of_noises}
and Lemma~\ref{lem:K-barK-hat},  one has
\begin{equs}[e:GUhatXi-compare]
\big|\CG_i (\hat{\mathcal U}  \bar{\boldsymbol{\Xi}}_{i} ) 
 & -
\bar\CG_i (\hat{\mathcal U}  \boldsymbol{\Xi}_{i} ) 
\big|_{\hat\cD^{\kappa+2,\eta+1-\kappa,\eps}_{-\kappa}}
\\
&\lesssim
\big|\CG_i (\hat{\mathcal U}  \bar{\boldsymbol{\Xi}}_{i} 
- \hat{\mathcal U}  \boldsymbol{\Xi}_{i}) 
+
\big(\CG_i -\bar\CG_i \big)(\hat{\mathcal U}  \boldsymbol{\Xi}_{i} ) 
|_{\hat\cD^{\kappa+2,\eta+1-\kappa,\eps}_{-\kappa}}
\lesssim
\eps^\theta\;.
\end{equs}
Finally, by assumption, $|\CW_i|_{\frac32,-2\kappa}\leq r\eps^{\kappa} \leq r\eps^{\theta}$.
Summarising these bounds, one has that the difference
between the  $\mathcal{B}_{i}$ components in the  two fixed point problems 
is bounded by a multiple of $\eps^\theta$.

We remark that the integration results from Appendix~\ref{app:Singular modelled distributions}
which we apply above are stated in terms of operators $\mcb{K}_{i}$ and $\bar{\mcb{K}}_{i} $.
But their differences with  $\CG_{i}$ and $\bar{\CG}_{i} $ are bounded much more easily since they are smooth.
For instance, regarding
\[
R \mathcal{R}
(\hat{\mathcal{U}} \boldsymbol{\bar{\Xi}}_{i})
-
\bar{R} \mathcal{R}
(\hat{\mathcal{U}}  \boldsymbol{\Xi}_{i})\;,
\]
we write it as $R \mathcal{R}( \hat{\mathcal{U}} (\boldsymbol{\bar{\Xi}}_{i} - \boldsymbol{\Xi}_{i} ))
 + (R-\bar{R})\mathcal{R} (\hat{\mathcal{U}} \boldsymbol{\Xi}_{i})$. 
The first piece can be estimated as in  \cite[Lemma~7.3]{Hairer14} together with Theorem~\ref{thm:reconstructDomain}. 
By definition \cite[(7.7)]{Hairer14} of $R - \bar{R}$, the second piece is of order $\eps$ since, for any $\alpha > 0$, one has $\| (G - K) - (G^{\eps} - K^{\eps})\|_{\mcC^{\alpha}} \lesssim \eps$. 

The estimate~\eqref{eq:abstract_soln_plus_reconstruction_estimate} then follows in a routine way from the these bounds together with the short-time convolution estimates for modelled distributions.
\end{proof}

In the situation we consider, we further have convergence of the most singular tree $\<IXi>$ in the space $\CC(\R,\Omega^1_\alpha)$ (Corollary~\ref{cor:SHE_mollif_converge}).
The next lemma shows that this suffices to also obtain convergence of solutions in $\CC((0,\tau],\Omega^1_\alpha)$.

\begin{lemma}\label{lem:fixed_close_enhanced}
Suppose we are in the setting of Lemma~\ref{lemma:fixedptpblmclose}.
For every $\tau>0$, we equip $\mathscr{M}_1$ with the pseudo-metric\footnote{More precisely,
we redefine  $\mathscr{M}_1$ as the closure of smooth models under this pseudo-metric with $\tau=1$.}
\begin{equ}
d_{1;\tau}(Z, \bar Z) + |(\PPi^Z - \PPi^{\bar Z}) \mcb{I}[\bar\bXi]|_{\CC([0,\tau],\Omega^1_\alpha)}\;.
\end{equ}
Let $\CB$ denote the corresponding component of $\CS$
and let $\lambda\in(0,1)$, $r>0$.
Then there exists $\tau\in(0,1)$, depending only on $r$, such that $\CR\CB$
is a uniformly continuous function into $\CC([\lambda\tau,\tau],\Omega^1_\alpha)$ on the set
\begin{multline*}
\CI_{r,\tau} \eqdef \Big\{
I = (Z,\bar \omega,(B^{(0)},U^{(0)},h^{(0)}))
\in \mathscr{M}_1\times \CC^{-2-\kappa}_\tau \times \Omega^\init
\\
:\,
I \textnormal{ is a $1$-input of size $r$ over time $\tau$}
\Big\}\;.
\end{multline*}
The same statement holds with $\CB$ replaced by $\bar\CB$, the $\CB$-component of $\bar\CS$,
$\mcb{I}[\bar\bXi]$ replaced by $\bar{\mcb{I}}[\bXi]$, and $\bar\omega$ replaced by $\omega$.
Furthermore, as $\eps\downarrow 0$,
\begin{equ}
|\CR\CB-\CR\bar\CB|_{\CC([\lambda\tau,\tau],\Omega^1_\alpha)}\to 0
\end{equ}
uniformly over all $\eps$-inputs of size $r$ over time $\tau$
which satisfy
\begin{equ}\label{eq:consistency_eps_models}
\PPi^Z \mcb{I}[\bar\bXi] = \PPi^Z \bar{\mcb{I}}[\bXi]\;.
\end{equ}
\end{lemma}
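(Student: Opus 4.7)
The plan is to peel off the most singular contribution to $\CR\CB_i$, which is the only part preventing it from lying in a classical Hölder space of strictly positive regularity, and to handle the remainder by classical estimates combined with the embedding $\Omega\mcC^{0,\alpha/2}\hookrightarrow \Omega^1_\alpha$ of Remark~\ref{rem:Holder_Omega_embedding}. Specifically, using the structure of $\CB_i\in\cD^{1+3\kappa,\eta}_{-2\kappa}$, we write
\[
\CR\CB_i \;=\; U\cdot \PPi^Z\mcb{I}[\bar\bXi_i] \;+\; \Theta_i\,,
\]
where $U = \CR\mathcal{U}$ (the reconstruction of the $\mfu$-component of $\CS$) is bounded in $\CC([\lambda\tau,\tau], \mcC^{\alpha}(\T^2,L(\mfg,\mfg)))$ on $\CI_{r,\tau}$, and $\Theta_i$ collects the reconstruction of the bracket and mass terms, the harmonic extension $GB^{(0)}_i$, and the paraproduct-type commutator $\CR(\mathcal{U}\otimes\mcb{I}[\bar\bXi_i])-U\cdot \PPi^Z\mcb{I}[\bar\bXi_i]$. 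Restricting to $[\lambda\tau,\tau]$ absorbs the blow-up at $t=0$ encoded in the weight $t^{\eta/2}$.

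For the smooth remainder $\Theta_i$, Lemma~\ref{lemma:fixedptpblmclose} together with standard reconstruction and Schauder estimates applied away from $t=0$ yield continuity of $\Theta_i$ in $\CC([\lambda\tau,\tau],\mcC^{0,\beta})$ for some $\beta>\alpha/2$; Remark~\ref{rem:Holder_Omega_embedding} then transfers this to continuity into $\CC([\lambda\tau,\tau],\Omega^1_\alpha)$. For the rough term $U\cdot \PPi^Z\mcb{I}[\bar\bXi_i]$, we observe that componentwise left-multiplication of a $1$-form in $\Omega^1_\alpha$ by a matrix-valued function in $\mcC^{0,\alpha}(\T^2,L(\mfg,\mfg))$ obeys exactly the same estimates as the ``multiplicative part'' $A\mapsto \Ad_g A$ of the gauge action: inspecting the proofs of Lemmas~\ref{lem:group_action_gr} and~\ref{lem:group_action_vee}, the only property of $\Ad_g$ being used is the Young-type control of $\ell\mapsto U(\ell(\cdot))$, which holds for any such Hölder matrix-valued function under our assumption $\alpha>2/3$. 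Combined with the continuity of $\PPi^Z\mcb{I}[\bar\bXi]$ in $\CC([0,\tau],\Omega^1_\alpha)$ (encoded in the enhanced pseudo-metric on $\CI_{r,\tau}$), this yields the desired continuity of $\CR\CB$ into $\CC([\lambda\tau,\tau],\Omega^1_\alpha)$. The argument for $\CR\bar\CB$ is identical after swapping $\bar\bXi\leftrightarrow\bXi$ and $\mcb{I}\leftrightarrow\bar{\mcb{I}}$.

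For the final convergence claim, the consistency hypothesis~\eqref{eq:consistency_eps_models} identifies the two rough factors, giving the decomposition
\[
\CR\CB_i-\CR\bar\CB_i \;=\; (U-\bar U)\cdot \PPi^Z\mcb{I}[\bar\bXi_i] \;+\; (\Theta_i-\bar\Theta_i)\,.
\]
Lemma~\ref{lemma:fixedptpblmclose} provides $|U-\bar U|_{\mcC^{\alpha}}=O(\eps^\theta)$ and $|\Theta_i-\bar\Theta_i|_{\mcC^{0,\beta}}=O(\eps^\theta)$ uniformly on $\CI_{r,\tau}$; the multiplicative estimate above, together with the uniform $\Omega^1_\alpha$-bound on the common rough factor $\PPi^Z\mcb{I}[\bar\bXi]$, then gives $|\CR\CB-\CR\bar\CB|_{\CC([\lambda\tau,\tau],\Omega^1_\alpha)}=O(\eps^\theta)\to 0$. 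The most delicate step is the quantitative paraproduct-type decomposition $\CR(\mathcal{U}\otimes\mcb{I}[\bar\bXi_i])=U\cdot \PPi^Z\mcb{I}[\bar\bXi_i]+(\text{smoother})$, with a $\mcC^{0,\beta}$ bound on the commutator that is uniform in $\eps\in(0,1]$: this is a variant of the reconstruction theorem for the product of a modelled distribution of strictly positive degree with a purely ``noise-like'' symbol, and its verification parallels the stochastic heat equation analysis of Section~\ref{sec:SHE}.
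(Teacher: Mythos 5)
Your strategy and decomposition match the paper's: split $\CR\CB_i$ into a rough factor $u\,\PPi^Z\mcb{I}[\bar\bXi_i]$ with $u = \CR\mcU$ and a smoother remainder, control the product via Lemmas~\ref{lem:group_action_gr} and~\ref{lem:group_action_vee} (which, as you correctly observe, only use $u\in\CC^{0,\alpha}$ rather than $u = \Ad_g$), push the remainder into $\Omega^1_\alpha$ via the embedding of Remark~\ref{rem:Holder_Omega_embedding}, and combine \eqref{eq:consistency_eps_models} with the estimate \eqref{eq:abstract_soln_plus_reconstruction_estimate} from Lemma~\ref{lemma:fixedptpblmclose} for the $\eps\downarrow 0$ claim.

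The one weak spot is your treatment of the ``paraproduct-type commutator'': you flag it as the most delicate step and appeal to ``a variant of the reconstruction theorem'' whose verification ``parallels the stochastic heat equation analysis of Section~\ref{sec:SHE}.'' That reference is off-target --- Section~\ref{sec:SHE} establishes Kolmogorov-type regularity estimates for the free SHE in $\Omega^1_\alpha$ and says nothing about paraproduct commutators. The paper's argument dissolves this step entirely: truncating $\CB$ at level $\gamma' = 1-6\kappa$ gives the \emph{exact} form $\CB(t,x) = u(t,x)\,\mcb{I}[\bar\bXi] + b(t,x)\,\bone$, with $b\in\CC([\lambda\tau,\tau],\CC^{1-6\kappa})$; since the structure group acts trivially on $\mcb{I}[\bar\bXi]$ and $\bone$, uniqueness of the reconstruction at level $\gamma'>0$ (away from $t=0$) identifies $\CR\CB = u\,\PPi^Z\mcb{I}[\bar\bXi] + b$ as an exact identity on $[\lambda\tau,\tau]\times\T^2$, with no separate commutator bound to prove. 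The commutator you worry about is already absorbed into $b$, whose regularity and uniform continuity are furnished by Lemma~\ref{lemma:fixedptpblmclose} and continuity of the reconstruction. So your decomposition and conclusion are right, but the delicate step you describe reduces to a one-line truncation observation, and the appeal to Section~\ref{sec:SHE} should be dropped.
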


\begin{proof}
We choose $\tau$ as half the `$\tau$' associated to $2r+1$ appearing in Lemma~\ref{lemma:fixedptpblmclose}.
Let $\mcU$ and $\bar\mcU$ denote the $\mcU$-component of $\CS$ and $\bar\CS$ respectively.
The idea is to use the uniform continuity on  $\CI_{r,\tau}$ of $\CR\mcU$ in a space of good regularity.
Truncating at level $\gamma=1-6\kappa$, we obtain
\begin{equ}
\CB (t,x)= u(t,x) \mcb{I}[\bar\bXi] + b(t,x) \bone\;.
\end{equ}
Note that the structure group acts trivially on $\mcb{I}[\bar\bXi]$ and $\bone$.

We know that $u(t,x) = \CR \mcU(t,x)\in \CC([\lambda\tau,\tau],\CC^{0,\alpha})$
and is a uniformly continuous function on $\CI_{r,\tau}$ due to Lemma~\ref{lemma:fixedptpblmclose} and continuity of the reconstruction map.
Recalling the embedding $\Omega\CC^{0,\alpha/2}\hookrightarrow\Omega^1_\alpha$ (Remark~\ref{rem:Holder_Omega_embedding})
and that $1-6\kappa>\alpha/2$
we also see that $b\in\CC([\lambda\tau,\tau],\CC^{1-6\kappa})\hookrightarrow \CC([\lambda\tau,\tau],\Omega^1_\alpha)$
is a uniformly continuous function on $\CI_{r,\tau}$.

Furthermore, on $[\lambda\tau,\tau]\times\T^2$,
\begin{equ}
\CR\CB = u \PPi^Z \mcb{I}[\bar\bXi] + b\;.
\end{equ}
It follows from the continuity of $\Omega^1_\alpha\times \CC^{0,\alpha}\ni(A,u)\mapsto u A$ (Lemmas~\ref{lem:group_action_gr} and~\ref{lem:group_action_vee}, which we note only use that $u\in\CC^{0,\alpha}$
and not that $u=\Ad_g$ for some $g\in\mfG^{0,\alpha}$),
that $u \PPi^Z \mcb{I}[\bar\bXi]\in
\CC([\lambda\tau,\tau],\Omega^1_\alpha)$
is a uniformly continuous function on $\CI_{r,\tau}$,
which proves the first claim.
The second claim concerning $\bar\CB$ follows in an identical manner. 

Finally, due to~\eqref{eq:abstract_soln_plus_reconstruction_estimate} and the continuity of the reconstruction map,
$|\CR\mcU-\CR\bar\mcU|_{\CC([\lambda\tau,\tau],\CC^{0,\alpha})} \to 0$ as $\eps\downarrow0$ uniformly over all $\eps$-inputs of size $r$.
Therefore the final claim follows from~\eqref{eq:consistency_eps_models} by the same argument as above.
\end{proof}

\begin{remark}\label{rem:time-zero}
With more technical effort, one should be able to set $\lambda=0$ in Lemma~\ref{lem:fixed_close_enhanced} once the initial conditions are taken in the appropriate spaces
and extra assumptions of the type $G*\omega,G*\bar\omega\in\CC([0,1],\Omega^1_\alpha)$
are made to handle behaviour at $t=0$. 
\end{remark}

\subsubsection{Maximal solutions}\label{subsubsec:maximal}

To prove convergence of maximal solutions for the $(\bar A,\bar U,\bar h)$ system~\eqref{eq:bar_A_final} and~\eqref{e:final_system},
we require a slightly different fixed point problem than~\eqref{eq:abstract_fixed_point_eq_2} which includes knowledge of the modelled distribution $\mcU\bXi$ on an earlier time interval
(this is to substitute the role of $\CW$ in~\eqref{eq:abstract_fixed_point_eq_2}).

\begin{remark}
It is much easier to show convergence of the maximal solutions of the $(B,U, h)$ system
by using the probabilistic bounds in Section~\ref{subsubsec:prob_bounds} below and stopping time arguments to patch together local solutions. 
Alternatively, one can take advantage of the fact that $(B,U,h)$ is gauge equivalent to an equation with additive noise
\dash see the argument in the proof of Proposition~\ref{prop:SPDEs_conv_zero} on pg. \pageref{page:conv_maximal_B}. 
\end{remark}

\begin{remark}
The results of this subsection will \textit{not} be used to control the difference
between the maximal solution of the $(B,U, h)$ and $(\bar A,\bar U,\bar h)$ systems.
This will be done instead in the proof of Proposition~\ref{prop:SPDEs_conv_zero} using the strong Markov property.
\end{remark}

We assume again in this subsection that $\moll$ is non-anticipative. 
The initial data taken as input for our fixed point problem will be richer than \eqref{eq:initial_data_for_gauge_sys}, consisting of $(B^{(0)},h^{(0)}) \in \Omega\mcC^{\eta}  \times \Omega\mcC^{\alpha - 1}$ along with a modelled distribution $\tilde\mcU\in \cD^{2+2\kappa}(\CV)$ with support on an interval $[-T,0]\times\T^2$ for some $T\in(0,1)$ and
taking values in the same sector $\CV$ as $\mcU$
in~\eqref{eq:abstract_fixed_point_eq}.

We extend $\tilde\mcU$ to positive times $t>0$ by $\tilde \mcU(t) = \big(G U^{(0)} \big)(t)$,
where $U^{(0)}=\CR\tilde \mcU(0)$ is the reconstruction of $\tilde\mcU$ at time zero
and $G U^{(0)}$ is the lift to the polynomial sector of its harmonic extension.
Since $\CR\colon \cD^{2+2\kappa}(\CV) \to \CC^{2-3\kappa}$ is continuous,
it holds that $\tilde \mcU \in \bar\cD^{2+2\kappa, 2-3\kappa}$ as a modelled distribution on $[-T,\infty)\times\T^2$,
where we recall the definition of $\bar \cD$ from Appendix~\ref{subapp:model_vanish}.

Then the fixed point problem we pose for $(\bar A,\bar U,\bar h)$ system is the same
as~\eqref{eq:abstract_fixed_point_eq} except that the first equation is replaced by
\begin{equs}[eq:abstract_fixed_point_eq_3]
\mathcal{B}_{i}
&=
\CG_i
\mathbf{1}_{+}
\Big(
[\mathcal{B}_j, 2\d_j \mathcal{B}_i - \d_i \mathcal{B}_j + [\mathcal{B}_j,\mathcal{B}_i]] + \mathring{C}_{1} \mathcal{B}_{i} + \mathring{C}_{2}\mathcal{H}_{i} 
\Big)\\
{}&\qquad
+ \bar\CG_i (\tilde \mcU \boldsymbol{\Xi}_{i}) - G v_i
+  
\bar{\CG}_i
\mathbf{1}_{+}
(\hat{\mathcal{U}} \boldsymbol{\Xi}_{i})
+
G B^{(0)}_{i}\;,	
\end{equs}
and the definition of $\mcU$ is now $\tilde \mcU + \hat\mcU$.
Here $Gv_i$ is the harmonic extension of
\begin{equ}\label{eq:v_i_def}
v_i \eqdef  \CR\bar\CG_i (\tilde \mcU \boldsymbol{\Xi}_{i})(0)
\;.
\end{equ}
Remark that $\mcB_i$ is now supported on times $t\geq-T$, not just $t\geq 0$,
and for $t \le 0$ we have $\CB_i=\bar\CG_i (\tilde \mcU \boldsymbol{\Xi}_{i})$.
 
\begin{lemma}\label{lem:restarted}
The fixed point problem specified above is well-posed with $\mcB_i \in \bar\cD^{1+3\kappa,\eta}_{-2\kappa}$,
$\hat\mcU\in\hat\cD^{2+2\kappa,\eta+1}_0$,
and $\CH \in \cD^{1+2\kappa,\eta}_0$
on an interval $[-T,\tau]$,
where $\tau>0$ depends only on the size of the tuple
\begin{equ}
\big(
\tilde \mcU, Z, B^{(0)},h^{(0)}, v 
\big) \in \cD^{2+2\kappa}_{[-T,0]} \times \mathscr{M}_1 \times \Omega\CC^\eta\times\Omega\CC^{\alpha-1}\times \Omega\CC^\eta\;,
\end{equ}
where $Z$ is the underlying model.
Furthermore $(\CB,\mcU,\CH)\restr_{(0,\tau)}$, as an element of the space~\eqref{eq:fixed_pt_space},
is a locally uniformly continuous function of the
same tuple.
\end{lemma}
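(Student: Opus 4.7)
The plan is to adapt the proof of Lemma~\ref{lemma:fixedptpblmclose}. The chief novelty is that the initial condition for the $\mcU$-component is now a full modelled distribution $\tilde\mcU\in\bar\cD^{2+2\kappa,\,2-3\kappa}$ supported on $[-T,0]\times\T^2$ (extended to positive times by its harmonic extension), and the inhomogeneity driving the $\mcB_i$-equation is the explicit term $\bar\CG_i(\tilde\mcU\,\boldsymbol{\Xi}_i)-Gv_i$ in place of the $\bar\CG_i^{\omega_i}(GU^{(0)}\boldsymbol{\Xi}_i)+\CW_i$ that appeared in~\eqref{eq:abstract_fixed_point_eq_2}. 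Since this new inhomogeneity depends only on the input data and not on the unknowns, the fixed point argument itself is unchanged; the essential work is to verify that $\bar\CG_i(\tilde\mcU\,\boldsymbol{\Xi}_i)-Gv_i$ lies in a space that embeds into $\bar\cD^{1+3\kappa,\eta}_{-2\kappa}$ and depends locally uniformly continuously on its inputs.

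For this, one multiplies $\tilde\mcU$ against $\boldsymbol{\Xi}_i\in\cD^{\infty,\infty}_{-2-\kappa}$ via the $\bar\cD$-analog of Lemma~\ref{lem:multiply-hatD} to obtain a product whose lowest degree is $-2-\kappa<-2$. Integration of such a product requires Theorem~\ref{thm:integration}, whose statement is formulated in $\hat\cD$ but whose proof extends without change to $\bar\cD$, since elements of $\bar\cD$ differ from elements of $\hat\cD$ only by a contribution supported on $[-T,0]$ which is smooth in the relevant sense. This places $\bar\CG_i(\tilde\mcU\,\boldsymbol{\Xi}_i)$ in the required space. The subtraction of $Gv_i$ is arranged so that, by the definition~\eqref{eq:v_i_def} of $v_i$, one has $\bigl(\bar\CG_i(\tilde\mcU\,\boldsymbol{\Xi}_i)-Gv_i\bigr)(0^+)=0$, so that the initial value of $\mcB_i$ at $t=0^+$ is supplied purely by the $GB^{(0)}_i$ term, as required by the statement of the lemma.

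The remaining two equations, for $\mcH$ and $\hat\mcU$, are treated exactly as in Lemma~\ref{lemma:fixedptpblmclose}, with the understanding that $\mcU=\tilde\mcU+\hat\mcU$ lies in $\bar\cD$ on $[-T,\tau]$ but that the products $[\mcH_j,[\mcH_j,\cdot]]\circ\mcU$ and $[[\mcB_j,\mcH_j],\cdot]\circ\mcU$ only enter through the $\bone_+$ projection, so that $\hat\mcU$ remains in $\hat\cD^{2+2\kappa,\eta+1}_0$ and $\CH$ in $\cD^{1+2\kappa,\eta}_0$. Short-time convolution estimates~\eqref{eq:short_time} together with Proposition~\ref{prop:short_time} then yield existence, uniqueness, and contraction on a ball whose radius and existence time $\tau$ depend only on the size of the input data. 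Local uniform continuity of $(\mcB,\mcU,\mcH)\restr_{(0,\tau)}$ in the tuple $(\tilde\mcU,Z,B^{(0)},h^{(0)},v)$ follows from stability of the fixed point as in the proof of \cite[Thm.~7.8]{Hairer14}, together with the observation that the new inhomogeneity $\bar\CG_i(\tilde\mcU\,\boldsymbol{\Xi}_i)-Gv_i$ is linear in the pair $(\tilde\mcU,v)$ and hence continuous by construction. The main technical obstacle is the careful bookkeeping of which components live in $\cD$, $\bar\cD$, or $\hat\cD$ near the seam $t=0$, and the verification that Theorem~\ref{thm:integration} applies in the $\bar\cD$ setting; once these points are settled, the argument reduces to a routine translation of the proof of Lemma~\ref{lemma:fixedptpblmclose}.
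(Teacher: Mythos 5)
Your proposal follows essentially the same route as the paper: apply the $\bar\cD$ multiplication result to $\tilde\mcU\,\bXi_i$, integrate via Theorem~\ref{thm:integration} to place $\bar\CG_i(\tilde\mcU\,\bXi_i)$ in a $\bar\cD$ space, and then run the same fixed-point argument as in Lemma~\ref{lemma:fixedptpblmclose}. Two citation slips are worth correcting, though. First, you write that Theorem~\ref{thm:integration} ``is formulated in $\hat\cD$ but \ldots extends without change to $\bar\cD$'' --- in fact the theorem's primary statement is already for $\bar\cD$ (the map $\mcb K\colon\bar\cD^{\gamma,\eta}(\CV)\to\bar\cD^{\gamma+\beta,\eta+\beta}$), with the $\hat\cD$ refinement appearing only under the additional non-anticipativity hypothesis; there is nothing to extend. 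Second, the multiplication you need is not an improvised ``$\bar\cD$-analog of Lemma~\ref{lem:multiply-hatD}'' but exactly Lemma~\ref{lem:multiply-barD}, which is stated for $\bar\cD$ and yields $\tilde\mcU\,\bXi_i\in\bar\cD^{\kappa,-4\kappa}_{-2-\kappa}$ (using $\bXi_i\in\bar\cD^{\infty,\infty}_{-2-\kappa}$, not merely $\cD^{\infty,\infty}_{-2-\kappa}$ as you write). Neither slip breaks the argument --- the tools you invoke are genuinely available and do what you need --- but you should be aware they are already provided in the paper rather than requiring further extension. The rest (treatment of $\hat\mcU$, $\CH$, short-time contraction, and stability of the fixed point for local uniform continuity) matches the paper's ``essentially identical to Lemma~\ref{lemma:fixedptpblmclose}'' reduction.
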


\begin{proof}
Remark that $\bXi \in \bar\cD^{\infty,\infty}_{-2-\kappa}$, and therefore, by Lemma~\ref{lem:multiply-barD}, $\tilde \mcU \bXi \in \bar\cD^{\kappa,-4\kappa}_{-2-\kappa}$.
Therefore, by Theorem~\ref{thm:integration},
$\bar\CG_i (\tilde \mcU \boldsymbol{\Xi}_{i})\in \bar\cD^{2-\kappa,2-5\kappa}_{-2\kappa}$.
The rest of the proof is essentially identical to the proof of Lemma~\ref{lemma:fixedptpblmclose}.
\end{proof}

\begin{lemma}\label{lem:restart_improve}
Equip $\mathscr{M}_1$ with the same pseudo-metric as in Lemma~\ref{lem:fixed_close_enhanced}.
Then, for fixed $\lambda>0$, $\CR\CB$ and $\{\CR\bar\CG_i(\mcU\bXi_i)\}_{i=1}^2$ are both
uniformly continuous functions into $\CC([\lambda\tau,\tau],\Omega^1_\alpha)$
of the tuple as in Lemma~\ref{lem:restarted}.
\end{lemma}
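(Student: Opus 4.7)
The plan is to adapt the decomposition argument from the proof of Lemma~\ref{lem:fixed_close_enhanced}: both $\CB$ and $\bar\CG_i(\mcU\bXi_i)$ are modelled distributions whose only singular basis vector (modulo polynomial symbols) is $\bar{\mcb{I}}[\bXi_i]$, so their reconstructions split as a coefficient times $\PPi^Z\bar{\mcb{I}}[\bXi_i]$ plus a polynomial remainder of regularity strictly greater than $\alpha/2$. Continuity in $\CC([\lambda\tau,\tau],\Omega^1_\alpha)$ then reduces, exactly as before, to the bilinear continuity of $(u,A)\mapsto u A$ from $\CC^{0,\alpha}\times\Omega^1_\alpha$ into $\Omega^1_\alpha$ (Lemmas~\ref{lem:group_action_gr} and~\ref{lem:group_action_vee}) together with the embedding $\CC^{1-6\kappa}\hookrightarrow\Omega^1_\alpha$ of Remark~\ref{rem:Holder_Omega_embedding}; the assumption that the input pseudo-metric on $\mathscr{M}_1$ controls $\PPi^Z\bar{\mcb{I}}[\bXi_i]$ in $\CC([0,\tau],\Omega^1_\alpha)$ (or its enhancement to include this symbol) then does the work.

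First I would treat $\mcR\bar\CG_i(\mcU\bXi_i)$. From Lemma~\ref{lem:restarted} one has $\mcU\in\bar\cD^{2+2\kappa,\cdot}$, so Lemma~\ref{lem:multiply-barD} yields $\mcU\bXi_i\in\bar\cD^{\kappa,\cdot}_{-2-\kappa}$, and Theorem~\ref{thm:integration} shows that the only basis vector of negative degree appearing in $\bar\CG_i(\mcU\bXi_i)$ is $\bar{\mcb{I}}[\bXi_i]$, with coefficient $u_i = \mcR\mcU$. Truncating at a level $\gamma_0\in(\alpha/2,\,1-6\kappa)$ we obtain a decomposition
\begin{equ}
\bar\CG_i(\mcU\bXi_i)(t,x) = u_i(t,x)\,\bar{\mcb{I}}[\bXi_i] + b_i(t,x)\,\bone + \mathfrak{r}_i(t,x)\;,
\end{equ}
with $\mathfrak{r}_i$ of regularity $>\alpha/2$. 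Since the structure group acts trivially on $\bar{\mcb{I}}[\bXi_i]$ and on $\bone$, reconstruction on $[\lambda\tau,\tau]$ gives
\begin{equ}
\mcR\bar\CG_i(\mcU\bXi_i) = u_i\cdot\PPi^Z\bar{\mcb{I}}[\bXi_i] + \tilde b_i\;,\qquad \tilde b_i \eqdef b_i + \mcR\mathfrak{r}_i\;.
\end{equ}
By Lemma~\ref{lem:restarted}, $u_i\in\CC([\lambda\tau,\tau],\CC^{0,\alpha})$ and $\tilde b_i \in \CC([\lambda\tau,\tau],\CC^{1-6\kappa})$ are uniformly continuous in the input tuple, using continuity of the reconstruction map together with the improved reconstruction bound of Lemma~\ref{lem:reconstruct} on the compact set $[\lambda\tau,\tau]\times\T^2$ (bounded away from $t=0$). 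Combining this with the bilinear continuity of the product and the $\CC^{1-6\kappa}\hookrightarrow\Omega^1_\alpha$ embedding gives the desired uniform continuity of $\mcR\bar\CG_i(\mcU\bXi_i)$.

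The statement for $\mcR\CB$ follows by the same argument: from \eqref{eq:abstract_fixed_point_eq_3}, on $[0,\tau]$ one has
\begin{equ}
\CB_i = \bar\CG_i\mathbf{1}_+(\hat\mcU\bXi_i) + \bar\CG_i(\tilde\mcU\bXi_i) + \CG_i\mathbf{1}_+\bigl(\mcF_i(\CB,\CH)\bigr) - G v_i + G B^{(0)}_i\;,
\end{equ}
where $\mcF_i$ collects the nonlinear and mass terms, the first two summands combine into $\bar\CG_i(\mcU\bXi_i)$ and thus produce exactly the $u_i\,\bar{\mcb{I}}[\bXi_i]$ singular term discussed above, while all the remaining contributions lie in a modelled-distribution space of regularity strictly greater than $\alpha/2$ and, after truncation and reconstruction, are absorbed into a polynomial remainder in $\CC([\lambda\tau,\tau],\CC^{1-6\kappa})$ depending uniformly continuously on the input by the stability part of Lemma~\ref{lem:restarted}. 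The main technical point, as in Lemma~\ref{lem:fixed_close_enhanced}, is ensuring uniformity in time on $[\lambda\tau,\tau]$: this is precisely why we restrict to $\lambda>0$, so that the singular behaviour at $t=0$ encoded by the $\bar\cD$-seminorms never enters the estimates, and Lemma~\ref{lem:reconstruct} on $O_\tau$ supplies the uniform control we need.
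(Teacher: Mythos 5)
Your proposal is correct and follows essentially the same route as the paper, whose proof of Lemma~\ref{lem:restart_improve} is simply "Identical to Lemma~\ref{lem:fixed_close_enhanced}" -- you have correctly spelled out that adaptation: truncate, identify $\bar{\mcb{I}}[\bXi_i]$ as the only below-threshold symbol (with coefficient $\CR\mcU$), use triviality of the structure group action on it, and then combine the bilinear continuity of $(u,A)\mapsto uA$ on $\CC^{0,\alpha}\times\Omega^1_\alpha$ with the embedding of H\"older functions into $\Omega^1_\alpha$ and the stability in Lemma~\ref{lem:restarted}. The only cosmetic imprecision is that you pick a truncation level $\gamma_0\in(\alpha/2,1-6\kappa)$ yet then claim $\tilde b_i\in\CC^{1-6\kappa}$ (you should either truncate at $1-6\kappa$ directly as the paper does, or settle for $\CC^{\gamma_0}$); since $\gamma_0>\alpha/2$ is all the embedding needs, this does not affect the argument.
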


\begin{proof}
Identical to Lemma~\ref{lem:fixed_close_enhanced}.
\end{proof}

We now combine Lemmas~\ref{lemma:fixedptpblmclose},~\ref{lem:fixed_close_enhanced},~\ref{lem:restarted} and~\ref{lem:restart_improve} to define \textit{maximal solutions} of the $(\bar A,\bar U,\bar h)$ system and show they are continuous functions of the inputs.

Consider, for the rest of this subsection, a family of inputs $\{I^\eps\}_{\eps\in[0,1]}$ with $I^\eps\in\CI_{r,1}$ and such that $I^\eps \to I^0$ as $\eps\downarrow0$
\dash here $\CI_{r,1}$ and the metric on $\mathscr{M}_1$ are as in Lemma~\ref{lem:fixed_close_enhanced}.
Suppose further that, for every $\eps\in[0,1]$, 
$\PPi^{Z^\eps}\bar{\mcb{I}}[\bXi] \in \CC(\R,\Omega^1_\alpha)$ and, for all $\tau>0$,
\begin{equ}
\lim_{\eps\downarrow0} d_{1;\tau}(Z^\eps,Z^0) + |(\PPi^{Z^\eps} - \PPi^{Z^0}) \bar{\mcb{I}}[\bXi]|_{\CC([0,\tau],\Omega^1_\alpha)} = 0\;.
\end{equ}
Recall that the kernel $K^\eps=K*\moll^\eps$ corresponding to $\bar{\mcb{K}}_{i} = \bar{\mcG}_i - \bar R\mcR$ is non-anticipative
and comes with the same parameter $\eps$
and the model component of $I^\eps$ realises $K^\eps$ for the associated abstract integration operator $\bar{\mcb{I}}$.

\begin{definition}\label{def:maximal}
The \textit{maximal solution} of the system~\eqref{eq:abstract_fixed_point_eq_2} and~\eqref{eq:abstract_fixed_point_eq_3} associated to $I^\eps$ is defined as follows.
We first solve the fixed point problem~\eqref{eq:abstract_fixed_point_eq_2} for $(\CB,\mcU,\CH)$ in the space~\eqref{eq:fixed_pt_space} over some interval $(0,\sigma_1]\eqdef (0,2\tau_1]$ using the fact that $I^\eps$ is a $1$-input
(see Remark~\ref{rem:tau_nonunique} on the choice of $\tau_1$).
By
Lemma~\ref{lem:fixed_close_enhanced}, we can furthermore assume that
$\mcR\CB(\sigma_1)$ takes values in $\Omega^1_\alpha \hookrightarrow \CC^\eta$.
By the proof of the same lemma, we can likewise assume $\mcR\bar\CG(\mcU\bXi)(\sigma_1) \in \Omega^1_\alpha$. 

We then solve the fixed point problem~\eqref{eq:abstract_fixed_point_eq_3}
with time centred around $\sigma_1$ instead of $0$, and with $T=\tau_1$ and $\mcU\restr_{[\sigma_1-T,\sigma_1]}$ playing the role of $\tilde \mcU$.
The initial condition is chosen as $B^{(0)}=\mcR\CB(\sigma_1)$ and $h^{(0)}=\mcR\CH(\sigma_1)$.
We therefore extend the solution to an interval  $(0,\sigma_2]\eqdef (0,\sigma_1+2\tau_2]$,
and, by Lemma~\ref{lem:restart_improve} (and its proof), we can again assume that
$\mcR\CB(\sigma_2)$ and $\mcR\bar\CG(\mcU\bXi)(\sigma_2)$ both take values in $\Omega^1_\alpha \hookrightarrow \CC^\eta$.

We then again solve~\eqref{eq:abstract_fixed_point_eq_3}
with time centred around $\sigma_2$, and with $T=\tau_2$ and $\mcU\restr_{[\sigma_2-T,\sigma_2]}$ playing the role of $\tilde \mcU$.
The initial condition is chosen as $B^{(0)}=\mcR\CB(\sigma_2)$ and $h^{(0)}=\mcR\CH(\sigma_2)$.
Proceeding inductively, we define $(\CB,\mcU,\mcH)$ on the interval $(0,\tau^\star_\eps)$ where $\tau^\star_\eps = \sum_{i=1}^\infty \sigma_i$.
\end{definition}

Let $Q^\eps\eqdef (\bar A^\eps,\bar U^\eps,\bar h^\eps)$ denote the reconstruction on each interval $(\sigma_i,\sigma_{i+1}]$ of the corresponding modelled distribution.
Observe that, by Lemma~\ref{lem:restarted}, if $\tau^\star_\eps<\infty$ then 
\begin{equ}[eq:blow_up_criteria]
\lim_{k\to\infty} |\bar A^\eps(\sigma_k)|_{\CC^\eta}+
|\bar h^\eps(\sigma_k)|_{\CC^{\alpha-1}}+|\mcU|_{\cD^{2+2\kappa};[\sigma_k-\tau_{k},\sigma_k]}+ |v(\sigma_k)|_{\CC^\eta} = \infty
\end{equ}
where $v=(v_i)_{i=1}^2$ is defined on $\sigma_1,\sigma_2,\ldots$ as in~\eqref{eq:v_i_def}.

\begin{remark}\label{rem:tau_nonunique}
The $\tau_i$ taken in Definition~\ref{def:maximal}
is not unique -- it can be chosen as any value $\tau_i\leq \tau$, with $\tau$ as in Lemmas~\ref{lem:restarted} and~\ref{lem:restart_improve}.
Furthermore, if we choose two different $\tau_n,\bar\tau_n$ with $\tau_n\wedge \bar\tau_n \leq \eps^2$ for some $n\geq 1$,
then we will, in general, obtain \textit{different} reconstructions $Q^\eps$ on $(0,\sigma_n\wedge\bar\sigma_n]$.

However, if we make two choices $\tau_i,\bar\tau_i$ with $\tau_i\wedge\bar\tau_i > \eps^2$ for all $i=1,\ldots, n$,
then $Q^\eps$ remains unaffected on $(0,\sigma_n\wedge\bar\sigma_n]$ -- this is because the term $Gv_i$ in~\eqref{eq:abstract_fixed_point_eq_3} removes the dependence of $-\bar\CG_i (\tilde \mcU \boldsymbol{\Xi}_{i})$ on times in $(-\infty,\eps^2)$ upon reconstruction.
\end{remark}

\begin{remark}\label{rem:discrepancy}
In a similar vein to Remark~\ref{rem:tau_nonunique}, the reconstructions $Q^\eps$ will \textit{not} in general solve an equation of the form~\eqref{eq:bar_A_final}+\eqref{e:final_system}.
They will only solve such an equation on the interval $(0,\sigma_n]$ provided that $\eps^2<\tau_i$ for all $i\leq n$.
However, since $I^\eps\to I^0$ as $\eps\downarrow0$,
for every $n\geq1$ there exists $\eps_n>0$ such that we can take the same $\{\tau_i\}_{i=1}^n$ for
all $i=1,\ldots, n$ and $\eps \in [0,\eps_n]$
and such that $\eps^2<\tau_i$.
In particular, this discrepancy won't matter in the $\eps\downarrow0$ limit.
\end{remark}

\begin{proposition}\label{prop:conv_max_sols}
It holds that $Q^\eps\in\CC((0,\tau^\star_\eps),\Omega^1_\alpha\times\hat\mfG^{0,\alpha})$.
Furthermore, define $Q^\eps(t)= \skull$ for $t\geq \tau^\star_\eps$, and, for $\lambda>0$, denote $Q^{\eps;\lambda}(t) = Q^\eps(t -\lambda)$.
If $Q^{0;\lambda} \in (\Omega^1_\alpha\times\hat\mfG^{0,\alpha})^\sol$, then
$Q^{\eps;\lambda}\to Q^{0;\lambda}$ as $\eps\downarrow0$ in same sense as in Lemma~\ref{lem:alt_conv}\ref{pt:D_conv_alt}
where the underlying metric space is $\Omega^1_\alpha\times\hat\mfG^{0,\alpha}$.
\end{proposition}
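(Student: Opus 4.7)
\textbf{Proof proposal for Proposition~\ref{prop:conv_max_sols}.} My plan is to split the argument in two: first the continuity of $Q^\eps$ on $(0,\tau^\star_\eps)$, then the convergence of the time-shifted processes.

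For continuity, I would argue interval-by-interval on the pieces $(\sigma_k,\sigma_{k+1}]$ produced by Definition~\ref{def:maximal}. On $(\sigma_1,\sigma_2)$ (i.e., on the first restart piece), and more generally on the open subinterval, Lemmas~\ref{lem:fixed_close_enhanced} and~\ref{lem:restart_improve} directly give that $\CR\CB = \bar A^\eps$ lies in $\CC([\lambda\sigma_{k+1},\sigma_{k+1}-\lambda],\Omega^1_\alpha)$ for any $\lambda>0$, while the components reconstructed from $\mcU$ and $\mcH$ are even smoother in time thanks to their higher regularity exponents $\eta+1$ and $\eta$, which embed into $\CC^{\alpha}(\T^2,L(\mfg,\mfg))$ and $\Omega\CC^{\alpha-1}\hookrightarrow \Omega^1_{\gr\alpha}$ respectively. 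Joining these subintervals at the restart times $\sigma_k$ uses the property, built into Definition~\ref{def:maximal}, that $\mcR\CB(\sigma_k)$ and $\mcR\bar\CG_i(\mcU\bXi_i)(\sigma_k)$ already lie in $\Omega^1_\alpha$ and are exactly the initial data for the next restart. Finally, the fact that the pair $(\bar U^\eps,\bar h^\eps)$ actually lands in the subset $\mathring{\mfG}^{0,\alpha}\subset \hat{\mfG}^{0,\alpha}$ (cf. Remark~\ref{rem:Uh_vs_g}) is preserved in the limit because for every $\eps>0$ the triple $Q^\eps$ modulo the discrepancy in Remark~\ref{rem:discrepancy} coincides with the classical gauge-transformed smooth solution of~\eqref{eq:bar_A_final}--\eqref{e:final_system} driven by $\xi^\delta$ (taking $\delta\downarrow 0$ first), where this property holds by construction; continuity in the input then transports it to arbitrary $\eps\in[0,1]$.

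For the convergence statement, I would fix $L>0$ and $T_L = L\wedge \inf\{t>0:d(Q^{\eps;\lambda}(t),o)\vee d(Q^{0;\lambda}(t),o)>L\}$. By hypothesis $Q^{0;\lambda}\in (\Omega^1_\alpha\times\hat\mfG^{0,\alpha})^\sol$, so on $[0,T_L]$ the reference solution $Q^{0;\lambda}$ stays bounded by $L$ in $\Omega^1_\alpha\times\hat\mfG^{0,\alpha}$; together with the blow-up criterion \eqref{eq:blow_up_criteria} this implies that all of the tuple-sizes $(\tilde\mcU,Z,B^{(0)},h^{(0)},v)$ that appear at the restart steps used to build $Q^0$ on the shifted interval $[\lambda,\lambda+T_L]$ are uniformly bounded. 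Lemma~\ref{lem:restarted} then gives a uniform lower bound $\tau_*>0$ on the step sizes $\tau_k$ needed to advance $Q^0$, so only finitely many steps $N=N(L)$ suffice to cover $[\lambda,\lambda+T_L]$. The plan is then a finite induction on $k=1,\ldots,N$: at step $k$, using that the inputs $I^\eps\to I^0$ in the prescribed topology and that the previous step converged in $\CC([\sigma_{k-1}+\mu,\sigma_k],\Omega^1_\alpha\times\hat\mfG^{0,\alpha})$ (for an appropriate $\mu>0$), the tuple driving the $k$-th restart converges in the topology required by Lemma~\ref{lem:restarted}; this allows us to reuse the same $\tau_k$ for $Q^\eps$ when $\eps^2<\tau_k$ (Remark~\ref{rem:discrepancy}), and by Lemma~\ref{lem:restart_improve} the reconstruction on $(\sigma_k,\sigma_{k+1}]$ converges in $\CC$ into $\Omega^1_\alpha\times\hat\mfG^{0,\alpha}$. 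Assembling the $N$ pieces gives $\sup_{t\in[0,T_L]}d(Q^{\eps;\lambda}(t),Q^{0;\lambda}(t))\to 0$, which is exactly the condition of Lemma~\ref{lem:alt_conv}\ref{pt:D_conv_alt}.

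The main obstacle I expect is making the inductive step fully rigorous at the interfaces $\sigma_k$ and at the initial time $\lambda$. Concretely, at each restart I need the modelled distribution $\mcU$ (which plays the role of $\tilde\mcU$ for the next step) to converge in $\cD^{2+2\kappa}$ on an interval $[\sigma_k-\tau_k,\sigma_k]$, not merely the reconstruction \dash this will require packaging the convergence of the fixed point from Lemma~\ref{lem:restarted} at the level of the modelled distributions (not just after reconstruction) and propagating it through. A secondary issue is that the initial time-shift by $\lambda>0$ is essential to give us the buffer $\mu>0$ to apply Lemma~\ref{lem:restart_improve} (which only controls the reconstruction away from the left endpoint of the interval), and ensures that the convergence at the very first step really does bootstrap; this is the price paid for not having improved behaviour at $t=0$, in line with Remark~\ref{rem:time-zero}.
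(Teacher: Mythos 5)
Your convergence argument is a reasonable reworking of the paper's, and your concern about tracking $\mcU$ as a modelled distribution (not just its reconstruction) is real but already handled: Lemma~\ref{lem:restarted} explicitly states that $(\CB,\mcU,\CH)\restr_{(0,\tau)}$ is a locally uniformly continuous function of the input tuple \emph{as an element of the modelled-distribution space}~\eqref{eq:fixed_pt_space}, so the data $\tilde\mcU$ you need for each restart does converge in $\cD^{2+2\kappa}$. Your finite-induction scheme (uniform lower bound on the $\tau_k$ via the boundedness of $Q^{0;\lambda}$ on $[0,T_L]$, hence finitely many restarts to cover the interval, then chaining Lemmas~\ref{lem:restarted} and~\ref{lem:restart_improve}) is essentially the paper's ``key remark'' made step-by-step, and your observation that the time-shift by $\lambda>0$ is what buys you the buffer needed to invoke Lemma~\ref{lem:restart_improve} is exactly right, cf.\ Remark~\ref{rem:time-zero}.

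The genuine gap is in the continuity claim. You establish that on each piece, letting the cutoff go to zero, $\CR\CB$ lies in $\CC((\sigma_k,\sigma_{k+1}],\Omega^1_\alpha)$, and that the value at $\sigma_{k+1}$ matches the initial datum $B^{(0)}$ fed into the next restart. But continuity of the concatenation at $\sigma_{k+1}$ also requires the \emph{right} limit of the $(k{+}1)$-st piece to equal $B^{(0)}$, and Lemma~\ref{lem:restart_improve} only gives continuity away from the left endpoint (on $[\lambda\tau,\tau]$ for $\lambda>0$), so the behaviour as $t\downarrow\sigma_{k+1}$ is not controlled by what you cite. Note also that the fixed point~\eqref{eq:abstract_fixed_point_eq_3} contains the term $\bar\CG_i(\tilde\mcU\bXi_i)-Gv_i$, whose reconstruction vanishes at $\sigma_{k+1}$ by the choice~\eqref{eq:v_i_def} but is not a priori continuous from the right in $\Omega^1_\alpha$. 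The paper closes this gap with a short trick you do not use: the same reconstruction on $(\sigma_{k+1},\sigma_{k+1}+\cdots]$ can be obtained by restarting the fixed point~\eqref{eq:abstract_fixed_point_eq_3} at the earlier time $\sigma_{k+1}-\tfrac12\tau_{k+1}$ with the same $\tilde\mcU$ on $[\sigma_{k+1}-\tau_{k+1},\sigma_{k+1}-\tfrac12\tau_{k+1}]$; applying Lemma~\ref{lem:restart_improve} to \emph{that} restart gives continuity on an interval whose interior contains $\sigma_{k+1}$, and hence continuity of $Q^\eps$ there. Without something like this shift-of-restart-time argument, the continuity of $Q^\eps$ at the junction points is not established.
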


\begin{proof}
The fact that $Q^\eps\in\CC((0,\tau^\star_\eps),\Omega^1_\alpha\times\hat\mfG^{0,\alpha})$ follows from Lemma~\ref{lem:restart_improve} and the fact that we can also restart the fixed point~\eqref{eq:abstract_fixed_point_eq_3}
at times $\sigma_k-\frac12\tau_k$ instead of $\sigma_k$ but with the same $\tilde \mcU$ on $[\sigma_k-\tau_k,\sigma_k-\frac12\tau_k]$
(which gives the same reconstruction).

The convergence $Q^{\eps;\lambda} \to Q^{0;\lambda}$ in the sense of Lemma~\ref{lem:alt_conv}\ref{pt:D_conv_alt}
readily follows from the stability results in Lemmas~\ref{lemma:fixedptpblmclose},~\ref{lem:fixed_close_enhanced},~\ref{lem:restarted} and~\ref{lem:restart_improve}.
The key remark is that the modelled distributions $(\CB^\eps,\mcU^\eps,\mcH^\eps)$ 
stay close to $(\CB^0,\mcU^0,\mcH^0)$ until either the modelled distribution norm of $\mcU^0$
becomes large or $|\CR\bar\CG_i ( \mcU^0 \boldsymbol{\Xi})|_{\Omega^1_\alpha}+ |Q^0|_{\Omega^1_\alpha\times\hat\mfG^{0,\alpha}}$ becomes large,
and this is enough to control the difference $|Q^\eps-Q^0|_{\Omega^1_\alpha\times\hat\mfG^{0,\alpha}}$ (as well as the difference $|\CR\bar\CG_i ( \mcU^\eps \boldsymbol{\Xi})-\CR\bar\CG_i ( \mcU^0 \boldsymbol{\Xi})|_{\Omega^1_\alpha}$)
by Lemmas~\ref{lem:fixed_close_enhanced} and~\ref{lem:restart_improve}.
The assumption $Q^{0;\lambda}\in (\Omega^1_\alpha\times\hat\mfG^{0,\alpha})^\sol$
ensures that $Q^{\eps;\lambda}$ can only be far from $Q^{0;\lambda}$ when \textit{both} are large,
which implies the condition stated in Lemma~\ref{lem:alt_conv}\ref{pt:D_conv_alt}.
\end{proof}


%
\subsubsection{Probabilistic bounds}\label{subsubsec:prob_bounds}
The key input in our argument regarding  stochastic control of models is given by the following lemma. 
\begin{lemma}\label{lem:conv_of_models2}
One has, for any $p \ge 1$, 
\begin{equ}\label{eq:eps_control_of_model}
\sup_{\eps \in (0,1]}
\sup_{\delta \in (0,\eps)}
\E[ \|Z^{\delta ,\eps}_{\BPHZ}\|_{\eps}^{p} ] 
< \infty\;.
\end{equ}
Moreover, there exist models $Z^{0,\eps}_{\BPHZ} \in \mathscr{M}_{\eps}$ for $\eps \in (0,1]$ such that, for any such $\eps$,  
\begin{equ}\label{eq:delta_conv_of_models}
\lim_{\delta \downarrow 0} Z^{\delta,\eps}_{\BPHZ}
= Z^{0,\eps}_{\BPHZ}
\end{equ}
in probability with respect to the topology of $d_{\eps}(\act,\act)$.
 
Finally, there exists a model $Z^{0,0}_{\BPHZ} \in \mathscr{M}_{0}$ such that
\begin{equ}\label{eq:eps_conv_of_models}
\lim_{\eps \downarrow 0}
Z^{0,\eps}_{\BPHZ}
= Z^{0,0}_{\BPHZ}
\end{equ}
in probability with respect to the topology of $d_{1}(\act,\act)$.
\end{lemma}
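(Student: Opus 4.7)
The plan is to reduce the statement to the general BPHZ convergence theorem of Chandra--Hairer \cite[Thm.~2.15]{CH16} applied to a scalar noise decomposition of the regularity structure built in Section~\ref{subsec:regstruct_for_system}, while carefully tracking $\eps$-dependence via the operator $\Theta$ of Section~\ref{subsec:eps_reg_structs}. For each fixed $\eps > 0$, the convergence \eqref{eq:delta_conv_of_models} as $\delta \downarrow 0$ is routine: the three power-counting criteria of \cite[Thm.~2.15]{CH16} hold exactly as in the proof of Lemma~\ref{lem:conv_of_models}, since the additional components of type $\mfh_i$ and $\mfu$ contribute only kernel edges of positive degree (degree $2$), and the doubled noise labels $\{\mfl_i,\bar\mfl_i\}_{i=1}^d$ still have $\deg = -|\s|/2-\kappa$ and form a uniformly compatible Gaussian family converging to $\zeta = (\xi_i)_{i=1}^d$. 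The worst subtree estimate is again realised by $\<IXiI'Xi_notriangle>$-type trees at degree $-1-2\kappa > -2$, and the completion procedure of \cite[Prop.~5.21]{BHZ19} does not interfere with these bounds.

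The heart of the argument is the $\eps$-uniform moment bound~\eqref{eq:eps_control_of_model}. Here we exploit that $\|\cdot\|_{\ell,\eps} = \wnorm{\Theta \cdot}_{\ell,\eps}$ rescales, by a factor $\eps^{\theta m(\tau)}$, any tree component in which $m(\tau)$ edges carry a ``difference'' integration $\bar{\mcb{I}}_{i,p} - \mcb{I}_{i,p}$ or a ``difference'' noise $\bar\Xi_i - \Xi_i$. By Lemma~\ref{lemma:control_of_kernels}, each such difference integration realises a kernel that is $(m,\eps^\kappa R,2-\kappa)$-regularising for some $R$ independent of $\eps$, and by It\^o isometry each difference noise $\chi^\eps \ast \xi_i - \chi^\delta \ast \xi_i$ has $L^2$-norm of order $\eps^\kappa$ (uniformly in $\delta \leq \eps$). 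Rerunning the stochastic estimates of \cite[Thms.~2.15, 2.31]{CH16} on the extended regularity structure where the kernel/noise labels are split accordingly (viewing $(\bar{\mcb{I}}_{i,p} - \mcb{I}_{i,p}, \bar\Xi_i - \Xi_i)$ as separate, smaller ``types'' in the sense of \cite{CH16}), one obtains an $L^p$ bound of order $\eps^{\kappa m(\tau)}$ for each such tree component. Since $\theta \le \kappa$, this compensates exactly for the weight $\eps^{-\theta m(\tau)}$ in the $\eps$-norm, yielding \eqref{eq:eps_control_of_model}.

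For the convergence \eqref{eq:eps_conv_of_models}, we proceed in two steps. Tightness of $\{Z^{0,\eps}_{\BPHZ}\}_{\eps \in (0,1]}$ in $\mathscr{M}_0$ under $d_1$ follows from \eqref{eq:eps_control_of_model} and the fact that $\|\cdot\|_{\ell,1}$ is dominated by $\|\cdot\|_{\ell,\eps}$ for $\eps \le 1$. To identify the limit, let $Z^{0,0}_{\BPHZ}$ be the BPHZ model in $\mathscr{M}_0$ associated to the kernel assignment $K^{(0)} \equiv K$ (so that $\bar{\mcb{I}}$ and $\mcb{I}$ coincide on the nose) and to the noise assignment in which both $\mfl_i$ and $\bar\mfl_i$ are represented by the same white noise $\xi_i$ (so that $\sigma$ becomes the identification). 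Using the $\Theta$-decomposition, we split each renormalised model evaluation into a ``main'' part, in which no difference symbols appear and which converges to the corresponding quantity for $Z^{0,0}_{\BPHZ}$ by the standard $\delta \to 0$ argument, and a ``remainder'' part, whose $L^p$ size vanishes at rate $\eps^{\kappa}$ by the same estimate used in the previous paragraph.

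The main obstacle I expect is to verify that the $\eps^{\kappa}$ gain per difference edge is preserved after the BPHZ negative twisted antipode is applied, since in the recursive definition \eqref{eq:recursive_antipode_vec} one extracts divergent subtrees that may themselves contain difference edges. The resolution is that the estimates of \cite{CH16} are formulated in terms of norms on the kernel assignment which are multiplicative across edges, so the factor $\eps^\kappa$ attached to each difference edge is carried through both the ``Hepp sector'' bounds and the twisted antipode expansion; moreover, the completion of $\mathring R$ only adds trees of positive degree, which are harmless. A secondary technical check, which is however routine given our choice \eqref{e:K-Keps-assign} and the non-anticipativeness of $\moll$, is that the extended regularity structure with split edge types remains subcritical and satisfies the integrability criteria of \cite[Thm.~2.15]{CH16}, so that the whole procedure fits within its scope.
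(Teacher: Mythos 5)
Your proposal follows essentially the same route as the paper's proof: the $\delta\downarrow 0$ convergence \eqref{eq:delta_conv_of_models} by direct application of \cite[Thm.~2.15]{CH16} exactly as in Lemma~\ref{lem:conv_of_models}, and the $\eps$-uniform bound \eqref{eq:eps_control_of_model} by observing that under $\Theta$ every ``difference'' edge $\bar{\mcb{I}}_{i,p}-\mcb{I}_{i,p}$ yields a factor $\eps^\kappa$ via Lemma~\ref{lemma:control_of_kernels} and every ``difference'' noise $\bar\Xi_i-\Xi_i$ yields $\eps^\kappa$ via $\xi_i^\delta-\xi_i^\eps$ (uniformly in $\delta\le\eps$), which since $\theta\le\kappa$ compensates the weight $\eps^{-\theta m(\tau)}$ in $\|\cdot\|_{\ell,\eps}$; all via the multiplicative norms of \cite[Thm.~2.31]{CH16}. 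The one place you diverge is in closing the $\eps\downarrow 0$ convergence \eqref{eq:eps_conv_of_models}. The paper proves directly that
\begin{equ}
\lim_{\eps\downarrow 0}\sup_{\delta,\tilde\eps\in(0,\eps)}\E\big[d_1(Z^{\delta,\eps}_{\BPHZ},Z^{\delta,\tilde\eps}_{\BPHZ})^2\big]=0
\end{equ}
by telescoping the difference and factoring out $K-K^\eps$ or $\xi_i^{\tilde\eps}-\xi_i^\eps$, then uses Fatou to pass $\delta\downarrow 0$ and concludes that $Z^{0,\eps}_{\BPHZ}$ is Cauchy in $L^2$, hence converges by completeness. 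You instead appeal to ``tightness'' of $\{Z^{0,\eps}_{\BPHZ}\}$ and then identify the limit explicitly. The tightness step as you phrase it is not quite right: a moment bound on $\|Z^{0,\eps}_{\BPHZ}\|_1$ gives boundedness in probability but not precompactness in the (non-locally-compact) metric space of models, so one should not expect subsequential limits for free. Moreover, defining $Z^{0,0}_{\BPHZ}$ directly as ``the BPHZ model with kernel $K$ and noise $(\xi_i,\xi_i)$'' is slightly circular, since such rough-noise BPHZ models are themselves constructed as limits of mollified ones. In fact the main-plus-remainder decomposition you propose in the final step, with the remainder vanishing at rate $\eps^\kappa$, is precisely the Cauchy bound in disguise, so it would be cleaner to dispense with the tightness language and phrase the closing step directly as the paper does.
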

\begin{proof}
As in Lemma~\ref{lem:conv_of_models} we proceed by using the results of \cite{CH16}. 
We start by proving \eqref{eq:delta_conv_of_models} and here we appeal to \cite[Theorem~2.15]{CH16}. 
We first note that for any scalar noise decomposition, it is straightforward to verify that the random smooth noise assignments $\zeta^{\delta,\eps}$ are a uniformly compatible family of Gaussian noises that converge to the Gaussian noise $\zeta^{0,\eps}$. 
The verification of the first three listed power-counting conditions of \cite[Theorem~2.15]{CH16} is analogous to how they were checked for Lemma~\ref{lem:conv_of_models}.
 This gives the existence of the limiting models $Z^{0,\eps}_{\BPHZ}$ and the desired convergence statement (note that for fixed $\eps > 0$, the metric $d_{\eps}(\act,\act)$ is equivalent to $d_{1}(\act,\act)$).  

To prove \eqref{eq:eps_conv_of_models} we will show
\begin{equ}\label{eq:double_limit_model}
\lim_{\eps \downarrow 0}
\sup_{
\delta, \tilde{\eps} \in (0,\eps)}
\E[ d_{1}(Z^{\delta,\eps}_{\BPHZ},Z^{\delta,\tilde{\eps}}_{\BPHZ})^{2}]
=
0\;.
\end{equ}
By using Fatou to take the limit $\delta \downarrow 0$, this gives us that $Z_{\BPHZ}^{0,\eps}$ is Cauchy in $L^{2}$ as $\eps \downarrow 0$ and so we obtain the desired limiting model $Z^{0,0}_{\BPHZ}$ and the desired convergence statement.  

To prove \eqref{eq:double_limit_model} we will use  the more quantitative \cite[Theorem~2.31]{CH16}.
Here we take $\mfL_{\mathrm{cum}}$ to be the set of all pairings of $\Lab_{-}$ and so the three power-counting conditions we verified for \cite[Theorem~2.15]{CH16} also imply the super-regularity assumption of \cite[Theorem~2.31]{CH16}. Since we only work with pairings, the cumulant homogeneity $\mfc$ is determined by our degree assignment on our noises. 
After rewriting the difference of the action of models as a telescoping sum which allows one to factor the corresponding difference in the kernel assignment $K - K^{\eps}$ or noise assignment $\xi_{i}^{\tilde{\eps}} - \xi_{i}^{\eps}$, one is guaranteed at least one factor of order $\eps^{\kappa}$ the right-hand side of the bound \cite[(2.15)]{CH16} \dash coming from $\|K - K^{\eps}\|_{2 - \kappa,k}$ in the first case or the contraction $\xi_{i}^{\tilde{\eps}} - \xi_{i}^{\eps}$ with another noise measured in the $\| \act \|_{-4-2\kappa,k}$ kernel norm in the second case. 
This gives us the estimate \eqref{eq:double_limit_model}. 

The above argument for obtaining \eqref{eq:double_limit_model} can also be applied to obtain \eqref{eq:eps_control_of_model}, namely, with the constraint  that $\delta \in (0,\eps)$, any occurrence of $\mcb{I}_{\mft,p} - \bar{\mcb{I}}_{\mft,p}$ gives a factor of $\eps^{\kappa}$ through the difference $K - K^{\eps}$ and any occurrence of $\Xi_{\mfl_{i}} - \Xi_{\bar{\mfl}_{i}}$ gives a factor $\eps^{\kappa}$ through the difference $ \xi_{i}^{\delta} - \xi_{i}^{\eps}$ and since $\theta \in (0,\kappa]$ this gives the suitable uniform in $\eps$ bounds on the moments of the model norm $\| \act \|_{\eps}$.  
\end{proof}

The next bound helps to control the difference between two solutions started from different (possibly random) initial conditions.
This is important when restarting the equation to obtain convergence of the maximal solutions.
For $\sigma\in\R$ and $U\in\CC^\kappa(\T^2)$,
we denote
$G_{(\sigma)} U (t,\cdot) \eqdef G U(t-\sigma,\cdot)$
the heat flow started at time $\sigma$ with initial condition $U$.
In particular $G_{(0)} U = G U$.
Let $\mbF=(\mcF_t)_{t\geq 0}$\label{pageref:mbF} denote the filtration generated by the white noise $\xi\eqdef(\xi_1,\xi_2)$.

\begin{lemma}\label{lem:Uxi_conv}
Let $\kappa>0$ and $\K\subset\R\times\T^2$ be compact.
For all $0<\bar\eps\leq\eps\leq1$,
$\mbF$-stopping times $\sigma\in[0,1]$,
and $\mcF_\sigma$-measurable $U\in \CC^{\kappa}(\T^2)$,
there exists an $\mcF_\sigma$-measurable random variable $M>0$ such that
\begin{equs}
\E
\big[
|G_{(\sigma)} U \xi^\eps |_{\CC^{-2-\kappa }(\K)}
\mid \mcF_\sigma \big]
&\leq |U|_{\CC^\kappa} M
\;, \label{eq:diff_init_cond}
\\
\E
\big[
|G_{(\sigma)} U (\xi^\eps - \xi^{\bar\eps}) |_{\CC^{-2-\kappa }(\K)}
\mid \mcF_\sigma \big]
&\leq |U|_{\CC^\kappa}M \eps^\varsigma\;,\label{eq:same_init_cond}
\end{equs}
where $\varsigma>0$ depends only on $\kappa$.
Furthermore every moment of $M$ is bounded uniformly in $\sigma$, $U$, $\eps$, and $\bar\eps$.
In particular, for every $\mcF_\sigma$-measurable $U\in \CC^\kappa(\T^2)$,
the limit
\begin{equ}\label{eq:Uxi_conv}
G_{(\sigma)} U \xi \eqdef \lim_{\eps\to 0} G_{(\sigma)} U \xi^\eps
\end{equ}
exists in $\CC^{-2-\kappa}$.
\end{lemma}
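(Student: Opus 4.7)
The plan is to first establish both estimates for deterministic $\sigma \in [0,1]$ and deterministic $U \in \CC^\kappa(\T^2)$, and then upgrade to the $\mcF_\sigma$-measurable setting via the strong Markov property. Setting $f = G_{(\sigma)} U$ and using the trivial bound $|f|_{L^\infty_{t,x}} \le |U|_{L^\infty} \le |U|_{\CC^\kappa}$, I would estimate the pairing against parabolic test functions $\psi_z^\lambda$ (with $|\s| = 4$) via
\[
\E\big[\langle f \xi_i^\eps, \psi_z^\lambda\rangle^2\big] = C_\xi \big|(f \psi_z^\lambda) * \check\moll^\eps\big|_{L^2}^2 \lesssim |U|_{\CC^\kappa}^2 \lambda^{-|\s|}\;,
\]
uniformly in $z \in \K$, $\lambda \in (0,1]$ and $\psi \in \mcB^r$. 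Since $\langle f \xi_i^\eps, \psi_z^\lambda\rangle$ is centred Gaussian, equivalence of Gaussian moments promotes this to bounds on all $p$-th moments, and a Kolmogorov argument in the spirit of Lemma~\ref{lem:Kolmogorov_fixed_time} (adapted to parabolic space-time test functions) yields
\[
\E\big[|f \xi^\eps|_{\CC^{-2-\kappa}(\K)}^p\big]^{1/p} \lesssim |U|_{\CC^\kappa}
\]
uniformly in $\sigma$, $U$, and $\eps$, once $p$ is taken large enough (any slight loss in the Kolmogorov exponent being absorbed by a harmless decrease of $\kappa$).

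For the difference $\xi^\eps - \xi^{\bar\eps}$ with $\bar\eps < \eps$, the key point is that $\moll^\eps - \moll^{\bar\eps}$ has vanishing integral, so that $|\int h(\moll^\eps - \moll^{\bar\eps})| \lesssim \eps^\theta |h|_{\CC^\theta}$ for any $\theta \in (0,1]$ (using parabolic H\"older norms). Applying this with $h = f\psi_z^\lambda$ and $\theta \le \kappa$, combined with the elementary estimate $|f\psi_z^\lambda|_{\CC^\theta} \lesssim |U|_{\CC^\kappa} \lambda^{-|\s|-\theta}$ (with a mild care near $t = \sigma$ where $f$ drops to zero, handled by falling back on the $L^\infty$ bound and support considerations), yields
\[
\E\big[\langle f(\xi^\eps - \xi^{\bar\eps}), \psi_z^\lambda\rangle^2\big] \lesssim \eps^{2\theta} |U|_{\CC^\kappa}^2 \lambda^{-|\s|-2\theta}\;,
\]
and the same Kolmogorov argument then gives \eqref{eq:same_init_cond} with any $\varsigma < \theta$.

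To upgrade to the stopping-time setting I would condition on $\mcF_\sigma$ and split $\xi = \xi^{<\sigma} + \xi^{\ge \sigma}$, which induces $\xi^\eps = \xi^{\eps,<\sigma} + \xi^{\eps,\ge \sigma}$. Because $\moll$ is non-anticipative, $\xi^{\eps,<\sigma}$ is supported in time in $[\sigma - \eps^2, \sigma + \eps^2]$, and by the strong Markov property $\xi^{\ge\sigma}$ is a white noise on $[\sigma,\infty)\times\T^2$ independent of $\mcF_\sigma$; after time shift the deterministic estimates apply conditionally to $f \xi^{\eps, \ge \sigma}$ (with $\sigma, U$ treated as constants), producing a conditional expectation bounded by $C|U|_{\CC^\kappa}$. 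The residual $f \xi^{\eps,<\sigma}$ is supported in the thin slab $[\sigma, \sigma + \eps^2]\times\T^2$ and is $\mcF_\sigma$-measurable; a parabolic scaling argument, again using $|f|_{L^\infty} \le |U|_{\CC^\kappa}$, shows that $|f\xi^{\eps,<\sigma}|_{\CC^{-2-\kappa}(\K)} \le |U|_{\CC^\kappa}\, \widetilde M$ for some $\mcF_\sigma$-measurable $\widetilde M > 0$ whose moments are bounded uniformly in $\sigma$ and $\eps$. Taking $M$ as the sum of these two contributions yields both \eqref{eq:diff_init_cond} and \eqref{eq:same_init_cond}, and the limit \eqref{eq:Uxi_conv} follows at once since \eqref{eq:same_init_cond} makes the family $\{G_{(\sigma)} U \xi^\eps\}_\eps$ Cauchy in probability in $\CC^{-2-\kappa}(\K)$.

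The main obstacle I anticipate is the careful treatment of the residual slab $f\xi^{\eps,<\sigma}$: although $\xi^\eps$ is more and more singular as $\eps \downarrow 0$, the time support within the support of $f$ shrinks as $\eps^2$, so one must run the duality argument at the correct parabolic scale to obtain an $\eps$-independent moment bound on its $\CC^{-2-\kappa}$ norm. Related bookkeeping is needed to check that the Kolmogorov argument genuinely recovers the parabolic $\CC^{-2-\kappa}$ norm across the jump at $t = \sigma$, but this should present no essential difficulty given the quadratic bounds derived above.
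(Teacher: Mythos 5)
Your overall strategy is sound and structurally close to the paper's, though with a genuinely different decomposition. The paper proves the estimate directly with $\sigma=0$ by restricting the product $GU\xi^\eps$ to the time intervals $[0,2\eps^2]$ and $[2\eps^2,\infty)$, telescoping $\bar\eps\in[\eps/2,\eps]$, and introducing an explicit random constant $M=\eps^{2-\kappa/4}\sup_{t\in[0,8]}|\xi^\eps(t,\cdot)|_{\CC^{-\kappa/2}}$. You instead decompose the noise $\xi=\xi^{<\sigma}+\xi^{\ge\sigma}$, apply the strong Markov property to the piece independent of $\mcF_\sigma$, and bound the residual slab $f\xi^{\eps,<\sigma}$ (supported in $[\sigma,\sigma+\eps^2]$) separately. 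A nice side-effect of your decomposition, which you don't exploit, is that the slab for $f\xi^{\bar\eps,<\sigma}$ shrinks to $[\sigma,\sigma+\bar\eps^2]$ rather than remaining $[\sigma,\sigma+\eps^2]$, which renders the telescoping reduction to $\bar\eps\in[\eps/2,\eps]$ unnecessary for your version.

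There is, however, a genuine gap in your treatment of \eqref{eq:same_init_cond}. For the piece $f(\xi^{\eps,\ge\sigma}-\xi^{\bar\eps,\ge\sigma})$ the conditional application of your deterministic difference estimate does give the required $\eps^\varsigma$-smallness. But for the residual slab you only claim $|f\xi^{\eps,<\sigma}|_{\CC^{-2-\kappa}(\K)}\le |U|_{\CC^\kappa}\widetilde M$ with $\widetilde M$ merely \emph{uniformly bounded} in moments, so the triangle inequality gives at most $|f(\xi^{\eps,<\sigma}-\xi^{\bar\eps,<\sigma})|_{\CC^{-2-\kappa}(\K)}\lesssim |U|_{\CC^\kappa}\widetilde M$ with no $\eps^\varsigma$ factor. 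Your final sentence, asserting that ``taking $M$ as the sum of these two contributions yields both \eqref{eq:diff_init_cond} and \eqref{eq:same_init_cond}'', is therefore unjustified as written: you have established \eqref{eq:diff_init_cond} but not \eqref{eq:same_init_cond}, and consequently the convergence claim \eqref{eq:Uxi_conv} also does not follow.

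The gap is fixable, and the mechanism is precisely the ``parabolic scaling'' you flag as the main obstacle: pairing against $\psi_z^\lambda$, the time integral over the slab produces a factor $\eps^2$, while $\sup_{t}|\xi^{\eps,<\sigma}(t,\cdot)|_{\CC^{-\kappa/2}}$ has moments of order $\eps^{-2+\kappa/2}$, so the slab contribution to the $\CC^{-2-\kappa}$ norm actually carries a net factor $\eps^{\kappa/2}$ (the paper extracts $\eps^{\kappa/4}$ by the analogous balance). In other words, the correct target for the slab is not ``an $\eps$-independent moment bound'' on its $\CC^{-2-\kappa}$ norm, but an $\eps^\varsigma$-decaying one, and that is what the duality argument actually produces if carried out at the parabolic scale $\eps$. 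You should restate your slab claim to reflect this and supply the computation, after which both estimates follow. Two minor imprecisions worth fixing as well: your assertion that $\xi^{\eps,<\sigma}$ is supported in time in $[\sigma-\eps^2,\sigma+\eps^2]$ is false (its time support extends to $-\infty$; it is only the product $f\xi^{\eps,<\sigma}$ that is confined to $[\sigma,\sigma+\eps^2]$), and the non-anticipativity of $\moll$ is not needed for this part of the argument, since the relevant support property follows from $\supp\moll\subset\{|z|<1/4\}$ alone.
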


\begin{proof}
We prove the statement for $\sigma=0$; the general case follows in an identical manner.
It suffices to prove the statement for $\bar\eps\in[\frac\eps2,\eps]$ with the general case following from telescoping.
Observe that $\xi^\eps \restr_{[2\eps^2,\infty)}$ and $\xi^{\bar\eps} \restr_{[2\eps^2,\infty)}$
are independent of $\mcF_0$ and therefore of $GU$.

It follows that, uniformly in $\psi\in\mcB^3$, $\lambda,\eps\in(0,1)$,
\begin{equ}
\E\big[
\scal{GU\xi^\eps\restr_{[2\eps^2,\infty)},\psi^\lambda_z}^2 \mid \mcF_0
\big]
\leq |\moll^\eps *(GU\psi^\lambda_z)|^2_{L^2}\lesssim |GU|_{L^\infty}^2 \lambda^{-4}\;.
\end{equ}
Likewise, uniformly in $\psi\in\mcB^3$, $\lambda,\eps\in(0,1)$, and  $\bar\eps\in(0,\eps)$, 
\begin{equs}
\E\big[\scal{GU(\xi^\eps - \xi^{\bar\eps})\restr_{[2\eps^2,\infty)},\psi^\lambda_z}^2 \mid \mcF_0
\big]
&\leq
|(\moll^\eps-\moll^{\bar\eps}) *(GU\psi_z^\lambda)|^2_{L^2}
\\
&\lesssim \eps^{\kappa}\lambda^{-4-\kappa}|U|^2_{\CC^{\kappa/2}}\;.
\end{equs}
Hence the required estimates hold for $GU\xi^\eps\restr_{[2\eps^2,\infty)}$ and $GU(\xi^\eps-\xi^{\bar\eps})\restr_{[2\eps^2,\infty)}$
by the equivalence of Gaussian moments and a Kolmogorov argument.

For the contribution on $[0,2\eps^2]$, define the random variable
\begin{equ}
M = \eps^{2-\frac\kappa4}\sup_{t \in [0,8]} |\xi^\eps(t,\cdot)|_{\CC^{-\frac\kappa2}}\;.
\end{equ}
Note that every moment of $M$ is bounded uniformly in $\eps$.
Then
\begin{equs}
|\scal{G U \xi^\eps \restr_{[0,2\eps^2]}, \psi^\lambda_z}|
&\lesssim  \lambda^{-2-\frac\kappa2} \eps^2 \sup_{t \le 2\eps^2} 
|(G U)(t,\cdot) \xi^\eps(t,\cdot)|_{\CC^{-\frac\kappa2}}
\\
&\lesssim
M\lambda^{-2-\frac\kappa2}\eps^{\frac\kappa4} |U|_{\CC^{\kappa}}\;.
\end{equs}
uniformly in $\psi\in\CB^3$, $\lambda\in (0,1)$, and $\eps\in(0,1)$.
The same bound holds for $|\scal{G U \xi^{\bar\eps} \restr_{[0,2\eps^2]}, \psi^\lambda_z}|$, which is where we use $\frac\eps2\leq\bar\eps \leq\eps$.
%
%
\end{proof}

Our final probabilistic estimates controls the term $\CW$ in~\eqref{eq:abstract_fixed_point_eq_2}.

\begin{lemma}\label{lem:CW}
Let $\zeta=G*\bone_+\moll^\eps*(\xi \bone_{-})\in \CC^\infty(\R_+)$
and let $\CW$ be its lift to the polynomial sector defined for positive times.
Then, for all $\kappa\in (0,1)$, there exists a random variable $M$ with moments bounded of all orders such that, for all $\tau\in(0,1)$,
\begin{equ}
|\CW|_{\frac32,-2\kappa; \tau} \leq M\eps^\kappa\;.
\end{equ}
\end{lemma}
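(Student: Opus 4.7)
The function $\zeta$ is a centered Gaussian field (being a linear functional of the white noise $\xi$), so the plan is to estimate second moments via It{\^o} isometry, promote these to a.s.\ bounds via Gaussian hypercontractivity and a weighted Kolmogorov-type argument, and then perform an elementary interpolation to match the modelled-distribution weights.

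Writing $\zeta$ as a stochastic integral against $\xi$,
\begin{equ}
\zeta(t,x) = \int_{(-\infty,0)\times \T^2} H^\eps(t,x;s',y')\, \xi(s',y')\, ds'\, dy'\;,
\end{equ}
with the deterministic kernel
\begin{equ}
H^\eps(t,x;s',y') \eqdef \int_{s \ge 0}\int_{\T^2} G(t-s,x-y)\moll^\eps(s-s',y-y')\, dy\,ds\;,
\end{equ}
we observe that the parabolic support of $\moll^\eps$ forces $H^\eps$ to vanish for $s' < -\eps^2/4$, and that for fixed $s' \in [-\eps^2/4,0]$ the $s$-integration runs only over $[0,s'+\eps^2/4]\subset[0,\eps^2/4]$. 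Combining these observations with standard heat-kernel estimates yields, uniformly in $(t,x) \in (0,1]\times \T^2$ and $(s',y')$ in the support of $H^\eps$,
\begin{equ}\label{e:planH}
|H^\eps(t,x;s',y')| \lesssim \Phi_{t+\eps^2}(x-y')\;,\qquad |\nabla_x H^\eps(t,x;s',y')| \lesssim \Phi'_{t+\eps^2}(x-y')\;,
\end{equ}
where $\Phi_\tau$ and $\Phi'_\tau$ are Gaussian-type kernels at scale $\sqrt\tau$ satisfying $|\Phi_\tau|_{L^2}^2 \lesssim \tau^{-1}$ and $|\Phi'_\tau|_{L^2}^2 \lesssim \tau^{-2}$; analogous Lipschitz estimates hold on space-time increments of $H^\eps$ and $\nabla_x H^\eps$. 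The main technical obstacle lies in establishing \eqref{e:planH} uniformly across the ``boundary'' regime $t \sim \eps^2$: for $t \gtrsim \eps^2$ one uses that $G(t-s,\cdot)$ is essentially $G(t,\cdot)$ on the tiny $s$-range of integration, while for $t \lesssim \eps^2$ one instead uses $\moll^\eps$'s support and size directly.

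Applying It{\^o} isometry, \eqref{e:planH}, and the fact that $H^\eps$ has $s'$-support of length at most $\eps^2/4$ then yields
\begin{equ}
\E[|\zeta(t,x)|^2] \lesssim \frac{\eps^2}{t+\eps^2}\,,\qquad \E[|\nabla\zeta(t,x)|^2] \lesssim \frac{\eps^2}{(t+\eps^2)^{2}}\,,
\end{equ}
together with the corresponding space-time increment bounds. Gaussian hypercontractivity promotes these to $L^p$ bounds for every $p < \infty$, after which a Kolmogorov-type argument adapted to the weighted norms on $\cD^{\gamma,\eta}$ (with powers of $\sqrt t$ as weights near the singular hyperplane $\{t=0\}$) produces a random variable $M$ with moments of every order such that, uniformly in $\eps \in (0,1]$ and $(t,x)\in(0,1]\times\T^2$,
\begin{equ}
|\zeta(t,x)| \le M\,\eps\,(t+\eps^2)^{-1/2}\,,\qquad |\nabla\zeta(t,x)| \le M\,\eps\,(t+\eps^2)^{-1}\,,
\end{equ}
with matching H{\"o}lder moduli on scales $|z-\bar z|_\s \le \sqrt{t\wedge\bar t}/2$. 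Finally, the elementary inequalities $\eps(t+\eps^2)^{-1/2} \lesssim \eps^\kappa t^{-\kappa}$ and $\eps(t+\eps^2)^{-1} \lesssim \eps^\kappa t^{-\kappa-1/2}$ hold for all $\kappa \in (0,1)$ and $t \in (0,1]$: split into $t \le \eps^2$, handled by the trivial bound $\eps(t+\eps^2)^{-1/2}\le 1\le \eps^{-\kappa}\le \eps^\kappa t^{-\kappa}$, and $t>\eps^2$, handled algebraically. Since $\CW$ is the canonical polynomial lift of $\zeta$ truncated at $\gamma=3/2$, whose only nontrivial components are $\zeta$ (at degree $0$) and $\nabla\zeta$ (at degree $1$), and since the translation-covariance condition for a canonical polynomial lift in $\cD^{3/2,-2\kappa}$ reduces to H{\"o}lder-type bounds already established, we conclude $|\CW|_{3/2,-2\kappa;\tau} \le M\eps^\kappa$ uniformly in $\tau\in(0,1)$, as required.
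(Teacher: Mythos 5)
Your argument is correct but follows a genuinely different route from the paper's. You compute second moments of the Gaussian field $\zeta$ directly via It\^o isometry (through the kernel $H^\eps$), then invoke a weighted Kolmogorov-type argument to obtain a.s.\ pointwise bounds with a moment-bounded random prefactor. The paper instead isolates the stochastic content in a single preliminary estimate, $\sup_{t\in[0,\eps^2]}|\moll^\eps*(\xi\bone_-)(t)|_{\CC^{-2\kappa}}\le M\eps^{-2+\kappa}$ with $M$ having all moments, and then runs a \emph{deterministic} heat-semigroup Schauder estimate conditional on this: $|\zeta(t)|_{\CC^\gamma}\lesssim\int_0^{t\wedge\eps^2}(t-s)^{-\gamma/2-\kappa}M\eps^{-2+\kappa}\,ds$, split into $t\le 2\eps^2$ and $t>2\eps^2$ exactly as in your final interpolation step. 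The upshot is the same, but the paper's factorisation keeps the probabilistic and analytic parts cleanly separated: the heat-semigroup bound is elementary and carries no $\eps$-dependent randomness, and the only place a Kolmogorov argument is needed is for the mollified noise itself (where it is standard and not entangled with the parabolic weight). Your version pushes the weighted Kolmogorov step through to $\zeta$ and $\nabla\zeta$ directly, which works but requires you to verify the weighted estimates on the smoothed field rather than on the noise, and to justify the kernel bounds \eqref{e:planH} across the $t\sim\eps^2$ crossover — both routine but more work than the paper's route. One thing you should still spell out if you pursue this route is the covariance bound on $\nabla^2\zeta$ (equivalently on $\partial_t\zeta$), since the degree-$3/2$ coherence bound in the $\bone$-component reduces to a Taylor remainder that in the caloric regime $t\gtrsim\eps^2$ is controlled by second derivatives, not just by the first-order quantities you list.
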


\begin{proof}
Recall that $\sup_{t\in[0,\eps^2]}|\moll^\eps*(\xi \bone_{-})(t)|_{\CC^{-2\kappa}} \leq M \eps^{-2+\kappa}$.
Therefore, for all $\gamma \in [0,2-2\kappa)\setminus\{1\}$
and $t\in [0,2\eps^2]$
\begin{equs}
|\zeta(t)|_{\CC^\gamma} \leq
\int_0^t | G_{t-s}(\moll^\eps*(\xi \bone_{-})(s))|_{\CC^\gamma}\mrd s
&\lesssim \int_0^t (t-s)^{-\frac\gamma2-\kappa} M \eps^{-2+\kappa}\mrd s
\\
&\lesssim
Mt^{1-\frac\gamma2-\kappa} \eps^{-2+\kappa}
\lesssim Mt^{-\kappa-\frac\gamma2}\eps^\kappa\;.
\end{equs}
For $t\in(2\eps^2,\tau)$, we simply have
\begin{equs}
|\zeta(t)|_{\CC^\gamma} \leq
\int_0^t | G_{t-s}(\moll^\eps*(\xi \bone_{-})(s))|_{\CC^\gamma}\mrd s
&\lesssim \int_0^{\eps^2} (t-s)^{-\frac\gamma2-\kappa} M \eps^{-2+\kappa}\mrd s
\\
&\lesssim
\eps^2(t-\eps^2)^{-\frac\gamma2-\kappa}
M\eps^{-2+\kappa}
\\
&\lesssim Mt^{-\frac\gamma2-\kappa}\eps^\kappa\;,
\end{equs}
from which the conclusion follows.
\end{proof}

\subsection{Renormalised equations for the gauge transformed system}
\label{sec:renorm_for_system}

In this section we derive the renormalised equations for the $B$ system and the $\bar A$ system 
and prove that they converge to the same limit, i.e. Proposition~\ref{prop:SPDEs_conv_zero}.

Given $\delta \in (0,1]$ and $\eps \in (0,1]$, we write $\ell^{\delta,\eps}_{\BPHZ}[\act]$ for the BPHZ renormalisation group character that goes between the canonical lift and $Z_{\BPHZ}^{\delta,\eps}$.
The rule given below \eqref{e:rule-gauged} determines the set $\mfT_{-}(R)$ of trees as in \eqref{e:def-mfT-}
and we only  list the trees in $\mfT_{-}(R)$ that are relevant  to deriving  the renormalised equations in the following two tables (for the $F$ system and the $\bar F$ system respectively). 
The reason that we will only need to be concerned with these trees will be clear by  Lemma~\ref{lemma:list_of_trees_gsym} below, which follows easily from the definition of $\Upsilon^{\act}_{\mft}[\act]$  and the parity constraints on the noises and spatial derivatives that are necessary for $\ell^{\delta,\eps}_{\BPHZ}[\act]$ not to vanish. 

Here the graphic notation 
is similarly as in Section~\ref{sec:solution_theory}:
(thick) lines denote (derivatives of) $\mcb{I}$,
colours denote spatial indices, and
the colour of a tiny triangle labels
the spatial index for the kernel immediately below it. Moreover, we draw a circle (resp. crossed circle) for $\bar\Xi$ (resp. $\mbX\bar\Xi$),
with a convention that the line immediately below
it understood as $\mcb{I}$,
and a square (resp. crossed square) for $\Xi$ (resp. $\mbX\Xi$),
with a convention that the line immediately below
it understood as $\bar{\mcb{I}}$.
We also draw a zigzag line $\tikz[baseline=-3] \draw[snake=snake , segment length=1pt, segment amplitude=1pt] (0,0) -- (0.3,0) ;$
for $\mcb{I}^\mfu$ and
a wavy line $\tikz[baseline=-3] \draw[snake=snake , segment length=3pt, segment amplitude=1pt] (0,0) -- (0.3,0) ;$
for $\mcb{I}^\mfh$. Their thick versions and tiny triangles above them are understood as before. 
\begin{center}
\begin{minipage}{.45\linewidth}
\centering
{\setlength{\extrarowheight}{5pt}
\begin{tabular}{cc}
\toprule\\[-1.7\normalbaselineskip] 
\multicolumn{2}{c}{Table~\refstepcounter{tables}\thetables\label{table:TableB}}\\
\midrule 
$\<IXi^2green>$ & $\mcb{I}_{\<green>}(\bar{\Xi}_{\<green>})^{2}$ 
\\
$\<IXiI'[I'Xi]_typed>$ & $\mcb{I}_{\<green>}(\bar{\Xi}_{\<green>})\mcb{I}_{\<orange>,\<red>}(\mcb{I}_{\<green>,\<red>}(\bar{\Xi}_{\<green>}))$ \\
$\<I[I'Xi]I'Xi_typed>$ & $\mcb{I}_{\<green>,\<red>}(\bar{\Xi}_{\<green>})\mcb{I}_{\<orange>}(\mcb{I}_{\<green>,\<red>}(\bar{\Xi}_{\<green>}))$ 
\\  
$\<IXiI'XXi_typed>$ & $\mcb{I}_{\<green>}(\bar{\Xi}_{\<green>})\mcb{I}_{\<green>,\<red>}(\mbX_{\<red>}\bar{\Xi}_{\<green>}) $
\\ 
$\<IXXiI'Xi_typed>$ & $\mcb{I}_{\<green>}(\mbX_{\<red>}\bar{\Xi}_{\<green>})\mcb{I}_{\<green>,\<red>}(\bar{\Xi}_{\<green>})$  
\\
$\<I[I[Xi]]Xi_typed>$ & $\mcb{I}^{\mfu}(\mcb{I}_{\<green>}(\bar{\Xi}_{\<green>}))\bar{\Xi}_{\<green>}$
\\
$\<I[I'Xi]I'Xi_typed-h>$ & $\mcb{I}_{\<green>,\<green>}(\bar{\Xi})\mcb{I}_{\<green>}^{\mfh}(\mcb{I}_{\<green>,\<green>}(\bar{\Xi}))$
\\
$\<IXiI'[I'Xi]_typed-h>$ & $\mcb{I}_{\<green>}(\bar{\Xi})\mcb{I}_{\<green>,\<green>}^{\mfh}(\mcb{I}_{\<green>,\<green>}(\bar{\Xi}))$\\
\bottomrule
\end{tabular}
}
\end{minipage}
\qquad
\begin{minipage}{.45\linewidth}
\centering
{\setlength{\extrarowheight}{5pt}
\begin{tabular}{cc}
\toprule\\[-1.7\normalbaselineskip]
\multicolumn{2}{c}{Table~\refstepcounter{tables}\thetables\label{table:TablebarA}}\\
\midrule 
$\<IXi^2green*>$ & $\bar{\mcb{I}}_{\<green>}(\Xi_{\<green>})^{2}$ 
\\ 
$\<IXiI'[I'Xi]_typed*>$ & $\bar{\mcb{I}}_{\<green>}(\Xi_{\<green>})\mcb{I}_{\<orange>,\<red>}(\bar{\mcb{I}}_{\<green>,\<red>}(\Xi_{\<green>}))$ 
\\  
$\<I[I'Xi]I'Xi_typed*>$ & $\bar{\mcb{I}}_{\<green>,\<red>}(\Xi_{\<green>})\mcb{I}_{\<orange>}(\bar{\mcb{I}}_{\<green>,\<red>}(\Xi_{\<green>}))$
 \\   
$\<IXiI'XXi_typed*>$ & 
$\bar{\mcb{I}}_{\<green>}(\Xi_{\<green>})\bar{\mcb{I}}_{\<green>,\<red>}(\mbX_{\<red>}\Xi_{\<green>})$
\\ 
$\<IXXiI'Xi_typed*>$ & $\bar{\mcb{I}}_{\<green>}(\mbX_{\<red>}\Xi_{\<green>})\bar{\mcb{I}}_{\<green>,\<red>}(\Xi_{\<green>})$   \\
$\<I[I[Xi]]Xi_typed*>$ & $\mcb{I}^{\mfu}(\bar{\mcb{I}}_{\<green>}(\Xi_{\<green>}))\Xi_{\<green>}$\\
$\<I[I'Xi]I'Xi_typed-h*>$ & $\bar{\mcb{I}}_{\<green>,\<green>}(\Xi)\mcb{I}_{\<green>}^{\mfh}(\bar{\mcb{I}}_{\<green>,\<green>}(\Xi))$\\
$\<IXiI'[I'Xi]_typed-h*>$ & $\bar{\mcb{I}}_{\<green>}(\Xi)\mcb{I}_{\<green>,\<green>}^{\mfh}(\bar{\mcb{I}}_{\<green>,\<green>}(\Xi))$\\
\bottomrule
\end{tabular}
}
\end{minipage}
\end{center}

The first five trees in each of the two tables have the same structure as the ones 
that appeared in Section~\ref{sec:solution_theory},
except that now the noises are understood as $\Xi$ or $\bar\Xi$, and edges understood as $\mcb{I}$ or $\bar{\mcb{I}}$. 
An important difference from Section~\ref{sec:solution_theory} is that 
the trees of the type
$\<IXiI'XXi_notriangle>\ $ and $\<IXXiI'Xi_notriangle>$  had vanishing $\Upsilon$ in Section~\ref{sec:solution_theory} 
and therefore no effect on the renormalised equation, but  this is not  the case now, as we will see below, 
due to the term $U \chi^{\eps} \ast  \xi$ (or $\chi^{\eps} \ast (\bar U \xi)$) in our equation.
Moreover, the tables also show trees in 
 $\mfT_{-}(R)$ such as those of the form $\<I[I[Xi]]Xi_notriangle>$
 which do not have any counterpart in Section~\ref{sec:solution_theory}. 

\begin{lemma}\label{lemma:list_of_trees_gsym}
If $\tau \in \mfT_{-}(R)$ is not of any of the forms listed in Table~\ref{table:TableB} (resp. Table~\ref{table:TablebarA}) then either $\ell^{\delta,\eps}_{\BPHZ}[\tau] = 0$ or $\Upsilon_{\mft}^{F}[\tau] = 0$
(resp. either $\ell^{\delta,\eps}_{\BPHZ}[\tau] = 0$ or $\Upsilon_{\mft}^{\bar F}[\tau] = 0$) for every $\mft \in \Lab_{+}$.
\end{lemma}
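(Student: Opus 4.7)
The plan is to proceed by exhaustive enumeration of $\tau \in \mfT_{-}(R)$, using the rule~\eqref{e:rule-gauged} together with the defining conditions of $\mfT_{-}(R)$ (in particular $\deg(\tau)<0$, $\mfn(\rho)=0$, and $\tau$ unplanted); subcriticality of $R$ guarantees that only finitely many shapes need to be considered. For every such $\tau$ which is not one of the shapes in Table~\ref{table:TableB} (resp.\ Table~\ref{table:TablebarA}), I will exhibit either a vanishing-BPHZ argument, showing $\ell^{\delta,\eps}_{\BPHZ}[\tau]=0$, or a vanishing-$\Upsilon$ argument, showing $\Upsilon^{F}_{\mft}[\tau]=0$ (resp.\ $\Upsilon^{\bar F}_{\mft}[\tau]=0$) for every $\mft\in\Lab_{+}$.

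The first family of eliminations comes from parity and symmetry of the noise and kernel, generalising Lemma~\ref{lem:symbols_vanish}. Namely, any tree with an odd total number of noise leaves has $\ell^{\delta,\eps}_{\BPHZ}[\tau]=0$ since the BPHZ character is an expectation against centred Gaussian noise; and any tree whose associated integrand is odd under some coordinate reflection $x_{j}\mapsto -x_{j}$ has $\ell^{\delta,\eps}_{\BPHZ}[\tau]=0$, since by our conventions in Section~\ref{subsec:notation} and the construction in Section~\ref{subsubsec:kernelandnoiseforsym} both $K$ and $\moll$ are invariant under such reflections. This is the direct analogue of Lemma~\ref{lem:symbols_vanish}\ref{item:BPHZ_zero}--\ref{item:derivs_same}.

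The second family of eliminations exploits the specific algebraic structure of $F$ and $\bar F$ to annihilate $\Upsilon$ on trees that cannot be produced by iterating these nonlinearities. For the $F$ system, $F_{\mfm_{i}}\equiv 0$, and then \eqref{eq:base_case_for_upsilon} together with the induction~\eqref{e:def-Upsilon-added} force $\bar\Upsilon^{F}_{\mfm_{i}}[\tau]=0$ and $\Upsilon^{F}_{(\mfm_{i},p)}[\tau]=0$ for every $\tau\in\mfT$ and $p\in\N^{d+1}$; consequently any tree containing an edge of type $\mfm_{i}$ gives $\Upsilon^{F}_{\mft}[\tau]=0$ for every $\mft$ (cf.\ Remark~\ref{rem:noise_edges_terminal}\ref{pt:conform}). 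Moreover the only noise variable appearing in $F_{\mfa_{i}},F_{\mfh_{i}},F_{\mfu}$ is $\xi_{\bar\mfl_{i}}$, so any tree with a leaf of type $\mfl_{i}$ also yields $\Upsilon^{F}=0$. Symmetrically, no component $\bar F_{\mft}$ contains the variable $\xi_{\bar\mfl_{i}}$, so trees with a $\bar\mfl_{i}$-leaf kill $\Upsilon^{\bar F}$; and the factorised form $\bar F_{\mfm_{i}}=\pr{\bar U}\xi_{i}$, being linear in each of its two arguments, sharply restricts the subtrees that may sit immediately below an $\mfm_{i}$-edge in a tree supporting a nonzero $\Upsilon^{\bar F}$.

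The third step is then a finite enumeration: one checks directly that the $\tau\in\mfT_{-}(R)$ surviving all of the above constraints are exactly the trees in the relevant table. Since both $\mfh_{i}$- and $\mfu$-edges have degree $2$ while noise leaves have degree $-2-\kappa$, the budget $\deg(\tau)<0$ severely limits how many such decorated edges can appear and which subtrees they may sit above; combined with the parity restrictions on spatial derivatives this leaves only the handful of shapes displayed. The main obstacle is the bookkeeping in this final step, where one must track simultaneously the admissible node types of $R$, the polynomial decorations $\mfn$, the degree budget, and the coordinate-parity constraints, and verify that no shape is overlooked. A convenient organising principle is to first fix the multiset of noise leaves (types and polynomial decorations), then the set of decorated edge-types that may appear, and only then enumerate the admissible tree topologies compatible with the rule $R$.
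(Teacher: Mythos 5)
Your proposal is correct and follows essentially the same strategy as the paper: parity and reflection arguments to show $\ell^{\delta,\eps}_{\BPHZ}[\tau]=0$ (odd number of noise leaves, unmatched spatial derivatives or polynomial decorations, cf.\ Lemma~\ref{lem:symbols_vanish}), combined with structural vanishing of $\Upsilon^F$ or $\Upsilon^{\bar F}$ (edges of type $\mfm_i$ or leaves of type $\mfl_i$ kill $\Upsilon^F$; leaves of type $\bar\mfl_i$ kill $\Upsilon^{\bar F}$) to eliminate everything not in the two tables. The paper's proof is deliberately terse — it simply defers to the arguments of Lemma~\ref{lem:symbols_vanish} and adds one remark that polynomial decorations $\mbX^k$ are handled by the same sign-flip argument as derivatives — whereas you spell out the $\Upsilon$-vanishing side (which the paper leaves implicit) and explicitly observe that the degree budget combined with $\deg(\mfh_i)=\deg(\mfu)=2$ bounds the enumeration, so your write-up is arguably more complete.
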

\begin{proof}
The proof of this lemma follows similar lines as Lemma~\ref{lem:symbols_vanish}, so we do not repeat the details. We only remark that 
for trees with 
a ``polynomial'' $\mathbf{X}$, namely
\begin{equ}
\mcb{I}_{\<red>,\<dblue>}  (\bar\Xi_{\<red>}) \mcb{I}_{\<green>} (\mathbf{X}_{\<orange>} \bar{\Xi}_{\<green>}),
\quad
\mcb{I}_{\<dblue>}(\bar\Xi_{\<dblue>})\mcb{I}_{\<red>,\<green>} (\mathbf{X}_{\<orange>} \bar{\Xi}_{\<red>}) ,
\quad
\bar{\mcb{I}}_{\<red>,\<dblue>}  (\Xi_{\<red>}) \bar{\mcb{I}}_{\<green>} (\mathbf{X}_{\<orange>} \Xi_{\<green>}),
\quad
\bar{\mcb{I}}_{\<dblue>}(\Xi_{\<dblue>})\bar{\mcb{I}}_{\<red>,\<green>} (\mathbf{X}_{\<orange>} \Xi_{\<red>}) ,
\end{equ}
the polynomial can be dealt with in the same way as for the derivative in Lemma~\ref{lem:symbols_vanish}; for instance for  the first tree,
if $\<dblue>\neq \<orange>$,
then flipping the sign of the $\<dblue>$-component (or, $ \<orange>$-component) of the appropriate  integration variable shows that $ \bar{\PPi}_{\can}[\tau]=0$.
\end{proof}

We now state a sequence of lemmas with identities for $\bar\Upsilon^F$ and $\bar\Upsilon^{\bar F}$, but we will not give the 
detailed calculations within the proof of each lemma, since these are straightforward (for instance they follow  similarly 
as in Section~\ref{sec:solution_theory}). 
We first show that in both $F$ and $\bar{F}$ systems we don't see any renormalisation of the $\mfu$ or $\mfh_{i}$ equations. 
\begin{lemma}\label{lemma:renorm-for-u-h}
For any of the $\tau$ of the form listed in Table~\ref{table:TableB} (resp. Table~\ref{table:TablebarA}) one has  $\bar{\bUpsilon}^{F}_{\mfu}[\tau] = 0$ (resp. $\bar{\bUpsilon}_{\mfu}^{\bar{F}}[\tau] = 0$).
Moreover, we have 
\[
\sum_{\tau \in \mfT_{-}(R)}
(\ell_{\BPHZ}^{\delta,\eps}[\tau] \otimes \id)
\bar{\bUpsilon}_{\mfh,\<dblue>}^{F}[\tau](\pr{\mathbf{A}})
=
\sum_{\tau \in \mfT_{-}(R)}
(\ell_{\BPHZ}^{\delta,\eps}[\tau] \otimes \id)
\bar{\bUpsilon}_{\mfh,\<dblue>}^{\bar{F}}[\tau](\pr{\mathbf{A}}) = 0\;.
\]
\end{lemma}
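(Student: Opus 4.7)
The plan is to combine a structural rule-compatibility argument with a symmetry argument pitting the Casimir against the antisymmetry of the Lie bracket. For the $\mfu$ equation, by Remark~\ref{rem:noise_edges_terminal}\ref{pt:conform}, $\bar{\bUpsilon}^F_\mfu[\tau]$ vanishes unless the types of the root children of $\tau$ form a node type appearing in $R(\mfu)$. Since $\mathring R(\mfu)$ consists only of node types of the form $\mfu\mfh_j^2$ or $\mfu\mfq_j\mfh_j$, every admissible root must contain both a $\mfu$-child and an $\mfh$-child. Direct inspection of Table~\ref{table:TableB} confirms that the root children of each listed tree come in one of the combinations $\{\mfa,\mfa\}$, $\{\mfa,\mfh\}$, or $\{\mfu,\bar{\mfl}\}$, so the first claim follows; the argument for $\bar F$ and Table~\ref{table:TablebarA} is verbatim.

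For the $\mfh$ equation, the same rule-based argument applied to $\mathring R(\mfh_i)$ (every admissible node type must contain an $\mfh$-child) eliminates every tree in Table~\ref{table:TableB} except $\tau_1=\<I[I'Xi]I'Xi_typed-h>$ and $\tau_2=\<IXiI'[I'Xi]_typed-h>$, and matching against $R(\mfh_{\<dblue>})$ forces $\<dblue>=\<green>$ for non-vanishing contributions. It therefore suffices to show that for $i=1,2$,
\begin{equ}
(\ell^{\delta,\eps}_{\BPHZ}[\tau_i]\otimes\id_\mfg)\bar{\bUpsilon}^F_{\mfh_{\<green>}}[\tau_i]=0
\end{equ}
\emph{individually}, so that the sum in the lemma vanishes term by term.

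To establish this, the two factors are computed separately. Applying the recursion~\eqref{e:def-Upsilon-added} with the explicit form of $F_{\mfh_{\<green>}}$ (in particular, the Leibniz expansion of $\partial_{\<green>}[B_j,h_j]$) gives that $\bar{\bUpsilon}^F_{\mfh_{\<green>}}[\tau_i]$ has the form of a Lie bracket $[X_i,Y_i]$, where $X_i$ and $Y_i$ are identified with the identity in $L(\mfg,\mfg)$ via canonical isomorphisms $\CT[A_i]\simeq\CT[B_i]\simeq\mfg^{\ast}$ of the two root subtrees. Next, a quick check of the subtree structure of each $\tau_i$ shows that all of its proper subtrees are planted, hence excluded from $\mfT_-(R)$, so the only non-trivial subforest of $\tau_i$ lying in $\SFm$ is $\tau_i$ itself. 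The recursion~\eqref{eq:recursive_antipode_vec} then collapses to $\tilde\mcA_-[\tau_i]=-\tau_i$, yielding $\ell^{\delta,\eps}_{\BPHZ}[\tau_i]=-\bar\PPi_{\can}[\tau_i]$. By the $\Ad$-invariance~\eqref{e:propC} of the noise covariance, $\bar\PPi_{\can}[\tau_i]\in \CT[A_i]^{\ast}\otimes \CT[B_i]^{\ast}\simeq\mfg\otimes\mfg$ must be a scalar multiple of the Casimir $\Cas=\sum_m e_m\otimes e_m$. Expanding $X_i=\sum_l e_l^{\ast}\otimes e_l$ and $Y_i=\sum_l e_l^{\ast}\otimes e_l$ in coordinates, the contraction then becomes
\begin{equ}
(\Cas\otimes\id_\mfg)[X_i,Y_i]=\sum_{k,l,m}\delta_{mk}\delta_{ml}\,[e_k,e_l]=\sum_m[e_m,e_m]=0\;,
\end{equ}
which is the desired vanishing. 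The argument for the $\bar F$ system follows verbatim.

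The main obstacle is the careful bookkeeping: correctly identifying which term of $F_{\mfh_{\<green>}}$ contributes to each $\tau_i$ (the two trees precisely capture the two Leibniz pieces of $\partial_{\<green>}[B_j,h_j]$), and verifying the planting of every proper subtree of $\tau_i$ in order to justify $\tilde\mcA_-[\tau_i]=-\tau_i$. Once these are in hand, the final cancellation is an elegant consequence of the symmetry of $\Cas$ against the antisymmetry of $[\cdot,\cdot]$.
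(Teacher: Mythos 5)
Your structural argument for the $\mfu$-component is fine and the reduction of the $\mfh$-component to the two trees $\tau_1=\<I[I'Xi]I'Xi_typed-h>$ and $\tau_2=\<IXiI'[I'Xi]_typed-h>$ (with the forced index identification $\<dblue>=\<green>$) is also correct. However, the claimed term-by-term vanishing is wrong, and the error is the central claim that $\bar{\bUpsilon}^F_{\mfh_{\<green>}}[\tau_i]$ is a plain bracket $[X_i,Y_i]$ with both factors identified with $\id_{\mfg}$.

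The two root subtrees of $\tau_i$ are not both ``bare noise slots.'' One of them is the planted depth-two tree $\mcb{I}^{\mfh}_{\bullet}(\mcb{I}_{\<green>,\<green>}(\bar\Xi))$, and computing its $\Upsilon$ via \eqref{e:def-Upsilon-added} brings in the jet variable $\pr{h_{\<green>}}$ through the inner application of $F_{\mfh}$. A direct application of the recursion gives
\begin{equation*}
\bar{\Upsilon}^{F}_{\mfh_{\<green>}}[\tau_1]
= \big[\Psi_{\<green>,\<green>}\,,\;\mcb{I}^{\mfh}_{\<green>}\big([\Psi_{\<green>,\<green>}, \pr{h_{\<green>}}]\big)\big]\;,\qquad
\bar{\Upsilon}^{F}_{\mfh_{\<green>}}[\tau_2]
= \big[\Psi_{\<green>}\,,\;\mcb{I}^{\mfh}_{\<green>,\<green>}\big([\Psi_{\<green>,\<green>}, \pr{h_{\<green>}}]\big)\big]\;,
\end{equation*}
so the object in $\CT[\tau_i]\otimes\mfg$ is a nested double bracket with an external factor $\pr{h}$, not $\sum_{k,l}e^*_k\otimes e^*_l\otimes[e_k,e_l]$. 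Contracting the two noise slots with $\Cas$ therefore produces $\ad_{\Cas}\pr{h_{\<green>}}=\lambda\pr{h_{\<green>}}\neq 0$, not $\sum_m[e_m,e_m]=0$. Each individual term is genuinely nonzero.

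The actual mechanism in the paper is a pairwise cancellation between $\tau_1$ and $\tau_2$: an integration by parts (moving the single spatial derivative from the $\mfa$-root edge to the $\mfh$-root edge) gives $\ell^{\delta,\eps}_{\BPHZ}[\tau_1]=-\ell^{\delta,\eps}_{\BPHZ}[\tau_2]$, while under the canonical isomorphism $\CT[\tau_1]\simeq\CT[\tau_2]$ one has $\Upsilon_{\mfh,\<dblue>}[\tau_1]=\Upsilon_{\mfh,\<dblue>}[\tau_2]$. It is the opposite signs of the two renormalisation characters, combined with the equality of the $\Upsilon$'s, that kill the sum. The Casimir--vs.--antisymmetry argument you propose does not apply here because the jet dependence on $\pr{h}$ breaks the symmetry you are trying to exploit.
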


\begin{proof}
The fact that 
$\bar{\bUpsilon}^{F}_{\mfu}[\tau] = \bar{\bUpsilon}_{\mfu}^{\bar{F}}[\tau] = 0$ for $\tau$ appearing in the tables
follows from direct computation.
One has $\bar{\bUpsilon}^{F}_{\mfh,\<dblue>}[\tau] = 0$ (resp. $\bar{\bUpsilon}_{\mfh,\<dblue>}^{\bar{F}}[\tau] = 0$) for any $\tau$ in Table~\ref{table:TableB} (resp. Table~\ref{table:TablebarA}) of the first six shapes. 
For the other trees one has, by integration by parts, 
\begin{equs}\label{eq:renorm_h}
{}\ell_{\BPHZ}^{\delta,\eps} [\<I[I'Xi]I'Xi_typed-h>]
= - \ell_{\BPHZ}^{\delta,\eps}[\<IXiI'[I'Xi]_typed-h>]\;,
\qquad 
\ell_{\BPHZ}^{\delta,\eps} [\<I[I'Xi]I'Xi_typed-h*>]
= - \ell_{\BPHZ}^{\delta,\eps}[\<IXiI'[I'Xi]_typed-h*>]\;.
\end{equs}
Additionally, one has
\begin{equs}\label{eq:coherence_h}
{}\Upsilon_{\mfh, \<dblue>}[\<I[I'Xi]I'Xi_typed-h>]
=\Upsilon_{\mfh, \<dblue>}[\<IXiI'[I'Xi]_typed-h>]\;,
\qquad
\Upsilon_{\mfh, \<dblue>}[\<I[I'Xi]I'Xi_typed-h*>]
=\Upsilon_{\mfh, \<dblue>}[\<IXiI'[I'Xi]_typed-h*>]\;.
\end{equs}
Above we are exploiting the canonical isomorphisms between the spaces where the objects above live \dash namely for any 
two trees $\tau$, $\bar \tau$ of any of the four forms appearing above, one has a canonical isomorphism 
$\CT[\tau] \simeq \CT[\bar \tau]$ by using Remark~\ref{rem:canonicalIsomorphism} and the canonical isomorphisms
between these trees obtained by only keeping their tree structure.
Combining \eqref{eq:renorm_h} with \eqref{eq:coherence_h} then yields the last claim.
\end{proof}
We define a subset $\bar{\mcb{A}} \subset \mcb{A}$ that encodes additional constraints on the jet 
of our solutions which comes from~\eqref{eq:h_and_U_def}. 
These constraints will help us simplify the counterterms for the $\mfa_{i}$ and $\mfm_{i}$ equations.
%
\begin{definition}\label{def:special_jets}
We define $\bar{\mcb{A}}$ to be the collection of all $\pr{\mathbf{A}} = (\pr{\mathbf{A}_{o}})_{ o \in \CE} \in \mcb{A}$ such that 
\begin{itemize}
\item $\pr{\mathbf{A}_{\mfu}}$ is unitary.
\item For all $a,b \in \mfg$, $\pr{\mathbf{A}_{\mfu}}[a,b] = [\pr{\mathbf{A}_{\mfu}}a,\pr{\mathbf{A}_{\mfu}}b]$.
\end{itemize} 
\end{definition}
We now turn to explicitly identifying the renormalisation counterterms for the $\mfa_{i}$ and $\mfm_{i}$ equations in the $\bar{F}$ system. 

We start by collecting formulae for the the renormalisation constants.
Write $K^{\delta,\eps} = K^{\eps} \ast \chi^{\delta} = K \ast \chi^{\eps} \ast \chi^{\delta}$
and recall the constants $\hat{C}^{\eps}$ and $\bar{C}^{\eps}$ defined in \eqref{e:defConstants}.
We then define the variants
\begin{equ}[e:defConstants_2]
\bar{C}^{\delta,\eps} \eqdef \int \mrd z\ K^{\delta,\eps}(z)^{2}\;,\qquad
\hat{C}^{\delta,\eps}
\eqdef \int \mrd z\ \partial_{j}K^{\delta,\eps}(z)(\partial_{j}K*K^{\delta,\eps})(z)\;,
\end{equ}
where one can choose any $j  \in \{1,2\}$ as in \eqref{e:defConstants}. 
We then have the following lemma.
\begin{lemma}\label{lem:tree_calcs2}
For $\hat{C}^{\eps}$ and $\bar{C}^{\eps}$ as in \eqref{e:defConstants}, one has  
\begin{equs}[e:valueRenorm2b]
{}&\ell_{\BPHZ}^{\delta,\eps}[\<I[I'Xi]I'Xi_typed>] = 
-\ell_{\BPHZ}^{\delta,\eps}[\<IXiI'[I'Xi]_typed>] = -\hat{C}^{\eps}\Cas\;, \quad \ell_{\BPHZ}^{\delta,\eps}[\<IXi^2green>] = -\bar{C}^\eps \Cas\;.
\end{equs} 
For $\hat{C}^{\delta,\eps}$ and $\bar{C}^{\delta,\eps}$ defined as in \eqref{e:defConstants_2} one has
\begin{equs}[e:valueRenorm2a]
{}&\ell_{\BPHZ}^{\delta,\eps}[\<I[I'Xi]I'Xi_typed*>] = 
-\ell_{\BPHZ}^{\delta,\eps}[\<IXiI'[I'Xi]_typed*>] =- \hat{C}^{\delta,\eps}\Cas\;, 
\quad 
\ell_{\BPHZ}^{\delta,\eps}[\<IXi^2green*>] =- \bar{C}^{\delta,\eps} \Cas\;.
\end{equs}
Finally, for any $\eps>0$, one has $\lim_{\delta \downarrow 0} \hat{C}^{\delta,\eps} = \hat{C}^{\eps}$ and $\lim_{\delta \downarrow 0} \bar{C}^{\delta,\eps} = \bar{C}^{\eps}$.
\end{lemma}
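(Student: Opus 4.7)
\textbf{Proof plan for Lemma~\ref{lem:tree_calcs2}.} The strategy closely parallels that of Lemma~\ref{lem:tree_calcs}, and the main task is careful bookkeeping of which mollifier sits on which noise \slash kernel factor. The starting observation is that all eight trees under consideration have exactly two noise leaves and involve only one level of abstract integration over a subtree, so item~\ref{item:antipode_minus} of Lemma~\ref{lem:symbols_vanish} applies verbatim (its proof only uses the structure of the twisted antipode, not which specific noise sits at the leaves). Hence for each such $\tau$ one has $\ell_{\BPHZ}^{\delta,\eps}[\tau] = -\bar{\PPi}_{\can}[\tau](0)$, and the problem reduces to evaluating these expectations.

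For the trees in \eqref{e:valueRenorm2b}, recall that our convention in Section~\ref{subsec:regstruct_for_system} has $\bar\Xi_i$ representing the noise $\xi_i^{\eps} = \chi^\eps * \xi_i$ and $\mcb{I}_i$ realising convolution with $K$. Consequently, every kernel factor contracted against a $\bar\Xi$ leaf in $\bar{\PPi}_{\can}[\tau](0)$ may be ``dressed'' with a $\chi^\eps$ to produce a factor of $K^\eps = K * \chi^\eps$. Writing out $\bar{\PPi}_{\can}[\<IXi^2green>](0)$ and using $\E[\xi_i(u)\otimes\xi_i(v)] = \delta(u-v)\,\Cas$ yields $\int K^\eps(z)^2\,dz\cdot\Cas = \bar C^\eps\,\Cas$. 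For $\<IXiI'[I'Xi]_typed>$, the same calculation combined with $\partial_\<red> K * K^\eps = K * \partial_\<red> K^\eps$ (since these are smooth convolutions) produces exactly $\hat C^\eps\,\Cas$, giving both identities in \eqref{e:valueRenorm2b} after including the minus sign. The equality between the constants for $\<IXiI'[I'Xi]_typed>$ and $\<I[I'Xi]I'Xi_typed>$ (up to sign) is a standard integration-by-parts manipulation exploiting $\partial_j(K*K^\eps) = \partial_j K * K^\eps = K * \partial_j K^\eps$. The sign discrepancy between these two trees can be traced either to the antipode identity used for each or to the direct computation of the integrals, which differ by one integration by parts.

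The trees in \eqref{e:valueRenorm2a} are handled identically, except that now the noise leaves are labelled $\Xi_i$ (representing $\xi_i^\delta = \chi^\delta * \xi_i$) and the kernel is $\bar{\mcb{I}}$ (realising $K^\eps$). Hence every contracted factor becomes $K^\eps * \chi^\delta = K^{\delta,\eps}$, and the integrals from the previous paragraph repeat verbatim with $K^\eps$ replaced by $K^{\delta,\eps}$, yielding $\bar C^{\delta,\eps}$ and $\hat C^{\delta,\eps}$ as claimed.

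Finally, the convergence $\hat C^{\delta,\eps}\to\hat C^\eps$ and $\bar C^{\delta,\eps}\to\bar C^\eps$ as $\delta\downarrow 0$ for fixed $\eps>0$ is elementary: for $\eps>0$ fixed, $K^\eps$ is smooth and compactly supported, so $K^{\delta,\eps} = K^\eps * \chi^\delta \to K^\eps$ uniformly together with all derivatives as $\delta\downarrow 0$, and the integrals defining these constants are absolutely convergent and over a compact region. I do not anticipate any genuine obstacle here; the only mildly delicate point is to keep the graphical notation and the placement of mollifiers aligned with the conventions fixed just before and after \eqref{e:K-Keps-assign}, so that the bookkeeping in the two tables produces exactly the indicated constants with the correct signs.
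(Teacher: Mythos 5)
Your argument is correct and takes essentially the same route as the paper's (which simply defers to Lemma~\ref{lem:tree_calcs} and calls the $\delta\downarrow0$ convergence obvious); your accounting of which mollifier dresses which kernel/noise is accurate and yields the right constants. One small clarification: the sign difference between $\<I[I'Xi]I'Xi_typed>$ and $\<IXiI'[I'Xi]_typed>$ comes entirely from a single integration by parts in the Gaussian integral, not from the twisted-antipode identity, since $\tilde\mcA_- = -\id$ holds on both subspaces — so the ``either/or'' in your final explanatory sentence should just be the integration-by-parts alternative.
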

\begin{proof}
The statements \eqref{e:valueRenorm2b} and \eqref{e:valueRenorm2a} follow in the same way as Lemma~\ref{lem:tree_calcs}. 
The final statement about convergence as $\delta \downarrow 0$ of the renormalisation constants is obvious. 
\end{proof}
We introduce additional renormalisation constants 
\begin{equ}[e:defConstantsTildeC]
\tilde{C}^{\eps}
\eqdef
\int \mrd z\ \chi^{\eps}(z) (K \ast K^{\eps})(z)\;,
\quad
\tilde{C}^{\delta,\eps}
\eqdef
\int \mrd z\ \chi^{\delta}(z) (K \ast K^{\delta,\eps})(z)\;.
\end{equ}
The following lemma is straightforward to prove. 
\begin{lemma}\label{lemma:renorm_constants_gsym}
One has 
\begin{equs}
\ell_{\BPHZ}^{\delta,\eps}
[\<I[I[Xi]]Xi_typed>]
&=-\tilde{C}^{\eps}\Cas\;, 
&\quad
\ell_{\BPHZ}^{\delta,\eps}
[\<I[I[Xi]]Xi_typed*>]
&=-\tilde{C}^{\delta,\eps} \Cas\;,
\end{equs}
and furthermore $\lim_{\delta \downarrow 0} \tilde{C}^{\delta,\eps} =  (K \ast K^{\eps})(0) \eqdef \tilde{C}^{0,\eps}$. Additionally, there are finite constants $C_{\gsym}$ and $\bar{C}_{\gsym}$ such that
\[
\lim_{\eps \downarrow 0}\tilde{C}^{\eps}
=C_{\gsym}
\quad\textnormal{and}\quad
\lim_{\eps \downarrow 0} \tilde{C}^{0,\eps} = \bar{C}_{\gsym}\;.
\]
Finally, we have that $\ell_{\BPHZ}^{\delta,\eps}
[\<IXXiI'Xi_typed*>]$, $\ell_{\BPHZ}^{\delta,\eps}
[\<IXiI'XXi_typed>]$, $\ell_{\BPHZ}^{\delta,\eps} 
[\<IXXiI'Xi_typed>]$, and $
\ell_{\BPHZ}^{\delta,\eps}
[\<IXiI'XXi_typed*>]$ are each given by a multiple of $\Cas$ where the prefactor only depends on $\delta,\eps$ and the form\footnote{That is, they do not depend on the specific colours\slash spatial indices appearing in the tree as long as they obey the constraints given in Tables~\ref{table:TableB} and~\ref{table:TablebarA}.} of the tree.  
\end{lemma}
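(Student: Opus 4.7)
The plan is to treat the three distinct assertions of the lemma separately, all of which rest on direct computation using Wick's theorem for Gaussian noise together with the fact that the negative twisted antipode simplifies drastically on these particular trees.

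For the two explicit identities for $\ell_{\BPHZ}^{\delta,\eps}$ on $\<I[I[Xi]]Xi_typed>$ and $\<I[I[Xi]]Xi_typed*>$, I would first analyse which sub-forests can arise in the twisted antipode recursion \eqref{eq:recursive_antipode_vec}. Since $\mcb{I}^{\mfu}(\mcb{I}_\<green>(\bar\Xi_\<green>))$ has positive degree and any subtree obtained by extracting only one of the two noises yields a contraction that vanishes by the parity property of $K$ (as in item~\ref{item:BPHZ_zero} of Lemma~\ref{lem:symbols_vanish}), one finds that $\tilde\CA_-\tau = -\tau$ on these subspaces exactly as in item~\ref{item:antipode_minus} of Lemma~\ref{lem:symbols_vanish}. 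Thus $\ell^{\delta,\eps}_\BPHZ[\tau] = -\bar\PPi_\can[\tau](0)$, and a direct Wick contraction of the two noises (of type $\bar\Xi_\<green>$ in the first case, $\Xi_\<green>$ in the second) using \eqref{eq:explicit_covar_noise} produces the Casimir $\Cas$ times an integral of the form $\int \chi^\eps(z)(K \ast K^\eps)(z)\,dz$ (respectively $\int \chi^\delta(z)(K\ast K^{\delta,\eps})(z)\,dz$) after a change of variables transferring the convolution with $\chi^\eps$ (or $\chi^\delta$) to one kernel. This yields $\tilde C^\eps$ and $\tilde C^{\delta,\eps}$ respectively.

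For the convergence statements, the $\delta \downarrow 0$ limit of $\tilde C^{\delta,\eps}$ is immediate by continuity of $K\ast K^\eps$ at the origin (for fixed $\eps > 0$ this kernel is smooth). For $\lim_{\eps\downarrow 0}\tilde C^\eps$ and $\lim_{\eps\downarrow 0}\tilde C^{0,\eps}$, the essential observation is that $K\ast K$ is continuous at $0$: writing $K = G + (K-G)$ with $G$ the heat kernel and $K-G$ smooth, the semigroup identity $(G\ast G)(t,x) = t\,G(t,x)$ vanishes at the origin (using that $G$ is non-anticipative, so $G(-s,-y)G(s,y)\equiv 0$), and the mixed and smooth terms are manifestly continuous. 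The two limits then both equal $(K \ast K)(0)$, which we simply name $C_\gsym$ and $\bar C_\gsym$.

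The final assertion concerning the four trees involving $\mbX_\<red>$ is where a conceptual rather than computational argument is cleanest. For each such $\tau$, the two noise leaves carry the same colour $\<green>$ (otherwise Wick contraction forces $\bar\PPi_\can[\tau]$ to vanish, see item~\ref{item:noises_same} of Lemma~\ref{lem:symbols_vanish}), and the only sub-forest that survives the twisted antipode is the empty one (by a parity/degree check analogous to that used above), so $\ell^{\delta,\eps}_\BPHZ[\tau] = -\bar\PPi_\can[\tau](0)$. The resulting expectation lies in $\CT[\tau]^{\ast} \simeq \mfg \otimes \mfg$, and by the $\Ad$-invariance of the joint law of $(\xi_1,\xi_2)$ together with the scalar nature of $K$ and $\mbX_\<red>$, it must lie in the $\Ad$-invariant subspace. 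Since $\mfg$ is simple, this subspace is one-dimensional and spanned by $\Cas$ (cf.\ the discussion around Lemma~\ref{lem:centre}), so $\ell^{\delta,\eps}_\BPHZ[\tau] = c\,\Cas$ with $c \in \R$ depending only on the scalar kernel-integral associated to the shape of $\tau$ and on $\delta,\eps$. The main obstacle will be verifying carefully that the twisted antipode contributes no further terms on these polynomial trees; this reduces to checking, for each of the four shapes, that every proper non-trivial divergent sub-forest either has positive degree after the polynomial is taken into account or gives a contraction that vanishes by the spatial symmetry of $K$ and $\chi^\eps$.
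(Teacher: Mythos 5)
The structure of your argument is sound and matches what the paper intends (the paper simply declares the lemma "straightforward"). Your handling of the twisted antipode, the Wick contraction producing $\Cas$, the $\delta \downarrow 0$ limit, and the $\Ad$-invariance argument for the four polynomial-decorated trees are all correct. One small inaccuracy: for $\<I[I[Xi]]Xi_typed>$ and its starred version, you do not need the parity argument to rule out extracting a single noise — such a subtree is \emph{planted}, and planted trees are excluded from $\mfT_-(R)$ by definition (see \eqref{e:def-mfT-}), so the only candidate is the whole tree, giving $\tilde\mcA_- = -\id$ directly.

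The genuine gap is in your treatment of the $\eps \downarrow 0$ limits. You claim that $K \ast K$ is continuous at the origin and that both $C_\gsym$ and $\bar C_\gsym$ equal $(K \ast K)(0)$. This is false, and the failure matters. By the semigroup identity $(G\ast G)(t,x) = t\,G(t,x)\,\bone_{t > 0}$, and for $d = 2$ this equals $(4\pi)^{-1}\exp(-|x|^2/4t)\,\bone_{t>0}$, which has \emph{no limit} as $(t,x)\to(0,0)$ — it depends on the ratio $|x|^2/t$ and takes every value in $(0,(4\pi)^{-1}]$ along different parabolic approach paths. Hence $K\ast K$ is \emph{not} continuous at the origin, even though it is bounded. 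Consequently $\tilde C^\eps = \int \chi^\eps(z)(K \ast K^\eps)(z)\,\mrd z$ converges (by the parabolic scale-invariance of $G\ast G$, which is the homogeneous degree-zero core of $K\ast K$), but its limit $C_\gsym$ genuinely depends on the mollifier $\chi$, whereas $\tilde C^{0,\eps} = (K\ast K^\eps)(0)$ converges to $\bar C_\gsym$ which, for non-anticipative $\moll$, is identically zero (see Remark~\ref{rem:nonAnt}: $K^-\cdot K^\eps \equiv 0$ off the measure-zero set $\{t=0\}$). Your conclusion $C_\gsym = \bar C_\gsym$ would force $\bar C = \lambda(C_\gsym - \bar C_\gsym) = 0$ in the proof of Theorem~\ref{thm:gauge_covar}, contradicting the non-trivial renormalisation established there. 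The correct argument for existence of $C_\gsym$ must isolate the exact-scaling piece $G\ast G$ and exploit the scale-invariance of $\chi^\eps$ rather than invoke continuity of $K\ast K$.
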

The rest of our computation of the renormalised equation is summarised in the following lemmas. 
In what follows we refer to the constant $\lambda$ fixed by Remark~\ref{rem:action_of_casimir}.
We also introduce the shorthand\footnote{Note that our use of the notations $\Psi_{\<green>} $ and $\Psi_{\<green>,\<red>}$ differs slightly from
Section~\ref{sec:solution_theory}.}
\[
\Psi_{\<green>} 
= \mcb{I}_{\<green>}\bar{\boldsymbol{\Xi}}_{\<green>} \;,
\qquad
\Psi_{\<green>,\<red>} = \mcb{I}_{\<green>,\<red>}\bar{\boldsymbol{\Xi}}_{\<green>}\;,
\qquad
\bar{\Psi}_{\<green>} = \bar{\mcb{I}}_{\<green>}\boldsymbol{\Xi}_{\<green>}\;,
\qquad
\bar{\Psi}_{\<green>,\<red>} = \bar{\mcb{I}}_{\<green>,\<red>}\boldsymbol{\Xi}_{\<green>}\;.
\]
We now walk through the computation of renormalisation counterterms for the system of equations given by $\bar{F}$.
We will directly give the expressions for $\bar{\bUpsilon}^{\bar F}$ such as \eqref{e:UpsbFxiIIxi} and \eqref{e:bUps-shortlist-2a} below, which follow by straightforward calculations from the definitions.

Recall the convention \eqref{e:convention-bfA}
for writing components of $\pr{\mathbf{A}}$ as $\pr{B}$, $\pr{\bar A}$, $\pr{U}$, etc.
The following lemma gives the renormalisation for the $\mfm_{i}$ equation in this system.
\begin{lemma}\label{lemma:renorm-for-m}
$\bar{\bUpsilon}^{\bar{F}}_{\mfm,\<dblue>}[\tau] = 0$ for all $\tau$ of the form in Table~\ref{table:TablebarA} except for 
$\tau=\<I[I[Xi]]Xi_typed*>$ where
\begin{equ}[e:UpsbFxiIIxi]
\bar{\bUpsilon}^{\bar F}_{\mfm,\<dblue>}
[\<I[I[Xi]]Xi_typed*>]
(\pr{\mathbf{A}}) = 
\delta_{\<dblue>,\<green>}
\big[[\pr{\bar{U}}\mcb{I}^{\mfu}\bar{\Psi}_{\<green>} , \pr{\bar{h}}_{\<green>}],\pr{\bar{U}}\boldsymbol{\Xi}_{\<green>}\big]  \;.
\end{equ}
In particular, for $\pr{\mathbf{A}} \in \bar{\mcb{A}}$,  
\begin{equs}\label{eq:renorm_moll_eq}
\sum_{\tau \in \mfT_{-}(R)}
(\ell_{\BPHZ}^{\delta,\eps}[\tau] \otimes \id)
\bar{\bUpsilon}_{\mfm,\<dblue>}^{\bar{F}}[\tau](\pr{\mathbf{A}})
=   \lambda \tilde{C}^{\delta,\eps}\pr{\bar{h}}_{\<dblue>}\;.
\end{equs}
\end{lemma}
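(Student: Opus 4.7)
The argument breaks into three steps. First, I would pin down the single tree in Table~\ref{table:TablebarA} that contributes. Since $\bar F_{\mfm_i}(\pr{\mathbf{A}}) = \pr{\bar U}\xi_i$ is bilinear in $\pr{\mathbf{A}_\mfu}$ and $\pr{\mathbf{A}_{(\mfl_i,0)}}$ and independent of all other components, the inductive formula \eqref{e:def-Upsilon-added} forces any nonzero $\bar\Upsilon^{\bar F}_{\mfm,\<dblue>}[\tau]$ to correspond to a tree $\tau$ whose root has exactly two incident edges of types $(\mfu,0)$ and $(\mfl_\<dblue>,0)$ and trivial polynomial factor $\mbX^0$ (any $\partial^k$ with $k \neq 0$ annihilates the constant $D_{(\mfu,0)}D_{(\mfl_\<dblue>,0)}\bar F_{\mfm_\<dblue>}$). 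Inspecting Table~\ref{table:TablebarA} row by row, only $\<I[I[Xi]]Xi_typed*>$ has this structure, which also forces $\<green>=\<dblue>$.

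Second, I would verify \eqref{e:UpsbFxiIIxi} by unwinding \eqref{e:def-Upsilon-added} one layer at a time. At the bottom, $\bar\Upsilon^{\bar F}_{\mfm_\<green>}[\Xi_\<green>] = \pr{\bar U}\boldsymbol{\Xi}_\<green>$ directly from the definition of $\bar F_{\mfm_\<green>}$, so $\Upsilon^{\bar F}_{(\mfm_\<green>,0)}[\Xi_\<green>] = \pr{\bar U}\bar\Psi_\<green>$. Among the terms of $\bar F_\mfu$, only $[[\pr{\bar A_j},\pr{\bar h_j}],\cdot]\circ \pr{\bar U}$ has a nonzero $D_{(\mfm_\<green>,0)}$-derivative, yielding $\bar\Upsilon^{\bar F}_\mfu[\bar{\mcb{I}}_\<green>(\Xi_\<green>)] = [[\pr{\bar U}\bar\Psi_\<green>,\pr{\bar h_\<green>}],\cdot]\circ \pr{\bar U}$. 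Applying $\mcb{I}^\mfu$ and combining with $\Upsilon^{\bar F}_{(\mfl_\<dblue>,0)}[\bone] = \boldsymbol{\Xi}_\<dblue>$ through the bilinear map $(V,\xi)\mapsto V\xi$ coming from $D_{(\mfu,0)}D_{(\mfl_\<dblue>,0)}\bar F_{\mfm_\<dblue>}$ produces \eqref{e:UpsbFxiIIxi}; the symmetry factor $S(\<I[I[Xi]]Xi_typed*>)=1$ since its two root children are distinct, so $\bar\bUpsilon = \bar\Upsilon$. The main technical work of the proof lies here, in correctly tracking how the $\mfg$- and $L(\mfg,\mfg)$-valued components of the recursion interact via the tensor structure.

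Finally, for \eqref{eq:renorm_moll_eq} I apply $\ell^{\delta,\eps}_{\BPHZ}[\<I[I[Xi]]Xi_typed*>]\otimes \id = -\tilde C^{\delta,\eps}\Cas\otimes \id$ (from Lemma~\ref{lemma:renorm_constants_gsym}) to \eqref{e:UpsbFxiIIxi} and sum over $\<green>$. Fixing an orthonormal basis $\{e_a\}$ of $\mfg$ so that $\Cas = \sum_a e_a\otimes e_a$, the contraction against the two $\mfg$-valued noise slots (carried by $\bar\Psi_\<dblue>$ and $\boldsymbol{\Xi}_\<dblue>$) yields $\sum_a[[\pr{\bar U}e_a,\pr{\bar h_\<dblue>}],\pr{\bar U}e_a]$. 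The assumption $\pr{\mathbf{A}} \in \bar{\mcb{A}}$ gives that $\pr{\bar U}$ is unitary, so $\{\pr{\bar U}e_a\}$ is also orthonormal, equivalently $(\pr{\bar U}\otimes \pr{\bar U})\Cas = \Cas$. Hence the sum equals $\sum_a[[e_a,\pr{\bar h_\<dblue>}],e_a] = -\sum_a[e_a,[e_a,\pr{\bar h_\<dblue>}]] = -\ad_\Cas\pr{\bar h_\<dblue>} = -\lambda\pr{\bar h_\<dblue>}$ by Remark~\ref{rem:action_of_casimir}, and multiplying by $-\tilde C^{\delta,\eps}$ gives \eqref{eq:renorm_moll_eq}.
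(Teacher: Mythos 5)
Your proof is correct and follows essentially the same structure as the paper's: identify the unique contributing tree, compute $\bar\bUpsilon^{\bar F}_{\mfm}$ inductively (which the paper states is a ``straightforward calculation'' and leaves implicit — your Step 2 correctly supplies these details, including the vanishing symmetry factor $S(\tau)=1$), then apply $\ell^{\delta,\eps}_{\BPHZ} = -\tilde C^{\delta,\eps}\Cas$ and simplify via $\ad_\Cas = \lambda\id_\mfg$.

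The one place where you diverge is the final algebraic simplification. You exploit the \emph{first} condition of Definition~\ref{def:special_jets} (unitarity of $\pr{\bar U}$): choosing an orthonormal basis, you observe $(\pr{\bar U}\otimes\pr{\bar U})\Cas = \Cas$ and thereby replace $\{\pr{\bar U}e_a\}$ by $\{e_a\}$ before contracting. The paper instead exploits only the \emph{second} condition (that $\pr{\bar U}$ is a Lie algebra homomorphism) together with invertibility, pulling $\pr{\bar U}$ out of the nested brackets to write the expression as $-\pr{\bar U}[\boldsymbol{\Xi}_\<dblue>,[\mcb{I}^\mfu\bar\Psi_\<dblue>,\pr{\bar U}^{-1}\pr{\bar h}_\<dblue>]]$, which after applying $\Cas$ becomes $\pr{\bar U}\ad_\Cas\pr{\bar U}^{-1}\pr{\bar h}_\<dblue> = \lambda\pr{\bar h}_\<dblue>$. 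Both give the same answer, but the paper's route is compatible with the remark that follows (Lemmas 5.7–5.9 remain valid if unitarity in Definition~\ref{def:special_jets} is weakened to mere invertibility), whereas your argument genuinely uses orthogonality. This is worth being aware of, since that weakened hypothesis is precisely what makes the computation robust under more general $\pr{\bar U}$.
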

\begin{proof}
Using the assumption that $\pr{\mathbf{A}} \in \bar{\mcb{A}}$ we have
\[
\big[[\pr{\bar{U}}\mcb{I}^{\mfu}(\bar{\Psi}_{\<dblue>}), \pr{\bar{h}}_{\<dblue>}],\pr{\bar{U} }\boldsymbol{\Xi}_{\<dblue>}\big]
=
-\pr{\bar{U}}[\boldsymbol{\Xi}_{\<dblue>},
\big[ \mcb{I}^{\mfu}(\bar{\Psi}_{\<dblue>}),\pr{\bar{U}}^{-1} \pr{\bar{h}}_{\<dblue>}]\big]\;.
\]
Inserting this into the left-hand side of \eqref{eq:renorm_moll_eq} and combining it with Lemma~\ref{lemma:renorm_constants_gsym}, we see that it is equal to 
\begin{equs}
\tilde{C}^{\delta,\eps}( \Cas \otimes \id)\,
\pr{\bar{U}}[\boldsymbol{\Xi}_{\<dblue>},
[\mcb{I}^{\mfu}(\bar{\Psi}_{\<dblue>}),\pr{\bar{U}}^{-1} \pr{\bar{h}}_{\<dblue>}]] 
=
 \tilde{C}^{\delta,\eps} \pr{\bar U} \ad_{\Cas} \pr{\bar{U}}^{-1}\pr{\bar{h}}_{\<dblue>}
=
\lambda \tilde{C}^{\delta,\eps} \pr{\bar{h}}_{\<dblue>}
\end{equs}
since $\ad_{\Cas} = \lambda \id_{\mfg}$.  
\end{proof}
For the $\mfa_{i}$ components we have the following lemma. 
\begin{lemma}\label{lemma:renorm_of_bar_a}
$\bar{\bUpsilon}^{\bar{F}}_{\mfa,\<dblue>}[\<I[I[Xi]]Xi_typed*>] = 0$, 
and
\begin{equs}\label{e:bUps-shortlist-2}
\bar\bUpsilon^{\bar F}_{\mfa,\<dblue>} [\<IXi^2green*>](\pr{\mathbf A})
&= \bone_{\<dblue>\neq \<green>} 
[ \pr{\bar{U}}\bar{\Psi}_{\<green>},[\pr{\bar{U}}\bar{\Psi}_{\<green>}, \pr{\bar{A}}_{\<dblue>}]]\;, \\[.2em]
\bar\bUpsilon^{\bar F}_{\mfa,\<dblue>}[\<I[I'Xi]I'Xi_typed*>](\pr{\mathbf A}) &=
 (2\delta_{\<green>,\<dblue>}\delta_{\<orange>,\<red>}  -\delta_{\<dblue>,\<red>}\delta_{\<orange>,\<green>})\,[[2\delta_{\<green>,\<orange>}\pr{\bar{A}}_{\<red>} - \delta_{\<red>,\<orange>}\pr{\bar{A}}_{\<green>} \,,\, 
\pr{\bar{U}}\mcb{I}_{\<orange>}(\bar{\Psi}_{\<green>,\<red>})]\,,\,
\pr{\bar{U}}\bar{\Psi}_{\<green>,\<red>}]\;, \\[.2em]
\bar\bUpsilon^{\bar F}_{\mfa,\<dblue>}[\<IXiI'[I'Xi]_typed*>](\pr{\mathbf A})
&=(2\delta_{\<orange>,\<dblue>}\delta_{\<green>,\<red>}  -\delta_{\<dblue>,\<red>}\delta_{\<green>,\<orange>})
[\pr{\bar{U}}\bar{\Psi}_{\<green>},[2\delta_{\<green>,\<orange>}\pr{\bar{A}}_{\<red>} - \delta_{\<red>,\<orange>}\pr{\bar{A}}_{\<green>}, \pr{\bar{U}}\mcb{I}_{\<orange>,\<red>} (\bar{\Psi}_{\<green>,\<red>})]] \;,\\
{}\bar\bUpsilon^{\bar F}_{\mfa,\<dblue>}[\<IXiI'XXi_typed*>](\pr{\mathbf A})  
& = \delta_{\<dblue>, \<red>}(2 \delta_{\<green>, \<red> } -1) [\pr{\bar{U}}\bar{\Psi}_{\<green>}, \pr{\partial}_{\<red>}\pr{\bar{U}}\bar{\mcb{I}}_{\<green>,\<red>}(\mbX_{\<red>}\boldsymbol{\Xi}_{\<green>})]\;,\label{e:bUps-shortlist-2a}\\
\bar\bUpsilon^{\bar F}_{\mfa,\<dblue>}[\<IXXiI'Xi_typed*>](\pr{\mathbf A}) 
& = \delta_{\<dblue>, \<red>} (2 \delta_{\<green>, \<red> } -1)[\pr{\partial}_{\<red>}\pr{\bar{U}}\bar{\mcb{I}}_{\<green>}(\mbX_{\<red>}\boldsymbol{\Xi}_{\<green>}),\pr{\bar{U}}\bar{\Psi}_{\<green>,\<red>}]\;.\label{e:bUps-shortlist-2b}
\end{equs}
In particular, for $\pr{\mathbf{A}} \in \bar{\mcb{A}}$,   
\begin{equ}\label{eq:renorm_for_a_equation}
\sum_{\tau \in \mfT_{-}(R)}
(\ell_{\BPHZ}^{\delta,\eps}[\tau] \otimes \id)
\bar{\bUpsilon}_{\mfa,\<dblue>}^{\bar{F}}[\tau](\pr{\mathbf{A}})
=  (  4\hat{C}^{\delta,\eps}-\bar C^{\delta,\eps} ) \lambda \pr{\bar{A}}_{\<dblue>}\;.
\end{equ} 
\end{lemma}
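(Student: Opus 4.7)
The strategy closely parallels the proof of Lemma~\ref{lem:bUps-shortlist} together with the index bookkeeping of Lemma~\ref{lemma:final_ups_calc}, but must be adapted to handle the extra $\pr{\bar U}$ factors coming from $\bar F_{\mfm_i}(\pr{\mathbf{A}}) = \pr{\bar U}\xi_i$ and the new polynomial trees $\<IXiI'XXi_typed*>$ and $\<IXXiI'Xi_typed*>$. I would begin with the vanishing statement $\bar{\bUpsilon}^{\bar F}_{\mfa,\<dblue>}[\<I[I[Xi]]Xi_typed*>] = 0$: since $\bar F_{\mfa_i}$ has no direct dependence on the noise components $\xi_{\mfl_j}$, any tree in which the root edge in $\bar{\bUpsilon}^{\bar F}_{\mfa,\<dblue>}$ is attached to a subtree that reduces via the induction \eqref{e:def-Upsilon-added} to applying $D_{(\mfl_j,0)}\bar F_{\mfa_i}$ at the base case \eqref{eq:base_case_for_upsilon} gives zero.

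Next, I would compute the five explicit identities in \eqref{e:bUps-shortlist-2}. The first three are computed by the same expansion used for \eqref{e:bUps-shortlist}, but with $\Psi_{\<green>}$ replaced by $\pr{\bar U}\bar\Psi_{\<green>}$ wherever the leaf $\boldsymbol{\Xi}_{\<green>}$ enters through a chain of integrations $\bar{\mcb{I}}_\mft$: this extra $\pr{\bar U}$ factor is what the $\mft = \mfm_j$ base case $\bar\Upsilon^{\bar F}_{\mfm_j}[\bone] = \bone \otimes \pr{\bar U}\bXi_j$ contributes. The two polynomial identities \eqref{e:bUps-shortlist-2a}--\eqref{e:bUps-shortlist-2b} arise exactly the same way, except that the symbol $\mbX_{\<red>}$ at the leaf is produced by the $\partial^k$ appearing in \eqref{e:def-Upsilon-added}, which, via the chain rule applied to $\pr{\bar U}\bXi$, yields the coefficient $\pr{\partial_{\<red>}\bar U}$; the prefactors $\delta_{\<dblue>,\<red>}(2\delta_{\<green>,\<red>}-1)$ come from the combinatorial coefficients in $[\pr{\bar A_j}, 2\pr{\d_j \bar A_i} - \pr{\d_i \bar A_j}]$ just as in the derivation of \eqref{e:calculationSimple}, together with the conformity constraints $\bar{\mcb{I}}_{\<green>,\<red>}(\mbX_{\<red>}\Xi_\<green>) \in \mfT(R)$.

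To prove \eqref{eq:renorm_for_a_equation} I would then insert the values from Lemma~\ref{lem:tree_calcs2} and Lemma~\ref{lemma:renorm_constants_gsym} into each of the Upsilon formulae, sum over the colour (index) constraints with $d = 2$, and at every step use the two defining properties of $\bar{\mcb{A}}$ in Definition~\ref{def:special_jets}: unitarity of $\pr{\bar U}$ (in particular $(\pr{\bar U}\otimes \pr{\bar U})\Cas = \Cas$ by the Ad-invariance \eqref{e:propC}) and the bracket-preservation identity $\pr{\bar U}[a,b] = [\pr{\bar U}a, \pr{\bar U}b]$. These let us pull every $\pr{\bar U}$ outside of the nested brackets and then apply $\ad_{\Cas} = \lambda\, \id_\mfg$ from Remark~\ref{rem:action_of_casimir}. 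The contribution from $\<IXi^2green*>$, $\<I[I'Xi]I'Xi_typed*>$ and $\<IXiI'[I'Xi]_typed*>$ is identical in form to that of Lemma~\ref{lemma:final_ups_calc}, giving $(4\hat C^{\delta,\eps} - \bar C^{\delta,\eps})\lambda\,\pr{\bar A_{\<dblue>}}$.

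The main obstacle will be verifying that the two polynomial trees $\<IXiI'XXi_typed*>$ and $\<IXXiI'Xi_typed*>$ make no net contribution to \eqref{eq:renorm_for_a_equation}. Here the relevant renormalisation constants from Lemma~\ref{lemma:renorm_constants_gsym} are proportional to $\Cas$, but after contracting $\Cas$ against $[\pr{\bar U}\bar\Psi_{\<green>}, \pr{\partial_{\<red>}\bar U} \cdots]$ and summing over $\<green>,\<red>$ with the constraints $\delta_{\<dblue>,\<red>}(2\delta_{\<green>,\<red>}-1)$, the sum splits into a $\<green>=\<red>$ and a $\<green>\neq \<red>$ piece; using integration by parts at the level of the integrals defining the two BPHZ constants (as in the proof of Lemma~\ref{lem:tree_calcs} via \eqref{e:KdeltaQ}), one sees that these two constants coincide, so the factor $(2\delta_{\<green>,\<red>}-1)$ summed over $\<green>$ yields zero. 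The remaining verification is essentially mechanical linear algebra with $\Cas$ and $\ad$, carried out using Lemma~\ref{lem:centre}.
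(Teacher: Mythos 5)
Your proof plan is essentially correct and follows the same approach as the paper: the vanishing of the $\<I[I[Xi]]Xi_typed*>$ contribution, the five explicit $\bar\bUpsilon^{\bar F}$ identities via the same induction as Lemma~\ref{lem:bUps-shortlist} (with the $\mfm$ base case contributing the extra $\pr{\bar U}$ factors), the use of Definition~\ref{def:special_jets} and $\ad_{\Cas}=\lambda\,\id_\mfg$ for the first three trees, and the cancellation $\sum_{\<green>}(2\delta_{\<green>,\<dblue>}-1)=0$ for the two polynomial trees.

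One step is slightly misattributed. For the polynomial trees you invoke \emph{integration by parts} (in the style of Lemma~\ref{lem:tree_calcs} via \eqref{e:KdeltaQ}) to argue that the BPHZ constants of the $\<green>=\<red>$ and $\<green>\neq\<red>$ instances coincide. That mechanism is not what makes the cancellation work, and it is also not needed: the last sentence of Lemma~\ref{lemma:renorm_constants_gsym} already asserts the constant is a multiple of $\Cas$ whose prefactor depends only on $\delta,\eps$ and the shape of the tree, \emph{not} on the specific spatial colours (this comes from the fact that the kernel assignment is the same for every spatial index and the kernel is symmetric under coordinate permutation, not from an integration-by-parts identity between two different tree shapes). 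Moreover, after contracting with $\Cas$ the bracket $(\Cas\otimes\id)[\pr{\bar U}\bar\Psi_{\<green>},\pr{\partial}_{\<dblue>}\pr{\bar U}\bar{\mcb{I}}_{\<green>,\<dblue>}(\mbX_{\<dblue>}\bXi_{\<green>})]$ is itself independent of $\<green>$, so the entire summand (constant times bracket) is $\<green>$-independent up to the prefactor $(2\delta_{\<green>,\<dblue>}-1)$, and the conclusion follows immediately from $\sum_{\<green>=1,2}(2\delta_{\<green>,\<dblue>}-1)=0$. Your conclusion is right; the causal chain should drop the integration-by-parts detour and instead cite the colour-independence already supplied by Lemma~\ref{lemma:renorm_constants_gsym} together with the $\<green>$-independence of the $\Cas$-contracted bracket.
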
 
\begin{proof}
The right-hand side of \eqref{eq:renorm_for_a_equation} comes from the contribution of trees  of
the form $\<IXi^2green*>$, $\<I[I'Xi]I'Xi_typed*>$, and $\<IXiI'[I'Xi]_typed*>$, which can be shown as in Lemma~\ref{lemma:final_ups_calc}, combined with the condition that $\pr{\mathbf{A}} \in \bar{\mcb{A}}$ (namely, the second relation of Definition~\ref{def:special_jets}) to cancel the factors of $\pr{\bar{U}}$.
The total contributions from the trees of the form $\<IXiI'XXi_typed*>$ and those of the form $\<IXXiI'Xi_typed*>$ each vanish. 
For the case of trees of form $\<IXiI'XXi_typed*>$ this total contribution is given by
\[
\check{C}
\sum_{\<green> =1,2}
(2 \delta_{\<green>, \<dblue> } -1) ( \Cas \otimes \id) [\pr{\bar{U}}\bar{\Psi}_{\<green>}, \pr{\partial}_{\<dblue>}\pr{\bar{U}}\bar{\mcb{I}}_{\<green>,\<dblue>}(\mbX_{\<dblue>}\boldsymbol{\Xi}_{\<green>})]\;,
\]
for some constant $\check{C}$. 
Other than the factor $(2 \delta_{\<green>, \<dblue> } -1)$, the summand above does not depend on $\<green>$ and since $\sum_{\<green> =1,2}
(2 \delta_{\<green>, \<dblue> } -1) = 0$ it follows that the sum above vanishes as claimed. 
A similar argument takes care of the case of $\<IXXiI'Xi_typed*>$. 
\end{proof}
The computation of the renormalisation of the $\mfa_{i}$ components in the $F$ system of equations mirrors the computations we have just done for the $\bar{F}$ system with the one difference that the term $\pr{U} \chi^{\eps} \ast \xi_{i}$, which is the analogue of the term $\pr{U}\xi_{i}$ that was part of $\bar{F}_{\mfm,i}$, is included in $F_{\mfa,i}$. In particular,
$\bar{\bUpsilon}^{F}_{\mfa}[\tau]$
for $\tau \in \{\<IXi^2green>,\<IXiI'[I'Xi]_typed>,\<IXiI'XXi_typed>,\<IXXiI'Xi_typed>,\<I[I[Xi]]Xi_typed>\}$
are given by formulas as in \eqref{e:bUps-shortlist-2} and \eqref{e:bUps-shortlist-2a}
with the following replacement
\[
\pr{\bar A} \mapsto \pr{B},\quad \pr{\bar U} \mapsto \pr{U} , \quad
\bar \Psi \mapsto \Psi ,\quad
\bar{\mcb{I}}(\mbX \boldsymbol{\Xi})
\mapsto 
\mcb{I}(\mbX \bar{\boldsymbol{\Xi}})
\]
and
$\bar{\bUpsilon}^{F}_{\mfa,\<dblue>}
[\<I[I[Xi]]Xi_typed>]
(\pr{\mathbf{A}}) = 
\delta_{\<dblue>,\<green>}
[[\pr{U}\mcb{I}^{\mfu}\Psi_{\<green>}, \pr{h}_{\<green>}],\pr{U}\bar{\boldsymbol{\Xi}}_{\<green>}] $. 

By using the renormalisation constants given in Lemma~\ref{lemma:renorm_constants_gsym} and performing again computations of the type found in Lemmas~\ref{lemma:renorm-for-m} and \ref{lemma:renorm_of_bar_a}, one obtains the following lemma. 
\begin{lemma}\label{lemma:renorm_of_b_equation}
For $\pr{\mathbf{A}} \in \bar{\mcb{A}}$,  
\begin{equs}
\sum_{\tau \in \mfT_{-}(R)}
(\ell_{\BPHZ}^{\delta,\eps}[\tau] \otimes \id)
\bar{\bUpsilon}_{\mfa,\<dblue>}^{F}[\tau](\pr{\mathbf{A}})
=
  \lambda C_{\sym}^{\eps} 
\pr{B}_{\<dblue>}
+ \lambda \tilde{C}^{\eps} \pr{h}_{\<dblue>}
\;,
\end{equs}
where $C_{\sym}^{\eps} $  
is as in \eqref{e:defConstants}.
\end{lemma}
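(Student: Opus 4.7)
The plan is to mirror the computations of Lemmas~\ref{lemma:renorm-for-m} and~\ref{lemma:renorm_of_bar_a}, partitioning the sum over $\mfT_-(R)$ into the three groups of trees from Table~\ref{table:TableB} that actually contribute (by Lemma~\ref{lemma:list_of_trees_gsym} and the observation preceding Lemma~\ref{lemma:renorm_of_b_equation} expressing $\bar\bUpsilon^F_{\mfa,\<dblue>}[\tau]$ in terms of the corresponding formulas from Lemma~\ref{lemma:renorm_of_bar_a} under the replacements $\pr{\bar A}\mapsto\pr B$, $\pr{\bar U}\mapsto\pr U$, $\bar\Psi\mapsto\Psi$, $\bar{\mcb I}(\mbX\bXi)\mapsto\mcb I(\mbX\bar\bXi)$, together with the formula for $\bar\bUpsilon^F_{\mfa,\<dblue>}[\<I[I[Xi]]Xi_typed>]$).

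First, for the three ``cherry-type'' trees $\<IXi^2green>$, $\<I[I'Xi]I'Xi_typed>$, $\<IXiI'[I'Xi]_typed>$, the renormalisation constants are $-\bar C^\eps\Cas$ and $\mp \hat C^\eps \Cas$ by \eqref{e:valueRenorm2b}, exactly as in the scalar computation of Lemma~\ref{lem:tree_calcs}. The expressions for $\bar\bUpsilon^F_{\mfa,\<dblue>}[\tau]$ on these trees differ from the $\bar F$-versions in \eqref{e:bUps-shortlist-2} only by the replacements listed above. Using that $\pr{\mathbf A}\in\bar{\mcb A}$, the second condition in Definition~\ref{def:special_jets} lets us move $\pr U$ outside of all nested brackets, so every factor of $\pr U$ can be cancelled against $\pr U^{-1}$ (using unitarity) after pairing with $\Cas$; the summation over free indices $\<green>,\<orange>,\<red>$ then collapses exactly as in the proof of Lemma~\ref{lemma:final_ups_calc} to give $(4\hat C^\eps-\bar C^\eps)\ad_{\Cas}\pr B_{\<dblue>}=\lambda C_\sym^\eps \pr B_{\<dblue>}$ by Remark~\ref{rem:action_of_casimir} and the definition of $C^\eps_\sym$ in \eqref{e:defConstants}.

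Next, for the polynomial trees $\<IXiI'XXi_typed>$ and $\<IXXiI'Xi_typed>$, the argument at the end of the proof of Lemma~\ref{lemma:renorm_of_bar_a} applies verbatim: the $\bar\bUpsilon^F_{\mfa,\<dblue>}[\tau]$-expressions carry a factor $(2\delta_{\<green>,\<dblue>}-1)$ times a $\<green>$-independent quantity, and since $\sum_{\<green>\in[2]}(2\delta_{\<green>,\<dblue>}-1)=0$ these trees contribute nothing to the total.

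Finally, for the remaining tree $\<I[I[Xi]]Xi_typed>$, we combine $\ell^{\delta,\eps}_\BPHZ[\<I[I[Xi]]Xi_typed>]=-\tilde C^\eps\Cas$ from Lemma~\ref{lemma:renorm_constants_gsym} with
\[
\bar\bUpsilon^F_{\mfa,\<dblue>}[\<I[I[Xi]]Xi_typed>](\pr{\mathbf A})
=\delta_{\<dblue>,\<green>}\big[[\pr U\mcb I^\mfu\Psi_{\<green>},\pr h_{\<green>}],\pr U\bar\bXi_{\<green>}\big]\;.
\]
Exactly as in the proof of Lemma~\ref{lemma:renorm-for-m}, the second bullet of Definition~\ref{def:special_jets} gives
\[
\big[[\pr U\mcb I^\mfu\Psi_{\<dblue>},\pr h_{\<dblue>}],\pr U\bar\bXi_{\<dblue>}\big]
=-\pr U\big[\bar\bXi_{\<dblue>},[\mcb I^\mfu\Psi_{\<dblue>},\pr U^{-1}\pr h_{\<dblue>}]\big]\;,
\]
and applying $(\Cas\otimes\id)$ and then $\ad_\Cas=\lambda\id_\mfg$ (Remark~\ref{rem:action_of_casimir}) together with $\pr U\pr U^{-1}=\id$ yields $\lambda\tilde C^\eps \pr h_{\<dblue>}$. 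Summing the three contributions gives the claim.

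The only step requiring any thought is keeping straight which system the $\tilde C^\eps$ counterterm belongs to: in the $\bar F$ system the term $\chi^\eps*(\pr{\bar U}\xi)$ sits in $\bar F_{\mfm}$, so $\<I[I[Xi]]Xi_typed*>$ contributes to $\bar\bUpsilon^{\bar F}_{\mfm}$ and not to $\bar\bUpsilon^{\bar F}_{\mfa}$; in the $F$ system the analogous term $\pr U \chi^\eps*\xi$ instead sits in $F_{\mfa}$, so the corresponding tree $\<I[I[Xi]]Xi_typed>$ contributes to $\bar\bUpsilon^F_{\mfa}$. This asymmetry is precisely the source of the extra $\lambda\tilde C^\eps\pr h_{\<dblue>}$ term in the statement, and is the only conceptual difference from Lemma~\ref{lemma:renorm_of_bar_a}.
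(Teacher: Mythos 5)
Your proof is correct and expands, in the natural way, the paper's one-line argument ("performing again computations of the type found in Lemmas~\ref{lemma:renorm-for-m} and~\ref{lemma:renorm_of_bar_a}"). You correctly partition the contributing trees from Table~\ref{table:TableB} into the three cherry-type trees giving $\lambda C_\sym^\eps\pr B_{\<dblue>}$ (with constants $\hat C^\eps,\bar C^\eps$ rather than $\hat C^{\delta,\eps},\bar C^{\delta,\eps}$ since the relevant noises/kernels in the $F$ system are $\bar\Xi$ and $\mcb I$), the polynomial trees cancelling via $\sum_{\<green>}(2\delta_{\<green>,\<dblue>}-1)=0$, and the tree $\<I[I[Xi]]Xi_typed>$ giving $\lambda\tilde C^\eps\pr h_{\<dblue>}$ via the same manipulation as Lemma~\ref{lemma:renorm-for-m}, with the sign from $\ell_\BPHZ^{\delta,\eps}=-\tilde C^\eps\Cas$ correctly cancelling the sign from the bracket rearrangement. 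Your closing remark pinpointing the structural asymmetry (the mollified-noise term lives in $F_{\mfa}$ but in $\bar F_{\mfm}$, so the $\lambda\tilde C\pr h$ counterterm appears in $\bar\bUpsilon^F_{\mfa}$ rather than being split off into the $\mfm$ equation) is exactly right and matches how the two lemmas are combined in Proposition~\ref{prop:SPDEs_conv_zero}.

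Two very small points worth noting, neither of which is a gap. First, your appeal to "unitarity" of $\pr U$ is slightly stronger than needed — only invertibility is used (as the remark following the lemma in the paper observes), together with the bracket-preservation property to pull $\pr U$ outside, and the fact that $\pr U$ acts on the $\mfg$-factor and hence commutes with $(\Cas\otimes\id)$ acting on the $\CT$-factor. Second, you silently dismiss the two $\mfh$-type trees $\<I[I'Xi]I'Xi_typed-h>,\<IXiI'[I'Xi]_typed-h>$ from Table~\ref{table:TableB}; this is justified because $\pr h$ enters $F_{\mfa_i}$ only linearly (through $\mathring C_2\pr h_i$), so any mixed second derivative $D_{(\mfa,\cdot)}D_{(\mfh,\cdot)}F_{\mfa_i}$ vanishes and hence $\bar\bUpsilon^F_{\mfa}$ vanishes on trees whose root has both an $\mfa$-child and an $\mfh$-child — but it is worth saying so explicitly, since Lemma~\ref{lemma:renorm-for-u-h} only addresses the $\mfu$ and $\mfh$ components of $\bar\bUpsilon$ on those trees, not the $\mfa$ component.
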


\begin{remark}
Lemmas~\ref{lemma:renorm-for-m}, \ref{lemma:renorm_of_bar_a}, and  \ref{lemma:renorm_of_b_equation}, still hold if one replaces the first condition of Definition~\ref{def:special_jets} by only requiring the invertibility of $\pr{\mathbf{A}_{\mfu}}$. 
\end{remark}
The main result of this section is the following proposition.
\begin{proposition}\label{prop:SPDEs_conv_zero} 
Suppose that $\moll$ is non-anticipative.
Fix any constants $\mathring{C}_{1}$ and $\mathring{C}_{2}$ and initial data $\bar{a} \in \Omega_\alpha^1$ and $g(0) \in \mfG^{\alpha}$. 
Consider the system of equations 
\begin{equs}[eq:renorm_bar_a]
\partial_t \bar{A}_i &= 
\Delta \bar{A}_i + \moll^\eps * (\bar{g}\xi_i \bar{g}^{-1}) + \bar{C}^{\eps}_{1} \bar{A}_i + \bar{C}^{\eps}_{2} (\partial_i \bar{g}) \bar{g}^{-1}
\\
&\quad + [\bar A_j,2\partial_j \bar A_i - \partial_i \bar A_j + [\bar A_j,\bar A_i]]\;, \qquad 
\bar A(0) = \bar a\;,
\end{equs}
and
\begin{equs}[eq:renorm_b]
\partial_t B_i &= 
\Delta B_i  + g \xi^\eps_i g^{-1} + C^{\eps}_{1} B_i 
+ C^{\eps}_{2} (\partial_i g) g^{-1}\\
& \quad +[B_j,2\partial_j B_i - \partial_i B_j + [B_j,B_i]]\;, \qquad 
B(0) = \bar a\;,
\end{equs}
where $\bar{g}$ and $g$ are given by the corresponding equations in \eqref{eq:SPDE_for_bar_A} and \eqref{eq:SPDE_for_B} started with the same initial data $\bar{g}(0) = g(0)$, and where we set $\bar{g}\equiv 1$ on $(-\infty,0)$ in~\eqref{eq:renorm_bar_a}.
The constants are defined by
\begin{equs}\label{eq:translated_renorm_constants}
\bar{C}^{\eps}_{1} &= \mathring{C}_{1} + \lambda C_{\sym}^{\eps} \;,\quad  
\bar{C}^{\eps}_{2} = \mathring{C}_{2} + \lambda\tilde{C}^{0,\eps} \;,\\
C^{\eps}_{1} &= \mathring{C}_{1} + \lambda C_{\sym}^{\eps} \;,\quad  
C^{\eps}_{2} = \mathring{C}_{2} + \lambda  \tilde{C}^{\eps}\;.
\end{equs}
Then, for every $\eps>0$,~\eqref{eq:renorm_bar_a}
is well-posed in the sense that, replacing $\xi_i$ by $\bone_{t<0}\xi_i + \bone_{t\geq0}\xi^\delta_i$, the (smooth) maximal solution $(\bar A,\bar g)$ converges as $\delta\downarrow0$ to a limiting (smooth) maximal solution in $(\Omega^1_\alpha\times \mfG^{0,\alpha})^\sol$.
Furthermore $(\bar{A},[\bar g])$ and $(B,[g])$ 
converge in probability in $(\Omega^1_\alpha\times\mathring \mfG^{0,\alpha})^\sol$
to the same limit as $\eps \downarrow 0$.
\end{proposition}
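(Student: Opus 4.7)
The plan is to lift both systems to abstract fixed point problems in the regularity structure of Section~\ref{subsec:regstruct_for_system}, identify the renormalised PDEs they encode through the BPHZ renormalisation formula, and then invoke the uniform comparison estimate \eqref{eq:abstract_soln_plus_reconstruction_estimate} (together with the enhanced convergence of Lemma~\ref{lem:fixed_close_enhanced}) to obtain that the reconstructions converge, and to the same limit as $\eps\downarrow0$.

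\textbf{Step 1 (Identification of renormalised equations).} With $\xi_i$ replaced by $\bone_{t<0}\xi_i + \bone_{t\ge 0}\xi^\delta_i$, I would feed the BPHZ model $Z^{\delta,\eps}_{\BPHZ}$ of Lemma~\ref{lem:conv_of_models2} into the fixed point problems \eqref{eq:abstract_fixed_point_eq} and \eqref{eq:abstract_fixed_point_eq_2}, taking as inputs $\omega_i = GU^{(0)}\xi^\delta_i$, $\bar\omega_i = GU^{(0)}\xi^\delta_i$, and $\CW_i$ the canonical lift of $G*(\bone_+\moll^\eps*(\xi_i\bone_-))$. Combining Proposition~\ref{prop:renormalisation_preserves_coherence} with Lemmas~\ref{lemma:renorm-for-u-h}, \ref{lemma:renorm-for-m}, \ref{lemma:renorm_of_bar_a}, and \ref{lemma:renorm_of_b_equation} (which apply since the reconstructed jets take values in $\bar{\mcb{A}}$ because $g,\bar g$ are $G$-valued), the reconstructions of these fixed point problems solve precisely the PDEs \eqref{eq:renorm_b} and \eqref{eq:renorm_bar_a} with the constants \eqref{eq:translated_renorm_constants}. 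The $\mfh$- and $\mfu$-equations pick up no counterterm, while the $\mfa$- and $\mfm$-components of $\bar F$ contribute $\lambda C^{\delta,\eps}_\sym \bar A_i$ and $\lambda \tilde C^{\delta,\eps} \bar h_i$ respectively; the $\mfa$-component of $F$ contributes $\lambda C^{\delta,\eps}_\sym B_i + \lambda \tilde C^{\eps} h_i$. The well-posedness statement follows by taking $\delta\downarrow0$ via the $\delta$-convergence of $Z^{\delta,\eps}_{\BPHZ}$ from Lemma~\ref{lem:conv_of_models2}, together with Lemma~\ref{lem:Uxi_conv} (to pass to the limit in $\omega_i$, $\bar\omega_i$) and continuity of the solution maps from Lemmas~\ref{lemma:fixedptpblmclose} and \ref{lem:restarted}, extended to maximal solutions via the restart mechanism of Definition~\ref{def:maximal} and Proposition~\ref{prop:conv_max_sols}.

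\textbf{Step 2 ($\eps\downarrow0$ convergence to the same limit).} At $\delta=0$, both systems are driven by models $Z^{0,\eps}_{\BPHZ}\in\mathscr{M}_\eps$ which, by Lemma~\ref{lem:conv_of_models2}, converge to a common limit $Z^{0,0}_{\BPHZ}$. The two abstract fixed point problems \eqref{eq:abstract_fixed_point_eq} and \eqref{eq:abstract_fixed_point_eq_2} differ only in the $\CB$-equation, where $\hat{\mcU}\bar\bXi_i + \CG_i^{\bar\omega_i}(GU^{(0)}\bar\bXi_i)$ is replaced by $\hat{\mcU}\bXi_i + \bar\CG_i^{\omega_i}(GU^{(0)}\bXi_i) + \CW_i$. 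On this piece Lemma~\ref{lemma:fixedptpblmclose} yields an $O(\eps^\theta)$ difference in the $\eps$-weighted modelled distribution norm, and Lemma~\ref{lem:CW} controls $\CW_i$ by $O(\eps^\kappa)$, so the inputs lie in $\CI_{r,\tau}$ uniformly with the size bound \eqref{eq:def_inputs_size} satisfied. The consistency condition \eqref{eq:consistency_eps_models} holds with this choice of models since both $\PPi^Z\mcb{I}[\bar\bXi]$ and $\PPi^Z\bar{\mcb{I}}[\bXi]$ equal $K^\eps * \xi$, whose convergence in $\CC(\R_+,\Omega^1_\alpha)$ to $K*\xi$ is provided by Corollary~\ref{cor:SHE_mollif_converge}. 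Lemma~\ref{lem:fixed_close_enhanced} then yields that $\mcR\CB$ and $\mcR\bar\CB$ converge in $\CC([\lambda\tau,\tau],\Omega^1_\alpha)$ to the same limit, and the same holds for the reconstructions of $(\mcU,\CH)$ in $\mfG^{0,\alpha}$ since these components have identical abstract form in the two systems.

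\textbf{Step 3 (Maximal solutions).} To extend from local to maximal solutions, I would iterate the restart construction of Definition~\ref{def:maximal}, using Lemma~\ref{lem:restart_improve} at each step to continue the solution while controlling its size in $\Omega^1_\alpha\times\hat{\mfG}^{0,\alpha}$. The stopping times are $\mbF$-stopping times by our assumption that $\moll$ is non-anticipative (so that $K^\eps$ is non-anticipative and the fixed point over $[0,\tau]$ depends only on data up to time $\tau$). Proposition~\ref{prop:conv_max_sols} then upgrades the convergence on each local interval to convergence in $(\Omega^1_\alpha\times\hat{\mfG}^{0,\alpha})^\sol$ in the sense of Lemma~\ref{lem:alt_conv}, hence also in $(\Omega^1_\alpha\times\mathring{\mfG}^{0,\alpha})^\sol$ after projecting.

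\textbf{Main obstacle.} The delicate point is Step~2 at the initial time slice: although the limits $\tilde C^{\eps}$ and $\tilde C^{0,\eps}$ do differ (as per Lemma~\ref{lemma:renorm_constants_gsym}), the $\CW_i$ term in \eqref{eq:abstract_fixed_point_eq_2} together with the distinction between $\omega_i$ and $\bar\omega_i$ are precisely what compensates this discrepancy; verifying compatibility of these auxiliary distributions with the $\hat\cD$-type singular modelled distribution spaces of Appendix~\ref{app:Singular modelled distributions} and pushing the $O(\eps^\theta)$ estimate of Lemma~\ref{lemma:fixedptpblmclose} through the reconstruction to $\Omega^1_\alpha$ using the structure of the most singular component $\mcb{I}[\bar\bXi]$ is the central technical burden. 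Once this is done, the matching of the two limits in $\mathring{\mfG}^{0,\alpha}$ (rather than in $\mfG^{0,\alpha}$) follows because both systems produce the same $(\mcU,\CH)$ at the abstract level, modulo the kernel of the action of $\mfG^{0,\alpha}$ on $\Omega\CC^{\alpha-1}$ which is quotiented out.
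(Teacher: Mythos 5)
Your Steps 1 and 2 track the paper's argument closely: the identification of the renormalised equations via Proposition~\ref{prop:renormalisation_preserves_coherence} and Lemmas~\ref{lemma:renorm-for-u-h}--\ref{lemma:renorm_of_b_equation}, the $\delta\downarrow0$ well-posedness, and the local-in-time comparison of the two abstract fixed point problems via Lemmas~\ref{lemma:fixedptpblmclose} and~\ref{lem:fixed_close_enhanced} are all on target. (Note that the paper is more careful than ``$g,\bar g$ are $G$-valued'' when justifying that the reconstructed jet lies in $\bar{\mcb{A}}$: one must first show $\bar U = \Ad_{\bar g}$ and $\bar h = (\mrd\bar g)\bar g^{-1}$, which is done via a uniqueness argument for the $(U,h)$-subsystem together with Lemma~\ref{lemma:linear_g_system}.)

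The genuine gap is in Step~3. You invoke Definition~\ref{def:maximal} and Proposition~\ref{prop:conv_max_sols} to pass from local to maximal solutions, but that machinery is a convergence result for a \emph{single} family of inputs (designed for the $(\bar A,\bar U,\bar h)$ system alone); it does not yield a comparison between the maximal solutions of the $B$-system and of the $\bar A$-system. The difficulty it does not resolve is the following: Lemma~\ref{lem:fixed_close_enhanced} gives $|X-Y|_{\CC([\lambda\tau,\tau],S)}\to 0$ only up to a \emph{local} stopping time $\tau$; if you now restart both systems, their restarted initial conditions differ by $O(\eps^\theta)$, and iterating the comparison estimate while controlling the accumulated error over a random number of restarts is not what Proposition~\ref{prop:conv_max_sols} provides. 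The paper avoids this entirely by proving convergence of $X$ and $Y$ in $S^\sol$ \emph{separately}, and then invoking the strong Markov property to upgrade the local agreement of the limits to agreement on the whole maximal interval.

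Concretely, the paper's route for $Y=(B,[g])$ does not go through the restart machinery at all: it uses the pathwise gauge equivalence $B=A^{g}$ where $A$ solves~\eqref{eq:SPDE_for_A} driven by $\xi^\eps$ (plus a bounded drift) with initial condition $\bar a^{g(0)^{-1}}$, and $g$ solves~\eqref{eq:SPDE_for_g_from_A}; since $A$ has additive noise and converges as in Theorem~\ref{thm:local_exist}, and~\eqref{eq:SPDE_for_g_from_A} is classically well-posed given $A\in(\Omega^1_\alpha)^\sol$, this gives convergence of $Y$ in $S^\sol$. For $X=(\bar A,[\bar g])$, non-anticipativity of $\moll$ gives equality \emph{in law} with an additive-noise system, yielding convergence in law in $S^\sol$; this in-law information is then upgraded to convergence in probability in $S^\sol$ via the restart machinery (Proposition~\ref{prop:conv_max_sols}) applied to $\bar A$ alone. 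Finally, because the two limits agree a.s.\ on $[0,\tau]$ for an $\mbF$-stopping time $\tau$ that is stochastically bounded below in terms of the initial data, and both limits are strong Markov, they agree a.s.\ as elements of $S^\sol$. You should also be aware that the small-time behaviour at $t=0$ requires its own argument (the paper uses a Portmanteau-type bound to control $\sup_{t\in[0,\lambda\tau]}|X(t)-X(0)|_S$), since Lemma~\ref{lem:fixed_close_enhanced} only gives convergence away from time zero.
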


\begin{remark}\label{rem:nonAnt}
It follows from the explicit expression \eqref{e:defConstantsTildeC} that $\moll$ being 
non-anticipative implies $ \tilde{C}^{0,\eps} = 0$. (The convolution of non-anticipative
functions is non-anticipative and any continuous non-anticipative function vanishes at the origin.)
\end{remark}

\begin{proof}
We first claim that, for $\eps,\delta>0$ the solution to~\eqref{eq:renorm_bar_a}, with $\xi_i$ replaced by $\bone_{t<0}\xi_i + \bone_{t\geq0}\xi_i^\delta$, is equal on a small interval $(0,\tau)$
to the reconstruction
of the solution to the fixed point problem \eqref{eq:abstract_fixed_point_eq_2}
with model $Z^{\delta,\eps}_{\BPHZ}$, and with $\omega_i\eqdef GU^{(0)}\xi^\delta_i$ and $\CW_i$ the canonical lift of $\zeta_i\eqdef K*\bone_+\moll^\eps*(\xi_i \bone_{-})$ therein.
Here $\tau>0$ is random but independent of $\delta$ (and $\eps$).
The fact that~\eqref{eq:renorm_bar_a} is well-posed in the sense of maximal solutions converging as $\delta\downarrow0$ follows from this claim
due to the convergence of $Z^{\delta,\eps}_{\BPHZ}$ and $\omega_i$ as $\delta \downarrow 0$ (\eqref{eq:delta_conv_of_models} and~\eqref{eq:Uxi_conv})
and standard PDE arguments since $\bar g$ is smooth (and thus $\bar g\xi_i\bar g^{-1}$ is well-defined) for positive times.


We deploy \cite[Thm.~5.7]{BCCH21} and Proposition~\ref{prop:renormalisation_preserves_coherence} to get the renormalised reconstruction $(\bar A, \bar U, \bar h)$ of the equation \eqref{eq:abstract_fixed_point_eq_2}. 
For $\bar A$, in terms of the indeterminates $\pr{\mathbf{A}} = (\pr{\mathbf{A}_{o}})_{o \in \CE}$ and nonlinearity $\bar{F}$, this amounts to \emph{summing} the renormalised and reconstructed integral fixed point equations for the indeterminates $\pr{\mathbf{A}_{\mfa_{i}}}$ and $\pr{\mathbf{A}_{\mfm_{i}}}$ with nonlinearity $\bar{F}$, and recalling \eqref{e:K-Keps-assign}.

We first argue that $\bar U$ satisfies the conditions of Definition~\ref{def:special_jets}.
Lemma~\ref{lemma:renorm-for-u-h} implies that $\bar U$ and $\bar h$ satisfy the same PDE as the corresponding components in~\eqref{e:final_system} with $B$ replaced by $\bar A$.
Consider the process $\bar g$ which solves~\eqref{eq:SPDE_for_bar_A} for the given $\bar A$,
and define $f_i = (\partial_i \bar g) \bar g^{-1}$ and $V=\Ad_{\bar g}$.
Then Lemma~\ref{lemma:linear_g_system} applied to $(\bar A, V, f)$ implies that $(V,f)$ also solves~\eqref{e:final_system} with $B$ replaced by $\bar A$ and with the same initial condition as $(\bar U, \bar h)$.
Hence $(V,f)=(\bar U,\bar h)$ and thus $\bar U$ satisfies the conditions of Definition~\ref{def:special_jets}.

We can now apply Lemmas~\ref{lemma:renorm-for-m} and \ref{lemma:renorm_of_bar_a}, and take the limit $\delta \downarrow 0$ of the renormalisation constants as given in Lemmas~\ref{lem:tree_calcs2} and \ref{lemma:renorm_constants_gsym}
to conclude that
$\bar A$ solves~\eqref{eq:renorm_bar_a} but with $\bar g\xi_i \bar g^{-1}$ replaced by $\bar U\xi_i$ and $(\partial_i \bar g) \bar g^{-1}$ replaced by $\bar h_i$.
It remains to observe that if $(D, \bar g)$ is the solution to~\eqref{eq:renorm_bar_a} and the second equation in~\eqref{eq:SPDE_for_bar_A} with $\bar A$ replaced by $D$,
then a similar argument as above using Lemma~\ref{lemma:linear_g_system} implies $\bar A=D$, which proves the claim.



A similar argument shows that \eqref{eq:renorm_b} is the renormalised equation obtained via the reconstruction (with respect to $Z^{0,\eps}_{\BPHZ}$) of the fixed point problem  \eqref{eq:abstract_fixed_point_eq} with $\bar\omega_i \eqdef GU^{(0)}\xi_i^\eps$.
The minor difference is that one is aiming for the renormalised and reconstructed integral fixed point equations for just the indeterminates $\pr{\mathbf{A}_{\mfa_{i}}}$ with non-linearity $F$ so the computations of Lemma~\ref{lemma:renorm-for-m} and~\ref{lemma:renorm_of_bar_a} are replaced by that of Lemma~\ref{lemma:renorm_of_b_equation}.

We now turn to proving the statements concerning convergence in probability as $\eps \downarrow 0$.
Let us denote
\begin{equ}
S\eqdef \Omega^1_\alpha\times\mathring\mfG^{0,\alpha}\;.
\end{equ}
\label{page:conv_maximal_B}We first claim that $Y \eqdef (B,[g])$ converges in probability in $S^\sol$.
Indeed, recall from Theorem~\ref{thm:grp_action} that $(A,g)\mapsto A^g$ is uniformly continuous from every ball in $S$
into $\Omega^1_\alpha$.
In particular, the map $(A,g)\mapsto (A^g,[g])$ is continuous and both $(A,g)\mapsto (A^g,[g])$ and its inverse (which is well-defined up to a constant multiple of $g$)
are bounded on every ball.

Next, a similar calculation to that at the start of Section~\ref{subsec:gauge_covar_results}
shows that $B=A^g$ where $A$ solves~\eqref{eq:SPDE_for_A} with initial condition $a=\bar a^{g^{-1}}$ and $C\eqdef C^\eps_1$, and with an addition drift term on the right-hand side of the form $\hat C^\eps (\partial_i g^{-1}) g$ for some $\hat C^\eps$ converging to a finite value as $\eps\downarrow0$, and where $g$ solves
\begin{equ}\label{eq:SPDE_for_g_from_A}
g^{-1}(\partial_t g)
= \partial_i(g^{-1}\partial_i g)+ [A_i,g^{-1}\partial_i g]\;,\qquad g(0)\in\mfG^{0,\alpha}\;.
\end{equ}
A proof similar to that of Theorem~\ref{thm:local_exist},
using in particular that~\eqref{eq:SPDE_for_g_from_A} is classically well-posed for any $A\in(\Omega^1_\alpha)^\sol$, 
shows that $(A,[g])$ converges in $S^\sol$.
An application of Lemma~\ref{lem:alt_conv} then shows that $Y=(A^g,[g])$ converges in probability in $S^\sol$ as claimed.

It remains to prove that $X \eqdef (\bar A,[\bar g])$ converges to the limit of $Y$ in probability in $S^\sol$.
We first note that,
because $\moll$ is non-anticipative, $X$ is equal in law to $(A,[\check g])$
where $A$ solves~\eqref{eq:SPDE_for_A} with initial condition $\bar a$ and $C\eqdef \bar C^\eps_1$, and with the additional drift term $\bar C^\eps_2 (\partial_i \check g) \check g^{-1}$ on the right-hand.
Here $\check g$ solves the corresponding equation in~\eqref{eq:SPDE_for_bar_A} with $\bar A$ replaced by $A$.
Again, a proof similar to that of Theorem~\ref{thm:local_exist}
show that $(A,[\check g])$ converges in $S^\sol$.
It follows that $X$ converges \textit{in law} in $S^\sol$.

Recall from Remark~\ref{rem:Uh_vs_g} that we identify $\mathring\mfG^{0,\alpha}$ with a subset of $\hat\mfG^{0,\alpha}$.
Observe that for every $\lambda\in (0,1)$, $|X-Y|_{\CC([\lambda\tau,\tau],S)} \to 0$ in probability
due to Corollary~\ref{cor:SHE_mollif_converge}, Lemmas~\ref{lem:fixed_close_enhanced},~\ref{lem:conv_of_models2}, and~\ref{lem:CW},
and the bound~\eqref{eq:same_init_cond} in Lemma~\ref{lem:Uxi_conv} with $\sigma=0$ therein.
Here $\tau\in(0,1)$ is the time appearing in Lemma~\ref{lem:fixed_close_enhanced}.
Observe that $\tau>0$ can be taken as an $\mbF$-stopping time, where $\mbF$ is the filtration generated by the noise, since $\moll$ is non-anticipative and therefore the necessary conditions on the models $Z^{0,\eps}_{\BPHZ}$ and input distributions $\omega,\bar\omega$
which ensure existence and convergence of the solutions on an interval $[0,t]$ can be checked on $[-1,t]\times\T^2$.
%

To handle time $t=0$,
observe that for all $\lambda,\delta>0$,
the set
\begin{equ}
\Big\{f\in S^\sol \,:\, f(0)=(\bar a,[\bar g])\,,\, \sup_{t\in [0,\lambda]} |f(t)-f(0)|_S\geq \delta\Big\}
\end{equ}
is closed in $S^\sol$.
Therefore, the convergence in law of $X$ and the Portmanteau Lemma imply that, for all $\delta>0$, 
\begin{equ}
\lim_{\lambda\to0}\limsup_{\eps\to0} \P
\Big[
\sup_{t\in[0,\lambda\tau]} |X(t)-X(0)|_{S} \geq \delta
\Big]
= 0\;.
\end{equ}
Since $Y(0)=X(0)$, it follows that $|Y-X|_{\CC([0,\tau],S)}\to0$ in probability.

To conclude the proof, let $Q$ denote the reconstruction of maximal solutions as defined in Section~\ref{subsubsec:maximal} with the model taken as $Z^{0,\eps}_{\BPHZ}$,
along with the blow-up time $\tau^\star$ (and extended to all positive times by $Q(t)=\skull$ for $t\geq\tau^\star$).
By Proposition~\ref{prop:conv_max_sols}, $Q$ converges in probability in the sense of Lemma~\ref{lem:alt_conv}\ref{pt:D_conv_alt} as $\eps \downarrow0$.

We now claim that $Q\in S^\sol$ for $\eps=0$.
Indeed, for $\eps=0$, the fixed point problem~\eqref{eq:abstract_fixed_point_eq_3} is easily seen (using Lemmas~\ref{lem:Uxi_conv} and~\ref{lem:CW}) to give the same reconstruction
as~\eqref{eq:abstract_fixed_point_eq_2}
with $\CW=0$
and the input distribution $\omega$ defined through~\eqref{eq:Uxi_conv} (we use here that each $\sigma_k$ can be taken as an $\mbF$-stopping time).
Therefore, when $\eps=0$, we obtain a maximal solution $\bar Q\in S^\sol$
directly from Lemmas~\ref{lemma:fixedptpblmclose} and~\ref{lem:fixed_close_enhanced},
and $Q=\bar Q$ on $(0,\tau^\star)$.

On the other hand, $(\CB,\mcU,\mcH)$ is coherent with $\bar F$ on each interval $(\sigma_k,\sigma_{k+1})$ (more precisely, it is coherent after we add the component $\mfm_i$, see~\eqref{eq:bar_F_def}).
It readily follows that $\mcU$ is determined by the reconstruction of the tuple.
Because $\mcU$ in~\eqref{eq:abstract_fixed_point_eq_2} is controlled purely by the input as in Lemma~\ref{lem:fixed_close_enhanced}, it follows as in the proof of that lemma
that the final two terms on the left-hand side of~\eqref{eq:blow_up_criteria}
can only blow up once $\bar Q$ blows up, i.e.\ $\tau^\star$ is the blow-up time of $\bar Q$ in $S$.
Therefore $Q=\bar Q \in S^\sol$ as claimed.

Observe now that, for $\eps>0$ and with notation as in Definition~\ref{def:maximal}, $X$ is equal to $Q$ on the interval $[0,2\tau_1+\ldots+2\tau_n]$
provided that $\eps^2<\tau_i$ for all $i=1,\ldots,n$.
Following Remark~\ref{rem:discrepancy}
and using that, for $\eps>0$, $X$ is an element of $S^\sol$, we obtain that
$X$ converges in probability in $S^\sol$ as $\eps\downarrow0$ (to the same limit as $Q$).

Finally, remark that the $\eps\downarrow0$
limits of $X$ and $Y$ agree almost surely on an interval $[0,\tau]$, where $\tau$ is an $\mbF$-stopping time which is stochastically bounded from below by a distribution depending only on the size of the initial condition.
Both limits are furthermore strong Markov with respect to $\mbF$, 
from which it follows that the limits are almost surely equal as elements of $S^\sol$.
\end{proof}

\begin{proof}[of Theorem~\ref{thm:gauge_covar}]
We first prove statement \ref{pt:gauge_covar}. Given $C \in \R$, which is assumed to be a real constant by Remark~\ref{rem:simple_vs_reductive} and Assumption~\ref{as:simple}, we fix $\mathring{C}_{1} = C - \lambda C_{\sym} $ and $\mathring{C}_{2} = C - \lambda C_{\gsym} $. 
We then take the $\bar{C}$ as claimed in the theorem as  
\[
\bar{C} \eqdef \lambda (  C_{\gsym} -  \bar{C}_{\gsym})\;.
\] 
With these choices and the definitions of \eqref{eq:translated_renorm_constants},
together with
\[
\tilde{C}^{\eps} - C_{\gsym}=o(1)\;,
\quad
C_{\sym}-C_{\sym}^\e = o(1)\;,
\quad
\tilde C^{0,\eps}-\bar C_{\gsym}=o(1)
\]
it follows that, as $\eps \downarrow 0$,  
\[
C = C_{1}^{\eps} + o(1) = C_{2}^{\eps} + o(1) = \bar{C}_{1}^{\eps} + o(1)\;,
\qquad
 C-\bar{C} = \bar{C}_{2}^{\eps} + o(1)\;.
\]
The desired statement then follows from Proposition~\ref{prop:SPDEs_conv_zero}. 
Note that if $\moll$ is non-anticipative then
 $\tilde{C}^{0,\eps} = 0$ by Remark~\ref{rem:nonAnt}, but we don't use 
this here in the proof of \ref{pt:gauge_covar}.

We now prove~\ref{pt:indep_moll}. 
Since $\moll$ is non-anticipative, $ \tilde{C}^{0,\eps} = 0$ for every $\eps > 0$ and so $\bar{C}_{\gsym} = 0$. 
It follows that $\bar{C} =  \lambda C_{\gsym} =  \lambda \lim_{\eps \downarrow 0} \int \mrd z\ \moll^{\eps}(z)(K \ast K^{\eps})(z)$ and so the desired statement follows from Remark~\ref{rem:sym_indep_of_mollifier}. 
\end{proof}

\subsection{Construction of the Markov process}
\label{subsec:Markov_process}

In this subsection, we prove Theorem~\ref{thm:Markov_process}.
We begin with several lemmas.

\begin{lemma}\label{lem:inf_continuous}
Let $\alpha\in (\frac23,1]$
and $A,B\in\Omega^1_\alpha$.
Then
\begin{equ}\label{eq:inf_Lip_cont}
\Big| \inf_{g\in\mfG^{0,\alpha}} |B^g|_\alpha
-\inf_{g\in\mfG^{0,\alpha}} |A^g|_\alpha \Big| \lesssim (1+|A|_\alpha+|B|_\alpha)|A-B|_\alpha\;,
\end{equ}
where the proportionality constant depends only on $\alpha$.
\end{lemma}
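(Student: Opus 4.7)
The plan is to show the asymmetric inequality
\begin{equ}
\inf_{g\in\mfG^{0,\alpha}} |B^g|_\alpha
- \inf_{g\in\mfG^{0,\alpha}} |A^g|_\alpha
\lesssim (1+|A|_\alpha+|B|_\alpha)|A-B|_\alpha\;,
\end{equ}
from which \eqref{eq:inf_Lip_cont} follows by swapping the roles of $A$ and $B$. The key identity is
\begin{equ}
B^g = A^g + \bigl((B-A)^g - 0^g\bigr)\;,
\end{equ}
which expresses the fact that $X \mapsto X^g - 0^g$ is linear in $X$ for fixed $g$ (this is visible from the definition of $X^g$ in Definition~\ref{def:gauge} since the non-linear piece $-(\mrd g)g^{-1}$ depends only on $g$). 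Consequently any control on $(B-A)^g - 0^g$ in $\Omega^1_\alpha$ will transfer a bound for $|A^g|_\alpha$ to $|B^g|_\alpha$.

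First, fix $\eps>0$ and pick $g \in \mfG^{0,\alpha}$ such that $|A^g|_\alpha \le \inf_{h\in\mfG^{0,\alpha}} |A^h|_\alpha + \eps$. Since the trivial choice $h \equiv \bone$ yields $|A^\bone|_\alpha = |A|_\alpha$, this gives in particular $|A^g|_\alpha \le |A|_\alpha + \eps$. Next, I would combine the equivalence $|\cdot|_\alpha \asymp |\cdot|_{\gr\alpha} + |\cdot|_{\v\alpha}$ from Theorem~\ref{thm:norm_equiv} with the estimates of Lemmas~\ref{lem:group_action_gr} and~\ref{lem:group_action_vee} (applied with $X = B-A$ and taking $\beta=\alpha$), together with the trivial bound $|g-\bone|_\infty \le \diam G$ for $G$-valued $g$, to obtain
\begin{equ}
|(B-A)^g - 0^g|_\alpha \lesssim (1 + |g|_{\Hol\alpha})\,|B-A|_\alpha\;.
\end{equ}

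The final ingredient is the control \eqref{eq:g_Hol_bound} from Proposition~\ref{prop:gauge_equiv}, which gives $|g|_{\Hol\alpha} \lesssim |A|_\alpha + |A^g|_\alpha \lesssim |A|_\alpha + \eps$. Plugging this in, and using $|B^g|_\alpha \le |A^g|_\alpha + |(B-A)^g - 0^g|_\alpha$, we arrive at
\begin{equ}
\inf_{h\in\mfG^{0,\alpha}} |B^h|_\alpha \le |B^g|_\alpha \le |A^g|_\alpha + C(1 + |A|_\alpha + \eps)|B-A|_\alpha\;,
\end{equ}
for some $C=C(\alpha)$. Combining with $|A^g|_\alpha \le \inf_{h}|A^h|_\alpha + \eps$ and then letting $\eps \to 0$ yields
\begin{equ}
\inf_{h} |B^h|_\alpha - \inf_{h}|A^h|_\alpha \lesssim (1+|A|_\alpha)|B-A|_\alpha\;,
\end{equ}
which is dominated by $(1+|A|_\alpha+|B|_\alpha)|A-B|_\alpha$. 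Reversing the roles of $A$ and $B$ completes the proof. No step presents a real obstacle; the mild subtlety is choosing an almost-minimiser $g$ and then using \eqref{eq:g_Hol_bound} to convert the minimising property of $g$ back into a H\"older bound on $g$ itself, which is what lets us control the action of $g$ on the difference $B-A$.
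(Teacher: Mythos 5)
Your proof is correct and follows essentially the same route as the paper: decompose the difference $B^g - A^g$ as $(B-A)^g - 0^g$, control it via Lemmas~\ref{lem:group_action_gr} and~\ref{lem:group_action_vee} (together with Theorem~\ref{thm:norm_equiv} and the compactness of $G$ to absorb $|g-\bone|_\infty$), and then use \eqref{eq:g_Hol_bound} to bound $|g|_{\Hol\alpha}$ for an almost-minimiser of $A$. The only cosmetic difference is that you pick a single $\eps$-almost-minimiser and send $\eps\to 0$, while the paper phrases the same step in terms of a minimising sequence.
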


\begin{proof}
As in the proof of Theorem~\ref{thm:grp_action}, for $g\in\mfG^{0,\alpha}$ we can write
\begin{equ}
A^g-B^g = \big((A-B)^g-0^g - (A-B)\big) + (A-B)\;,
\end{equ}
from which it follows by Lemmas~\ref{lem:group_action_gr} and~\ref{lem:group_action_vee} that
\begin{equ}\label{eq:A_B_g_bound}
|A^g-B^g|_\alpha \lesssim (1+|g|_{\Hol\alpha})|A-B|_\alpha\;,
\end{equ}
where the proportionality constant depends only on $\alpha$.
Consider a minimising sequence $g_n\in\mfG^{0,\alpha}$ for $A$.
Then, by Proposition~\ref{prop:gauge_equiv},
$\limsup_{n\to\infty} |g_n|_{\Hol\alpha}\lesssim |A|_\alpha$,
and thus by~\eqref{eq:A_B_g_bound}
\begin{equ}
\inf_{g\in\mfG^{0,\alpha}} |B^g|_\alpha
-\inf_{g\in\mfG^{0,\alpha}} |A^g|_\alpha
\leq \limsup_{n\to\infty}|B^{g_n}|_\alpha - |A^{g_n}|_\alpha
\lesssim (1+|A|_\alpha)|A-B|_\alpha\;.
\end{equ}
Swapping $A$ and $B$ and applying the same argument, we obtain~\eqref{eq:inf_Lip_cont}.
\end{proof}

\begin{lemma}\label{lem:selection}
Let $\alpha\in (\frac23,1)$.
There exists a measurable (Borel) selection $S\colon\mfO_\alpha\to\Omega^1_\alpha$
such that $|S(x)|_\alpha < 1+\inf_{A\in x}|A|_\alpha$
for all $x\in\mfO_\alpha$.
\end{lemma}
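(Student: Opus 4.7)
The plan is to construct $S$ via the Kuratowski--Ryll-Nardzewski measurable selection theorem, applied to an appropriate closed-valued multifunction on $\mfO_\alpha$ whose values sit inside $\Omega^1_\alpha$. I would first observe that the function
\[
\phi\colon \mfO_\alpha \to \R_+\;,\qquad \phi(x) \eqdef \inf_{A\in x}|A|_\alpha\;,
\]
is continuous: Lemma~\ref{lem:inf_continuous} shows that $\phi\circ\pi$ is continuous on $\Omega^1_\alpha$ (where $\pi\colon\Omega^1_\alpha\to\mfO_\alpha$ denotes the canonical projection), so continuity of $\phi$ with respect to the quotient topology, and thus with respect to $D_\alpha$ by Theorem~\ref{thm:Polish}, follows immediately. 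With this in hand, I would introduce the multifunction
\[
F(x) \eqdef \{A \in x\,:\, |A|_\alpha < \phi(x) + \tfrac12\}\;,
\]
and work with its closure $\bar F(x)$ inside $\Omega^1_\alpha$. By the very definition of $\phi$, the set $F(x)$ (and hence $\bar F(x)$) is non-empty for every $x$. Moreover, since each orbit $x$ is closed in $\Omega^1_\alpha$ by Lemma~\ref{lem:closed} and the norm $|\cdot|_\alpha$ is continuous, one has $\bar F(x) \subset \{A\in x : |A|_\alpha \leq \phi(x) + \tfrac12\}$, so that any selection $S$ of $\bar F$ automatically satisfies $|S(x)|_\alpha \leq \phi(x) + \tfrac12 < 1 + \phi(x)$, which is the bound required by the lemma.

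The main obstacle is then verifying that $\bar F$ is weakly measurable, i.e., that $\{x \in \mfO_\alpha : \bar F(x) \cap U \neq \emptyset\}$ is Borel for every open $U \subset \Omega^1_\alpha$. I would do this in two steps. First, I would show that $\pi$ is an open map. By Corollary~\ref{cor:group_action_smooth_closure}, for each fixed $g\in\mfG^{0,\alpha}$ the map $A \mapsto A^g$ is continuous on $\Omega^1_\alpha$ with continuous inverse $A \mapsto A^{g^{-1}}$, hence a self-homeomorphism. Consequently $\pi^{-1}(\pi(W)) = \bigcup_{g\in \mfG^{0,\alpha}} g\cdot W$ is open whenever $W\subset \Omega^1_\alpha$ is, so $\pi(W)$ is open in $\mfO_\alpha$. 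Second, I would observe that
\[
V \eqdef \{A \in \Omega^1_\alpha \,:\, |A|_\alpha < \phi(\pi(A)) + \tfrac12\}
\]
is open in $\Omega^1_\alpha$, by continuity of $\phi$, $\pi$, and the norm. Using these two facts together with the openness of $U$, one then has the equivalences
\[
\bar F(x) \cap U \neq \emptyset \iff F(x) \cap U \neq \emptyset \iff x \in \pi(V \cap U)\;,
\]
the first because $U$ is open and points of $\bar F(x)\cap U$ can be approximated from inside $F(x)\cap U$, the second by direct inspection. The right-hand side is open, hence Borel, in $\mfO_\alpha$.

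Once both properties are in place, a standard measurable selection theorem of Kuratowski--Ryll-Nardzewski, applied with the Polish target space $\Omega^1_\alpha$, produces a Borel measurable selection $S\colon \mfO_\alpha \to \Omega^1_\alpha$ of $\bar F$. By the inclusion noted above, $S$ satisfies $|S(x)|_\alpha \leq \phi(x) + \tfrac12 < 1 + \phi(x)$ for every $x \in \mfO_\alpha$, which is the required bound. The only genuinely nontrivial step in the whole argument is the openness of $\pi$ combined with the continuity of $\phi$; once these are established, the measurability of $\bar F$ and hence the existence of the selection follow from standard principles.
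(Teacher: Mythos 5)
Your proof is correct and takes essentially the same approach as the paper: both establish that $\pi$ is an open map, that orbits are closed (Lemma~\ref{lem:closed}), that the minimal orbit norm varies continuously (Lemma~\ref{lem:inf_continuous}), and then invoke a Kuratowski--Ryll-Nardzewski-type selection theorem. The only difference is packaging: the paper restricts $\pi$ to the open set $Y=\{A : |A|_\alpha < 1+\inf_g|A^g|_\alpha\}$ and applies the Bogachev~\cite[Thm.~6.9.3]{Bogachev07} form of the theorem (Borel section of a surjective, continuous, open map between Polish spaces with closed fibers), whereas you build the closed-valued multifunction $\bar F$ and verify weak measurability directly -- a bit more explicit but substantially equivalent.
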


\begin{proof}
Consider the subset $Y \eqdef \{A\in\Omega^1_\alpha \ssep |A|_\alpha < 1+\inf_{g\in\mfG^{0,\alpha}}|A^g|_\alpha\}$,
which is open due to Lemma~\ref{lem:inf_continuous}.
In particular, $Y$ is Polish (Alexandrov’s theorem) and, by Lemma~\ref{lem:closed}, the gauge equivalence classes in $Y$ are closed.
Note that $\pi\colon \Omega^1_\alpha\to \mfO_\alpha$ is an open map since $\pi^{-1}(\pi(U))=\cup_{g\in\mfG^{0,\alpha}}U^g$
is open for every open subset $U\subset\Omega^{1}_\alpha$.
Therefore $\pi\colon Y \to \mfO_\alpha$ is also open and
the conclusion follows by the Rokhlin--Kuratowski--Ryll-Nardzewski
selection theorem~\cite[Thm.~6.9.3]{Bogachev07}.
\end{proof}

For the rest of the section, let us fix a non-anticipative mollifier $\moll$ 
and set $C=\bar C$, the 
constant from part~\ref{pt:gauge_covar} of Theorem~\ref{thm:gauge_covar}.
By a ``white noise'' we again mean a pair of i.i.d.\ $\mfg$-valued
white noises $\xi=(\xi_1,\xi_2)$ on $\R\times \T^2$.

\begin{proof}[of Theorem~\ref{thm:Markov_process}]
\ref{pt:generative_exists}
By Lemma~\ref{lem:selection}, there exists a measurable selection $S\colon\hat\mfO_\alpha \to \hat\Omega^1_\alpha$ such that for all $x\in\mfO_\alpha$
\begin{equ}\label{eq:S_bound}
|S(x)|_\alpha < 1+\inf_{a\in x} |a|_\alpha
\end{equ}
and $S(\skull)=\skull$.
Let $\xi$ be white noise and let $(\mcF_t)_{t \geq 0}$ be the filtration generated by $\xi$.

Consider any $a \in \hat\Omega^1_{\alpha}$.
We define a Markov process $A \colon \mcO \to D_0(\R_+, \hat\Omega^1_{\alpha})$ with $A(0)=a$
and a sequence of stopping times $(\sigma_j)_{j=0}^\infty$ as follows.
For $j=0$, set $\sigma_0 = 0$ and $A(0)=a$.
Define further
\begin{equ}
b \eqdef
\begin{cases}
S([a]) \quad &\text{ if } |a|_\alpha \geq 2+\inf_{g\in\mfG^{0,\alpha}}|a^g|_\alpha\;,
\\
a \quad &\text{ otherwise.}
\end{cases}
\end{equ}
Consider now $j \geq 0$.
If $\sigma_j=\infty$, then we set $\sigma_{j+1}=\infty$.
Otherwise, if $\sigma_j<\infty$, 
suppose that $A$ is defined on $[0,\sigma_{j}]$.
If $A(\sigma_j)=\skull$, then define $\sigma_j=\sigma_{j+1}$.
Otherwise, define $\Theta \in \mcC([\sigma_{j},\infty),\hat\Omega^1_\alpha)$
by 
\begin{equ}
\Theta(t)=
\begin{cases}
\Phi_{\sigma_{j},t}(A(\sigma_{j})) \quad &\text{ if } j \geq 1\;,
\\
\Phi_{0,t}(b) \quad &\text{ if } j = 0\;,
\end{cases}
\end{equ}
where we used the notation $\Phi_{s,t}$ as in Definition~\ref{def:generative}.
We then set
\begin{equ}
\sigma_{j+1} = \inf\{t > \sigma_j \ssep |\Theta(t)|_\alpha \geq 2+\inf_{g\in\mfG^{0,\alpha}}|\Theta(t)^g|_\alpha\}\;,
\end{equ}
and define $A(t) = \Theta(t)$ for all $t\in(\sigma_{j},\sigma_{j+1})$
and $A(\sigma_{j+1}) = S([\Theta(\sigma_{j+1})])$.
Observe that $\sigma_{j}<\sigma_{j+1}$ a.s.\ due to Lemma~\ref{lem:inf_continuous}, the condition~\eqref{eq:S_bound}, and the continuity of $\Theta$ at $\sigma_j$.
In fact, defining $M(t) = \inf_{g\in\mfG^{0,\alpha}}|A(t+)^g|_\alpha$,
then by decomposing $\Theta$ into the SHE with initial 
condition $A(\sigma_j+)$ and a remainder as in the proof of Theorem~\ref{thm:local_exist},
we see that the law of $\sigma_{j+1}-\sigma_j$ depends only on $A(\sigma_j+)$ and can be stochastically bounded from below by a strictly positive random variable depending only on $M(\sigma_j)$.
In particular, if the quantity $T^* \eqdef \lim_{j\to\infty}\sigma_j$ is finite, then a.s. $\lim_{t\nearrow T^*}M(t) = \infty$.
In this case, we have defined $A$ on $[0,T^*)$, and then set $A\equiv \skull$ on $[T^*,\infty)$.
If $T^* = \infty$, then we have defined $A$ on $\R_+$ and the construction is complete.
Note that, in either case, a.s. $T^* = \inf\{t \geq 0\ssep A(t) = \skull\}$.

To complete the proof of~\ref{pt:generative_exists}, we 
need only remark that items~\ref{pt:dynamics} and~\ref{pt:honest_blow_up}
of Definition~\ref{def:generative} are satisfied and that $A$ is Markov, which all follow from the 
construction of $(\sigma_j)_{j=0}^\infty$, the definition of $b$, and the above discussion.

\ref{pt:unique_Markov}
The idea of the proof is to couple any generative 
probability measure $\bar \mu$ to the law of the 
process $A$ constructed in part~\ref{pt:generative_exists}.
Consider a white noise $\bar\xi$ with an admissible 
filtration $(\bar\mcF_t)_{t\geq0}$, a $\bar\mcF$-stopping time $\sigma$,
a solution $\bar A\in\mcC([s,\sigma),\Omega^1_\alpha)$
to the SYM driven by $\bar \xi$,
and a gauge equivalent initial condition $A(s) = \bar A(s)^{g(s)}$.
Remark that, by part~\ref{pt:gauge_covar} of
Theorem~\ref{thm:gauge_covar}, we can construct 
on the same probability space a stopping time $\tau$,
a time-dependent gauge 
transformation $g\in\mcC([s,\tau),\mfG^{0,\alpha})$ 
(namely $g^{-1} = \bar{g}$, the solution to that component of ~\eqref{eq:SPDE_for_bar_A} driven 
by $\bar A$ started with initial data $\bar{g}(s) = g^{-1}(s)$) and a solution $A\in\mcC([s,\tau),\Omega^1_\alpha)$ to the SYM driven by the
white noise $\xi \eqdef \Ad_g \bar\xi$ such that $\bar A^{g} = A$ on $[s,\tau)$. 
Moreover, by the bound~\eqref{eq:g_Hol_bound} in Proposition~\ref{prop:gauge_equiv},
$|g|_{\Hol\alpha}$ cannot blow-up before $|\bar A|_{\gr\alpha}+|A|_{\gr\alpha}$ does.
Since $\Omega^1_{\gr\alpha} \hookrightarrow \Omega\mcC^{0,\alpha-1}$ (see Section~\ref{subsec:smooth_one_forms}),
and since by Theorem~\ref{thm:local_exist} we can start the SYM from any initial condition in $\Omega\mcC^{\eta}$, $\eta\in(-\frac12,0)$,
it follows that we can take $\tau=\sigma\wedge T^*$ where $T^*$
is the blow-up time of $|A|_{\alpha}$.
Note also that $g$ and $\xi$ are adapted to the 
filtration generated by $\bar \xi$, and $A$ is adapted to the filtration generated by $\xi$.

Consider $a,\bar a\in\hat\Omega^1_\alpha$ with $[a]=[\bar a]$ and a generative 
probability measure $\bar\mu$ on $D_0(\R_+,\hat\Omega^1_\alpha)$ with initial condition $\bar a$.
Let $\bar A\in D_0(\R_+,\hat\Omega^1_\alpha)$ denote 
the corresponding process with filtration $(\bar\mcF_t)_{t\geq 0}$, white noise $\bar\xi$, and blow-up 
time $\bar T^*$ as in Definition~\ref{def:generative}.
It readily follows from the above remark and the
conditions in Definition~\ref{def:generative} that
there exist, on the same probability space,
\begin{itemize}
\item a process $g\colon \R_+ \to \mfG^{0,\alpha}$
adapted to $(\bar\mcF_t)_{t \geq 0}$,
which is c{\`a}dl{\`a}g on the interval $(0,\bar T^*)$, for which $\lim_{t\downarrow0}g(t)$ exists, and which
remains constant $g\equiv 1$ on $[\bar T^*,\infty)$, and
\item a Markov process $A\in D_0(\R_+,\hat\Omega^1_\alpha)$ constructed 
as in part~\ref{pt:generative_exists} using the white 
noise $\xi\eqdef\Ad_g\bar\xi$ such that $A=\bar A^{g}$ and $A(0)=a$.
\end{itemize}
(Specifically, the process $g$ is constructed to have 
jumps in $[0,\bar T^*)$ only at the jump times of $\bar A$
and $A$, and $\bar{g} = g^{-1}$ solves~\eqref{eq:SPDE_for_bar_A} driven by $\bar A$ on its intervals of continuity.)
In particular, the pushforwards $\pi_*\bar\mu$ and $\pi_*\mu$ coincide,
where $\mu$ is the law of $A$.

To complete the proof, it remains only to show that 
for the process $A$ from part~\ref{pt:generative_exists}
with any initial condition $a\in\hat\Omega^1_\alpha$, the projected process
$\pi A$ takes values in $\mfO_\alpha^\sol$ and is Markov.
The Markov property of $\pi A$ follows from that of $A$ and from
taking $\bar \mu$ in the above argument as the law of $A$ with initial condition $\bar a\sim a$.
The fact that $\pi A$ takes values in $\mfO_\alpha^\sol$
is due to items~\ref{pt:fix_at_zero},~\ref{pt:solves_SYM},~\ref{pt:gauge_at_jumps}, and~\ref{pt:honest_blow_up} of Definition~\ref{def:generative}
together with the fact that,
due to Lemma~\ref{lem:k_lower_bound},
for any sequence $x_n\in\mfO_\alpha$,
$\inf_{a\in x_n} |a|_\alpha \to \infty \Rightarrow x_n\to\skull$ in $\hat\mfO_\alpha$.
\footnote{The converse $x_n\to\skull$ in $\hat\mfO_\alpha \Rightarrow\inf_{a\in x_n} |a|_\alpha \to \infty $ is also true due to~\eqref{eq:K_upper_bound} and
Lemma~\ref{lem:points_on_spheres}
as in the proof of Lemma~\ref{lem:k_to_D_conv},
but is not needed here.}
\end{proof}

\appendix

\section{Singular modelled distributions}
\label{app:Singular modelled distributions}

We collect in this appendix several results on singular modelled distributions used in Section~\ref{sec:gauge_equivar}.

\subsection{Short-time convolution estimates}
\label{app:reconstruct}

In Section~\ref{sec:gauge_equivar}, it is important that
the short time $\tau>0$ for which the fixed point maps~\eqref{eq:abstract_fixed_point_eq} and~\eqref{eq:abstract_fixed_point_eq_2} are contractions is a stopping time with respect to the white noise.
This requires short-time convolution estimates which only depend on the size of the model up to time $\tau>0$ (see Proposition~\ref{prop:short_time}).
In this appendix, we prove a refined version of the reconstruction theorem, Lemma~\ref{lem:reconstruct}, which is used to prove this result.

Fix a regularity structure $\CT$ on $\R^{d+1}$,
an arbitrary scaling $\s$, as well as an integer 
$r \ge 1$ such that $-r$ is
smaller than the lowest homogeneity appearing in $\CT$ and such that furthermore
$r \ge \sup_i \s_i$.
Given  model $Z = (\Pi,\Gamma)$ and a compact set $\K\subset\R^{d+1}$, we denote by $\$Z\$_\K$
the smallest constant $C$ such that, for all homogeneous elements $\tau \in \CT$, 
\begin{equ}
\bigl| \bigl(\Pi_x \tau\bigr)(\phi_x^\lambda)\bigr| \le C \|\tau\| \lambda^{\deg\tau}\;,
\end{equ}
for all $\phi \in \CB^r_{\s,0}$ and all $x \in \K$, $\lambda \in (0,1]$ such that
$B_\s(x,2\lambda) \subset \K$, as well as
\begin{equ}
\|\Gamma_{x,y} \tau\|_\alpha \le C \|\tau\| \, \|x-y\|_\s^{\deg \tau - \alpha}\;,
\end{equ}
for all $x,y \in \K$. Note that $\K \mapsto \$Z\$_\K$ is increasing by definition.
We likewise define the pseudo-metric $\$Z;\bar Z\$_\K$ with $\Pi_x$ replaced by $\Pi_x-\bar\Pi_x$ and $\Gamma_{x,y}$ replaced by $\Gamma_{x,y}-\bar\Gamma_{x,y}$.

\begin{remark}
The definition of $\$\act\$_{\K}$ and $\$\act;\act\$_{\K}$ in~\cite[Eq.~(2.16)-(2.17)]{Hairer14} is \textit{not} equivalent to our definition above, the difference being that we enforce $B_\s(x,2\lambda)\subset\K$ instead of just $\phi\in\mcB^r_{s,0}$ and $x\in\K$.
\end{remark}

With this definition at hand, we then have the following sharpening of 
\cite[Prop.~7.2]{Hairer14}.

\begin{lemma}\label{lem:reconstruct}
Let $\gamma>0$, $F\in \cD^\gamma$, $\psi \in \CB^r_{\s,0}$ and $\K \eqdef \supp \psi_x^\lambda$. Then
\begin{equ}
 \bigl| \bigl(\CR F - \Pi_x F(x)\bigr)(\psi_x^\lambda)\bigr|
 \lesssim \lambda^\gamma \|F\|_{\cD^\gamma_\K} \bigl(1 + \$Z\$_\K\bigr)\;.
\end{equ}
\end{lemma}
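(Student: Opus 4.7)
The plan is to adapt the standard wavelet-based proof of the reconstruction theorem (as in \cite[Prop.~3.25 and Prop.~7.2]{Hairer14}), making sure that every model pairing invoked involves a test function whose $2$-enlarged support is contained in $\K$. The proof realises $\CR F$ as a limit of wavelet approximants at dyadic scales and bounds $(\CR F - \Pi_x F(x))(\psi_x^\lambda)$ via a telescoping sum, each term controlled by model bounds on wavelets combined with the $\cD^\gamma_\K$-estimate $\|F(y) - \Gamma_{y, x} F(x)\|_\beta \lesssim \|F\|_{\cD^\gamma_\K}\|y-x\|_\s^{\gamma - \beta}$ for $y, x \in \K$.

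The key new ingredient is the observation that $\psi \in \CB^r_{\s, 0}$, being compactly supported and of class $\CC^r$ globally on $\R^{d+1}$, must vanish together with its derivatives up to order $r$ on $\partial \supp \psi = \partial \K$. Consequently
\begin{equ}
|\psi_x^\lambda(z)| \lesssim \lambda^{-|\s|-r}\, d_\s(z, \partial \K)^r\qquad (z \in \K)\;,
\end{equ}
and this decay allows one to write, via a Whitney-type partition of unity subordinate to a covering of the interior of $\K$ by balls $B_\s(y_{n,k}, \mu_n)$ at dyadic scales $\mu_n = 2^{-n}\lambda$ with $d_\s(y_{n,k}, \partial \K) \gtrsim \mu_n$,
\begin{equ}
\psi_x^\lambda = \sum_{n \ge 0}\sum_k c_{n,k}\,(\phi^{(n,k)})_{y_{n,k}}^{\mu_n}\;,
\end{equ}
where $\phi^{(n,k)} \in \CB^r_{\s, 0}$ uniformly, $B_\s(y_{n,k}, 2\mu_n) \subset \K$, and $|c_{n,k}| \lesssim (\mu_n/\lambda)^{|\s| + r}$.

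Once this decomposition is in place, one applies the standard reconstruction telescoping argument to each bump: for fixed $(n, k)$, the pairing $(\CR F - \Pi_x F(x))\bigl((\phi^{(n,k)})_{y_{n,k}}^{\mu_n}\bigr)$ is expressed as a telescoping sum over finer dyadic scales $\mu' \le \mu_n$, and the wavelets that appear are all centered within $O(\mu_n)$ of $y_{n,k}$, so their $2$-enlarged support remains inside $\K$ and the refined bound $(\Pi_{y'}\tau)(\phi_{y'}^{\mu'}) \lesssim \$Z\$_\K \|\tau\|(\mu')^{\deg\tau}$ applies. Combined with the $\cD^\gamma_\K$ estimate, this yields a per-bump contribution of order $\sum_\beta \mu_n^\beta\, \lambda^{\gamma - \beta}\,\|F\|_{\cD^\gamma_\K}(1+\$Z\$_\K)$. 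Counting $\lesssim (\lambda/\mu_n)^{|\s| - 1}$ bumps at scale $n$ (a surface-area count for the boundary layer of depth $\mu_n$) and summing over $k$ and $n$ produces a contribution at scale $n$ of order $\lambda^\gamma\, 2^{-n(r+\alpha_0+1)}\,\|F\|_{\cD^\gamma_\K}(1+\$Z\$_\K)$, with $\alpha_0$ the lowest homogeneity in $\CT$; the geometric series converges thanks to $r > -\alpha_0$.

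The main obstacle is the construction of the Whitney partition of unity with the stated properties: producing bumps that all belong to $\CB^r_{\s,0}$ up to a uniform constant while also ensuring that the coefficients $c_{n,k}$ inherit the full boundary decay of $\psi_x^\lambda$. This is routine (it is essentially Whitney's construction of adapted partitions of unity) but requires careful balancing of the geometric scale of the covering against the $\CC^r$ control of $\psi_x^\lambda$; once this balancing is fixed, the rest of the argument reduces to a bookkeeping exercise paralleling the proof of \cite[Prop.~3.25]{Hairer14}.
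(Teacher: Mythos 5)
Your proposal takes the same broad approach as the paper's proof: cover $\K$ by a Whitney‐type partition of unity adapted to $\partial\K$ at dyadic scales, decompose $\psi_x^\lambda$ accordingly, apply the usual reconstruction estimate to each bump (which is admissible for $\$Z\$_\K$ because the Whitney construction keeps the $2$‐enlarged supports inside $\K$), and sum a geometric series over the scales. Both proofs rest on the same two ingredients: the boundary decay of $\psi_x^\lambda$ and a count of bumps per scale.

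There is, however, a quantitative difference worth flagging. The paper's proof uses only the linear boundary decay $|\psi_x^\lambda(y)|\lesssim\lambda^{-|\s|}\,\bigl(d_\s(y,\partial\K)/\lambda\bigr)$, together with the rather generous volume count "$\lesssim(\lambda/\lambda_k)^{|\s|}$ bumps at scale $\lambda_k$", yielding $\alpha_k\lesssim(\lambda_k/\lambda)^{|\s|+1}$ and the summation $\sum_k\alpha_k\lambda_k^\gamma\lesssim\lambda^\gamma$. You instead use the full order‐$r$ decay (correctly observing that a globally $\CC^r$, compactly supported $\psi$ has derivatives up to order $r$ vanishing on $\partial\supp\psi$, hence $|\psi_x^\lambda(z)|\lesssim\lambda^{-|\s|-r}d_\s(z,\partial\K)^r$), combined with the tighter surface‐area count. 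The payoff is exactly as you say: your scale‐$n$ contribution behaves like $\lambda^\gamma\,2^{-n(r+1+\alpha_0)}$, summable under the standing hypothesis $r>-\alpha_0$ regardless of how negative the lowest homogeneity $\alpha_0$ is. This extra room is precisely what is needed to absorb the base‐point shift from $\Pi_{y_{n,k}}F(y_{n,k})$ to $\Pi_x F(x)$ — each bump picks up an extra term of order $\alpha_k\sum_\beta\lambda_k^\beta\lambda^{\gamma-\beta}$ with $\beta$ ranging down to $\alpha_0$, and with only the linear‐decay coefficient $\alpha_k\sim(\lambda_k/\lambda)^{|\s|+1}$ this sum would not close for $\alpha_0$ very negative. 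The paper's displayed estimate is for $\bigl(\CR F-\Pi_{x_k}F(x_k)\bigr)(\psi_k)$ and it then summarily concludes by summing $\alpha_k$; it does not make the base‐point correction explicit, so your argument is a more complete (and, for general $\alpha_0$, more robust) account of the same proof, at the small cost of having to check that the Whitney bumps simultaneously sit in $\CB^r_{\s,0}$ uniformly and carry coefficients inheriting the full order‐$r$ decay.
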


\begin{proof}
We claim that one can construct a countable collection 
of compactly supported functions $\phi_k$ with the following properties:
\begin{claim}
\item For every $k$ there exists $\lambda_k \in 2^{-\N}$ such that the support of $\phi_k$ 
belongs to the (scaled) ball $B_\s(x_k, \lambda_k)$ for some
$x_k \in \K$ such that also $B_\s(x_k, 2\lambda_k) \subset \K$ but $B_\s(x_k, 3\lambda_k) \not\subset \K$. 
\item One has $\sum_{k \ge 0} \phi_k(x) = 1$ for all $x \in \mathring \K$.
\item There exists $C>0$ such that 
$\sup |D^\ell \phi_k| \le C \lambda_k^{-|\ell|_\s}$ for all $|\ell| \le r$.
\item There exist $C>0$ such that the number of values $k$
such that $\lambda_k = 2^{-n}$ is bounded by $C (\lambda / \lambda_k)^{|\s|}$.
\end{claim}
The construction of this partition of unity is virtually identical to
the one given in \cite[\S II.9]{Whitney}, except that the axes are scaled 
according to $\s$. 

Since $\psi_x^\lambda \in \CC^r$ for some $r \ge \sup_i |\s_i|$ and since all of its derivatives vanish outside of $\K$, 
it follows that $|\psi_x^\lambda(y)| \lesssim \lambda^{-|\s|}  (d_\s(y,\d \K)/\lambda)$
uniformly over all $k\ge 0$.
By the first property above, points in the support of $\phi_k$ are at scaled distance 
at most $\CO(\lambda_k)$ of the boundary of $\K$.
Since we furthermore have $\lambda_k \lesssim \lambda$ for every $k$ by the first property and
since $\|D^\ell \psi_x^\lambda\|_{L^\infty} \lesssim \lambda^{-|\s|-|\ell|_\s}$ for every $\ell$ with
$|\ell| \le r$, it follows that 
\begin{equ}[e:boundDpsi]
\sup_{y \in \supp \phi_k} |D^\ell \psi_x^\lambda(y)| \lesssim \lambda^{-|\s|} (\lambda/\lambda_k)^{|\ell|_\s -1}\;,
\end{equ}
uniformly over $k \ge 1$ and $|\ell| \le r$.

We then write $\psi_x^\lambda = \sum_{k \ge 0} \psi_k$
with $\psi_k = \phi_k \psi_x^\lambda$
and note that one has the bound
\begin{equs}
\|D^\ell \psi_k\|_{L^\infty} &\lesssim \sum_{\ell_1 + \ell_2 = \ell}
\|D^{\ell_1} \phi_k\|_{L^\infty}\|D^{\ell_2} \psi_x^\lambda\|_{L^\infty(\supp \phi_k)} \\
&\lesssim \lambda_k^{-|\ell_1|_\s} \lambda^{-|\s|-|\ell_2|_\s} (\lambda / \lambda_k)^{-1+|\ell_2|_\s} \\
&= \lambda_k^{-|\ell|_\s-|\s|}(\lambda / \lambda_k)^{-1-|\s|}\;,
\end{equs}
provided that $|\ell| \le r$.

It follows that $\psi_k$ is of the form 
$\alpha_k \bar \psi_{x_k}^{\lambda_k}$ for some $\bar \psi \in \CB^r_{\s,0}$
and for $\alpha_k \lesssim (\lambda/\lambda_k)^{-|\s|-1}$. It then follows
from \cite[Lem.~6.7]{Hairer14} that 
\begin{equ}
\bigl| \bigl(\CR F - \Pi_{x_k} F(x_k)\bigr)(\psi_k)\bigr|
 \lesssim \alpha_k \lambda_k^\gamma \|F\|_{\CD^\gamma_\K} \bigl(1 + \$Z\$_{B(x_k, 2\lambda_k)}\bigr)\;.
\end{equ}
and in particular that
\begin{equ}
\bigl| \bigl(\CR F - \Pi_{x_k} F(x_k)\bigr)(\psi_k)\bigr|
 \lesssim \alpha_k \lambda^\gamma \|F\|_{\CD^\gamma_\K} \bigl(1 + \$Z\$_{\K}\bigr)\;.
\end{equ}
It then remains to sum over $k$, using the last property above to show
that $\sum_k \alpha_k$ is of order $1$.
\end{proof}

\begin{remark}
The quantity $ \bigl| \bigl(\Pi_x F(x)\bigr)(\psi_x^\lambda)\bigr|$
itself can similarly be bounded by
$\lambda^{\bar \alpha} \|F(x)\| \bigl(1 + \$Z\$_\K\bigr)$
for $\bar\alpha \le 0$ the lowest degree appearing in the regularity structure, provided
that $r$ is sufficiently large as a function of $\bar \alpha$.
The reason is that in order to be able to retrace the exact same proof as
above, we need to replace the bound \eqref{e:boundDpsi} by
a bound of order $\lambda^{-|\s|} (\lambda/\lambda_k)^{|\ell|_\s -\beta}$
for some $\beta > -\bar \alpha$.
\end{remark}

We assume henceforth that we are in the periodic setting, i.e.\ we work over $\R\times \T^d$.
For $T>0$, let $O_T\eqdef [-1,T]\times \T^d$.\label{O_tau_page_ref}

\begin{proposition}\label{prop:short_time}
The statement of~\cite[Thm~7.1]{Hairer14} holds with the improvement that $\$\act\$_{O}$ and $\$\act;\act\$_{O}$ therein are replaced by $\$\act\$_{O_T}$ and $\$\act;\act\$_{O_T}$ respectively.
\end{proposition}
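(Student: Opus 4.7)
The plan is to retrace the proof of \cite[Thm~7.1]{Hairer14} verbatim, replacing at each step the standard reconstruction bound \cite[Lem.~6.7]{Hairer14} (used implicitly through \cite[Prop.~7.2]{Hairer14}) by Lemma~\ref{lem:reconstruct}, and using the fact that $K$ is non-anticipative to guarantee that only the past is ever probed. No new analytic input is needed beyond these two observations.

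More precisely, the first step is to decompose $K = \sum_{n \ge 0} K_n$ as in \cite[Sec.~5]{Hairer14}, where each $K_n$ is supported in the parabolic ball of radius $2^{-n}$ around the origin. Since $K$ has support in $\{(t,x) : t \ge 0\}$ by our standing assumption on $\moll$ and the construction in Section~\ref{subsubsec:kernelandnoiseforsym}, the decomposition can be arranged so that each $K_n$ is itself non-anticipative; this also applies to $K^\eps = K \ast \moll^\eps$ when $\moll$ is non-anticipative. In the construction and the estimate of $\mcb{K}f(x)$ at a base point $x \in O_T$, the model only enters through quantities of the form $(\Pi_x \tau)(K_n(x-\cdot))$ and reconstruction errors of the form $(\CR f - \Pi_x f(x))(K_n(x-\cdot))$. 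For both, the test function $K_n(x-\cdot)$ is (after rescaling) an element of $\mcB^r_{\s,0}$ at scale $2^{-n}$ centred at $x$, with support contained in $\{y : y_0 \le x_0\} \cap B_\s(x, 2^{-n})$. Since $x_0 \le T$ this set has time coordinate at most $T$ and therefore lies in $O_T$, provided $2^{-n}$ is small enough to avoid the bottom boundary $\{t = -1\}$. Lemma~\ref{lem:reconstruct} then bounds the reconstruction error in terms of $\$Z\$_{\K}$ with $\K$ this support, and a fortiori in terms of $\$Z\$_{O_T}$, replacing the larger set $O$ used in the original argument.

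The remaining small-$n$ contributions, where the support of $K_n(x-\cdot)$ could in principle escape $O_T$ through the bottom boundary, are handled by the usual ``initial layer'' trick: in the applications of Section~\ref{sec:gauge_equivar} the operator $\mcb{K}$ is always applied to modelled distributions of the form $\bone_+ f$, whose reconstruction vanishes on $\{t<0\}$, so the contribution of any test function supported in $\{t < 0\}$ vanishes identically (the cushion of size one provided by the choice $O_T = [-1,T]\times\T^d$ is more than enough to accommodate the range of the relevant $K_n$). The bound on the difference $\mcb{K}f - \mcb{K}\bar f$ for two different models is proved by an identical argument, with $\$\cdot\$_{O_T}$ replaced throughout by the pseudo-metric $\$\cdot;\cdot\$_{O_T}$.

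The main obstacle is bookkeeping: one must carefully verify that every invocation of the reconstruction theorem in the proof of \cite[Thm~7.1]{Hairer14} is through test functions to which Lemma~\ref{lem:reconstruct} applies (i.e.\ smooth compactly supported functions at a fixed scale), and that the particular wavelet \slash kernel decomposition used there preserves the non-anticipativity needed to keep supports inside $O_T$. Both points are by now routine after the observations above, and no qualitatively new estimate is required.
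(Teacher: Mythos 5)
Your proof is correct and matches the paper's (whose proof consists of a single remark that one retraces \cite[Thm~7.1]{Hairer14} with \cite[Prop.~7.2]{Hairer14} replaced by Lemma~\ref{lem:reconstruct}); your elaboration of the role of non-anticipativity of $K$ is precisely the hidden ingredient that makes the localized set $O_T$ sufficient. One small inaccuracy: since $K$ is supported in the parabolic unit ball, any $K_n(x-\cdot)$ with $x_0\ge 0$ has support in $[-1,x_0]\times\T^d\subset O_T$ for every $n\ge 0$, so the bottom boundary $\{t=-1\}$ is never in play and the ``initial layer'' paragraph you added is unnecessary (though harmless).
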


\begin{proof}
The proof is very similar to that of~\cite[Thm~7.1]{Hairer14}; the only difference is that the application of~\cite[Prop.~7.2]{Hairer14}
is replaced by Lemma~\ref{lem:reconstruct}.
\end{proof}

\subsection{Modelled distributions with prescribed behaviour at \texorpdfstring{$t=0$}{t=0}}
\label{subapp:model_vanish}

Assume henceforth that we are given a regularity structure $\CT$ as
well as a model on $\R^{d+1}$ endowed with the parabolic scaling. (Other scalings can be
dealt with in exactly the same way.) We write $P = \{(t,x) \,:\, t=0\}$ for the time $0$ hyperplane
and consider the corresponding spaces $\cD^{\gamma,\eta}$ defined as in \cite[Sec.~6]{Hairer14}. 

We also recall that the reconstruction operator defined in \cite{Hairer14} is local, so that there
exists a continuous reconstruction operator $\tilde \CR \colon \cD^{\gamma,\eta} \to \CD'(\R^{d+1}\setminus P)$\label{def:tildeR} satisfying the bound above \cite[Lem.~6.7]{Hairer14}. 
One problem is that there is in 
full generality no way of canonically extending $\tilde \CR$ to an operator $\CR \colon \cD^{\gamma,\eta} \to \CD'(\R^{d+1})$. 
 \cite[Prop.~6.9]{Hairer14} provides such an extension under the assumption $\alpha\wedge \eta >-2$ (where $\alpha$ is the lowest degree of our modelled distributions) which is insufficient for our purposes in Section~\ref{sec:gauge_equivar}.
However, below we show that assuming improved behavior near $P$, one only needs $ \eta >-2$ for the extension to be unique.

We write $\bar \cD^{\gamma,\eta} = \cD^{\gamma,\eta} \cap \cD^\eta$ 
as well as $\hat \cD^{\gamma,\eta} \subset \bar \cD^{\gamma,\eta}$ for the subspace of those  \label{def:Dhat-space}
functions $f\in \bar \cD^{\gamma,\eta}$ such that $f(t,x) = 0$ for $t \le 0$.
Similarly to \cite[Lem.~6.5]{Hairer14} one can show that these are closed subspaces
of $\cD^{\gamma,\eta}$, so that we endow them with the usual norms $\$f\$_{\gamma,\eta}$.
We also note that $\bone_+ f= f$ for all $f\in\hat\cD^{\gamma,\eta}$.

Note that elements of $\hat \cD^{\gamma,\eta}$
have the improved behaviour
that its component of degree $\ell$ vanishes at the rate $|z|_P^{\eta-\ell}$ when $z$ is near $P$
if $\ell<\eta$ (rather than just being bounded as for generic elements of $\cD^{\gamma,\eta}$).

\begin{definition}
Given $f\in \cD^{\gamma,\eta}_\alpha$ for $\gamma>0$ and $\omega\in \CC^{\eta\wedge \alpha}$, we say that $\omega$ is \emph{compatible} with $f$ if $\omega(\phi) = (\tilde \CR f )(\phi)$ for all $\phi\in \CC^\infty_c (\R^{d+1}\backslash P)$.
\end{definition}

Recall as in \cite{Hairer14} that $ \CQ_{< \eta}$ is the projection to the subspace of the regularity structure of degree less than $\eta$.

\begin{theorem}\label{thm:reconstructDomain}
Let $\gamma > 0$ and $\eta \in (-2,\gamma]$. 
There exists a unique continuous linear operator
$\CR:\bar\cD^{\gamma,\eta}_\alpha\to\CC^{\eta\wedge \alpha}$ such that
$\CR f$ is compatible with $f$
and such that
\begin{equ}[e:eta-bound]
\big(\CR f-\Pi_x \CQ_{<\eta} f(x)\big)(\psi_x^\lambda)\lesssim \lambda^\eta\;,
\end{equ}
uniformly over $\lambda \le 1$ and over $\psi\in\CB$. 
\end{theorem}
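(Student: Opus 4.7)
\medskip

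\noindent\textbf{Proof plan.} The plan is to establish uniqueness first, since the bound \eqref{e:eta-bound} pins down $\CR f$ very rigidly, and then construct $\CR f$ by a dyadic annular cutoff procedure around $P$ that exploits the improved $\cD^\eta$ decay.

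For uniqueness, suppose $\CR^1, \CR^2$ both satisfy the hypotheses. Their difference $D \eqdef \CR^1 f - \CR^2 f$ vanishes on $\CC^\infty_c(\R^{d+1}\setminus P)$ (by compatibility of each with $\tilde\CR f$), so $\supp D \subset P$, while subtraction in \eqref{e:eta-bound} kills the $\Pi_x \CQ_{<\eta}f(x)$ term and yields $|D(\psi^\lambda_x)| \lesssim \lambda^\eta$, i.e., $D\in\CC^\eta$. Such a $D$ is of finite order and hence admits a finite decomposition $D = \sum_{k=0}^K \d_t^k\delta(t)\otimes g_k$ with $g_k\in \CD'(\T^d)$. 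Testing against $\psi^\lambda_{(0,x_0)}$ gives, by the parabolic rescaling, a contribution $\sim \lambda^{-2-2k} g_k(\phi_k^\lambda)$ from level $k$, which cannot be $o(\lambda^{-2})$ unless $g_k=0$. Since $\eta>-2$, all $g_k$ must vanish and $D=0$.

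For existence, fix $\chi\in\CC^\infty(\R;[0,1])$ with $\chi\equiv 0$ on $\{|s|\leq 1/2\}$ and $\chi\equiv 1$ on $\{|s|\geq 1\}$, and set $\chi_n(t,x)\eqdef \chi(2^{2n}t)$, so $\chi_n-\chi_{n-1}$ is supported in a parabolic annulus $A_n$ around $P$ of thickness $\sim 2^{-n}$. Define, for test $\phi$,
\begin{equ}
(\CR f)(\phi)\eqdef (\tilde\CR f)(\chi_0\phi) + \sum_{n\geq 1}(\tilde\CR f)\bigl((\chi_n-\chi_{n-1})\phi\bigr)\;.
\end{equ}
Each summand makes sense because $(\chi_n-\chi_{n-1})\phi\in\CC^\infty_c(\R^{d+1}\setminus P)$. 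Convergence of the series is the heart of the matter: I cover $A_n\cap \supp\phi$ by $O(2^{nd})$ parabolic balls of radius $2^{-n}$, split via a subordinate partition of unity, and write each piece as $\alpha_n^{(i)}\tilde\psi_n^{(i)}$ with $\tilde\psi_n^{(i)}\in\CB$ scaled at $2^{-n}$ and $\alpha_n^{(i)}\lesssim 2^{-n|\s|}$. The crucial input is the $\cD^\eta$ bound $\|f_\ell(y_n^{(i)})\|\lesssim |y_n^{(i)}|_P^{\eta-\ell}\sim 2^{-n(\eta-\ell)}$; combined with $|\Pi_{y_n^{(i)}}\tau_\ell(\tilde\psi_n^{(i)})|\lesssim 2^{-n\ell}$ and Lemma~\ref{lem:reconstruct} for the remainder, each patch contributes $\lesssim 2^{-n|\s|}(2^{-n\gamma}+2^{-n\eta})$. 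Summing over patches gives $|T_n|\lesssim 2^{-n(2+\eta)}$ using $|\s|=d+2$, which is summable precisely when $\eta>-2$.

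To finish, independence of the choice of $\chi$ follows from uniqueness applied to the difference of two candidate extensions (both supported on $P$ and both satisfying the required decay), and compatibility with $\tilde\CR f$ is immediate since $\chi_n\phi\equiv\phi$ for large $n$ whenever $\phi\in\CC^\infty_c(\R^{d+1}\setminus P)$. For the bound \eqref{e:eta-bound}, I subtract $\Pi_x\CQ_{<\eta}f(x)(\psi^\lambda_x)$, decompose $\psi^\lambda_x$ into the near-$P$ part (those annuli with $2^{-n}\leq C\lambda$) and the far part, and apply the annular estimate to the near part while using the local reconstruction Lemma~\ref{lem:reconstruct} on the far part; the geometric series then gives exactly $\lambda^\eta$. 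The membership $\CR f\in\CC^{\eta\wedge\alpha}$ follows by combining this with $|\Pi_x\CQ_{<\eta}f(x)(\psi^\lambda_x)|\lesssim \lambda^\alpha$. The main obstacle I expect is the bookkeeping in the annular decomposition: one must verify that the number of scale-$2^{-n}$ parabolic patches needed to cover $A_n\cap\supp\phi$ is indeed $O(2^{nd})$ (not $O(2^{n(d+2)})$) and that the localisation constants $\alpha_n^{(i)}$ genuinely satisfy the claimed bound uniformly in the test function and in $x$, so that the summation threshold $\eta>-2$ is sharp and matches the condition imposed on $\eta$.
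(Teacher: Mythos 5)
Your proposal is correct in substance, but it takes a different route than the paper's proof. The paper dispatches this result in two lines by citing \cite[Thm.~C.5]{EtienneCLT} (which handles singular hyperplanes arising as domain boundaries), transplanting the exponent $-1$ there to $-2$ here on account of the parabolic codimension of $P$, and replacing the quantity $\CR_+f_+ + \CR_-f_-$ there by $\hat\CR\CQ_{<\eta}f$, where $\hat\CR$ is the reconstruction operator of \cite[Thm.~3.10]{Hairer14} for $\gamma < 0$. The structure of the cited proof thus proceeds by producing a candidate extension ``all at once'' from $\hat\CR\CQ_{<\eta}f$ rather than building it scale by scale, and it verifies the bound \eqref{e:eta-bound} relative to this candidate. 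Your construction is the more hands-on dyadic annular decomposition in the spirit of \cite[Prop.~6.9]{Hairer14}: defining $\CR f$ by summing the contributions of the parabolic shells $A_n$ and establishing summability via the $\cD^\eta$ decay. What you gain is a self-contained argument whose summability threshold visibly produces the condition $\eta > -2$ (the count $O(2^{nd})$ of patches times $2^{-n|\s|}2^{-n\eta}$); what the paper's route gains is brevity, at the cost of relying on an external result. Both are legitimate.

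Two small places where your argument is a bit loose, though neither is fatal. In the uniqueness part, the statement ``which cannot be $o(\lambda^{-2})$ unless $g_k=0$'' glosses the key step: from $D=\sum_k\partial_t^k\delta(t)\otimes g_k$ and $|D(\psi^\lambda_{x_0})|\lesssim\lambda^\eta$ for $x_0\in P$, choosing $\psi(t,y)=\tfrac{t^k}{k!}\chi(t)\phi(y)$ with $\chi\equiv1$ near $0$ isolates the $k$-th term and gives $|g_k(\phi^\lambda_{\bar x_0})|\lesssim\lambda^{\eta+2+2k}$ uniformly in $\bar x_0$; since $\eta+2>0$ this forces $g_k\equiv0$. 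You should state this explicitly rather than invoking a blanket ``cannot be $o(\lambda^{-2})$''. Second, when you apply Lemma~\ref{lem:reconstruct} on a patch at distance $\sim 2^{-n}$ from $P$, the factor $\|f\|_{\cD^\gamma_{\K}}\sim 2^{-n(\eta-\gamma)}$ on that patch enters the estimate (not just a uniform bound), so the per-patch contribution is $\lesssim 2^{-n|\s|}(2^{-n\gamma}\cdot 2^{-n(\eta-\gamma)}+2^{-n\eta})=2^{-n|\s|}2^{-n\eta}$; it is worth making this degraded local modelled distribution norm explicit, since it is exactly what converts the generic $\lambda^\gamma$ reconstruction gain into the $2^{-n\eta}$ that you need.
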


\begin{proof}
The proof is virtually identical to that of \cite[Thm.~C.5]{EtienneCLT} with the boundary of the domain
$D$ there playing the role of the time-$0$ hyperplane $P$. In particular, the exponent $\eta$ in
our statement should be compared to that of $\sigma$ in \cite[Thm.~C.5]{EtienneCLT} and the exponent $-2$ 
appearing here is analogous to the exponent $-1$ there due to parabolic scaling. Finally,
the quantity $\CR_+ f_+ + \CR_- f_-$ appearing there should be replaced throughout by
$\hat \CR \CQ_{<\eta} f$ where $\hat \CR$ denotes the (continuous) reconstruction
operator for $\gamma < 0$ given in the second part of \cite[Thm.~3.10]{Hairer14}.
\end{proof}

\begin{lemma}\label{lem:multiply-barD}
For $F_i \in \bar \cD^{\gamma_i,\eta_i}_{\alpha_i}$ with $\alpha_i \le 0 < \gamma_i$ and $\eta_i \le \gamma_i$, one has
$F_1 \cdot F_2 \in \bar\cD^{\gamma, \eta}_{\alpha_1+\alpha_2}$ with $\gamma = (\alpha_1 + \gamma_2)\wedge (\alpha_2 + \gamma_1)$ and $\eta = (\alpha_1 + \eta_2)\wedge (\alpha_2 + \eta_1) \wedge (\eta_1 + \eta_2)$.
\end{lemma}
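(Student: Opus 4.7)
The plan is to reduce the claim to the standard multiplication estimate of Hairer and then separately verify the additional vanishing at $P$ that defines the bar‐space. By \cite[Prop.~6.12]{Hairer14} applied to $F_1,F_2\in\cD^{\gamma_1,\eta_1}\times\cD^{\gamma_2,\eta_2}$, the product $F_1\cdot F_2$ already lies in $\cD^{\gamma,\eta}$ with the exponents $\gamma=(\alpha_1+\gamma_2)\wedge(\alpha_2+\gamma_1)$ and $\eta=(\alpha_1+\eta_2)\wedge(\alpha_2+\eta_1)\wedge(\eta_1+\eta_2)$ as stated. So the only thing left to show is that $F_1F_2$ also belongs to $\cD^\eta$, i.e.\ has the improved behaviour near $P$: for every homogeneity $\ell<\eta$ in the regularity structure,
\[
\|Q_\ell(F_1F_2)(z)\|_\ell\lesssim |z|_P^{\,\eta-\ell}\;,
\]
together with the analogous bound for $Q_\ell\bigl(F_1(z)F_2(z)-\Gamma_{z,y}(F_1F_2)(y)\bigr)$, expressed in $\|z-y\|_\s$ and $|z|_P,|y|_P$.

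The key step is a pointwise case analysis. Expanding the product at $z$ gives
\[
Q_\ell(F_1F_2)(z)=\sum_{\ell_1+\ell_2=\ell} Q_{\ell_1}F_1(z)\cdot Q_{\ell_2}F_2(z)\;,
\]
where $\ell_i\geq\alpha_i$. For each pair $(\ell_1,\ell_2)$ I distinguish four subcases:
\begin{enumerate}[label=(\alph*)]
\item If $\ell_1<\eta_1$ and $\ell_2<\eta_2$, both factors enjoy the improved bound from their respective bar‐spaces, giving a contribution of order $|z|_P^{(\eta_1-\ell_1)+(\eta_2-\ell_2)}=|z|_P^{\eta_1+\eta_2-\ell}\lesssim |z|_P^{\eta-\ell}$ since $\eta\le\eta_1+\eta_2$ and $|z|_P$ is bounded on any fixed compact set.
\item If $\ell_1\ge\eta_1$ but $\ell_2<\eta_2$, the first factor is controlled by the generic $\cD^{\gamma_1,\eta_1}$ estimate (i.e.\ bounded by a constant since $(\eta_1-\ell_1)\wedge 0\le 0$) while the second factor gains $|z|_P^{\eta_2-\ell_2}$. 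Using $\ell_1\ge\alpha_1$ one has $\eta_2-\ell_2=\eta_2+\ell_1-\ell\ge\alpha_1+\eta_2-\ell\ge\eta-\ell$, so the contribution is again $\lesssim|z|_P^{\eta-\ell}$; the symmetric case is identical.
\item If $\ell_1\ge\eta_1$ and $\ell_2\ge\eta_2$, then $\ell=\ell_1+\ell_2\ge\eta_1+\eta_2\ge\eta$, contradicting the standing assumption $\ell<\eta$, so this case does not occur.
\end{enumerate}

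For the translation estimates one writes, as in the proof of \cite[Prop.~6.12]{Hairer14},
\[
F_1(z)F_2(z)-(\Gamma_{z,y}F_1(y))(\Gamma_{z,y}F_2(y))=\bigl(F_1(z)-\Gamma_{z,y}F_1(y)\bigr)F_2(z)+\Gamma_{z,y}F_1(y)\bigl(F_2(z)-\Gamma_{z,y}F_2(y)\bigr)\;,
\]
and applies the same case analysis, now combining the $\cD^{\gamma_i,\eta_i}$ translation bounds on the differences with either the bar‐improved or the generic pointwise bound on the remaining factor, using the two regimes $\|z-y\|_\s\le\tfrac12(|z|_P\wedge|y|_P)$ and $\|z-y\|_\s\ge\tfrac12(|z|_P\wedge|y|_P)$ exactly as in Hairer's proof to combine the factor $\|z-y\|_\s^{\gamma-\ell}$ with the appropriate power of $|z|_P\wedge|y|_P$ arising from whichever component is in bar‐range.

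I do not expect a genuine obstacle: the structure is entirely parallel to \cite[Prop.~6.12]{Hairer14}, and the only new bookkeeping is the four‐way case split above, which singles out in each regime precisely one of the three minima defining $\eta$. The mildly tedious part is verifying the second half (translation estimates), where eight symmetric sub‐cases arise from crossing the two halves of the above decomposition with the bar / non‐bar dichotomy for each factor, but each of them reduces in one line to the arithmetic inequality $\eta\le(\alpha_1+\eta_2)\wedge(\alpha_2+\eta_1)\wedge(\eta_1+\eta_2)$ already used above.
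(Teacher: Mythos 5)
Your overall plan---invoke \cite[Prop.~6.12]{Hairer14} for the $\cD^{\gamma,\eta}$-membership of the product and then check the extra condition defining $\bar\cD$---is exactly what the paper's one-line proof intends. However, you have misidentified what that extra condition is. By definition $\bar\cD^{\gamma,\eta}=\cD^{\gamma,\eta}\cap\cD^\eta$, where $\cD^\eta$ is the \emph{ordinary} (non-singular) modelled-distribution space of degree $\eta$: membership means that the component of degree $\ell<\eta$ is uniformly \emph{bounded}, $\|f(z)\|_\ell\lesssim 1$, and that the translation estimate $\|f(z)-\Gamma_{zy}f(y)\|_\ell\lesssim\|z-y\|_\s^{\eta-\ell}$ holds uniformly up to $P$. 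It does \emph{not} mean that the degree-$\ell$ component vanishes like $|z|_P^{\eta-\ell}$; that improved vanishing is the characteristic feature of $\hat\cD^{\gamma,\eta}$ (the subspace of $\bar\cD^{\gamma,\eta}$ supported in $\{t\ge0\}$), as the paper points out right after the definitions. Your cases (a) and (b) therefore assign to $F_i\in\bar\cD^{\gamma_i,\eta_i}$ a vanishing bound they are not entitled to, and the target estimate $\|Q_\ell(F_1F_2)(z)\|\lesssim|z|_P^{\eta-\ell}$ that you set out to prove is both stronger than what $\cD^\eta$-membership requires and false in general for $\bar\cD$ inputs.

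With the correct reading, the pointwise half of the $\cD^\eta$ check is trivial: \cite[Prop.~6.12]{Hairer14} already gives $\|Q_\ell(F_1F_2)(z)\|\lesssim|z|_P^{(\eta-\ell)\wedge0}=1$ for $\ell<\eta$, which is precisely the uniform bound $\cD^\eta$ asks for. The only genuine content is the uniform translation estimate for $\ell<\eta$, which you only sketch. The exponent bookkeeping you set up (using $\eta\le(\alpha_1+\eta_2)\wedge(\alpha_2+\eta_1)\wedge(\eta_1+\eta_2)$ and ruling out $m\ge\eta_1$, $n\ge\eta_2$ for $m+n=\ell<\eta$) is the right arithmetic, but one must pair the $\cD^{\eta_1}$ difference bound (for $m<\eta_1$) or the $\cD^{\gamma_1,\eta_1}$ difference bound together with the usual $\|z-y\|_\s\lessgtr|z|_P\wedge|y|_P$ dichotomy (for $m\ge\eta_1$) with the uniform bound $\lesssim 1$---not $|z|_P^{\eta_2-n}$---on the other factor. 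As written, your argument is essentially a proof of the subsequent Lemma~\ref{lem:multiply-hatD} (the $\hat\cD$ version), where the improved vanishing on the inputs really is available.
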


\begin{proof}
This follows from~\cite[Proposition~6.12]{Hairer14} and the definition of $\bar \cD^{\gamma,\eta}$.
\end{proof}

\begin{lemma}\label{lem:multiply-hatD}
For $F_i \in \hat \cD^{\gamma_i,\eta_i}_{\alpha_i}$ with $\alpha_i \le 0 < \gamma_i$ and $\eta_i \le \gamma_i$, one has
$F_1 \cdot F_2 \in \hat\cD^{\gamma, \eta_1+\eta_2}_{\alpha_1+\alpha_2}$.
\end{lemma}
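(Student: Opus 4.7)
The plan is to leverage the already-proven Lemma~\ref{lem:multiply-barD} and then upgrade the near-$P$ exponent from the triple minimum
$\eta' = (\alpha_1+\eta_2)\wedge(\alpha_2+\eta_1)\wedge(\eta_1+\eta_2)$ to the cleaner value $\eta_1+\eta_2$, exploiting the extra structure provided by vanishing for $t\le 0$. First, I would invoke Lemma~\ref{lem:multiply-barD} to conclude that $F_1\cdot F_2 \in \bar\cD^{\gamma,\eta'}_{\alpha_1+\alpha_2}$. Since both $F_i$ vanish for $t\le 0$, so does $F_1\cdot F_2$, which places it inside $\hat\cD^{\gamma,\eta'}_{\alpha_1+\alpha_2}$. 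It then only remains to prove the improved bounds at $P$.

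The key observation is that for any $F\in\hat\cD^{\gamma,\eta}$, the combination of $F\in\cD^\eta$ with $F(0,x)=0$ yields the genuine decay $\|F(z)\|_\ell \lesssim \|z\|_P^{\eta-\ell}$ for all $\alpha\le \ell<\eta$, rather than the mere boundedness available for a generic element of $\bar\cD^{\gamma,\eta}$; for $\ell\ge \eta$ the bound $\|F(z)\|_\ell\lesssim \|z\|_P^{\eta-\ell}$ is already part of the $\cD^{\gamma,\eta}$ definition. In particular one has the uniform estimate $\|F_i(z)\|_{\ell_i}\lesssim \|z\|_P^{\eta_i-\ell_i}$ for all relevant $\ell_i$. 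The "bad" terms $\alpha_1+\eta_2$ and $\alpha_2+\eta_1$ in Lemma~\ref{lem:multiply-barD} arose precisely because, in $\bar\cD$, one factor could only be estimated as bounded while the other decayed; now both factors decay, and the pointwise estimate for $F_1 \cdot F_2$, split over $\ell_1+\ell_2=\ell$, collapses to $\|z\|_P^{\eta_1-\ell_1}\|z\|_P^{\eta_2-\ell_2} = \|z\|_P^{\eta_1+\eta_2-\ell}$, as needed for the $\cD^{\gamma,\eta_1+\eta_2}$ pointwise bound.

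For the translation bound, I would use the standard Leibniz-type splitting
\begin{equ}
(F_1F_2)(z) - \Gamma_{zy}(F_1F_2)(y)
= (F_1(z)-\Gamma_{zy}F_1(y))\, F_2(z) + \Gamma_{zy}F_1(y)\,(F_2(z)-\Gamma_{zy}F_2(y))\;,
\end{equ}
and in each summand combine the improved pointwise decay on the "un-translated" factor with the appropriate $\cD^{\gamma_i,\eta_i}$ or $\cD^{\eta_i}$ translation estimate on the other, carrying out a case analysis according to whether $\ell_i<\eta_i$ or $\ell_i\ge \eta_i$. The constraint $\|z-y\|\le \|z\|_P\wedge\|y\|_P$ characteristic of the Hölder regime ensures $\|z\|_P\asymp\|y\|_P\asymp \|z\|_P\wedge\|y\|_P$, which allows one to interchange these quantities freely. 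The main obstacle is the resulting bookkeeping: one has to verify that for every admissible split $\ell_1+\ell_2=\ell$ and every assignment of the translation to one of the two factors, the accumulated powers of $\|z-y\|$ and of $\|z\|_P\wedge\|y\|_P$ combine to dominate $\|z-y\|^{\gamma-\ell}(\|z\|_P\wedge\|y\|_P)^{\eta_1+\eta_2-\gamma}$ (and analogously $\|z-y\|^{\eta_1+\eta_2-\ell}$ for the $\cD^{\eta_1+\eta_2}$ component). This delicate but routine check is driven entirely by the relations $\alpha_i\le 0 < \gamma_i$ and $\eta_i\le\gamma_i$, with no new ingredients beyond the improved decay extracted from the vanishing at $P$.
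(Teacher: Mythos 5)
Your proof is essentially correct and hinges on the same key ingredient as the paper's (which simply says the argument parallels Hairer's Proposition~6.12): the improved decay $\|F(z)\|_\ell \lesssim \|z\|_P^{\eta_i-\ell}$ available for \emph{all} degrees $\ell$ once $F\in\hat\cD^{\gamma_i,\eta_i}$ (decay for $\ell<\eta_i$ from the $\cD^{\eta_i}$ bound combined with vanishing at $t\le 0$, and from the $\cD^{\gamma_i,\eta_i}$ bound for $\ell\ge\eta_i$). That observation is precisely what replaces the boundedness estimate responsible for the terms $\alpha_1+\eta_2$, $\alpha_2+\eta_1$ in Lemma~\ref{lem:multiply-barD} by a genuine decay, yielding the exponent $\eta_1+\eta_2$.

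One small remark on packaging: first invoking Lemma~\ref{lem:multiply-barD} to place $F_1F_2$ in $\bar\cD^{\gamma,\eta'}$ with $\eta'$ the triple minimum is a detour that doesn't save any work, since the ``upgrade'' step already forces you to redo every near-$P$ estimate with the improved decay \dash which is exactly what a direct adaptation of Hairer's multiplication proof entails. A direct verification is cleaner and is what the paper intends. Also note that the ``routine check'' you defer is where you must show the product satisfies not only the $\cD^{\gamma,\eta_1+\eta_2}$ bounds in the H\"older regime $\|z-y\|\le\|z\|_P\wedge\|y\|_P$, but also membership in $\cD^{\eta_1+\eta_2}$ in the complementary regime $\|z-y\|\gtrsim\|z\|_P\wedge\|y\|_P$; there one needs the refined pointwise bound $\|F_1F_2(y)\|_m\lesssim\|y\|_P^{(\eta_1+\eta_2-m)\vee 0}$, which follows from using the improved decay on both factors simultaneously and is stronger than what the triple-minimum space records \dash this is why the detour through $\bar\cD^{\gamma,\eta'}$ doesn't actually buy anything. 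With that clarified, the argument goes through.
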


\begin{proof}
The proof is virtually identical to that of \cite[Proposition~6.12]{Hairer14}.
\end{proof}

%

In the rest of this appendix we work on $\R\times\T^d$.
For $T>0$, we denote by $\$\act\$_{\gamma,\eta;T}$ the modelled distribution norm as defined earlier for the set $O_T\eqdef [-1,T]\times\T^d$.
As in \cite{Hairer14}  we assume henceforth that 
we have an abstract integration map $\CI$ of order $\beta$ defined on a sector $\CV$
and admissible models $Z,\bar Z$ realising a kernel $K$ for $\CI$.
We furthermore assume that $\zeta+\beta\notin\N$ for all homogeneities $\zeta$ of $\CV$,
except for the polynomial sector.

For the spaces $\bar \cD^{\gamma,\eta}$, we have the following version of Schauder estimate.
A typical situation to apply this result is
$\alpha <-2< \eta$. Note the improved exponent
$\eta+\beta$ (rather than $(\eta\wedge \alpha)+\beta$)
as well as the improved dependence on $Z$ only on the interval $[-1,T]$ (rather than $[-1,T+1]$)
in the non-anticipative case.

\begin{theorem}\label{thm:integration}
Let $\gamma>0$, and $\eta > -2$ such that $\gamma+\beta,\eta+\beta\notin \N$.
Then, there exists an operator 
$\mcb{K} \colon \bar \cD^{\gamma,\eta}(\CV) \to \bar \cD^{\gamma+\beta,\eta+\beta}$ such that 
$\CR \mcb{K} f = K* \CR f$ with the reconstruction $\CR$ from Theorem~\ref{thm:reconstructDomain}.  

If furthermore
$K$ is non-anticipative in the sense that $K(t,x) = 0$ for $t < 0$, 
then $\mcb{K}$ maps $ \hat \cD^{\gamma,\eta} (\CV)$ to $\hat\cD^{\gamma+\beta,\eta+\beta}$
and for $T\in(0,1)$ and $\kappa\geq 0$
\begin{equs}[eq:short_time]
\$\mcb K f \$_{\gamma+\beta-\kappa,\eta+\beta-\kappa;T}
&\lesssim T^{\kappa/2}\$f\$_{\gamma,\eta;T}\;,
\\
\$\mcb K f; \mcb K \bar f \$_{\gamma+\beta-\kappa,\eta+\beta-\kappa;T}
&\lesssim T^{\kappa/2}(\$f;\bar f\$_{\gamma,\eta;T} + \$Z;\bar Z\$_{O_T})\;,
\end{equs}
where $\bar f\in \hat \cD^{\gamma,\eta}_\alpha(\CV)$ is a modelled distribution
with respect to $\bar Z$.
The first proportionality constant above depends only on $\$Z\$_{O_T}$
and the second depends on $\$Z\$_{O_T}+\$\bar Z\$_{O_T}+\$f \$_{\gamma,\eta;T}+\$\bar f\$_{\gamma,\eta;T}$.
\end{theorem}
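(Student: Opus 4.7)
The plan is to follow the Schauder-type construction of the abstract integration operator in [Hairer14, Thm.~5.12, Prop.~6.16] essentially verbatim, with the key novelty being that we replace the use of the naive reconstruction operator (which requires $\alpha > -2$ to be canonically defined on all of $\R^{d+1}$) by the refined operator $\CR$ provided by Theorem~\ref{thm:reconstructDomain}. Concretely, I would set
\[
\mcb{K} f(x) \eqdef \CI f(x) + \CJ(x) f(x) + (\CN f)(x)\;,
\]
where $\CJ(x) f(x)$ encodes the Taylor polynomial of $K$ around $x$ tested against $f(x)$ through the model, and
\[
(\CN f)(x) = \sum_{|k|_\s < \gamma+\beta} \frac{\mathrm{X}^k}{k!}\,\bigl\langle \CR f - \Pi_x f(x), D_1^{(k)} K(x-\bigcdot)\bigr\rangle\;,
\]
with $D_1^{(k)} K$ the usual Taylor remainder kernel of [Hairer14, Lem.~5.19]. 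The fact that each pairing in $\CN f$ is well-defined under our weaker assumption $\eta > -2$ is exactly the content of Theorem~\ref{thm:reconstructDomain}: the hypothesis $f \in \bar\cD^{\gamma,\eta}$ (in particular $f \in \cD^\eta$) gives us a canonical distributional extension $\CR f \in \CC^{\eta\wedge\alpha}$ satisfying \eqref{e:eta-bound}, and this bound is exactly the ingredient that replaces the $\alpha$-based bound used in [Hairer14] when estimating $\CN f$ by the usual dyadic decomposition.

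For the non-anticipative statement, assume $K(t,\bigcdot)=0$ for $t<0$ and $f \in \hat\cD^{\gamma,\eta}(\CV)$. Since $f$ vanishes on $\{t\le 0\}$, two applications of Theorem~\ref{thm:reconstructDomain} (one to $f$ and one to the zero modelled distribution) must agree on test functions supported in $\{t<0\}$, and so by uniqueness $\CR f$ vanishes on $\{t\le 0\}$ as well. Combined with non-anticipativity of $K$ this immediately yields $\mcb K f \in \hat\cD$. The improvement of the boundary exponent from $(\eta\wedge\alpha)+\beta$ to $\eta+\beta$ then follows from the fact that, in $\hat\cD^{\gamma,\eta}$, the degree-$\ell$ component of $f$ vanishes at rate $|z|_P^{\eta-\ell}$ near $P$ rather than being merely bounded: when estimating components of $\mcb K f(x)$ for $x$ close to $P$, one can therefore always trade the worst-case $\alpha$-scaling in the dyadic decomposition of $D_1^{(k)}K$ for the better $\eta$-scaling coming from \eqref{e:eta-bound} restricted to the relevant time-strip. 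This is parallel to the boundary analysis carried out in [MateBoundary, Sec.~4.3].

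The short-time bound with the $T^{\kappa/2}$ factor is obtained by interpolating between the $\kappa = 0$ case just established and the trivial observation that $\mcb K f \equiv 0$ on $\{t \le 0\}$: since $\mcb K f$ is supported in $[0,T]\times \T^d$, one can trade $\kappa$ units of joint regularity for a factor $T^{\kappa/2}$ using the parabolic scaling. The fact that the proportionality constant depends only on $\$Z\$_{O_T}$ (and not $\$Z\$_{O_{T+1}}$ as in a naive application of [Hairer14]) is handled by invoking the refined reconstruction bound Lemma~\ref{lem:reconstruct} in place of [Hairer14, Prop.~7.2]: any test function used to probe $\mcb K f$ inside $[0,T]\times \T^d$ is, by non-anticipativity of $K$, effectively supported in $O_T$, and Lemma~\ref{lem:reconstruct} controls the reconstruction in terms of the model norm on that support. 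The difference estimate is obtained by writing the standard telescoping decomposition in the pairs $(f,\bar f)$ and $(Z,\bar Z)$ and running the same argument for each term. The main obstacle in the whole programme is a careful bookkeeping exercise: one must propagate the bound \eqref{e:eta-bound} through the dyadic sum defining $\CN f$ in such a way that the $\eta+\beta$ boundary exponent, the $T^{\kappa/2}$ short-time gain, and the locality of the model dependence on $O_T$ are all captured simultaneously, without the boundary and short-time improvements interfering with each other.
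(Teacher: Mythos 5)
Your proposal correctly identifies the main structural modification for the first statement: use the operator $\CI + \CJ + \CN$ of \cite[Prop.~6.16]{Hairer14}, but replace the naive reconstruction in $\CN$ by the operator $\CR$ of Theorem~\ref{thm:reconstructDomain}, and in the problematic regime $2^{-n}\gtrsim|x|_P$ split off $\Pi_x\CQ_{<\eta}f(x)$ so that \eqref{e:eta-bound} can be applied. Your remarks on the short-time factor $T^{\kappa/2}$ and on the model locality (via Lemma~\ref{lem:reconstruct} in place of \cite[Prop.~7.2]{Hairer14}) are in line with the paper's argument.

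However, your explanation of the $\hat\cD$-statement has a genuine gap. You claim that the improved boundary exponent $\eta+\beta$ (rather than $(\eta+\beta-\ell)\wedge 0$) follows from the vanishing of the degree-$\ell$ component of $f$ at rate $|z|_P^{\eta-\ell}$, by ``trading the worst-case $\alpha$-scaling for the better $\eta$-scaling from \eqref{e:eta-bound} restricted to the relevant time-strip''. This is not the right mechanism. In the regime $2^{-n}\gtrsim|x|_P$, the reconstruction remainder
\[
\bigl(\CR f-\bone_+\Pi_x\CQ_{<\eta}f(x)\bigr)\bigl(D_1^\ell K_n(x,\cdot)\bigr)
\]
is bounded by \eqref{e:eta-bound} applied at scale $2^{-n}$ only by $2^{(|\ell|_\s-\beta-\eta)n}$. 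Summing over $n\le m$ (with $2^{-m}\asymp|x|_P$) gives $|x|_P^{(\eta+\beta-|\ell|_\s)\wedge 0}$, \emph{with} the cap, no matter how fast $f$'s components decay near $P$: the decay of $f(x)$ does help estimate the truncated polynomial contribution $\Pi_x\CQ_{\le|\ell|_\s-\beta}f(x)$, but it simply does not enter the reconstruction-remainder term. So your proposed mechanism cannot upgrade the cap to the required $|x|_P^{\eta+\beta-|\ell|_\s}$.

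The missing ingredient is a support-restriction/partition-of-unity argument that exploits \emph{both} $\supp\CR f\subset\{t\ge 0\}$ (since $f\in\hat\cD$) \emph{and} non-anticipativity of $K$ (so that $D_1^\ell K_n(x,\cdot)$ is supported in $\{t\le t_x\}$): their intersection lies in a time-strip of width $\sim|x|_P^2$. Partitioning $D_1^\ell K_n(x,\cdot)$ into roughly $(2^{-n}/|x|_P)^d$ pieces of scale $|x|_P$, applying \eqref{e:eta-bound} to each, and counting, one gets a per-scale contribution $\sim 2^{|\ell|_\s n}|x|_P^{\beta+\eta}$, which sums over $n\le m$ to $|x|_P^{\eta+\beta-|\ell|_\s}$ as required. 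Also note that your claim that $\CR f$ vanishing on $\{t\le 0\}$ together with non-anticipativity of $K$ ``immediately yields $\mcb{K}f\in\hat\cD$'' is too quick: it gives the support condition, but not the uncapped weighted bound, which is the actual point of the statement.
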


\begin{proof}
The proof of the first statement is very similar to that of \cite[Prop.~6.16]{Hairer14},
so we only point out where it differs.
As usual, one writes $K = \sum_{n \ge 0} K_n$,
where we further assume that $K_n$ annihilates polynomials of sufficiently high order for $n\geq 1$.
Regarding the bound on $\mcb{K} f$, the only point where the proof differs is when one considers
the $\mbX^\ell$-component of $\mcb{K} f$ in the regime when $2^{-n} \gtrsim |x|_P$. What is required there
is a bound on
\begin{equ}
\bigl| \bigl(\CR f - \Pi_x \CQ_{\le (|\ell|_\s - \beta)} f(x)\bigr)\bigl(D_1^\ell K_n(x,\cdot)\bigr)\bigr|\;.
\end{equ}
Instead of simply bounding the two terms separately as in \cite{Hairer14}, we bound it by
\begin{equs}
\bigl| \bigl(\CR f &- \Pi_x \CQ_{< \eta} f(x)\bigr)\bigl(D_1^\ell K_n(x,\cdot)\bigr)\bigr|
+ \sum_{\eta \le \zeta \le |\ell|_\s - \beta} 
\bigl| \bigl(\Pi_x \CQ_{\zeta} f(x)\bigr)\bigl(D_1^\ell K_n(x,\cdot)\bigr)\bigr| \\
&\lesssim 2^{(|\ell|_\s - \beta - \eta)n} + \sum_{\eta \le \zeta \le |\ell|_\s - \beta} 2^{(|\ell|_\s - \beta - \zeta)n} |x|_P^{\eta - \zeta}\;,  \label{e:use-rec-dom}
\end{equs}
where we used Theorem~\ref{thm:reconstructDomain} to bound the first term
and where the sum in the second line has a strict inequality $\zeta < |\ell|_\s - \beta$ if $n\geq 1$ due to our assumption that $K_n$ annihilates polynomials of sufficiently high degree.
After summing over $n$,
this is then bounded by $|x|_P^{(\eta + \beta - |\ell|_\s)\wedge 0}$ as required.

If we further assume that $K$ is non-anticipative, 
then we can improve the bound to $|x|_P^{\eta + \beta - |\ell|_\s}$,
which is required for $\mcb{K} f$ to take values in  $\hat\cD^{\gamma+\beta,\eta+\beta}$. To see this, 
we redo the above bound by
\begin{equs}[e:non-anti-integrate]
\bigl| \bigl(\CR f & - \bone_+ \Pi_x \CQ_{< \eta} f(x)\bigr)\bigl(D_1^\ell K_n(x,\cdot)\bigr)\bigr|
\\
&+\bigl|
 \bigl( \bone_+ \Pi_x \CQ_{< \eta} f(x) +\Pi_x \CQ_{\le |\ell|_\s - \beta} f(x)\bigr)\bigl(D_1^\ell K_n(x,\cdot)\bigr)\bigr| 
\end{equs}
where $\bone_+$ is the indicator function for positive time.
The second term is bounded by
$ \sum_{ \zeta \le |\ell|_\s - \beta} 2^{(|\ell|_\s - \beta - \zeta)n} |x|_P^{\eta - \zeta}$, and summing over the relevant values of $n$
indeed yields a bound by $|x|_P^{\eta + \beta - |\ell|_\s}$, since $|\ell|_\s - \beta - \zeta>0$ if $n\geq 1$.

Since $f\in  \hat \cD^{\gamma,\eta} $,
and $K$ is non-anticipative,
the contribution to the first term in \eqref{e:non-anti-integrate}
only comes from
$t\in (0,|x|_P) $ where $t$ is the time variable of the implicit 
argument in $D_1^\ell K_n(x,\cdot)$.
Let $m$ be such that $2^{-m} \le |x|_P \le 2^{-m+1}$. 
One can find a partition of unity $\{\phi_z\}_{z\in \Lambda}$ for some index set $\Lambda$, such that each function $\phi_z$ has a support of diameter $2^{-m}$,
and the  first term in \eqref{e:non-anti-integrate} is equal to
\[
\sum_{y\in \Lambda}\, 
\bigl| \bigl(\CR f - \bone_+ \Pi_x \CQ_{< \eta} f(x)\bigr)\bigl(\phi_y D_1^\ell K_n(x,\cdot)\bigr)\bigr|
\]
and finally the number of terms contributing to the above sum
is $(2^{-n}/|x|_P)^d$.
Since each term
here can be bounded by
$2^{(d+ |\ell|_\s)n} 2^{-m(d+\beta+\eta)} $,
the above expression is bounded by 
$ 2^{ |\ell|_\s n}2^{-m(\beta+\eta)}$.
Since $|\ell|_\s \ge 0$, summing over the relevant $n$
we get a bound by $|x|_P^{\eta + \beta - |\ell|_\s}$.

Regarding the bound on $(\mcb{K} f)(x) - \Gamma_{xy} (\mcb{K} f)(y)$ again, the only regime in which the proof differs
is when considering the $\mbX^\ell$-component in the regime $2^{-n} \gtrsim |x|_P$. 
As shown in \cite[Eq.~(5.48)]{Hairer14}, the term that needs to be bounded can be written as
\begin{equs}\label{e:relevantTerm}
\bigl(\Pi_y f(y) - \CR \CQ_{<\eta} f&\bigr)(K_{n;xy}^{\ell,\gamma}) + \sum_{\eta \le \zeta < \gamma} \bigl(\Pi_y\CQ_{\zeta} f(y)\bigr)(K_{n;xy}^{\ell,\gamma}) \\ 
&- \sum_{\zeta \le |\ell|_\s - \beta} \bigl(\Pi_x \CQ_\zeta \bigl(\Gamma_{xy} f(y) - f(x)\bigr)\bigr)\bigl(D_1^\ell K_n(x,\cdot)\bigr)\;.
\end{equs}
It follows furthermore from the Taylor remainder formula \cite[Eq.~(5.28)]{Hairer14} that in this regime 
one has $K_{n;xy}^{\ell,\gamma} = \|x-y\|_\s^{\gamma + \beta - |\ell|_\s} 2^{\gamma n} \phi_y^{2^{-n}}$ 
for some test function $\phi \in \CB$.

Theorem~\ref{thm:reconstructDomain} then implies that the first term in \eqref{e:relevantTerm} is bounded
by some multiple of  $\|x-y\|_\s^{\gamma + \beta - |\ell|_\s} 2^{(\gamma - \eta)n}$, which sums up to
$\|x-y\|_\s^{\gamma + \beta - |\ell|_\s} |x|_P^{(\eta - \gamma) \wedge 0}$, as required.
The second term in \eqref{e:relevantTerm} is bounded by some multiple of
\begin{equ}
\sum_{\eta \le \zeta < \gamma} \|x-y\|_\s^{\gamma + \beta - |\ell|_\s} 2^{(\gamma - \zeta)n} |y|_P^{\eta-\zeta}\;,
\end{equ}
which again leads to an analogous bound after summing over $n$.

The $\zeta$-summand of the last term in \eqref{e:relevantTerm} is bounded by some multiple of 
\begin{equ}
\|x-y\|_\s^{\gamma-\zeta} |x|_P^{\eta-\gamma} 2^{(|\ell|_\s - \beta - \zeta)n}\;,
\end{equ} 
which then sums up to (recalling again that $\zeta<|\ell|_\s-\beta$ by assumption if $n\geq 1$)
\begin{equ}
\|x-y\|_\s^{\gamma-\zeta} |x|_P^{\eta + \beta + \zeta -\gamma - |\ell|_\s} 
\lesssim 
\|x-y\|_\s^{\gamma + \beta -|\ell|_\s} |x|_P^{\eta -\gamma}\;, 
\end{equ}
where we used that $\zeta \le |\ell|_\s- \beta$ and $\|x-y\|_\s \lesssim |x|_P$ for the inequality.
This is again of the required form, thus concluding the proof that $\mcb{K} f$ takes values in  $\hat\cD^{\gamma+\beta,\eta+\beta}$.

It remains to show~\eqref{eq:short_time}.
The proof of this is similar to that of~\cite[Thm.~7.1]{Hairer14}
upon using~\cite[Lem.~6.5]{Hairer14} and using Lemma~\ref{lem:reconstruct} in place of~\cite[Prop.~7.2]{Hairer14}.
\end{proof}

\subsection{Schauder estimates with input distributions}
\label{subapp:input_distr}

Assume we are in the setting of Section~\ref{subapp:model_vanish}.
As in \cite[Sec.~4.5]{MateBoundary}, given a space-time distribution $\omega$
and a modelled distribution $f$,
we write $\mcb{K}^\omega f$\label{K_omega_page_ref} for 
the modelled distribution
defined as in  \cite[Sec.~5]{Hairer14} with $\CR f$ replaced by $\omega$.

Given a distribution $\omega\in\CD'(\R^{d+1})$, a compact set $\K\subset \R^{d+1}$, and $\alpha\in\R$, we let $|\omega|_{\CC^\alpha(\K)}$ be the smallest constant $C$ such that
\begin{equ}
|\omega(\phi^\lambda_x)| \leq C\lambda^{\alpha}
\end{equ}
for all $\phi \in \CB^r_{\s,0}$ and all $x \in \K$, $\lambda \in (0,1]$ such that
$B_\s(x,2\lambda) \subset \K$.

\begin{lemma}\label{lem:Schauder-input}
Fix $\gamma>0$. Let $f\in \cD^{\gamma,\eta}_\alpha(\CV)$, and $\omega\in \CC^{\eta\wedge \alpha}$ which is compatible with $f$. Set $\bar\gamma=\gamma+\beta$, $\bar\eta=(\eta\wedge \alpha)+\beta$,
which are assumed to be non-integers,
 $\bar\alpha=(\alpha+\beta)\wedge 0$ 
and $\bar\eta\wedge\bar\alpha >-2$. Then $\mcb{K}^\omega f \in \cD^{\bar\gamma,\bar\eta}_{\bar \alpha}$,
and one has $\CR\mcb{K}^\omega f = K* \omega$.

Furthermore, if $\bar f\in \cD^{\gamma,\eta}_\alpha(\CV)$ 
with respect to $\bar Z$, and $\bar\omega\in \CC^{\eta\wedge \alpha}$  is compatible with $\bar f$, then, for every compact $\K\subset\R^{d+1}$,
 \begin{equ}
 |\mcb{K}^\omega f ; \mcb{K}^{\bar\omega}\bar f|_{ \bar\gamma,\bar\eta;\K} 
 \lesssim
   |f;\bar f|_{\gamma,\eta;\bar\K}
  + \$Z;\bar Z\$_{\bar\K}
+  |\omega-\bar\omega|_{\CC^{\eta\wedge \alpha}(\bar\K)}
 \end{equ}
 locally uniformly in models, modelled distributions and space-time distributions $\omega$,
where $\bar\K$ is the $2$-fattening of $\K$.
The above bound also holds uniformly in $\eps$
for the $\eps$-dependent norms 
on models and modelled distributions defined in Section~\ref{subsec:eps_reg_structs}.

Finally, if $K$ is non-anticipative and we are in the spatially periodic setting, then the same bound holds with $f,\bar f$
replaced by $\bone_+ f,\bone_+\bar f$,
and with $\K=\bar\K = O_T$ for any $T\in(0,1)$.
\end{lemma}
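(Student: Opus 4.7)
\medskip

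The plan is to adapt the construction and estimates from \cite[Sec.~5]{Hairer14} and the refinements in \cite[Sec.~4.5]{MateBoundary}, with careful attention to how the input distribution $\omega$ replaces the role of $\CR f$ throughout. First, I would define $\mcb{K}^\omega f$ via the standard kernel decomposition $K=\sum_{n\geq 0}K_n$ with $K_n$ supported at scale $2^{-n}$, writing
\begin{equ}
\mcb{K}^\omega f(x) \eqdef \CI f(x) + J(x) f(x) + \sum_{|\ell|_\s < \bar\gamma} \frac{\mbX^\ell}{\ell!} \sum_{n\ge 0} \bigl(\omega - \Pi_x \CQ_{< \bar\gamma - \beta} f(x)\bigr)\bigl(D_1^\ell K_n(x,\cdot)\bigr)\;,
\end{equ}
where $J$ is defined as in \cite[Eq.~(5.16)]{Hairer14}. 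The compatibility hypothesis between $\omega$ and $f$ is exactly what is needed so that this agrees with the usual $\mcb{K} f$ away from the singular hyperplane $P$, which will eventually yield the identity $\CR \mcb{K}^\omega f = K * \omega$ via the uniqueness part of the reconstruction theorem together with the assumption $\bar\eta\wedge \bar\alpha > -2$.

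Next, the bounds needed to conclude $\mcb{K}^\omega f \in \cD^{\bar\gamma, \bar\eta}_{\bar\alpha}$ follow by retracing the estimates in the proof of \cite[Thm.~5.12]{Hairer14} essentially verbatim, with one key modification: the bound on $(\CR f - \Pi_x \CQ_{\le \gamma} f(x))(D_1^\ell K_n(x,\cdot))$ is replaced throughout by the analogous bound on $(\omega - \Pi_x \CQ_{\le \gamma} f(x))(D_1^\ell K_n(x,\cdot))$. This latter quantity is controlled by splitting: in the regime where $2^{-n} \le |x|_P/2$, the support of $D_1^\ell K_n(x,\cdot)$ lies away from $P$ and compatibility allows us to use the standard reconstruction bound as in \cite{Hairer14}; in the complementary regime, we instead use directly that $\omega \in \CC^{\eta\wedge\alpha}$, yielding the bound $|\omega(D_1^\ell K_n(x,\cdot))| \lesssim |\omega|_{\CC^{\eta\wedge\alpha}} 2^{(|\ell|_\s - \beta - (\eta\wedge\alpha))n}$, together with the standard bound on $\Pi_x \CQ_{\le \gamma} f(x)$ against the same kernel. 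Summation over $n$ then gives exactly the homogeneity exponents $\bar\gamma$, $\bar\eta$, $\bar\alpha$ claimed in the statement.

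For the difference bound, I would linearise the construction: writing
\begin{equ}
\mcb{K}^\omega f - \mcb{K}^{\bar\omega} \bar f = (\mcb{K}^\omega f - \mcb{K}^{\bar\omega} f) + (\mcb{K}^{\bar\omega} f - \mcb{K}^{\bar\omega} \bar f)\;,
\end{equ}
the first difference involves only $\omega - \bar\omega$ tested against kernels and produces the $|\omega-\bar\omega|_{\CC^{\eta\wedge\alpha}(\bar\K)}$ term by the same splitting argument as above, while the second difference reduces to the standard comparison estimate for $\mcb{K}$ applied to $f$ vs.\ $\bar f$ (with models $Z$ vs.\ $\bar Z$) and produces the terms $|f;\bar f|_{\gamma,\eta;\bar\K} + \$Z;\bar Z\$_{\bar\K}$ by the proof of \cite[Thm.~5.12]{Hairer14}. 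Crucially, this entire argument uses only the abstract structural properties of the norms on the regularity structure, not any specific realisation, so uniformity in $\eps$ for the $\eps$-dependent norms defined in Section~\ref{subsec:eps_reg_structs} is automatic.

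For the final non-anticipative claim with $\bone_+ f$ and $\K = \bar\K = O_T$, I would replace the standard reconstruction input by Lemma~\ref{lem:reconstruct}, which provides the required bound with constants depending only on $\$Z\$_{O_T}$ rather than on the model on a fattening. Since $K$ is non-anticipative, the integral defining $\mcb{K}^\omega(\bone_+ f)(x)$ for $x \in O_T$ only involves values at times $\le t(x) \le T$, so no information outside $O_T$ enters (using spatial periodicity for the spatial directions). The main obstacle I anticipate is bookkeeping: tracking the compatibility condition carefully through all the regimes in the kernel decomposition, especially ensuring that the regime $2^{-n} \gtrsim |x|_P$ in the estimate of the $\mbX^\ell$-component of $\mcb{K}^\omega f(x) - \Gamma_{xy}\mcb{K}^\omega f(y)$ produces the right exponents $\bar\eta$ and not something weaker, which is the reason for the strict assumption $\bar\eta\wedge\bar\alpha > -2$ ensuring that the Schauder output lives in a space where the reconstruction is still canonically determined.
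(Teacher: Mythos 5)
Your proposal is essentially the approach one would take; in the paper the first two assertions are simply deferred to \cite[Lem.~4.12(i)]{MateBoundary} (whose proof is precisely the modification of the Hairer Schauder argument that you sketch, replacing $\CR f$ by $\omega$ and splitting the kernel sum at $2^{-n}\asymp |x|_P$), and the final non-anticipative assertion follows as you say by substituting Lemma~\ref{lem:reconstruct} into the argument of \cite[Lem.~5.2]{MateBoundary}, together with the observation that $\omega,\bar\omega$ only get tested against $D_1^\ell K_n(z,\cdot)$ with $z$ in $O_T$.

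One bookkeeping remark: in your linearised split $\mcb{K}^\omega f - \mcb{K}^{\bar\omega}\bar f = (\mcb{K}^\omega f - \mcb{K}^{\bar\omega} f) + (\mcb{K}^{\bar\omega} f - \mcb{K}^{\bar\omega}\bar f)$, the first piece is purely polynomial, namely $\sum_{\ell}\frac{\mbX^\ell}{\ell!}\sum_n (\omega-\bar\omega)(D_1^\ell K_n(x,\cdot))$, but it does \emph{not} produce only the $|\omega-\bar\omega|_{\CC^{\eta\wedge\alpha}}$ contribution. Using $|\omega-\bar\omega|_{\CC^{\eta\wedge\alpha}}$ directly in the regime $2^{-n}\lesssim|x|_P$ makes the $n$-sum diverge for $|\ell|_\s>\bar\eta$, so there you must instead invoke compatibility to rewrite $\omega-\bar\omega$ as a difference of reconstructions away from $P$ and then use the reconstruction comparison bound, which also brings in $|f;\bar f|_{\gamma,\eta;\bar\K}+\$Z;\bar Z\$_{\bar\K}$. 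The total estimate is still as claimed, but the attribution of terms to the two halves of your split is off. Also, the uniformity in $\eps$ is not entirely ``automatic'': it rests on the fact that the abstract integrators and the reconstruction bound are controlled uniformly under the $\eps$-dependent norms (cf.\ the discussion after Lemma~\ref{lem:compare on T}), which should at least be pointed to rather than dismissed as free.
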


\begin{proof}
All the statements except for the last one follow from~\cite[Lem.~4.12(i)]{MateBoundary}.
The last statement follows from a similar proof as that of~\cite[Lem.~5.2]{MateBoundary} upon using the improved reconstruction bound Lemma~\ref{lem:reconstruct}.
We remark that the input distributions $\omega,\bar\omega$ need only be evaluated on $D^\ell_1 K_n(z,\cdot)$ for $z=(t,x)$ with $t<\tau$.
\end{proof}

\section{Symbolic index}

We collect in this appendix commonly used symbols of the article, together
with their meaning and, if relevant, the page where they first occur.

 \begin{center}
\renewcommand{\arraystretch}{1.1}
\begin{longtable}{lll}
\toprule
Symbol & Meaning & Page\\
\midrule
\endfirsthead
\toprule
Symbol & Meaning & Page\\
\midrule
\endhead
\bottomrule
\endfoot
\bottomrule
\endlastfoot
$|\cdot|_\alpha$ & 
Extended norm on $\Omega$ & 
\pageref{norm_alpha page ref}\\
$\| \act \|_{\ell, \eps}$&
$\eps$-dependent norms on regularity structure of degree $\ell$ &
\pageref{norm ell eps page ref}\\
$\$\act\$_{\eps}$, $d_{\eps}$ &
 $\eps$-dependent seminorms and metrics on models&
\pageref{eps norms page ref}\\
$| \act |_{\gamma,\eta,\eps}$ &
 $\eps$-dependent norms  on  modelled distributions  &
\pageref{eps norms page ref}\\
$\bone_+$ &
 Indicator function of $\{(t,x)\,:\,t\geq 0\}$  &
\pageref{one_+_page_ref}\\
$\mcb{A}$ &
Target space of the jet of the noise
and the solution &
\pageref{def:mcbA}\\
$\tilde\mcA_-$, $\boldsymbol{\tilde\CA_-}$ &
Negative twisted antipode and its abstract version &
\pageref{eq:recursive_antipode_vec}\\
$\mathbf{A}^{\mcA}$ &
 Element of $\mcb{A}$ describing the
polynomial part of 
 $\mcA \in \expan$&
\pageref{eq:coherent_jet}\\
$\Cas$ &
Covariance of $\mfg$-valued white noise = quadratic Casimir&
\pageref{eq:def_of_cas}\\
$\bar C^\eps,\hat{C}^\eps$ & 
Renormalisation constants for stochastic YM equation&
\pageref{e:defConstants}\\
$C_{\sym}^{\eps},C_{\sym}$ & 
Combination of  renormalisation constants
and its limit &
\pageref{e:defConstants}\\
$E$ & A generic Banach space & \pageref{E page ref}\\
$\mbF$ & Natural filtration $\mbF=(\mcF_t)_{t\geq0}$
of the noise $\xi$ & \pageref{pageref:mbF}\\
$\mfF$&
Isomorphism classes of labelled forests
& \pageref{mfF page ref} \\
$\Func_V$ & 
The monoidal functor between   $\SSet$  and  $\Vec$  &  \pageref{def:CF-V} \\
$G$ & Compact Lie group & \pageref{G page ref}\\
$\CG_-$&
Renormalisation group&
\pageref{eq:character_group_iso}\\
$\mfg$ & Lie algebra of $G$ & \pageref{mfg page ref}\\
$\mfG^\alpha$  & $\alpha$-H{\"o}lder continuous gauge transformations & \pageref{mfG^alpha page ref}\\
$\mfG^{0,\alpha}$  & Closure of smooth functions in $\mfG^{\alpha}$ & \pageref{mfG^0,alpha page ref}\\
$\mathring{\mfG}^{0,\alpha}$  & Quotient of $\mfG^{0,\alpha}$ by kernel of action & \pageref{mrmfG^0,alpha page ref}\\
 $\expan$&
 Set of expansions with polynomial part and tree part&
\pageref{eq:coherent_jet}\\
$\Hom(\symset,\bar{\symset})$ &
Morphisms between two symmetric sets $\symset$ and $\bar{\symset}$ & \pageref{def:morphism}\\
$K^{(\eps)}$ &
Kernel assignment for gauge transformed system&
\pageref{model page ref}\\
$\mcb{K}^{\omega}$ &
Integration operator with input distribution $\omega$ &
\pageref{K_omega_page_ref}\\
$\ell_{\BPHZ}$&
BPHZ renormalisation character&
\pageref{e:defBPHZ}\\
$\mathscr{M}_{\eps}$ &
 The family of $K^{(\eps)}$-admissible models&
\pageref{model page ref}\\
$O_\tau$ &
 $[-1,\tau]\times\T^d$ for $\tau>0$ &
\pageref{O_tau_page_ref}\\
$\Omega$ & Space of additive $E$-valued functions on $\mcX$ & \pageref{Omega page ref}\\
$\Omega_\alpha$ & Banach space $\{A \in \Omega \ssep |A|_\alpha < \infty\}$ & \pageref{Omega_alpha page ref}\\
$\Omega\CB$ & $E$-valued $1$-forms with components in $\mcB$ & \pageref{Omega_CB page ref}\\
$\Omega^1_\alpha$ & Closure of smooth $E$-valued $1$-forms in $\Omega_\alpha$& \pageref{def:closure_smooth_1_forms}\\
$\mfO_\alpha$  & Space of orbits $\Omega^1_\alpha/\mfG^{0,\alpha}$ & \pageref{mfO_alpha page ref}\\
 $\CP(A)$ & Powerset of a set $A$ & \pageref{powerset page ref}\\
 $\proj^*$ & 
 Functor 
 from $\SSet_\Lab$ to $\TStruc_{\bar\Lab}$ &
 \pageref{e:linkpi3} \\
$\smooth(B) $&
Space of smooth functions from $\mcb{A}$ to $B$&
 \pageref{PB page ref}\\
$\mathring{\mcb{Q}}$ (resp. $\mcb{Q}$) &
The set of choices of RHS of SPDE (resp. obeying $R$) &
\pageref{e:Qcirc} \\
$\rho$ & Distance function on $\mcX$ & \pageref{rho page ref}\\
$R$ & Subcritical, complete rule & \pageref{rule page ref}\\
$\symset$ & A generic symmetric set & \pageref{def:sym-typed-set}\\
$\SSet_{\mfL}$ & The category of symmetric sets with types $\mfL$ & \pageref{def:SSet} \\
 $\TStruc$ & 
Category of typed structures,
 objects are 
 $\prod_{\alpha \in \CA} \symset_{\alpha}$ &
 \pageref{def:typed_struct} \\
 $\scal{\tau}$ &
 The symmetric set for a  labelled rooted tree $\tau$
 & \pageref{def:scal-tau}\\
$\mfT$&
Isomorphism classes of labelled trees
& \pageref{mfT page ref} \\
$\mfT(R)$&
Trees strongly conforming to $R$
& \pageref{mfT(R) page ref} \\
$\mfT_{-}(R)$ &
Negative degree unplanted trees in $\mfT(R)$ with $\mfn(\rho)=0$
& \pageref{e:def-mfT-}\\
$\ST,\SF$&
Our abstract regularity structures&
\pageref{STSF page ref}\\
$\CT,\CF$  &
Vector spaces for concrete regularity structure
& \pageref{e:defCTtau}\\
$V^{\otimes \symset} $ &
Tensor product determined by the symmetric set $\symset$&
\pageref{e:def-V-tensor-symset}\\
$\mcX$  & Set of line segments & \pageref{mcX page ref}\\
$\Xi_i$ &
Symbol for noise, defined as $\mcb{I}_{(\mfl_i,0)}({\bf 1})$  for $\mfl_i \in \mfL_-$ &
\pageref{Xi page ref}\\
$\bUpsilon,\bbUpsilon$ &
Maps describing coherence of expansions &
\pageref{eq:Upsilon_with_sym}\\
$\Upsilon,\bar \Upsilon$ &
Unnormalised coherence maps &
\pageref{e:def-Upsilon-added}
\end{longtable}
 \end{center}

\endappendix
\bibliographystyle{./Martin}
\bibliography{./refs}

\def\cprime{$'$} \def\polhk#1{\setbox0=\hbox{#1}{\ooalign{\hidewidth
  \lower1.5ex\hbox{`}\hidewidth\crcr\unhbox0}}}
\begin{thebibliography}{BCCH21}
\def\myhref#1#2{\href{#2}{\nolinkurl{#1}}}

\bibitem[AB83]{AB83}
\textsc{M.~F. Atiyah} and \textsc{R.~Bott}.
\newblock The {Y}ang-{M}ills equations over {R}iemann surfaces.
\newblock \emph{Philos. Trans. Roy. Soc. London Ser. A} \textbf{308}, no. 1505,
  (1983), 523--615.
\newblock
  \myhref{doi:10.1098/rsta.1983.0017}{https://dx.doi.org/10.1098/rsta.1983.0017}.

\bibitem[AK20]{AK20}
\textsc{S.~Albeverio} and \textsc{S.~Kusuoka}.
\newblock The invariant measure and the flow associated to the
  {$\Phi^4_3$}-quantum field model.
\newblock \emph{Ann. Sc. Norm. Super. Pisa Cl. Sci. (5)} \textbf{20}, no.~4,
  (2020), 1359--1427.
\newblock
  \myhref{doi:10.2422/2036-2145.201809_008}{https://dx.doi.org/10.2422/2036-2145.201809_008}.

\bibitem[BCCH21]{BCCH21}
\textsc{Y.~Bruned}, \textsc{A.~Chandra}, \textsc{I.~Chevyrev}, and
  \textsc{M.~Hairer}.
\newblock Renormalising {SPDE}s in regularity structures.
\newblock \emph{J. Eur. Math. Soc. (JEMS)} \textbf{23}, no.~3, (2021),
  869--947.
\newblock \myhref{doi:10.4171/jems/1025}{https://dx.doi.org/10.4171/jems/1025}.

\bibitem[BCFP19]{BCFP19}
\textsc{Y.~Bruned}, \textsc{I.~Chevyrev}, \textsc{P.~K. Friz}, and
  \textsc{R.~Prei\ss}.
\newblock A rough path perspective on renormalization.
\newblock \emph{J. Funct. Anal.} \textbf{277}, no.~11, (2019), 108283, 60.
\newblock
  \myhref{doi:10.1016/j.jfa.2019.108283}{https://dx.doi.org/10.1016/j.jfa.2019.108283}.

\bibitem[BG20]{BG18}
\textsc{N.~Barashkov} and \textsc{M.~Gubinelli}.
\newblock A variational method for {$\Phi^4_3$}.
\newblock \emph{Duke Math. J.} \textbf{169}, no.~17, (2020), 3339--3415.
\newblock
  \myhref{doi:10.1215/00127094-2020-0029}{https://dx.doi.org/10.1215/00127094-2020-0029}.

\bibitem[BHST87]{BHST87II}
\textsc{Z.~Bern}, \textsc{M.~B. Halpern}, \textsc{L.~Sadun}, and
  \textsc{C.~Taubes}.
\newblock Continuum regularization of quantum field theory. {II}. {G}auge
  theory.
\newblock \emph{Nuclear Phys. B} \textbf{284}, no.~1, (1987), 35--91.
\newblock
  \myhref{doi:10.1016/0550-3213(87)90026-5}{https://dx.doi.org/10.1016/0550-3213(87)90026-5}.

\bibitem[BHZ19]{BHZ19}
\textsc{Y.~Bruned}, \textsc{M.~Hairer}, and \textsc{L.~Zambotti}.
\newblock Algebraic renormalisation of regularity structures.
\newblock \emph{Invent. Math.} \textbf{215}, no.~3, (2019), 1039--1156.
\newblock
  \myhref{doi:10.1007/s00222-018-0841-x}{https://dx.doi.org/10.1007/s00222-018-0841-x}.

\bibitem[Bog07]{Bogachev07}
\textsc{V.~I. Bogachev}.
\newblock \emph{Measure theory. {V}ol. {I}, {II}}.
\newblock Springer-Verlag, Berlin, 2007,  Vol. I: xviii+500 pp., Vol. II:
  xiv+575.
\newblock
  \myhref{doi:10.1007/978-3-540-34514-5}{https://dx.doi.org/10.1007/978-3-540-34514-5}.

\bibitem[Bou94]{Bourgain94}
\textsc{J.~Bourgain}.
\newblock Periodic nonlinear {S}chr\"{o}dinger equation and invariant measures.
\newblock \emph{Comm. Math. Phys.} \textbf{166}, no.~1, (1994), 1--26.
\newblock
  \myhref{doi:10.1007/BF02099299}{https://dx.doi.org/10.1007/BF02099299}.

\bibitem[CCHS22]{CCHSPrep}
\textsc{A.~{Chandra}}, \textsc{I.~{Chevyrev}}, \textsc{M.~{Hairer}}, and
  \textsc{H.~{Shen}}.
\newblock {Stochastic quantisation of Yang-Mills-Higgs in 3D}.
\newblock \emph{arXiv e-prints} (2022).
\newblock \myhref{arXiv:2201.03487}{https://arxiv.org/abs/2201.03487}.

\bibitem[CG13]{CG13}
\textsc{N.~Charalambous} and \textsc{L.~Gross}.
\newblock The {Y}ang-{M}ills heat semigroup on three-manifolds with boundary.
\newblock \emph{Comm. Math. Phys.} \textbf{317}, no.~3, (2013), 727--785.
\newblock
  \myhref{doi:10.1007/s00220-012-1558-0}{https://dx.doi.org/10.1007/s00220-012-1558-0}.

\bibitem[CH16]{CH16}
\textsc{A.~{Chandra}} and \textsc{M.~{Hairer}}.
\newblock {An analytic BPHZ theorem for regularity structures}.
\newblock \emph{ArXiv e-prints} (2016).
\newblock \myhref{arXiv:1612.08138}{https://arxiv.org/abs/1612.08138}.

\bibitem[Cha19]{Chatterjee18}
\textsc{S.~Chatterjee}.
\newblock Yang-{M}ills for probabilists.
\newblock In \emph{Probability and analysis in interacting physical systems},
  vol. 283 of \emph{Springer Proc. Math. Stat.},  1--16. Springer, Cham, 2019.
\newblock
  \myhref{doi:10.1007/978-3-030-15338-0_1}{https://dx.doi.org/10.1007/978-3-030-15338-0_1}.

\bibitem[Che19]{Chevyrev18YM}
\textsc{I.~Chevyrev}.
\newblock Yang-{M}ills measure on the two-dimensional torus as a random
  distribution.
\newblock \emph{Comm. Math. Phys.} \textbf{372}, no.~3, (2019), 1027--1058.
\newblock
  \myhref{doi:10.1007/s00220-019-03567-5}{https://dx.doi.org/10.1007/s00220-019-03567-5}.

\bibitem[CW16]{TomTrees}
\textsc{T.~{Cass}} and \textsc{M.~P. {Weidner}}.
\newblock {Tree algebras over topological vector spaces in rough path theory}.
\newblock \emph{ArXiv e-prints} (2016).
\newblock \myhref{arXiv:1604.07352}{https://arxiv.org/abs/1604.07352}.

\bibitem[DeT83]{deturck83}
\textsc{D.~M. DeTurck}.
\newblock Deforming metrics in the direction of their {R}icci tensors.
\newblock \emph{J. Differential Geom.} \textbf{18}, no.~1, (1983), 157--162.
\newblock
  \myhref{doi:10.4310/JDG/1214509286}{https://dx.doi.org/10.4310/JDG/1214509286}.

\bibitem[DH87]{DH87}
\textsc{P.~H. Damgaard} and \textsc{H.~H\"uffel}.
\newblock Stochastic quantization.
\newblock \emph{Phys. Rep.} \textbf{152}, no. 5-6, (1987), 227--398.
\newblock
  \myhref{doi:10.1016/0370-1573(87)90144-X}{https://dx.doi.org/10.1016/0370-1573(87)90144-X}.

\bibitem[DK90]{DK90}
\textsc{S.~K. Donaldson} and \textsc{P.~B. Kronheimer}.
\newblock \emph{The geometry of four-manifolds}.
\newblock Oxford Mathematical Monographs. The Clarendon Press, Oxford
  University Press, New York, 1990,  x+440.
\newblock Oxford Science Publications.

\bibitem[Don85]{Donaldson}
\textsc{S.~K. Donaldson}.
\newblock Anti self-dual {Y}ang-{M}ills connections over complex algebraic
  surfaces and stable vector bundles.
\newblock \emph{Proc. London Math. Soc. (3)} \textbf{50}, no.~1, (1985), 1--26.
\newblock
  \myhref{doi:10.1112/plms/s3-50.1.1}{https://dx.doi.org/10.1112/plms/s3-50.1.1}.

\bibitem[Dri89]{Driver89}
\textsc{B.~K. Driver}.
\newblock Y{M{${}_2$}}: continuum expectations, lattice convergence, and
  lassos.
\newblock \emph{Comm. Math. Phys.} \textbf{123}, no.~4, (1989), 575--616.
\newblock
  \myhref{doi:10.1007/BF01218586}{https://dx.doi.org/10.1007/BF01218586}.

\bibitem[FH20]{FrizHairer}
\textsc{P.~K. Friz} and \textsc{M.~Hairer}.
\newblock \emph{A course on rough paths}.
\newblock Universitext. Springer, Cham, [2020] \copyright 2020,  xvi+346.
\newblock With an introduction to regularity structures, Second edition.
\newblock
  \myhref{doi:10.1007/978-3-030-41556-3}{https://dx.doi.org/10.1007/978-3-030-41556-3}.

\bibitem[FV10]{FV10}
\textsc{P.~K. Friz} and \textsc{N.~B. Victoir}.
\newblock \emph{Multidimensional stochastic processes as rough paths}, vol. 120
  of \emph{Cambridge Studies in Advanced Mathematics}.
\newblock Cambridge University Press, Cambridge, 2010,  xiv+656.
\newblock Theory and applications.
\newblock
  \myhref{doi:10.1017/CBO9780511845079}{https://dx.doi.org/10.1017/CBO9780511845079}.

\bibitem[GH19a]{MateBoundary}
\textsc{M.~Gerencs\'{e}r} and \textsc{M.~Hairer}.
\newblock Singular {SPDE}s in domains with boundaries.
\newblock \emph{Probab. Theory Related Fields} \textbf{173}, no. 3-4, (2019),
  697--758.
\newblock
  \myhref{doi:10.1007/s00440-018-0841-1}{https://dx.doi.org/10.1007/s00440-018-0841-1}.

\bibitem[GH19b]{Mate2}
\textsc{M.~Gerencs\'{e}r} and \textsc{M.~Hairer}.
\newblock A solution theory for quasilinear singular {SPDE}s.
\newblock \emph{Comm. Pure Appl. Math.} \textbf{72}, no.~9, (2019), 1983--2005.
\newblock \myhref{doi:10.1002/cpa.21816}{https://dx.doi.org/10.1002/cpa.21816}.

\bibitem[GH21]{GH21}
\textsc{M.~Gubinelli} and \textsc{M.~Hofmanov\'{a}}.
\newblock A {PDE} construction of the {E}uclidean {$\phi_3^4$} quantum field
  theory.
\newblock \emph{Comm. Math. Phys.} \textbf{384}, no.~1, (2021), 1--75.
\newblock
  \myhref{doi:10.1007/s00220-021-04022-0}{https://dx.doi.org/10.1007/s00220-021-04022-0}.

\bibitem[GHM22]{AndrisKonstantin}
\textsc{A.~Gerasimovics}, \textsc{M.~Hairer}, and \textsc{K.~Matetski}.
\newblock Directed mean curvature flow in noisy environment.
\newblock \emph{arXiv e-prints} (2022).
\newblock \myhref{arXiv:2201.08807}{https://arxiv.org/abs/2201.08807}.

\bibitem[GIP15]{GIP15}
\textsc{M.~Gubinelli}, \textsc{P.~Imkeller}, and \textsc{N.~Perkowski}.
\newblock Paracontrolled distributions and singular {PDE}s.
\newblock \emph{Forum Math. Pi} \textbf{3}, (2015), e6, 75.
\newblock \myhref{arXiv:1210.2684v3}{https://arxiv.org/abs/1210.2684v3}.
\newblock
  \myhref{doi:10.1017/fmp.2015.2}{https://dx.doi.org/10.1017/fmp.2015.2}.

\bibitem[Gro85]{Gross85}
\textsc{L.~Gross}.
\newblock A {P}oincar{\'e} lemma for connection forms.
\newblock \emph{J. Funct. Anal.} \textbf{63}, no.~1, (1985), 1--46.
\newblock
  \myhref{doi:10.1016/0022-1236(85)90096-5}{https://dx.doi.org/10.1016/0022-1236(85)90096-5}.

\bibitem[{Gro}17]{GrossSingular}
\textsc{L.~{Gross}}.
\newblock {Stability of the Yang-Mills heat equation for finite action}.
\newblock \emph{ArXiv e-prints} (2017).
\newblock \myhref{arXiv:1711.00114}{https://arxiv.org/abs/1711.00114}.

\bibitem[Gro22]{Gross2}
\textsc{L.~Gross}.
\newblock The {Y}ang-{M}ills heat equation with finite action in three
  dimensions.
\newblock \emph{Mem. Amer. Math. Soc.} \textbf{275}, no. 1349, (2022), v+111.
\newblock \myhref{arXiv:math/0101239}{https://arxiv.org/abs/math/0101239}.
\newblock \myhref{doi:10.1090/memo/1349}{https://dx.doi.org/10.1090/memo/1349}.

\bibitem[Hai14]{Hairer14}
\textsc{M.~Hairer}.
\newblock A theory of regularity structures.
\newblock \emph{Invent. Math.} \textbf{198}, no.~2, (2014), 269--504.
\newblock \myhref{arXiv:1303.5113}{https://arxiv.org/abs/1303.5113}.
\newblock
  \myhref{doi:10.1007/s00222-014-0505-4}{https://dx.doi.org/10.1007/s00222-014-0505-4}.

\bibitem[HM18a]{HM18}
\textsc{M.~Hairer} and \textsc{K.~Matetski}.
\newblock Discretisations of rough stochastic {PDE}s.
\newblock \emph{Ann. Probab.} \textbf{46}, no.~3, (2018), 1651--1709.
\newblock
  \myhref{doi:10.1214/17-AOP1212}{https://dx.doi.org/10.1214/17-AOP1212}.

\bibitem[HM18b]{HM16}
\textsc{M.~Hairer} and \textsc{J.~Mattingly}.
\newblock The strong {F}eller property for singular stochastic {PDE}s.
\newblock \emph{Ann. Inst. Henri Poincar\'{e} Probab. Stat.} \textbf{54},
  no.~3, (2018), 1314--1340.
\newblock
  \myhref{doi:10.1214/17-AIHP840}{https://dx.doi.org/10.1214/17-AIHP840}.

\bibitem[HP21]{EtienneCLT}
\textsc{M.~Hairer} and \textsc{E.~Pardoux}.
\newblock Fluctuations around a homogenised semilinear random {PDE}.
\newblock \emph{Arch. Ration. Mech. Anal.} \textbf{239}, no.~1, (2021),
  151--217.
\newblock
  \myhref{doi:10.1007/s00205-020-01574-8}{https://dx.doi.org/10.1007/s00205-020-01574-8}.

\bibitem[HS22]{HS19}
\textsc{M.~Hairer} and \textsc{P.~Sch\"{o}nbauer}.
\newblock The support of singular stochastic partial differential equations.
\newblock \emph{Forum Math. Pi} \textbf{10}, (2022), Paper No. e1, 127.
\newblock \myhref{arXiv:1909.05526}{https://arxiv.org/abs/1909.05526}.
\newblock
  \myhref{doi:10.1017/fmp.2021.18}{https://dx.doi.org/10.1017/fmp.2021.18}.

\bibitem[JW06]{JW06}
\textsc{A.~Jaffe} and \textsc{E.~Witten}.
\newblock Quantum {Y}ang-{M}ills theory.
\newblock In \emph{The millennium prize problems},  129--152. Clay Math. Inst.,
  Cambridge, MA, 2006.

\bibitem[Kec95]{KechrisSet}
\textsc{A.~S. Kechris}.
\newblock \emph{Classical descriptive set theory}, vol. 156 of \emph{Graduate
  Texts in Mathematics}.
\newblock Springer-Verlag, New York, 1995,  xviii+402.
\newblock
  \myhref{doi:10.1007/978-1-4612-4190-4}{https://dx.doi.org/10.1007/978-1-4612-4190-4}.

\bibitem[Kna02]{Knapp02}
\textsc{A.~W. Knapp}.
\newblock \emph{Lie groups beyond an introduction}, vol. 140 of \emph{Progress
  in Mathematics}.
\newblock Birkh\"{a}user Boston, Inc., Boston, MA, second ed., 2002,
  xviii+812.
\newblock
  \myhref{doi:10.1007/978-1-4757-2453-0}{https://dx.doi.org/10.1007/978-1-4757-2453-0}.

\bibitem[L{\'e}v03]{Levy03}
\textsc{T.~L{\'e}vy}.
\newblock Yang-{M}ills measure on compact surfaces.
\newblock \emph{Mem. Amer. Math. Soc.} \textbf{166}, no. 790, (2003), xiv+122.
\newblock \myhref{arXiv:math/0101239}{https://arxiv.org/abs/math/0101239}.
\newblock \myhref{doi:10.1090/memo/0790}{https://dx.doi.org/10.1090/memo/0790}.

\bibitem[L{\'e}v06]{Levy06}
\textsc{T.~L{\'e}vy}.
\newblock Discrete and continuous {Y}ang-{M}ills measure for non-trivial
  bundles over compact surfaces.
\newblock \emph{Probab. Theory Related Fields} \textbf{136}, no.~2, (2006),
  171--202.
\newblock
  \myhref{doi:10.1007/s00440-005-0478-8}{https://dx.doi.org/10.1007/s00440-005-0478-8}.

\bibitem[LN06]{LevyNorris06}
\textsc{T.~L{\'e}vy} and \textsc{J.~R. Norris}.
\newblock Large deviations for the {Y}ang-{M}ills measure on a compact surface.
\newblock \emph{Comm. Math. Phys.} \textbf{261}, no.~2, (2006), 405--450.
\newblock
  \myhref{doi:10.1007/s00220-005-1450-2}{https://dx.doi.org/10.1007/s00220-005-1450-2}.

\bibitem[Lyo94]{Lyons94}
\textsc{T.~Lyons}.
\newblock Differential equations driven by rough signals. {I}. {A}n extension
  of an inequality of {L}. {C}. {Y}oung.
\newblock \emph{Math. Res. Lett.} \textbf{1}, no.~4, (1994), 451--464.
\newblock
  \myhref{doi:10.4310/MRL.1994.v1.n4.a5}{https://dx.doi.org/10.4310/MRL.1994.v1.n4.a5}.

\bibitem[Man89]{Mandelkern89}
\textsc{M.~Mandelkern}.
\newblock Metrization of the one-point compactification.
\newblock \emph{Proc. Amer. Math. Soc.} \textbf{107}, no.~4, (1989),
  1111--1115.
\newblock \myhref{doi:10.2307/2047675}{https://dx.doi.org/10.2307/2047675}.

\bibitem[Mei75]{Ears}
\textsc{G.~H. Meisters}.
\newblock Polygons have ears.
\newblock \emph{Amer. Math. Monthly} \textbf{82}, (1975), 648--651.
\newblock \myhref{doi:10.2307/2319703}{https://dx.doi.org/10.2307/2319703}.

\bibitem[MV81]{Mitter}
\textsc{P.~K. Mitter} and \textsc{C.-M. Viallet}.
\newblock On the bundle of connections and the gauge orbit manifold in
  {Y}ang-{M}ills theory.
\newblock \emph{Comm. Math. Phys.} \textbf{79}, no.~4, (1981), 457--472.
\newblock
  \myhref{doi:10.1007/BF01209307}{https://dx.doi.org/10.1007/BF01209307}.

\bibitem[MW17a]{MW17Phi43}
\textsc{J.-C. Mourrat} and \textsc{H.~Weber}.
\newblock The dynamic {$\Phi^4_3$} model comes down from infinity.
\newblock \emph{Comm. Math. Phys.} \textbf{356}, no.~3, (2017), 673--753.
\newblock \myhref{arXiv:1601.01234}{https://arxiv.org/abs/1601.01234}.
\newblock
  \myhref{doi:10.1007/s00220-017-2997-4}{https://dx.doi.org/10.1007/s00220-017-2997-4}.

\bibitem[MW17b]{MW17Phi42}
\textsc{J.-C. Mourrat} and \textsc{H.~Weber}.
\newblock Global well-posedness of the dynamic {$\Phi^4$} model in the plane.
\newblock \emph{Ann. Probab.} \textbf{45}, no.~4, (2017), 2398--2476.
\newblock
  \myhref{doi:10.1214/16-AOP1116}{https://dx.doi.org/10.1214/16-AOP1116}.

\bibitem[MW20]{MoinatWeber18}
\textsc{A.~Moinat} and \textsc{H.~Weber}.
\newblock Space-time localisation for the dynamic {$\Phi^4_3$} model.
\newblock \emph{Comm. Pure Appl. Math.} \textbf{73}, no.~12, (2020),
  2519--2555.
\newblock \myhref{doi:10.1002/cpa.21925}{https://dx.doi.org/10.1002/cpa.21925}.

\bibitem[PW81]{ParisiWu}
\textsc{G.~Parisi} and \textsc{Y.~S. Wu}.
\newblock Perturbation theory without gauge fixing.
\newblock \emph{Sci. Sinica} \textbf{24}, no.~4, (1981), 483--496.
\newblock
  \myhref{doi:10.1360/ya1981-24-4-483}{https://dx.doi.org/10.1360/ya1981-24-4-483}.

\bibitem[Rad92]{Rade92}
\textsc{J.~Rade}.
\newblock On the {Y}ang-{M}ills heat equation in two and three dimensions.
\newblock \emph{J. Reine Angew. Math.} \textbf{431}, (1992), 123--163.
\newblock
  \myhref{doi:10.1515/crll.1992.431.123}{https://dx.doi.org/10.1515/crll.1992.431.123}.

\bibitem[{Sch}18]{PhilippMall}
\textsc{P.~{Sch{\"o}nbauer}}.
\newblock {Malliavin calculus and density for singular stochastic partial
  differential equations}.
\newblock \emph{ArXiv e-prints} (2018).
\newblock \myhref{arXiv:1809.03570}{https://arxiv.org/abs/1809.03570}.

\bibitem[Sen92]{Sengupta92}
\textsc{A.~Sengupta}.
\newblock The {Y}ang-{M}ills measure for {$S^2$}.
\newblock \emph{J. Funct. Anal.} \textbf{108}, no.~2, (1992), 231--273.
\newblock
  \myhref{doi:10.1016/0022-1236(92)90025-E}{https://dx.doi.org/10.1016/0022-1236(92)90025-E}.

\bibitem[Sen97]{Sengupta97}
\textsc{A.~Sengupta}.
\newblock Gauge theory on compact surfaces.
\newblock \emph{Mem. Amer. Math. Soc.} \textbf{126}, no. 600, (1997), viii+85.
\newblock \myhref{doi:10.1090/memo/0600}{https://dx.doi.org/10.1090/memo/0600}.

\bibitem[She21]{Shen18}
\textsc{H.~Shen}.
\newblock Stochastic quantization of an {A}belian gauge theory.
\newblock \emph{Comm. Math. Phys.} \textbf{384}, no.~3, (2021), 1445--1512.
\newblock
  \myhref{doi:10.1007/s00220-021-04114-x}{https://dx.doi.org/10.1007/s00220-021-04114-x}.

\bibitem[ST12]{MonCat}
\textsc{S.~Stolz} and \textsc{P.~Teichner}.
\newblock Traces in monoidal categories.
\newblock \emph{Trans. Amer. Math. Soc.} \textbf{364}, no.~8, (2012),
  4425--4464.
\newblock
  \myhref{doi:10.1090/S0002-9947-2012-05615-7}{https://dx.doi.org/10.1090/S0002-9947-2012-05615-7}.

\bibitem[Whi34]{Whitney}
\textsc{H.~Whitney}.
\newblock Analytic extensions of differentiable functions defined in closed
  sets.
\newblock \emph{Trans. Amer. Math. Soc.} \textbf{36}, no.~1, (1934), 63--89.
\newblock \myhref{doi:10.2307/1989708}{https://dx.doi.org/10.2307/1989708}.

\bibitem[You36]{Young}
\textsc{L.~C. Young}.
\newblock An inequality of the {H}\"older type, connected with {S}tieltjes
  integration.
\newblock \emph{Acta Math.} \textbf{67}, no.~1, (1936), 251--282.
\newblock
  \myhref{doi:10.1007/BF02401743}{https://dx.doi.org/10.1007/BF02401743}.

\bibitem[Zwa81]{zwanziger81}
\textsc{D.~Zwanziger}.
\newblock Covariant quantization of gauge fields without {G}ribov ambiguity.
\newblock \emph{Nuclear Phys. B} \textbf{192}, no.~1, (1981), 259--269.
\newblock
  \myhref{doi:10.1016/0550-3213(81)90202-9}{https://dx.doi.org/10.1016/0550-3213(81)90202-9}.

\end{thebibliography}

\end{document}